\numberwithin{figure}{chapter}
\newcommand{\A}{\mathbb{A}}
\newcommand{\B}{\mathbb{B}}
\newcommand{\J}{\mathbb{J}}
\newcommand{\N}{\mathbb{N}}
\newcommand{\Q}{\mathbb{Q}}
\newcommand{\K}{\mathbb{K}}
\newtheorem{definition}{{\bf Definition}}[chapter]
\newtheorem{theorem}{{\bf Theorem}}[chapter]
\newtheorem{theoreme}{{\bf Th\'eor\`eme}}[chapter]
\newtheorem{corollary}{{\bf Corollary}}[chapter]
\newtheorem{corollaire}{{\bf Corollaire}}[chapter]
\newtheorem{proposition}{\noindent {\bf Proposition}}[chapter]
\newtheorem{fact}{\noindent {\bf Fact}}[chapter]
\newtheorem{lemma}{\noindent {\bf Lemma}}[chapter]
\newtheorem{claim}{\noindent {\bf Claim}}[chapter]
\newtheorem{question}{\noindent {\bf Question}}[chapter]
\newtheorem{questions}[question]{Questions}
\newtheorem{example}{\noindent {\bf Example}}[chapter]
\newtheorem{notation}{\noindent {\bf Notation}}[chapter]
\newtheorem{problem}{Problem}[chapter]
\newtheorem{remark}{\noindent {\bf Remark}}[chapter]
\def\endproof{\hfill {\kern 6pt\penalty 500
   \raise -0pt\hbox{\vrule \vbox to5pt {\hrule width 5pt
  \vfill\hrule}\vrule}}}
\begin{document}

\title{Chains conditions in algebraic lattices}

\author{Ilham Chakir}
\vspace{10cm}

\address {D\'epartement de Math\'ematiques, Universit\'e Hassan $1^{er}$, Facult\'e des Sciences et Techniques, Settat, Maroc}

\email{chakirilham@yahoo.fr}

\maketitle

Ce  travail  repr\'esente la  "Th\`ese d'\'Etat" de l'auteur soutenue  le 11 Mai 2009 devant l'Universit\'e Mohamed V, Facult\'e des Sciences, Rabat (Maroc). 
Le jury \'etait  compos\'e des Professeurs  A. Alami Idrissi, Fac.  des Sciences, Rabat (Maroc), Pr\'esident;  M. Kabil,  F.S.T Mohammedia  (Maroc), Membre;  D. Misane, Fac. des Sciences, Rabat (Maroc), Membre et  Rapporteur;  
M. Pouzet, Universit\'e Claude-Bernard Lyon 1  (France), Membre et Rapporteur; 
I. Rosenberg, Universit\'e de Montr\'eal, (Canada),  Rapporteur; M. Sobrani, F.S.T F\`es-Sa\"{\i}s (Maroc), Membre;  N. Zaguia, Universit\'e d'Ottawa  (Canada), Rapporteur.

\vspace{3cm}

Le texte comprend deux r\'esum\'es -l'un en anglais, l'autre en fran\c{c}ais; une introduction en fran\c{c}ais, suivie du corps de la th\`ese form\'ee  de quatre chapitres en anglais.   

\newpage

\begin{center} {\huge Remerciements}
\end{center}
\vspace{1.5cm}
	Je tiens ‡ remercier tout d'abord le Professeur ALAMI IDRISSI Ali  de m'avoir fait l'honneur de pr\'esider le jury.

Mes remerciements les plus profonds et toute ma gratitude vont au Professeur Maurice POUZET. Aupr\`es de lui, j'ai appris et approfondi mes connaissances en math\'ematiques et au del\`a. Je lui sais gr\'e  \'egalement  de tous les efforts qu'il a consenti dans mon travail. Son soutien, son accueil chaleureux  et surtout son amiti\'e m'ont profond\'ement touch\'ee. Sa participation  au  jury me comble de plaisir.

Je suis reconnaissante au Professeur Driss MISANE de s'\^etre int\'eress\'e \`a mon travail en r\'edigeant un rapport. Son aide et sa disponibilit\'e m'ont \'et\'e utiles et sa participation au jury me fait honneur.

Je remercie chaleureusement les Professeurs Ivo ROSENBERG  et  Nejib ZAGUIA qui se sont int\'eress\'es \`a ce travail et ont eu l'amabilit\'e de r\'ediger un rapport de pr\'e-soutenance.

J'exprime mes vifs remerciements au Professeur Mohamed SOBRANI qui m'a fait l'honneur de participer au jury.

Je suis redevable au Professeur Mustapha KABIL dont l'aide et les conseils m'ont \'et\'e pr\'ecieux. Je le remercie \'egalement pour l'honneur qu'il me fait de participer au jury.

Je remercie Ègalement l'ensemble du Laboratoire de Probabilit\'es, Combinatoire et Statistique  \`a Lyon au sein duquel j'ai finalis\'e ce travail et dont l'aide logistique m'a \'et\'e d'un grand soutien.

Je remercie aussi les membres du D\'epartement de Math\'ematiques de la Facult\'e des Sciences et Techniques de Settat, sp\'ecialement le Professeur El Moustapha EL KHOUZAI.

Enfin, je ne trouve pas les mots juste pour exprimer ma gratitude \`a Monsieur Mamoun  DRIBI,  directeur de la soci\'et\'e GHESS, qui m'a aid\'ee, conseill\'ee et encourag\'ee, a mis \`a ma disposition bureau, mat\'eriel et un cadre agr\'eable de travail. Mlle Souad ALIOUA dont le sourire, l'accueil et la serviabilit\'e ont accompagn\'e ce travail.

\tableofcontents

\bibliography{articles,books,thesis}

\begin{thebibliography}{999}
\bibitem{birk} G.Birkhoff, Lattice Theory, A.M.S. Coll.
Pub. Vol. XXV. Third Ed.,  1967.

\bibitem  {bonn}R.Bonnet, Stratifications et extensions des genres de cha\^{\i}nes d\'enombrables, Comptes
Rendus Acad. Sc.Paris, 269, S\'erie A, (1969), 880-882.

\bibitem{bonn-pouz1}R Bonnet, M Pouzet, Extensions et stratifications d'ensembles dispers\'es,
Comptes Rendus Acad. Sc.Paris, 268, S\'erie A, (1969),1512-1515.

\bibitem  {bonn-pouz}R.Bonnet and M.Pouzet, Linear extention of ordered sets, in Ordered Sets. (I.Rival) ed.
Reidel, ASI 83, (1982), 125-170.

\bibitem  {cant}G.Cantor, Beitrage Zur Begrundung der transfiniten Menenlehre, Math. Ann. 49 (1897),
207-246.

\bibitem{chak} I.Chakir, Cha\^{\i}nes d'id\'eaux  et dimension alg\'ebrique des treillis distributifs, Th\`ese de doctorat, Universit\'e
Claude-Bernard(Lyon1) 18 d\'ecembre 1992, n° 1052.

\bibitem{cp} I.Chakir, M.Pouzet, The length of chains in distributive lattices, Notices of the A.M.S., 92 T-06-118, 502-503.

\bibitem{chapou} I.Chakir, M.Pouzet, Infinite independent sets in  distributive lattices, Algebra   Universalis {\bf 53}(2) 2005, 211-225.

\bibitem  {chapou2} I. Chakir, M. Pouzet, A characterization of well-founded algebraic lattices, submitted to Order, under revision.

\bibitem {chapou3} I.Chakir, M.Pouzet, On the length of chains in algebraic modular lattices, Order, to appear.

\bibitem  {duff}D.Duffus, M.Pouzet, and I.Rival, Complete ordered
sets with no infinite antichains, Discrete Math. 35 (1981), 39-52.
\bibitem{Erdos-Tarski} P. Erd\"os, A. Tarski, On families of mutually
exclusive sets, Annals of Math. 44 (1943) 315-329.

\bibitem{fraissetr}
R.~Fra{\"\i}ss{\'e}.
\newblock {\em Theory of relations}.
\newblock North-Holland Publishing Co., Amsterdam, 2000.

\bibitem{galv}F.Galvin, E. C. Milner, M.Pouzet, Cardinal representations for closures and preclosures,
Trans. Amer. Math. Soc., 328, (1991), 667-693.

\bibitem  {grat}G.Gr\"atzer, General Lattice Theory, Birkh\"auser, Stuttgard, 1998.

\bibitem  {haus}F. Hausdorff, Grundzüge einer Theorie der Geordnete Mengen, Math. Ann., 65 (1908),
435-505.

\bibitem{higm}G. Higman, Ordering by divisibility in abstract algebras, Proc. London. Math. Soc. 2 (3),
(1952), 326-336.

\bibitem{hindman}N.Hindman, Finite sums from sequences within cells of a partition of $\N$.
J. Combinatorial Theory Ser. A 17 (1974), 1--11.

\bibitem{Hofmann}Hofmann, K. H., M. Mislove and A. R. Stralka, The
pontryagin duality of compact 0-dimensional semilattices and its
applications, Lecture Note in Mathematics 396 (1974),
Springer-Verlag.

\bibitem{dejongh-parikh}
D.~H.~J. de~Jongh and R.~Parikh.
\newblock Well-partial orderings and hierarchies.
\newblock {\em Nederl. Akad. Wetensch. Proc. Ser. Math}.,
  39(3):195--207, 1977.

\bibitem  {lave}R. Laver, On Fra\"\i ss\'e's order type conjecture,
Annals of Math., 93 (1971), 89-111.

\bibitem{lave} R. Laver, An order type decomposition theorem,
Annals of Math., 98 (1973), 96-119.

\bibitem  {lmp1}J.D. Lawson, M. Mislove, H. A. Priestley, Infinite antichains in semilattices, Order,
2 (1985), 275-290.

\bibitem  {lmp2}J.D. Lawson, M. Mislove, H. A. Priestley, Infinite antichains and duality theories,
Houston Journal of Mathematics, Volume 14, No. 3, (1988), 423-441.

\bibitem  {lmp3}J.D. Lawson, M. Mislove, H. A. Priestley, Ordered sets with no infinite antichains, Discrete Mathematics, 63 (1987), 225-230.

\bibitem  {mp1}E. C. Milner and M. Pouzet, On the Cofinality of Partially Ordered Sets, (I.Rival) ed. Reidel ASI 83, (1982), 279-297.

\bibitem{mp2} E. C. Milner, M.Pouzet, A decomposition theorem for closure systems having no infinite independent set, in Combinatorics, Paul Erd\"os is Eighty (Volume 1), Keszthely (Hungary), 1993, pp. 277-299, Bolyai Society Math. Studies.

\bibitem{misl}M. Mislove, When are order scattered and topologically scattered the same? Annals of Discrete Math. 23 (1984), 61-80.

\bibitem{nash}C. St. J. A. Nash-Williams, On Well-Quasi-Ordering
Finite Trees, Proc. Cambridge Philos. Soc., 59 (1963), 833-835.

\bibitem{pouzetcondch}M.~Pouzet, Condition de cha\^{\i}ne en
th\'eorie des relations, Isra\"el J. Math. (Ser. 1-2) 30 (1978) 65--84.


\bibitem{pouzettr}
 M.~Pouzet,
 {\em Sur la th\'eorie des relations}.
 Th\`ese d'\'etat, Universit\'e Claude-Bernard, Lyon 1, pp.~78--85, 1978.
\bibitem {pou-sob} M.Pouzet, M.Sobrani, Sandwiches of ages, Annals of Pure and Applied logic, 108 (2001), 299-330
\bibitem{pouzet} M.Pouzet, M.Sobrani, Ordinal invariants of an age. Report . August 2002, Universit\'e Claude-Bernard (Lyon 1).

\bibitem{pz} M. Pouzet and N. Zaguia, Ordered sets with no chains
of ideals of a given type, Order, 1 (1984), 159-172.

\bibitem{prie} H. A. Priestley, ordered sets and duality for
distributive lattices, Annals of Discrete Math., 23, (1984),
39-60.

\bibitem{rado}R.Rado, Partial well-ordering of a set of vectors,
Mathematika, 1 (1954), 89-95.

\bibitem{rams}F. P. Ramsey, On a problem of formal logic, Proc.
London Math. Soc., 30, (1930), 264-286.

\bibitem  {rose}J. G. Rosenstein, Linear ordering, Academic Press, 1982.

\bibitem{shel}S.Shelah, Independence of strong partition relation for small cardinals, and the free subset problem, The Journal
of Symbolic Logic, 45, (1980), 505-509.

\bibitem {sobrani} M.Sobrani, Sur les \^ages de relations et quelques aspects homologiques des constructions D+M.
Th\`ese de doctorat d'\'etat, Universit\'e S.M.Ben Abdallah-Fez,
Fez, Janvier 2002.



\bibitem  {trotter} W.T.~Trotter.
\newblock{\em  Combinatorics and Partially Ordered Sets: Dimension Theory,} The Johns
Hopkins University Press, Baltimore, MD, 1992.


\end{thebibliography}

\chapter*{Abstract}
This work studies the relationship between the chains 
\index{chain} of an  algebraic lattice  and the order structure of the join-semilattice \index{join-semilattice}
 of its compact elements\index{compact}.  The results  are presented into four chapters, each corresponding to a paper  written in collaboration with 
Maurice Pouzet. \begin{enumerate}
\item A characterization of well-founded algebraic lattices,  19p. \\arXiv:0812.2300.
\item The length of chains in algebraic lattices, Les annales ROAD du LAID3, special issue 2008, pp 379-390 (proceedings of ISOR'08, Algiers, Algeria, Nov 2-6, 2008). 
\item The length of chains in algebraic modular lattices, Order, 24(2007) 224-247.
 \item Infinite independent sets  in distributive lattices, Algebra Universalis, 53(2005) 211-225.
\end{enumerate}

Our first studies on this theme have appeared in our doctoral thesis \cite {chak} presented in Lyon in 1992. A part of our results is included in Chapter  3 and  4 of the present work. 

Here are our main results (the necessary definitions can be found in the last section of this volume).

We show that for \emph{every order type  $\alpha$ there is a list  $\mathbb{B}_{\alpha}$ of join-semilattices, with cardinality at most  
$2^{\mid\alpha\mid}$,  such that an algebraic lattice $L$ 
contains a chain of order type 
 $I(\alpha)$ if and only if the join-subsemilattice $K(L)$ of its compact elements  contains a join-semilattice isomorphic to a  member of $\mathbb{B}_{\alpha}$.}
(Theorem \ref {thmfirst},  Chapter \ref{chap:wellfoundedbis}).

We conjecture that when  $\alpha$ is countably infinite, there is a finite list. 
The following result supports this conjecture: 
Let  $[\omega]^{<\omega}$ be  the set of  finite subsets of  $\omega$ ordered by inclusion. Then, \emph{among the join-subsemilattices of  $[\omega]^{<\omega}$ belonging to $\B_{\alpha}$, one embeds  in all others as a join-semilattice} (cf. Theorem \ref{thm:mainalgebra}, Chapter \ref{chap:wellfoundedbis}).

We also show that
\emph{an algebraic lattice  $L$  is well-founded if and only if the join-semilattice $K(L)$ of compact elements $L$ is well-founded 
and contains no  join-semilattice isomorphic to
$\underline \Omega(\omega^*)$ or to  $[\omega]^{<\omega}$}
(Theorem \ref {thm4}, Chapter \ref{chap:wellfounded}).

We describe the countable  indivisible order types $\alpha$ such that: \emph{for every modular algebraic lattice $L$,   $L$ contains no chain of  order type $\alpha$ if and only if  the join-semilattice of its compact elements  contains neither $\alpha$ nor a join-semilattice isomorphic to  $[\omega]^{<\omega}$}
(Theorem \ref {thm6},  Chapter \ref{chap:algebraic}). The chains $\omega^*$ et $\eta$ are among these order types $\alpha$.

We identify  two meet-semilattices  $\Gamma$ et $\Delta$.  We show  (Theorem \ref{thm8}, Chapter \ref{chap:TD}) that: \emph{for every  distributive lattice $T$, the following properties are equivalent: \begin{enumerate} [{(i)}] \item $T$ contains a join-subsemilattice isomorphic to  $[\omega]^{<\omega}$;
\item  The lattice
$[\omega]^{<\omega}$ is a quotient of a sublattice of  $T$;
\item  $T$ contains a sublattice isomorphic to 
$I_{<\omega}(\Gamma)$ ou \`a $I_{<\omega}(\Delta)$.\end{enumerate}}

This abstract is followed by  its  french version and by an introduction in french. The four chapters are in english. \\


\chapter*{R\'esum\'e}
Cette th\`ese porte sur le rapport entre la longueur des
cha\^{\i}nes \index{cha\^{\i}ne} d'un treillis alg\'ebrique et la
structure d'ordre du sup-treillis \index{sup-treillis} de ses
\'el\'ements compacts. \index{compact} Les r\'esultats obtenus ont
donn\'e lieu \`a quatre articles \'ecrits en collaboration avec
Maurice Pouzet  et constituant chacun un
chapitre de la th\`ese:
\begin{enumerate}
\item A characterization of well-founded algebraic lattices, 19p. \\ arXiv:0812.2300.
\item The length of chains in algebraic lattices, Les annales ROAD du LAID3, special issue 2008, pp 379-390 (proceedings of ISOR'08, Algiers, Algeria, Nov 2-6, 2008). 
\item The length of chains in algebraic modular lattices, Order, 24(2007) 224-247.
 \item Infinite independent sets  in distributive lattices, Algebra Universalis, 53(2005) 211-225.
\end{enumerate}

Nos premi\`eres recherches sur ce th\`eme sont apparues dans notre th\`ese de doctorat \cite {chak} Lyon 1992. Une partie d'entre elles est incluse dans les chapitres 3 et 4 du pr\'esent travail. Voici nos principaux r\'esultats (on pourra trouver en appendice les d\'efinitions n\'ecessaires).

 Nous montrons que: \emph{si $\alpha$ est un type d'ordre,  il existe une liste de
sup-treillis, soit $\mathbb{B}_{\alpha}$, de taille au plus
$2^{\mid\alpha\mid}$,  telle que quel que soit le  treillis alg\'ebrique $L$,
$L$  contient une cha\^{\i}ne de
type $I(\alpha)$ si et seulement si le sous sup-treillis $K(L)$ de ses \'el\'ements compacts contient un sous sup-treillis
isomorphe \`a un membre de $\mathbb{B}_{\alpha}$.}
(Theorem \ref {thmfirst},  Chapter \ref{chap:wellfoundedbis}).

Nous conjecturons que si $\alpha$ est d\'enombrable il existe une liste finie.

Consid\'erant l'ensemble $[\omega]^{<\omega}$ form\'e des parties finies de $\omega$ et ordonn\'e par inclusion,  nous montrons \`a l'appui de cette conjecture que: \emph{parmi les sous sup-treillis de $[\omega]^{<\omega}$ appartenant \`a $\B_{\alpha}$, l'un d'eux se plonge comme sous sup-treillis dans tous les autres} (cf. Theorem \ref{thm:mainalgebra}, Chapter \ref{chap:wellfoundedbis}).

Comme r\'esultat positif, nous montrons que:
\emph{un treillis alg\'ebrique $L$  est bien fond\'e si est seulement si
le sup-treillis $K(L)$ des \'el\'ements compacts de $L$ est bien
fond\'e et ne contient pas de sous sup-treillis  isomorphe \`a
$\underline \Omega(\omega^*)$ ou \`a $[\omega]^{<\omega}$}
(Theorem \ref {thm4}, Chapter \ref{chap:wellfounded}).

Nous d\'ecrivons les types d'ordre indivisibles d\'enombrables $\alpha$ tels que: \emph{ quel que soit le treillis  alg\'ebrique modulaire $L$,   $L$ ne contient pas de cha\^{\i}ne de type $\alpha$ si et seulement si le sup-treillis de ses \'el\'ements compacts  ne contient ni $\alpha$ ni de  sous sup-treillis  isomorphe \`a $[\omega]^{<\omega}$}
(Theorem \ref {thm6},  Chapter \ref{chap:algebraic}). Parmi eux figurent
$\omega^*$ et $\eta$.

Nous identifions deux inf-treillis $\Gamma$ et $\Delta$.  Nous  montrons (Theorem \ref{thm8}, Chapter \ref{chap:TD}) que: \emph{pour un treillis distributif $T$,  les propri\'et\'es suivantes
sont \'equivalentes: \begin{enumerate} [{(i)}] \item $T$ contient un sous sup-treillis isomorphe \`a $[\omega]^{<\omega}$;
\item  Le treillis
$[\omega]^{<\omega}$ est quotient d'un sous-treillis de $T$;
\item  $T$ contient un sous-treillis isomorphe \`a
$I_{<\omega}(\Gamma)$ ou \`a $I_{<\omega}(\Delta)$.\end{enumerate}}

\chapter*{Introduction}
La notion d'op\'erateur  de fermeture est centrale en math\'ematiques. Elle est \'egalement  un outil de mod\'elisation dans plusieurs sciences appliqu\'ees comme l'informatique et  les sciences sociales (eg. logique et  mod\'elisation du calcul, bases de donn\'ees relationnelles, fouille de donn\'ees...). Dans beaucoup d'exemples concrets, les op\'erateurs de fermeture satisfont une propri\'et\'e d'engendrement fini, et on dit que ceux-ci sont alg\'ebriques.  \`A un op\'erateur de fermeture est associ\'e un treillis complet, le treillis de ses ferm\'es. Et  ce treillis est alg\'ebrique si et seulement si la fermeture est alg\'ebrique. Du fait de l'importance de ces op\'erateurs de fermeture, les treillis alg\'ebriques \index{treillis alg\'ebrique} sont des
objets privil\'egi\'es de la  th\'eorie des treillis. \index{treillis}

Un exemple de base est l'ensemble, ordonn\'e par inclusion, des sections initiales d'un ensemble ordonn\'e (ou  pr\'eordonn\'e).  C'est celui qui motive notre recherche. Son importance vient, pour nous, de la th\'eorie des relations et de la logique. Les structures relationnelles \'etant pr\'eordonn\'ees par abritement, les sections initiales correspondent aux classes de mod\`eles finis des th\'eories universelles tandis que les id\'eaux sont les \^ages de structures relationnelles introduits par Fra{\"\i}ss{\'e} \cite{fraissetr}. Et toute une approche de l'\'etude des structures relationnelles finies peut \^etre exprim\'ee en termes d'ordre et tout particuli\`erement en termes de ce treillis des sections initiales   \cite{pouzetcondch, pouzettr, pou-sob, sobrani}.

Notre travail porte sur la fa\c{c}on dont les cha\^{\i}nes d'un treillis alg\'ebrique  $L$ se refl\`etent dans la structure du sup-treillis  $K(L)$ constitu\'e des \'el\'ements compacts de ce treillis. L'objectif \'etant de d\'eterminer les sup-treillis que $K(L)$ doit contenir pour assurer que  $L$ contienne une cha\^{\i}ne  d'un type donn\'e $\alpha$.

Pour la compr\'ehension de ce qui suit, rappelons qu'un  treillis $L$ est {\it alg\'ebrique} s'il est
complet et si tout \'el\'ement est  supr\'emum d'\'el\'ements
compacts. L'ensemble $K(L)$ des \'el\'ements compacts de $L$ est
un {\it sup-treillis} (c'est \`a dire un ensemble ordonn\'e
\index{ensemble ordonn\'e} dans lequel  toute paire d'\'el\'ements
a un supr\'emum) admettant un plus petit \'el\'ement  et, de plus, l'ensemble
$J(K(L))$ des id\'eaux \index{id\'eal} de $K(L)$ est un treillis
complet \index{treillis complet}isomorphe \`a $L$.
R\'eciproquement, si $P$ est un sup-treillis ayant un plus petit \'el\'ement,
l'ensemble $J(P)$ des id\'eaux de $P$ est un treillis alg\'ebrique
dont l'ensemble des \'el\'ements compacts est isomorphe \`a $P$.
Ainsi, pour revenir \`a l'exemple qui motive notre recherche, si $L$ est l'ensemble des sections initiales d'un ensemble ordonn\'e $Q$, alors $K(L)$ est \'egal \`a l'ensemble $I_{<\omega}(Q)$ des sections initiales de $Q$ qui sont  finiment engendr\'ees. Et $I(Q)$ est isomorphe \`a  $J(I_{<\omega}(Q))$.

Exprim\'e en termes de sup-treillis et d'id\'eaux, notre travail  porte  donc sur le rapport entre la
longueur des cha\^{\i}nes d'id\'eaux d'un sup-treillis admettant un plus petit \'el\'ement et la structure d'ordre de ce sup-treillis.

Il s'inspire d'un rapport entre longueur des cha\^{\i}nes d'id\'eaux d'un ensemble ordonn\'e $P$  quelconque et structure d'ordre de cet ensemble ordonn\'e constat\'e par Pouzet et  Zaguia, 1985 \cite{pz}.

\'Etant donn\'ee une cha\^{\i}ne de type $\alpha$, d\'esignons par $I(\alpha)$ le type d'ordre de l'ensemble des sections initiales de cette cha\^{\i}ne. Disons que $\alpha$  est \emph{ind\'ecomposable}\index{ind\'ecomposable} si la cha\^{\i}ne  s'abrite dans chacune de ses sections  finales non-vides.  \index{d\'enombrable}

 Pouzet et Zaguia obtiennent  le r\'esultat suivant (cf. \cite {pz}  Theorem 4 pp. 162 et Theorem \ref{thm2},  Chapter \ref{chap:wellfoundedbis}).

\begin{theoreme}\label{thm:posetintro}
Soit $\alpha$ un type d'ordre \index{type d'ordre}  ind\'ecomposable d\'enombrable.

Il existe une liste finie $A_{1}^{\alpha}, \ldots,
A_{n_\alpha}^{\alpha}$ d'ensembles ordonn\'es, tels que pour tout
ensemble ordonn\'e $P$, l'ensemble $J(P)$ des id\'eaux de $P$
ne contient pas de  cha\^{\i}ne de type $I(\alpha)$ si et seulement si
$P$ ne contient aucun  sous-ensemble isomorphe \`a un des $A_
{1}^{\alpha}, \ldots, A_{n_\alpha}^{\alpha}$.
\end{theoreme}
Les  $A_{1}^{\alpha}, \ldots, A_{n_\alpha}^{\alpha}$ sont les "obstructions" \`a l'existence  d'une cha\^{\i}ne d'id\'eaux de type $I(\alpha)$. Une  obstruction typique s'obtient par  \emph{sierpinskisation monotone}: on se donne une  bijection $\varphi$ des entiers sur une cha\^{\i}ne, dont le type  $\omega\alpha$ est la somme de $\alpha$ copies de la cha\^{\i}ne $\omega$ des entiers, telle que $\varphi^{-1}$ soit croissante sur chaque $\omega.\{\beta\}$ pour $\beta\in \alpha$. On  ordonne $\N$ en posant $x\leq y$ si $x\leq y$ dans l'ordre naturel et  $\varphi(x)\leq \varphi(y)$ dans l'ordre de  $\omega\alpha$.  On constate que  l'ensemble  ordonn\'e obtenu, augment\'e d'un  plus petit \'el\'ement, s'il n'en a pas, contient une cha\^{\i}ne d'id\'eaux de type  $I(\alpha)$. En outre, tous les ensembles ordonn\'es obtenus par ce proc\'ed\'e se plongent les uns dans les autres; de ce fait,  on les d\'esigne par le m\^eme symbole $\underline {\Omega}(\alpha)$ (cf. Lemma 3.4.3 pp. 167 \cite{pz}). Si $\alpha$ n'est ni $\omega$, ni $\omega^*$, le dual de $\omega$ et ni $\eta$ (le type d'ordre des rationnels), les autres obstructions s'obtiennent au moyen de sommes lexicographiques d'obstructions correspondant \`a des  cha\^{\i}nes  de type strictement inf\'erieur \`a $\alpha$.

Il est naturel de se demander ce que devient le r\'esultat ci-dessus si, au lieu d'ensembles
ordonn\'es,  on consid\`ere des sup-treillis.  Cette  question est au coeur de notre travail.

Afin de raccourcir la suite de l'expos\'e, formalisons un peu:

D\'esignons par  $\mathbb{E}$ la classe des ensembles ordonn\'es. \'Etant donn\'es $P,P'\in \mathbb{E}$ disons que $P$ s'\emph{abrite} dans $P'$ et notons $P\leq P'$ si $P$ est isomorphe \`a une partie de $P'$. Cette relation est un pr\'eordre.  Deux ensembles ordonn\'es $P$ et $P'$ tels que $P\leq P'$ et $P'\leq P$ sont dits \emph{\'equimorphes}\index{\'equimorphe}.
Pour un type d'ordre $\alpha$, d\'esignons   par
$\mathbb{E}_{\neg\alpha}$ la classe form\'ee des \'el\'ements $P\in \mathbb{E}$
tels que $J(P)$ n'abrite pas de cha\^{\i}ne de type $I(\alpha)$. Pour une partie $\mathbb {B} $ de $\mathbb{E}$ d\'esignons par
$\uparrow\mathbb{B}$ la classe des $P\in \mathbb{E}$ abritant un \'el\'ement de $\mathbb {B}$ et
posons $Forb(\mathbb{B}):= \mathbb E \setminus \uparrow\mathbb{B}$
(" Forb" pour "forbidden"). Le r\'esultat ci-dessus s'\'ecrit
\begin{equation}\label{eq: forbposet} \mathbb{E}_{\neg\alpha}=Forb(\{A_{1}^{\alpha}, \ldots,
A_{n_\alpha}^{\alpha}\}).
\end{equation}
Rempla\c{c}ons la classe
$\mathbb E$ par  la classe $\mathbb{J}$ des sup-treillis $P$ admettant un plus petit \'el\'ement. Pour un type d'ordre $\alpha$, d\'esignons par
$\mathbb{J}_{\alpha}$ la classe des $P$ dans  $\mathbb{J}$ tels
que $J(P)$ abrite une cha\^{\i}ne de type $I(\alpha)$ et posons $\mathbb{J}_{\neg\alpha}:= \J\setminus \mathbb{J}_{\alpha}$. Pour une partie $\mathbb{B}$ de $\mathbb J$,
d\'esignons par    $\uparrow\mathbb{B}$  la  classe des $P$ dans
$\mathbb{J}$ contenant un sous sup-treillis (et non un
sous-ensemble  ordonn\'e) isomorphe \`a un membre de $\mathbb{B}$
et  posons $Forb_{\J}(\mathbb{B}):= \mathbb J \setminus
\uparrow\mathbb{B}$.

Nous pouvons pr\'eciser notre question ainsi:

\begin{questions}\label{quest: basic}

\begin{enumerate}
\item Pour un type d'ordre $\alpha$,  quelles sont les parties  $\mathbb{B}$ de
$\mathbb{\J}$ les plus simples possible telles que:
\begin{equation}\label{eq:forblattice}
\mathbb
{J}_{\neg\alpha}=Forb_{\J}(\mathbb{B}).
\end{equation}
\item Est ce que pour toute   cha\^{\i}ne d\'enombrable $\alpha$ on peut trouver $\mathbb{B}$ fini tel que $\mathbb {J}_{\neg\alpha}=Forb_{\J}(\mathbb{B})$?
\end{enumerate}
\end{questions}
Notre travail est consacr\'e   \`a ces questions.  Nous n'y r\'epondons que partiellement.

Pour r\'epondre \`a la deuxi\`eme question,  il est tentant d'utiliser le Th\'eor\`eme \ref{thm:posetintro}.
En effet, si l'ensemble $J(P)$
des id\'eaux d'un sup-treillis $P$ contient une cha\^{\i}ne de
type $I(\alpha)$ alors $P$
 doit contenir,  comme ensemble ordonn\'e,  un des $A_{i}^{\alpha}$. Et par
cons\'equent, $P$ doit contenir, comme sous sup-treillis, le
sup-treillis engendr\'e par $A_{i}^{\alpha}$ dans $P$.  Mais nous ne savons pas d\'ecrire les sup-treillis que l'on peut
engendrer au moyen des $A_{i}^{\alpha}$. Notre approche consiste \`a adapter, quand cela para\^{\i}t possible, la preuve du Th\'eor\`eme \ref{thm:posetintro}.
Nous pr\'esentons ci-dessous les r\'esultats obtenus.

\section{R\'esultats g\'en\'eraux}
Notons imm\'ediatement que si
$\alpha$ est le type d'une cha\^{\i}ne finie on peut prendre $\mathbb{B}=\{1+\alpha \}$. Ceci est encore vrai si $\alpha$ est la  cha\^{\i}ne $\omega$ des entiers. Un cas plus int\'eressant est celui de la  cha\^{\i}ne $\omega^*$
des entiers n\'egatifs. Les ensembles ordonn\'es n'abritant pas la cha\^{\i}ne $\omega^*$ sont dits \emph{bien fond\'es};  ensembles ordonn\'es dont toute partie non vide contient un \'el\'ement minimal, ils servent \`a mod\'eliser le raisonnement par induction.

Notons $\Omega(\omega^*)$ l'ensemble $[
\omega]^2$ des parties \`a deux \'el\'ements de $\omega$, qu'on
identifie aux paires  $(i,j)$, $i<j<\omega$, muni de l'ordre
suivant: $(i,j)\leq (i',j')$ si et seulement si $i'\leq i$ et
$j\leq j'$. Soit $\underline\Omega(\omega^*):=\Omega(\omega^*)
\cup \{\emptyset \}$ obtenu en ajoutant un plus petit \'el\'ement.
Notons $[\omega]^{<\omega}$ l'ensemble, ordonn\'e par inclusion,
des parties finies de $\omega$.

\begin{figure}
\begin{center}
\includegraphics[width=2.5in]{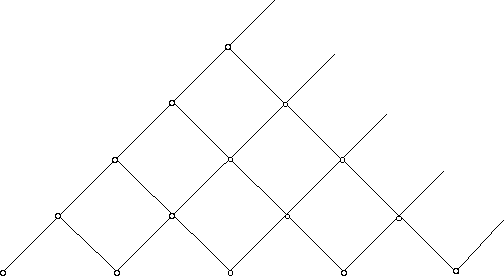}
\caption{$\Omega(\omega^*)$}
\end{center}
\label{fig:omega}
\end{figure}

Les ensembles $\underline\Omega(\omega^*)$ et $[\omega]^{<\omega}$
sont des treillis bien-fond\'es, tandis que  les treillis
alg\'ebriques $J(\underline \Omega(\omega^*))$  et
$J([\omega]^{<\omega})$
 ne le sont pas
(par exemple  $J([\omega]^{<\omega})$ est isomorphe \`a $\mathfrak
{P}(\omega)$, l'ensemble des parties de $\omega$). Ces deux
ensembles sont incontournables,
en effet:

\begin{equation}\label{eq:forbidomegastar}\J_{\neg \omega^*}=Forb_{\J}(\{1+\omega^*, \underline
{\Omega}(\omega^*), [\omega]^{<\omega}\}).\end{equation}

C'est le premier r\'esultat significatif de ce travail. Il se reformule de fa\c{c}on plus accessible comme suit:

 \begin{theoreme}\label{b-fond1}(cf. Theorem \ref {thm4},  Chapter \ref{chap:wellfounded})
Un treillis alg\'ebrique $L$  est bien fond\'e si est seulement si
le sup-treillis $K(L)$ des \'el\'ements compacts de $L$ est bien
fond\'e et ne contient pas de sous sup-treillis  isomorphe \`a
$\underline \Omega(\omega^*)$ ou \`a $[\omega]^{<\omega}$.
\end{theoreme}

Notons que la  liste donn\'ee par le
Th\'eor\`eme \ref{thm:posetintro} dans le cas $\alpha= \omega^*$ a un terme de moins. On a en effet (cf.
\cite {pz} Theorem 1. pp.160):

\begin{equation}\mathbb E_{\neg \omega^*}=Forb\{\omega^*, \underline \Omega(\omega^*)\}
\end{equation}

Ceci tient au fait que $\underline\Omega(\omega^*)$ est isomorphe
comme ensemble ordonn\'e \`a  un sous-ensemble de
$[\omega]^{<\omega}$, mais non pas comme sup-treillis.

Apr\`es les cha\^{\i}ne finies, les cha\^{\i}nes $\omega$ et $\omega ^*$,  une cha\^{\i}ne typique est  la cha\^{\i}ne $\eta$ des rationnels.   Les ensembles ordonn\'es n'abritant pas la cha\^{\i}ne  des rationnels
sont dits \emph{dispers\'es}.

Notons $\underline\Omega(\eta)$ l'ensemble des couples  d'entiers $(n, \frac{i}{2^n})$ tels que $0\leq i<2^n$ ordonn\'e de sorte que:  $(n, \frac{i}{2^n})\leq (m, \frac{j}{2^m})$ si $n\leq m$ et $\frac{i}{2^n}\leq \frac{j}{2^m}$. Ainsi  $\underline \Omega(\eta)$ est un sous-ensemble du produit de la cha\^{\i}ne $\omega$ et de la cha\^{\i}ne $D$ des nombres dyadiques de l'intervalle $[0, 1[$. C'est en fait un sous sup-treillis de ce produit.

\begin{figure}[htbp]
\centering
\includegraphics[width=3in]{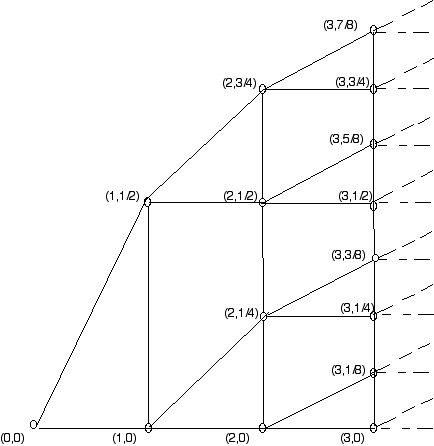}
\caption{$ \underline \Omega(\eta)$}
\label{Omega}
\end{figure}

Dans \cite {pz} (Theorem 2. pp.161) il est prouv\'e que:

\begin{equation}\mathbb E_{\neg \eta}=Forb\{\eta, \underline \Omega(\eta)\}
\end{equation}

Nous ne savons pas r\'epondre  \`a la question suivante:
\begin{question} Est-ce que $\J_{\neg\eta}= Forb_{\J}(\{1+\eta, [\omega]^{<\omega}, \underline \Omega(\eta)\} )$?
\end{question}

%
%

 Si $\alpha$ est un type d'ordre arbitraire, nous obtenons le r\'esultat suivant (analogue \`a celui d\'ej\`a obtenu en \cite{pz}, cf.Theorem 3 pp.162).

\begin{theoreme}\label{liste} (cf. Theorem \ref {thmfirst}, Chapter \ref{chap:wellfoundedbis})
Soit $\alpha$ un type d'ordre. Il existe une liste de
sup-treillis, soit $\mathbb{B}_{\alpha}$, de taille au plus
$2^{\mid\alpha\mid}$,  telle que pour tout sup-treillis $P$,
l'ensemble $J(P)$ des id\'eaux de $P$ contient une cha\^{\i}ne de
type $I(\alpha)$ si et seulement si $P$ contient un sup-treillis
isomorphe \`a un membre de $\mathbb{B}_{\alpha}$.
\end{theoreme}

C'est tr\`es loin d'une r\'eponse positive \`a la seconde de nos questions, \`a savoir l'existence  d'une liste finie lorsque  $\alpha$ est d\'enombrable.

Si cette question  a une r\'eponse  positive alors il y a une seule liste de cardinal minimum,  pourvu que des sup-treillis $P$, $P'$ qui se plongent  l'un
dans l'autre comme sup-treillis soient identifi\'es.  En effet,  pr\'eordonnons $\J$ en posant $P\leq P'$ si $P$  se plonge dans $P'$ comme sous  sup-treillis. Dans  l'ordre quotient, $\mathbb
J_{\alpha}$ est une section finale. Et si $\mathbb B$ est une liste finie de cardinal minimum alors dans ce quotient c'est
 l'ensemble des \'el\'ements minimaux de $\mathbb
J_{\alpha}$.

 En fait notre seconde question \'equivaut \`a:

 \begin{question}
 \begin{enumerate}
 \item  Est-ce que $\mathbb{J}_{\alpha}$
admet un nombre fini d'\'el\'ements minimaux?
\item  Est-ce que tout
\'el\'ement de $\mathbb {J}_{\alpha}$ majore un \'el\'ement
minimal?
\end{enumerate}
\end{question}

Par exemple $1+\alpha\in \mathbb{J}_{\alpha}$.
En effet  $J(1+\alpha)=1+J(\alpha)=I(\alpha)$. De plus $1+\alpha$ est
minimal dans $\mathbb{J}_{\alpha}$. En effet, soit $Q\in
\mathbb{J}$ tel que $Q\leq 1+\alpha$, on a $Q=1+\beta$. Comme
$J(Q)=I(\beta)$, si $Q\in \mathbb{J}_{\alpha}$, $I(\alpha)\leq
I(\beta)$. Ce qui implique $\alpha\leq\beta$ et $1+\alpha\leq Q$.

Lorsque $\alpha$ est d\'enombrable, le treillis
$[\omega]^{<\omega}$ joue un r\^{o}le central. Nous avons:\begin{equation}[\omega]^{<\omega}\in
\mathbb{J}_{\alpha}.
\end{equation}

 En effet $J([\omega]^{<\omega})$ est isomorphe \`a
$\mathfrak{P}(\omega)$ lequel contient une cha\^ine de type $I(\alpha)$ pour tout $\alpha$ d\'enombrable.

Nous montrons que vis a vis du pr\'eordre sur $\J$,  l'ensemble des sous sup-treillis de $[\omega]^{<\omega}$ qui appartiennent \`a $\J_{\alpha}$ a un plus petit \'e\'l\'ement:
\begin{theoreme} \label{thm:mainalgebraintro}(cf. Theorem \ref{thm:mainalgebra}, Chapter \ref{chap:wellfoundedbis}) Pour tout $\alpha$ d\'enombrable, $\J_{\alpha}$ contient un  sup-treillis $Q_{\alpha}$ qui se plonge comme sous sup-treillis dans tout sous sup-treillis de  $[\omega]^{<\omega}$ appartenant \`a $\J_{\alpha}$.
Ce treillis $Q_{\alpha}$ \'etant \'egal \`a:
\begin{enumerate}[${\bullet}$]
\item $1+\alpha$ si $\alpha$ est fini.
\item $I_{<\omega}(S_{\alpha})$ o\`u $S_{\alpha}$ est une sierpinskisation de $\alpha$ et de $\omega$ si $\alpha$ est infini  d\'enombrable.
\end{enumerate}
\end{theoreme}
Ce r\'esultat est cons\'equence des deux th\'eor\`emes suivants:
\begin{theoreme}\label{minimal} (cf. Theorem \ref{ordinal},
Chapter \ref{chap:wellfoundedbis}) Soit $\alpha$ un type d'ordre d\'enombrable. Le
sup-treillis $[\omega]^{<\omega}$ est minimal dans
$\mathbb{J}_{\alpha}$ si et seulement si $\alpha$ n'est pas un
ordinal.
\end{theoreme}

Soit  $\alpha$ un ordinal.   Soit $S_{\alpha}:=\alpha$ si $\alpha<\omega$. Si  $\alpha=\omega\alpha'+n$ avec  $\alpha'\not =0$ et $n<\omega$,  soit   $S_{\alpha}:=\Omega(\alpha')\oplus n$  la somme directe  de $\Omega(\alpha')$ et de la cha\^{\i}ne $n$,  o\`u $\Omega(\alpha')$ est une sierpinskisation de $\omega\alpha'$ et $\omega$, via une bijection $\varphi: \omega\alpha'\rightarrow \omega$ telle que $\varphi^{-1}$ soit croissante sur chaque $\omega.\{\beta\}$.

\begin{theoreme} \label{thm:qalphaintro} (cf. Theorem \ref {thm:qalpha}, Chapter \ref{chap:wellfoundedbis}). Si  $\alpha$ est un ordinal, alors  $Q_{\alpha}:= I_{<\omega}(S_{\alpha})$ est le plus petit sous sup-treillis $P$ de  $[\omega]^{<\omega}$  qui appartient \`a  $\J_{\alpha}$ .
\end{theoreme}

Les ensembles ordonn\'es  $\underline{\Omega}(\omega^*)$ et $\underline \Omega (\eta)$ sont des sup-treillis. Pour chaque cha\^{\i}ne d\'enombrable $\alpha$, nous consid\'erons des instances particuli\`eres de $\Omega(\alpha)$ qui sont des sous sup-treillis  de $\omega\times\alpha$. Nous  d\'esignons encore par $\Omega(\alpha)$ l'un quelconque d'entre eux, comme nous d\'esignons par
 $\underline\Omega(\alpha)$ le sup-treillis obtenu en ajoutant un plus petit \'el\'ement \`a $\Omega(\alpha)$ s'il n'en  a pas d\'ej\`a un.

A un type d'ordre d\'enombrable $\alpha$ nous associons un sup-treillis $P_{\alpha}$ d\'efini comme suit.

Ou bien $\alpha$  ne s'abrite pas dans une de ses sections finales strictes. Dans ce cas $\alpha$ s'\'ecrit  $\alpha= n+\alpha'$ avec $n<\omega$ et $\alpha'$ sans premier \'el\'ement. Et nous posons $P_{\alpha}:=n+\underline \Omega(\alpha')$. Dans le cas contraire, si $\alpha$ est \'equimorphe \index{\'equimorphe} \`a $\omega+ \alpha'$  nous posons $P_{\alpha}:= \underline \Omega(1+\alpha')$, sinon, nous posons $P_{\alpha}= \underline \Omega(\alpha)$.

L' importance du type de sierpinskisation ci-dessus vient du   r\'esultat suivant:
 \begin{theoreme}\label{thm:serpinskalphaintro}(cf. Theorem \ref{thm:serpinskalpha},
Chapter \ref{chap:wellfoundedbis})
Si $\alpha$ est un type d'ordre infini d\'enombrable,  $P_{\alpha}$ est minimal dans   $\J_{\alpha}$.
 \end{theoreme}

%
%
Des  th\'eor\`emes \ref{minimal} et \ref {thm:serpinskalphaintro}  d\'ecoule que:

\emph{ Si $\alpha$ n'est pas un ordinal, $P_{\alpha}$ et  $[\omega]^{<\omega}$ sont  deux \'el\'ements incomparables de $\J_{\alpha}$}.

Tandis que d'apr\`es les  th\'eor\`emes \ref{thm:qalphaintro} et \ref{thm:serpinskalphaintro}:

  \emph {Si  $\alpha$ est un ordinal infini d\'enombrable, $P_{\alpha}$ et $Q_{\alpha}$  sont des obstructions minimales.}

En fait, si $\alpha\leq \omega+\omega=\omega2$, elles coincident.
En effet, si $\alpha=\omega+n$ avec  $n<\omega$, alors $S_{\alpha}= \Omega (1)\oplus  n$. Comme  $ \Omega (1)$ est isomorphe \`a  $\omega$,  $Q_{\alpha}$ est isomorphe au produit direct $\omega\times (n+1)$ qui est lui m\^eme isomorphe \`a  $\Omega(n+1)=P_{\alpha}$. Si $\alpha=\omega+\omega= \omega2$, $S_{\alpha}= \Omega (2)$. Cet ensemble ordonn\'e est isomorphe au produit direct $\omega\times 2$. Et l'ensemble  $Q_{\alpha}$ est isomorphe \`a  $[\omega]^2$, la partie du produit direct  $\omega \times  \omega$ constitu\'ee des couples $(i,j)$ tels que $i<j$. En retour $[\omega]^2$ est isomorphe   \`a $\Omega(\omega)=P_{\alpha}$.

Comme nous le verrons (Chapitre \ref{chap:wellfoundedbis}, Corollary \ref{incomp}) au dela de  $\omega2$ ces deux obstructions sont incomparables.






 Ce travail sugg\`ere deux autres questions.
\begin{questions}
\begin{enumerate}
\item Si $\alpha$ est un ordinal infini, est  ce que  les obstructions minimales  sont
$\alpha$, $P_{\alpha}$, $Q_\alpha$ et des sommes
lexicographiques d'obstructions correspondant \`a des ordinaux
plus petits?
\item Si $\alpha$ est une cha\^{\i}ne dispers\'ee non bien ordonn\'ee, est ce que les obstructions minimales  sont
$\alpha$, $P_{\alpha}$, $[\omega]^{<\omega}$ et des sommes
lexicographiques d'obstructions correspondant \`a cha\^{\i}nes dispers\'ees
plus petites?
\end{enumerate}
\end{questions}
Nous ne pouvons donner que quelques exemples de type d'ordres pour lesquels la r\'eponse est positive.

\section{Le cas des treillis modulaires}
N'ayant pas de r\'eponse au probl\`eme pos\'e, nous le
restreignons. Au lieu de consid\'erer la classe des treillis
alg\'ebriques nous consid\'erons des sous-classes particuli\`eres.
Une classe int\'eressante est celle des treillis alg\'ebriques
modulaires.
Au lieu de rechercher les obstructions
correspondant  \`a un type d\'enombrable $\alpha$ nous recherchons
les types d'ordre $\alpha$ pour lesquels il n'y a que deux
obstructions, \`a savoir $1+\alpha$ et $[\omega]^{<\omega}$. Les
r\'esultats que nous obtenons sont valables pour une classe de
treillis plus large que les treillis modulaires alg\'ebriques, la
classe $\mathbb L$ d\'efinie comme suit. Notons $\mathbb A $ la
classe des treillis alg\'ebriques. Pour un type d'ordre $\alpha$,
soit $L_{\alpha}:= 1+(1\oplus \alpha)+1$ le treillis form\'e par
la somme directe de la cha\^{\i}ne \`a un \'el\'ement $1$ et de la
cha\^{\i}ne $\alpha$, avec un plus petit et un plus grand
\'el\'ements ajout\'es. Notons $\mathbb L$  la collection des
$L\in\mathbb A$ tel que $L$ ne contient aucun sous-treillis \index{sublattice} isomorphe \`a
$L_{\omega+1}$ ou \`a $L_{\omega^*}$. Puisque  $L_{2}$ est isomorphe \`a $M_{5}$, le treillis non modulaire \`a cinq \'el\'ements, tout treillis alg\'ebrique modulaire appartient \`a   $\mathbb L$.
\begin{figure}[htbp]
\begin{center}
\includegraphics[width=3in]{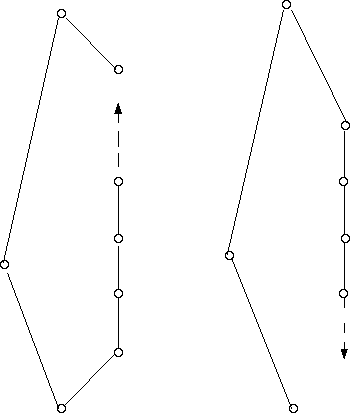}
\caption{$L_{\omega+1}$,     $L_{\omega^*}$} \label{L(omega)}
\end{center}\end{figure}

\begin{figure}[htbp]
\begin{center}\includegraphics[width=0.8in]{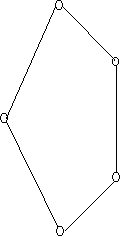}
\caption{$M_{5}$} \label{M5}
\end{center}\end{figure}

Pour un type d'ordre $\alpha$, nous notons $\mathbb A_{\neg
\alpha}$ (resp. $\mathbb L_{\neg \alpha}$) la collection des
$L\in\mathbb A$ (resp. $L\in\mathbb L$) tel que $I(\alpha)\not
\leq L$. Consid\'erons la classe $\mathbb{K}$ des types d'ordre
$\alpha$ tel que $L \in  \mathbb L_{\neg \alpha}$ d\`es que $K(L)$
ne contient ni cha\^{\i}ne de type $1+\alpha$ ni sous ensemble
isomorphe \`a $[\omega]^{<\omega}$. Si $\alpha$ est d\'enombrable
alors ces deux conditions sont n\'ecessaires pour interdire $I(\alpha)$ dans $L$;
si, de plus, $ \alpha$ est ind\'ecomposable, cela revient \`a
interdire $\alpha$ dans $L$.
\begin{theoreme}\label{thm6} (cf. Theorem \ref {thm6},  Chapter \ref{chap:algebraic})
La classe $\mathbb{K}$ satisfait les propri\'et\'es suivantes:\\
$(p_{1})$ $0\in\mathbb{K}$, $1\in \mathbb{K}$;\\ $(p_{2})$ Si
$\alpha+1\in\mathbb{K}$ et $\beta\in\mathbb{K}$ alors
$\alpha+1+\beta\in\mathbb{K}$;\\  $(p_{3})$ Si
$\alpha_{n}+1\in\mathbb{K}$
 pour tout $n<\omega$ alors la $\omega$-somme
$\gamma:=\alpha_{0}+1+\alpha_{1}+1+\ldots+\alpha_{n}+1+\ldots$ est
dans $\mathbb{K}$;\\ $(p_{4})$ Si $\alpha_{n}\in\mathbb{K}$ et
$\{m: \alpha_{n}\leq\alpha_{m}\}$ est infini pour tout $n<\omega$
alors la $\omega^*$-somme $\delta:=
\ldots+\alpha_{n+1}+\alpha_{n}+\ldots+\alpha_{1}+\alpha_{0}$ est
dans $\mathbb{K}$.\\ $(p_{5})$ Si $\alpha$ est un type d'ordre
d\'enombrable dispers\'e, alors $\alpha\in \mathbb{K}$ si et
seulement si $\alpha=\alpha_{0}+\alpha_{1}+...+\alpha_{n}$ avec
$\alpha_{i}\in \mathbb{K}$ pour $i\leq n$, $\alpha_{i}$
strictement ind\'ecomposable \`a gauche pour $i<n$ et $\alpha_{n}$
ind\'ecomposable;\\
 $(p_{6})$  $\eta\in \mathbb{K}$.\\
\end{theoreme}

\begin{corollaire}\label {cor: scatteredmodularintro}(cf. Corollary \ref{cor: scatteredmodular}, Chapter \ref {chap:algebraic})Un treillis alg\'ebrique modulaire est bien fond\'e, respectivement dispers\'e, si et seulement si le sup-treillis de ses \'el\'ements compacts  est bien fond\'e, resp. dispers\'e, et ne contient pas de sous sup-treillis isomorphe \`a $[\omega]^{<\omega}$.
\end{corollaire}

On peut dire un peu plus. Soit $\mathbb{P}$ la plus petite classe de types d'ordre,
satisfaisant les propri\'et\'es $(p_{1})$ \`a $(p_{4})$ dessus. Par
exemple, $\omega, \omega^{*}, \omega(\omega^{\alpha})^{*},
\omega^{*}\omega, \omega^{*}\omega\omega^{*}$ sont dans
$\mathbb{P}$. On a  $\mathbb{P}\subseteq \mathbb{K}\setminus
\{\eta \}$.

Nous ignorons si tout type d'ordre d\'enombrable
$\alpha\in \mathbb{K}\setminus \{\eta \}$ est \'equimorphe \`a un
$\alpha' \in \mathbb{P}$. La classe des types d'ordre $\alpha$ qui
sont \'equimorphes \`a un $\alpha' \in \mathbb{P}$ satisfait aussi
$(p_{5})$ donc, pour r\'epondre \`a notre question, nous pouvons supposer que $\alpha$ est ind\'ecomposable. Nous n'avons r\'eussi
\`a r\'epondre \`a cette question que dans le cas o\`u $\alpha$
 est \emph{indivisible}, c'est \`a dire s'abrite dans au moins  une des deux parties de n'importe qu'elle partition de $\alpha$ en deux parties.    Nous \'eliminons les types d'ordres indivisibles qui
ne sont pas dans $\mathbb{P}$ en construisant des treillis
alg\'ebriques de la forme $J(T)$ o\`u $T$ est un treillis
distributif appropri\'e. Ces treillis distributifs sont obtenus
par des sierpinskizations. Nous obtenons:

\begin{theoreme} (cf. Theorem \ref{thm7},  Chapter\ref{chap:algebraic})
Un  type d'ordre d\'enombrable indivisible $\alpha$ est \'equimorphe \`a un type d'ordre d\'enombrable  $\alpha' \in \mathbb {P} \cup \{\eta\}$ si et seulement si quelque soit le treillis  alg\'ebrique modulaire $L$ les propri\'e\'es suivantes sont \'equivalentes:
\begin{enumerate}[{(i)}]\item    $L$ ne contient pas de cha\^{\i}ne de type $I(\alpha)$.

\item  Le sup-treillis $K(L)$ des \'el\'ements compacts  ne contient ni $1+\alpha$ ni de  sous sup-lattice isomorphe \`a $[\omega]^{<\omega}$.
\end{enumerate}
\end{theoreme}



\section{Treillis distributifs et  sup-treillis $[\omega]^{<\omega}$}
Le fait qu'un sup-treillis $P$ contienne un sous-sup-treillis
isomorphe \`a $[\omega]^{<\omega}$  \'equivaut \`a l'existence
d'un ensemble ind\'ependant infini. Rappelons que dans un
sup-treillis $P$, un sous-ensemble $X$ est {\it ind\'ependant} si
$x\not \leq \bigvee F$ pour tout $x\in X$ et toute partie finie
non vide $F$ de $X\setminus \{x\}$. Les conditions qui assurent
l'existence d'un ensemble ind\'ependant infini ou les
cons\'equences de leurs inexistence  ont \'et\'e d\'ej\`a
consid\'er\'es (voir les recherches sur le "free-subset problem"
de Hajnal \cite {shel} ou "on the cofinality of posets"
\cite{galv, mp2}).

Le r\'esultat  suivant, traduit l'existence d'un ensemble
ind\'ependant de taille $\kappa$ pour un cardinal $\kappa$.

\begin{theoreme} \cite {chapou}  \cite {lmp3} \label  {tm01} (cf. Theorem \ref{tm1},  Chapter \ref{chap:TD}) Soit $\kappa$ un cardinal ;
pour un sup-treillis $P$ les propri\'et\'es suivantes sont
\'equivalentes:\\ $(i)$ $P$ contient un ensemble ind\'ependant de
taille $\kappa$;\\ $(ii)$ $P$ contient un sous sup-treillis
isomorphe \`a $[\kappa]^{<\omega}$;\\ $(iii)$ $P$ contient un
sous-ensemble isomorphe \`a $[\kappa]^{<\omega}$;\\ $(iv)$ $J(P)$
contient un sous-ensemble isomorphe \`a $\mathfrak P (\kappa)$;\\
$(v)$ $\mathfrak P (\kappa)$ se plonge dans $J(P)$ par une
application qui pr\'eserve les sup arbitraires.\end{theoreme}

L'existence d'un ensemble ind\'ependant infini, dans le cas d'un
treillis distributif, se traduit par l'existence de deux
sup-treillis particuliers $I_{<\omega}(\Gamma)$ et
$I_{<\omega}(\Delta)$ , o\`u $\Gamma$ et $\Delta$ sont deux
inf-treillis sp\'eciaux.

 Soit
$\Delta:= \{ (i,j): i<j\leq\omega \}$ muni de l'ordre $(i,j)\leq
(i',j')$ si et seulement si $j\leq i'$ ou $i=i'$ et $j\leq j'$.
Soit $\Gamma:=\{(i,j)\in \Delta : j=i+1$ ou  $j=\omega \}$ muni de
l'ordre induit. Les ensembles ordonn\'es $\Delta$ et $\Gamma$ sont
des inf-treillis bien-fond\'es dont les \'el\'ements maximaux
forment une anticha\^{\i}ne infinie. Pour un ensemble ordonn\'e
$Q$, notons $I_{<\omega}(Q)$ l'ensemble des sections initiales
finiment engendr\'es de $Q$. Les ensembles $I_{<\omega}(\Delta)$
et $I_{<\omega}(\Gamma )$, sont des treillis  distributifs
bien-fond\'es contenant un sous ensemble isomorphe \`a
$[\omega]^{<\omega}$.
\begin{figure}
\begin{center}
\includegraphics[width=2.5in]{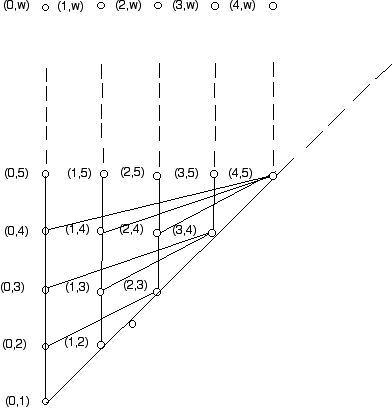}
\caption{$\Delta$}
\label{fig:delta}
\end{center}
\end{figure}
\begin{figure}
\begin{center}
\includegraphics[width=2.5in]{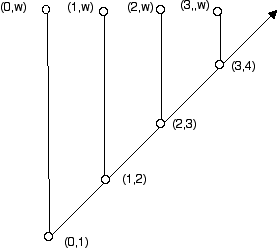}
\caption{$\Gamma$}
\label{fig:gamma}
\end{center}
\end{figure}

Notons que, pour un treillis distributif, contenir
$[\omega]^{<\omega}$, comme sous sup-treillis  ou comme
sous-ensemble ordonn\'e est \'equivalent.
\begin{theoreme}\label{thm08} (cf. Theorem \ref{thm8}, Chapter \ref{chap:TD})
Soit $T$ un treillis distributif. Les propri\'et\'es suivantes
sont \'equivalentes:\\ $(i)$ $T$ contient un sous-ensemble
isomorphe \`a $[\omega]^{<\omega}$;\\
$(ii)$ Le treillis
$[\omega]^{<\omega}$ est quotient d'un sous-treillis de $T$;\\
$(iii)$ $T$ contient un sous-treillis isomorphe \`a
$I_{<\omega}(\Gamma)$ ou \`a $I_{<\omega}(\Delta)$.
\end{theoreme}

Un des arguments de la preuve du Th\'eor\`eme \ref{thm08} est le
Th\'eor\`eme de Ramsey  \cite{rams} appliqu\'e comme dans
\cite{duff}. Les ensembles ordonn\'es $\Delta$ (avec un plus grand
\'el\'ement ajout\'e) et $\Gamma$ ont \'et\'e consid\'er\'es
avant, dans \cite{lmp1} et \cite{lmp2}. L'ensemble ordonn\'e
$\delta$ obtenu \`a partir de $\Delta$ en supprimant les
\'el\'ements maximaux a \'et\'e aussi consid\'er\'e par E.
Corominas en 1970 comme une variante de l'exemple construit par R.
Rado \cite{rado}.
\chapter[Well-founded algebraic lattices]{A characterization of well-founded algebraic lattices}\label{chap:wellfounded}

We characterize well-founded  algebraic lattices by means of
forbidden subsemilattices of the join-semilattice made of their
compact elements.  More specifically, we show that  an algebraic
lattice $L$ is well-founded if and only if $K(L)$, the
join-semilattice of compact elements of $L$, is well-founded and
contains neither $[\omega]^{<\omega}$, nor
$\underline\Omega(\omega^*)$
 as a join-subsemilattice.   As an immediate corollary, we get that an  algebraic modular lattice $L$ is well-founded if and only if
$K(L)$ is well-founded and contains no infinite independent set.
If $K(L)$ is a join-subsemilattice of $I_{<\omega}(Q)$, the set of
finitely generated initial segments of a well-founded poset $Q$,
then $L$ is well-founded if and only if $K(L)$ is
well-quasi-ordered.

\section{Introduction and synopsis of results}
Algebraic lattices \index{algebraic lattice} and join-semilattices
\index{join-semilattice} (with a 0) are two aspects of the same
thing, as expressed in the following basic result.
\begin{theorem} \cite{Hofmann}, \cite{grat} The collection $J(P)$ of ideals \index{ideal} of a join-semilattice $P$, once ordered by inclusion,
is an algebraic lattice
and the subposet $K(J(P))$ of its compact elements \index{compact
element} is isomorphic to $P$. Conversely, the subposet $K(L)$ of
compact elements of an algebraic lattice $L$ is a join-semilattice
with a $0$ and $J(K(L))$ is isomorphic to $L$.\end{theorem}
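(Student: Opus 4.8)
The plan is to prove the two directions separately, in each case exhibiting a canonical bijection and checking it is an order isomorphism. Throughout, recall that an ideal of a join-semilattice $P$ is a nonempty down-set closed under the join of any two of its elements (equivalently a nonempty, down-closed, up-directed subset), and write $\downarrow a := \{x \in P : x \le a\}$ for the principal ideal generated by $a$.

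First I would treat the forward direction. Ordering $J(P)$ by inclusion, I would check that $J(P)$ is a complete lattice: since $P$ has a least element $0$, every ideal contains $0$, so an arbitrary intersection of ideals is again a nonempty down-set closed under pairwise joins, hence an ideal; this furnishes all infima, and the supremum of a family $(I_\lambda)_\lambda$ is the set of elements of $P$ lying below some finite join of members of $\bigcup_\lambda I_\lambda$. The crux is then to identify the compact elements of $J(P)$ as exactly the principal ideals. On one hand $\downarrow a$ is compact: if $\downarrow a \subseteq \bigvee_\lambda I_\lambda$, then $a$ lies below a finite join $a_1 \vee \cdots \vee a_n$ with $a_k \in I_{\lambda_k}$, whence $\downarrow a \subseteq I_{\lambda_1} \vee \cdots \vee I_{\lambda_n}$. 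On the other hand every ideal satisfies $I = \bigcup_{a \in I}\downarrow a = \bigvee_{a\in I}\downarrow a$, so if $I$ is compact it is already a finite join $\downarrow a_1 \vee \cdots \vee \downarrow a_n = \downarrow(a_1 \vee \cdots \vee a_n)$, hence principal. The same identity $I = \bigvee_{a\in I}\downarrow a$ shows that every ideal is the supremum of the compact elements below it, so $J(P)$ is algebraic. Finally $a \mapsto \downarrow a$ is an order isomorphism from $P$ onto $K(J(P))$, since $\downarrow a \subseteq \downarrow b \iff a \le b$ and every compact ideal is principal.

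For the converse I would first verify that $K(L)$ is a join-semilattice with $0$: the join of two compact elements is compact (if $c \vee d \le \bigvee S$, then $c,d \le \bigvee S$, so each lies below a finite subjoin, and the union of the two finite index sets covers $c \vee d$), while $0 = \bigvee \emptyset$ is compact and least. Then I would define $\Phi : L \to J(K(L))$ by $\Phi(x) = \{c \in K(L) : c \le x\}$ and $\Psi : J(K(L)) \to L$ by $\Psi(I) = \bigvee I$, and check that they are mutually inverse, order-preserving maps. The identity $\Psi \circ \Phi = \mathrm{id}$ is precisely the hypothesis that $L$ is algebraic, while $\Phi \circ \Psi = \mathrm{id}$ uses directedness of an ideal together with compactness: if $c \in K(L)$ and $c \le \bigvee I$, then $c \le \bigvee F$ for some finite $F \subseteq I$, and $\bigvee F \in I$, so $c \in I$.

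I expect no deep obstacle, since this is the standard duality between algebraic lattices and join-semilattices with $0$; the one point requiring genuine care is the identification $K(J(P)) = \{\downarrow a : a \in P\}$, where both the compactness of principal ideals and the non-compactness of non-principal ideals must be argued. It is exactly the hypotheses that ideals are directed and that $P$ carries a least element that make the infima, the bottom element, and the inversion $\Phi \circ \Psi = \mathrm{id}$ go through.
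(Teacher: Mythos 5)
Your proof is correct, and it is the standard argument: the paper itself offers no proof of this statement, citing it to Hofmann--Mislove--Stralka and Gr\"atzer, and your route (compact elements of $J(P)$ are exactly the principal ideals, giving $P\cong K(J(P))$ via $a\mapsto\,\downarrow a$; mutually inverse maps $\Phi(x)=\{c\in K(L):c\leq x\}$ and $\Psi(I)=\bigvee I$ for the converse) is precisely the one found in those references. One small point in your favour: the theorem's first sentence omits the hypothesis that $P$ has a least element, but you correctly supply it, and it is genuinely needed --- without a $0$ the intersection of all ideals can be empty, so $J(P)$ would lack a bottom element and could not be a complete lattice.
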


In this paper, we characterize well-founded algebraic lattices by
means of forbidden join-subsemilattices of the join-semilattice made
of their compact elements. In the sequel  $\omega$  denotes the
chain of non-negative integers, and when this causes no confusion,
the first infinite cardinal \index{cardinal} as well as  the first
infinite ordinal \index{ordinal}. We denote $\omega^*$ the chain of
negative integers.  We  recall that a poset $P$ is {\it
well-founded} \index{well-founded} provided that every non-empty
subset of $P$ has a minimal element. With the Axiom of dependent
choices, this amounts to the fact that $P$ contains no subset
isomorphic to $\omega^*$. Let $\Omega(\omega^*)$ be the set $[
\omega]^2$ of two-element subsets of $\omega$, identified to pairs
$(i,j)$, $i<j<\omega$, ordered so that $(i,j)\leq (i',j')$ if and
only if $i'\leq i$ and $j\leq j'$ w.r.t. the natural order on
$\omega$. Let $\underline\Omega(\omega^*):=\Omega(\omega^*) \cup
\{\emptyset \}$ be obtained by adding a least element. Note that
$\underline\Omega(\omega^*)$ is isomorphic to the set of bounded
intervals of $\omega$ (or $\omega^*$) ordered by inclusion. Moreover
$\underline\Omega(\omega^*)$ is a join-semilattice
($(i,j)\vee(i',j')=(i\wedge i',j\vee j')$). The join-semilattice
$\underline\Omega(\omega^*)$ embeds in $\Omega(\omega^*)$ as a
join-semilattice; the advantage of $\underline\Omega(\omega^*)$
w.r.t. our discussion is to have a zero. Let $\kappa$ be a cardinal
number, e.g. $\kappa:= \omega$; denote $[\kappa]^{<\omega}$  (resp.
$\mathfrak {P}(\kappa)$ ) the set, ordered by inclusion, consisting
of finite (resp. arbitrary ) subsets of $\kappa$. The posets
$\underline\Omega(\omega^*)$ and $[\kappa ]^{<\omega}$ are
well-founded lattices, whereas  the algebraic lattices $J(\underline
\Omega(\omega^*))$  and $J([\kappa]^{<\omega})$ ($\kappa$ infinite)
are not well-founded (and we may note that $J([\kappa]^{<\omega})$
is isomorphic to $\mathfrak {P}(\kappa)$). As  a poset
$\underline\Omega(\omega^*)$ is isomorphic to  a subset  of
$[\omega]^{<\omega}$, but not as a join-subsemilattice. This is our
first result.

\begin{proposition}\label{w-f}
$\underline\Omega(\omega^*)$ does not embed in
$[\omega]^{<\omega}$ as a join-subsemilattice;
 more generally, if $Q$ is a well-founded poset then $\underline \Omega(\omega^*)$ does
not embed as a join-subsemilattice into $I_{<\omega}(Q)$, the
join-semilattice made of finitely generated initial segments of
$Q$.
\end{proposition}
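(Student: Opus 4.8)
The plan is to argue by contradiction, working entirely with the images of the atoms. Suppose $f$ is an embedding of $\underline\Omega(\omega^*)$ into $I_{<\omega}(Q)$ as a join-semilattice; the first assertion is the special case where $Q$ is an infinite antichain, so that $I_{<\omega}(Q)=[\omega]^{<\omega}$. Write $a_n:=(n,n+1)$ for the atoms of $\Omega(\omega^*)$ and set $A_n:=f(a_n)$. Since the join in $I_{<\omega}(Q)$ is union and $(i,j)=\bigvee_{i\le l<j}a_l$, the map $f$ is completely determined by the $A_n$, with $f((i,j))=A_i\cup\cdots\cup A_{j-1}$. First I would record three consequences of the fact that $f$ is an order-embedding (so it reflects inclusion) together with join-preservation, read off from the arithmetic $(p,q)\vee(p',q')=(\min(p,p'),\max(q,q'))$: (B1) $A_n\not\subseteq A_{n+1}$, since $a_n\not\le a_{n+1}$; (B2) $A_n\subseteq A_0\cup A_{n+1}$, since $a_n\le a_0\vee a_{n+1}=(0,n+2)$; and (B3) $A_m\subseteq A_{n+1}\cup A_{m+1}$ whenever $n<m$, since $a_m\le a_{n+1}\vee a_{m+1}=(n+1,m+2)$.

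The heart of the combinatorics is an \emph{at most one descent} lemma. By (B1) choose $z_n\in A_n\setminus A_{n+1}$; by (B2) this forces $z_n\in A_0$. I claim no point of $Q$ can be such a witness for two distinct indices: if $z\in(A_n\setminus A_{n+1})\cap(A_m\setminus A_{m+1})$ with $n<m$, then (B3) gives $z\in A_m\setminus A_{m+1}\subseteq A_{n+1}$, contradicting $z\notin A_{n+1}$. Hence the $z_n$ are pairwise distinct, so $A_0$ contains infinitely many distinct points. The same computation also yields $z_n\not\le z_m$ for $n<m$ (from $z_m\in A_{n+1}$ while $z_n\notin A_{n+1}$, using that $A_{n+1}$ is an initial segment).

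In the finite case this finishes everything at once: when $Q$ is an antichain, $A_0=\downarrow F_0$ is \emph{finite}, and infinitely many distinct $z_n\in A_0$ is absurd; this is precisely the non-embeddability of $\underline\Omega(\omega^*)$ into $[\omega]^{<\omega}$. For a general well-founded $Q$ the segment $A_0=\downarrow F_0$ may be infinite, so I would replace finiteness by an induction on the ordinal rank of $Q$. Since $F_0$ is finite, pigeonhole gives a single maximal generator $g_0\in F_0$ with $z_n\le g_0$ for infinitely many $n$; discarding the at most one index with $z_n=g_0$ (they are distinct), I get infinitely many $z_n$ lying strictly below $g_0$, that is, in $Q':=\{x\in Q: x<g_0\}$, a well-founded poset of \emph{strictly smaller} rank. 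The restricted segments $A_n\cap Q'$ inherit (B2) and (B3) automatically, because intersection with the initial segment $Q'$ commutes with unions and respects inclusions, and the surviving witnesses $z_n\in Q'$ still display the leaving phenomenon; passing to a suitable infinite subsequence one reproduces exactly the same configuration inside $Q'$ and invokes the induction hypothesis, equivalently manufacturing an infinite descending chain in $Q$.

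The step I expect to be the main obstacle is precisely this last reduction. Facts (B1)–(B3) only guarantee that the index-support $\{k:z_n\in A_k\}$ of a witness has a single \emph{descent}; they do not forbid an unbounded ``tail'', so a priori $z_{n}$ could reappear in $A_{n'}$ for the next retained index $n'$, destroying the non-inclusion (B1) after restriction to $Q'$. The delicate point is therefore to thin out the index set so that each chosen $z_{n_k}$ genuinely leaves before the next chosen index $n_{k+1}$, while keeping infinitely many indices and doing so uniformly through the recursion; controlling these tails, again via the finiteness of the generating sets supplied by well-foundedness, is where the real work lies.
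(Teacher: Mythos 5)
Your proof of the first assertion (the case of $[\omega]^{<\omega}$) is correct and complete, and it is more elementary than the paper's own route for that half (the paper transports a non-separating $\omega^*$-chain of ideals and observes that in $[\omega]^{<\omega}$ such a chain can have only finitely many members below a witnessing pair): facts (B1)--(B3) are correctly derived from order-reflection and join-preservation, the witnesses $z_n$ are forced into $A_0$ and are pairwise distinct, and finiteness of $A_0$ gives the contradiction. For a general well-founded $Q$, however, there is a genuine gap --- indeed two. The tail problem you flag yourself is real but repairable with your own tools: order-reflection gives the stronger fact $A_n\not\subseteq A_{n+1}\cup\dots\cup A_m$ for every $m>n$ (since $a_n\not\leq a_{n+1}\vee\dots\vee a_m$), any witness of this non-inclusion lifts to a generator in $F_n$ (the union on the right is an initial segment), and a pigeonhole over the finite set $F_n$, the unions being increasing in $m$, yields $z_n\in F_n$ with $z_n\notin A_l$ for \emph{all} $l>n$. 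The gap you do not acknowledge is the fatal one for your induction: the restricted sets $A_n\cap Q'$, where $Q'=\{x\in Q: x<g_0\}$, need not be finitely generated initial segments of $Q'$ (take $Q$ made of two incomparable elements $g_0,g$ sitting above an infinite antichain: $\downarrow g\cap Q'$ is that antichain). So what you obtain after restriction is \emph{not} an embedding of $\underline\Omega(\omega^*)$ into $I_{<\omega}(Q')$, the induction hypothesis cannot be invoked for it, and finite generation is precisely what fuels both of your pigeonhole steps, so the recursion has nothing to run on at the next level. (Nor do the elements $g_0,g_1,\dots$ produced at successive stages form a descending chain: $g_1$ is a generator taken in $Q$, and nothing places it below $g_0$.)

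The paper avoids this trap by cutting in the opposite direction: instead of intersecting with the initial segment of points below $g_0$, it \emph{removes} an initial segment, namely the common lower part $\overline{I}_\omega$ of the transported chain of ideals, and set-difference by an initial segment preserves finite generation. Your framework can in fact be completed the same way, with no induction at all. Let $W:=\{x\in Q: x\in A_l\ \text{for infinitely many}\ l\}$, an initial segment of $Q$. Join-preservation gives $A_l\subseteq A_n\cup A_m$ whenever $n\leq l\leq m$; hence any point of $A_m$ outside $A_n$ (with $n\leq m$) lies in every later $A_{m'}$, so $A_m\subseteq A_n\cup W$. Consequently the sets $y_n:=A_n\setminus W$ form a decreasing sequence, each $y_n$ is a finitely generated initial segment of $Q\setminus W$ (generated by $F_n\setminus W$), and your never-reappearing witnesses satisfy $z_n\in y_n\setminus y_{n+1}$, so the descent is strict. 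This contradicts part $a)$ of Theorem \ref{wellfounded} (Birkhoff: $I_{<\omega}$ of a well-founded poset is well-founded). This completion is, in substance, the paper's own argument, which obtains the corresponding inclusion from complete distributivity of $I(Q)\cong J(I_{<\omega}(Q))$ rather than from (B3)-type computations and concludes by the same appeal to Birkhoff's theorem.
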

\begin{figure}
\begin{center}
\includegraphics[width=2.5in]{chakir-pouzetfig1}
\caption{$\Omega(\omega^*)$}
\end{center}
\label{fig:omega}
\end{figure}
Our next result expresses that $\underline
\Omega(\omega^*)$ and  $[\omega]^{<\omega}$ are unavoidable
examples of well-founded join-semilattices whose set of ideals is
not well-founded.
\begin{theorem}\label{thm4}
An algebraic lattice $L$  is well-founded if and only if $K(L)$ is
well-founded  and contains no join-subsemilattice  isomorphic to
$\underline \Omega(\omega^*)$ or to $[\omega]^{<\omega}$.\\
\end{theorem}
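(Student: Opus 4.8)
The plan is to work throughout on the join-semilattice side. By the basic duality recalled above, $L\cong J(K(L))$ and $K(L)$ is a join-semilattice with $0$; writing $P:=K(L)$, the statement becomes: for a join-semilattice $P$ with $0$, the lattice $J(P)$ of ideals is well-founded if and only if $P$ is well-founded and contains neither $\underline\Omega(\omega^*)$ nor $[\omega]^{<\omega}$ as a join-subsemilattice. I would prove the two implications separately, the first being routine and the second carrying all the difficulty.

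For necessity I argue by contraposition, using a single functoriality lemma: if $Q$ is a join-subsemilattice of $P$, then $I\mapsto \downarrow_P I$ (downward closure in $P$) sends an ideal of $Q$ to an ideal of $P$ and is an order-embedding, since $(\downarrow_P I)\cap Q=I$ for every ideal $I$ of $Q$. Granting this, three cases finish the job. If $P$ is not well-founded it contains $\omega^*$, which embeds in $J(P)$ via principal ideals, so $J(P)$ is not well-founded. If $\underline\Omega(\omega^*)$ (resp. $[\omega]^{<\omega}$) embeds in $P$ as a join-subsemilattice, the lemma embeds $J(\underline\Omega(\omega^*))$ (resp. $J([\omega]^{<\omega})\cong\mathfrak P(\omega)$) as an ordered subset of $J(P)$; since these ideal lattices are already known not to be well-founded, neither is $J(P)$.

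For sufficiency I again argue by contraposition: assuming $J(P)$ not well-founded and $P$ well-founded, I must exhibit a join-subsemilattice copy of $\underline\Omega(\omega^*)$ or of $[\omega]^{<\omega}$. Fix a strictly descending chain of ideals $I_0\supsetneq I_1\supsetneq\cdots$ and witnesses $x_n\in I_n\setminus I_{n+1}$. Two features are immediate: $x_n\not\le x_m$ whenever $n<m$ (else $x_n$ would land in $I_{n+1}$), and, since each finite join of elements of $I_{n+1}$ stays in $I_{n+1}$ while $x_n\notin I_{n+1}$, one has the \emph{independence to the right} $x_n\not\le\bigvee_{k\in F}x_k$ for every finite $F\subseteq\{n+1,n+2,\dots\}$. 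A first Ramsey argument on pairs, combined with well-foundedness (which forbids the strictly descending colour $x_m<x_n$), lets me thin $(x_n)$ to an infinite antichain that still enjoys independence to the right.

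The heart of the proof is then a dichotomy on this antichain. If some infinite subset is independent, the known equivalence between infinite independent sets and $[\omega]^{<\omega}$ as a join-subsemilattice (\cite{chapou}) yields the copy of $[\omega]^{<\omega}$. Otherwise dependence is forced, and, because of independence to the right, every failure of independence must involve a strictly smaller index; a second Ramsey argument on triples, arranged so that $x_j\le x_i\vee x_k$ for $i<j<k$, should collapse each finite join to the join of its two extreme terms, identifying the generated join-subsemilattice with $\underline\Omega(\omega^*)$ via $(i,j+1)\mapsto x_i\vee x_j$. The main obstacle is precisely to certify that this last map is injective, i.e. that no \emph{extra} collapses occur: a priori $P$ could satisfy relations such as $x_j\le x_i\vee x_{j'}$ with $i,j'<j$ that are absent in $\underline\Omega(\omega^*)$. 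Here well-foundedness must be invoked a second time to rule out an infinite regress of such collapses, and any surviving collapse must be rerouted to produce fresh independence and hence $[\omega]^{<\omega}$ instead. This interplay is exactly what Proposition \ref{w-f} predicts, since it shows the two obstructions are genuinely distinct as join-semilattices even though $\underline\Omega(\omega^*)$ sits inside $[\omega]^{<\omega}$ as a mere subposet; managing this trade-off cleanly is the crux of the argument.
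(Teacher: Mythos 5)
Your easy direction is fine, and your overall dichotomy (infinite independent set giving $[\omega]^{<\omega}$ versus a sierpinski-type structure giving $\underline\Omega(\omega^*)$) is the right shape. But the mechanism you propose for the hard direction -- Ramsey-thinning a fixed sequence of witnesses $x_n\in I_n\setminus I_{n+1}$ -- breaks down at two points, and the second is exactly the one you flag as ``the crux'' and leave unresolved. First, the Ramsey step on triples does not produce the dichotomy you need: if the homogeneous colour is the negative one ($x_j\not\leq x_i\vee x_k$ for all $i<j<k$ in the infinite set), this does \emph{not} make the set independent, since independence quantifies over joins of \emph{arbitrary} finite subsets, not over joins of two extreme terms; a relation such as $x_j\leq x_{i_1}\vee x_{i_2}\vee x_{i_3}$ can survive every such thinning. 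So after Ramsey you may hold neither an independent set nor the coherence relations, and the argument stalls. Second, even when the positive colour is homogeneous, no selection of a subsequence can in general force injectivity of $(i,j)\mapsto x_i\vee x_j$; you correctly observe this but offer only the hope that well-foundedness ``must be invoked a second time,'' which is not an argument.

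The paper resolves both problems simultaneously by working one level up, with a dichotomy on \emph{chains of ideals} rather than on elements: a chain $\mathcal I$ is \emph{separating} if for every $I\in\mathcal I\setminus\{\cup\mathcal I\}$ and every $x\in\cup\mathcal I\setminus I$ there is $J\in\mathcal I$ with $I\not\subseteq\{x\}\bigvee J$. If some $\omega^*$-chain in $J(P)$ is separating, an independent set is built \emph{adaptively} (each new element is chosen outside $\{x_0\vee\dots\vee x_{n-1}\}\bigvee I_n$, which is precisely what the separating property supplies -- note this set is not extracted from a fixed witness sequence). If no $\omega^*$-chain is separating, then in Lemma \ref{w*} the copy of $\Omega(\omega^*)$ is obtained not by selecting witnesses but by \emph{modifying} them: one sets $y_n:=x_n\vee z\vee t$, where $z$ (chosen by unboundedness of $I_{n-1}$) forces the non-collapse condition $y_n\not\leq y_0\vee y_{n-1}$ that gives injectivity, and $t$ (chosen using the containments $I_j\subseteq\{x_j\}\bigvee I_{n-1}$ coming from non-separation) forces the coherence condition $y_j\leq y_i\vee y_n$ for $i\leq j\leq n$ that makes $f(i,j):=y_i\vee y_j$ join-preserving. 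Your proposal has no counterpart to this join-with-auxiliary-elements construction, and without it neither the join-preservation nor the injectivity of the map into $\underline\Omega(\omega^*)$ can be secured; this is a genuine gap, not a detail to be cleaned up.
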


The fact that a join-semilattice $P$ contains a
join-subsemilattice isomorphic to $[\omega]^{<\omega}$  amounts to
the existence of an infinite independent set. Let us recall that a
subset $X$ of a join-semilattice $P$ is {\it independent}
\index{independent} if $x\not \leq \bigvee F$ for every $x\in X$
and every non-empty finite subset $F$ of $X\setminus \{x\}$.
Conditions which may insure the existence of an infinite
independent set  or consequences of the inexistence
 of such sets  have been considered within the framework of the structure of closure systems \index{closure system}
 (cf. the research on the  "free-subset problem" of Hajnal
\cite {shel} or on the cofinality of posets \cite{galv, mp2}). A
basic result is the following. \\
\begin{theorem} \cite {chapou}  \cite {lmp3} \label  {tm2.1} Let $\kappa$ be a cardinal number; for a join-semilattice $P$
the following properties are equivalent:\\ $(i)$ $P$ contains an
independent set of size $\kappa$;\\ $(ii)$ $P$ contains  a
join-subsemilattice isomorphic to $[\kappa]^{<\omega}$;\\ $(iii)$
$P$ contains  a subposet isomorphic to $[\kappa]^{<\omega}$;\\
$(iv)$ $J(P)$ contains a subposet isomorphic to $\mathfrak P
(\kappa)$;\\ $(v)$ $\mathfrak P (\kappa)$ embeds in $J(P)$ via a
map preserving arbitrary joins.
\end{theorem}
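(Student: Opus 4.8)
The plan is to run a cycle of implications, split into two arcs that share the endpoint $(i)$: the "internal" arc $(i)\Rightarrow(ii)\Rightarrow(iii)\Rightarrow(i)$ among the three conditions about $P$ itself, and the arc $(ii)\Rightarrow(v)\Rightarrow(iv)\Rightarrow(i)$ that folds in the two conditions about $J(P)$. Two of the links are immediate: $(ii)\Rightarrow(iii)$, because a join-subsemilattice is a fortiori a subposet, and $(v)\Rightarrow(iv)$, because a join-preserving embedding is in particular an order-embedding.

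For $(i)\Rightarrow(ii)$, given an independent set $X=\{x_\alpha:\alpha<\kappa\}$ I would consider the map $\varphi\colon [\kappa]^{<\omega}\to P$ sending $F$ to $\bigvee_{\alpha\in F}x_\alpha$ (with empty join $0$). It visibly preserves finite joins, so the only point is that it is an order-embedding, and this is exactly what independence provides: if $F\not\subseteq F'$ and $\alpha\in F\setminus F'$, then $x_\alpha\le\varphi(F)$ while $x_\alpha\not\le\bigvee_{\beta\in F'}x_\beta=\varphi(F')$, whence $\varphi(F)\not\le\varphi(F')$; its image is a copy of $[\kappa]^{<\omega}$. For $(iii)\Rightarrow(i)$, given only an order-embedding $f\colon[\kappa]^{<\omega}\to P$, I claim the images $z_\alpha:=f(\{\alpha\})$ of the singletons are already independent, even though $f$ need not preserve joins. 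Indeed, for $\alpha$ and a finite $G\subseteq\kappa\setminus\{\alpha\}$, each $z_\beta$ with $\beta\in G$ lies below $f(G)$, so $\bigvee_{\beta\in G}z_\beta\le f(G)$; but $z_\alpha=f(\{\alpha\})\not\le f(G)$ since $\{\alpha\}\not\subseteq G$ and $f$ reflects order, forcing $z_\alpha\not\le\bigvee_{\beta\in G}z_\beta$.

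The implications involving $J(P)$ pass through the ideal completion. For $(ii)\Rightarrow(v)$, a join-subsemilattice embedding $g\colon[\kappa]^{<\omega}\hookrightarrow P$ induces $\hat g\colon J([\kappa]^{<\omega})\to J(P)$ by $\hat g(I)=\downarrow g[I]$; this is an ideal of $P$ because $g$ preserves finite joins, it preserves arbitrary joins (joins in $J$ being generated by unions), and it reflects order since $g$ reflects order and ideals are down-closed. As an ideal of $[\kappa]^{<\omega}$ is precisely the set of finite subsets of some $A\subseteq\kappa$, one has $J([\kappa]^{<\omega})\cong\mathfrak P(\kappa)$, so $\hat g$ is the desired join-preserving embedding of $\mathfrak P(\kappa)$ into $J(P)$.

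The crux is $(iv)\Rightarrow(i)$, where one must descend from a mere poset copy of $\mathfrak P(\kappa)$ in $J(P)$ back to genuine elements of $P$. The obstacle is that an order-embedding $g\colon\mathfrak P(\kappa)\to J(P)$ carries no join-preservation, so witnesses chosen naively in $g(\{\alpha\})\setminus g(\emptyset)$ need not be independent. The key idea I would use is to exploit the co-atoms of $\mathfrak P(\kappa)$: since $\{\alpha\}\not\subseteq\kappa\setminus\{\alpha\}$, order-reflection gives $g(\{\alpha\})\not\subseteq g(\kappa\setminus\{\alpha\})$, so I may pick $c_\alpha\in g(\{\alpha\})\setminus g(\kappa\setminus\{\alpha\})\subseteq P$. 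For any nonempty finite $G\subseteq\kappa\setminus\{\alpha\}$, each $c_\beta$ with $\beta\in G$ lies in the ideal $g(\kappa\setminus\{\alpha\})$, hence so does $\bigvee_{\beta\in G}c_\beta$; since $c_\alpha\notin g(\kappa\setminus\{\alpha\})$ and that ideal is down-closed, $c_\alpha\not\le\bigvee_{\beta\in G}c_\beta$. Thus $\{c_\alpha:\alpha<\kappa\}$ is an independent set of size $\kappa$ (distinctness follows since $c_\alpha\notin g(\kappa\setminus\{\alpha\})\supseteq g(\{\beta\})\ni c_\beta$), which closes the cycle and establishes the equivalence of all five conditions.
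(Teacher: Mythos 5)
Your proof is correct: the decomposition into the two arcs through $(i)$ covers all five equivalences, and each individual implication is sound.

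It is, however, a genuinely different route from the paper's, whose own treatment of this statement is in Chapter \ref{chap:TD}. There the theorem is split into two separately proved results: a closure-operator lemma (Theorem \ref{tm1}, stated with proof omitted as ``almost immediate'') tying independence to the \emph{poset} containments of $[\kappa]^{<\omega}$ and $\mathfrak P(\kappa)$, and the purely order-theoretic Theorem \ref{thm1}, valid for an \emph{arbitrary} poset $P$: $[\kappa]^{<\omega}\le P$ if and only if $\mathfrak P(\kappa)\le J(P)$. The hard half of the latter is Proposition \ref{lemI.3.3}: given an order-embedding $f:J(K)\to J(P)$, one shows that $C(x):=f(\downarrow x)\setminus\bigcup\{f(J): x\not\in J\in J(K)\}$ is non-empty for each $x\in K$ (this uses that $K\setminus\uparrow x$ is a finite union of ideals, and that an ideal contained in a finite union of initial segments lies in one of them), and then an induction on $\vert\downarrow x\vert$, choosing each witness above an upper bound of the previously chosen values inside the up-directed set $f(\downarrow x)$, turns $x\mapsto g(x)\in C(x)$ into an order-embedding of $K$ into $P$. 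Your co-atom argument for $(iv)\Rightarrow(i)$ --- pick $c_\alpha\in g(\{\alpha\})\setminus g(\kappa\setminus\{\alpha\})$ and use that every ideal of a join-semilattice is closed under finite joins --- short-circuits all of this machinery, but only because joins are available in $P$: it yields exactly the join-semilattice statement, whereas the paper's descent also proves the version for arbitrary posets $P$ (where ``independent'' is not even defined) and applies to other posets $K$ satisfying the hypotheses of Proposition \ref{lemI.3.3}, such as sierpinskisations; that extra generality is reused elsewhere in the thesis. So your argument is shorter and self-contained, the paper's is heavier but strictly more general. One detail to patch: in $(i)\Rightarrow(ii)$ your empty join presupposes a least element of $P$; either assume $0\in P$ (harmless here, since the theorem is applied to $P=K(L)$), or, $\kappa$ being infinite, reserve one element of the independent set to serve as bottom, mapping $F\mapsto x_{0}\vee\bigvee_{\alpha\in F}x_{1+\alpha}$.
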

Let $L(\alpha):= 1+(1\oplus J(\alpha))+1$ be the lattice made of
the direct sum \index{direct sum} of the one-element chain $1$ and
the chain $J(\alpha)$, ($\alpha$  finite or equal to $\omega^*$),
with top and bottom added.
\begin{figure}
\begin{center}
\includegraphics[width=1in]{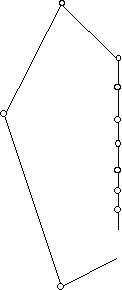}
\caption{$L(\omega^*)$}
\end{center}
\label{lomega}
\end{figure}

Clearly $J(\underline\Omega(\omega^*))$ contains a sublattice
isomorphic to  $L(\omega^*)$. Since a modular lattice
\index{modular lattice} contains no sublattice isomorphic  to
$L(2)$, we get as a corollary of Theorem \ref {thm4}:

\begin{theorem}\label{cor4}
An algebraic modular lattice $L$  is well-founded
\index{well-founded} if and only if $K(L)$ is well-founded  and
contains no infinite independent set.
\end{theorem}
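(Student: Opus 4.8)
The plan is to deduce the statement from Theorem \ref{thm4} by showing that, for a modular lattice, the obstruction $\underline\Omega(\omega^*)$ can never occur. First I would record the elementary reduction. By Theorem \ref{tm2.1} (with $\kappa=\omega$), the join-semilattice $K(L)$ contains no infinite independent set if and only if it contains no join-subsemilattice isomorphic to $[\omega]^{<\omega}$. Hence, by Theorem \ref{thm4}, the asserted equivalence follows once I prove the single implication: \emph{if $L$ is modular then $K(L)$ contains no join-subsemilattice isomorphic to $\underline\Omega(\omega^*)$}. Granting this, the direct implication of the corollary is immediate (a well-founded $L$ has $K(L)$ well-founded with none of the two forbidden subsemilattices, in particular no infinite independent set), and for the converse one combines the hypotheses with this implication to recover exactly the hypotheses of Theorem \ref{thm4}.

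To prove the implication I argue by contraposition, exhibiting a copy of the pentagon $L(2)$ as a sublattice of $L$, which is impossible in a modular lattice. I would first display the pentagon inside $J(\underline\Omega(\omega^*))$ itself, realizing concretely the sublattice $L(\omega^*)$ mentioned above. Writing $(i,j)$ for the elements of $\Omega(\omega^*)$ and $\emptyset$ for the least element, consider the five ideals $\bot=\{\emptyset\}$, $c=\{\emptyset,(0,1)\}$, $a=\{(i,j):i\ge 2\}\cup\{\emptyset\}$, $b=\{(i,j):i\ge 1\}\cup\{\emptyset\}$, and $\top=\underline\Omega(\omega^*)$. One checks directly that $\bot<a<b<\top$, that $c$ is incomparable to $a$ and $b$, that $c\vee a=c\vee b=\top$, and that $c\wedge a=c\wedge b=\bot$; since $a<b$ but $a\vee(c\wedge b)=a\neq b=(a\vee c)\wedge b$, these five ideals form a copy of $L(2)$.

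Now suppose $S\cong\underline\Omega(\omega^*)$ is a join-subsemilattice of $K:=K(L)$, and transport the five ideals along the canonical map $g\colon J(S)\to J(K)=L$ sending an ideal $I$ of $S$ to the ideal $\downarrow_{K}I$ it generates in $K$. The map $g$ preserves arbitrary joins, so the relations $g(c)\vee g(a)=g(c)\vee g(b)=g(\top)$ survive; moreover the strict order $g(\bot)<g(a)<g(b)<g(\top)$ and the incomparabilities of $g(c)$ with $g(a),g(b)$ also survive, because each witness is detected by the order of $S$ alone (for instance $(1,2)\in b\setminus a$, and $(1,2)\le_{K}(i,j)$ with $i\ge 2$ is forbidden already in $S$). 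It therefore remains only to verify the two meets $g(c)\wedge g(a)=g(c)\wedge g(b)=g(\bot)$, that is, that $\downarrow_{K}(0,1)$ meets $g(b)$ in $\{0\}$.

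This last verification is the main obstacle, and I expect it to be where the real work lies: a join-subsemilattice embedding need not preserve meets, so a priori $K$ might contain some $0\neq z\le_{K}(0,1)$ lying also below some $(i,j)$ with $i\ge 1$, which would inflate the meet even though $(0,1)$ and $(i,j)$ have only $0$ as a common lower bound in $S$. I would rule such elements out by exploiting the infinite $\omega^*$-structure together with the hypotheses available on $K$ (well-foundedness and modularity of $L$): rather than fixing one pentagon, I would run the modular law over the whole descending chain of triples $\bigl(g(a_{k+1}),\,g(c),\,g(a_{k})\bigr)$, where $a_k=\{(i,j):i\ge k\}\cup\{\emptyset\}$, and show that if the modular law held for every $k$ then the putative meet-spoiling elements would force $g(a_{k+1})\vee\bigl(g(c)\wedge g(a_k)\bigr)=g(a_k)$ for all $k$; using the compactness of the elements $(i,j)$ and a minimal-counterexample (infinite-descent) argument against the well-foundedness of $K$, this cannot persist, so some triple violates the modular law and yields the pentagon. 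Establishing this robustness — that no modular, well-founded $K$ can absorb $\underline\Omega(\omega^*)$ as a join-subsemilattice — is the crux; once it is in hand, the corollary is pure bookkeeping with Theorems \ref{thm4} and \ref{tm2.1}.
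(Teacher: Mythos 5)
Your overall route is the paper's own: Theorem \ref{cor4} is derived there from Theorem \ref{thm4} together with Theorem \ref{tm2.1}, plus the observation that $J(\underline\Omega(\omega^*))$ contains $L(\omega^*)$, hence the pentagon $L(2)$, as a sublattice, which modularity forbids. Your five ideals are exactly that pentagon, and your bookkeeping with Theorems \ref{thm4} and \ref{tm2.1} is correct. Where you genuinely add something is in refusing to treat the last step as formal: a join-subsemilattice copy of $\underline\Omega(\omega^*)$ in $K(L)$ only yields a map $g\colon J(\underline\Omega(\omega^*))\to L$ preserving arbitrary joins and the order, not meets, so the image of the pentagon need not be a sublattice of $L$. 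That is a real subtlety which the paper's two-sentence derivation (``Clearly \dots we get as a corollary'') leaves implicit; the paper only treats it rigorously elsewhere, in Chapter \ref{chap:algebraic} (Lemma \ref{lem III.5.1} and the theorem following it).

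The gap is in your closing paragraph, which is a plan rather than a proof, and as written the plan does not close. What you can indeed prove is this: modularity and the constancy of joins force $g(a_k)=\bigl(g(c)\wedge g(a_k)\bigr)\vee g(a_{k+1})$ for every $k$, hence the meets $g(c)\wedge g(a_k)$ must be \emph{strictly} decreasing (if two consecutive meets coincided, the displayed equality would give $g(a_k)=g(a_{k+1})$; equivalently, you get your pentagon). But these meets are \emph{ideals} of $K:=K(L)$, all contained in the principal ideal $g(c)=\downarrow s$, where $s\in K$ is the element of the copy corresponding to $(0,1)$. A strictly decreasing $\omega^*$-chain of ideals is not a violation of the well-foundedness of $K$: it says only that $J(\downarrow s)$ is ill-founded, and an ill-founded ideal lattice over a well-founded semilattice is precisely the standing situation (recall $L=J(K)$ is assumed ill-founded), not a contradiction. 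Your ``infinite descent against the well-foundedness of $K$'' needs descending \emph{elements} of $K$, which this does not produce; and iterating on the fixed copy (finding, via Theorems \ref{thm4} and \ref{tm2.1} applied to $\downarrow s$, a new copy inside $\downarrow s$) only yields a new distinguished element $s'\le s$, possibly equal to $s$, so the descent never becomes strict. The missing step is a relocation: take $z$ \emph{minimal} in $K$ with $J(\downarrow z)$ ill-founded (here well-foundedness of $K$ is used), pass to a proper ideal $Q$ of $\downarrow z$ whose ideal lattice is still ill-founded (the second member of a strictly descending chain of ideals of $\downarrow z$ works; then $z\notin Q$, so every element of $Q$ is strictly below $z$), apply Theorems \ref{thm4} and \ref{tm2.1} to $Q$ to get a copy of $\underline\Omega(\omega^*)$ all of whose elements, in particular its $s'$, satisfy $s'<z$, and only then run your dichotomy: either a pentagon in $L$, contradicting modularity, or strictly descending meets inside $\downarrow s'$, contradicting the minimality of $z$. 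Two further remarks: what this establishes is the implication under the hypotheses ``$K(L)$ well-founded with no infinite independent set'', not the unconditional implication ``$L$ modular $\Rightarrow$ no $\underline\Omega(\omega^*)$ in $K(L)$'' that you announce (the weaker version is all the corollary needs, but statement and proof should match); and the paper's Chapter \ref{chap:algebraic} performs this same repair more cleanly, replacing strict descent by eventual constancy — a weakly decreasing $\omega^*$-sequence in the well-founded poset $J(\downarrow x)$ is eventually constant — which hands you the pentagon at once.
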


 Another consequence is this:
\begin{theorem} \label{thmwf}
 For a join-semilattice $P$, the following properties are equivalent:
 \begin{enumerate}[(i)]
 \item $P$ is well-founded with no infinite antichain \index{antichain};
 \item $P$ contains no infinite independent set and embeds as a join-semilattice into a join-semilattice
 of the  form $I_{<\omega} (Q)$ where $Q$ is some well-founded poset.
\end{enumerate}
\end{theorem}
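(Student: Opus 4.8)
The plan is to pass to the algebraic lattice $L:=J(P)$, whose compact elements form a copy of $P$, and to prove the two implications separately, using Theorem \ref{tm2.1} to translate ``no infinite independent set'' into ``no join-subsemilattice isomorphic to $[\omega]^{<\omega}$'', Proposition \ref{w-f} to forbid $\underline\Omega(\omega^*)$, and (if one organizes the bookkeeping through it) Theorem \ref{thm4} to control the well-foundedness of $L$.

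For $(i)\Rightarrow(ii)$ the first clause is immediate: taking $F$ to be a singleton in the definition shows that an independent set is an antichain, so an infinite independent set would be an infinite antichain, contradicting $(i)$. The substantive point is the embedding. First I would check that $L=J(P)$ is well-founded: a strictly descending chain of ideals $I_0\supsetneq I_1\supsetneq\cdots$ yields, on choosing $x_n\in I_n\setminus I_{n+1}$, a sequence with $x_n\not\le x_m$ for $n<m$ (else $x_n\le x_m\in I_{n+1}$ forces $x_n\in I_{n+1}$), i.e.\ a bad sequence, contradicting that $P$ is well-quasi-ordered. Then I would take $Q$ to be the set of completely meet-irreducible elements of $L$, a subposet of the well-founded $L$ and hence well-founded, and define $\phi(p):=\{M\in Q:\ p\notin M\}$. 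Using $p\vee p'\le M\iff p\le M\ \wedge\ p'\le M$ one gets $\phi(p\vee p')=\phi(p)\cup\phi(p')$; using that the completely meet-irreducible elements are meet-dense in an algebraic lattice (for $a<b$, Zorn plus compactness of a compact $c\le b$ with $c\not\le a$ produces a completely meet-irreducible $M\ge a$ with $b\not\le M$) one gets that $\phi$ is an order-embedding, hence injective; and each $\phi(p)$ is a down-set of $Q$. This realises $P$ as a join-subsemilattice of $I_{<\omega}(Q)$, provided each $\phi(p)$ is finitely generated.

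For $(ii)\Rightarrow(i)$: $I_{<\omega}(Q)$ is well-founded whenever $Q$ is (a routine fact — a strictly descending chain of finitely generated down-sets produces, via the finite generating antichains and K\"onig's lemma, an infinite descending chain in $Q$; alternatively one ranks finitely generated down-sets by the Dershowitz--Manna multiset order on the ranks of their generators), so $P$, being a subposet, is well-founded. It remains to rule out an infinite antichain $\{a_n=\downarrow F_n\}$ in $P\subseteq I_{<\omega}(Q)$, with the $F_n$ finite. Here I would first thin out by pigeonhole to make $|F_n|$ constant, then apply the Sunflower Lemma (Erd\H{o}s--Rado) to the generating sets to obtain an infinite subfamily whose $F_n$ form a sunflower with core $C$ and pairwise disjoint nonempty petals $P_n$; since $C\subseteq a_m$ for every $m$, incomparability of the $a_n$ reduces to incomparability of the down-sets $\downarrow P_n$. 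Finally I would use the well-foundedness of $Q$ to select, for an infinite subfamily, a private generator $w_n\in P_n$ lying in no other $\downarrow P_m$; then this subfamily of $\{a_n\}$ is independent in $P$, contradicting $(ii)$ (and, via Theorem \ref{tm2.1}, this independent set is exactly a copy of $[\omega]^{<\omega}$).

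The main obstacle is the finite generation of $\phi(p)$ in $(i)\Rightarrow(ii)$, equivalently the assertion that only finitely many maximal ideals of $P$ can avoid a given compact element $p$. This is where well-quasi-orderedness (not merely well-foundedness) of $P$ and the compactness of $p$ must both be used, and it is delicate: $L=J(P)$ need not be well-quasi-ordered even when $P$ is (Rado-type phenomena), so one cannot simply invoke finiteness of antichains in $L$. I expect the heart of the proof to be a wqo/compactness argument bounding these maximal ideals; should it fail for the completely-meet-irreducible choice of $Q$, one would have to build a more economical well-founded $Q$ by hand. The extraction of a private generator in $(ii)\Rightarrow(i)$ is the second, milder, technical point, where well-foundedness of $Q$ is what forces an antichain to contain an independent set.
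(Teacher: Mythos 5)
Your proof of $(i)\Rightarrow(ii)$ stops exactly at the point that carries all the content, and your diagnosis of what is missing is off. The finite generation of $\phi(p)$ does not require a delicate wqo/compactness argument, and it cannot ``fail for the completely-meet-irreducible choice of $Q$'': it is an instance of the Erd\H{o}s--Tarski theorem, which says that in a poset with no infinite antichain every initial segment is a finite union of ideals (\cite{Erdos-Tarski}; see also \cite{fraissetr}, 4.7.3, p.~125). Apply it to $P\setminus\uparrow p$, writing it as $I_1\cup\dots\cup I_n$ with each $I_i$ an ideal. Since a directed set contained in a finite union of initial segments is contained in one of them, every ideal avoiding $p$ lies in some $I_i$; hence the maximal ideals of $P\setminus\uparrow p$ are among the $I_i$, they are completely meet-irreducible (an ideal $I$ maximal in $P\setminus\uparrow p$ satisfies $p\in I^{+}\setminus I$, so $I\neq I^{+}$), and the finitely many maximal $I_i$'s generate $\phi(p)$ inside $Q=\triangle(J(P))$. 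Note that only ``no infinite antichain'' is used here; well-foundedness of $P$ enters, exactly as you argue via the Higman-type bad-sequence step, only to make $Q$ well-founded. This is precisely the paper's route (Proposition \ref{lem04} combined with Corollary \ref{corthm16}), so your construction is the intended one, but as written the decisive step is an acknowledged hole.

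For $(ii)\Rightarrow(i)$ you take a genuinely different route: the paper deduces ``no infinite antichain'' from Theorem \ref{tm2.1} together with implication $6)\Rightarrow 1)$ of Theorem \ref{finitesubsets}, an implication which itself leans on the chapter's main result (Theorem \ref{thm4}) and on the non-embeddability of $\underline\Omega(\omega^{*})$ into $I_{<\omega}(Q)$. Your direct sunflower argument would be more self-contained, but the clause ``use the well-foundedness of $Q$ to select \dots\ a private generator $w_n\in P_n$ lying in no other $\downarrow P_m$'' is the whole difficulty and is not a mere selection: incomparability only gives, for each \emph{ordered} pair $(n,m)$, some element of $P_n$ outside $\downarrow C\cup\downarrow P_m$, and the witness may genuinely differ according to whether $m<n$ or $m>n$, so no single element need be private. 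To finish, one must enumerate the petals (their sizes are bounded), colour each pair $n<m$ by the two domination relations between $P_n$ and $P_m$, and pass to an infinite Ramsey-homogeneous set; there, the homogeneous relations compose (transitivity of the order), the backward relation has empty diagonal (a diagonal entry yields a strictly descending sequence in $Q$, using disjointness of the petals), and one chases a cycle in the backward relation to show that the forward-covered and backward-covered index sets cannot exhaust all indices. The surviving index gives the private generators and an infinite independent subfamily, contradicting $(ii)$. With that argument supplied your route works and is an attractive elementary alternative to the paper's; without it, the proof has a gap at its crux.
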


Posets which are well-founded and have no  infinite antichain are
said {\it well-partially-ordered}\index{well-partially-ordered} or
{\it well-quasi-ordered}\index{well-quasi-ordered}, wqo for short.
They play an important role in several areas (see \cite
{fraissetr}). If $P$ is a wqo join-semilattice then $J(P)$, the
poset of ideals  of  $P$,  is well-founded and one may assign to
every $J\in J(P)$ an ordinal, its  {\it height} \index{height},
denoted by $h(J, J(P))$. This ordinal is defined  by induction,
setting $h(J,J(P)):= Sup(\{h(J',J(P))+1: J'\in J(P), J'\subset
J\})$ and $h(J', J(P)):= 0$ if $J'$ is minimal in $J(P)$. The
ordinal $h(J(P)):= h(P, J(P))+1$ is the {\it height} of $J(P)$. If
$P:= I_{<\omega}(Q)$, with $Q$ wqo, then $J(P)$ contains a chain
of order type $h(J(P))$. This is an equivalent form of the famous
result of de Jongh and Parikh \cite{dejongh-parikh} asserting that
among the linear extensions \index{linear extension} of a wqo, one
has a maximum order type.

\begin{problem} Let $P$ be a wqo join-semilattice; does $J(P)$ contain a chain of order type $h(J(P))$?\end{problem}

An immediate corollary of Theorem \ref {thmwf} is:

\begin{corollary} \label{poset} A join-semilattice $P$ of $[\omega]^{<\omega}$ contains either  $[\omega]^{<\omega}$ as a join-semilattice or is wqo.
\end{corollary}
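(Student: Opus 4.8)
The plan is to read the statement as asserting a dichotomy for a join-subsemilattice $P$ of $[\omega]^{<\omega}$, and to obtain it directly from Theorems \ref{tm2.1} and \ref{thmwf} after one structural observation. First I would record that $[\omega]^{<\omega}$ itself has the form $I_{<\omega}(Q)$ for a well-founded $Q$: take $A$ to be the infinite antichain on $\omega$. The initial segments of an antichain are precisely its subsets, and the finitely generated ones are exactly the finite subsets, so $I_{<\omega}(A)=[\omega]^{<\omega}$. Since an antichain has no infinite descending chain, it is (trivially) well-founded, so $[\omega]^{<\omega}$ is an instance of the class $I_{<\omega}(Q)$ with $Q$ well-founded. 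This is the only point that is not verbatim in the cited theorems.

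Next I would split on whether $P$ carries an infinite independent set. In the first case, $P$ contains an independent set of size $\omega$, and the equivalence $(i)\Leftrightarrow(ii)$ of Theorem \ref{tm2.1} applied with $\kappa:=\omega$ yields a join-subsemilattice of $P$ isomorphic to $[\omega]^{<\omega}$; this is the first alternative of the corollary. In the remaining case $P$ contains no infinite independent set. Here I would use the observation above: because $P$ is already a join-subsemilattice of $[\omega]^{<\omega}=I_{<\omega}(A)$, the inclusion map embeds $P$ as a join-semilattice into $I_{<\omega}(Q)$ with $Q:=A$ well-founded. Thus condition $(ii)$ of Theorem \ref{thmwf} is met, and by the equivalence $(i)\Leftrightarrow(ii)$ there, $P$ is well-founded with no infinite antichain, i.e.\ wqo; this is the second alternative.

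There is essentially no hard step: the whole content sits in Theorems \ref{thmwf} and \ref{tm2.1}, and the corollary is their combination on the specific host $[\omega]^{<\omega}$. The one thing to state carefully is the identification $[\omega]^{<\omega}=I_{<\omega}(A)$ together with the well-foundedness of the antichain $A$, which is what lets condition $(ii)$ of Theorem \ref{thmwf} be read off for an arbitrary join-subsemilattice of $[\omega]^{<\omega}$ that avoids infinite independent sets.
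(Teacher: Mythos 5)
Your proof is correct and follows the same route the paper intends: the paper derives Corollary \ref{poset} as an immediate consequence of Theorem \ref{thmwf}, exactly via the dichotomy on the existence of an infinite independent set, with Theorem \ref{tm2.1} $(i)\Leftrightarrow(ii)$ handling one case and Theorem \ref{thmwf} $(ii)\Rightarrow(i)$ the other. Your explicit observation that $[\omega]^{<\omega}=I_{<\omega}(A)$ for an infinite antichain $A$ (which is well-founded) is precisely the identification the paper leaves implicit.
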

 Let us compare  join-subsemilattices of $[\omega]^{<\omega}$.
 Set $P\leq P'$ for two  such join-subsemilattices  if $P$ embeds in $P'$ as a join-semilattice.
 This gives a quasi-order  and,  according to  Corollary \ref{poset},
 the poset corresponding to this quasi-order has a largest element (namely $[\omega]^{<\omega}$),
 and  all other members come from   wqo join-semilattices.
 Basic examples of join-subsemilattices of $[\omega]^{<\omega}$ are the  $I_{<\omega} (Q)$'s
 where $Q$ is a countable poset such that no element is above infinitely many
 elements. These posets $Q$ are exactly those which are embeddable
 in the poset $[\omega]^{<\omega}$ ordered by inclusion. An
 interesting subclass is made of posets of the form $Q=(\N, \leq)$
 where the order $\leq$ is the intersection of the natural order $\mathfrak{N}$ on $\N$
 and of a linear order\index{linear order} $\mathfrak{L}$ on $\N$,
 (that is $x \leq y$ if $x \leq y$ w.r.t. $\mathfrak{N}$ and $x \leq y$ w.r.t. $\mathfrak{L}$).
 If $\alpha$ is the type of the linear order,
 a poset of this form  is a {\it sierpinskisation} \index{sierpinskisation} of $\alpha$.
 The corresponding join-semilattices are wqo provided that the posets  $Q$ have no infinite antichain;
 in the particular case of  a sierpinskisation of $\alpha$ this amounts to the fact that $\alpha$ is
 well-ordered \index{well-order}.

 As shown in \cite {pz}, sierpinskisations
given by a bijective map $\psi:\omega\alpha\rightarrow \omega$
which is  order-preserving \index{order-preserving} on each
component $\omega \cdot \{i\}$ of $\omega\alpha$ are all
embeddable in each other, and for this reason  denoted by the same
symbol $\Omega(\alpha)$. Among the representatives of $\Omega
(\alpha)$, some are join-semilattices, and among them, join-subsemilattices
of the direct product \index{direct product} $\omega\times \alpha$
(this is notably the case of the poset $\Omega(\omega ^{*})$ we
previously defined). We extend the first part of Proposition
\ref{w-f}, showing that except for $\alpha\leq\omega$, the
representatives of $\Omega(\alpha)$ which are join-semilattices
never embed in $[\omega]^{<\omega}$ as join-semilattices, whereas
they embed  as posets (see Corollary \ref{sierpinski} and Example
\ref{ex:ordinal}). From this result, it follows that the posets
$\Omega (\alpha)$ and $I_{<\omega}(\Omega (\alpha))$  do not embed
in each other as join-semilattices.

 These two posets  provide  examples of a  join-semilattice
$P$ such that $P$ contains no chain of type $\alpha$ while $J(P)$
contains a chain of type $J(\alpha)$. However,   if $\alpha$ is
not well ordered  then $I_{<\omega}(\Omega (\alpha))$ and
$[\omega]^{<\omega}$ embed in each other as join-semilattices.

  \begin{problem}
 Let $\alpha$ be a countable  ordinal.
Is there a minimum member among the join-subsemilattices $P$ of
$[\omega]^{<\omega}$ such that $J(P)$ contains a chain of type
$\alpha+1$? Is it true that this minimum is
$I_{<\omega}(\Omega(\alpha))$  if $\alpha$ is indecomposable?
\end{problem}

\section{Definitions and basic results}\label {subsection2.2}
Our definitions and notations are standard and agree with \cite
{grat} except on minor points that we will mention. We adopt the
same terminology as in \cite {chapou}. We recall only few things.
Let $P$ be a poset. A subset $I$ of $P$ is an {\it initial
segment} \index{initial segment} of $P$ if $x\in P$, $y\in I$ and
$x\leq y$ imply $x\in I$. If
 $A$ is a subset of $P$, then $\downarrow A=\{x\in P: x\leq y$ for some $y\in A\}$ denotes the least initial segment containing
$A$. If $I=\downarrow A$ we say that $I$ is {\it generated} by $A$
or $A$ is {\it cofinal} in \index{cofinal} $I$. If $A=\{a\}$ then
$I$ is a {\it principal initial segment} \index{principal initial
segment} and we write $\downarrow a$ instead of $\downarrow
\{a\}$. We denote $down(P)$ the set of principal initial segments
of $P$. A {\it final segment} \index{final segment} of $P$ is any
initial segment of $P^*$, the dual of $P$. We denote by $\uparrow
A$ the final segment generated by $A$. If $A=\{a\}$ we write
$\uparrow a$ instead of $\uparrow \{a\}$. A subset $I$ of $P$ is
{\it directed}\index{directed}
 if every finite subset of $I$
has an upper bound in $I$ (that is $I$ is non-empty and every pair
of elements of $I$ has an upper bound). An {\it ideal}\index{ideal}
is a non-empty directed initial segment of $P$ (in some other texts,
the empty set is an ideal). We denote $I(P)$ (respectively, $I_
{<\omega }(P)$, $J(P)$) the set of initial segments (respectively,
finitely generated initial segments, ideals of $P$) ordered by
inclusion and we set
 $ J_ *(P):=J(P)\cup\{\emptyset\}$, $I_ 0 (P):=I_ {<\omega}
(P)\setminus\{\emptyset\}$. Others authors use {\it down set} for
initial segment. Note that $down(P)$ has not to be confused with
$I(P)$. If $P$ is a join-semilattice with a $0$, an element $x\in
P$ is {\it join-irreducible} \index{join-irreducible} if it is
distinct from $0$, and if $x=a\vee b$ implies $x=a$ or $x=b$ (this
is a slight variation from \cite{grat}). We denote $\J_{irr}(P)$
the set of join-irreducibles of $P$. An element $a$ in a lattice
$L$ is {\it compact}\index{compact} if for every $A\subset L$,
$a\leq \bigvee A$ implies $a\leq \bigvee A'$ for some finite
subset $A'$ of $A$. The lattice $L$ is {\it compactly
generated}\index{ compactly generated} if every element is a
supremum of compact elements. A lattice is {\it
algebraic}\index{algebraic} if it is complete and compactly
generated.
\\

We note that $I_{ <\omega}(P)$ is the set of compact elements of
$I(P)$, hence $J(I_{ <\omega}(P))\cong I(P)$. Moreover $I_
{<\omega}(P)$ is  a lattice, and in fact a distributive lattice,
if and only if $P$ is {\it $\downarrow$-closed}
\index{$\downarrow$-closed}, that is, the intersection of two
principal initial segments of $P$ is a finite union, possibly
empty, of principal initial segments. We also note that $J(P)$  is
the set of join-irreducible elements of $I(P)$; moreover,
$I_{<\omega}(J(P))\cong I(P)$ whenever $P$ has no infinite
antichain. \\ Notably for the proof of Theorem
\ref{finitesubsets}, we will need the following  results.

\begin{theorem} \label{wellfounded} Let $P$ be a poset.\\
$a)$  $I_{ <\omega}(P)$ is well-founded if and only if $P$ is
well-founded (Birkhoff  1937, see \cite {birk});\\ $b)$ $I_{
<\omega}(P)$ is wqo iff $P$ is wqo iff $I(P)$ well-founded (
Higman 1952 \cite {higm}); \\ $c)$ if $P$ is a well-founded
join-semilattice with a 0, then every member of $P$ is a finite
join of join-irreducible elements of $P$ (Birkhoff, 1937, see
\cite {birk});\\ $d)$ A join-semilattice $P$ with a zero is wqo if
and only if every member of $P$ is a finite join of
join-irreducible elements of $P$ and the set $\J_{irr}(P)$ of
these join-irreducible elements is wqo (follows from $b)$ and
$c)$).
\end{theorem}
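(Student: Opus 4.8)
The four parts being classical (Birkhoff, Higman), the plan is to recall the standard arguments while isolating the one step that carries real content. Throughout I use that $down(P)$, the set of principal initial segments, is an isomorphic copy of $P$ sitting inside both $I_{<\omega}(P)$ and $I(P)$, and that (via the axiom of dependent choices) a poset is well-founded exactly when it has no infinite strictly descending chain, and is wqo exactly when it has no infinite \emph{bad} sequence, i.e.\ no sequence $(x_n)$ with $x_n\not\le x_m$ for all $n<m$.

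For part $a)$, the implication ``$I_{<\omega}(P)$ well-founded $\Rightarrow P$ well-founded'' is immediate, since well-foundedness passes to the subposet $down(P)\cong P$. The converse is where the work lies. First I would normalise: a finitely generated initial segment $I$ can be written $I=\downarrow A$ with $A$ the antichain of maximal elements of $I$, which is finite because $I$ is finitely generated. Then I would attach to each such $I$ the finite multiset $\mu(I):=\{\!\{\,\mathrm{rk}(a):a\in A\,\}\!\}$ of ranks of those maximal elements, $\mathrm{rk}$ being the rank function of the well-founded poset $P$. The key lemma is that $\downarrow A\supsetneq\downarrow B$ forces $\mu$ to drop in the Dershowitz--Manna multiset order: writing $X:=\mu(A\setminus B)$ and $Y:=\mu(B\setminus A)$ one has $\mu(B)=(\mu(A)-X)+Y$, the removed part $X$ is non-empty since a maximal element of $\downarrow A$ must be lost, and every element of $Y$ is strictly below some element of $X$, because any $b\in B\setminus A$ lies strictly below some $a\in A$ with $a\notin B$ (were $a\in B$, then $b<a$ would be two comparable points of the antichain $B$). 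As the multiset order over the well-ordered ranks is well-founded, an infinite descending chain in $I_{<\omega}(P)$ would yield one among the multisets, a contradiction. This is the main obstacle: the naive attempt of choosing $x_n\in I_n\setminus I_{n+1}$ only produces a bad sequence $x_n\not\le x_m$, which is compatible with $P$ being well-founded (witness an infinite antichain), so the rank/multiset refinement is genuinely needed.

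Part $b)$ assembles two classical equivalences. For ``$P$ wqo $\iff I_{<\omega}(P)$ wqo'' I would invoke Higman's theorem: ordering finite subsets $A,B$ by $\downarrow A\subseteq\downarrow B$ is exactly the domination order, which is wqo as soon as $P$ is; conversely $P\cong down(P)$ is a subposet of $I_{<\omega}(P)$, and a subposet of a wqo is wqo. For ``$P$ wqo $\iff I(P)$ well-founded'' I would argue: if $P$ is not wqo it is either not well-founded, so $down(P)$ already gives a descending chain in $I(P)$, or it carries an infinite antichain $\{a_n\}$, and then $\downarrow\{a_k:k\ge n\}$ is strictly decreasing in $n$; conversely a strictly descending chain $I_0\supsetneq I_1\supsetneq\cdots$ in $I(P)$ yields, on choosing $x_n\in I_n\setminus I_{n+1}$, a bad sequence $x_n\not\le x_m$ (for $n<m$, since $x_m\in I_{n+1}$ while $x_n\notin I_{n+1}$), contradicting wqo.

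For part $c)$ I would run a well-founded induction on $P$: if some element were not a finite join of join-irreducibles, choose such an $x$ minimal; then $x\neq 0$ and $x$ is not join-irreducible, so $x=a\vee b$ with $a,b<x$, whence by minimality $a$ and $b$ are finite joins of join-irreducibles, and so is $x=a\vee b$, a contradiction. Finally part $d)$ is read off from $b)$ and $c)$. The forward direction is immediate: a wqo $P$ is well-founded, so $c)$ gives the finite-join representation, and $\J_{irr}(P)$, as a subposet of the wqo $P$, is wqo. For the converse, assuming $\J_{irr}(P)$ wqo, $b)$ makes $I_{<\omega}(\J_{irr}(P))$ wqo; writing each $x_n=\bigvee F_n$ with $F_n\subseteq\J_{irr}(P)$ finite and mapping $x_n\mapsto\downarrow F_n\in I_{<\omega}(\J_{irr}(P))$, wqo of the target furnishes $i<j$ with $\downarrow F_i\subseteq\downarrow F_j$, i.e.\ each element of $F_i$ lies below one of $F_j$, so $x_i=\bigvee F_i\le\bigvee F_j=x_j$; thus $P$ admits no bad sequence and is wqo.
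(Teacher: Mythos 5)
Your proposal is correct, but there is nothing in the paper to compare it against line by line: the paper states Theorem~\ref{wellfounded} purely as background, attributing $a)$ and $c)$ to Birkhoff, $b)$ to Higman, and noting only that $d)$ ``follows from $b)$ and $c)$'' --- no proofs are given. So what you have done is supply complete arguments for results the paper merely cites, and your arguments are sound. The one place where real work is needed is the direction ``$P$ well-founded $\Rightarrow I_{<\omega}(P)$ well-founded'' of $a)$, and you correctly diagnose why it is non-trivial: picking $x_n\in I_n\setminus I_{n+1}$ along a descending chain of finitely generated initial segments only yields a sequence with $x_n\not\leq x_m$ for $n<m$, which is compatible with $P$ being well-founded (an infinite antichain does exactly this), so one cannot conclude directly. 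Your repair --- writing each $I_n$ as $\downarrow A_n$ with $A_n$ the finite antichain of its maximal elements, assigning the multiset of ranks, and checking that strict inclusion forces a strict drop in the Dershowitz--Manna multiset order (non-empty $X=\mu(A\setminus B)$ removed, each element of $Y=\mu(B\setminus A)$ strictly below some element of $X$ because any $b\in B\setminus A$ sits strictly under some $a\in A\setminus B$ by the antichain property of $B$) --- is a correct and standard modern substitute for Birkhoff's original argument; it does import the well-foundedness of the multiset order as a black box, but that is a classical theorem. Parts $b)$, $c)$, $d)$ are handled exactly as one should: $b)$ via Higman's theorem on the domination order plus the bad-sequence extraction from a descending chain of initial segments, $c)$ via a minimal counterexample (with the implicit convention that $0$ is the empty join, which is needed for the statement to hold at $0$ and is consistent with the paper's usage), and $d)$ by composing $b)$ and $c)$ through the map $x=\bigvee F\mapsto\;\downarrow F\in I_{<\omega}(\J_{irr}(P))$, which is precisely the derivation the paper gestures at.
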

 A poset $P$ is {\it scattered} \index{scattered} if it does not contain  a copy of $\eta$,
 the chain of rational numbers. A topological space $T$ is
{\it scattered} if every non-empty closed set contains some
isolated point. The power set of a set, once  equipped with the
product topology, is a compact space. The set $J(P)$ of ideals of
a join-semilattice $P$ with a $0$ is a closed subspace of
$\mathfrak P (P)$, hence is a compact space too. Consequently, an
algebraic lattice $L$ can be viewed as a poset and a topological
space as well. It is easy  to see that if $L$ is topologically
scattered \index{topologically scattered} then it is order
scattered \index{order scattered}. It is  a more significant fact,
due to M.Mislove \cite {misl}, that the converse holds if $L$ is
distributive.

\section[Separating chains of ideals]
{Separating chains of ideals and proofs of Proposition \ref {w-f}
and Theorem \ref{thm4}}

Let $P$ be a join-semilattice. If $x\in P$ and $J\in J(P)$, then
$\downarrow x$ and $J$ have a join $\downarrow x \bigvee J$ in
$J(P)$ and $\downarrow x \bigvee J=\downarrow\{ x \vee y: y\in J\}$.
Instead of $\downarrow x \bigvee J$ we also use the notation
$\{x\}\bigvee J$. Note that $\{x\}\bigvee J$ is the least member of
$J(P)$ containing $\{x\}\cup J$. We say that a non-empty chain
$\mathcal I$ of ideals of $P$ is {\it separating} \index{separating}
if for every $I\in\mathcal I\setminus\{\cup\mathcal I\}$ and every
$x\in \cup\mathcal I\setminus I$, there is some $J\in\mathcal I$
such that $I\not\subseteq\{x\}\bigvee J$.\\ If $\mathcal I$ is
separating then $\mathcal I$ has a least element implies it is a
singleton set. In $P:=[\omega]^{<\omega}$, the chain $\mathcal
I:=\{I_{n}: n<\omega\}$ where $I_{n}$ consists of the finite subsets
of $\{m: n\leq m\}$ is separating. In $P:=\omega^*$, the chain
$\mathcal I:=\{\downarrow x: x\in P\}$ is non-separating, as well as
all of its infinite subchains. In $P:=\Omega(\omega^*)$ the chain
$\mathcal I:=\{I_{n}: n<\omega\}$ where $I_{n}:=\{(i,j): n\leq
i<j<\omega\}$ has the same property.\\ We may observe that {\it a
join-preserving embedding\index{join-preserving embedding} from a
join-semilattice $P$ into a join-semilattice $Q$ transforms every
separating (resp. non-separating) chain of ideals of $P$ into a
separating (resp. non-separating) chain of ideals of $Q$} (If
$\mathcal I$ is a separating chain of ideals of $P$, then $\mathcal
J=\{f(I): I\in \mathcal I\}$ is a separating chain of ideals of
$Q$). Hence the containment of $[\omega]^{<\omega}$ (resp. of
$\omega^*$ or of $\Omega(\omega^*)$), as a join-subsemilattice,
provides a chain of ideals which is separating (resp.
non-separating, as are all its infinite subchains, as well). We show
in the  next  two lemmas that the converse holds.

\begin{lemma}\label{independent}
A join-semilattice $P$ contains an infinite independent set if and
only if it contains an infinite separating chain of ideals.
\end{lemma}

\begin{proof}
Let $X=\{x_{n}: n<\omega\}$ be an infinite independent set. Let
$I_{n}$ be the ideal generated by $X\setminus\{x_{i}: 0\leq i\leq
n\}$. The chain $\mathcal I=\{I_{n}: n<\omega\}$ is separating. Let
$\mathcal I$ be an infinite separating chain of ideals. Define
inductively an infinite sequence $x_{0}, I_{0}, \ldots, x_{n},I_{n},
\ldots$ such that $I_{0}\in\mathcal I\setminus\{\cup\mathcal I\},
x_{0}\in \cup\mathcal I \setminus I_{0}$ and
 such that:\\
 $a_{n})$
$I_{n}\in\mathcal I$;\\ $b_{n})$ $I_{n}\subset I_{n-1}$;\\
 $c_{n})$
$x_{n}\in I_{n-1}\setminus(\{x_{0}\vee \ldots \vee
x_{n-1}\}\bigvee I_{n})$ for every  $n\geq 1$.\\ The construction
is immediate. Indeed, since $\mathcal I$ is infinite then
$\mathcal I\setminus\{\cup\mathcal I\}\not =\emptyset$. Choose
arbitrary $I_{0}\in\mathcal I\setminus\{\cup\mathcal I\}$ and
$x_{0}\in\cup\mathcal I\setminus I_{0}$. Let $n\geq 1$. Suppose
$x_{k}, I_{k}$ defined and satisfying $a_{k}), b_{k}), c_{k})$ for
all $k\leq n-1$. Set $I:=I_{n-1}$ and $x:=x_{0}\vee \ldots \vee
x_{n-1}$. Since $I\in\mathcal I$ and $x\in\cup\mathcal I\setminus
I$, there is some $J\in\mathcal I$ such that
$I\not\subseteq\{x\}\bigvee J$. Let $z\in I\setminus(\{x\}\bigvee
J)$. Set $x_{n}:=z$, $I_{n}:=J$. The set $X:=\{x_{n}: n<\omega\}$
is independent. Indeed if $x\in X$ then since $x=x_{n}$ for some
$n$, $n<\omega$, condition $c_{n})$ asserts that there is some
ideal containing $X\setminus \{x\}$ and excluding $x$.
\end{proof}\\

\begin{lemma} \label{w*}
A join-semilattice $P$ contains either $\omega^*$ or
$\Omega(\omega^*)$ as a join-subsemilattice if and only if it
contains an $\omega^*$-chain $\mathcal I$ of ideals such that all
infinite subchains are non-separating.
\end{lemma}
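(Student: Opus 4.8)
The plan is to prove the two implications separately; the direction from containment to a non-separating chain is routine, while the converse carries all the difficulty.

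For the easy direction I would transport the two model chains already exhibited in the text. If $\omega^*$ embeds as a join-subsemilattice via a map $f$, then the chain $\{\downarrow x : x\in\omega^*\}$ is an $\omega^*$-chain all of whose infinite subchains are non-separating; if $\Omega(\omega^*)$ embeds, the chain $\{I_n\}$ with $I_n=\{(i,j):n\le i<j<\omega\}$ has the same property. In each case one checks in a line that these sets are ideals forming a strictly decreasing $\omega^*$-chain whose infinite subchains are non-separating (adjoin the top generator of the subchain as the element $x$ to collapse everything). Since a join-preserving embedding carries each non-separating chain of ideals to a non-separating chain of ideals (the observation recorded just before Lemma \ref{independent}), the image chain is the required non-separating $\omega^*$-chain in $P$.

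Conversely, write the given chain as $I_0\supset I_1\supset\cdots$ with union $L=I_0$. The plan is to manufacture, by induction on $l$, levels $N_0<N_1<\cdots$ and elements $x_l\in L$ recording the non-separating behaviour. Applying the hypothesis to the infinite tail $\{I_n:n>N_{l-1}\}$ produces $N_l>N_{l-1}$ and $\tilde x_l\in I_{N_{l-1}+1}\setminus I_{N_l}$ blocking that tail, i.e. $I_{N_l}\subseteq\{\tilde x_l\}\bigvee I_n$ for every $n>N_{l-1}$. The crucial remark is that any element $\ge\tilde x_l$ lying outside $I_{N_l}$ blocks just as well, so I am free to enlarge the witness. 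Exploiting this, for each pair $k<m<l$ I use the blocking of the already-built $x_k$ at the index $N_l$ to find $y_{k,m}\in I_{N_l}$ with $x_m\le x_k\vee y_{k,m}$ (legitimate since $x_m\in I_{N_k}\subseteq\{x_k\}\bigvee I_{N_l}$); as $I_{N_l}$ is an ideal, hence directed, these finitely many witnesses have a common upper bound $y^*\in I_{N_l}$, and I set $x_l:=\tilde x_l\vee y^*$. The enlargement keeps $x_l\in I_{N_{l-1}+1}\setminus I_{N_l}$ and preserves blocking, while now $x_l\ge y_{k,m}$ forces the \emph{absorption relation} $x_m\le x_k\vee x_l$ for all $k<m<l$; treating each triple when its largest index is created yields $x_j\le x_i\vee x_k$ for all $i<j<k$, together with $x_k\not\le x_l$ for $k<l$ (because $x_l\in I_{N_k}$ while $x_k\notin I_{N_k}$). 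Securing this absorption is precisely what separates $\Omega(\omega^*)$ from $[\omega]^{<\omega}$, and I expect it to be the main obstacle; the device of enlarging blocking witnesses and using directedness of the deep ideals is the heart of the argument.

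Finally I would apply Ramsey's theorem to the pairs $k<l$ coloured by comparability of $x_k$ and $x_l$. Since $x_k\not\le x_l$, a comparable homogeneous set gives a strictly descending sequence, hence a copy of $\omega^*$ as a join-subsemilattice; an incomparable homogeneous set gives an infinite antichain $\{a_k\}$ that still satisfies the absorption relations, as these pass to subsequences. On this antichain I define $f\colon\Omega(\omega^*)\to P$ by $f((i,j)):=a_i\vee a_{j-1}$; absorption makes $f$ join-preserving, since interior generators are absorbed into the extreme ones, and the ideals $I_{N_k}$ act as certificates ($a_k\notin I_{N_k}$ while $a_l\in I_{N_k}$ for $l>k$) to verify that $f$ preserves and reflects the order. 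Checking this order-reflection is the remaining bookkeeping point, but it follows directly from the level structure recorded during the construction.
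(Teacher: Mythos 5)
Your easy direction is fine and coincides with the paper's (transport the two model non-separating chains through a join-preserving embedding). Your hard direction also starts along the same lines as the paper's construction: blocking witnesses extracted from non-separation of tails, enlarged by joining with elements deep in the chain so as to force the absorption relations $x_m\le x_k\vee x_l$ ($k<m<l$), and a final map sending a pair to the join of the two extreme generators. But there is a genuine gap at the last step, and it is not bookkeeping. Your map $f((i,j)):=a_i\vee a_{j-1}$ must be \emph{injective} (join-preservation plus injectivity is what makes the image a copy of $\Omega(\omega^*)$), and the only problematic case is a collision in the second coordinate: $a_i\vee a_{j-1}=a_i\vee a_{j'-1}$ with $i<j-1<j'-1$. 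Your certificates cannot rule this out: an ideal $J_k$ of the chain contains a join $a_i\vee a_{j-1}$ exactly when it contains both generators, i.e.\ exactly when $k<i$; so the two candidate images lie in \emph{precisely the same} ideals of the chain, and the ``level structure'' is blind to the difference. Pairwise incomparability from Ramsey does not help either: three pairwise incomparable elements can perfectly well satisfy $w\le u\vee v$ (three atoms of $M_3$ do), and indeed, by absorption, a collision is exactly the statement $a_{j'-1}\le a_i\vee a_{j-1}$, which nothing in your construction forbids. Since your $x_l=\tilde x_l\vee y^*$ with $y^*$ only constrained to majorize absorption witnesses inside $I_{N_l}$, nothing prevents $x_l\le x_k\vee x_m$ for earlier $k<m$, and then $x_k\vee x_m=x_k\vee x_l$ kills injectivity.

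The missing idea is the paper's condition $b_n)$: each new element must satisfy $y_n\not\le y_0\vee y_{n-1}$, and it is precisely this condition that the paper invokes to get $j=j'$ in the injectivity proof. Securing it is what forces the extra structure of the paper's argument that your proposal omits: one first splits into cases according to whether every ideal of the chain meets the set $E$ of elements dominating some member of the chain. If yes, one extracts a strictly descending sequence directly and gets $\omega^*$ (no Ramsey needed); if not, one may assume no element of $\cup\mathcal I$ dominates any member of $\mathcal I$, so every $I_{n-1}$ is \emph{unbounded} in $\cup\mathcal I$, and one can then join into $y_n$ a ``novelty'' element $z\in I_{n-1}$ with $z\not\le y_0\vee\dots\vee y_{n-1}$, which yields $b_n)$. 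Note that this unboundedness can genuinely fail without the dichotomy (the chain may even consist of principal ideals, e.g.\ when $P$ contains $\omega^*$ cofinally in the chain), so the novelty elements cannot be manufactured inside your scheme as it stands. In short: your construction secures the analogues of the paper's conditions $a_n)$ and $c_n)$ but not $b_n)$, and the claim that order-reflection ``follows directly from the level structure'' is false; repairing it requires the domination dichotomy and the unboundedness argument, i.e.\ the substantive part of the paper's proof.
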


\begin{proof}
Let $\mathcal I$ be an $\omega^*$-chain of ideals and let $A$ be its
largest element (that is $A=\cup\mathcal I$). Let $E$ denote the set
$\{x: x\in A$ and $I\subset\downarrow x$ for some $I\in \mathcal
I\}$.\\ {\bf Case (i)}. For every $I\in\mathcal I$, $I\cap
E\not=\emptyset$. We can build an infinite strictly decreasing
sequence $x_{0}, \ldots, x_{n}, \ldots$ of elements of $P$. Indeed,
let us choose $x_{0}\in E\cap(\cup\mathcal I)$ and $I_{0}$ such that
$I_{{0}}\subset\downarrow x_{0}$. Suppose $x_{0}, \ldots, x_{n}$ and
$I_{0}, \ldots, I_{n}$ defined such that $I_{i}\subset\downarrow
x_{i}$ for all $i=0, \ldots, n$. As $E\cap I_{n}\not=\emptyset$ we
can select $x_{n}\in E\cap I_{n}$ and by definition of $E$, we can
select some $I_{n+1}\in \mathcal I$ such
that $I_{n+1}\subset\downarrow x_{n+1}$. Thus $\omega^*\leq P$.\\
{\bf Case (ii)}. There is some $I\in\mathcal I$ such that $I\cap
E=\emptyset$. In particular all members of $\mathcal I$ included in
$I$ are unbounded in $I$. Since all infinite subchains of $\mathcal
I$ are non-separating then, with no loss of generality, we may
suppose that $I=A$ (hence $E=\emptyset$). We set $I_{-1}:=A$ and
define a sequence $x_{0}, I_{0}, \ldots, x_{n}, I_{n}, \ldots$ such
that
 $I_{n}\in\mathcal I$, $x_{n}\in I_{n-1}\setminus I_{n}$ and $I_{n}\subseteq \{x_{n}\}\bigvee I$ for all $I\in\mathcal I$, all $n<\omega$.
Members of this sequence being defined for all $n', n'<n$, observe
that the set $\mathcal I_{n}:=\{I\in\mathcal I: I\subseteq
I_{n-1}\}$ being infinite is non-separating, hence there are $I\in
\mathcal I_{n}$ and $x\in I_{n-1}\setminus I$ such that $I\subseteq
\{x\}\bigvee J$ for all $J\in\mathcal I_{n}$. Set $I_{n}:=I$ and
$x_{n}:=x$. Next, we define a sequence $y_{0}:=x_{0}, \ldots, y_{n},
\ldots$ such that for every $n\geq 1$:\\ $a_{n})$ $x_{n} \leq
y_{n}\in I_{n-1}$; \\$b_{n})$ $y_{n} \not\leq y_{0}\vee y_{n-1}$;\\
$c_{n})$  $y_{j}\leq y_{i}\vee y_{n}$ for every $i\leq j\leq n$.\\
Suppose $y_{0}, \ldots, y_{n-1}$ defined for some $n$, $n\geq 1$.
Since $I_{n-1}$ is unbounded, we may select $z\in I_{n-1}$ such that
$z\not\leq y_{0}\vee\ldots\vee y_{n-1}$. If $n=1$, we set
$y_{1}:=x_{1}\vee z$. Suppose $n\geq 2$. Let $0\leq j\leq n-2$.
Since $y_{j+1}\vee\ldots\vee y_{n-1}\in I_{j}\subseteq \{
x_{j}\}\bigvee I_{n-1}$  we may select $t_{j}\in I_{n-1}$ such that
$y_{j+1}\vee\ldots\vee y_{n-1}\leq x_{j}\vee t_{j}$. Set
$t:=t_{0}\vee\ldots\vee t_{n-2}$ and $y_{n}:=x_{n}\vee z\vee t$.\\
Let $f: \Omega(\omega^*)\to P$ be defined by
$f(i,j):=y_{i}\vee y_{j}$ for all $(i,j)$, $i<j<\omega$.\\
Condition $c_{n})$ insures that $f$ is join-preserving. Indeed, let
$(i,j), (i',j')\in\Omega(\omega^*)$. We have $(i,j)\vee
(i',j')=(i\wedge i',j\vee j')$ hence $f((i,j)\vee (i',j'))=f(i\wedge
i',j\vee j')=y_{i\wedge i'}\vee y_{j\vee j'}$. If $F$ is a finite
subset of $\omega$ with minimum $a$ and maximum $b$ then conditions
$c_{n})$ force $\bigvee\{y_{n}: n\in F\}=y_{a}\vee y_{b}$. If
$F:=\{i,j,i',j'\}$ then, taking account of $i<j$ and $i'<j'$, we
have $f(i,j)\vee f(i',j')=y_{i}\vee y_{j}\vee y_{i'}\vee
y_{j'}=y_{i\wedge i'}\vee y_{j\vee j'}$. Hence
$f((i,j)\vee (i',j'))=f(i,j)\vee f(i',j')$, proving our claim.  \\
Next, $f$ is one-to-one. Let $(i,j), (i',j')\in\Omega(\omega^*)$
such that $f(i,j)=f(i',j')$, that is $y_{i}\vee y_{j}=y_{i'}\vee
y_{j'}$ $(1)$. Suppose $j<j'$. Since $0\leq i<j$, Condition $c_{j})$
implies $y_{i}\leq y_{0}\vee y_{j}$. In the other hand,  since
$0\leq j\leq j'-1$, Condition $c_{j'-1})$ implies $y_{j}\leq
y_{0}\vee y_{j'-1}$. Hence $y_{i}\vee y_{j}\leq y_{0}\vee y_{j'-1}$.
From $(1)$ we get $y_{j'}\leq y_{0}\vee y_{j'-1}$, contradicting
Condition $b_{j'})$. Hence $j'\leq j$. Exchanging the roles of
$j,j'$ gives $j'\leq j$ thus $j=j'$. If $i<i'$ then, Conditions
$a_{i'})$ and $a_{j'})$ assure $y_{i'}\in I_{i'-1}$ and $y_{j'}\in
I_{j'-1}$. Since $I_{j'-1}\subseteq I_{i'-1}$ we have $y_{i'}\vee
y_{j'}\in I_{i'-1}$. In the other hand $x_{i}\not\in I_{i}$ and
$x_{i}\leq y_{i}\vee y_{j}$ thus $y_{i}\vee y_{j}\not\in I_{i}$.
From $I_{i'-1}\subseteq I_{i}$, we have $y_{i}\vee y_{j}\not\in
I_{i'-1}$, hence $y_{i}\vee y_{j}\not = y_{i'}\vee y_{j'}$ and
$i'\leq i$. Similarly we get $i\leq i'$. Consequently $i=i'$.
\end{proof}

\subsection{Proof of Proposition \ref{w-f}}
If $\underline \Omega(\omega^*)$ embeds in $[\omega]^{<\omega}$ then
$[\omega]^{<\omega}$ contains a non-separating $\omega^*$-chain of
ideals. This is impossible: a  non-separating chain of ideals of
$[\omega]^{<\omega}$ has necessarily a least element. Indeed, if the
pair $x,I$ ($x\in [\omega]^{<\omega}$, $I\in  \mathcal I$) witnesses
the fact that the chain $\mathcal I$ is non-separating  then there
are at most $\mid x\mid+1$ ideals belonging to  $\mathcal I$ which
are included in $I$ (note that the set $\{\cup I \setminus \cup J:
J\subseteq I, J\in \mathcal I\}$  is a chain of subsets of $x$). The
proof of the general case requires more care. If $\underline
\Omega(\omega^*)$ embeds in $I_{<\omega}(Q)$ as a join-semilattice
then we may find a sequence $x_{0}, I_{0}, \ldots, x_{n}, I_{n},
\ldots$ such that $I_{n}\subset I_{n-1}\in J(I_{<\omega}(Q))$,
$x_{n}\in I_{n-1} \setminus I_{n}$ and $I_{n} \subseteq
\{x_{n}\}\bigvee I_{m}$ for every $n<\omega$ and every $m<\omega$.
Set $I_{\omega}:=\bigcap\{ I_{n}: n<\omega\}$, $\overline
I_{n}:=\cup I_{n}$ for every $n\leq\omega$, $Q':=Q\setminus
\overline I_{\omega}$ and $y_{n}:=x_{n}\setminus \overline
I_{\omega}$ for every $n<\omega$. We claim that $y_{0}, \ldots,
y_{n}, \ldots$ form a strictly descending sequence in
$I_{<\omega}(Q')$. According to Property $a)$  stated in Theorem
\ref {wellfounded},  $Q'$, thus $Q$, is not well-founded.\\ First,
$y_{n}\in I_{<\omega}(Q')$. Indeed, if $a_{n}\in [Q]^{<\omega}$
generates $x_{n}\in I_{<\omega}(Q)$ then, since $\overline
I_{\omega}\in I(Q)$, $a_{n}\setminus \overline I_{\omega}$ generates
$x_{n}\setminus \overline I_{\omega}\in I(Q')$. Next,
$y_{n+1}\subset y_{n}$. It suffices to prove that the following
inclusions hold: $$x_{n+1}\cup \overline I_{\omega}\subseteq
\overline I_{n}\subset x_{n}\cup \overline I_{\omega}$$ Indeed,
substracting $\overline I_{\omega}$, from the sets  figuring above,
we get:\\ $$y_{n+1}=(x_{n+1}\cup \overline I_{\omega})\setminus
 \overline I_{\omega}\subset (x_{n}\cup \overline I_{\omega})\setminus \overline I_{\omega}=y_{n}$$
The first inclusion is obvious. For the second note that, since
$J(I_{<\omega}(Q))$ is isomorphic to $I(Q)$, complete distributivity
holds, hence  with the hypotheses on the sequence $x_{0}, I_{0},
\ldots, x_{n}, I_{n}, \ldots$  we have $I_{n} \subseteq \bigcap\{\{
x_{n}\}\bigvee I_{m}: m<\omega\}=\{ x_{n}\}\bigvee\bigcap\{ I_{m}:
m<\omega\}=\{ x_{n}\}\bigvee I_{\omega}$, thus $\overline
I_{n}\subset x_{n}\cup \overline I_{\omega}$.
\endproof\\
\begin{remark} One can deduce the fact that $\Omega(\omega^*)$ does not embed as  a join-semilattice in
$[\omega]^{<\omega}$ from the fact that it contains a strictly
descending chain of completely meet-irreducible \index{completely
meet-irreducible} ideals (namely the chain $\mathcal I:=\{I_{n}:
n<\omega\}$ where $I_{n}:=\{(i,j): n\leq i<j<\omega\}$) (see
Proposition \ref {lem03}) but this fact by itself does not prevent
the existence of some well-founded poset $Q$ such that
$\Omega(\omega^*)$ embeds as a join semilattice in
$I_{<\omega}(Q)$. \end{remark}

\subsection{Proof of Theorem \ref{thm4}}
In terms of join-semilattices and  ideals, result becomes this:
let $P$ be a join-semilattice, then $J(P)$ is well-founded if and
only if $P$ is well-founded  and contains no join-subsemilattice
isomorphic to $\Omega(\omega^*)$ or to $[\omega]^{<\omega}$.\\

The proof goes as follows. Suppose that $J(P)$ is not well-founded. If some $\omega^*$-chain in $J(P)$ is separating then, according to
Lemma \ref{independent}, $P$
contains an infinite independent set. From Theorem \ref {tm2.1}, it contains a join-subsemilattice isomorphic to $[\omega]^{<\omega}$.
If no $\omega^*$-chain in $J(P)$ is separating, then all the infinite subchains of an arbitrary $\omega^*$-chain are non-separating.
From Lemma \ref{w*},
 either $\omega^*$ or
$\Omega ( \omega^*)$ embed in $P$ as a join-semilattice. The
converse is obvious. \endproof\\

\section[Join-subsemilattices of $I_{<\omega}(Q)$]{Join-subsemilattices of $I_{<\omega}(Q)$ and proof of Theorem  \ref{thmwf}}
In this section, we consider  join-semilattices  which embed in
join-semilattices of the form $I_{<\omega}(Q)$. These are easy  to
characterize internally (see Proposition \ref {lem04}). This is
also the case if the posets $Q$ are antichains (see Proposition
\ref {lem03}) but does not go so well if the posets  $Q$ are
well-founded (see Lemma \ref{counterexample}).

Let us recall  that if $P$ is a  join-semilattice, an element $x\in
P$ is  {\it join-prime}\index{join-prime} (or prime if there is no
confusion), if it is distinct from the least element $0$, if any,
and if $x\leq a\vee b$ implies $x\leq a$ or $x\leq b$. This amounts
to the fact that $P\setminus \uparrow x$ is an ideal . We denote
$\J_{pri}(P)$, the set of join-prime  members of $P$. We recall that
$\J_{pri}(P)\subseteq \J_{irr}(P)$; the equality holds provided that
$P$ is a distributive lattice. It also holds if $P=
{I}_{<\omega}(Q)$. Indeed:

\begin{fact}For  an arbitrary poset $Q$, we have:
\begin{equation}\label{eqirr}
 {\J}_{irr}( I_{<\omega} (Q))=\J_{pri}( I_{<\omega} (Q))= down (Q)
 \end{equation}
\end{fact}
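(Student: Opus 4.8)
The plan is to establish the chain of inclusions
$down(Q) \subseteq \J_{pri}(I_{<\omega}(Q)) \subseteq \J_{irr}(I_{<\omega}(Q)) \subseteq down(Q)$,
the middle inclusion being exactly the general fact $\J_{pri}(P)\subseteq \J_{irr}(P)$ recalled just before the statement. First I would record the two structural features of $I_{<\omega}(Q)$ on which everything rests: its elements are precisely the sets $\downarrow F$ with $F$ a finite subset of $Q$, its least element is $\emptyset = \downarrow\emptyset$, and the join of two members is their union, so that $\downarrow F \vee \downarrow G = \downarrow(F\cup G)$.

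For the first inclusion, I would fix $a\in Q$ and check that $\downarrow a$ is join-prime. It is nonempty, hence distinct from the least element $\emptyset$. If $\downarrow a \subseteq I\vee J = I\cup J$ with $I,J\in I_{<\omega}(Q)$, then $a\in I\cup J$, so $a$ lies in one of them, say $a\in I$; since $I$ is an initial segment this already forces $\downarrow a\subseteq I$. Thus $\downarrow a\leq I$ or $\downarrow a\leq J$, which is precisely join-primeness. Hence $down(Q)\subseteq \J_{pri}(I_{<\omega}(Q))$.

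For the last inclusion, let $x\in \J_{irr}(I_{<\omega}(Q))$. Since $x\neq \emptyset$, write $x=\downarrow F$ for some finite nonempty $F$. The key reduction is to replace $F$ by its set $A$ of maximal elements: because $F$ is finite, every element of $F$ lies below a maximal one, so $\downarrow F = \downarrow A$ with $A=\{a_{1},\dots,a_{n}\}$ a nonempty antichain. Then $x=\downarrow a_{1}\vee\cdots\vee\downarrow a_{n}$, and applying join-irreducibility (inductively, grouping the join as $\downarrow a_{1}\vee(\downarrow a_{2}\vee\cdots\vee\downarrow a_{n})$) yields $x=\downarrow a_{i}$ for some $i$, whence $x\in down(Q)$. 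Combining the three inclusions gives the asserted equalities.

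The argument is almost entirely routine; the only point demanding a little care is the reduction of a finitely generated initial segment to the downward closure of its finitely many maximal generators, and keeping track of the least element $\emptyset$ throughout so as not to mistake it for a join-irreducible.
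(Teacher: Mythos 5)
Your proof is correct and follows essentially the same route the paper intends: the paper states this Fact without a separate proof, but the observation embedded in the proof of the very next Fact---that the primes of $I_{<\omega}(Q)$ are exactly the sets $\downarrow x$ with $x\in Q$, and that any $I\in I_{<\omega}(Q)$ generated by a finite set $F$ equals $\bigcup\{\downarrow x : x\in F\}$---is precisely your argument. Your two additions (reducing $\downarrow F$ to $\downarrow Max(F)$ before invoking join-irreducibility inductively, and checking that $\emptyset$ is excluded) are the standard details completing that observation, not a different method.
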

\begin{fact} For a poset $P$, the following properties are equivalent:
\begin{itemize}
\item $P$ is isomorphic to $ {I}_{<\omega}(Q)$ for some poset $Q$;
\item $P$ is a join-semilattice with a least element  in which  every element is a finite  join of primes.
\end{itemize}
\end{fact}

\begin{proof} Observe that the primes in $I_{<\omega} (Q)$, are the $\downarrow x$, $x\in
Q$. Let $I\in I_{<\omega} (Q)$ and $F\in [Q]^{<\omega}$ generating
$I$, we have $I=\cup \{\downarrow x: x\in F\}$ . Conversely, let
$P$ be a join-semilattice with a $0$. If every element in $P$ is a
finite join of primes, then $P\cong I_{<\omega} (Q)$ where
$Q:=\J_{pri}(P)$.
\end{proof}

Let $L$ be  a complete lattice \index{complete lattice}. For $x\in
L$ , set $x^{+}:=\bigwedge\{y\in L: x<y\}$. We recall that $x\in L$
is {\it completely meet-irreducible} \index{completely
meet-irreducible} if $x=\bigwedge X$ implies $x\in X$, or
-equivalently- $x\neq x^{+}$.  We denote $\triangle (L)$ the set of
completely meet-irreducible members of $L$. We recall the following
Lemma.
\begin{lemma}\label{lem01} Let $P$ be a join-semilattice,  $I\in J(P)$ and $x\in P$.
Then $x\in I^{+}\setminus I$ if and only if  $I$ is a maximal ideal of $P\setminus
\uparrow x$.
\end{lemma}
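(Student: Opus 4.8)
The plan is to unwind both sides of the claimed equivalence to a single condition phrased purely in terms of ideals of $P$, using two elementary observations. The first, and the only one requiring any thought, is this: for an initial segment $S$ of $P$ and an element $x\in P$ one has $x\notin S$ if and only if $S\cap \uparrow x=\emptyset$ (equivalently $S\subseteq P\setminus \uparrow x$). Indeed, if some $w\in S$ satisfied $x\leq w$ then, $S$ being an initial segment, we would get $x\in S$; the converse is trivial since $x\in \uparrow x$. First I would record this fact.

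Next I would use it to translate each side. On the right: since $\uparrow x$ is a final segment, $R:=P\setminus \uparrow x$ is an initial segment of $P$, and the ideals of the subposet $R$ are exactly the ideals $J$ of $P$ with $J\subseteq R$; by the observation above these are precisely the ideals $J\in J(P)$ with $x\notin J$. Hence \emph{``$I$ is a maximal ideal of $R$''} unwinds to: $x\notin I$, and there is no ideal $J$ with $I\subsetneq J$ and $x\notin J$, i.e. $x\notin I$ together with $x\in J$ for every ideal $J\supsetneq I$. On the left: because $P$ is a join-semilattice with $0$, the intersection of any family of ideals is again an ideal (a directed initial segment containing $0$), so meets in the complete lattice $J(P)$ are intersections and $I^{+}=\bigcap\{J\in J(P): I\subsetneq J\}$. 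Thus $x\in I^{+}$ means precisely that $x\in J$ for every ideal $J$ strictly containing $I$, and \emph{``$x\in I^{+}\setminus I$''} unwinds to: $x\notin I$ together with $x\in J$ for every ideal $J\supsetneq I$.

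Comparing the two translations, both sides reduce to the identical condition, so the equivalence is immediate; I would nonetheless spell out the two implications for clarity, noting that the degenerate cases (such as $I=P$ or $x$ the least element) are automatically handled by the clause $x\notin I$. The only genuine obstacle is the first observation together with the identification of the ideals of $R$ with the ideals of $P$ avoiding $x$: a tempting but flawed alternative—intersecting a given ideal $J\supsetneq I$ with $R$ to produce a larger ideal inside $R$—fails because such an intersection need not be directed, since $x\not\leq a$ and $x\not\leq b$ do not preclude $x\leq a\vee b$. The initial-segment observation circumvents this entirely: an ideal either avoids $x$, and then lies in $R$ outright, or it contains $x$.
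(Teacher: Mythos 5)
Your proposal is correct. The paper itself states this lemma without proof (it is introduced with ``We recall the following Lemma''), so there is no argument in the text to compare against; your unwinding is exactly the standard one that such a recall points to: ideals of $P\setminus\uparrow x$ are precisely the ideals of $P$ avoiding $x$, and, since $P$ has a least element, meets in the complete lattice $J(P)$ are intersections, so $I^{+}=\bigcap\{J\in J(P): I\subsetneq J\}$ and both sides reduce to ``$x\notin I$ and every ideal strictly containing $I$ contains $x$.'' Your closing remark is also apt: $J\cap(P\setminus\uparrow x)$ need indeed fail to be directed (one can have $x\leq a\vee b$ with $x\not\leq a$, $x\not\leq b$), which is precisely why the argument must go through the initial-segment observation rather than by truncating ideals.
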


\begin{proposition}\label{lem04} Let $P$ be a join-semilattice. The
following properties are equivalent:
\begin {enumerate}[(i)]
\item $P$ embeds in
$I_{<\omega}(Q)$,  as a join-semilattice, for some poset $Q$;
\item $P$ embeds in $I_{<\omega}(J(P))$ as a join-semilattice;
\item $P$ embeds in $I_{<\omega}(\triangle(J(P)))$ as a join-semilattice;
\item  For every $x\in P$, $P\setminus \uparrow x$ is a finite
union of ideals.
\end{enumerate}
\end{proposition}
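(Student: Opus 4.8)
The plan is to prove the cycle $(i)\Rightarrow(iv)\Rightarrow(iii)\Rightarrow(i)$ together with the side loop $(iv)\Rightarrow(ii)\Rightarrow(i)$, since $(ii)\Rightarrow(i)$ and $(iii)\Rightarrow(i)$ are immediate (take $Q:=J(P)$, respectively $Q:=\triangle(J(P))$). The substance lies entirely in the passage between the internal condition $(iv)$ and the existence of the two canonical join-embeddings of $(ii)$ and $(iii)$; everything else is bookkeeping.

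For $(i)\Rightarrow(iv)$ I would start from a join-embedding $f:P\to I_{<\omega}(Q)$. A join-embedding is automatically an order-embedding, so $x\leq p$ iff $f(x)\subseteq f(p)$. Writing $f(x)=\downarrow\{a_1,\dots,a_n\}$ with $a_i\in Q$, the inclusion $f(x)\subseteq f(p)$ is equivalent to $a_i\in f(p)$ for all $i$, whence $P\setminus\uparrow x=\bigcup_{i=1}^n\{p:a_i\notin f(p)\}$. Using that $f$ preserves joins (so $f(p\vee p')=f(p)\cup f(p')$) and that each $f(p)$ is an initial segment, one checks that every non-empty set $\{p:a_i\notin f(p)\}$ is downward closed and directed, hence an ideal; discarding the empty pieces exhibits $P\setminus\uparrow x$ as a finite union of ideals.

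For the converse implications I would introduce the two canonical maps $\phi(x):=\{J\in J(P):x\notin J\}$ and $\psi(x):=\{M\in\triangle(J(P)):x\notin M\}$. Independently of $(iv)$, each is an initial segment of $J(P)$, resp. of $\triangle(J(P))$, ordered by inclusion, and since $x\vee y\in I$ iff $x\in I$ and $y\in I$ for an ideal $I$, both send joins to unions, i.e. preserve joins (so, being injective, they are automatically order-embeddings). Injectivity of $\phi$ is easy: if $x\not\leq y$ then the principal ideal $\downarrow y$ lies in $\phi(x)\setminus\phi(y)$. Injectivity of $\psi$ is the one point needing the machinery at hand: from $x\not\leq y$ one gets $\downarrow y\subseteq P\setminus\uparrow x$, and a Zorn argument yields a maximal ideal $M$ of $P\setminus\uparrow x$ with $\downarrow y\subseteq M$; by Lemma \ref{lem01} such an $M$ satisfies $x\in M^{+}\setminus M$, so $M^{+}\neq M$, hence $M$ is completely meet-irreducible and witnesses $M\in\psi(x)\setminus\psi(y)$.

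It then remains to use $(iv)$ to make $\phi$ and $\psi$ land in the \emph{finitely generated} initial segments, and here is the step I expect to be decisive: any ideal $J$ contained in a finite union $C_1\cup\cdots\cup C_n$ of ideals is already contained in a single $C_i$. Indeed, if not, pick $a_i\in J\setminus C_i$, take an upper bound $a\in J$ of $a_1,\dots,a_n$ by directedness of $J$, and note that $a\in C_j$ for some $j$ forces $a_j\in C_j$, a contradiction. Writing $P\setminus\uparrow x=C_1\cup\cdots\cup C_n$ via $(iv)$, this observation gives $\phi(x)=\downarrow\{C_1,\dots,C_n\}$ in $J(P)$, proving $(iv)\Rightarrow(ii)$. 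For $(iv)\Rightarrow(iii)$ I would first enlarge each $C_i$, again via Zorn and Lemma \ref{lem01}, to a completely meet-irreducible ideal $M_i\supseteq C_i$ with $x\notin M_i$ (a maximal ideal of $P\setminus\uparrow x$); then every $M\in\psi(x)$ is an ideal contained in $\bigcup C_i\subseteq\bigcup M_i$, so the same observation gives $M\subseteq M_i$ for some $i$, whence $\psi(x)=\downarrow\{M_1,\dots,M_n\}$ is finitely generated. The only delicate points are thus the Zorn-plus-Lemma~\ref{lem01} production of completely meet-irreducible separators and this directedness argument; the rest is formal.
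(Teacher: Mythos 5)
Your proof is correct and takes essentially the same route as the paper: the same canonical representation maps $x\mapsto\{J : x\notin J\}$ into $I(J(P))$ and $I(\triangle(J(P)))$, the same primality observation that an ideal contained in a finite union of ideals lies in one of them, and the same Zorn-plus-Lemma~\ref{lem01} production of completely meet-irreducible separators. If anything, you are more explicit than the paper at the one delicate point of $(iv)\Rightarrow(iii)$: you saturate the ideals $C_i$ to maximal ideals $M_i$ of $P\setminus\uparrow x$ before asserting that $\psi(x)$ is finitely generated inside $\triangle(J(P))$, a step the paper leaves implicit even though the ideals $I_0,\dots,I_n$ it names need not themselves be completely meet-irreducible.
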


\begin{proof}
$(i)\Rightarrow (iv)$ Let $\varphi$ be an embedding from $P$ in
$P':= I_{<\omega}(Q)$. We may suppose that $P$ has a least element
$0$ and that $\varphi (0)=\emptyset$ (if $P$ has no least element,
add one, say $0$,  and set $\varphi (0):=\emptyset $; if $P$ has a
least element, say $a$,  and $\varphi (a)\not =\emptyset$,  add to
$P$ an element $0$ below $a$ and set $\varphi (0):= \emptyset$).
For $J'\in \mathfrak { P}(P')$,  let $\varphi^{-1}(J'):= \{x\in P:
\varphi (x)\in J'\}$. Since $\varphi$ is order-preserving,
$\varphi^{-1}(J')\in I(P)$ whenever $J'\in I(P')$ ; moreover,
since $\varphi$ is join-preserving, $\varphi^{-1}(J')\in J(P)$
whenever $J'\in J(P')$. Now, let  $x\in P$. We have
$\varphi^{-1}(P'\setminus \varphi(x)):= P\setminus \uparrow x$.
Since $\varphi(x)$ is a finite join of primes, $P'\setminus
\uparrow  \varphi (x)$ is a finite union of ideals. Since their
inverse images are ideals, $P\setminus \uparrow x$ is a finite
union of ideals too.

$(iv)\Rightarrow (iii)$  We use  the well-known method for representing a poset by a family of sets.

\begin{fact}\label{representation}Let $P$ be a poset and $Q\subseteq I(P)$. For $x\in P$ set
$\varphi _{Q}(x):=\{J\in Q: x\not \in J\}$.  Then:
\begin{enumerate}[(a)]
\item  $\varphi _{Q}(x) \in I(Q)$;
\item  $\varphi _{Q}:P\rightarrow I(Q)$ is an order-preserving \index{order-preserving} map;
\item $\varphi _{Q}$  is  an order-embedding\index{order-embedding} if and only if for every $x, y\in P$ such that $x\not \leq y$
there is some $J\in Q$ such that $x\not \in J$ and $y\in J$.
\end{enumerate}
\end{fact}

Applying  this to $Q:=\triangle( J(P))$ we get  immediately that
$\varphi _{Q}$ is join-preserving \index{join-preserving}. Moreover,
$\varphi _{Q}(x)\in I_{<\omega}(Q)$ if and only if $P\setminus
\uparrow x$ is a finite union of ideals. Indeed, we have
$P\setminus \uparrow x= \cup \varphi _{Q}(x)$, proving that
$P\setminus \uparrow x$ is a finite union of ideals provided that
$\varphi _{Q}(x)\in I_{<\omega}(Q)$. Conversely, if $P\setminus
\uparrow x$ is a finite union of ideals, say $I_{0}, \dots, I_{n}$,
then since ideals are prime members of $I(P)$, every ideal included
in $I$ is included in some $I_i$, proving that $\varphi _{Q}(x)\in
I_{<\omega}(Q)$. To conclude, note that if $P$ is a join-semilattice
then $\varphi _{Q}$ is join-preserving.

$(iii)\Rightarrow (ii)$ Trivial.

$(ii)\Rightarrow (i)$ Trivial.

\end{proof}

\begin{corollary}\label{corthm16} If a   join-semilattice $P$ has no infinite antichain, it embeds in
$I_{<\omega}(J(P))$ as a join-subsemilattice.
\end{corollary}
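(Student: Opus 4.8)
The plan is to deduce this immediately from Proposition~\ref{lem04}, whose condition $(ii)$ is precisely the conclusion we are after, so it suffices to verify condition $(iv)$: for every $x\in P$, the set $P\setminus\uparrow x$ is a finite union of ideals. First I would record that $P\setminus\uparrow x=\{y\in P: x\not\leq y\}$ is an \emph{initial segment} of $P$: if $z\leq y$ and $x\leq z$ then $x\leq y$, so $x\not\leq y$ forces $x\not\leq z$. This strips the problem of any reference to $x$ or to the join operation and reduces it to a statement purely about initial segments, namely: \emph{if $P$ has no infinite antichain, then every initial segment of $P$ is a finite union of ideals.}

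To obtain this last fact I would invoke the observation already recorded in Section~\ref{subsection2.2}, that $I_{<\omega}(J(P))\cong I(P)$ whenever $P$ has no infinite antichain. Under the natural identification, an initial segment $S$ of $P$ corresponds to the initial segment of $J(P)$ consisting of the ideals of $P$ contained in $S$; the assertion that this lies in $I_{<\omega}(J(P))$ means exactly that it is finitely generated, i.e.\ it has finitely many maximal elements $M_{1},\dots,M_{n}$. Since $S$ equals the union of all ideals it contains (each $y\in S$ lies in the principal ideal $\downarrow y\subseteq S$), and each such ideal sits below some $M_{i}$, we get $S=M_{1}\cup\dots\cup M_{n}$, a finite union of ideals. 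Applying this to each $S:=P\setminus\uparrow x$ gives condition $(iv)$, and Proposition~\ref{lem04} then furnishes the desired join-embedding of $P$ into $I_{<\omega}(J(P))$.

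The only genuine content buried here is the equivalence between having no infinite antichain and every initial segment being a finite union of ideals, so the main point of the argument is recognizing that we are entitled to it via the cited isomorphism rather than reproving it. Were I to make the argument self-contained, I would proceed directly: every element of an initial segment $S$ lies in a maximal ideal of $S$ (by Zorn's Lemma, since $\downarrow y$ is directed and the union of a chain of directed initial segments is directed), whence $S$ is the union of its maximal ideals; and if there were infinitely many pairwise non-nested maximal ideals, I would extract an infinite antichain of $P$ by inductively choosing elements that witness the failure of directedness of unions of distinct maximal ideals. That extraction is the delicate step, but it is already subsumed in the isomorphism $I(P)\cong I_{<\omega}(J(P))$, so no new work is needed.
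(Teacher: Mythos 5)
Your proposal is correct and takes essentially the same route as the paper: the paper's proof likewise reduces the corollary to condition (iv) of Proposition \ref{lem04}, invoking the well-known fact that in a poset with no infinite antichain every initial segment is a finite union of ideals (citing \cite{Erdos-Tarski}, see also \cite{fraissetr}). The only difference is in how that fact is sourced: you recover it by unwinding the natural isomorphism $I_{<\omega}(J(P))\cong I(P)$ recorded in the paper's preliminaries, which is the same statement in disguise, so the two arguments coincide in substance.
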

\begin{proof} As is well known, if  a poset has no infinite antichain then every initial segment
is a finite union of ideals (cf \cite{Erdos-Tarski}, see also
\cite{fraissetr} 4.7.3 pp. 125). Thus Proposition \ref{lem04}
applies.
\end{proof}

Another corollary of Proposition  \ref{lem04}  is the following.

\begin{corollary}\label{lem00}  Let $P$ be a join-semilattice. If  for every $x\in P$,  $P\setminus \uparrow x$ is a finite union of ideals and $ \triangle (J(P))$ is well-founded   then   $P$ embeds  as a join-subsemilattice in $I_{<\omega}(Q)$,  for some well-founded poset $Q$.
\end{corollary}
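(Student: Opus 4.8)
The plan is to recognize that the two hypotheses of the corollary match exactly the ingredients already assembled in Proposition \ref{lem04}, and then simply to read off the conclusion with the concrete witness $Q := \triangle(J(P))$. In other words, the corollary is a specialization of Proposition \ref{lem04} in which the well-foundedness assumption pins down which poset $Q$ to use.

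First I would observe that the first hypothesis---that $P\setminus \uparrow x$ is a finite union of ideals for every $x\in P$---is literally condition $(iv)$ of Proposition \ref{lem04}. Invoking the implication $(iv)\Rightarrow (iii)$ proved there, it follows that $P$ embeds as a join-subsemilattice into $I_{<\omega}(\triangle(J(P)))$, the embedding being the representation map $\varphi_{Q}$ of Fact \ref{representation} with $Q:=\triangle(J(P))$. Recall that under $(iv)$ one has $P\setminus \uparrow x=\cup \varphi_{Q}(x)$ and that $P\setminus \uparrow x$ a finite union of ideals forces $\varphi_{Q}(x)\in I_{<\omega}(Q)$, so that $\varphi_{Q}$ indeed lands in $I_{<\omega}(Q)$ and is join-preserving.

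The only remaining point is to certify that this particular $Q$ is a well-founded poset, and that is precisely the content of the second hypothesis, namely that $\triangle(J(P))$ is well-founded. Combining the two, $P$ embeds as a join-subsemilattice in $I_{<\omega}(Q)$ for the well-founded poset $Q:=\triangle(J(P))$, which is exactly the assertion to be proved.

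I expect no genuine obstacle here: the work has all been done in Proposition \ref{lem04}, and the role of the added hypothesis on $\triangle(J(P))$ is merely to guarantee that the canonical witness $Q=\triangle(J(P))$ supplied by the implication $(iv)\Rightarrow(iii)$ carries the extra well-foundedness property demanded by the conclusion. The one thing to state carefully is that the poset $Q$ appearing in the conclusion should be taken to be $\triangle(J(P))$ rather than an arbitrary representing poset, since it is for this canonical choice that well-foundedness is assumed.
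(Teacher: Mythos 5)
Your proposal is correct and is exactly the paper's intended argument: the paper states this as an immediate corollary of Proposition \ref{lem04}, obtained by applying the implication $(iv)\Rightarrow(iii)$ to get the embedding into $I_{<\omega}(\triangle(J(P)))$ and then using the second hypothesis to certify that this canonical witness $Q=\triangle(J(P))$ is well-founded. Nothing is missing.
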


The converse does not hold:

\begin{example}\label{counterexample} There is a bipartite poset $Q$ such that $I_{<\omega}(Q)$ contains a
join-semilattice $P$ for which  $ \triangle (J(P))$ is not
well-founded.
\end{example}

\begin{proof}
Let $\underline 2:= \{0,1\}$ and $Q:=\mathbb{N}\times
\underline{2}$. Order $Q$ in such a way that $(m,i)<(n,j)$ if
$m>n$ in $\N$ and $i<j$ in $\underline 2$.

Let $P$ be the set
of subsets $X$ of $Q$ of the form $X:=F\times \{0\}\cup G\times
\{1\}$ such that $F$ is a  non-empty final segment of $\mathbb{N}$, $G$ is a
non-empty finite subset of $\mathbb{N}$ and
\begin{equation}\label{contrex}
min(F)-1\leq min(G)\leq
min(F)
\end{equation}
where $min(F)$ and $min(G)$  denote the least element of $F$  and
$G$ w.r.t. the natural order on $\mathbb{N}$. For each $n\in \N$,
let $I_{n}:=\{ X\in P:  (n,0) \not \in X\}$.

{\bf Claim }
\begin{enumerate}
\item $Q$ is bipartite and  $P$ is a join-subsemilattice of $I_{<\omega}(Q)$.
\item The $I_{n}$'s form a strictly descending sequence \index{descending sequence} of members of
$ \triangle (J(P))$.
\end{enumerate}

{\bf Proof of the Claim }

1. The poset $Q$ is decomposed into two antichains, namely $\N\times
\{0\}$ and $\N\times \{1\}$ and for this raison is called {\it
bipartite}.
 Next, $P$ is a
subset of $I_{<\omega}(Q)$. Indeed, Let $X\in P$. Let $F, G$ such
that $ X=F\times \{0\}\cup G\times \{1\}$.  Set $G':=
G\times\{1\}$. If $min(G)=min(F)-1$, then $X=\downarrow G'$
whereas if  $min(G)=min(F)$ then $X=\downarrow G' \cup \{(min(F),
0)\}$. In both cases    $X\in I_{<\omega}(Q)$.  Finally, $P$ is a
join-semilattice. Indeed,  let $X,X'\in P$ with $X:=F\times
\{0\}\cup G\times \{1\}$ and $X':=F'\times \{0\}\cup G'\times
\{1\}$. Obviously $X\cup X'=(F\cup F')\times \{0\}\cup (G\cup
G')\times \{1\}$. Since $X, X'\in P$, $F\cup F'$ is a non-empty
final segment of $\mathbb{N}$ and $G\cup G'$ is a non-empty finite
subset of $\mathbb{N}$. We have $min(G\cup
G')=min(\{min(G),min(G')\})\leq min(\{min(F),min(F')\})=min(F\cup
F')$ and similarly $min(F\cup F')-1=min\{min(F),
min(F')\}-1=min\{min(F)-1, min(F')-1\} \leq min
\{min(G),min(G')\}=min(G\cup G')$, proving that inequalities as in
(\ref {contrex}) hold. Thus $X\cup X'\in I_{<\omega}(Q)$.

2.  Due to its definition, $I_{n}$ is an non-empty initial segment
of $P$ which is closed under finite unions, hence $I_{n}\in J(P)$.
Let $X_{n}:= \{(n,1), (m, 0): m\geq n+1\} $ and $Y_{n}:= X_{n}\cup
\{(n,0)\}$. Clearly, $X_{n} \in I_{n}$ and $Y_{n} \in P$. We claim
that $I_{n}^{+}=I_{n}\bigvee \{Y_{n}\}$. Indeed, let $J$ be an
ideal containing strictly $I_{n}$. Let $Y:=\{m\in \mathbb{N}:
m\geq p\}\times\{0\}\cup G\times\{1\} \in J\setminus I_{n}$. Since
$Y\not\in I_{n}$, we have  $p\leq n$ hence  $Y_{n}\subseteq Y\cup
X_{n}\in J$. It follows that $Y_{n}\in J$, thus
$I_{n}^{+}\subseteq J$, proving our claim. Since $I_{n}^{+}\not=
I_{n}$, $I_{n}\in\triangle (J(P))$. Since, trivially,
$I_{n}^{+}\subseteq I_{n-1}$ we have $I_{n}\subset I_{n-1}$,
proving that the $I_n$'s form a strictly descending sequence.
\end{proof}

Let  $E$ be a set and $\mathcal{F}$ be a subset of $\mathfrak
P(E)$, the power set of $E$. For $x\in E$, set $\mathcal {F}_{\neg
x}:=\{F\in \mathcal{F}: x\not\in F\}$ and for $X \subset \mathcal
F$ , set $\overline X:= \bigcup X$. Let $\mathcal{F}^{<\omega}$
(resp. $\mathcal{F}^\cup$) be the collection of finite (resp.
arbitrary) unions of members of $\mathcal{F}$. Ordered by
inclusion, $\mathcal{F}^\cup$ is a  complete lattice
\index{complete lattice}, the least element and the largest
element being  the empty set and $\bigcup\mathcal{F}$,
respectively.

\begin{lemma}\label{cl2} Let $Q$ be a poset,  $\mathcal{F}$ be  a subset of $I_{<\omega}(Q)$ and $P:= \mathcal{F}^{<\omega}$ ordered by inclusion.
\begin{enumerate}[{(a)}]

\item The map $X\rightarrow \overline X$ is an isomorphism  from  $J(P)$ onto  $\mathcal{F}^\cup$ ordered by inclusion.

\item  If $I\in \triangle (J(P))$
then there is
some $x\in Q$ such that $I=P_{\neg x}$.

\item If $\downarrow q$ is finite for every  $q\in Q$ then $\overline{I^+}\setminus\overline{I}$ is finite for every  $I\in J(P)$ and
the set $\varphi_{\triangle}(X):= \{I\in {\triangle}(J(P)) : X\not\in I\}$ is finite for every $ X\in P$.

\end{enumerate}
\end{lemma}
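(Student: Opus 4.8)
The plan is to establish (a) first, since (b) and (c) both read off the structure of $J(P)$ through the identification it provides. For (a) I would exhibit an explicit inverse to $X \mapsto \overline{X}$, namely $U \mapsto X_U := \{A \in P : A \subseteq U\}$ for $U \in \mathcal{F}^\cup$. Both maps are visibly inclusion-preserving, and $X_U$ is a non-empty initial segment of $P$ closed under finite unions, hence an ideal, so it remains only to check the two round-trip identities. The identity $\overline{X_U} = U$ is immediate, because every member of a subfamily $\mathcal{G} \subseteq \mathcal{F}$ with $U = \bigcup \mathcal{G}$ already lies in $X_U$. The point needing care is $X_{\overline{X}} = X$: given $A \in P$ with $A \subseteq \overline{X} = \bigcup X$, I must locate $A$ inside the ideal $X$. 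Writing $A = F_1 \cup \dots \cup F_n$ with $F_i \in \mathcal{F}$ finitely generated, say $F_i = \downarrow E_i$ with $E_i$ finite, each of these finitely many generators lies in some member of $X$; directedness of $X$ yields a single $B \in X$ containing all of them, whence $F_i \subseteq B$ and, $X$ being an initial segment, $F_i \in X$; closure under finite unions then gives $A \in X$. This is the one spot where both \emph{directedness} of ideals and \emph{finite generation} of the members of $\mathcal{F}$ are essential.

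For (b) I would work inside $\mathcal{F}^\cup$ through the isomorphism of (a), so that $I \in \triangle(J(P))$ corresponds to a completely meet-irreducible $U := \overline{I} \in \mathcal{F}^\cup$. Complete meet-irreducibility means $U \neq U^+$, and then $U^+ = \bigwedge\{V \in \mathcal{F}^\cup : V \supsetneq U\}$ is the \emph{least} element of $\mathcal{F}^\cup$ strictly above $U$. Choosing any $x \in U^+ \setminus U$, I claim $I = P_{\neg x}$. The inclusion $I \subseteq P_{\neg x}$ is free, since every $A \in I$ satisfies $A \subseteq U$ while $x \notin U$. For the reverse, take $A \in P$ with $x \notin A$; if $A \not\subseteq U$ then $A \cup U \in \mathcal{F}^\cup$ lies strictly above $U$, hence contains $U^+$, and in particular $x$, forcing $x \in A$ or $x \in U$ -- a contradiction; thus $A \subseteq U$, i.e. $A \in I$. (Alternatively one can route this through Lemma \ref{lem01}, recognizing $I$ as a maximal ideal of $P \setminus \uparrow x$.) The main obstacle here is conceptual rather than computational: one must correctly transport the covering relation of $J(P)$ to $\mathcal{F}^\cup$ and exploit that $U^+$ is a \emph{single least} cover.

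For (c) I would first record the elementary consequence of the hypothesis that every $A \in P$ is finite: writing $A = \downarrow E$ with $E$ finite gives $A = \bigcup_{e \in E} \downarrow e$, a finite union of finite down-sets. For the first assertion, if $I$ is not completely meet-irreducible then $I^+ = I$ and $\overline{I^+} \setminus \overline{I} = \emptyset$; otherwise I pick $A \in I^+ \setminus I$ and note that the ideal of $P$ generated by $I \cup \{A\}$ contains $I$ properly yet is contained in $I^+$, so it equals $I^+$, whence $\overline{I^+} = \overline{I} \cup A$ by (a) and $\overline{I^+} \setminus \overline{I} \subseteq A$ is finite. For the second assertion I would invoke (b): each $I \in \varphi_{\triangle}(X)$ equals $P_{\neg x}$ for some $x \in Q$, and $X \notin P_{\neg x}$ forces $x \in X$; since $X$ is finite, only finitely many such $x$, and hence finitely many such ideals $P_{\neg x}$, can occur, so $\varphi_{\triangle}(X)$ is finite. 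The only genuine work in (c) is the finiteness observation and the identity $\overline{I^+} = \overline{I} \cup A$; everything else is bookkeeping layered on top of (a) and (b).
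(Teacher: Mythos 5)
Your proposal is correct and follows essentially the same route as the paper's own proof: part (a) rests on finite generation of the members of $P$ together with directedness and closure of ideals under finite unions, part (b) picks $x\in\overline{I^{+}}\setminus\overline{I}$ and exploits that $I^{+}$ is the \emph{least} ideal strictly above $I$ to conclude $I=P_{\neg x}$, and part (c) identifies $I^{+}$ with the ideal generated by $I\cup\{A\}$ (so $\overline{I^{+}}\setminus\overline{I}\subseteq A$) and then counts via (b). The only differences are organizational --- you exhibit an explicit inverse $U\mapsto\{A\in P: A\subseteq U\}$ where the paper proves the bi-implication $I\subseteq J\Leftrightarrow\overline{I}\subseteq\overline{J}$, you run (b) inside $\mathcal{F}^{\cup}$ rather than in $J(P)$, and you make explicit the trivial case $I=I^{+}$ in (c) that the paper leaves implicit.
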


\begin{proof}

\noindent $(a)$ Let $I$ and $J$ be two ideals of $P$. Then $J$
contains $I$ if and only if $\overline{J}$ contains $\overline{I}$.
Indeed, if $I\subseteq J$ then, clearly $\overline{I}\subseteq
\overline{J}$. Conversely, suppose $\overline{I}\subseteq
\overline{J}$. If $X\in I$, then $X\subseteq \overline{I}$, thus
$X\subseteq \overline{J}$. Since $X\in I_{<\omega}(Q)$, and
$X\subseteq \overline{J}$, there are $X_{1}, \ldots, X_{n}\in J$
such that $X\subseteq Y=X_{1}\cup \ldots \cup X_{n}$. Since $J$ is
an ideal $Y\in J$. It follows that $X\in J$.

\noindent $(b)$ Let  $I\in \triangle (J(P))$. From $(a)$,  we have
$\overline{I}\subset\overline{I^+}$. Let $x\in
\overline{I^+}\setminus\overline{I}$.  Clearly  $P_{\neg x}$ is an
ideal containing $I$. Since  $x\not\in \overline{P_{\neg x}}$,
$P_{\neg x}$ is distinct from $I^{+}$. Hence $P_{\neg x}=I$. Note
that the converse of assertion $(b)$ does not  hold in general.

\noindent $(c)$ Let
$I\in \triangle (J(P))$ and $X\in I^{+}\setminus I$.  We have $\{X\}\bigvee
I= I^+$, hence  from$(a)$ $\overline{\{X\} \bigvee
I}= \overline{I^+}$. Since  $\overline{\{X\} \bigvee
I}=X\cup \overline{I}$ we have
$\overline{I^+}\setminus\overline{I}\subseteq X$. From our hypothesis on $P$,  $X$ is
finite, hence $\overline{I^+}\setminus\overline{I}$ is finite. Let $X\in P$.  If  $I\in \varphi_{\triangle}(X)$ then according  to $(b)$
there is some $x\in Q$ such that $I= P_{\neg x}$. Necessarily $x\in X$.
Since $X$ is finite, the number of these $I$'s is finite.
\end{proof}

\begin{proposition}\label{lem03} Let $P$ be a join-semilattice.
The
following properties are equivalent:
\begin{enumerate}[(i)]
\item $P$ embeds in
$[E]^{<\omega}$ as a join-subsemilattice for some set $E$;
\item   for every $x\in P$, $\varphi_{\triangle}(x)$ is finite.
\end{enumerate}
\end{proposition}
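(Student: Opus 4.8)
The plan is to route everything through the canonical map $\varphi_{\triangle}$ of Fact \ref{representation}, taken with $Q:=\triangle(J(P))$, after the one observation that $[E]^{<\omega}$ is exactly $I_{<\omega}(E)$ when $E$ is regarded as an antichain, so that the principal initial segments $\downarrow q=\{q\}$ are finite. As established in the proof of Proposition \ref{lem04}, $\varphi_{\triangle}\colon P\to \mathfrak P(\triangle(J(P)))$ is a join-preserving order-embedding: join-preservation holds because for an ideal $I$ one has $x\vee y\notin I$ iff $x\notin I$ or $y\notin I$, whence $\varphi_{\triangle}(x\vee y)=\varphi_{\triangle}(x)\cup\varphi_{\triangle}(y)$; the embedding property holds because $J(P)$ is algebraic, so every ideal is a meet of completely meet-irreducible ideals and hence incomparable elements of $P$ are separated by some $I\in\triangle(J(P))$. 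Moreover $\overline{\varphi_{\triangle}(x)}=P\setminus\uparrow x$.

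For $(ii)\Rightarrow(i)$ I would simply take $E:=\triangle(J(P))$. By hypothesis every $\varphi_{\triangle}(x)$ is a finite subset of $E$, so $\varphi_{\triangle}$ maps $P$ into $[E]^{<\omega}$; since the join of $[E]^{<\omega}$ is set union, the join-preserving order-embedding $\varphi_{\triangle}$ is then a join-subsemilattice embedding of $P$ into $[E]^{<\omega}$, which is $(i)$. This is the whole of this direction.

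For $(i)\Rightarrow(ii)$ I would fix a join-subsemilattice embedding $\psi\colon P\to[E]^{<\omega}$ and set $\mathcal F:=\psi(P)\subseteq I_{<\omega}(E)$, with $E$ viewed as an antichain. As $\psi$ preserves joins and the join in $[E]^{<\omega}$ is union, $\mathcal F$ is closed under finite unions, so after normalising $\psi$ to send the least element (added to $P$ if necessary) to $\emptyset$ one has $\mathcal F^{<\omega}=\psi(P)\cong P$. Because $E$ is an antichain, $\downarrow q=\{q\}$ is finite for every $q\in E$, so Lemma \ref{cl2}(c) applies to the pair $(E,\mathcal F)$ and gives that $\varphi_{\triangle}(X)$ is finite for every $X\in\mathcal F^{<\omega}$; transporting this finiteness along the isomorphism $\psi$ yields $(ii)$.

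The substantive step is the reduction carried out in the last paragraph: the whole content of $(i)\Rightarrow(ii)$ is the recognition that, with $E$ an antichain, $[E]^{<\omega}=I_{<\omega}(E)$ has finite principal initial segments, which is exactly the hypothesis of Lemma \ref{cl2}(c). Everything else is formal, the only care being the least-element normalisation guaranteeing that $\mathcal F^{<\omega}$ is genuinely isomorphic to $P$ rather than to $P$ with a bottom element adjoined, together with the routine transfer of the completely meet-irreducible ideals of $J(\mathcal F^{<\omega})$ back to those of $J(P)$ along $\psi$.
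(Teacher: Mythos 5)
Your proof is correct and is essentially the paper's own argument: for (ii)$\Rightarrow$(i) both take $E:=\triangle(J(P))$ and the representation map $\varphi_{\triangle}$ of Fact \ref{representation}, and for (i)$\Rightarrow$(ii) both set $\mathcal F:=\psi(P)\subseteq I_{<\omega}(E)$, with $E$ viewed as an antichain, and apply Lemma \ref{cl2}(c), with the same least-element normalisation. The only difference is how the order-embedding property of $\varphi_{\triangle}$ is certified --- you invoke the Birkhoff--Frink fact that in an algebraic lattice every element is a meet of completely meet-irreducible elements, whereas the paper cites its lemma identifying the ideals $I$ with $x\in I^{+}\setminus I$ as the maximal ideals of $P\setminus\uparrow x$ (plus Zorn's lemma) --- but these are the same Zorn argument in different packaging.
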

\begin{proof}
$(i)\Rightarrow (ii)$ Let $\varphi$ be an embedding from $P$ in
$[E]^{<\omega}$ which preserves joins.  Set $\mathcal F:= \varphi
(P)$. Apply   part $(c)$  of Lemma \ref{cl2} .
 $(ii)\Rightarrow (i)$ Set $E:= \triangle (J(P))$. We have  $\varphi_{\triangle}(x)\in [E]^{<\omega}$.  According to Fact \ref{representation}
 and Lemma \ref{lem01},  the map $\varphi_{\triangle}: P \rightarrow [E]^{<\omega}$ is an embedding preserving joins.\end{proof}

 \begin{corollary}\label{sierpinski} Let $\beta$ be a countable  order type. If a proper initial segment
 contains infinitely many non-principal  initial segments then no sierpinskisation  $P$ of  $\beta$ with $\omega$
 can embed  in $[\omega]^{<\omega}$ as a join-semilattice (whereas it embeds as a poset).
 \end{corollary}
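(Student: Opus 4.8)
The plan is to deduce this from Proposition \ref{lem03}. Since $\beta$ is countable, the sierpinskisation $P$ is a countable join-semilattice carried by $\N$, and a countable join-semilattice embeds as a join-subsemilattice in $[\omega]^{<\omega}$ exactly when it join-embeds in $[E]^{<\omega}$ for \emph{some} set $E$: any join-embedding into $[E]^{<\omega}$ has image meeting only countably many points of $E$, so $E$ may be shrunk to a countable set. By Proposition \ref{lem03} the latter holds if and only if $\varphi_{\triangle}(x)=\{I\in \triangle(J(P)): x\notin I\}$ is finite for every $x\in P$. Hence, to show that no such $P$ embeds in $[\omega]^{<\omega}$ as a join-semilattice, it suffices to exhibit one $x\in P$ for which $\varphi_{\triangle}(x)$ is infinite, i.e. infinitely many completely meet-irreducible ideals of $P$ all avoiding a single fixed element. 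The parenthetical claim that $P$ embeds as a \emph{poset} is immediate: in a sierpinskisation the order is contained in the natural order $\mathfrak N$ of type $\omega$, so no element lies above infinitely many others, and by the characterisation recalled earlier such countable posets embed into $[\omega]^{<\omega}$ by inclusion.

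To produce such an $x$, I would read the completely meet-irreducible ideals of $P$ off the cuts of the chain $\mathfrak L$ of type $\beta$. Writing $\leq_P=\mathfrak N\cap\mathfrak L$, every initial segment $L$ of $\mathfrak L$ determines an initial segment $P_L:=\{y\in P:\ y\in L \text{ w.r.t. } \mathfrak L\}$ of $P$, because $\leq_P\subseteq\mathfrak L$; and $L\mapsto P_L$ is injective since $\mathfrak L$ is a bijective labelling of $\N$. The first step is to verify that when $L$ is a \emph{non-principal} (limit) initial segment of $\beta$, the set $P_L$ is directed, hence an ideal, and that it is completely meet-irreducible: the element of $\beta$ immediately above the cut defined by $L$ supplies the unique minimal ideal of $P$ strictly above $P_L$, so $P_L^{+}\neq P_L$ and therefore $P_L\in\triangle(J(P))$ by the definition of a completely meet-irreducible element.

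Now I invoke the hypothesis. There is a proper initial segment $J_{0}\subsetneq\beta$ containing infinitely many non-principal initial segments $L_{1},L_{2},\dots$. Since $J_0$ is proper, choose $x\in P$ whose $\mathfrak L$-position lies strictly above $J_{0}$. For each $k$, every element of $P_{L_k}$ has $\mathfrak L$-position inside $L_k\subseteq J_0$, hence strictly $\mathfrak L$-below $x$, so $x\notin P_{L_k}$; and each $P_{L_k}$ is a completely meet-irreducible ideal by the previous step. Thus $\{P_{L_k}:k\geq 1\}\subseteq \varphi_{\triangle}(x)$ is infinite, and Proposition \ref{lem03} yields that $P$ does not join-embed in $[\omega]^{<\omega}$.

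The main obstacle is the bookkeeping in the middle step: checking that the segments $P_L$ attached to limit cuts are genuinely directed (this is where the join-semilattice, i.e. monotone, nature of the sierpinskisation is used, so that cofinally many $\mathfrak N$-large elements have $\mathfrak L$-positions approaching the cut from below and keep joins of members of $P_L$ inside $P_L$) and that they are \emph{completely} meet-irreducible, not merely meet-irreducible. Equivalently, one may route the verification through Lemma \ref{lem01} by showing that $P\setminus\uparrow x$ has infinitely many maximal ideals: each such maximal ideal omits $x$ and, being of the form described there, satisfies $x\in I^{+}\setminus I$, hence is completely meet-irreducible. The substance is the same in either formulation, namely that the infinitely many limit cuts of $\beta$ lying below $J_{0}$ persist, after intersection with the $\omega$-order, as infinitely many distinct completely meet-irreducible ideals of $P$ missing the chosen $x$.
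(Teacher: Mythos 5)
You follow the same top-level route as the paper: reduce, via Proposition \ref{lem03}, to exhibiting one $x\in P$ for which $\varphi_{\triangle}(x)$ is infinite, and read the candidate ideals off the cuts of the chain $\mathfrak{L}$ of type $\beta$ (your sets $P_{L}$ are exactly the paper's non-principal ideals, by its Claim 1). The gap is in your middle step: it is \emph{not} true that every non-principal initial segment $L$ of $\mathfrak{L}$ yields a completely meet-irreducible ideal $P_{L}$. Your justification presupposes that ``the element of $\beta$ immediately above the cut'' exists, i.e.\ that $\beta\setminus L$ has a least element; a limit cut need not have one (e.g.\ the middle cut of a copy of $\omega+\omega^{*}$, or any irrational cut when $\beta$ embeds $\eta$). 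For such cuts the \emph{conclusion} fails, not just the argument: take the paper's own lattice sierpinskisation $\Omega(\eta)$ of $\omega(1+\eta)$ and $\omega$ (pairs $(n,i/2^{n})$ ordered componentwise), which is a join-semilattice satisfying the hypothesis of the corollary. For irrational $r$, the cut $L_{r}:=\{(n,a):a<r\}$ is a non-principal ideal, yet $L_{r}^{+}\subseteq\bigcap\{\,\{(n,a):a<b\}: b\ \text{dyadic},\ b>r\,\}=L_{r}$, so $L_{r}\notin\triangle(J(P))$. Since your proof needs infinitely many of the $P_{L_{k}}$ to be completely meet-irreducible, and nothing in the hypothesis prevents \emph{all} the cuts inside $J_{0}$ from being of this dense kind (consider $\beta=(\omega+\omega^{*})\omega+1$), the argument breaks at exactly this point. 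Moreover, even for successor cuts, where $P_{L}^{+}\neq P_{L}$ does hold, your description of the minimal ideal above $P_{L}$ as ``$P_{L}$ plus the successor element'' is wrong: that set is never directed, since $P_{L}$ is infinite while the successor has only finitely many $\mathfrak{N}$-predecessors.

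The paper's proof is built precisely around this difficulty, and the difference is not cosmetic. It never argues cut-by-cut: given a point $x_{n}$, it uses Zorn's lemma to extend a non-principal ideal avoiding $x_{n}$ to a \emph{maximal} one $I_{x_{n}}$, and it is maximality --- via the lemma asserting that $x\in I^{+}\setminus I$ if and only if $I$ is a maximal ideal of $P\setminus\uparrow x$, combined with the fact that principal ideals of a sierpinskisation are finite --- that forces $I_{x_{n}}\in\triangle(J(P))$, regardless of the shape of the cut. Getting infinitely many \emph{distinct} such ideals is then arranged by Ramsey's theorem: extract from the given cuts a strictly monotone sequence, separate consecutive members by points $x_{n}$, and take $I_{x_{n}}$ above the corresponding cut; all of them avoid the fixed $x$ because non-principal ideals are initial segments of $\mathfrak{L}$ and each $x_{n}$ lies $\mathfrak{L}$-below $x$. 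Your closing remark that routing the verification through ``infinitely many maximal ideals of $P\setminus\uparrow x$'' is ``the same in substance'' conflates your false cut-by-cut claim with this Zorn--Ramsey argument; the latter is exactly the missing content, and neither formulation in your write-up supplies it.
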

 \begin{proof}
According to Proposition \ref{lem03}  it suffices to prove  that $P$ contains some $x$ for which $\varphi_{\Delta}(x)$ is infinite.

 Let $P$ be a sierpinskisation  of $\beta$  and  $\omega$. It is  obtained as the intersection of two linear orders $L$,   $L'$ on the same set  and having respectively order type  $\beta$  and  $\omega$.   We may suppose that the ground set is  $\N$ and $L'$ the natural order.

{\bf  Claim 1}
 A non-empty subset $I$ is a non-principal ideal  of
$P$ if and only if this is a non-principal initial segment of $L$.

{\bf Proof of  Claim 1} Suppose that  $I$ is  a non-principal
initial segment of $L$. Then, clearly, $I$ is an initial segment
of $P$. Let us check that $I$  is up-directed. Let $x, y\in I$;
since $I$ is non-principal in $L$, the set $A:= I\cap\uparrow_{L}
x \cap \uparrow_{L} y$ of upper-bounds of $x$ and $y$ w.r.t.  $L$
which belong to  $I$ is infinite; since $B:= \downarrow_{L'} x
\cup \downarrow_{L'} y$ is finite, $A\setminus B$ is non-empty. An
arbitrary element $z\in A\setminus B$ is an upper bound of $x,y$
in $I$ w.r.t. the poset  $P$ proving that $I$ is up-directed.
Since $I$ is infinite, $I$ cannot have a largest element in $P$,
hence $I$ is a non-principal ideal of $P$. Conversely, suppose
that $I$ is  a non-principal ideal of $P$.   Let us check that $I$
is an initial segment of $L$. Let $x\leq_{L} y$ with $y\in I$.
Since $I$ non-principal in $P$, $A:=\uparrow_{P} y\cap I$ is
infinite; since $B:= \downarrow_{L'} x \cup\downarrow_{L'} y$ is
finite, $A\setminus B$ is non-empty. An arbitrary element of
$A\setminus B$ is an upper bound of $x$ and $y$ in $I$ w.r.t. $P$.
It  follows that $x\in I$. If $I$ has a largest element w.r.t. $L$
then such an element must be maximal in $I$ w.r.t. $P$, and since
$I$ is an ideal, $I$ is a principal ideal, a contradiction.

{\bf Claim 2} Let  $x\in \N$. If there is a non-principal ideal of
$L$ which does not contain $x$, there is a maximal one, say $I_x$.
If $P$ is a join-semilattice, $I_x\in \Delta(P)$.

{\bf Proof of Claim 2} The first part follows from Zorn's Lemma. The second part follows from Claim 1 and
 Lemma  \ref{lem01}.

 {\bf Claim 3} If an initial segment $I$ of $\beta$ contains infinitely many non-principal initial segments
 then there is an infinite sequence $(x_n)_{n<\omega}$ of elements of $I$ such that the  $I_{x_n}$'s  are all distinct.

 {\bf Proof of Claim 3} With Ramsey's theorem obtain a sequence $(I_n)_{n<\omega}$ of non-principal initial segments which is either strictly
 increasing \index{increasing sequence} or strictly decreasing \index{decreasing sequence}.
 Separate two successive members by some element $x_n$ and apply the first part of Claim 2.

If we pick $x\in \N \setminus I$ then it follows from Claim 3 and the second part of Claim 2 that
$\varphi_{\Delta}(x)$ is infinite.
\end{proof}

\begin{example}\label{ex:ordinal} If $\alpha$ is a countably infinite order type distinct from  $\omega$, $\Omega(\alpha)$ is not embeddable  in $[\omega]^{<\omega}$ as a join-semilattice.
\end{example}
Indeed,  $ \Omega(\alpha)$ is a sierpinskisation of $ \omega\alpha$ and $\omega$.  And  if $\alpha$ is distinct from  $\omega$,
 $\alpha$ contains  some element which majorizes  infinitely many others.  Thus $\beta:= \omega\alpha$ satisfies
 the hypothesis of Corollary \ref{sierpinski}.

 Note that on an other hand, for every ordinal $\alpha\leq \omega$, there are  representatives of  $ \Omega(\alpha)$ which are  embeddable in $[\omega]^{<\omega}$ as  join-semilattices.

\begin{theorem}\label {finitesubsets}
Let $Q$ be a well-founded poset and let $\mathcal{F}\subseteq I_{<\omega}(Q)$. The following properties are equivalent:
\begin{itemize}
\item[$1)$] $\mathcal{F}$ has no infinite antichain;
\item[$2)$] $\mathcal{F}^{<\omega}$ is wqo;
\item[$3)$] $J(\mathcal{F}^{<\omega})$ is topologically scattered;
\item[$4)$] $\mathcal{F}^\cup$ is order-scattered;
\item[$5)$] $\mathfrak{P}(\omega)$ does not embed in $\mathcal{F}^\cup$;
\item[$6)$] $\lbrack \omega\rbrack^{<\omega}$  does not embed in $\mathcal{F}^{<\omega}$;
\item[$7)$] $\mathcal{F}^\cup$ is well-founded.
\end{itemize}
\end{theorem}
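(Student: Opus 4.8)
The plan is to prove the whole equivalence as one cycle $1)\Rightarrow 2)\Rightarrow 3)\Rightarrow 4)\Rightarrow 5)\Rightarrow 6)\Rightarrow 7)\Rightarrow 1)$, after normalising the seven conditions. Write $P:=\mathcal{F}^{<\omega}$; a finite union of finitely generated initial segments is again one, so $P$ is a join-subsemilattice of $I_{<\omega}(Q)$, and since $Q$ is well-founded, part $a)$ of Theorem \ref{wellfounded} makes $I_{<\omega}(Q)$, hence $P$, well-founded. By part $(a)$ of Lemma \ref{cl2} the map $X\mapsto \overline X$ is an isomorphism of $J(P)$ onto $\mathcal{F}^\cup$, so conditions $3),4),5),7)$ are really four statements about the single algebraic lattice $L:=J(P)\cong\mathcal{F}^\cup$. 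Two facts about $P$ will be used throughout: it is well-founded, and by Proposition \ref{w-f} it contains no join-subsemilattice isomorphic to $\Omega(\omega^*)$.

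The routine links come first. For $1)\Leftrightarrow 2)$ I note that every join-irreducible of $P$ is a finite union of members of $\mathcal F$ equal to one of them, so $\J_{irr}(P)\subseteq\mathcal F\subseteq P$, and all three are well-founded; thus if $\mathcal F$ has no infinite antichain then $\mathcal F$ is wqo, hence so is its subset $\J_{irr}(P)$, hence (parts $c),d)$ of Theorem \ref{wellfounded}) $P$ is wqo, while the converse is immediate from $\mathcal F\subseteq P$. The step $3)\Rightarrow 4)$ is the elementary half of the scattered/topologically-scattered comparison, and $4)\Rightarrow 5)$ holds because $\mathfrak P(\omega)$ contains a copy of $\eta$, so an embedding of $\mathfrak P(\omega)$ into $\mathcal{F}^\cup$ would destroy order-scatteredness. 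For $5)\Rightarrow 6)$ I apply Theorem \ref{tm2.1} with $\kappa=\omega$: its clauses $(iii)$ and $(iv)$ say that $[\omega]^{<\omega}$ embeds in $P$ iff $\mathfrak P(\omega)$ embeds in $J(P)\cong\mathcal{F}^\cup$. Finally $6)\Rightarrow 7)$ is Theorem \ref{thm4} applied to $P$: being well-founded and omitting $\Omega(\omega^*)$, $P$ has $J(P)$ well-founded as soon as it omits $[\omega]^{<\omega}$ as a join-subsemilattice, which by Theorem \ref{tm2.1} is the same as omitting it as a subposet.

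For $2)\Rightarrow 3)$ I would argue topologically inside $\mathfrak P(P)$, showing $L=J(P)$ has no non-empty perfect closed subset. A Cantor scheme built on such a subset yields an $\omega$-branching family of ideals whose successive separations are witnessed by elements of $P$; reading these off through the separating/non-separating dichotomy behind Lemmas \ref{independent} and \ref{w*}, one obtains either an infinite separating chain of ideals, hence an infinite independent set, or an $\omega^*$-chain all of whose infinite subchains are non-separating, hence a copy of $\omega^*$ or of $\Omega(\omega^*)$ in $P$. Each outcome produces an infinite antichain (or a descending $\omega^*$), contradicting that $P$ is wqo.

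The real work is the closing implication $7)\Rightarrow 1)$, which I prove contrapositively: from an infinite antichain I manufacture a strictly descending chain in $\mathcal{F}^\cup$. Given a pairwise $\subseteq$-incomparable family $\{A_n\}_{n<\omega}\subseteq\mathcal F$, write $A_n=\downarrow M_n$ with $M_n$ its finite set of maximal elements (finite because $A_n$ is a finitely generated initial segment of the well-founded $Q$). It suffices to extract an infinite independent subfamily, for then the tails $U_k:=\bigcup_{n\ge k}A_n$ strictly descend in $\mathcal{F}^\cup$ and $7)$ fails. I split on whether some $A_n$ has a maximal element belonging to only finitely many of the $A_m$: if this occurs for infinitely many $n$ one selects such a private-element subfamily greedily, and it is independent by construction. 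The complementary case—where every maximal element of cofinally many $A_n$ lies below maximal elements of infinitely many other $A_m$—is where I expect the main obstacle, and here I would induct on the well-founded rank in $Q$ of the maximal elements, using a pigeonhole step to isolate a maximal element shared by infinitely many $A_m$ and then recursing on the incomparabilities that survive once that shared element is factored out.
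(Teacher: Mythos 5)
Most of your cycle coincides with the paper's own proof: your $1)\Rightarrow 2)$, $3)\Rightarrow 4)$, $4)\Rightarrow 5)$, $5)\Rightarrow 6)$ and $6)\Rightarrow 7)$ are, up to cosmetic rearrangement, exactly the printed arguments (in particular $6)\Rightarrow 7)$ via Proposition \ref{w-f} and Theorem \ref{thm4} is the paper's step verbatim). However, two links of your cycle have genuine gaps, and they are precisely the two places where the paper uses outside tools.

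First, $2)\Rightarrow 3)$. You propose a Cantor scheme on a non-empty perfect closed subset of $J(P)$ and then appeal to ``the separating/non-separating dichotomy behind Lemmas \ref{independent} and \ref{w*}''. But both of those lemmas take a \emph{chain} of ideals as input: Lemma \ref{independent} needs an infinite separating chain, and Lemma \ref{w*} needs an $\omega^*$-chain all of whose infinite subchains are non-separating. A Cantor scheme produces a tree of ideals in which the ideals sitting on incomparable nodes are in general incomparable under inclusion, and nothing in your construction extracts a chain from it; so the dichotomy simply has no object to apply to. The paper avoids this issue entirely: $P$ wqo gives $I(P)$ well-founded by Higman (Theorem \ref{wellfounded}, part $b)$), well-founded implies topologically scattered by Mislove \cite{misl}, and $J(P)$ inherits scatteredness as a subspace. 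If you want to stay self-contained, a direct argument is available: for wqo $P$, any minimal member $I$ of a non-empty closed $C\subseteq I(P)$ is isolated in $C$, because $P\setminus I$ is a finitely generated final segment $\uparrow F$ and the basic open set $\{X: X\cap F=\emptyset\}$ meets $C$ only in $I$. Your sketch, as written, cannot be completed.

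Second, and more seriously, $7)\Rightarrow 1)$, which you yourself identify as the crux. Your Case 1 greedy extraction does not close: when you adjoin a new member $A_{n_i}$ to the subfamily you must also guarantee that \emph{its} private maximal element avoids the finitely many members already chosen, and nothing you say ensures that such a candidate survives. The membership pattern in which the distinguished maximal element $q_n$ of $A_n$ lies in $A_i$ exactly for $i<n$ satisfies your Case 1 hypothesis (each $q_n$ lies in only finitely many $A_m$), yet after the first choice $A_{n_0}$ every later candidate has $q_n\in A_{n_0}$, so the greedy process stalls; ruling this pattern out would itself require an argument (ultimately resting on the well-foundedness of $Q$), and none is given. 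Your Case 2 is explicitly only a plan. The paper settles the whole extraction in one stroke with Ramsey's theorem \cite{rams} on triples: choose $f(i,j)\in Max(F_i)\setminus F_j$, colour a triple $i<j<k$ according to whether $f(i,j)=f(i,k)$; homogeneity in the ``all distinct'' colour is impossible because $Max(F_i)$ is finite, and homogeneity in the ``equal'' colour yields a single $g(i)\in Max(F_i)$ with $g(i)\notin F_j$ for every later $j$ in the homogeneous set $X$, whence the tails $G_i:=\bigcup\{F_j: i\leq j\in X\}$ form a strictly descending $\omega^*$-chain in $\mathcal{F}^\cup$. Without Ramsey's theorem, or some equivalent uniformization replacing your case split, this implication remains unproven in your proposal.
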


\begin{proof} We prove the following chain of implications:
$$1)\Longrightarrow 2)\Longrightarrow 3)\Longrightarrow
4)\Longrightarrow 5)\Longrightarrow 6)\Longrightarrow 7)
\Longrightarrow 1)$$ $1)\Longrightarrow 2)$. Since $Q$ is
well-founded then, as mentioned in $a)$ of Theorem
\ref{wellfounded}, $I_{<\omega}(Q)$ is well-founded.  It follows
first that $\mathcal{F}^{<\omega}$ is well-founded, hence from
Property $c)$ of Theorem  \ref{wellfounded},  every member of
$\mathcal{F}^{<\omega}$ is a finite join of join-irreducibles. Next,
as a subset of $\mathcal{F}^{<\omega}$, $\mathcal {F}$ is
well-founded, hence wqo according to  our hypothesis. The set of
join-irreducible members of $\mathcal{F}^{<\omega}$ is wqo as a
subset of $\mathcal {F}$.
 From Property $d)$  of Theorem  \ref{wellfounded}, $\mathcal{F}^{<\omega}$ is wqo \\
$2)\Longrightarrow 3)$. If $\mathcal{F}^{<\omega}$ is wqo then
$I(\mathcal{F}^{<\omega})$ is well-founded (cf. Property  ($b)$ of
Theorem \ref{wellfounded}). If follows that
$I(\mathcal{F}^{<\omega})$ is topologically scattered
(cf.\cite{misl});  hence all its subsets are topologically
scattered, in particular $J(\mathcal{F}^{<\omega})$. \\
$3)\Longrightarrow4)$. Suppose that $\mathcal{F}^\cup$ is not
ordered scatered. Let $f: \eta \rightarrow \mathcal{F}^\cup$ be an
embedding. For $r\in \eta$ set $\check f(r)=\bigcup \{f(r'):
r'<r\}$. Let $X:=\{\check f(r): r<\eta\}$. Clearly $X\subseteq
\mathcal{F}^\cup$. Furthermore $X$ contains no isolated point
(Indeed, since $\check f(r)=\bigcup \{\check f(r'): r'<r\}$, $\check
f(r)$ belongs to the topological closure of $\{\check f(r'):
r'<r\}$). Hence $\mathcal{F}^\cup$ is not topologically scatered.\\
$4)\Longrightarrow 5)$. Suppose that $\mathfrak{P}(\omega)$ embeds
in $\mathcal{F}^\cup$. Since $\eta\leq \mathfrak{P}(\omega)$, we
have
$\eta\leq \mathcal{F}^\cup$.\\
$5)\Longrightarrow 6)$. Suppose that $[\omega]^{<\omega}$ embeds in
$\mathcal{F}^{<\omega}$, then $J([\omega]^{<\omega})$ embeds in
$J(\mathcal{F}^{<\omega})$. Lemma \ref{cl2} assures that
$J(\mathcal{F}^{<\omega})$ is isomorphic to $\mathcal{F}^{\cup}$. In
the other hand $J([\omega]^{<\omega})$ is isomorphic to
$\mathfrak{P}(\omega)$. Hence $\mathfrak{P}(\omega)$ embeds in
$\mathcal{F}^{\cup}$.\\
$6)\Longrightarrow 7)$. Suppose $\mathcal{F}^{\cup}$ not
well-founded. Since $Q$ is well-founded, $a)$ of Theorem
\ref{wellfounded} assures $I_{<\omega}(Q)$ well-founded, but
$\mathcal{F}^{<\omega}\subseteq I_{<\omega}(Q)$, hence
$\mathcal{F}^{<\omega}$ is well-founded. Furthermore, since
$I_{<\omega}(Q)$ is closed under finite unions, we have
$\mathcal{F}^{<\omega}\subseteq I_{<\omega}(Q)$, Proposition \ref
{w-f} implies that $\underline\Omega (\omega^{*})$ does not embed in
$\mathcal{F}^{<\omega}$.
From Theorem \ref{thm4}, we have $\mathcal{F}^{<\omega}$ not well-founded.\\
$7)\Longrightarrow 1)$. Clearly, $\mathcal F$ is well-founded. If
$F_{0}, \dots, F_{n}\dots $ is an infinite antichain of members of
$\mathcal{F}$, define $f(i,j):[\omega]^{2}\rightarrow Q$, choosing
$f(i,j)$ arbitrary in $Max(F_{i})\setminus F_{j}$. Divide
$[\omega]^3$ into $R_{1}:=\{(i,j,k)\in [\omega]^3: f(i,j)=f(i,k)\}$
and $R_{2}:=[\omega]^3\setminus R_{1}$.
 From Ramsey's theorem,  cf. \cite {rams}, there is some infinite subset $X$ of $\omega$ such that $[X]^{3}$ is included in $R_{1}$ or in $R_{2}$.
The inclusion in $R_{2}$ is impossible since $\{f(i,j ): j<\omega
\}$, being included in $Max(F_{i})$,  is finite for every $i$. For
each $i\in X$, set $G_{i}:= \bigcup\{F_{j}: i\leq j\in X\}$. This
defines an $\omega^*$-chain in $\mathcal{F}^\cup$.
\end{proof}

\begin{remark} If $\mathcal{F}^{<\omega}$ is closed under finite
intersections then equivalence between $(3)$ and $(4)$ follows from
Mislove's Theorem mentioned in \cite{misl}.
\end{remark}
Theorem \ref {finitesubsets} above was obtained by the second author and M.Sobrani in the special case where $Q$ is an antichain \cite {pouzet, sobrani} .

 \begin{corollary} \label{provisoire}
If  $P$ is a join-subsemilattice of a join-semilattice of the form $[\omega]^{<\omega}$,
or more generally of the form $I_{<\omega} (Q)$ where $Q$ is some well-founded poset,
then $J(P)$  is well-founded if and only if $P$ has no infinite antichain.
\end{corollary}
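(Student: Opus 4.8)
The plan is to read this off directly from Theorem \ref{finitesubsets} and part (a) of Lemma \ref{cl2}, after a routine identification. Up to isomorphism I may assume $P$ is an actual subset of $I_{<\omega}(Q)$ which is closed under the join operation of $I_{<\omega}(Q)$. The first step is to observe that this join is just union: the join of two finitely generated initial segments $I,J$ of $Q$ is the least initial segment containing both, namely $I\cup J$, which is again a finitely generated initial segment. Hence $P$, being a join-subsemilattice, is closed under finite unions. Setting $\mathcal{F}:=P$, I then get $\mathcal{F}\subseteq I_{<\omega}(Q)$ and $\mathcal{F}^{<\omega}=P$, since every finite union of members of $P$ already lies in $P$ and each member of $P$ is the union of the one-element family consisting of itself (the possible empty-union element affects neither well-foundedness nor the existence of infinite antichains, so it can be ignored).

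Next I would apply part (a) of Lemma \ref{cl2} with this $\mathcal{F}$ and with $Q$ well-founded as given. That lemma provides the isomorphism $X\mapsto \overline X$ from $J(P)=J(\mathcal{F}^{<\omega})$ onto $\mathcal{F}^{\cup}$ ordered by inclusion. Consequently $J(P)$ is well-founded if and only if $\mathcal{F}^{\cup}$ is well-founded. This is the only place where the structural hypothesis (that $P$ sits inside some $I_{<\omega}(Q)$) is really used, so getting the identification $\mathcal{F}^{<\omega}=P$ exactly right is the crux.

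Finally, since $Q$ is well-founded, Theorem \ref{finitesubsets} applies verbatim to $\mathcal{F}=P$; the equivalence of its conditions $(7)$ and $(1)$ says that $\mathcal{F}^{\cup}$ is well-founded if and only if $\mathcal{F}$ has no infinite antichain. Chaining these equivalences, $J(P)$ is well-founded $\iff$ $\mathcal{F}^{\cup}$ is well-founded $\iff$ $P$ has no infinite antichain, which is the assertion. The displayed special case $I_{<\omega}(Q)=[\omega]^{<\omega}$ needs no separate argument, since $[\omega]^{<\omega}=I_{<\omega}(A)$ for $A$ an infinite antichain, and an antichain is well-founded. The main obstacle is genuinely minor: it is simply the bookkeeping that $\mathcal{F}^{<\omega}$ coincides with $P$ so that Lemma \ref{cl2} and Theorem \ref{finitesubsets} can be invoked with $\mathcal{F}=P$; once that is settled the corollary is immediate.
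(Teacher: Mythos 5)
Your proof is correct and follows exactly the route the paper intends: the corollary is stated as an immediate consequence of Theorem \ref{finitesubsets}, and your identification $\mathcal{F}:=P$ (so that $\mathcal{F}^{<\omega}=P$ up to the harmless empty-union element), combined with the isomorphism $J(\mathcal{F}^{<\omega})\cong\mathcal{F}^{\cup}$ from Lemma \ref{cl2}(a) and the equivalence $(1)\Leftrightarrow(7)$ of the theorem, is precisely the intended bookkeeping. Your observation that joins in $I_{<\omega}(Q)$ are unions, which makes $P$ closed under finite unions, is the key point and you have it right.
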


{\bf Remark.} If, in Theorem \ref {finitesubsets} above, we suppose that $\mathcal  F$ is well-founded instead of $Q$,
all implications in the above chain hold, except $6)\Rightarrow 7)$.  A counterexample is provided by $Q:= \omega \oplus \omega^*$,
the direct sum of the chains $\omega$ and
$\omega^*$, and
$\mathcal F$, the image of $\underline \Omega (\omega^*)$ via a natural embedding.

\subsection{ Proof of  Theorem \ref {thmwf}}
 $(i)\Rightarrow (ii)$ Suppose that $(i)$ holds. Set $Q:= J(P)$. Since $P$ contains no infinite antichain,
 $P$ embeds as a join-subsemilattice in $ I_{<\omega}(Q)$ (Corollary \ref {corthm16}). From $b)$ of Theorem \ref{wellfounded} $Q$ is well-founded.
 Since $P$ has no infinite antichain, it has no infinite independent set.

  $(ii)\Rightarrow (i)$ Suppose that $(ii)$ holds. Since $Q$ is well-founded, then from $a)$ of Theorem \ref{wellfounded}, $I_{<\omega}(Q)$ is well-founded.
  Since $P$ embeds in $I_{<\omega}(Q)$, $P$ is well-founded. From our hypothesis, $P$ contains no infinite independent set.  According to  implication $(iii)\Rightarrow (i)$ of Theorem \ref{tm2.1} , it does not embed $[\omega]^{<\omega}$. From implication $6)\Rightarrow 1)$ of Theorem \ref {finitesubsets}, it has no infinite antichain. \endproof

\chapter[Length of chains in algebraic lattices]{On the length of chains in  algebraic lattices}\footnote{Les principaux r\'esultats de ce chapitre sont inclus dans l'article: I.Chakir, M.Pouzet, The length of chains in algebraic lattices, Les annales ROAD du LAID3, special issue 2008, pp 379-390 (proceedings of ISOR'08, Algiers, Algeria, Nov 2-6, 2008).} \label{chap:wellfoundedbis}

We study how the existence of a chain of a given type in an
algebraic lattice $L$ is reflected in the join-semilattice $K(L)$ of
its compact elements. We show that for every  chain $\alpha$ of size
$\kappa$, there is a set $\B$ of  at most $2^{\kappa}$
join-semilattices, each one having  a least element such that an
algebraic lattice $L$ contains no chain of order type $I(\alpha)$ if
and only if the join-semilattice $K(L )$ of its compact elements
contains no join-subsemilattice isomorphic to a member of $\B$. We
show that among the join-subsemilattices of $[\omega]^{<\omega}$
belonging to $\B$, one is embeddable in all the others. We
conjecture that if $\alpha$ is countable, there is a finite   $\B$.
We study some special cases, particularly  when $\alpha$ is an
ordinal.

\section{Introduction}
This paper is about the relationship between the length of chains in an algebraic lattice $L$ and the structure of the join-semilattice $K(L)$ of the compact elements of $L$.  We started such an investigation  in \cite{chak}, \cite{cp}, \cite{chapou}, \cite{chapou2}. We present first the motivation.

Let $P$ be an ordered set (poset). An \emph{ideal} of $P$ is any non-empty up-directed initial segment of $P$.  The set $J(P)$  of ideals of $P$, ordered by inclusion, is an interesting poset associated with $P$. For a concrete  example, if   $P:= [\kappa]^{<\omega}$   the set, ordered by inclusion, consisting of finite subsets of a set of size
$\kappa$, then $J([\kappa]^{<\omega})$ is isomorphic to $\mathfrak{P}(\kappa)$  the power set of $\kappa$ ordered by inclusion.  In \cite{chapou} we proved:
\begin{theorem} \label{thm0}
A poset $P$ contains a subset isomorphic to
$[\kappa]^{<\omega}$ if and only if $J(P)$ contains a subset isomorphic to
$\mathfrak {P}(\kappa)$.
\end{theorem}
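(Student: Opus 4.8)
The statement is an equivalence, so the plan is to prove the two implications separately; the converse direction (recovering $[\kappa]^{<\omega}$ inside $P$ from a copy of $\mathfrak{P}(\kappa)$ inside $J(P)$) is the substantial one, and I will spend most of the effort there.

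For the direct implication, suppose $\phi\colon[\kappa]^{<\omega}\to P$ is an order-embedding. For each $A\subseteq\kappa$ I would form the initial segment
\[
I_A:=\bigcup_{F\in[A]^{<\omega}}\downarrow\phi(F),
\]
generated by the images of the finite subsets of $A$. Each $I_A$ is a non-empty initial segment, and it is up-directed since $\phi(F_1),\phi(F_2)\le\phi(F_1\cup F_2)\in I_A$ for $F_1,F_2\in[A]^{<\omega}$; hence $I_A\in J(P)$. The map $A\mapsto I_A$ is clearly monotone, and it reflects the order: if $i\in A\setminus B$ then $\phi(\{i\})\in I_A$, whereas $\phi(\{i\})\in I_B$ would give $\phi(\{i\})\le\phi(F)$ for some $F\in[B]^{<\omega}$, forcing $\{i\}\subseteq F\subseteq B$ because $\phi$ reflects order, a contradiction. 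Thus $A\mapsto I_A$ embeds $\mathfrak{P}(\kappa)$ into $J(P)$.

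For the converse, let $\psi\colon\mathfrak{P}(\kappa)\to J(P)$ be an order-embedding. The key preliminary step is to extract a family $(a_i)_{i<\kappa}$ in $P$ that detects membership. Since $\{i\}\not\subseteq\kappa\setminus\{i\}$ and $\psi$ reflects the order, we have $\psi(\{i\})\not\subseteq\psi(\kappa\setminus\{i\})$, so I can choose
\[
a_i\in\psi(\{i\})\setminus\psi(\kappa\setminus\{i\}).
\]
Monotonicity of $\psi$ then yields the biconditional $a_i\in\psi(A)\iff i\in A$: if $i\in A$ then $a_i\in\psi(\{i\})\subseteq\psi(A)$, while if $i\notin A$ then $A\subseteq\kappa\setminus\{i\}$ gives $\psi(A)\subseteq\psi(\kappa\setminus\{i\})\not\ni a_i$. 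Next I would convert the copy $\{\psi(F):F\in[\kappa]^{<\omega}\}$ of $[\kappa]^{<\omega}$ living in $J(P)$ into a genuine copy inside $P$, by defining $g(F)\in P$ by recursion on $|F|$ so that $g(F)$ is an upper bound, chosen within the ideal $\psi(F)$, of the finite set $\{a_i:i\in F\}\cup\{g(F'):F'\subsetneq F\}$ (for $F=\emptyset$ one takes any element of the non-empty ideal $\psi(\emptyset)$). Such a choice always exists because every element of this set already lies in $\psi(F)$ — indeed $a_i\in\psi(\{i\})\subseteq\psi(F)$ for $i\in F$, and $g(F')\in\psi(F')\subseteq\psi(F)$ for $F'\subseteq F$ by the induction hypothesis — and $\psi(F)$, being an ideal, is up-directed. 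By construction $F\subseteq G$ implies $g(F)\le g(G)$; conversely, if $i\in F\setminus G$ then $a_i\le g(F)$ while $a_i\notin\psi(G)\ni g(G)$, so $g(F)\not\le g(G)$. Hence $g$ is an order-embedding of $[\kappa]^{<\omega}$ into $P$.

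The step I expect to be the main obstacle is the \emph{monotonicity} of the lift $g$: an arbitrarily selected upper bound of $\{a_i:i\in F\}$ in $\psi(F)$ need not be compatible, along inclusions, with the elements chosen for subsets of $F$. The recursion on $|F|$ is precisely what overcomes this — by folding the previously built $g(F')$ into the constraint set and noting that all constraints lie in the single up-directed ideal $\psi(F)$, a coherent monotone choice is always available. The separation property $a_i\in\psi(A)\iff i\in A$ does double duty here, both guaranteeing that the required upper bounds exist (so $g$ is well defined) and forcing $g$ to reflect the order.
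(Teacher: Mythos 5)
Your proof is correct in both directions. The forward direction is essentially the paper's: your $I_A:=\bigcup_{F\in[A]^{<\omega}}\downarrow\phi(F)$ is exactly the image of the ideal $[A]^{<\omega}$ under the canonical map $I\mapsto\downarrow\phi[I]$ from $J([\kappa]^{<\omega})\cong\mathfrak{P}(\kappa)$ into $J(P)$ that the paper uses. The converse is where you genuinely diverge. The paper does not argue directly about $\mathfrak{P}(\kappa)$; it proves a more general proposition: for any poset $K$ in which every $\downarrow x$ is finite and every $K\setminus\uparrow x$ is a finite union of ideals, one has $K\leq P$ if and only if $J(K)\leq J(P)$. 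Its lifting picks $g(x)$ in the set $C(x)=f(\downarrow x)\setminus\bigcup\{f(J): x\notin J\in J(K)\}$, and the delicate point is that $C(x)$ (and $C(x)\cap\uparrow z$) is non-empty; this rests on the join-primality of ideals among initial segments (an ideal contained in a finite union of ideals is contained in one of them), which is where the hypothesis on $K\setminus\uparrow x$ enters. You replace all of this by exploiting the co-atoms $\kappa\setminus\{i\}$ of $\mathfrak{P}(\kappa)$: the detector elements $a_i\in\psi(\{i\})\setminus\psi(\kappa\setminus\{i\})$ satisfy $a_i\in\psi(A)\iff i\in A$, so your recursive choice of $g(F)$ only involves \emph{positive} constraints (dominate finitely many elements already inside the up-directed ideal $\psi(F)$), and order-reflection is immediate; no primality argument and no non-emptiness of a difference set is needed beyond the base case. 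The trade-off: your argument is shorter and self-contained but tied to the special structure of $\mathfrak{P}(\kappa)$, whereas the paper's proposition is reusable and is in fact applied later to other posets $K$ (e.g. $I_{<\omega}(Q)$ and sierpinskizations) where no such co-atoms are available.
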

Maximal chains in $\mathfrak {P}(\kappa)$ are of the form  $I(C)$, where  $I(C)$ is the chain of initial segments of an arbitrary chain $C$ of size $\kappa$
(cf. \cite {bonn-pouz}). Hence, if $J(P)$ contains a subset isomorphic to
$\mathfrak{P}(\kappa)$ it contains a  copy of $I(C)$ for every  chain $C$ of size $\kappa$, whereas chains in $P$ can be
 small: eg in $P:=  [\kappa]^{<\omega}$ they are finite or have order type $\omega$. What
happens if for a given order type $\alpha$, particularly a countable one,  $J(P)$ contains no chain of type $\alpha$? A partial answer was given by Pouzet, Zaguia, 1984 (cf. \cite{pz} Theorem 4, pp.62).  In order to state their result,  we recall that the  order type $\alpha$ of a chain $C$ is \emph{indecomposable} if $C$ can be embedded in each non-empty final segment of $C$.

\begin{theorem} \label{thm2}\footnote{In Theorem 4, $I(\alpha)$ is replaced  by $\alpha$.  This is due to the fact that  if $\alpha$ is a countable indecomposable order type and $P$ is a poset, $I(\alpha)$ can be embedded into $ J(P)$ if and only if $\alpha$ can be embedded  into $J(P)$. } Given an indecomposable countable order type $\alpha$, there is a finite list of ordered sets $A_{1}^{\alpha}, A_{2}^{\alpha},
\ldots, A_{n_{\alpha}}^{\alpha}$ such that for every poset $P$, the set $J(P)$ of ideals of $P$ contains no chain of type $I(\alpha)$
if and only if $P$ contains no subset isomorphic to one of the $A_{1}^{\alpha}, A_{2}^{\alpha}, \ldots,
A_{n_{\alpha}}^{\alpha}$.
\end{theorem}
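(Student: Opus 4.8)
The plan is to prove the two implications separately. The forward direction (\emph{soundness}) --- that containment of any $A_i^\alpha$ in $P$ forces a chain of type $I(\alpha)$ in $J(P)$ --- is the routine half, handled by exhibiting explicit chains of ideals. The converse (\emph{completeness}) --- that a chain of type $I(\alpha)$ in $J(P)$ forces a copy of one of the $A_i^\alpha$ in $P$ --- is the substantial half, and I would carry it out by induction on the structural complexity of the countable indecomposable type $\alpha$. At the outset I would invoke the reduction noted in the footnote: since $\alpha$ is indecomposable and countable, $J(P)$ contains a copy of $I(\alpha)$ if and only if it contains a copy of $\alpha$, so throughout I may work with chains of type $\alpha$ rather than $I(\alpha)$.

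For soundness, the canonical obstruction is the monotone sierpinskization $\underline\Omega(\alpha)$: fix a bijection $\varphi\colon \omega\alpha \to \mathbb N$ with $\varphi^{-1}$ order-preserving on each block $\omega\cdot\{\beta\}$, order $\mathbb N$ by declaring $x\le y$ iff $x\le y$ naturally and $\varphi(x)\le\varphi(y)$ in $\omega\alpha$, and adjoin a least element. For each initial segment $S$ of the chain of type $\alpha$, the set $\varphi^{-1}(\omega\cdot S)$ is an ideal of $\underline\Omega(\alpha)$, and as $S$ ranges over $I(\alpha)$ these ideals form a chain of the desired type; up-directedness of each such ideal is exactly where the increasing behaviour of $\varphi^{-1}$ on blocks is used. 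The remaining obstructions are either the chain $\alpha$ itself --- whose principal ideals already realize $\alpha$ inside $J(P)$ --- or lexicographic sums of obstructions attached to chains of type strictly below $\alpha$, for which the chain of ideals is assembled block by block. The mutual embeddability of all monotone sierpinskizations (\cite{pz}, Lemma~3.4.3) is what allows a single symbol $\underline\Omega(\alpha)$ to represent the whole family, and this is essential to keeping the final list finite.

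For completeness, suppose $J(P)$ carries a chain $\mathcal C$ of type $I(\alpha)$. I would apply the separating / non-separating dichotomy used in Lemmas \ref{independent} and \ref{w*}: either some suitable subchain of $\mathcal C$ is separating, in which case an independence argument together with a Ramsey extraction (in the style of the proof of Theorem \ref{finitesubsets}) yields a copy of the sierpinskization $\Omega(\alpha)$ in $P$; or no subchain is separating, in which case the ideals accumulate along a genuine chain and one reads off a copy of $\alpha$ itself. The induction enters through the shape of $\alpha$: when $\alpha$ is one of the prime indecomposable types $\omega$, $\omega^*$, $\eta$ the dichotomy closes directly --- the cases $\omega^*$ and $\eta$ being essentially the poset results of \cite{pz} reflected in Theorem \ref{thm4} --- whereas when $\alpha$ is a nontrivial lexicographic sum one localizes $\mathcal C$ to the blocks, applies the inductive hypothesis to each block (whose type is strictly simpler), and glues the component obstructions into a single lexicographic-sum obstruction.

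The step I expect to be the main obstacle is the bookkeeping that forces the list to be \emph{finite}. For an arbitrary order type the analogous list has size up to $2^{|\alpha|}$ (cf. Theorem \ref{thmfirst}), so finiteness here is genuinely special to indecomposable countable $\alpha$; it rests on the collapse of all sierpinskizations of $\alpha$ to one obstruction and on the fact that the recursion for countable indecomposable types combines only finitely many component obstructions before terminating. I would therefore arrange the completeness argument so that the gluing step draws obstructions from a predetermined finite schema --- namely $\alpha$, $\underline\Omega(\alpha)$, and the finitely many lexicographic sums dictated by the canonical decomposition of $\alpha$ --- rather than from an unconstrained search, and then check that every branch of the dichotomy lands inside this schema. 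Verifying that these finitely many posets are exactly the obstructions, with none redundant and none missing, is the delicate heart of the proof.
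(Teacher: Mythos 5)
The first thing to say is that this paper contains no proof of Theorem \ref{thm2}: it is imported from Pouzet--Zaguia \cite{pz} (Theorem 4, p.~162) as the motivation for the chapter, and the text states only that the author's own results were obtained by ``mimicking'' its proof. So your proposal cannot be compared with an internal argument; it has to be judged on its own merits.

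On its merits it is a plan rather than a proof, and two of its load-bearing steps are defective. First, the machinery you invoke --- separating chains, Lemmas \ref{independent} and \ref{w*}, independent sets --- is join-semilattice machinery: the definition of ``separating'' uses the join $\{x\}\bigvee J$ in $J(P)$, and independence uses finite joins $\bigvee F$; for an arbitrary poset $P$ neither is available (and $J(P)$ need not be a lattice), so the dichotomy cannot simply be ``applied'' --- it would have to be rebuilt with poset-appropriate substitutes (e.g.\ Theorem \ref{thm0} in place of Theorem \ref{tm2.1}). Second, even granting such a dichotomy, you have wired it backwards: in Lemma \ref{w*} it is the \emph{non-separating} $\omega^*$-chains that yield $\omega^*$ or the sierpinskization $\Omega(\omega^*)$, while \emph{separating} chains yield an infinite independent set, hence $[\omega]^{<\omega}$ (which, as a poset, contains every $\Omega(\beta)$ --- this is exactly why $[\omega]^{<\omega}$ is redundant in the poset list, as the paper notes). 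Your sketch attaches the Ramsey/sierpinskization extraction to the separating side and claims the non-separating side ``reads off a copy of $\alpha$ itself,'' which for $\alpha$ beyond $\omega^*$ is precisely the part that is not routine. Finally, the inductive gluing step --- decomposing a countable indecomposable $\alpha$ into quasi-monotonic $\omega$- or $\omega^*$-sums, producing lexicographic-sum obstructions for the blocks, and verifying that the resulting list is finite, sufficient and irredundant --- is the actual substance of the Pouzet--Zaguia proof, and you explicitly defer it (``the delicate heart''). What you have written is an accurate table of contents for such a proof, not the proof.
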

If $P$ is a join-semilattice with a least element, $J(P)$ is an algebraic lattice, moreover every algebraic lattice is isomorphic to the poset $J(K(L))$ of ideals  of the join-semilattice $K(L)$ of the compact elements of $L$ (see \cite{grat}).
It is natural to ask whether  the two results above change if the poset $P$ is  a join-semilattice and if  one consider join-subsemilattices instead of subsets of $P$.

The specialization of Theorem \ref{thm0} to this case is immediate
(in fact easier to prove) and shows no difference.  Indeed {\it a  join-semilattice $P$ contains a subset isomorphic to
$[\kappa]^{<\omega}$ if and only if it contains a join-subsemilattice isomorphic to
$[\kappa]^{<\omega}$}.
The specialization of Theorem \ref{thm2} turns to be  different and is far from being  immediate.  In fact, we do not know yet whether  there is a finite list.

At first glance,  if the set  $J(P)$ of ideals of a join-semilattice  $P$ contains a chain of type  $I(\alpha)$ then according to  Theorem \ref{thm2},  $P$  contains, as a poset,  one of the   $A_{i}^{\alpha}$'s. Thus, $P$ contains,  as a join-subsemilattice,  the join-semilattice generated by  $A_{i}^{\alpha}$ in $P$. A description of  the  join-semilattices generated by the $A_{i}^{\alpha}$'s would lead to the specialization of Theorem \ref{thm0}. We have been unable to succeed in this direction. The only results we have obtained so far have been obtained by  mimicking the proof of Theorem \ref{thm2}.

In this result,  the $A_{1}^{\alpha}, \ldots, A_{n_\alpha}^{\alpha}$'s  are the "obstructions" to the existence of a chain of type  $\alpha$. Typical obstructions are built via  sierpinskisations. Let $\alpha$ be a countable chain and $\omega$ be the chain of non-negative integers.  A \emph{sierpinskisation} of $\alpha$ and $\omega$, or simply of $\alpha$,  is any poset $(S, \leq)$ such that the order on $S$ is the intersection of two linear orders on $S$, one of type $\alpha$,  the other of type $\omega$. Such a  sierpinskisation can   be obtained from a bijective map $\varphi:\omega \rightarrow \alpha$, setting $S:=\N$ and $x\leq y$ if $x\leq y$ w.r.t. the natural order on  $\N$ and  $\varphi(x)\leq \varphi(y)$ w.r.t. the order of type  $\alpha$. The proof of Theorem \ref{thm2} involves sierpinskisations of $\omega.\alpha$ and $\omega$, where $\omega.\alpha$ is the ordinal sum of $\alpha$ copies of the chain $\omega$, these sierpinskisations being obtained from bijective maps $\varphi:\omega\rightarrow \omega\alpha$ such that $\varphi^{-1}$ is order-preserving on each subset of the form $\omega\times \{\beta\}$ where $\beta\in \alpha$.  For brevity, we say that these sierpinskisations are \emph{monotonic}\index{monotonic sierpinskisation}. Augmented of a least element,  if it has none,  a monotonic sierpinskisation contains a chain  of ideals of type  $I(\alpha)$. Moreover, all posets obtained via this process can be embedded in each other. They are denoted by the same symbol $\underline {\Omega}(\alpha)$ (cf. \cite{pz} Lemma 3.4.3).  If $\alpha=\omega^*$ or $\alpha= \eta$, the list is reduced to $\underline {\Omega}(\alpha)$ and $\alpha$. For other order types, there are  other obstructions. They are  obtained by means of lexicographical sums of obstructions corresponding to chains of order type strictly less than  $\alpha$.


As we will see, among the monotonic sierpinskisations of $\omega\alpha$ and $\omega$ there are some which are join-subsemilattices of the direct product $\omega\times \alpha$ that  we call  \emph{lattice sierpinskisations}.   This suggests to prove  the specialization of Theorem \ref{thm2} along the same lines. We succeeded for $\alpha= \omega^*$. We did not for $\alpha= \eta$.

We  observe that for other countable chains, there are other obstructions that we have to take into account:

In  Theorem \ref{thm2},
$[\omega]^{<\omega}$ never occurs in the list $A_{1}^{\alpha}, A_{2}^{\alpha}, \ldots, A_{n_{\alpha}}^{\alpha}$.  We will prove in this paper that  $[\omega]^{<\omega}$ occurs necessarily in a list  if and ony if $\alpha$ is not an ordinal (Theorem \ref{ordinal}). And we will prove that if $\alpha$ is an ordinal  then $[\omega]^{<\omega}$ contains an   obstruction  which necessarily occurs in every list  of obstructions. This obstruction is the  join-semilattice $Q_{\alpha}:=I_{<\omega}(S_{\alpha})$ made of the finitely generated initial segments of $S_{\alpha}$, where $S_{\alpha}$ is  a sierpinskisation of $\alpha$ and $\omega$ (Theorem \ref{thm:qalpha}).

We conjecture that with these extra obstructions added, the specialization of Theorem \ref{thm2}  can be  obtained. We guess that the case of ordinal number is not far away. But we  are only able to give an answer in very few cases.

\section{Presentation of the results}Let $\A$, resp. $\J$,   be the class of algebraic lattices, resp. join-semilattices having a least element.  Given an order type  $\alpha$,  let $I(\alpha)$ be the order type of the chain $I(C)$ of initial segments of a chain $C$ of order type $\alpha$, let $\A_{\neg \alpha} $ be the class of algebraic lattices $L$ such that $L$ contains no chain of order type $I(\alpha)$, let $\J_{\neg \alpha}$ be the subclass of $P\in \J$ such that $J(P)\in \A_{\neg \alpha}$, let $\J_{\alpha}:= \J \setminus \J_{\neg \alpha}$ and,  for a  subcollection $\B$ of $\J$, let $Forb_{\J}(\B)$ be  the class of $P\in \J$ such that no member of $\B$ is isomorphic to a join-subsemilattice of $P$. We ask:

 \begin{question}Find $\B$ as simple as possible such that:

\begin{equation}\label {eqforbid}L\in \A_{\neg\alpha}  \; \text{if and only if}\;  K(L)\in Forb_{\J}(\B).
 \end{equation}
 or equivalently:
 \begin{equation}\label{eqforbid2}\J_{\neg \alpha}=Forb_{\J}(\B).
 \end{equation}

 \end{question}

 We  prove that we can find some $\mathbb{B}$ of size at most $2^{\mid\alpha\mid}$.
\begin{theorem}\label{thmfirst}
Let  $\alpha$ be an order type. There is a list
$\mathbb{B}$ of join-semilattices, of size at most  $2^{\mid\alpha\mid}$, such that for every join-semilattice  $P$, the lattice
$J(P)$ of ideals of  $P$ contains  no chain of order type  $I(\alpha)$ if and only if  $P$ contains no join-subsemilattice isomorphic to a member of  $\mathbb{B}$.
\end{theorem}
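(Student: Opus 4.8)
The plan is to reformulate the statement in terms of the join-subsemilattice quasi-order on $\J$: writing $P\leq P'$ when $P$ embeds in $P'$ as a join-subsemilattice, the class $\J_{\alpha}$ is upward closed for this quasi-order, and proving the theorem amounts to producing a set $\mathbb{B}\subseteq\J_{\alpha}$ with $\J_{\alpha}=\,\uparrow\!\mathbb{B}$ and $\vert\mathbb{B}\vert\leq 2^{\mid\alpha\mid}$. I would take $\mathbb{B}$ to be a set of representatives, up to isomorphism, of those members of $\J_{\alpha}$ that are generated, as join-semilattices, by at most $\mid\alpha\mid$ elements. The proof then splits into three tasks: a \emph{transfer} lemma giving $\uparrow\!\mathbb{B}\subseteq\J_{\alpha}$, an \emph{extraction} step giving $\J_{\alpha}\subseteq\,\uparrow\!\mathbb{B}$, and a counting remark bounding $\vert\mathbb{B}\vert$.

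For the transfer lemma, let $f\colon B\to P$ be a join-preserving order-embedding. Since $f$ preserves joins it sends ideals of $B$ to up-directed subsets of $P$, so the map $\hat f(I):=\downarrow f(I)$ carries $J(B)$ into $J(P)$ and is inclusion-preserving. It is in fact strictly increasing: if $I\subsetneq I'$ in $J(B)$, pick $b\in I'\setminus I$; were $f(b)\in\downarrow f(I)$ we would have $f(b)\leq f(a)$ for some $a\in I$ (using that $I$ is closed under finite joins and $f$ preserves them), whence $b\leq a\in I$ and $b\in I$, a contradiction. Thus $\hat f$ is an order-embedding of $J(B)$ into $J(P)$, so any chain of type $I(\alpha)$ in $J(B)$ produces one in $J(P)$. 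In particular each $B\in\mathbb{B}\subseteq\J_{\alpha}$ forces $P\in\J_{\alpha}$ whenever $B\leq P$.

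The extraction step is the heart of the argument. Suppose $P\in\J_{\alpha}$ and fix a chain $(K_{t})_{t\in I(C)}$ of ideals of $P$ isomorphic to $I(C)$, where $C$ is a chain of type $\alpha$ and $t$ ranges over initial segments of $C$. For $c\in C$ write $K_{c}^{-}:=K_{\{d:\,d<c\}}$ and $K_{c}^{+}:=K_{\{d:\,d\leq c\}}$; since these two initial segments of $C$ are distinct, $K_{c}^{-}\subsetneq K_{c}^{+}$ and we may choose a witness $x_{c}\in K_{c}^{+}\setminus K_{c}^{-}$. Let $B$ be the join-subsemilattice of $P$ generated by $\{0\}\cup\{x_{c}:c\in C\}$; then $\vert B\vert\leq\mid\alpha\mid$ when $\alpha$ is infinite. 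To see $B\in\J_{\alpha}$, associate to every initial segment $s\in I(C)$ the ideal $L_{s}$ of $B$ generated by $\{x_{c}:c\in s\}$. The assignment $s\mapsto L_{s}$ is inclusion-preserving, and it is \emph{strictly} so: if $s\subsetneq s'$, choose $c\in s'\setminus s$; every $d\in s$ satisfies $d<c$ (as $s$ is an initial segment missing $c$), so $x_{d}\in K_{d}^{+}\subseteq K_{c}^{-}$, whence all generators of $L_{s}$ lie in the ideal $K_{c}^{-}$ and therefore $L_{s}\subseteq K_{c}^{-}\cap B$; since $x_{c}\notin K_{c}^{-}$ while $x_{c}\in L_{s'}$, we get $L_{s}\subsetneq L_{s'}$. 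Hence $s\mapsto L_{s}$ is an order-embedding of $I(C)$ into $J(B)$, so $J(B)$ contains a chain of type $I(\alpha)$ and $B\in\J_{\alpha}$, as required.

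Finally, for the counting, a join-semilattice of cardinality at most $\kappa:=\mid\alpha\mid$ is, up to isomorphism, a binary operation on a set of size $\leq\kappa$, and there are at most $\kappa^{\kappa}=2^{\kappa}$ such operations when $\kappa$ is infinite; hence $\vert\mathbb{B}\vert\leq 2^{\mid\alpha\mid}$. (For $\alpha$ finite the theorem is immediate, taking $\mathbb{B}=\{1+\alpha\}$.) I expect the strict monotonicity of $s\mapsto L_{s}$ in the extraction step to be the delicate point: it is exactly here that one must check the chosen witnesses $x_{c}$ genuinely reconstruct the whole chain $I(\alpha)$ inside the small semilattice $B$, and the inclusion $L_{s}\subseteq K_{c}^{-}\cap B$ is what makes this work.
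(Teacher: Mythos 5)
Your proposal is correct and is essentially the paper's own argument: the paper likewise proves the theorem by extracting, from a chain of ideals of type $I(\alpha)$ in $J(P)$, a join-subsemilattice of $P$ of size at most $\vert\alpha\vert$ still lying in $\J_{\alpha}$, and then bounding the number of isomorphism types of such small join-semilattices by $2^{\vert\alpha\vert}$. Your witnesses $x_{c}\in K_{c}^{+}\setminus K_{c}^{-}$ and the ideals $L_{s}$ are precisely the specialization to chains of the paper's Lemma \ref{lem:11} (proved there for an arbitrary poset $R$ in place of $C$, via the sets $C(X)=f(X)\setminus\bigcup F(X)$ and the implication from condition (\ref{eq:notleq}) back to an embedding of $I(R)$ into $J(Q_f)$), which the paper then packages as the more general Theorem \ref{1.4} before specializing to $R:=C$.
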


This is very weak. Indeed,  we cannot answer the following question.

 \begin{question}
If $\alpha$ is countable, does equation (\ref{eqforbid2}) holds for some \emph{finite} subset $\B$ of $\J$?
\end{question}

Questions and results above can be recast in terms of a quasi-order.
Let $P,P'\in \J$,  set $P\leq P'$ if $P'$ is isomorphic to a
join-subsemilattice of $P$. This relation is a quasi-order on $\J$.
If $\alpha$ is an order type, $\J_{\neg \alpha}$ is an initial
segment of $\J$, that is $P'\in\J_{\neg \alpha} $ and $P \leq P'$
imply $P \in \J_{\neg \alpha}$.  Indeed, from  $P\leq P'$ we get an
embedding from $J(P)$ into $J(P')$  which preserves arbitrary joins.
If $I(\alpha)$ was embeddable in $J(P)$ it would be embeddable in
$J(P')$, which is not the case. Hence, $P\in\J_{\neg \alpha} $.

A  class $\B$ satisfying (\ref{eqforbid}) is   {\it coinitial}   in  $\J_{\alpha}$,  in the sense that for every $P'\in \J_{\alpha}$ there is some $P\in \B$ such that $P'\leq P$.

The existence of a finite cofinal $\B$ amounts to the fact that,
w.r.t. the order on the quotient, $\J_{\alpha}$ has finitely many
minimal elements and every element of $\J_{\alpha}$ is above some.
Thus, as far we identify two join-semilattices which are embeddable
in each other as join-semilattices
\begin{lemma}  \label {lem:forbid1}$\J_{\alpha}$ contains $1+\alpha$ and $[E]^{<\omega}$, where $E$ is the domain of the chain $\alpha$.
\end{lemma}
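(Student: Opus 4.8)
The plan is to reduce the statement to two chain-existence facts and then verify each by a short order-theoretic computation. First I would note that both objects genuinely lie in $\J$: the chain $1+\alpha$ is a join-semilattice (the join of two comparable elements is the larger one) with a least element, and $[E]^{<\omega}$ is a join-semilattice under union with least element $\emptyset$. Since membership $P\in\J_{\alpha}$ means precisely that $J(P)$ contains a chain of order type $I(\alpha)$, it then suffices to exhibit such a chain inside $J(1+\alpha)$ and inside $J([E]^{<\omega})$.

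For $1+\alpha$ the argument is a direct computation of ideals. Fix a chain $C$ of type $\alpha$ and let $\bot$ be the adjoined least element, so that $1+\alpha$ is the chain $\{\bot\}+C$. In a chain every non-empty initial segment is up-directed, hence an ideal; therefore the ideals of $\{\bot\}+C$ are exactly the sets $\{\bot\}\cup S$ with $S$ an initial segment of $C$, the choice $S=\emptyset$ being allowed and producing $\{\bot\}$. The assignment $S\mapsto\{\bot\}\cup S$ is an order-isomorphism from $I(C)$ onto $J(1+\alpha)$, so $J(1+\alpha)$ is itself a chain of order type $I(\alpha)$. In the notation of the introduction this is the identity $J(1+\alpha)=1+J(\alpha)=I(\alpha)$, and it yields $1+\alpha\in\J_{\alpha}$.

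For $[E]^{<\omega}$ I would first identify its ideals. A non-empty directed initial segment of $[E]^{<\omega}$ closed under finite unions is exactly a set $[A]^{<\omega}$ of finite subsets of some $A\subseteq E$, so $J([E]^{<\omega})\cong\mathfrak{P}(E)$ ordered by inclusion, recovering the isomorphism $J([\omega]^{<\omega})\cong\mathfrak{P}(\omega)$ used earlier. It then suffices to find a chain of type $I(\alpha)$ in $\mathfrak{P}(E)$. Regarding $E$ as the ground set of the chain $C$ of type $\alpha$, I send each initial segment $S$ of $C$ to the ideal $[S]^{<\omega}$; since $S\subseteq S'$ holds if and only if $[S]^{<\omega}\subseteq[S']^{<\omega}$ (for the converse, test singletons), this is an order-embedding of $I(C)$ into $J([E]^{<\omega})$ whose image is a chain of order type $I(\alpha)$. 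Alternatively one may invoke the fact that the maximal chains of $\mathfrak{P}(\kappa)$ are exactly the $I(C)$ with $|C|=\kappa$ (cf. \cite{bonn-pouz}), applied with $\kappa=\mid\alpha\mid$. Either way $[E]^{<\omega}\in\J_{\alpha}$.

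None of the steps is a genuine obstacle; the only points requiring care are the $1+$ bookkeeping in the first computation (keeping track of the empty initial segment, which is the least element of $I(C)$ but is not an ideal of $C$) and the verification that $S\mapsto[S]^{<\omega}$ is an order-embedding rather than merely order-preserving, for which testing singletons suffices.
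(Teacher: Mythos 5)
Your proof is correct and takes essentially the same route as the paper: both arguments rest on the identities $J(1+\alpha)=1+J(\alpha)=I(\alpha)$ and $J([E]^{<\omega})\cong\mathfrak{P}(E)$, with a chain of type $I(\alpha)$ then located inside $\mathfrak{P}(E)$. The only (cosmetic) difference is that you exhibit the embedding $S\mapsto[S]^{<\omega}$ explicitly and verify it reflects order via singletons, where the paper simply cites the description of maximal chains of $\mathfrak{P}(E)$ from \cite{bonn-pouz} — a fact you also mention as an alternative.
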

\begin{proof} Since   $J(1+\alpha)=1+J(\alpha)=I(\alpha)$, $1+\alpha\in \J_{\alpha}$. As mentionned above,  $J([E]^{<\omega})$ is isomorphic to
$\mathfrak{P}(E)$ and since $\alpha$ is a linear order on $E$, $I(\alpha)$ is isomorphic to a maximal chain of $[E]^{<\omega}$, hence $[E]^{<\omega}\in \J_{\alpha}$. \end{proof}

As one can immediately see:
\begin{lemma}\label{lem:forbid2}
$1+\alpha$ belongs  to every $\B$ coinitial in $\J_{\alpha}$.
\end{lemma}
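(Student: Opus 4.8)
The plan is to lean on the two facts already recorded for $1+\alpha$: that it lies in $\J_{\alpha}$ (Lemma \ref{lem:forbid1}) and that it is \emph{minimal} there, and then to observe that a coinitial family cannot avoid a minimal element. First I would make the minimality of $1+\alpha$ in the quasi-ordered class $\J_{\alpha}$ explicit. Suppose $Q\in\J$ embeds as a join-subsemilattice into $1+\alpha$. Being a subsemilattice of a chain that has a least element, $Q$ is itself a chain with a least element, so $Q\cong 1+\beta$ for some order type $\beta$ with $\beta\leq\alpha$. Computing $J(Q)=J(1+\beta)\cong I(\beta)$, the assumption $Q\in\J_{\alpha}$ says that $I(\alpha)$ embeds into the chain $I(\beta)$; this forces $\alpha\leq\beta$, and together with $\beta\leq\alpha$ it makes $\alpha$ and $\beta$ equimorphic, so $Q$ is equimorphic to $1+\alpha$. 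Thus $1+\alpha$ is minimal in $\J_{\alpha}$.

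Granting this, the argument is short. Let $\B$ be coinitial in $\J_{\alpha}$. Since $1+\alpha\in\J_{\alpha}$ by Lemma \ref{lem:forbid1}, coinitiality produces some $P\in\B$ that embeds as a join-subsemilattice into $1+\alpha$. As $\B$ is a coinitial subset of $\J_{\alpha}$, its members lie in $\J_{\alpha}$; in particular $P\in\J_{\alpha}$. Now $P$ embeds into $1+\alpha$ and belongs to $\J_{\alpha}$, so by the minimality established above $P$ is equimorphic to $1+\alpha$. Identifying join-semilattices that embed in each other — the convention already adopted when passing to the quotient of $\J$ — this says precisely that $1+\alpha\in\B$, as required.

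The only step that needs genuine care is the reflection property $I(\alpha)\leq I(\beta)\Rightarrow\alpha\leq\beta$ that underlies the minimality of $1+\alpha$; this is exactly the input used for minimality in the introductory discussion, and it rests on the monotonicity of the operator $\gamma\mapsto I(\gamma)$ together with the way a chain sits inside the chain of its initial segments. Everything else is routine bookkeeping: that a join-subsemilattice of a chain is again a chain with a least element, and that $J(1+\beta)\cong I(\beta)$. For this reason the statement can indeed be recorded, as the text does, with the remark that it is seen immediately.
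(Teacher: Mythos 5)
Your proof is correct and follows essentially the same route as the paper's: both reduce the statement to the minimality of $1+\alpha$ in the quasi-ordered class $\J_{\alpha}$, established by writing any $Q\in\J_{\alpha}$ with $Q\leq 1+\alpha$ as $Q=1+\beta$ with $\beta\leq\alpha$, computing $J(Q)\cong I(\beta)$, and using $I(\alpha)\leq I(\beta)\Rightarrow\alpha\leq\beta$ to conclude $1+\alpha\leq Q$. Your additional remarks (that members of a coinitial $\B$ lie in $\J_{\alpha}$, and that the key input is the reflection $I(\alpha)\leq I(\beta)\Rightarrow\alpha\leq\beta$, which the paper also invokes elsewhere as a well-known fact) are accurate bookkeeping that the paper leaves implicit.
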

\begin{proof}
In terms of the quasi-order, this assertion amounts to the fact  that $1+\alpha$ is minimal in $\mathbb{\J}_{\alpha}$.
As shown in Lemma \ref{lem:forbid1},  $1+\alpha\in  \mathbb{J}_{\alpha}$. If  $Q\in \mathbb{J}_{\alpha}$ and   $Q\leq 1+\alpha$ then since $Q$ has a least element, we have
$Q=1+\beta$ with  $\beta \leq \alpha$. Since  $J(Q)=I(\beta)$, from $Q\in \mathbb{J}_{\alpha}$, we get
$I(\alpha)\leq I(\beta)$. This implies $\alpha\leq\beta$  and
$1+\alpha\leq Q$.  Hence   $1+\alpha$ is minimal in  $\mathbb{J}_{\alpha}$ as claimed.
\end{proof}

 If $\alpha$ is a finite chain, or the chain $\omega$ of non-negative integers, one can  easily see that  $1+\alpha$ is the least element of $\J_{\alpha}$.  Thus, one can take $\B=\{1+\alpha\}$.

Sierpinskisations come in the picture:

\begin{lemma}\label{lem:finitegenesierp} If $\alpha$ is a countably infinite order type and $S$ is a sierpinskisation of $\alpha$ and $\omega$ then the join-semilattice $I_{<\omega}(S)$, made of finitely generated initial segments of $S$, is isomorphic to a  join-subsemilattice of $[\omega]^{<\omega}$ and  belongs to $\J_{\alpha}$.\end{lemma}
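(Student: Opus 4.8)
The plan is to exploit the two distinguished linear orders carried by a sierpinskisation. I would write $S$ on the ground set $\N$ so that its order $\leq$ is the intersection of the natural order on $\N$ (the $\omega$-component) with a linear order $L$ of type $\alpha$; concretely, fixing the bijection $\varphi\colon\N\to\alpha$ of the construction, one has $x\leq y$ iff $x\leq y$ in the natural order and $\varphi(x)\leq\varphi(y)$ in $L$. The two assertions then split cleanly: the $\omega$-component will produce the embedding into $[\omega]^{<\omega}$, and the $\alpha$-component will produce membership in $\J_{\alpha}$.

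For the first assertion the key observation is that the $\omega$-component forces every principal initial segment to be finite: $\downarrow x\subseteq\{0,1,\dots,x\}$ for each $x\in\N$. Consequently every finitely generated initial segment $I=\downarrow F$ (with $F$ finite) is itself a finite subset of $\N$, so $I_{<\omega}(S)$ is literally a subset of $[\N]^{<\omega}=[\omega]^{<\omega}$. I would then check that the inclusion map is a join-embedding: it is injective and order-preserving because distinct initial segments are distinct subsets and inclusion goes to inclusion, and it preserves joins because in both $I_{<\omega}(S)$ and $[\omega]^{<\omega}$ the join of two members is their set-theoretic union (recall $\downarrow F\cup\downarrow G=\downarrow(F\cup G)$). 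The least element $\emptyset$ is preserved, so $I_{<\omega}(S)\in\J$ and it embeds into $[\omega]^{<\omega}$ as a join-subsemilattice.

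For the second assertion I would invoke the correspondence recalled earlier, $J(I_{<\omega}(S))\cong I(S)$, so that it suffices to exhibit a chain of order type $I(\alpha)$ inside the lattice $I(S)$ of all initial segments of $S$. Here the other component does the work: since $\leq$ refines $L$, every initial segment of the chain $(\N,L)$ of type $\alpha$ is also an initial segment of $S$. The initial segments of a chain of type $\alpha$ form, under inclusion, a chain of type $I(\alpha)$, and distinct ones remain distinct and inclusion-comparable once viewed inside $I(S)$; hence $I(S)\cong J(I_{<\omega}(S))$ contains a copy of $I(\alpha)$, which is exactly the statement $I_{<\omega}(S)\in\J_{\alpha}$.

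I do not expect a serious obstacle: the argument rests on the two one-line facts that the $\omega$-side makes principal down-sets finite (yielding the embedding into $[\omega]^{<\omega}$) and that the $\alpha$-side supplies an $I(\alpha)$-chain of initial segments (yielding membership in $\J_{\alpha}$). The only points needing a modicum of care are confirming that the inclusion $I_{<\omega}(S)\hookrightarrow[\omega]^{<\omega}$ preserves joins rather than merely order, and that the $I(\alpha)$-chain is not collapsed when transported into $I(S)$; both are immediate once one notes that joins on every side are unions and that the map $\mathcal{J}\mapsto\bigcup\mathcal{J}$ is an isomorphism between the ideals of $I_{<\omega}(S)$ and the initial segments of $S$.
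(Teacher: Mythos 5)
Your proof is correct and follows essentially the same route as the paper's: the linear extension of type $\omega$ makes every finitely generated initial segment finite, so that $I_{<\omega}(S)$ sits inside $[\omega]^{<\omega}$ as a join-subsemilattice (joins being unions), while the linear extension of type $\alpha$ yields a chain of type $I(\alpha)$ in $I(S)\cong J(I_{<\omega}(S))$. The extra details you verify (join-preservation of the inclusion, non-collapsing of the $I(\alpha)$-chain) are exactly the points the paper leaves implicit.
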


\begin{proof}By definition,  the order on a sierpinskisation $S$ of $\alpha$  and $\omega$ has a
linear extension such that the resulting chain $\overline S$ has
order type $\alpha$. The chain $I(\overline S)$ is a maximal chain
of $I(S)$ of type $I(\alpha)$. The lattices $I(S)$ and
$J(I_{<\omega}(S))$ are isomorphic, thus $I_{<\omega}(S)\in
\J_{\alpha}$. The order on $S$ has  a linear extension of  type
$\omega$, thus  every principal initial segment of $S$ is finite and
more generally every finitely generated initial segment of $S$ is
finite. This tells us that $I_{<\omega}(S)$ is a join-subsemilattice
of $[S]^{<\omega}$. Since $S$  is countable, $I_{<\omega}(S)$
identifies to a join-subsemilattice of $[\omega]^{<\omega}$.
\end{proof}

\begin{remark}\label{remark:notordinal} If $\alpha$ is not an ordinal,
Lemma \ref{lem:finitegenesierp} tells us nothing new. Indeed, in
this case any sierpinskisation $S$ of $\alpha$ and $\omega$ contains
an infinite antichain, hence $I_{<\omega}(S)$ and
$[\omega]^{<\omega}$ are embeddable in each other as
join-semilattices.
\end{remark}

There is a  much deeper result:
\begin{theorem} \label{thm:mainalgebra}  If  $\alpha$ is a countable order type then among the join-subsemilattices $P$ of  $[\omega]^{<\omega}$  which  belong to $\J_{\alpha}$ there is one which embeds as a join-semilattice in all the others. This join-semilattice is of the form $I_{<\omega}(S_{\alpha})$ where $S_{\alpha}$ is a sierpinskisation of $\alpha$ and $\omega$.
\end{theorem}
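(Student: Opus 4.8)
The plan is to separate the two regimes that the statement secretly combines, according to whether or not $\alpha$ is an ordinal; throughout, write $A\hookrightarrow B$ to mean that $A$ embeds into $B$ as a join-semilattice. For the chosen sierpinskisation $S_{\alpha}$ of $\alpha$ and $\omega$, Lemma \ref{lem:finitegenesierp} already tells us that $Q_{\alpha}:=I_{<\omega}(S_{\alpha})$ lies in $\J_{\alpha}$ and that $Q_{\alpha}\hookrightarrow [\omega]^{<\omega}$; so the whole task is to prove that this single $Q_{\alpha}$ satisfies $Q_{\alpha}\hookrightarrow P$ for \emph{every} join-subsemilattice $P$ of $[\omega]^{<\omega}$ lying in $\J_{\alpha}$.

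First I would dispose of the case where $\alpha$ is \emph{not} an ordinal. Then $\omega^{*}$ embeds in $\alpha$, so $I(\omega^{*})$ embeds in $I(\alpha)$ and in particular $I(\alpha)$ contains a copy of $\omega^{*}$. Fix a join-subsemilattice $P$ of $[\omega]^{<\omega}$ with $P\in\J_{\alpha}$; by definition $J(P)$ carries a chain of type $I(\alpha)$, hence $J(P)$ is not well-founded. If $P$ were wqo, then part $b)$ of Theorem \ref{wellfounded} (Higman) would make $I(P)$, and a fortiori its subset $J(P)$, well-founded, a contradiction. So $P$ is not wqo, and Corollary \ref{poset} produces an embedding $[\omega]^{<\omega}\hookrightarrow P$. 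Combined with $Q_{\alpha}\hookrightarrow[\omega]^{<\omega}$ this gives $Q_{\alpha}\hookrightarrow P$; by Remark \ref{remark:notordinal} one may even take $Q_{\alpha}$ mutually embeddable with $[\omega]^{<\omega}$, so the minimum here is $[\omega]^{<\omega}$ itself.

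When $\alpha$ \emph{is} an ordinal, $[\omega]^{<\omega}$ is no longer minimal in $\J_{\alpha}$ (Theorem \ref{ordinal}), and the right candidate is the lattice sierpinskisation $S_{\alpha}$ specifically attached to $\alpha$; the required conclusion, that $Q_{\alpha}=I_{<\omega}(S_{\alpha})$ embeds into every join-subsemilattice $P$ of $[\omega]^{<\omega}$ in $\J_{\alpha}$, is exactly Theorem \ref{thm:qalpha}. Granting that, the two cases together exhibit $Q_{\alpha}$ as the announced minimum of the form $I_{<\omega}(S_{\alpha})$.

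The entire difficulty is thus concentrated in Theorem \ref{thm:qalpha}, the ordinal case. The hard part will be to manufacture, inside an arbitrary $P$ whose ideal lattice merely supports an $I(\alpha)$-chain, an explicit join-embedding of $I_{<\omega}(S_{\alpha})$. I would mimic the proof of Theorem \ref{thm2}: starting from the $I(\alpha)$-chain of ideals of $P$, extract by an inductive selection refined with Ramsey-type arguments (in the spirit of Lemma \ref{w*} and Corollary \ref{sierpinski}) a family of compact elements whose pairwise and finite joins reproduce the monotonic sierpinskisation $S_{\alpha}$, the finiteness of members of $[\omega]^{<\omega}$ guaranteeing that the generated initial segments stay finitely generated, so that the image is $I_{<\omega}(S_{\alpha})$. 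The delicate point, as always in this subject, is to make the selected family independent enough that the resulting map preserves joins and not merely the order.
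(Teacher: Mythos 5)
Your proposal is correct and takes essentially the same route as the paper: the paper also proves Theorem \ref{thm:mainalgebra} by splitting on whether $\alpha$ is an ordinal, settling the non-ordinal case via Theorem \ref{thm:posetchap2} (your Corollary \ref{poset}) together with Remark \ref{remark:notordinal}, and invoking Theorem \ref{thm:qalpha} for the ordinal case. The only divergence is immaterial: your closing sketch of how Theorem \ref{thm:qalpha} might be proved (Ramsey-type extraction) is not how the paper actually proves it (Theorem \ref{thm:qalphaprelim} combined with Lemma \ref{lem:qalphaprelim}), but since you, like the paper, use that theorem as a cited black box, this does not affect the argument.
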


We deduce it from Theorem \ref{ordinal} and Theorem \ref{thm:qalpha} below:
\begin{theorem}\label{ordinal}
Let $\alpha$ be a countable order type.  The join-semilattice $[\omega]^{<\omega}$ belongs  to every $\B$ coinitial in $\J_{\alpha}$  if and only if $\alpha$  is not an ordinal.
\end{theorem}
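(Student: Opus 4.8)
The plan is to first turn the statement about coinitial families into a statement about minimality, exactly as was done for $1+\alpha$ in the proof of Lemma~\ref{lem:forbid2}. Recall that every $\B$ with $\J_{\neg\alpha}=Forb_{\J}(\B)$ consists of members of $\J_{\alpha}$, and that, by Lemma~\ref{lem:forbid1}, $[\omega]^{<\omega}\in\J_{\alpha}$. A routine argument then shows that $[\omega]^{<\omega}$ lies in \emph{every} coinitial $\B$ if and only if it is a minimal element of $\J_{\alpha}$ for the quasi-order of join-subsemilattice embeddability; concretely, minimality means: \emph{every join-subsemilattice $P$ of $[\omega]^{<\omega}$ which belongs to $\J_{\alpha}$ already contains a copy of $[\omega]^{<\omega}$ as a join-subsemilattice}. (If some $P\in\J_{\alpha}$ embeds in $[\omega]^{<\omega}$ without $[\omega]^{<\omega}$ embedding back, then $\J_{\alpha}\setminus\{[\omega]^{<\omega}\}$ is coinitial and omits $[\omega]^{<\omega}$; conversely minimality forces any obstruction below $[\omega]^{<\omega}$ to be equivalent to it.) So it suffices to prove: $[\omega]^{<\omega}$ is minimal in $\J_{\alpha}$ if and only if $\alpha$ is not an ordinal.

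For the direction ``$\alpha$ not an ordinal $\Rightarrow$ minimal'', I would take any join-subsemilattice $P$ of $[\omega]^{<\omega}$ with $P\in\J_{\alpha}$ and aim to embed $[\omega]^{<\omega}$ into $P$. The key observation is that since $\alpha$ is not well-ordered it carries a strictly decreasing sequence, and the corresponding principal initial segments give an embedding of $\omega^{*}$ into $I(\alpha)$. As $P\in\J_{\alpha}$, the lattice $J(P)$ contains a chain of type $I(\alpha)$, hence a copy of $\omega^{*}$, so $J(P)$ is \emph{not} well-founded. By Corollary~\ref{provisoire}, $P$ (being a join-subsemilattice of $[\omega]^{<\omega}$) then has an infinite antichain, so it is not wqo; Corollary~\ref{poset} forces $P$ to contain $[\omega]^{<\omega}$ as a join-subsemilattice. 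This shows no member of $\J_{\alpha}$ lies strictly below $[\omega]^{<\omega}$, i.e. $[\omega]^{<\omega}$ is minimal.

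For the converse ``$\alpha$ an ordinal $\Rightarrow$ not minimal'', the plan is to exhibit an element of $\J_{\alpha}$ strictly below $[\omega]^{<\omega}$. If $\alpha$ is finite or equal to $\omega$, I would simply take $1+\alpha$, which is the least element of $\J_{\alpha}$ and is distinct from $[\omega]^{<\omega}$. For an infinite ordinal $\alpha$ I would take $P:=I_{<\omega}(S_{\alpha})$, where $S_{\alpha}$ is a sierpinskisation of $\alpha$ and $\omega$. By Lemma~\ref{lem:finitegenesierp}, $P$ embeds in $[\omega]^{<\omega}$ and belongs to $\J_{\alpha}$. Because $\alpha$ is well-ordered, the intersection order $S_{\alpha}$ has no infinite antichain and is well-founded, hence wqo; by Theorem~\ref{wellfounded}(b), $I_{<\omega}(S_{\alpha})$ is wqo as well. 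A wqo poset cannot contain the (non-wqo) $[\omega]^{<\omega}$, so $[\omega]^{<\omega}$ does not embed in $P$, and thus $[\omega]^{<\omega}$ is not minimal.

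The genuine mathematical content is entirely carried by the earlier Corollary~\ref{poset} (equivalently Theorem~\ref{finitesubsets}) and Corollary~\ref{provisoire}: once the dichotomy ``a join-subsemilattice of $[\omega]^{<\omega}$ either contains $[\omega]^{<\omega}$ or is wqo'' is in hand, both directions are short. The only point requiring care is the ordinal case, where I must guarantee that the witnessing $P$ is truly wqo so that $[\omega]^{<\omega}$ cannot embed back into it; this is precisely where the well-orderedness of $\alpha$ (via the absence of infinite antichains in $S_{\alpha}$) is essential, and it is the mirror image of the $\omega^{*}\hookrightarrow I(\alpha)$ observation used in the other direction.
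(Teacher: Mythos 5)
Your proposal is correct and follows essentially the same route as the paper: the paper likewise reduces the statement to minimality of $[\omega]^{<\omega}$ in $\J_{\alpha}$, proves the ``not an ordinal'' direction via the dichotomy of Corollary~\ref{poset}/Theorem~\ref{thm:posetchap2} (a join-subsemilattice of $[\omega]^{<\omega}$ either contains $[\omega]^{<\omega}$ or is wqo, hence has well-founded $J(P)$), and proves the ordinal direction by exhibiting $I_{<\omega}(S_\alpha)$ via Lemma~\ref{lem:finitegenesierp} together with the Higman/wqo argument of Lemma~\ref{lem:wqo}. Your only deviations are cosmetic: you pass through Corollary~\ref{provisoire} to extract the infinite antichain, and you treat the case $\alpha\leq\omega$ explicitly with $1+\alpha$ (which the paper disposes of in an earlier remark, since Lemma~\ref{lem:finitegenesierp} needs $\alpha$ infinite).
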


Let   $\alpha$ be an ordinal.   Set $S_{\alpha}:=\alpha$ if $\alpha<\omega$. If   $\alpha=\omega\alpha'+n$ with  $\alpha'\not =0$ and  $n<\omega$,  let  $S_{\alpha}:=\Omega(\alpha')\oplus n$  be the direct sum of  $\Omega(\alpha')$ and the chain $n$, where  $\Omega(\alpha')$ is a monotonic sierpinskisation  of  $\omega\alpha'$ and $\omega$. We note that for countably infinite $\alpha$'s, $S_{\alpha}$ is a sierpinskisation of $\alpha$ and $\omega$. We prove that $Q_{\alpha}:= I_{<\omega}(S_{\alpha})$ has the property stated in Theorem \ref{thm:mainalgebra}:

\begin{theorem} \label{thm:qalpha}If  $\alpha$ is an ordinal then   $I_{<\omega}(S_{\alpha})$ is a join-subsemilattice of
  $[\omega]^{<\omega}$  which  belongs to $\J_{\alpha}$ and is embeddable  as a join-semilattice in all join-subsemilattices  of $[\omega]^{<\omega}$  which  belongs to $\J_{\alpha}$
\end{theorem}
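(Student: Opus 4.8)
The statement asserts three things about the countable ordinal $\alpha$: that $Q_\alpha:=I_{<\omega}(S_\alpha)$ is a join-subsemilattice of $[\omega]^{<\omega}$, that $Q_\alpha\in\J_\alpha$, and that $Q_\alpha$ embeds as a join-semilattice into every join-subsemilattice $P$ of $[\omega]^{<\omega}$ lying in $\J_\alpha$. The first two are cheap and I would dispose of them immediately. If $\alpha$ is finite, then $S_\alpha=\alpha$ and $Q_\alpha=I_{<\omega}(\alpha)\cong 1+\alpha$, which by the discussion following Lemma \ref{lem:forbid2} is the least element of $\J_\alpha$, so everything holds at once. If $\alpha$ is infinite, then $S_\alpha$ is a sierpinskisation of $\alpha$ and $\omega$ (as recorded just before the theorem), so Lemma \ref{lem:finitegenesierp} shows directly that $Q_\alpha$ is a join-subsemilattice of $[\omega]^{<\omega}$ and belongs to $\J_\alpha$. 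Hence the entire content is the minimality statement, and I fix an infinite $\alpha$ and such a $P$ from now on.

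My first move is to split on whether $P$ has an infinite antichain. View $[\omega]^{<\omega}$ as $I_{<\omega}(Q)$ with $Q$ a countable antichain (which is well-founded), and set $\mathcal F:=P$; since $P$ is closed under finite joins, $\mathcal F^{<\omega}=P$. If $P$ \emph{has} an infinite antichain, then by the failure of clause $(1)$ in Theorem \ref{finitesubsets} the equivalent clause $(6)$ fails too, i.e.\ $[\omega]^{<\omega}$ embeds as a join-semilattice into $P$; composing with the embedding $Q_\alpha\hookrightarrow[\omega]^{<\omega}$ from Lemma \ref{lem:finitegenesierp} settles this case. So I may assume $P$ has \emph{no} infinite antichain; being a subposet of the well-founded $[\omega]^{<\omega}$ it is well-founded, hence wqo, and $J(P)$ is well-founded by Theorem \ref{wellfounded} $b)$.

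The wqo case is the heart. Since $P\in\J_\alpha$, fix an increasing chain of ideals $(I_\gamma)_{\gamma\le\alpha}$ of $P$ realizing the type $I(\alpha)=\alpha+1$. Applying Lemma \ref{cl2} $(a)$ with $\mathcal F=P$ (so $\mathcal F^{<\omega}=P$), this transports to a strictly increasing chain of subsets $\overline{I_\gamma}:=\bigcup I_\gamma\subseteq\omega$. For each $\gamma<\alpha$ I select a marker $m_\gamma\in\overline{I_{\gamma+1}}\setminus\overline{I_\gamma}$ and a witness $x_\gamma\in I_{\gamma+1}\setminus I_\gamma$ containing it; the markers are pairwise distinct and, because $x_{\gamma'}\subseteq\overline{I_{\gamma'+1}}$, they obey the triangular law $m_\gamma\in x_{\gamma'}\Rightarrow\gamma\le\gamma'$. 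Writing $\alpha=\omega\alpha'+n$, the plan is a transfinite selection along the $\omega\alpha'$ part producing from the witnesses an order-embedded, join-faithful copy of the monotone sierpinskisation $\Omega(\alpha')$, with the final $n$ steps contributing the direct summand $n$ in $S_\alpha=\Omega(\alpha')\oplus n$. Here \emph{join-faithful} means the chosen elements $q_s$ ($s\in S_\alpha$) satisfy $q_s\le\bigvee_{t\in F}q_t$ iff $s\in\downarrow_{S_\alpha}F$ for every finite $F$; granting this, $\downarrow s\mapsto q_s$ extends to a join-embedding of $I_{<\omega}(S_\alpha)=Q_\alpha$ into $P$, since the join-irreducibles of $Q_\alpha$ form a copy of $S_\alpha$ and every member of $Q_\alpha$ is a finite join of them. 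The forward implication is exactly where the markers are used (a private $m_s\in q_s$ can enter $\bigvee_{t\in F}q_t=\bigcup_{t\in F}q_t$ only through some $q_t$ with $s\le t$), and the backward one is order-preservation.

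The main obstacle is precisely this construction: one must choose the $q_s$ so as to realize the two-dimensional order of $\Omega(\alpha')$ and simultaneously keep the family join-faithful. I would run the recursion on the columns $\beta<\alpha'$ and the index within each column, at each stage invoking Ramsey's theorem to thin the witnesses—as in Corollary \ref{sierpinski} and in the construction of the $y_n$ in Lemma \ref{w*}, whose conditions $a_n),b_n),c_n)$ are the one-column prototype—so that newly placed generators carry markers avoiding the joins of the already-placed incomparable ones; here the wqo hypothesis is what keeps the thinning from stalling. A useful simplification is that it suffices to produce \emph{some} monotone sierpinskisation of $\omega\alpha'$ and $\omega$ inside $P$: by \cite{pz} (Lemma 3.4.3) all of them embed in one another, hence so do the lattices $I_{<\omega}(\cdot)$ they generate, so the copy found may be matched with the chosen $\Omega(\alpha')$ up to join-semilattice isomorphism. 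Checking that the appended finite chain really appears as a direct summand, and that the recursion survives the limit columns $\beta<\alpha'$, are the remaining points of care.
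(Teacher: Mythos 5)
Your reductions are all sound: the finite case, the membership of $Q_\alpha$ in $\J_\alpha$ and its embedding in $[\omega]^{<\omega}$ via Lemma \ref{lem:finitegenesierp}, the dichotomy on antichains via Theorem \ref{thm:posetchap2} (a legitimate shortcut, though the paper's argument needs no such split), and the observation that a join-faithful family $(q_s)_{s\in S_\alpha}$ induces a join-embedding of $Q_\alpha$ into $P$ (in effect Lemma \ref{lem:11} and Proposition \ref{1.3}). Your markers and witnesses with the triangular law are exactly the opening of the paper's proof of Theorem \ref{thm:qalphaprelim} (your $m_\gamma$, $x_\gamma$ are its $x_\beta$, $F_\beta$). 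But the proposal stops precisely where the real work begins: the transfinite recursion that is supposed to produce a join-faithful copy of the \emph{monotonic} sierpinskisation $\Omega(\alpha')$ is never carried out --- you yourself call it ``the main obstacle'' and leave the limit columns and the direct summand as unresolved ``points of care.'' That construction is the entire content of the theorem, and the plan as sketched hits a genuine difficulty: to realize the order of $\Omega(\alpha')$ you must force containments by replacing witnesses by joins of witnesses, but a join $q_t=x_{\gamma_1}\cup\dots\cup x_{\gamma_k}$ may swallow the private marker $m_s$ of an element $s$ that has to remain incomparable with $t$. The triangular law only forbids a marker from entering witnesses of \emph{ordinally smaller} index, whereas the incomparabilities of a sierpinskisation are precisely relations inside the ordinal order that must be broken; neither Lemma \ref{w*} (your one-column prototype, whose target is far simpler) nor a Ramsey thinning obviously survives the limit columns of an arbitrary countable $\alpha'$. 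Tellingly, the paper's construction never uses the wqo hypothesis you lean on.

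The paper sidesteps this difficulty by reversing the logic, and this is the idea your proposal misses. In Theorem \ref{thm:qalphaprelim} one does not prescribe the target order at all: one lets the witnesses dictate it, taking $\rho$ to be the membership relation between markers and witnesses, $\hat\rho$ its transitive closure, and $R:=(X,\hat\rho)$. Join-faithfulness of the union map $\phi(I):=\bigcup\{F_\beta:x_\beta\in I\}$ is then automatic (injectivity follows from the triangular law), so $I_{<\omega}(R)$ join-embeds in $P$ with no thinning whatsoever; principal initial segments of $R$ are finite, and intersecting an $\omega$-type linear extension of $\hat\rho$ with the type-$\alpha$ order $\theta$ produces \emph{some} sierpinskisation $S$ of $\alpha$ and $\omega$ with $I_{<\omega}(S)\subseteq I_{<\omega}(R)$. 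Only afterwards is the prescribed structure recovered, by pure order theory: Lemma \ref{lem: proppouzag} (Pouzet--Zaguia) states that $\Omega(\alpha')$ order-embeds in \emph{every} sierpinskisation of $\omega\alpha'$ and $\omega$ --- a strictly stronger tool than the mutual embeddability of monotonic sierpinskisations you invoke --- whence $Q_\alpha=I_{<\omega}(\Omega(\alpha')\oplus n)$ join-embeds into $I_{<\omega}(S)$ (Lemma \ref{lem:qalphaprelim}, with Lemma \ref{lem:trivial} handling the summand $n$), hence into $P$. To complete your proposal along your own lines you would essentially have to reprove Lemma \ref{lem: proppouzag} with the join-faithfulness constraint superimposed; the paper's factorization makes that unnecessary.
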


The deduction of Theorem \ref{thm:mainalgebra} from these two results is immediate:

If $\alpha$ is an ordinal, apply Theorem \ref{thm:qalpha}.  If $\alpha$ is not an ordinal, then according to Remark \ref{remark:notordinal} above, the conclusion of Theorem \ref{thm:mainalgebra} amounts to the fact that $[\omega]^{<\omega}$ is minimal in $\J_{\alpha}$.

The proof of Theorem \ref{thm:qalpha} is given in Section \ref{section:minimal}. We discuss here the  proof of Theorem \ref{ordinal}.

The "only if" part of Theorem \ref{ordinal} is easy. It follows from Lemma \ref{lem:finitegenesierp} and the following:
\begin{lemma} \label{lem:wqo}If $\alpha$ is  an ordinal and $S$ is a sierpinskisation of $\alpha$ and $\omega$, then  $[\omega]^{<\omega}$ is not embeddable in  $I_{<\omega}(S)$.
\end{lemma}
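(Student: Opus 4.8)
The plan is to show that the hypothesis ``$\alpha$ is an ordinal'' forces the sierpinskisation $S$ to be well-quasi-ordered, and then to invoke Higman's theorem (part $b)$ of Theorem \ref{wellfounded}) to transfer this property to $I_{<\omega}(S)$. Since $[\omega]^{<\omega}$ is manifestly not wqo, it cannot embed there, even as a poset, a fortiori not as a join-semilattice.

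First I would check that $S$ is well-founded. By definition the order of $S$ is the intersection of a linear order $L_{\alpha}$ of type $\alpha$ and a linear order $L_{\omega}$ of type $\omega$; in particular it is contained in $L_{\alpha}$. Since $\alpha$ is an ordinal, $L_{\alpha}$ is a well-order, hence has no infinite strictly descending sequence, and neither does any order contained in it; thus $S$ is well-founded.

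Next comes the key step: $S$ has no infinite antichain. Let $A$ be an antichain of $S$. Distinct elements of $A$ are incomparable in $S$, which means they are ordered oppositely by $L_{\alpha}$ and $L_{\omega}$; hence on $A$ the restrictions of $L_{\alpha}$ and $L_{\omega}$ are mutually reverse linear orders. If $A$ were infinite, then the restriction of $L_{\omega}$ to $A$, being an infinite subchain of a chain of type $\omega$, would have type $\omega$, so the restriction of $L_{\alpha}$ to $A$ would have type $\omega^{*}$. This produces an infinite $L_{\alpha}$-descending sequence, contradicting the well-foundedness of $L_{\alpha}$. Therefore $A$ is finite. Combining the two observations, $S$ is well-founded with no infinite antichain, i.e.\ $S$ is wqo.

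Finally, by part $b)$ of Theorem \ref{wellfounded}, $I_{<\omega}(S)$ is wqo, hence has no infinite antichain. A join-semilattice embedding of $[\omega]^{<\omega}$ into $I_{<\omega}(S)$ is in particular an order embedding, so $I_{<\omega}(S)$ would then contain a subposet isomorphic to $[\omega]^{<\omega}$; but the singletons $\{n\}$, $n<\omega$, form an infinite antichain in $[\omega]^{<\omega}$, contradicting the wqo-ness of $I_{<\omega}(S)$. Hence $[\omega]^{<\omega}$ is not embeddable in $I_{<\omega}(S)$. The argument has no serious obstacle; the only point requiring care is the antichain computation, which hinges precisely on the fact that an ordinal contains no copy of $\omega^{*}$—exactly the place where the hypothesis that $\alpha$ is an ordinal is used, matching the dichotomy recorded in Remark \ref{remark:notordinal}.
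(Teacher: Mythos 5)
Your proof is correct, and its skeleton coincides with the paper's: both reduce the lemma to the fact that a sierpinskisation of an ordinal $\alpha$ and $\omega$ is well-quasi-ordered, and then invoke Higman's theorem (Theorem \ref{wellfounded}, part $b)$). The execution differs at two points. For the wqo-ness of $S$, the paper observes that $S$ embeds in the direct product $\omega\times\alpha$ and quotes the standard fact that a finite product of wqo posets is wqo, whereas you verify both halves by hand: well-foundedness because the order of $S$ is contained in the well-order $L_{\alpha}$, and the absence of infinite antichains because an antichain of $S$ carries mutually reverse restrictions of $L_{\alpha}$ and $L_{\omega}$, so an infinite one would place a copy of $\omega^{*}$ inside $L_{\alpha}$. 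For the conclusion, the paper uses the implication ``$S$ wqo $\Rightarrow I(S)$ well-founded'' and derives the contradiction by lifting a hypothetical embedding to $J([\omega]^{<\omega})\cong\mathfrak P(\omega)\hookrightarrow J(I_{<\omega}(S))\cong I(S)$, which fails because $\mathfrak P(\omega)$ is not well-founded; you instead use ``$S$ wqo $\Rightarrow I_{<\omega}(S)$ wqo'' and kill the embedding directly with the infinite antichain of singletons in $[\omega]^{<\omega}$. Your route is more self-contained (no product-of-wqo theorem, no identification $J([\omega]^{<\omega})\cong\mathfrak P(\omega)$, no lifting of poset embeddings to ideal lattices), while the paper's is shorter modulo those standard facts and stays aligned with the well-foundedness-of-$I(S)$ viewpoint it exploits elsewhere in the chapter; both uses of the ordinal hypothesis are, as you note, the same.
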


This simple fact relies on the important notion of
well-quasi-ordering introduced by Higman \cite{higm}. We recall that
a poset $P$ is \emph{well-quasi-ordered} (briefly w.q.o.) if every
non-empty subset $A$ of $P$ has at least a minimal element and the
number of these minimal elements is finite. As shown by Higman, this
is equivalent to the fact that $I(P)$ is well-founded \cite{higm}.

 Well-ordered set are trivially w.q.o. and, as it is well known, the direct product of finitely many w.q.o. is w.q.o.  Lemma \ref{lem:wqo} follows immediately from this. Indeed, if   $S$ is a sierpinskisation of $\alpha$ and $\omega$, it embeds in the direct product $\omega\times \alpha$.  Thus $S$  is w.q.o. and consequently  $I(S)$  is well-founded. This implies that $[\omega]^{<\omega}$ is not  embeddable in  $I_{<\omega}(S)$. Otherwise  $J([\omega]^{<\omega})$ would be embeddable in $J(I_{<\omega}(S))$, that is  $\mathfrak P(\omega)$ would be embeddable in $I(S)$. Since $\mathfrak P(\omega)$ is not well-founded, this would contradict the well-foundedness of $I(S)$.

The "if" part is based on our earlier work on well-founded algebraic lattices, that we record here.

Let   $\omega^*$ be  the chain of negative integers. Let  $\underline\Omega(\omega^*)$ be the join-semilattice obtained by adding a least element to the  set $[ \omega]^2$ of two-element subsets of $\omega$, identified to pairs  $(i,j)$, $i<j<\omega$,
ordered so that
$(i,j)\leq (i',j')$ if and only if
$i'\leq i$ and
$j\leq j'$.

We claim that  if $\alpha= \omega^*$,  $\J_{\alpha}$ has a coinitial  set made of three join-semilattices, namely $\omega^{*}$,  $[\omega]^{<\omega}$ and $\underline\Omega(\omega^*)$. Alternatively
\begin{equation}\J_{\neg \omega^*}=Forb_{\J}(\{1+\omega^*, \underline \Omega(\omega^*), [\omega]^{<\omega}\}).
\end{equation}

\begin{figure}[htbp]
\centering
\includegraphics[width=3in]{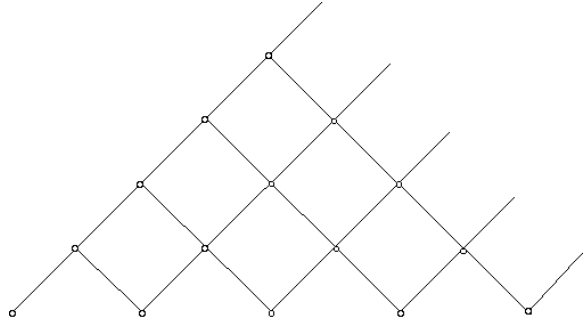}
\caption{$\underline {\Omega}(\omega^*)$}
\label{Omega}
\end{figure}

This fact is not straightforward. Reformulated in simpler terms, as follows, this is the main result of \cite{chapou2} (cf. Chapter \ref{chap:wellfounded}, {Theorem \ref{thm4}).

\begin{theorem}\label{thm4chap2}
An algebraic lattice $L$  is well-founded if and only if
$K(L)$ is well-founded  and contains no join-subsemilattice  isomorphic to $\underline \Omega(\omega^*)$ or to $[\omega]^{<\omega}$.\\
\end{theorem}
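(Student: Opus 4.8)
The plan is to recognise that this is literally the statement established in the previous chapter as Theorem \ref{thm4}, and so to prove it I would reduce to join-semilattices and reassemble the lemmas already at hand. By the basic correspondence between algebraic lattices and join-semilattices (every algebraic lattice $L$ is isomorphic to $J(K(L))$, and $K(L)$ is a join-semilattice with a least element), putting $P:=K(L)$ turns the claim into an equivalent ideal-theoretic statement: for a join-semilattice $P$ with $0$, the lattice $J(P)$ is well-founded if and only if $P$ is well-founded and contains no join-subsemilattice isomorphic to $\underline\Omega(\omega^*)$ or to $[\omega]^{<\omega}$. Since $\underline\Omega(\omega^*)$ embeds in $\Omega(\omega^*)$ as a join-semilattice, and is obtained from it by adjoining a $0$, a copy of $\Omega(\omega^*)$ inside $P$ automatically contains a copy of $\underline\Omega(\omega^*)$; I may therefore work with $\Omega(\omega^*)$ throughout and switch back at the end.

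For the converse (the easy direction) I would show that each forbidden configuration already destroys well-foundedness of $J(P)$. If $P$ is not well-founded, then $down(P)\subseteq J(P)$ contains a strictly descending $\omega^*$-chain, so $J(P)$ is not well-founded. If $P$ contains $\underline\Omega(\omega^*)$ (resp. $[\omega]^{<\omega}$) as a join-subsemilattice via a join-preserving embedding $f$, then the assignment sending an ideal $I$ to the ideal generated by $f(I)$ is a strict order-embedding of $J(\underline\Omega(\omega^*))$ (resp. of $J([\omega]^{<\omega})\cong\mathfrak P(\omega)$) into $J(P)$; as neither $J(\underline\Omega(\omega^*))$ nor $\mathfrak P(\omega)$ is well-founded, its image gives an $\omega^*$-chain in $J(P)$, so $J(P)$ is not well-founded.

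For the forward (main) direction I would assume $J(P)$ is not well-founded, fix an $\omega^*$-chain of ideals, and split according to whether some $\omega^*$-chain of ideals of $P$ is separating. If one is, then Lemma \ref{independent} yields an infinite independent set in $P$, and the equivalence $(i)\Leftrightarrow(ii)$ of Theorem \ref{tm2.1} upgrades this to a join-subsemilattice isomorphic to $[\omega]^{<\omega}$. If none is, then every infinite subchain of a fixed $\omega^*$-chain is non-separating, so Lemma \ref{w*} provides a join-subsemilattice of $P$ isomorphic to $\omega^*$ or to $\Omega(\omega^*)$: in the first case $P$ fails to be well-founded, in the second $P$ contains $\underline\Omega(\omega^*)$ by the remark of the first paragraph. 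In every branch one of the three forbidden situations occurs, which is the contrapositive of what is required.

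The only genuinely delicate ingredient is Lemma \ref{w*}, and within it Case (ii), where from a non-separating $\omega^*$-chain one must manufacture a map $f:\Omega(\omega^*)\to P$ given by $f(i,j)=y_i\vee y_j$ that is at once join-preserving and injective. This rests on the careful inductive selection of the elements $y_n$ meeting conditions $a_n),b_n),c_n)$, and this construction is the technical heart of the whole argument. Since Lemmas \ref{independent} and \ref{w*} together with Theorem \ref{tm2.1} are already available from the first chapter, the present theorem follows by combining them exactly as above; no new difficulty beyond that construction arises.
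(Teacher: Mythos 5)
Your proposal follows the paper's route exactly: it reduces the statement to its ideal-theoretic form for $P:=K(L)$, settles the easy direction by transporting descending chains of ideals along the embedding $J(P')\rightarrow J(P)$ induced by a join-preserving embedding $P'\rightarrow P$, and proves the main direction by the dichotomy on separating $\omega^*$-chains of ideals, invoking Lemma \ref{independent} together with Theorem \ref{tm2.1} in the separating case and Lemma \ref{w*} in the non-separating case. This is precisely the proof of Theorem \ref{thm4} in Chapter \ref{chap:wellfounded}, of which the present theorem is a restatement.

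The one flaw is the justification you give for passing between $\Omega(\omega^*)$ and $\underline\Omega(\omega^*)$: you claim (echoing an erroneous remark in Chapter \ref{chap:wellfounded}) that $\underline\Omega(\omega^*)$ embeds in $\Omega(\omega^*)$ as a join-semilattice, so that a copy of $\Omega(\omega^*)$ inside $P$ ``automatically contains'' a copy of $\underline\Omega(\omega^*)$. This is false, even at the level of posets: any embedding $f$ of $\underline\Omega(\omega^*)$ would send the infinite antichain $\{(i,i+1):i<\omega\}$ into the final segment $\uparrow f(\emptyset)$, yet every principal final segment $\uparrow (a,b)$ of $\Omega(\omega^*)$ meets each antichain in at most $a+1$ points, since its elements with equal first coordinate are pairwise comparable. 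The repair is immediate and is what your argument actually needs: $P=K(L)$ has a least element $0$; a join-subsemilattice $S\cong\Omega(\omega^*)$ of $P$ cannot contain $0$, because $\Omega(\omega^*)$ has no least element; hence $S\cup\{0\}$ is a join-subsemilattice of $P$ isomorphic to $\underline\Omega(\omega^*)$. With this substitution (needed only in the non-separating case, where Lemma \ref{w*} delivers $\omega^*$ or $\Omega(\omega^*)$), your proof is complete and coincides with the paper's.
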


From Theorem \ref {thm4chap2}, we obtained:

 \begin{theorem} \label{thm:posetchap2}(Corollary \ref{poset}, \cite{chapou2})  A join-subsemilattice $P$ of $[\omega]^{<\omega}$ contains either  $[\omega]^{<\omega}$ as a join-semilattice or is well-quasi-ordered. In the latter case,   $J(P)$ is well-founded.
\end{theorem}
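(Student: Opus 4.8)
The plan is to derive the dichotomy by specializing Theorem \ref{finitesubsets} to a well-chosen family, and then to read off the well-foundedness of $J(P)$ from that same theorem. First I would observe that $[\omega]^{<\omega}$ is exactly $I_{<\omega}(A)$, where $A$ denotes the antichain on $\omega$; being an antichain, $A$ is trivially well-founded (every nonempty subset consists of minimal elements). This places us in the setting of Theorem \ref{finitesubsets} with the well-founded poset $Q:=A$.

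Next I would set $\mathcal{F}:=P$. Since $P$ is a join-subsemilattice of $[\omega]^{<\omega}$ and joins there are unions, $P$ is closed under finite joins, so $\mathcal{F}^{<\omega}=P$, while $\mathcal{F}\subseteq I_{<\omega}(A)$. Hence the hypotheses of Theorem \ref{finitesubsets} are met. The dichotomy is then immediate from the equivalence of items $2)$ and $6)$ of that theorem: either $[\omega]^{<\omega}$ embeds in $\mathcal{F}^{<\omega}=P$ (the first alternative), or it does not, in which case $P=\mathcal{F}^{<\omega}$ is wqo (the second alternative). One small point to verify is that the statement asks for $[\omega]^{<\omega}$ \emph{as a join-semilattice}, whereas one might read $6)$ as an embedding of posets; but for $[\omega]^{<\omega}$ the two notions coincide by the equivalence $(ii)\Leftrightarrow(iii)$ of Theorem \ref{tm2.1}, so no discrepancy arises.

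Finally, in the wqo case I would obtain the well-foundedness of $J(P)$ directly from item $7)$ of Theorem \ref{finitesubsets}, which gives that $\mathcal{F}^{\cup}$ is well-founded, together with Lemma \ref{cl2}$(a)$ identifying $J(P)=J(\mathcal{F}^{<\omega})$ with $\mathcal{F}^{\cup}$. Alternatively, once $P$ is known to be wqo, Theorem \ref{wellfounded}$(b)$ yields that $I(P)$ is well-founded, and since $J(P)\subseteq I(P)$ this again gives $J(P)$ well-founded.

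I do not expect a genuine obstacle here: the argument is essentially an assembly of results already in hand. The only place requiring a little care is recognizing that the ambient lattice $[\omega]^{<\omega}$ is $I_{<\omega}$ of a well-founded poset and that a join-subsemilattice is automatically stable under finite unions, so that taking $\mathcal{F}:=P$ legitimately feeds $P$ itself into Theorem \ref{finitesubsets} as $\mathcal{F}^{<\omega}$. The same conclusion can be reached along the lines used in Chapter \ref{chap:wellfounded}, by checking condition $(ii)$ of Theorem \ref{thmwf}: absence of an infinite independent set via Theorem \ref{tm2.1}, together with the evident embedding of $P$ into $I_{<\omega}(A)$ with $A$ well-founded, whence $(i)$, i.e. $P$ is wqo.
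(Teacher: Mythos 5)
Your proof is correct and is essentially the paper's own argument: the paper obtains this statement as an immediate corollary of Theorem \ref{thmwf} (with Corollary \ref{provisoire} supplying the well-foundedness of $J(P)$), and both of those are themselves proved by exactly the application of Theorem \ref{finitesubsets} (with $Q$ the antichain on $\omega$ and $\mathcal{F}:=P$) that you carry out directly, together with Theorem \ref{tm2.1} to pass between poset and join-semilattice containment of $[\omega]^{<\omega}$. Your closing alternative --- verifying condition $(ii)$ of Theorem \ref{thmwf} via Theorem \ref{tm2.1} and the embedding of $P$ into $I_{<\omega}$ of a well-founded poset --- is precisely the route the paper takes, so the two arguments coincide up to packaging.
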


With this result, the proof of the "if" part of Theorem \ref{ordinal} is immediate.  Indeed, suppose  that $\alpha$ is not an ordinal.
Let $P\in\mathbb{J}_{\alpha}$. The lattice $J(P)$ contains a chain
isomorphic to $I(\alpha)$. Since $\alpha$ is not an ordinal,
$\omega^{*}\leq\alpha$. Hence, $J(P)$ is not well-founded. If $P$ is embeddable  in $[\omega]^{<\omega}$ as a
join-semilattice then, from
Theorem \ref{thm:posetchap2},  $P$ contains a join-subsemilattice
isomorphic to $[\omega]^{<\omega}$. Thus  $[\omega]^{<\omega}$ is
minimal in $\mathbb{J}_{\alpha}$.

A sierpinskisation $S$ of a countable order type $\alpha$ and $\omega$ is embeddable into $[\omega]^{<\omega}$ as a poset.  A consequence of Theorem \ref{thm:posetchap2} is the following
\begin{lemma}\label{lem:sierpnotinw} If  $S$  can be  embedded in $[\omega]^{<\omega}$ as a join-semilattice, $\alpha$ must be an ordinal.
\end{lemma}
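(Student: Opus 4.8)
The plan is to run the dichotomy of Theorem \ref{thm:posetchap2} on the image of $S$ inside $[\omega]^{<\omega}$ and to discard the unwanted alternative by exploiting the fact that a sierpinskisation is, by construction, a poset of order dimension at most $2$. First I would record the structural observation underlying everything: being a sierpinskisation of $\alpha$ and $\omega$, the poset $S$ carries an order equal to the intersection of a linear order of type $\alpha$ and a linear order of type $\omega$ on a common countable ground set. Sending each point to the pair formed by its ranks in these two linear orders yields an order-embedding of $S$ into the direct product $\omega\times\alpha$ of two chains. This embedding is purely order-theoretic and will be the lever for ruling out the bad case.

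Next, assume $S$ embeds in $[\omega]^{<\omega}$ as a join-semilattice, so that its image is a join-subsemilattice of $[\omega]^{<\omega}$ isomorphic to $S$. By Theorem \ref{thm:posetchap2}, either $S$ contains $[\omega]^{<\omega}$ as a join-semilattice, or $S$ is well-quasi-ordered. I would eliminate the first alternative as follows. The lattice $[\omega]^{<\omega}$ contains a copy of the Boolean lattice $[3]^{<\omega}\cong 2^3$, the \emph{standard example}, whose order dimension is $3$; on the other hand every subposet of a product of two chains has order dimension at most $2$. Hence $[\omega]^{<\omega}$ does not embed, even as a poset, into $\omega\times\alpha$; since $S$ itself embeds as a poset into $\omega\times\alpha$, it follows that $[\omega]^{<\omega}$ cannot embed into $S$ as a poset, a fortiori not as a join-semilattice.

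Consequently $S$ is well-quasi-ordered; in particular $S$ has no infinite antichain. But Remark \ref{remark:notordinal} asserts that, whenever $\alpha$ is not an ordinal, \emph{every} sierpinskisation of $\alpha$ and $\omega$ — in particular our $S$ — contains an infinite antichain. Therefore $\alpha$ must be an ordinal, which is exactly the claim.

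The only step needing care is the poset-theoretic fact that $[\omega]^{<\omega}$ is not a subposet of a product of two chains; this is where I expect the real content to sit. It can be invoked directly from dimension theory (Dushnik--Miller), or, to keep the argument self-contained, proved by the classical six-element computation on the three atoms and three coatoms of $2^3$: writing their images in $\omega\times\alpha$ as coordinate pairs and using that the three atoms are pairwise incomparable, one normalises the first coordinates to be strictly increasing, forces the second coordinates to be strictly decreasing, and then the four covering relations $a_i\le b_j$ ($i\neq j$) together with the three non-relations $a_i\not\le b_i$ produce a contradiction. Everything else in the proof is a straight application of the quoted results.
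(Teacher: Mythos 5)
Your proof is correct and is essentially the paper's own argument: the paper runs the identical reasoning in contrapositive form --- if $\alpha$ is not an ordinal, $S$ contains an infinite antichain, so Theorem \ref{thm:posetchap2} forces a copy of $[\omega]^{<\omega}$ inside $S$, which is impossible because a sierpinskisation embeds into a product of two chains while $[\omega]^{<\omega}$ has infinite Dushnik--Miller dimension. Your only cosmetic deviations are the direct (rather than contrapositive) organization and the use of the dimension-$3$ witness $2^{3}$ in place of the paper's unbounded family $\mathfrak P(\{0,\dots,n-1\})$; both suffice, since all that is needed is dimension greater than $2$.
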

\begin{proof} Otherwise,  $S$ contains an infinite antichain and by Theorem \ref{thm:posetchap2} it contains a copy of $[\omega]^{<\omega}$. But this poset cannot be embedded in a sierpinskisation. Indeed,  a sierpinskisation is embeddable  into a product of two chains, whereas $[\omega]^{<\omega}$ cannot be embedded in a product of finitely many chains (for every integer $n$, it contains the power set $\mathfrak P(\{0,\dots ,n-1\})$ which cannot be embedded into  a product of less than $n$ chains;  its dimension, in the sense of Dushnik-Miller's notion of dimension, is infinite, see \cite{trotter})\end{proof}.

 This fact invited us to restrict our  notion of sierpinskisation to some which are join-semilattices.  In fact, there are lattices of a particular kind.

 To a countable order type   $\alpha$, we associate a join-subsemilattice $\Omega_L(\alpha)$ of  the direct product $\omega\times\alpha$ obtained via a sierpinskisation of $\omega\alpha$ and $\omega$. We add a least element, if there is none, and we denote by   $\underline\Omega_L(\alpha)$  the resulting poset.  We also associate the join-semilattice
  $P_{\alpha}$ defined as follows:

If  $1+\alpha\not \leq \alpha$, in which case   $\alpha= n+\alpha'$ with $n<\omega$ and  $\alpha'$ without a first element, we set $P_{\alpha}:=n+\underline \Omega_L(\alpha')$. If not, and  $\alpha$ is equimorphic \index{equimorphic} to $\omega+ \alpha'$ we set $P_{\alpha}:= \underline \Omega_L(1+\alpha')$, otherwise, we set $P_{\alpha}= \underline \Omega_L(\alpha)$.

%
The importance of this kind of sierpinskisation  steems from the following
  result:
 \begin{theorem}\label{thm:serpinskalpha}

 If $\alpha$ is countably infinite, $P_{\alpha}$ belongs to every $\B$ coinitial in $
  \J_{\alpha}$.
 \end{theorem}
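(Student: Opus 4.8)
The assertion is equivalent to saying that $P_{\alpha}$ is a \emph{minimal} element of $\J_{\alpha}$ (once equimorphic join-semilattices are identified): arguing exactly as in the proof of Lemma \ref{lem:forbid2}, a minimal member of $\J_{\alpha}$ must occur, up to equimorphism, in every coinitial subfamily. So the plan is to establish two facts: (a) $P_{\alpha}\in\J_{\alpha}$, and (b) every join-subsemilattice $P$ of $P_{\alpha}$ for which $J(P)$ still contains a chain of type $I(\alpha)$ contains, in turn, a copy of $P_{\alpha}$ as a join-subsemilattice.

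For (a) I would use that in each of the three cases of its definition $P_{\alpha}$ is obtained from a lattice sierpinskisation $\underline{\Omega}_{L}(\gamma)$, with $\gamma\in\{\alpha',\,1+\alpha',\,\alpha\}$, which is a join-subsemilattice of a product $\omega\times\gamma$ of two chains and is itself a monotonic sierpinskisation of $\omega\gamma$ and $\omega$. As in the discussion around Lemma \ref{lem:finitegenesierp}, its ideal lattice $J(\underline{\Omega}_{L}(\gamma))$ carries a chain of ideals of type $I(\gamma)$, read off from the cuts of the underlying sierpinskisation in the $\gamma$-direction (for $\gamma=\omega^{*}$ this is exactly the descending chain $\{I_{n}\}$ of Chapter \ref{chap:wellfounded}). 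The three cases are calibrated precisely so that, after prepending the finite chain $n$ in the case $P_{\alpha}=n+\underline{\Omega}_{L}(\alpha')$, respectively after the $\omega+\alpha'$ normalization, the type of this chain becomes exactly $I(\alpha)$. This gives $P_{\alpha}\in\J_{\alpha}$.

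The substance of the theorem is (b). Let $P$ be a join-subsemilattice of $P_{\alpha}$ and let $\mathcal{C}=(C_{t})_{t\in I(\alpha)}$ be a chain of ideals of $P$ of type $I(\alpha)$. The successor cuts of $I(\alpha)$ are indexed by $\alpha$, so at each $\beta<\alpha$ the chain $\mathcal{C}$ has a jump $C_{\beta}^{-}\subset C_{\beta}^{+}$, and one may pick a witness $z_{\beta}\in C_{\beta}^{+}\setminus C_{\beta}^{-}$. The plan is to refine this skeleton into a family indexed by $\omega\gamma$ that realizes a lattice sierpinskisation inside $P$, by generalizing the inductive construction in the proof of Lemma \ref{w*}. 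There the elements $y_{n}$ were selected to satisfy conditions $a_{n})$, $b_{n})$, $c_{n})$ forcing $(i,j)\mapsto y_{i}\vee y_{j}$ to be a join-preserving embedding of $\Omega(\omega^{*})$; here the analogous conditions, now indexed along $\omega\gamma$, must guarantee that finite joins of the chosen elements reproduce the order of $\underline{\Omega}_{L}(\gamma)$ — the join over a finite block collapsing to the join of its two extreme elements, and strict inequalities persisting across distinct blocks. Since $P\subseteq\omega\times\gamma$, the two coordinates of each selected element range over chains, so a Ramsey and pigeonhole refinement of the index set lets us assume the witnesses are monotone on each $\omega$-block, which is exactly the monotonicity defining a lattice sierpinskisation. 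Mapping by finite joins of these witnesses, and adjoining a bottom, then yields a join-embedding of $\underline{\Omega}_{L}(\gamma)$, and after the case-dependent bookkeeping (the finite prefix $n$; the $\omega+\alpha'$ normalization) a join-embedding of $P_{\alpha}$ itself into $P$.

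The hard part will be the faithfulness of this reconstructed family in the transfinite setting. The conditions $a_{n}),b_{n}),c_{n})$ of Lemma \ref{w*} were tailored to $\omega^{*}$; their transfinite analogues along $\omega\gamma$ must be arranged so that the inductive selection never gets stuck — at each step the relevant tail of $\mathcal{C}$ must behave as in Case (ii) of Lemma \ref{w*} — and so that join-preservation and injectivity of the resulting map hold simultaneously. Controlling the interplay between the $\omega$-direction that carries the construction and the $\gamma$-direction carried by $\mathcal{C}$, and in particular ruling out accidental join relations forced by the ambient product $\omega\times\gamma$, is where the bulk of the argument lies; this is also where the dichotomy $1+\alpha\leq\alpha$ or not, and the special status of $\omega+\alpha'$, genuinely intervene, since they determine which lattice sierpinskisation $\underline{\Omega}_{L}(\gamma)$ is the correct minimal target.
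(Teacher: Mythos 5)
Your reduction of the statement to the minimality of $P_{\alpha}$ in $\J_{\alpha}$ (by the argument of Lemma \ref{lem:forbid2}) and your part (a) are fine and agree with the paper. The genuine gap is in part (b), and you flag it yourself: everything after ``the plan is to refine this skeleton'' is a programme, not a proof, and the programme hits a real obstruction at exactly the point you defer. The construction in Lemma \ref{w*} is an $\omega$-length induction whose engine is the separating/non-separating dichotomy for $\omega^*$-chains of ideals; neither ingredient survives for a general countably infinite $\alpha$. When $\alpha$ is not an ordinal the index set $\omega\gamma$ is not well-ordered, so there is no order along which your ``inductive selection'' can proceed; if you instead enumerate the witnesses in type $\omega$, you must maintain the $c_{n})$-type constraints (join-preservation, injectivity, collapse of finite joins to joins of extreme elements) between witnesses whose $\gamma$-coordinates interleave arbitrarily, and a Ramsey or pigeonhole refinement does not repair this: Ramsey homogeneity is along an $\omega$-sequence, whereas the coherence you need is along the order type $\gamma$. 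So the central step of your argument is both missing and, along the route you chose, genuinely hard.

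The paper avoids any such construction by exploiting the hypothesis your proposal under-uses: $P$ is a join-subsemilattice of $P_{\alpha}$, hence of a product $\omega\times A$ of two chains with finite vertical lines. Proposition \ref{prop:sierpkey} is a structure theorem for exactly such $P$: it contains a join-subsemilattice $S'$ of the canonical form $m+\underline\Omega_L(\beta)$ with the property that precisely the same chains embed in $J(P)$ and in $J(S')$. Lemma \ref{lem:somewhatcrucial} then converts ``$I(\alpha)$ embeds in $J(S')$'' into embeddability relations between $\gamma$ (resp.\ $\alpha$) and $\beta$, giving equimorphy and hence a join-embedding of $P_{\alpha}$ into $S'\subseteq P$; the dichotomy $\alpha+1\leq\alpha$ or not enters only at this final comparison, not inside a transfinite construction. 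To complete your proposal you would in effect have to reprove Proposition \ref{prop:sierpkey}; as written, the proposal is an outline with its decisive step unproved.
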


With Theorem \ref {thm:serpinskalpha} and Theorem \ref{ordinal}, Lemma \ref{lem:sierpnotinw}  yields :

$\bullet$ \emph{If $\alpha$ is not an ordinal $P_{\alpha}$ and  $[\omega]^{<\omega}$ are two incomparable minimal members of $\J_{\alpha}$}.

 According to Theorem \ref{thm:qalpha} and Theorem \ref{thm:serpinskalpha}:

  $\bullet$\emph {If $\alpha$ is a countably infinite ordinal, $Q_{\alpha}$ and $P_{\alpha}$ are minimal obstructions.}

In fact, using this new kind of sierpinskisation,  if $\alpha\leq \omega+\omega=\omega2$, $Q_{\alpha}$ and $P_{\alpha}$ coincide.

Indeed, if $\alpha=\omega+n$ with  $n<\omega$, then $S_{\alpha}=\Omega_L (1)\oplus  n$. In this case, $S_{\alpha}$ is isomorphic to $\omega\oplus n$, hence $Q_{\alpha}$ is isomorphic to the direct product $\omega\times (n+1)$ which in turn is isomorphic to $\Omega_L(n+1)=P_{\alpha}$. If $\alpha=\omega+\omega= \omega2$, $S_{\alpha}=\Omega_L (2)$. This poset is isomorphic to the direct product $\omega\times 2$. In this case,  $Q_{\alpha}$ is isomorphic to $[\omega]^2$, the subset of the product $\omega \times  \omega$ made of pairs $(i,j)$ with $i<j$. In turn  this poset is isomorphic to $\underline\Omega_L(\omega)=P_{\alpha}$.

Beyond $\omega2$,  no $Q_{\alpha}$ is a sierpinskisation.

For that, we apply the following  refinement of Lemma \ref{lem:sierpnotinw} obtained in
 \cite {chapou2} (see Example \ref{ex:ordinal},  Chapter \ref{chap:wellfounded}).
\begin{proposition}\label{w-f}
Let  $\gamma$ be a countable order type.  Then  $\underline\Omega_L(\gamma)$  is embeddable  in $[\omega]^{<\omega}$ as a join-semilattice  if  and only if  $\gamma\leq \omega$.
\end{proposition}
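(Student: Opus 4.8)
The plan is to prove the two implications separately, after recording that for a \emph{countable} order type one has $\gamma\leq\omega$ if and only if $\gamma$ is finite or $\gamma=\omega$. Indeed, any suborder of $\omega$ is well-ordered with each element having only finitely many predecessors, so its type is at most $\omega$; conversely such $\gamma$ clearly embed in $\omega$. Thus $\gamma\not\leq\omega$ means precisely that $\gamma$ is countably infinite and $\gamma\neq\omega$. The non-embeddability direction will be deduced from Corollary \ref{sierpinski} (exactly as in Example \ref{ex:ordinal}), while the embeddability direction will be read off from Theorem \ref{thm:qalpha} together with the explicit isomorphisms already computed in this section.

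For the hard direction, assume $\gamma\not\leq\omega$, so $\gamma$ is countably infinite and distinct from $\omega$. First I would observe that $\gamma$ then contains an element $a$ with infinitely many predecessors $a_{0}<a_{1}<\cdots<a$: if every element of $\gamma$ had only finitely many predecessors, then $\gamma$ would admit no infinite descending chain, hence be a well-order in which each element has finite rank, forcing $\gamma\leq\omega$. Writing $\omega\gamma$ as the ordinal sum of $\gamma$ copies of $\omega$, realized on $\gamma\times\omega$ with the lexicographic order in which the $\gamma$-coordinate dominates, each set $J_{i}:=\{(b,k): b<a_{i}\}$ is an initial segment with no greatest element (its top block is a copy of $\omega$ when $a_{i}$ has an immediate predecessor, and $a_{i}$ is a limit otherwise), hence non-principal; the $J_{i}$ are pairwise distinct and are all contained in the \emph{proper} initial segment $\{(b,k): b<a\}$. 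Thus a proper initial segment of $\beta:=\omega\gamma$ contains infinitely many non-principal initial segments, so by Corollary \ref{sierpinski} no sierpinskisation of $\omega\gamma$ and $\omega$ embeds in $[\omega]^{<\omega}$ as a join-semilattice. Since $\Omega_L(\gamma)$ is such a sierpinskisation it does not embed, and neither does $\underline\Omega_L(\gamma)$: it contains $\Omega_L(\gamma)$ as a join-subsemilattice, and a join-subsemilattice of a join-semilattice embeddable in $[\omega]^{<\omega}$ would itself embed.

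For the easy direction, assume $\gamma\leq\omega$. If $\gamma=m$ is finite then the chosen representative $\Omega_L(m)$ is isomorphic to the direct product $\omega\times m$, which by the computation recorded in this section is isomorphic to $Q_{\omega+(m-1)}=I_{<\omega}(S_{\omega+(m-1)})$ for $m\geq 1$ (and to the chain $\omega$ when $m=1$, which embeds trivially; the case $m=0$ is vacuous). If $\gamma=\omega$ then $\Omega_L(\omega)$ is isomorphic to $[\omega]^{2}$, the set of pairs $i<j$ inside $\omega\times\omega$, which is isomorphic to $Q_{\omega2}=I_{<\omega}(S_{\omega2})$. In all these cases $\Omega_L(\gamma)$ already possesses a least element, so $\underline\Omega_L(\gamma)=\Omega_L(\gamma)$, and Theorem \ref{thm:qalpha} guarantees that the relevant $Q_{\alpha}$ is a join-subsemilattice of $[\omega]^{<\omega}$; hence $\underline\Omega_L(\gamma)$ embeds as a join-semilattice.

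The main obstacle is the hard direction: exhibiting, inside a \emph{single} proper initial segment of $\omega\gamma$, infinitely many distinct non-principal initial segments whenever $\gamma\not\leq\omega$, so that the hypothesis of Corollary \ref{sierpinski} is met. The equivalence between embeddability of $\Omega_L(\gamma)$ and of $\underline\Omega_L(\gamma)$, and the bookkeeping of least elements, are then routine.
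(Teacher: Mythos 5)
Your route is the same as the paper's: for the forbidding direction you reduce to Corollary \ref{sierpinski} exactly as in Example \ref{ex:ordinal} (via the observation that $\gamma\not\leq\omega$ forces some element of $\gamma$ to have infinitely many predecessors), and for the positive direction you invoke the identifications $\Omega_L(m)\cong\omega\times m\cong Q_{\omega+(m-1)}$ and $\Omega_L(\omega)\cong[\omega]^{2}\cong Q_{\omega2}$ together with Theorem \ref{thm:qalpha}. The positive direction, and the passage between $\Omega_L(\gamma)$ and $\underline\Omega_L(\gamma)$ (a join-subsemilattice of something embeddable in $[\omega]^{<\omega}$ is itself embeddable), are correct as written.

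There is, however, one false step in the forbidding direction. From $\gamma\not\leq\omega$ you correctly deduce that some $a\in\gamma$ has infinitely many predecessors, but you then assert that these can be listed as an \emph{increasing} chain $a_0<a_1<\cdots<a$. No such chain need exist: for $\gamma=\omega^*$, or $\omega^*+n$, or $\omega^*\omega^*$, every element has infinitely many predecessors, yet $\gamma$ contains no increasing $\omega$-sequence whatsoever, since $\omega\not\leq\omega^*$. These dually well-ordered types are precisely among the cases the proposition must handle (indeed $\underline\Omega_L(\omega^*)=\underline\Omega(\omega^*)$ is the paper's central example). Fortunately your construction never uses monotonicity of the sequence, only distinctness: if $b\neq b'$ are two predecessors of $a$, say $b<b'$, then $(b,0)\in J_{b'}\setminus J_{b}$, so the initial segments $J_{b}:=\{(c,k):c<b\}$, indexed by infinitely many pairwise distinct predecessors $b$ of $a$, are already pairwise distinct; each nonempty $J_{b}$ (at most one of them is empty, when $b$ is least in $\gamma$) is a union of full $\omega$-blocks, hence has no greatest element and is non-principal; and all of them lie inside the proper initial segment $\{(c,k):c<a\}$ of $\omega\gamma$. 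Replacing ``increasing'' by ``pairwise distinct'' repairs the proof, and the rest of your argument goes through unchanged.
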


A consequence of Proposition \ref{w-f} is the following:\begin{corollary} \label {incomp} If  $\alpha$ is a  countable chain, then  $P_{\alpha}$ and  $[\omega]^{<\omega}$
are two incomparable members of $\J_{\alpha}$ if and only if
$\alpha$ is not embeddable in  $\omega2$
\end{corollary}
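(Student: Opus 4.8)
The plan is to split incomparability into its two halves and show that one of the two possible embeddings never occurs, while the other occurs exactly when $\alpha\leq\omega2$. Recall that both objects lie in $\J_{\alpha}$: $[\omega]^{<\omega}\in\J_{\alpha}$ by Lemma \ref{lem:forbid1} (for $\alpha$ countable), and $P_{\alpha}\in\J_{\alpha}$ by Theorem \ref{thm:serpinskalpha}. Since the quasi-order on $\J$ is defined by mutual embeddability of join-semilattices, incomparability of $P_{\alpha}$ and $[\omega]^{<\omega}$ means precisely that neither embeds into the other as a join-semilattice; this is symmetric, so the direction of the convention on $\leq$ is irrelevant. I would assume $\alpha$ infinite, the finite case being trivial.

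First I would dispose of one embedding once and for all: $[\omega]^{<\omega}$ never embeds into $P_{\alpha}$, even as a poset. Indeed, by its definition $P_{\alpha}$ is either $\underline\Omega_L(\gamma)$ or an ordinal sum $n+\underline\Omega_L(\gamma)$ with $n<\omega$, where $\underline\Omega_L(\gamma)$ is a join-subsemilattice of the product $\omega\times\gamma$ of two chains (with possibly a least element adjoined); hence $P_{\alpha}$ has order dimension at most $2$. As recalled in the proof of Lemma \ref{lem:sierpnotinw}, $[\omega]^{<\omega}$ has infinite dimension, since it contains $\mathfrak P(\{0,\dots,n-1\})$ for every $n$, so it cannot be embedded as a poset into any poset of finite dimension, in particular not into $P_{\alpha}$. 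Thus this embedding is impossible for every $\alpha$, and incomparability is equivalent to the single statement that $P_{\alpha}$ does not embed into $[\omega]^{<\omega}$ as a join-semilattice.

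It then remains to prove that $P_{\alpha}$ embeds into $[\omega]^{<\omega}$ if and only if $\alpha\leq\omega2$; the corollary follows by contraposition. The decisive input is Proposition \ref{w-f}, stating that $\underline\Omega_L(\gamma)$ embeds into $[\omega]^{<\omega}$ iff $\gamma\leq\omega$, so the task reduces to identifying the parameter $\gamma$ in the definition of $P_{\alpha}$ and verifying the equivalence $\gamma\leq\omega\iff\alpha\leq\omega2$. If $\alpha$ is a countably infinite ordinal, then $1+\alpha=\alpha$ and $\alpha=\omega+\alpha'$ for a unique $\alpha'$, so $P_{\alpha}=\underline\Omega_L(1+\alpha')$; here $1+\alpha'\leq\omega$ holds exactly when $\alpha'\leq\omega$, i.e.\ exactly when $\alpha=\omega+\alpha'\leq\omega2$, so Proposition \ref{w-f} gives the equivalence. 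If $\alpha$ is not an ordinal, then $\alpha\not\leq\omega2$ (a subchain of the ordinal $\omega2$ is well-ordered), and in each branch of the definition the parameter $\gamma\in\{\alpha',1+\alpha',\alpha\}$ is again a non-ordinal: in the branch $P_{\alpha}=n+\underline\Omega_L(\alpha')$ because $\alpha'$ is infinite with no first element; in the branch $P_{\alpha}=\underline\Omega_L(1+\alpha')$ because $\alpha$, being mutually embeddable with $\omega+\alpha'$, would otherwise be a subchain of an ordinal and hence well-ordered; and in the last branch $\gamma=\alpha$. So $\gamma\not\leq\omega$, and Proposition \ref{w-f} shows that $\underline\Omega_L(\gamma)$, and therefore $P_{\alpha}$ which contains it as a join-subsemilattice, does not embed into $[\omega]^{<\omega}$.

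The only delicate point — and the main obstacle — is this bookkeeping in the last step: one must check, across all three branches of the definition of $P_{\alpha}$, that the relevant $\gamma$ satisfies $\gamma\leq\omega$ precisely when $\alpha\leq\omega2$, and that the finite summand $n$ appearing in the first branch is harmless. It is, since $n+\underline\Omega_L(\gamma)$ embeds into $[\omega]^{<\omega}$ only if its join-subsemilattice $\underline\Omega_L(\gamma)$ does, and since the first branch arises only when $1+\alpha\not\leq\alpha$, which for infinite $\alpha$ forces $\alpha$ to be a non-ordinal and hence $\alpha\not\leq\omega2$. With these verifications in hand, the two claims combine: incomparability holds iff $P_{\alpha}\not\hookrightarrow[\omega]^{<\omega}$, which holds iff $\alpha\not\leq\omega2$.
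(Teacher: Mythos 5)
Your proof is correct and follows essentially the same route as the paper: both hinge on Proposition \ref{w-f}, after reducing incomparability to the single question of whether $P_{\alpha}$ embeds into $[\omega]^{<\omega}$, the reverse embedding being impossible by the Dushnik--Miller dimension argument. The only difference is that you spell out what the paper leaves implicit --- the dimension argument behind its word ``necessarily'' and the branch-by-branch identification of the parameter $\gamma$ in $P_{\alpha}=n+\underline\Omega_L(\gamma)$ --- whereas the paper invokes the concrete isomorphisms $P_{\alpha}\cong\omega\times(n+1)$ and $P_{\omega 2}\cong[\omega]^{2}$ for the forward direction.
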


\begin{proof}
Let  $\alpha \leq \omega2$. As  we have seen $P_{\alpha}$ is isomorphic to $\omega\times (1+\alpha')$ if
$\alpha'<\omega$  and  to $[\omega]^{2}$ if $\alpha=\omega2$. In both cases $P_{\alpha}$ is embeddable , as a join-semilattice,
in $[\omega]^{<\omega}$. Conversely, if $P_{\alpha}$ and
$[\omega]^{<\omega}$ are comparable, as join-semilattices, then, necessarily $P_{\alpha}$ is embeddable  into
$[\omega]^{<\omega}$ as a join-semilattice. From Proposition
\ref{w-f}, it follows that $\alpha \leq \omega2$.
\end{proof}

This work leaves open the following questions. We are only able to give some examples of ordinals for which the answer to the first question is positive.

\begin{questions}
\begin{enumerate}
\item If  $\alpha$ is a countably infinite ordinal, does the minimal obstructions are $\alpha$, $P_{\alpha}$, $Q_\alpha$ and some  lexicographical sums of
obstructions corresponding to smaller  ordinal?
\item  If $\alpha$ is a scattered order type which is not an ordinal, does the minimal  obstructions are
$\alpha$, $P_{\alpha}$, $[\omega]^{<\omega}$ and some  lexicographical sums of
obstructions corresponding to smaller  scattered order types?
\item If $\alpha$ is the order type $\eta$ of the chain of rational numbers, does $\J_{\neg \eta}= Forb_{\J}( \{1+\eta, [\omega]^{<\omega}, \underline \Omega(\eta)\})$ where $\underline \Omega(\eta)$ is the lattice serpinskisation represented Figure \ref {Omegachap2}?
\end{enumerate}
\end{questions}

\begin{figure}[htbp]
\centering
\includegraphics[width=3in]{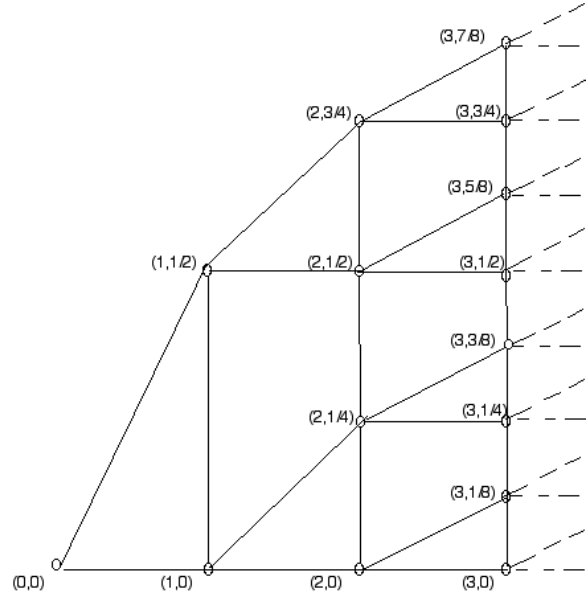}
\caption{$ \underline \Omega(\eta)$}
\label{Omegachap2}
\end{figure}







\section[Join-semilattices  and obstructions]{Join-semilattices and a proof of Theorem \ref{thmfirst}}
A \emph{join-semilattice} is a poset $P$ such that every two elements $x,y$ have a least upper-bound, or join, denoted by $x\vee y$. If $P$ has a least element, that we denote $0$, this amounts to say that every finite subset of $P$ has a join. In the sequel, we will mostly consider join-semilattices with a least element. Let $Q$ and $P$ be such join-semilattices. A map $f:Q\rightarrow P$ is \emph{join-preserving} if:
\begin{equation}\label{eq:def:joinpreserving}
f(x\vee y)=f(x)\vee f(y)
\end{equation}
for all $x,y\in Q$.

This map \emph{preserves finite (resp. arbitrary) joins} if
\begin{equation}\label{eq:def:finitepreserving}
f(\bigvee X)=\bigvee \{f(x): x\in X\}
\end{equation}
for every finite (resp. arbitrary) subset $X$ of $Q$.

If $P$ is a join-semilattice with a least element, the set $J(P)$ of ideals of $P$ ordered by inclusion is a complete lattice. If $A$ is a subset of  $P$, there is a least ideal  containing $A$, that we denote $<A>$. An ideal $I$ is \emph{generated} by a subset $A$ of $P$ if $I=<A>$.  If $[A]^{<\omega}$ denotes the collection of finite subsets of $A$ we have:
\begin{equation}\label{eq:def:generated}
<A>= \downarrow \{ \bigvee X: X\in [A]^{<\omega}\}
\end{equation}

\begin{lemma} \label{lem:arbitraryjoins}Let $Q$ be a join-semilattice with a least element and  $L$ be a complete lattice.  To a map  $g:Q\rightarrow L$ associate $\overline g: \mathfrak P(Q)\rightarrow L$ defined by setting  $\overline g(X):= \bigvee \{g(x): x\in X \}$ for every   $X \subseteq Q$. Then $\overline g$ induces a map from $J(Q)$ in $L$ which preserves arbitrary joins whenever $g$ preserves finite joins.  \end{lemma}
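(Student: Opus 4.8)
The plan is to reduce the statement to a single identity relating $\overline{g}$ on an arbitrary subset to its value on the ideal generated by that subset, and then to read off preservation of arbitrary joins from the way suprema are computed in the complete lattice $J(Q)$. First I would record two elementary monotonicity facts. Applying the finite-join identity \eqref{eq:def:finitepreserving} to a two-element set $\{x,y\}$ with $x\le y$ gives $g(y)=g(x\vee y)=g(x)\vee g(y)$, hence $g(x)\le g(y)$; so \emph{$g$ is order-preserving}. Directly from its definition $\overline{g}(X)=\bigvee\{g(x):x\in X\}$, the map $\overline{g}$ is monotone for inclusion, and it obeys the union rule $\overline{g}\bigl(\bigcup_{t\in T}X_{t}\bigr)=\bigvee_{t\in T}\overline{g}(X_{t})$ for every family $(X_{t})_{t\in T}$ of subsets of $Q$, since a supremum taken over a union of index sets equals the supremum of the partial suprema in the complete lattice $L$.

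The heart of the argument is the identity $\overline{g}(\langle A\rangle)=\overline{g}(A)$ for every $A\subseteq Q$, where $\langle A\rangle$ denotes the ideal generated by $A$. The inequality $\overline{g}(A)\le\overline{g}(\langle A\rangle)$ is immediate from $A\subseteq\langle A\rangle$ and the monotonicity of $\overline{g}$. For the reverse inequality I would take $z\in\langle A\rangle$; by the description \eqref{eq:def:generated} of the generated ideal there is a finite $X\in[A]^{<\omega}$ with $z\le\bigvee X$. Using the monotonicity of $g$ and then the finite-join hypothesis \eqref{eq:def:finitepreserving}, $g(z)\le g(\bigvee X)=\bigvee\{g(y):y\in X\}\le\overline{g}(A)$. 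As this holds for every $z\in\langle A\rangle$, taking the join over all such $z$ yields $\overline{g}(\langle A\rangle)\le\overline{g}(A)$, and equality follows.

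This is the step where the hypothesis is essential, and I expect it to be the only delicate point: mere monotonicity of $g$ would give $g(z)\le g(\bigvee X)$ but would \emph{not} let us replace $g(\bigvee X)$ by $\bigvee\{g(y):y\in X\}$, so without finite-join preservation the bound $g(z)\le\overline{g}(A)$ need not hold. Everything else is bookkeeping about suprema in $L$.

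To conclude, I would recall that $J(Q)$ is a complete lattice in which the supremum of a family $(I_{t})_{t\in T}$ of ideals is the least ideal containing their union, that is $\bigvee_{t\in T}I_{t}=\langle\bigcup_{t\in T}I_{t}\rangle$. Combining the three facts, $\overline{g}\bigl(\bigvee_{t\in T}I_{t}\bigr)=\overline{g}\bigl(\langle\bigcup_{t\in T}I_{t}\rangle\bigr)=\overline{g}\bigl(\bigcup_{t\in T}I_{t}\bigr)=\bigvee_{t\in T}\overline{g}(I_{t})$, which is exactly preservation of arbitrary joins. For the empty family one uses that the bottom of $J(Q)$ is the principal ideal $\downarrow 0$ of the least element $0$ of $Q$, so preservation of the empty join reduces to $\overline{g}(\downarrow 0)=g(0)=0_{L}$; this is the instance of \eqref{eq:def:finitepreserving} for $X=\emptyset$, which may simply be understood as part of the finite-join hypothesis.
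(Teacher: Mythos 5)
Your proof is correct and takes essentially the same route as the paper's: both rest on the key identity that $\overline g$ of an ideal equals $\overline g$ of any set generating it, applied to the fact that the join in $J(Q)$ of a family of ideals is the ideal generated by their union. Your write-up merely fills in details (monotonicity of $g$, the finite-cover argument for elements of $\langle A\rangle$, the empty family) that the paper's one-line proof of its Claim leaves implicit.
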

\begin{proof}

\begin{claim}\label{claim: supgenerates}Let $I\in J(Q)$. If $g$ preserves finite joins and $A$ generates $I$ then $\overline g(I) =\bigvee\{g(x): x\in A\}$.
\end{claim}
\noindent{\bf Proof of {claim} \ref{claim: supgenerates}.} Since $I=<A>$ and $g$ preserves finite joins, $<\{g(x): x\in I\}>=<\{g(x): x\in A\}>$. The claimed equality follows.
 \endproof

Now, let $\mathcal I\subseteq J(Q)$ and $I:=\bigvee \mathcal I$. Clearly, $A:= \bigcup \mathcal I$ generates $I$.   Claim \ref{claim: supgenerates}  yields  $\overline g(I) =\bigvee\{g(x): x\in A\}= \bigvee \bigcup \{\{g(x): x\in J\}: J\in \mathcal I\}=\bigvee \{\bigvee \{g(x): x\in J\}: J\in \mathcal I\}= \bigvee \{\overline g(J): J\in \mathcal I\}$. This proves that $\overline g$ preserves arbitrary joins. \end{proof}
\begin{remark} Note that in Lemma \ref{lem:arbitraryjoins} the fact that $g$ is one-to-one  does not necessarily transfer to the map induced by $\overline g $ on $J(Q)$.
For an example, let $\kappa:=2^{\aleph_{0}}$ and   let $Q:= [\kappa] ^{<\omega}$ and $L:= \mathfrak P(\aleph_{0})$ ordered by inclusion. The lattice  $J(Q)$ is isomorphic to $\mathfrak P(\kappa)$, thus $\vert Q\vert= 2^{\kappa}> \kappa= \vert L\vert$, proving that $J(Q)$ is not embeddable  in $L$. On an other hand,  as it is well known,  $\mathfrak P(\aleph_{0})$ contains a subset  $A$ of size $2^{\aleph _{0}}$ made of infinite sets which are pairwise almost disjoint. As it is easy to check, the set of finite unions of members of $A$ is a join-subsemilattice of $L$ isomorphic to $Q$.
\end{remark}

\begin{lemma}\label{lem:11} Let $R$ be a poset and $P$ be a join-semilattice with a least element.
The following properties are equivalent:
\begin{enumerate} [{(i)}]
\item \label{item:0}There is an
embedding from $I(R)$ in $J(P)$ which preserves arbitrary joins.

\item \label{item:1}There is an
embedding from $I(R)$ in $J(P)$.

\item \label{item:2}There is a map $g$
from $I_{<\omega}(R)$ in $P$ such that
\begin{equation}  \label{eq:notsup}X\nsubseteq
Y_{1}\cup\ldots\cup Y_{n}   \Rightarrow g(X)\not\leq
g(Y_{1})\vee\ldots\vee g(Y_{n})
\end{equation} for all $X, Y_{1}, \ldots,
Y_{n}\in I_{<\omega}(R)$.

\item\label{item:3} There is a map $h:R \rightarrow
P$ such that \begin{equation} \label{eq:notleq}\forall i (1\leq i \leq
n\Rightarrow x\not\leq y_{i}) \Rightarrow h(x)\not\leq h(y_{1})\vee\ldots\vee h(y_{n})\end{equation} for all
$x, y_{1}, \ldots, y_{n}\in R$.
\end{enumerate}\end{lemma}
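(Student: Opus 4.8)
The plan is to establish the cycle $(i)\Rightarrow(ii)\Rightarrow(iv)\Rightarrow(i)$ together with the equivalence $(iii)\Leftrightarrow(iv)$, so that all four statements collapse to one. The implication $(i)\Rightarrow(ii)$ is immediate, since a join-preserving embedding is in particular an embedding. The equivalence of the two "separation" conditions $(iii)$ and $(iv)$ is soft. From $g$ satisfying $(iii)$ I would put $h(x):=g(\downarrow x)$ and observe that if $x\not\leq y_i$ for all $i$ then $x\notin\downarrow y_1\cup\dots\cup\downarrow y_n$, so $\downarrow x\nsubseteq\downarrow y_1\cup\dots\cup\downarrow y_n$ and $(iii)$ yields the desired non-inequality. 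Conversely, from $h$ satisfying $(iv)$, fixing for each $X\in I_{<\omega}(R)$ a finite generating set $F_X$ (so $X=\downarrow F_X$) I set $g(X):=\bigvee\{h(x):x\in F_X\}$; if $X\nsubseteq Y_1\cup\dots\cup Y_n$ I pick $x\in X$ outside the union, lift it to some $x'\in F_X$ (still outside each $Y_j$, as the $Y_j$ are initial segments), note $x'\not\leq y$ for every $y\in\bigcup_j F_{Y_j}$, and apply $(iv)$ to get $h(x')\not\leq g(Y_1)\vee\dots\vee g(Y_n)$; since $g(X)\geq h(x')$ this gives $(iii)$.

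For $(iv)\Rightarrow(i)$ I would build the embedding explicitly. Given $h$, define $g_{2}:I_{<\omega}(R)\to J(P)$ by letting $g_{2}(X)$ be the ideal of $P$ generated by $\{h(x):x\in X\}$. Because the ideal generated by a union of sets is the join of the ideals they generate, $g_{2}$ preserves finite joins; applying Lemma \ref{lem:arbitraryjoins} (with $Q:=I_{<\omega}(R)$ and $L:=J(P)$) together with the identification $J(I_{<\omega}(R))\cong I(R)$ recalled in Section \ref{subsection2.2}, the map $\overline{g_{2}}$ descends to a map $\widetilde h:I(R)\to J(P)$ preserving arbitrary joins, explicitly $\widetilde h(I)=\langle\{h(x):x\in I\}\rangle$. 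Monotonicity is clear, so it remains only to see that $\widetilde h$ reflects the order. If $I\nsubseteq I'$, pick $x\in I\setminus I'$; since $I'$ is an initial segment, $x\not\leq y$ for every $y\in I'$, so $(iv)$ gives $h(x)\not\leq h(y_1)\vee\dots\vee h(y_m)$ for every finite family from $I'$, i.e. $h(x)\notin\widetilde h(I')$, whereas $h(x)\in\widetilde h(I)$. Hence $\widetilde h(I)\nsubseteq\widetilde h(I')$ and $\widetilde h$ is an embedding.

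The one implication carrying the real idea, and the main obstacle, is $(ii)\Rightarrow(iv)$. Let $f:I(R)\to J(P)$ be an order embedding. For each $x\in R$ I introduce $I_x:=\{y\in R:x\not\leq y\}$, which is the \emph{largest} initial segment of $R$ not containing $x$; in particular $\downarrow x\nsubseteq I_x$, so $f(\downarrow x)\nsubseteq f(I_x)$ and I may choose $h(x)\in f(\downarrow x)\setminus f(I_x)$. To check $(iv)$, suppose $x\not\leq y_i$ for $i=1,\dots,n$. Then each $y_i\in I_x$, hence $\downarrow y_i\subseteq I_x$ and $h(y_i)\in f(\downarrow y_i)\subseteq f(I_x)$. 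Since $f(I_x)$ is an ideal of $P$, it is directed and downward closed, so $h(y_1)\vee\dots\vee h(y_n)\in f(I_x)$; as $h(x)\notin f(I_x)$ and $f(I_x)$ is an initial segment, $h(x)\not\leq h(y_1)\vee\dots\vee h(y_n)$, which is $(iv)$. The subtlety here is exactly that a bare order embedding furnishes, for each $x$, only a witness separating $\downarrow x$ from a \emph{single} ideal; the trick is that taking the witness against the \emph{maximal} avoiding segment $I_x$ makes it work simultaneously against every finite join $h(y_1)\vee\dots\vee h(y_n)$ built from elements not above $x$, the closure of ideals under finite joins being precisely what converts "not below any single $h(y_i)$" into "not below their join".
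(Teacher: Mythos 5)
Your proposal is correct, but it is organized differently from the paper's proof, which closes the single cycle $(i)\Rightarrow(ii)\Rightarrow(iii)\Rightarrow(iv)\Rightarrow(i)$. The paper's key step is $(ii)\Rightarrow(iii)$, carried out for \emph{all} finitely generated initial segments: for $X\in I_{<\omega}(R)$ it chooses $g(X)$ in $C(X):=f(X)\setminus\bigcup\{f(R\setminus\uparrow a): a\in Max(X)\}$, and proving $C(X)\neq\emptyset$ needs the extra observation that an ideal contained in a finite union of initial segments is contained in one of them. Your route replaces this by a direct $(ii)\Rightarrow(iv)$, which is in effect the paper's construction restricted to principal initial segments: your witness $h(x)\in f(\downarrow x)\setminus f(R\setminus\uparrow x)$ is exactly the paper's $g(\downarrow x)$, and because only the single maximal avoiding segment $R\setminus\uparrow x$ is involved, the finite-union (primality) argument disappears; maximality of $R\setminus\uparrow x$ together with closure of ideals under finite joins does all the work, as you correctly isolate. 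The price is that $(iii)$ must then be recovered from $(iv)$ by hand; your argument (lifting the witness $x\in X\setminus(Y_1\cup\dots\cup Y_n)$ to a generator $x'\in F_X$, which stays outside the $Y_j$ because they are initial segments) is correct, whereas the paper gets $(iii)$ for free from its cycle. Your $(iv)\Rightarrow(i)$ coincides with the paper's (the ideal generated by $h[I]$), merely routing the preservation of arbitrary joins through Lemma \ref{lem:arbitraryjoins} instead of the one-line remark that joins in $I(R)$ are unions. One caveat, shared equally by the paper: in $(iv)\Rightarrow(i)$, the case where $h(x)\leq\bigvee\{h(y):y\in F\}$ with $F=\emptyset$ (i.e.\ $h(x)=0$) is silently excluded; this is harmless provided implication (\ref{eq:notleq}) is read as allowing the empty family $n=0$, which forces $h(x)\neq 0$ for all $x$ --- a condition your construction in $(ii)\Rightarrow(iv)$ automatically satisfies, since the ideal $f(R\setminus\uparrow x)$ contains the least element of $P$.
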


\begin{proof}\noindent $(\ref{item:0})\Rightarrow (\ref{item:1})$. Obvious.

 \noindent $(\ref{item:1})\Rightarrow (\ref{item:2})$  Let $f$ be an embedding from
$I(R)$ in $J(P)$. Let  $X \in I_{<\omega}(R)$. The set  $A:= Max(X)$ of maximal elements of $X$ is finite and $X:=\downarrow A$. Set $F(X):= \{f(R\setminus \uparrow a): a \in Max (X)\}$ and  $C(X):= f(X)\setminus \bigcup F(X)$.

\begin {claim} \label{claim:notempty}$C(X)\not =\emptyset$ for every $X \in I_{<\omega}(R)$\end{claim}
\noindent{\bf Proof of Claim \ref{claim:notempty}.}
 If $X= \emptyset$, $F(X)= \emptyset$. Thus  $C(X)=f(X)$ and our assertion is proved. We may then assume  $X\not = \emptyset$.  Suppose  $C(X)=\emptyset$, that is $f(X)\subseteq \bigcup F(X)$. Since $f(X)$ is an ideal of $P$ and  $\bigcup F(X)$ is a finite union of initial segments  of $P$, this implies that $f(X)$ is included in some, that is $f(X) \subseteq f(R\setminus \uparrow a)$ for some $a \in Max(X)$. Since $f$ is an embedding, this implies $X\subseteq  R\setminus \uparrow a$ hence $a\not \in X$. A contradiction.
\endproof

 Claim \ref{claim:notempty} allows us to pick an element $g(X)\in
C(X)$ for each $X \in I_{<\omega}(R)$. Let $g$ be the map defined by this process. We show that implication (\ref{eq:notsup}) holds. Let $X, Y_{1},\ldots,
Y_{n}\in I_{<\omega}(R)$. We have $g( Y_{i})\in f( Y_{i})$ for
every $1\leq i\leq n$. Since $f$ is an embedding
$f(Y_{i})\subseteq f( Y_{1}\cup \ldots \cup Y_{n})$ for every
$1\leq i\leq n$. But $f( Y_{1}\cup \ldots \cup Y_{n})$ is an
ideal. Hence $g(Y_{1})\vee\ldots\vee g(Y_{n})\in f( Y_{1} \cup
\ldots \cup Y_{n})$. Suppose $X\nsubseteq Y_{1}\cup \ldots \cup
Y_{n}$. There is $a\in Max(X)$ such that $Y_{1}\cup \ldots \cup
Y_{n}\subseteq R\setminus \uparrow a$.  And since $f$ is an embedding, $f(Y_{1}\cup \ldots \cup
Y_{n})\subseteq f(R\setminus \uparrow a)$. If $g(X)\leq g(Y_{1})\vee\ldots\vee g(Y_{n})$ then
$g(X)\in f( Y_{1}\cup \ldots\cup Y_{n})$. Hence $g(X)\in f(R\setminus \uparrow a)$,  contradicting
$g(X)\in C(X)$.

\noindent$(\ref{item:2})\Rightarrow (\ref{item:3})$.  Let $g: I_{<\omega}(R) \rightarrow P$ such that  implication (\ref{eq:notsup}) holds. Let  $h$ be the map induced by $g$ on  $R$ by setting $h(x):=g(\downarrow x)$ for $x\in R$. Let $x, y_{1}, \ldots,
y_{n}\in R$. If $x\not\leq y_{i}$ for every $1\leq i \leq n$, then
$\downarrow x \nsubseteq (\downarrow y_{1} \cup \ldots \cup
\downarrow y_{n})$. Since $g$ satisfies implication (\ref{eq:notsup}),
we have $h(x):=g(\downarrow x)\not\leq g(\downarrow y_{1})\vee
\ldots \vee g(\downarrow y_{n})=h(y_{1})\vee\ldots\vee
h(y_{n})$. Hence  implication (\ref{eq:notleq}) holds.

\noindent$(\ref{item:3})\Rightarrow (\ref{item:0})$ Let $h: R\rightarrow P$
such that implication (\ref{eq:notleq}) holds. Define $f: I(R)\rightarrow
J(P)$ by setting $f(I):=<\{h(x): x\in I\}>$, the ideal generated by
$\{h(x): x\in I\}$, for $I\in I(R)$. Since in $I(R)$ the join is the union,    $f$ preserves arbitrary joins.  We claim that $f$ is one-to-one.
Let $I, J\in I(R)$ such that $I\nsubseteq J$. Let $x\in
I\setminus  J$. Clearly $h(x)\in f(I)$. We claim that
$h(x)\not\in f(J)$. Indeed, if $h(x)\in f(J)$, then
$h(x)\leq\bigvee\{h(y): y\in F\}$ for some finite subset $F$ of
$J$. Since implication (\ref{eq:notleq}) holds, we have $x\leq
y$ for some $y\in F$. Hence $x\in J$, contradiction. Consequently
$f(I)\nsubseteq f(J)$. Thus  $f$ is one-to-one as claimed.
\end{proof}

The following proposition rassembles the main properties of the comparizon of join-semilattices.
\begin{proposition} \label{1.3} Let  $P$, $Q$ be two join-semilattices with a least element. Then:
\begin{enumerate}
\item \label{item 1lem1.3}$Q$ is embeddable in $P$ by a join-preserving map  iff $Q$ is embeddable in  $P$ by a map preserving finite joins.
\item \label{item 2lem1.3}If $Q$ is embeddable  in  $P$ by a join-preserving map then $J (Q)$ is embeddable  in  $J (P )$
by a map preserving arbitrary joins.

Suppose $Q := I_{<\omega} (R)$ for some poset $R$. Then:
\item  \label{item 3lem1.3}$Q$ is embeddable in $P$ as a poset iff $Q$ is embeddable in   $P$ by a map preserving finite joins.
\item \label{item 4lem1.3}$J (Q)$ is embeddable in $J (P )$ as a poset iff $J (Q)$  is embeddable in   $J(P)$ by a map preserving arbitrary  joins.
\item  \label{item 5lem1.3}If $\downarrow  x$ is finite for every $x \in  R$ then $Q$ is embeddable  in $P$ as a poset iff $J (Q)$
is embeddable in  $J (P )$ as a poset.
\end{enumerate}
\end{proposition}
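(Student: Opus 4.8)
The plan is to reduce the whole proposition to the transfer lemma (Lemma~\ref{lem:11}) and the sup-preserving extension lemma (Lemma~\ref{lem:arbitraryjoins}), handling the five items in the order $(\ref{item 1lem1.3}),(\ref{item 2lem1.3}),(\ref{item 3lem1.3}),(\ref{item 4lem1.3}),(\ref{item 5lem1.3})$, since the later parts lean on the earlier ones. For $(\ref{item 1lem1.3})$ one direction is trivial: a map preserving finite joins preserves binary joins. For the converse, a join-preserving order-embedding $f$ preserves every \emph{nonempty} finite join by induction on the number of terms, so the only thing missing is the empty join, i.e. $f(0_Q)=0_P$. I would repair this by replacing $f$ with the map that agrees with $f$ off $0_Q$ and sends $0_Q$ to $0_P$; a short check, using injectivity of $f$ and $0_Q\le x$ for all $x$, shows it is still an order-embedding satisfying $g(x\vee y)=g(x)\vee g(y)$, hence preserving all finite joins. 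For $(\ref{item 2lem1.3})$ I start from a join-preserving embedding $f:Q\to P$, which by $(\ref{item 1lem1.3})$ may be taken to preserve finite joins with $f(0_Q)=0_P$, and apply Lemma~\ref{lem:arbitraryjoins} to the map $x\mapsto \downarrow f(x)$ from $Q$ into the complete lattice $J(P)$. This map preserves finite joins (the join in $J(P)$ of two principal ideals with a common upper bound is principal), so it induces $\widehat f:J(Q)\to J(P)$, $\widehat f(I)=\downarrow_P f(I)$, preserving arbitrary joins; its injectivity and order-reflection follow from $f$ being an order-embedding, since for $x\in I\setminus I'$ we have $f(x)\in\widehat f(I)$, while $f(x)\le f(y)$ with $y\in I'$ would force $x\le y\in I'$.

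For $(\ref{item 3lem1.3})$ the reverse implication is immediate, so the content is promoting a bare order-embedding $\phi:I_{<\omega}(R)\to P$ to one preserving finite joins. I set $h(x):=\phi(\downarrow x)$ for $x\in R$ and claim $h$ satisfies condition (iv), i.e. implication~(\ref{eq:notleq}), of Lemma~\ref{lem:11}: if $x\not\le y_i$ for all $i$ then $\downarrow x\not\subseteq \downarrow y_1\cup\dots\cup\downarrow y_n=:Y$ in $I_{<\omega}(R)$, hence $\phi(\downarrow x)\not\le\phi(Y)$, whereas $\bigvee_i\phi(\downarrow y_i)\le\phi(Y)$ because $\phi$ is order-preserving, so $h(x)\not\le h(y_1)\vee\dots\vee h(y_n)$. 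Given such $h$, I define $g(X):=\bigvee\{h(x):x\in Max(X)\}$ for $X\in I_{<\omega}(R)$, a finite join since $Max(X)$ is finite and $X=\downarrow Max(X)$. Using that $h$ is order-preserving one checks $g(X\cup X')=g(X)\vee g(X')$ and $g(\emptyset)=0_P$, so $g$ preserves finite joins, and order-reflection comes straight from~(\ref{eq:notleq}) by picking $x\in Max(X)\setminus X'$. Thus $g$ is the desired finite-join-preserving embedding.

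For $(\ref{item 4lem1.3})$ I use that $J(Q)=J(I_{<\omega}(R))$ is isomorphic, as a complete lattice, to $I(R)$ via $\mathcal I\mapsto\bigcup\mathcal I$, an isomorphism that sends joins to joins. Under this identification a poset embedding $J(Q)\to J(P)$ is exactly condition (ii) of Lemma~\ref{lem:11}, and an arbitrary-join-preserving embedding is condition (i), so the equivalence is precisely (i)$\iff$(ii) of that lemma. Part $(\ref{item 5lem1.3})$ is then assembled: $(\ref{item 3lem1.3})$ gives [$Q$ embeds as a poset] $\iff$ [$Q$ embeds preserving finite joins], $(\ref{item 2lem1.3})$ pushes the latter up to an arbitrary-join-preserving embedding of $J(Q)$, and $(\ref{item 4lem1.3})$ turns that into a poset embedding of $J(Q)$, giving the forward direction. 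For the reverse I would read off, through $(\ref{item 4lem1.3})$ and Lemma~\ref{lem:11}, a map $h:R\to P$ satisfying~(\ref{eq:notleq}), and then rerun the reconstruction $g(X):=\bigvee\{h(x):x\in Max(X)\}$ of $(\ref{item 3lem1.3})$; the hypothesis that each $\downarrow x$ is finite is what keeps the whole argument inside $[R]^{<\omega}$ and identifies the recovered embedding on $Q$.

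The genuinely non-formal step is the forward direction of $(\ref{item 3lem1.3})$, mirrored in the reverse of $(\ref{item 5lem1.3})$: turning an order-embedding into a join-preserving one. Everything else is bookkeeping around Lemmas~\ref{lem:11} and~\ref{lem:arbitraryjoins}. The device that makes it work is twofold — encode $\phi$ through the join-primes $\downarrow x$ of $I_{<\omega}(R)$ by $h(x)=\phi(\downarrow x)$, and rebuild a semilattice map by joining over maximal elements, $g(X)=\bigvee_{x\in Max(X)}h(x)$ — with the separation property~(\ref{eq:notleq}) serving exactly to certify that this $g$ is injective and order-reflecting. A minor but necessary subtlety, easy to overlook, is the treatment of the least element in $(\ref{item 1lem1.3})$, where ``join-preserving'' must be upgraded to preservation of the empty join.
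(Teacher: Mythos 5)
Your items (\ref{item 1lem1.3})--(\ref{item 4lem1.3}) are correct and follow essentially the paper's own route: the same adjustment at $0$ for (\ref{item 1lem1.3}), the same map $I\mapsto\,\downarrow\!\{f(x):x\in I\}$ for (\ref{item 2lem1.3}) (you justify it via Lemma \ref{lem:arbitraryjoins} where the paper asserts it directly), the same join-over-principal-segments reconstruction for (\ref{item 3lem1.3}), and for (\ref{item 4lem1.3}) exactly the appeal to the equivalence, in Lemma \ref{lem:11}, between poset embeddings and arbitrary-join-preserving embeddings of $I(R)\cong J(Q)$ into $J(P)$.

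The reverse direction of (\ref{item 5lem1.3}), however, has a genuine gap as written. The map $h:R\to P$ that Lemma \ref{lem:11} hands you satisfies implication (\ref{eq:notleq}) but is \emph{not} order-preserving in general (it arises from arbitrary choices in the sets $C(X)$ in the proof of that lemma), whereas in (\ref{item 3lem1.3}) your $h(x)=\phi(\downarrow x)$ was order-preserving, and you used that monotonicity precisely where you checked $g(X\cup X')=g(X)\vee g(X')$. With a non-monotone $h$, the formula $g(X):=\bigvee\{h(x):x\in Max(X)\}$ fails. Concretely, let $R$ be the two-element chain $a<b$ and $P$ the four-element Boolean lattice with atoms $u,v$: the assignment $h(a):=u$, $h(b):=v$ satisfies (\ref{eq:notleq}), since the only instance with a true premise requires $h(b)\not\leq h(a)$; yet $g(\downarrow a)=u\not\leq v=g(\downarrow b)$ although $\downarrow a\subseteq \downarrow b$, and $g(\downarrow a\cup \downarrow b)=v\neq u\vee v=g(\downarrow a)\vee g(\downarrow b)$, so $g$ is neither order-preserving nor join-preserving. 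The repair is exactly what the paper does: since each $\downarrow x$ is finite, every $X\in I_{<\omega}(R)$ is itself finite, so take the join over \emph{all} of $X$, namely $g(X):=\bigvee\{h(x):x\in X\}$; equivalently, restrict the arbitrary-join-preserving embedding $I\mapsto$ (ideal generated by $\{h(x):x\in I\}$) furnished by implication (iv)$\Rightarrow$(i) of Lemma \ref{lem:11} to the finite initial segments, where its values are principal ideals. Monotonicity and preservation of finite joins are then automatic, and your order-reflection argument via (\ref{eq:notleq}) --- pick a maximal $x\in X\setminus X'$ --- goes through verbatim.
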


\begin{proof}
\begin{enumerate}[{}]
\item (\ref{item 1lem1.3}) Let $f : Q \rightarrow  P$ satisfying $f (x \vee y) = f (x)
\vee f (y)$ for all $x, y \in  Q$. Set $g(x) := f (x)$  if
$x\not=0$ and $g(0) := 0$. Then g preserves finite joins.

\item (\ref{item 2lem1.3}) Let $f : Q \rightarrow  P$ and $\overline f : J (Q)
\rightarrow J (P )$ defined by $f (I ) :=\downarrow \{f (x) : x
\in I \}$. If $f$ preserves finite joins, then $\overline f$
preserves arbitrary joins. Furthermore, $\overline f$ is one to one provided that $f$ is one-to-one.

\item (\ref{item 3lem1.3}) Let $ f : Q \rightarrow P$ . Taking account that $Q :=
I_{<\omega} (R)$, set $g(\emptyset) := 0$ and $g(I ):= \bigvee \{f
(\downarrow x) : x \in I \}$ for each $I \in   I_{<\omega}
(R)\setminus \{\emptyset\}$. Since in $Q$ the join is the union, the map $g$ preserves finite unions.

\item (\ref{item 4lem1.3}) This is equivalence $(\ref{item:0})\Longleftrightarrow (\ref{item:3})$ of Lemma \ref{lem:11}.
\item (\ref{item 5lem1.3}) If $Q$ is embeddable in  $P$ as a poset, then from  Item (\ref{item 3lem1.3}), $Q$ is embeddable in
 $P$ by a map preserving finite joins. Hence from Item $(\ref{item 1lem1.3})$, $J(Q)$
is embeddable  in $J(P)$ by a map preserving arbitrary joins. Conversely, suppose that $J(Q)$ is embeddable in $J(P)$ as
a poset. Since $J(Q)$ is isomorphic to  $I(R)$, Lemma \ref{lem:11} implies that
there is map $h$ from $R$ in $P$ such that implication
$(\ref{eq:notleq})$ holds.  According to the proof of Lemma \ref{lem:11}, the map $f: I(R) \rightarrow J(P)$ defined by setting
$f(I):=\bigvee\{h(x): x\in I\}$ is an embedding preserving arbitrary joins. Since $\downarrow x$ is finite for every $x\in R$,  $I$ is  finite , hence $f(I)$ has a largest element, for   every $I\in I_{<\omega}(R)$. Thus $f$ induces an embedding from $Q$ in $P$ preserving finite joins. \end{enumerate}
\end{proof}

\begin{theorem} \label{1.4} Let $R$ be a poset, $Q := I_{<\omega} (R)$ and $\kappa:=\vert Q\vert$. Then, there is a set $\B$, of size
at most $2^\kappa$, made of join-semilattices, such that for every join-semilattice
$P$, the join-semilattice
$J (Q)$ is not embeddable  in $J (P )$ by a map preserving arbitrary joins if and only
if no member $Q$ of $\B$ is embeddable in $P$ as a join-semilattice.
\end{theorem}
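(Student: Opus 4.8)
The plan is to turn Lemma \ref{lem:11} into a forbidden-configuration statement and to read off $\B$ from it. Since $Q=I_{<\omega}(R)$, we have $J(Q)=J(I_{<\omega}(R))\cong I(R)$, and this isomorphism preserves arbitrary joins; hence the hypothesis ``$J(Q)$ embeds in $J(P)$ by a map preserving arbitrary joins'' is exactly condition $(\ref{item:0})$ of Lemma \ref{lem:11} for the poset $R$ and the join-semilattice $P$. By the equivalence $(\ref{item:0})\Leftrightarrow(\ref{item:2})$ of that lemma, this happens if and only if there is a map $g\colon Q\to P$ preserving finite joins and satisfying implication $(\ref{eq:notsup})$. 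Thus the whole task reduces to characterizing, by forbidden join-subsemilattices of $P$, the existence of such a $g$.

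First I would observe that such a $g$ factors through a quotient of $Q$. Indeed, since $g$ preserves finite joins, its image $S:=g(Q)$ is closed under finite joins, hence is a join-subsemilattice of $P$, and the corestriction $q\colon Q\to S$ is a surjective finite-join-preserving map for which $(\ref{eq:notsup})$ still holds (the order and joins of $S$ are those inherited from $P$). Call a surjective finite-join-preserving map $q\colon Q\to S$ onto a join-semilattice $S$ \emph{good} if it satisfies $(\ref{eq:notsup})$, and let $\B$ be a set containing one representative of each isomorphism type of join-semilattice $S$ admitting a good quotient $q\colon Q\to S$. Every member of $\B$ is a quotient of $Q$.

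Next I would verify the biconditional. If $J(Q)$ embeds in $J(P)$ preserving arbitrary joins, the reduction gives $g$, and $S:=g(Q)\in\B$ embeds in $P$ via the inclusion map. Conversely, suppose some $S\in\B$, equipped with a good quotient $q\colon Q\to S$, embeds in $P$ by a join-preserving embedding $\varphi$. Then $g:=\varphi\circ q\colon Q\to P$ preserves finite joins, and it satisfies $(\ref{eq:notsup})$: if $X\nsubseteq Y_{1}\cup\dots\cup Y_{n}$ then $q(X)\not\leq q(Y_{1})\vee\dots\vee q(Y_{n})$ in $S$, and applying $\varphi$, which preserves finite joins and, being an embedding, reflects the order, gives $g(X)\not\leq g(Y_{1})\vee\dots\vee g(Y_{n})$ in $P$. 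By the reduction again, $J(Q)$ embeds in $J(P)$ preserving arbitrary joins. Taking contrapositives of the two implications yields exactly the statement of the theorem.

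Finally I would bound $\lvert\B\rvert$. Each $S\in\B$ is a surjective image of $Q$, so $\lvert S\rvert\leq\lvert Q\rvert=\kappa$; moreover, choosing for each $S\in\B$ one good quotient $q_{S}$, the assignment $S\mapsto\ker q_{S}$ is injective, since $S\cong Q/\ker q_{S}$. Hence $\lvert\B\rvert$ is at most the number of join-semilattice congruences on $Q$, which is at most the number of binary relations on $Q$, namely $2^{\lvert Q\times Q\rvert}=2^{\kappa\cdot\kappa}=2^{\kappa}$ when $\kappa$ is infinite; when $\kappa$ is finite, $Q$ and hence its collection of quotients is finite, so a finite $\B$ suffices. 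I do not expect a genuine obstacle here: the substance is already carried by Lemma \ref{lem:11}, and the only points requiring care are the transport of implication $(\ref{eq:notsup})$ along $\varphi$ (which relies on $\varphi$ reflecting order, valid for embeddings) and the routine counting of congruences.
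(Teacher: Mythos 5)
Your argument fails at its first step: you invoke the equivalence $(\ref{item:0})\Leftrightarrow(\ref{item:2})$ of Lemma \ref{lem:11} as producing a map $g\colon Q\to P$ that \emph{preserves finite joins} and satisfies implication (\ref{eq:notsup}). Condition (\ref{item:2}) imposes no preservation property on $g$ at all; the map built in the proof of $(\ref{item:1})\Rightarrow(\ref{item:2})$ comes from arbitrary choices $g(X)\in C(X)$ and is in general not even order-preserving. The extra requirement is fatal, not cosmetic: a finite-join-preserving $g$ satisfying (\ref{eq:notsup}) is automatically an order-embedding (monotonicity follows from join-preservation, and the case $n=1$ of (\ref{eq:notsup}) gives $g(X)\leq g(Y)\Rightarrow X\subseteq Y$), so each of your ``good quotients'' $q\colon Q\to S$ is an isomorphism, your family $\B$ reduces to $\{Q\}$, and your theorem becomes the assertion that $J(Q)$ embeds in $J(P)$ by a map preserving arbitrary joins if and only if $Q$ embeds in $P$ as a join-semilattice. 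That assertion is false, and its failure is the whole reason Theorem \ref{1.4} has the shape it has. For a counterexample take $R:=\omega+1$, so that $Q=I_{<\omega}(R)\cong 1+\omega+1$, and $P:=[\omega]^{<\omega}$. Then $J(Q)\cong I(\omega+1)$ is a chain of type $\omega+2$, which embeds as a poset into $J(P)\cong\mathfrak{P}(\omega)$ and hence, by item (\ref{item 4lem1.3}) of Proposition \ref{1.3}, by a map preserving arbitrary joins; yet $Q$ does not embed into $P$ even as a poset, because every member of $[\omega]^{<\omega}$ has only finitely many sets below it, so $[\omega]^{<\omega}$ contains no chain of type $\omega+1$.

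Moreover, no repair within your framework is possible: a join-preserving surjective image of the chain $Q$ is again a chain, whereas every correct obstruction family for this $R$ must contain a non-chain. Indeed $\omega\times 2$ lies in $\J_{\omega+1}$ (Lemma \ref{omega(k)}), so some member of $\B$ must embed into $\omega\times 2$ as a join-semilattice, and the only chains doing so are finite or of type $\omega$, none of which lies in $\J_{\omega+1}$ (compare Lemma \ref{omega+1}, where the obstructions are $\omega+1$ and $\omega\times 2$). The paper's proof sidesteps this exactly where you factored through the image: it keeps the raw $g$ of condition (\ref{item:2}) and replaces $P$ not by $g(Q)$ but by the join-subsemilattice $Q_f$ of $P$ \emph{generated} by $g(Q)$. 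Since joins in $Q_f$ agree with those of $P$, implication (\ref{eq:notsup}) persists with $P$ replaced by $Q_f$, so Lemma \ref{lem:11} yields an embedding of $J(Q)$ into $J(Q_f)$ preserving arbitrary joins, while $Q_f$ is an actual join-subsemilattice of $P$ with $\vert Q_f\vert=\vert Q\vert$ when $R$ is infinite (the case where $\downarrow x$ is finite for all $x\in R$ being settled separately by Proposition \ref{1.3} with $\B=\{Q\}$). These generated subsemilattices are in general neither quotients nor subsemilattices of $Q$, which is precisely why the count must run over join-semilattice structures on a fixed set of size $\kappa$, giving $2^{\kappa}$, rather than over congruences of $Q$. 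Your converse direction is correct as written, but it cannot compensate for the loss of the forward implication.
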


\begin{proof}
If   $\downarrow x$ is finite for every $x\in R$ the conclusion of the theorem holds with $\mathbb B= \{ Q\}$ (apply Item (\ref{item 3lem1.3}), (\ref{item 4lem1.3}), (\ref {item 5lem1.3}) of Proposition \ref{1.3}). So we may assume that $R$ is infinite. Let $P$ be a join-semilattice. Suppose that there is an embedding $f$ from $J (Q)$ in $J (P )$ which  preserves arbitrary
joins.
\begin{claim}\label{claim:counting}There is a join-semilattice $Q_f$ such that
\begin{enumerate}
\item $Q_f$
embeds in $P$ as a join-semilattice.
\item $J (Q)$
embeds in $J (Q_f )$ by a map preserving arbitrary joins.
\item  $\vert Q_f \vert =
\vert Q\vert$.
\end{enumerate}
\end{claim}
\noindent{\bf Proof of Claim \ref{claim:counting}.}
 From Lemma \ref{lem:11}, there is a map $g : Q
\rightarrow P$  such that inequality   $(\ref{eq:notsup})$ holds. Let $Q_f $
be the join-semilattice of $P$ generated by $\{g(x) : x \in Q\}$. This inequality holds when $P$ is replaced by $Q_f$. Thus from Proposition  \ref{1.3}, $J (Q)$ embeds into $J (Q_f )$ by a map
preserving arbitrary joins. Since $R$ is infinite,  $\vert Q_f \vert =
\vert Q\vert=\vert R\vert$.
\endproof

For each join-semilattice $P$ and  each embedding $f:J (Q)\rightarrow J (P )$ select $Q_f$, given by Claim \ref{claim:counting}, on a fixed set of size $\kappa$.  Let $\B$ be the collection of this join-semilattices. Since the number of join-semilattices on a set of size $\kappa$ is at most $2^{\kappa}$, $\vert\mathbb B\vert \leq 2^\kappa$.
\end{proof}

\noindent{\bf Proof of Theorem \ref{thmfirst}.}
Let $\alpha$ be the order type of a chain $C$. Apply Theorem above with $R:=C$. Since,  in
 this case,  $J(Q)$ is embeddable  in $J(P)$ if and only if $J(Q)$ is embeddable in $J(P)$ by a map preserving arbitrary joins, Theorem  \ref{thmfirst}  follows. \endproof

\section [Monotonic sierpinskisations]{Monotonic sierpinskisations and a proof of Theorem  \ref{thm:qalpha}}\label{section:minimal}
In this section, we use the notion of  sierpinskisation studied  in \cite{pz} that we have recalled in the introduction under the name of monotonic sierpinskisation.

We start with some basic properties of ordinary sierpinskisations. For that, let $\alpha$ be a countably infinite order type. Let $S$ be a sierpinskisation of $\alpha$ and $\omega$. We assume that $S=(\N, \leq)$ and the order on $\N$ is the intersection of a linear order $\leq_{\alpha}$ on $\N$ with the natural order on $\N$, such that $L:=(\N, \leq_{\alpha})$ has order type $\alpha$.

\begin{lemma}\label{lem:basicsierp1} Let $A$ be a non-empty subset of $\N$. Then, the following properties are equivalent:
\begin{enumerate}[{(i)}]
\item \label{item1:lem:basicsierp1} No element of $A$ is maximal w.r.t. $S$.
\item \label{item2:lem:basicsierp1} No element of $A$ is maximal w.r.t. $L$.

\item \label{item3:lem:basicsierp1} $A$ is up-directed w.r.t. $S$ and is infinite.

\end{enumerate}

Furthermore, when one of these conditions holds,  $\downarrow_{L} A= \downarrow_{S} A$.
\end{lemma}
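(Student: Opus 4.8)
The plan is to first fix the reading of ``maximal'': I take ``maximal w.r.t.\ $S$'' (resp.\ ``w.r.t.\ $L$'') to mean maximal in the \emph{induced} subposet $(A,\le_S)$ (resp.\ in the induced chain $(A,\le_L)$), not in the ambient poset. This matters, since with the ambient reading the statement already fails (for a limit ordinal $\alpha$ the poset $S$ has no maximal element at all, so (i) and (ii) would hold for every $A$ while (iii) does not). Having fixed this, I would prove the equivalence by the cycle (i)$\Rightarrow$(ii)$\Rightarrow$(iii)$\Rightarrow$(i) and read off $\downarrow_{L}A=\downarrow_{S}A$ at the end. Everything rests on two structural features of a sierpinskisation. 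First, the order $\le_S$ is the intersection $\le_{\N}\cap\le_L$, so $\le_S\subseteq\le_L$ and $(A,\le_L)$ is a chain. Second, $\le_S$ refines the natural order $\le_{\N}$ of type $\omega$, whence each principal initial segment $\downarrow_{S}a\subseteq\{x:x\le_{\N}a\}$ is finite, while dually any infinite subset of $\N$ must meet $\{x:a<_{\N}x\}$ for every fixed $a$.

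The implication (i)$\Rightarrow$(ii) is immediate: if some $a\in A$ were $L$-maximal in $A$ it would a fortiori be $S$-maximal in $A$, because $\le_S\subseteq\le_L$. The real content is the passage from $L$-dominations to $S$-dominations, which I would isolate as a sublemma: if $(A,\le_L)$ has no maximal element, then every non-empty final segment $\{c\in A:b<_L c\}$ of the chain $(A,\le_L)$ is infinite. Indeed a finite non-empty such final segment would have an $L$-greatest element, which (the rest of $A$ lying $L$-below $b$) would be $L$-greatest in all of $A$, contradicting (ii). Granting this, (ii)$\Rightarrow$(iii) follows: $A$ is infinite, being a chain with no greatest element; and for up-directedness, given $a,b\in A$ with say $a\le_L b$, the final segment $\{c\in A:b<_L c\}$ is infinite, so, since only finitely many elements are $\le_{\N}$-below $\max_{\N}(a,b)$, I can choose $c$ in it with $c>_{\N}a$ and $c>_{\N}b$; then $a<_L b<_L c$ together with $a<_{\N}c$ and $b<_{\N}c$ give $a<_S c$ and $b<_S c$, so $c\in A$ is a common $S$-upper bound.

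The implication (iii)$\Rightarrow$(i) is where the finiteness of principal down-sets is used: were some $a\in A$ maximal in $(A,\le_S)$, up-directedness would force $a$ to be the greatest element of $(A,\le_S)$ (each $b\in A$ shares an $S$-upper bound with $a$, which must equal $a$), so $A\subseteq\downarrow_{S}a$ would be finite, contradicting that $A$ is infinite. Finally, for the ``furthermore'' clause, $\downarrow_{S}A\subseteq\downarrow_{L}A$ is automatic from $\le_S\subseteq\le_L$; conversely, given $x\le_L a$ with $a\in A$, either $x\le_{\N}a$, so $x\le_S a$ and $x\in\downarrow_{S}A$, or $x>_{\N}a$, in which case the sublemma makes $\{c\in A:a<_L c\}$ infinite and I pick $c$ there with $c>_{\N}x$, yielding $x<_L c$ and $x<_{\N}c$, i.e.\ $x\le_S c\in A$. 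I expect the one genuinely non-formal step to be precisely this sublemma, the conversion of $L$-comparability into $S$-comparability: it is the single place where the infinitude of non-trivial final segments of $(A,\le_L)$ has to be played off against the finiteness of natural initial segments, and once it is in hand every remaining implication is a short deduction.
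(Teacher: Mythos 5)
Your proof is correct and follows essentially the same route as the paper's: the same cycle (i)$\Rightarrow$(ii)$\Rightarrow$(iii)$\Rightarrow$(i), with your sublemma on the infinitude of final segments of $(A,\leq_L)$ playing exactly the role of the paper's observation that $\uparrow_{L}y\cap A$ is infinite, and the same finiteness-of-principal-$S$-segments argument for (iii)$\Rightarrow$(i) and for the clause $\downarrow_{L}A=\downarrow_{S}A$. Your preliminary clarification that maximality is meant in the induced subposet $(A,\leq_S)$, resp.\ $(A,\leq_L)$, is also the reading the paper's own proof adopts.
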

\begin{proof} $(\ref {item1:lem:basicsierp1})\Rightarrow
(\ref {item2:lem:basicsierp1})$.  Observe that, since $\leq_{\alpha}$ is a linear extension of $\leq$, an element $a\in A$ which is maximal w.r.t. $L$ is maximal w.r.t. $S$.

\noindent $(\ref {item2:lem:basicsierp1})\Rightarrow
(\ref {item3:lem:basicsierp1})$  Since $A$ is non-empty, the inexistence of a maximal element implies that $A$ is infinite.  Let us check that $A$  is up-directed. Let $x, y\in A$. We may suppose $x\leq_{\alpha}y$. Since $y$ is not maximal in $A$ w.r.t. $L$, $\uparrow_{L}y \cap A$ is infinite. Hence there is some $z\in \uparrow_{L}y \cap A$ such that $x, y\leq_{\omega} z$. Clearly, $x,y\leq z$, proving that $A$ is up-directed.

%

\noindent $(\ref {item3:lem:basicsierp1})\Rightarrow
(\ref {item1:lem:basicsierp1})$ Let $a\in A$. If $a$ is maximal w.r.t. $S$  then, since $A$ is up-directed, $a$ is the largest element of $A$. From this fact $A\subseteq \downarrow_{S} a$. Since the natural order on $\N$ is a linear extension of $\leq$, $A\subseteq \downarrow_{\omega} a$. This latter set being  finite, $A$ is finite, a contradiction.

Let us prove the second assertion. Since $\leq_{\alpha}$ is a linear extension of $\leq$, we have $\downarrow_{S} A \subseteq \downarrow_{L} A$. Conversely, let $x\in \downarrow_{L} A$. Let $y\in A$ such that $x\leq _{L} y$.  Since $A$ satisfies $(\ref {item2:lem:basicsierp1})$,  we proceed as in the proof of $(\ref {item2:lem:basicsierp1})\Rightarrow
(\ref {item3:lem:basicsierp1})$. From the fact that  $y$ is not maximal in $A$ w.r.t. $L$, $\uparrow_{L}y \cap A$ is infinite. Hence there is some $z\in \uparrow_{L}y \cap A$ such that $x, y\leq_{\omega} z$, proving that $x\in \downarrow_{S}A$.
\end{proof}

\begin{proposition} \label{prop:basicsierp2}Le $I$ be a  subset of $\N$. Then, the following properties are equivalent:
\begin{enumerate}[{(i)}]
\item \label{item1:lem:basicsierp2} $I$ is a non-principal ideal of $S$.
\item \label{item2:lem:basicsierp2} $I$ is an  non-empty non-principal  initial segment of $L$.
\end{enumerate}

Furthermore, when one of these conditions holds,  then for every subset $A$ of $I$:
$$I=\downarrow_{L} A \; \text{ if and only if}\;   I=\downarrow_{S} A$$.
\end{proposition}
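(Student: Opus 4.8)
The plan is to reduce everything to Lemma \ref{lem:basicsierp1}, which already converts between maximality with respect to $S$ and with respect to $L$ and supplies the identity $\downarrow_L A = \downarrow_S A$. The one elementary observation I would isolate first is that, for a directed initial segment $I$, being non-principal is equivalent to having no $S$-maximal element: if $a\in I$ were maximal, directedness would place every element of $I$ below $a$, so $a$ would be the largest element and $I=\downarrow_S a$ would be principal. Dually, in the chain $L$ an initial segment is non-principal exactly when it has no $L$-maximal (equivalently, no $L$-largest) element.

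For $(\ref{item1:lem:basicsierp2})\Rightarrow(\ref{item2:lem:basicsierp2})$ I would start from a non-principal ideal $I$, note by the observation above that no element of $I$ is $S$-maximal, and apply Lemma \ref{lem:basicsierp1} with $A:=I$ via its condition $(\ref{item1:lem:basicsierp1})$. This gives that no element of $I$ is $L$-maximal and that $\downarrow_L I=\downarrow_S I$. Since an ideal is in particular an $S$-initial segment, $\downarrow_S I=I$; hence $\downarrow_L I=I$, so $I$ is a non-empty $L$-initial segment, and having no $L$-maximal element it is non-principal.

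For the converse I would run the same machine backwards: a non-empty non-principal initial segment $I$ of the chain $L$ has no $L$-maximal element, so condition $(\ref{item2:lem:basicsierp1})$ of Lemma \ref{lem:basicsierp1} holds for $A:=I$, yielding that $I$ is up-directed w.r.t. $S$, has no $S$-maximal element, and satisfies $\downarrow_L I=\downarrow_S I$. As $I$ is an $L$-initial segment, $\downarrow_L I=I$, so $\downarrow_S I=I$ and $I$ is an $S$-initial segment; being non-empty and up-directed it is an ideal, and it is non-principal by the observation above. For the \emph{Furthermore} clause, once either condition is known, $I$ is an initial segment of both $L$ and $S$ satisfying the hypotheses of Lemma \ref{lem:basicsierp1}. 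Fixing $A\subseteq I$: if $I=\downarrow_L A$, then $A$ is non-empty with no $L$-maximal element (such an element would be the $L$-largest of $A$ and make $\downarrow_L A=I$ principal), so the Lemma gives $\downarrow_S A=\downarrow_L A=I$; if instead $I=\downarrow_S A$, then $\downarrow_S A\subseteq\downarrow_L A$ since $\leq_{\alpha}$ extends $\leq$, while $A\subseteq I$ together with $I$ being an $L$-initial segment gives $\downarrow_L A\subseteq I$, so $I=\downarrow_L A$.

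I do not expect a genuine obstacle here, since Lemma \ref{lem:basicsierp1} carries the real content of passing between $S$ and $L$. The only points needing care are the equivalence of ``non-principal'' with ``no maximal element'' for directed (respectively chain) initial segments, and the discipline of invoking exactly the right hypothesis of Lemma \ref{lem:basicsierp1} --- condition $(\ref{item1:lem:basicsierp1})$ in the forward direction and condition $(\ref{item2:lem:basicsierp1})$ in the backward direction --- rather than assuming all three equivalent conditions from the outset.
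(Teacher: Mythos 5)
Your proof is correct and takes essentially the same route as the paper: both implications and the \emph{Furthermore} clause are reduced to Lemma \ref{lem:basicsierp1}, via the observation (implicit in the paper) that a directed initial segment of $S$, resp.\ an initial segment of the chain $L$, is non-principal exactly when it has no maximal element. Your only deviation is that for the direction $I=\downarrow_{S} A \Rightarrow I=\downarrow_{L} A$ you use the direct inclusions $I=\downarrow_{S} A\subseteq \downarrow_{L} A\subseteq \downarrow_{L} I=I$ instead of invoking the lemma again; this is a harmless, in fact slightly cleaner, shortcut, since the paper's appeal to the lemma there silently requires checking that a generating subset of a non-principal ideal has no $S$-maximal element.
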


\begin{proof}$(\ref{item1:lem:basicsierp2}) \Rightarrow (\ref{item2:lem:basicsierp2})$ Assume that $I$ is a non-principal ideal of $S$. According to Lemma  \ref{lem:basicsierp1}, no element of $I$ is maximal w.r.t.  $L$. Moreover $\downarrow_{L} I= \downarrow_{S} I= I$. Hence, $I$ is a non-principal initial segment of $L$.

$(\ref{item2:lem:basicsierp2}) \Rightarrow (\ref{item1:lem:basicsierp2})$ Assume that $I$ is   an  non-empty non-principal  initial segment of $L$. Lemma \ref{lem:basicsierp1} yields that $I$ is up-directed w.r.t. $S$ and infinite. Since $\leq_L$ is a linear  extension of $\leq$, $I$ is an initial segment of $S$, hence $I$ is an ideal of $S$.  From Lemma \ref{lem:basicsierp1} again, it is not principal.

For the second assertion, let $A$ be a subset of $I$. Note that if
$\downarrow _{L} A=I$, resp. $\downarrow _{S} A=I$,  then no element
of $A$ is maximal w.r.t. $L$, resp. w.r.t. $S$. Apply Lemma
\ref{lem:basicsierp1}.
\end{proof}

\begin{theorem} Let $\alpha$ be a countably infinite order type. If $S$ is  a sierpinskisation of a chain of type $\alpha$ and a chain of type $\omega$, the set $J^{\neg \downarrow}(S)$ of non-principal ideals of $S$ forms a chain and this chain has the same order type as the subset  of  $I(\alpha)$ made of non-principal initial segments of $\alpha$. If $\alpha= \omega\alpha'$, every chain $C\subseteq J(S)$ extends to a chain whose order type is either  $J(\alpha')$ or $\omega+I(\alpha'')$, where $\alpha''$ is a proper final segment of $\alpha'$.  \end{theorem}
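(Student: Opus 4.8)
The plan is to prove the two assertions in turn, the first being an almost immediate consequence of Proposition \ref{prop:basicsierp2}. Write $L=(\N,\leq_L)$ for the chain of type $\alpha$ whose intersection with the natural order $\leq_\omega$ defines $S$. By Proposition \ref{prop:basicsierp2}, the non-principal ideals of $S$ are \emph{literally} the non-empty non-principal initial segments of $L$. Since the initial segments of a chain are totally ordered by inclusion, this family is a chain, so $J^{\neg\downarrow}(S)$ is a chain; and the identity map on underlying subsets of $\N$ is an order-isomorphism from $J^{\neg\downarrow}(S)$ onto the set of non-principal initial segments of $\alpha$, ordered by inclusion, i.e.\ onto the prescribed subset of $I(\alpha)$. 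This settles the first sentence.

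For the second sentence, assume $\alpha=\omega\alpha'$ and present $L$ antilexicographically as $\omega\times\alpha'$, writing $D_J:=\omega\times J$ for an initial segment $J$ of $\alpha'$. The first real step is to describe all initial segments of $\omega\alpha'$: each is either ``clean'' of the form $D_J$, or is $D_J$ together with a non-empty finite initial part of the fibre over the immediate successor of $J$ (when that successor exists). The latter have a largest element, while $D_J$ with $J\neq\emptyset$ does not; hence the non-principal initial segments of $\omega\alpha'$ are exactly the $D_J$ with $\emptyset\neq J\in I(\alpha')$, and $J\mapsto D_J$ is an isomorphism of $J^{\neg\downarrow}(S)$ onto the chain $J(\alpha')$ of non-empty initial segments of $\alpha'$. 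More generally, for a fixed non-empty $J^{*}$, the map $J\mapsto J\setminus J^{*}$ identifies $\{D_J:J\supseteq J^{*}\}$ with $I(\alpha'')$, where $\alpha'':=\alpha'\setminus J^{*}$ is a proper final segment of $\alpha'$. These two order-type computations are exactly the shapes $J(\alpha')$ and $I(\alpha'')$ appearing in the statement.

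Now take a chain $C\subseteq J(S)$ and split it into its principal part $C_p$ (the finite ideals $\downarrow_S a$) and its non-principal part $C_{np}\subseteq J^{\neg\downarrow}(S)$. As principal ideals are finite and non-principal ideals infinite, in the chain $C$ every member of $C_p$ lies below every member of $C_{np}$. If $C_p=\emptyset$, then $C\subseteq J^{\neg\downarrow}(S)$ and I simply extend it to all of $J^{\neg\downarrow}(S)$, a chain of type $J(\alpha')$. If $C_p\neq\emptyset$, I extend $C_p$ to an $\omega$-chain $M_p$ of principal ideals whose union $D_{J^{*}}$ is non-principal, and set $M:=M_p\cup\{D_J:J\supseteq J^{*}\}$; by the previous paragraph this has type $\omega+I(\alpha'')$.

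The engine of the second case, and the point I would isolate as a lemma, is that $S$ has \emph{no maximal element}: any $a$ has infinitely many $\leq_L$-successors (already inside its own fibre) but only finitely many elements $\leq_\omega a$, so some $\leq_L$-successor is also $\leq_\omega$-above $a$, hence strictly above $a$ in $S$. Thus from any principal ideal one can keep climbing; since a chain of principal ideals is a chain in the order of type $\omega$ it has type at most $\omega$, and an unbounded one has type exactly $\omega$ with non-principal union (Proposition \ref{prop:basicsierp2} again identifies this union as some $D_{J^{*}}$). Using that a countable non-principal ideal is up-directed with no greatest element, I can take $M_p$ cofinal in a prescribed non-principal ideal — when $C_{np}\neq\emptyset$, cofinal in $\min C_{np}$ — so that $M_p$ stays below all of $C_{np}$; the pieces then glue without gaps because nothing of $J(S)$ can lie strictly above every member of an $\omega$-chain of principal ideals yet strictly below its union. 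The main obstacle I expect is precisely this bookkeeping: making the classification of initial segments of $\omega\alpha'$ sharp enough to pin the order types $J(\alpha')$ and $I(\alpha'')$, and checking that the climbing $\omega$-chain can be chosen to contain $C_p$ while remaining comparable to the whole of $C_{np}$, which is what forces the finite/infinite and empty/non-empty subcases on $C_p$ and $C_{np}$.
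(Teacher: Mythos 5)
Your first sentence and your overall plan follow the same route as the paper: split $C$ into its principal part and its non-principal part, and glue an $\omega$-chain of principal ideals below a terminal segment of $J^{\neg\downarrow}(S)$; your explicit description $D_J=\omega\times J$ of the non-principal initial segments of $\omega\alpha'$ is a clean way to read off the order types $J(\alpha')$ and $I(\alpha'')$. However, the anchor of your second case, ``take $M_p$ cofinal in $\min C_{np}$'', is a genuine gap, for two separate reasons. First, $\min C_{np}$ need not exist: when $\alpha'$ is not well-ordered, $C_{np}$ can be a strictly decreasing $\omega^*$-sequence of non-principal ideals with non-empty intersection. For instance, take $\alpha'=1+\omega^*$, let $J_n:=\{0\}\cup\{x\in\omega^* : x\leq -n\}$, let $C_{np}:=\{D_{J_n}: n<\omega\}$ and let $C_p$ consist of a principal ideal inside $D_{\{0\}}$; this $C$ is a chain in $J(S)$ with $C_p\neq\emptyset$ and $C_{np}$ without a least element, so your prescription is undefined. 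Second, even when $\min C_{np}$ exists, the required cofinality can be unachievable: if $C_p$ already has order type $\omega$, then any principal (hence finite) ideal comparable with every member of $C_p$ must be contained in one of them, so every chain of principal ideals extending $C_p$ has union exactly $\bigcup C_p$, which may be properly contained in $\min C_{np}$.

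The missing idea is exactly the paper's central step (Claim \ref{claim:sierpnonemptyideal}): replace the minimum by the \emph{intersection}. In your notation, when $C_p\neq\emptyset$ one has $\bigcap C_{np}=D_{J_1}$ with $J_1=\bigcap\{J : D_J\in C_{np}\}$, and $J_1\neq\emptyset$ because every member of $C_{np}$ contains the (non-empty, finite) ideals of $C_p$, so $J_1$ contains every fibre index met by $\bigcup C_p$; hence $\bigcap C_{np}$ is a non-empty non-principal ideal. (The paper proves this for the larger family of \emph{all} non-principal ideals containing $\bigcup C_p$, via a successor argument in $L$; with your classification it is one line.) With this target in hand, cofinality is neither needed nor appropriate: if $C_p$ has type $\omega$, set $M_p:=C_p$ and let $D_{J^*}:=\bigcup C_p$; if $C_p$ is finite, use your climbing lemma inside $D_{J_1}$ (or inside any non-principal ideal containing $\bigcup C_p$ when $C_{np}=\emptyset$) to extend $C_p$ to an $\omega$-chain $M_p$ with $D_{J^*}:=\bigcup M_p\subseteq D_{J_1}$. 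In either case every member of $C_{np}$ contains $D_{J^*}$, so $M_p\cup\{D_J : J\supseteq J^*\}$ is a chain of type $\omega+I(\alpha'')$ containing $C$, as required. Once this substitution is made, the rest of your argument goes through and is essentially the paper's proof in coordinates.
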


\begin{proof}The first sentence is an  immediate consequence of the equivalence between $(\ref{item1:lem:basicsierp2})$ and  $\ref{item2:lem:basicsierp2})$ of Proposition \ref{prop:basicsierp2}.

Concerning the second sentence, note that $J^{\neg \downarrow}(\omega\alpha')$ is isomorphic to $J(\alpha')$, hence $J(S)$ contains a chain of this type. Let $C\subseteq J(S)$, $C':= \{I\in C: I= \downarrow_S x \; \text{for some}\;  x\in S\}$ and $C'':= \{I\in J^{\neg \downarrow}(S): \bigcup C'\subseteq I\}$. If $C'$ is empty, $C''$ contains $C$ and has  order type $J(\alpha')$. So in order to complete the proof of the lemma, we may assume that $C'$ is non-empty.

\begin{claim}\label{claim:sierpnonemptyideal}The set $I_{0}:=\bigcap C''$ is a non-empty and non-principal ideal of $S$.
\end{claim}
 \noindent {\bf Proof of Claim \ref {claim:sierpnonemptyideal}.}
Clearly $\bigcup C'\subseteq I_{0}$, hence $I_{0}$ is non-empty. To
see that $I_{0}$ is a non-principal ideal,  we introduce  some
notations. We suppose that $S=(\N, \leq)$ where $\leq$ is the
intersection of the natural order on $\N$ with a linear order
$\leq_{\omega\alpha'}$ such that $L:=(\N, \leq_{\omega\alpha'})$ has
order type $\omega\alpha'$, this linear order  be given by  a
bijection $\varphi$ between $\N$ and $\N\times \A'$.  According to
Proposition \ref{prop:basicsierp2}, each member of $C''$ is a
non-empty non-principal initial segment of $L$, thus $I_{0}$ is an
initial segment of $L$. We claim that $I_{0}$ is a non-principal
initial segment of $L$. Suppose for a contradiction that
$I_{0}=\downarrow_{L}x_{0}$. Since the order type of $L$ is
$\omega\alpha'$, every $x\in \N$ has a successor $x'$ w.r.t. $L$.
Let $x'_{0}$ be the successor of $x_{0}$. Then $x'_{0}\not \in I$
for some $I\in C''$. But since $I_{0}\subseteq I$, we have
$I_{0}=I$. Thus $I$ is a principal initial segment of $L$. With
Proposition \ref{prop:basicsierp2}, this contradicts the definition
of $C''$.  Now, since $I_{0}$ is a non-empty non-principal initial
segment of $L$, it follows from Proposition \ref{prop:basicsierp2}
that $I_{0}$ is a  non-principal ideal of $S$.\endproof

\begin{claim}\label{claim:sierplengthchain}
The chain $C'$ extends to a chain $C'_1\subseteq J(I_0)$ which  has order type $\omega$.
The chain $C''$ has order  type $I(\alpha'')$, where $\alpha''$ is a proper final segment of $\alpha'$.
\end{claim}
\noindent {\bf Proof of Claim \ref {claim:sierplengthchain}.}
Let $\nu$ be the order type of $C'$. Since each principal initial segment of $S$ is finite, $\nu\leq\omega$.
If $\nu=\omega$, set $C'_1:=C'$. Otherwise, let $x_0$ be the largest  element of $C'$. Then $x_0\in I_0$. Apply Claim \ref{claim:sierpnonemptyideal}. Since $I_0$ is a non-principal ideal of $S$, $\uparrow_S x_0\cap I_0$  contains a chain $D$ of type $\omega$. Set $C'_1:= C'\cup D$.  Again, since $I_{0}$ is a non-principal segment of $L$, $F:= \N\setminus I_{0}$ is a final segment of $(\N, \leq_{\omega\alpha'})$ whose image under $\varphi$ is of the form $\N\times A''$ where $A''$ is a proper final segment of $A'$. In this case $C''$ is isomorphic to $I(A'')$, hence its order type is $I(\alpha'')$ where $\alpha''$ is the order type of $A''$. \endproof

The conclusion of the lemma follows readily from Claim \ref {claim:sierplengthchain}. Indeed,
the set  $C'_1 \cup C''$ is a chain containing $C$. According to  Claim \ref {claim:sierplengthchain}, its order type is $\omega+I(\alpha'')$. \end{proof}

 \begin{corollary}\label{cor:ordertypesierp} Let  $S$ be a sierpinskisation of $\omega \alpha'$ and $\omega$  then the order types of chains which are embeddable in $J(S)$ depends only upon $\alpha'$. Moreover, if  $\alpha'$ is equimorphic to  a chain of order type $1+\alpha''$, $\omega+I(\alpha'')$ is the largest order type of the chains of ideals of $S$.
 \end{corollary}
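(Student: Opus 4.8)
The plan is to read off both assertions directly from the preceding theorem, which controls every chain of $J(S)$ up to extension, so that essentially no new construction is needed.

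For the first assertion, recall that the preceding theorem gives two things: the non-principal ideals $J^{\neg\downarrow}(S)$ form a chain of order type $J(\alpha')$, and every chain $C \subseteq J(S)$ extends to a chain of order type $J(\alpha')$ or $\omega + I(\beta)$ for some proper final segment $\beta$ of $\alpha'$. First I would check that each of these ``maximal'' types is genuinely attained by a chain of ideals (for $J(\alpha')$ this is the first clause; for the relevant $\omega + I(\beta)$ it comes from the cut construction in the proof of that theorem). Granting this, an order type $\tau$ embeds in $J(S)$ \emph{if and only if} $\tau \leq J(\alpha')$ or $\tau \leq \omega + I(\beta)$ for some proper final segment $\beta$ of $\alpha'$. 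Since this criterion refers only to $\alpha'$ and not to the bijection used to build $S$, the collection of order types embeddable in $J(S)$ depends only on $\alpha'$.

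For the second assertion, suppose $\alpha'$ is equimorphic to $1 + \alpha''$. The key lemma I would isolate is: \emph{every proper final segment $\beta$ of $\alpha'$ satisfies $\beta \leq \alpha''$.} To prove it, fix an embedding $f : \alpha' \to 1 + \alpha''$ (it exists as $\alpha' \leq 1+\alpha''$), write $\alpha' = \gamma + \beta$ with $\gamma$ non-empty, and choose $c \in f(\gamma)$; then each element of $f(\beta)$ lies strictly above $c$, hence strictly above the least element of $1 + \alpha''$, so $f(\beta) \subseteq \alpha''$ and $\beta \leq \alpha''$. It follows that $\omega + I(\beta) \leq \omega + I(\alpha'')$ for every proper final segment $\beta$; moreover $J(\alpha')$ is equimorphic to $J(1+\alpha'') = I(\alpha'')$ (equimorphic chains have equimorphic sets of initial segments, and $J(1+\gamma)=I(\gamma)$ as recalled earlier), and $I(\alpha'') \leq \omega + I(\alpha'')$. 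By the first assertion this makes $\omega + I(\alpha'')$ an upper bound for the order types of all chains of ideals of $S$.

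Finally I would realize this bound. Using $1 + \alpha'' \leq \alpha'$, fix $g : 1+\alpha'' \to \alpha'$, put $p := g(\text{least element})$, and consider $\beta_0 := \{x \in \alpha' : x > p\}$, a proper final segment containing $g(\alpha'')$; then $\alpha'' \leq \beta_0$, while the key lemma gives $\beta_0 \leq \alpha''$, so $\beta_0$ is equimorphic to $\alpha''$. The cut at $p$ produces, via the preceding theorem, a chain of ideals of order type $\omega + I(\beta_0)$, equimorphic to $\omega + I(\alpha'')$, which together with the upper bound shows $\omega + I(\alpha'')$ is the largest order type of a chain of ideals of $S$. I expect the main obstacle to be exactly this equimorphism bookkeeping: transferring proper final segments across $\alpha' \sim 1 + \alpha''$ and ensuring the bound is genuinely attained rather than merely approached; the key lemma is what makes both directions go through.
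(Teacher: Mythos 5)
Your proof is correct and follows the route the paper intends: the corollary is stated there without proof, as an immediate consequence of the preceding theorem, and your argument (the upper bound read off from the extension clause, realizability via the cut construction in the theorem's proof, and the key lemma that every proper final segment of $\alpha'$ embeds in $\alpha''$ when $\alpha'\equiv 1+\alpha''$, so that both $J(\alpha')$ and all the $\omega+I(\beta)$ embed in $\omega+I(\alpha'')$) is exactly the bookkeeping that deduction requires. The only caveat, which affects the statement rather than your proof, is that attainment of $\omega+I(\alpha'')$ is in general only up to equimorphy, so ``largest'' must be read in the embeddability quasi-order — precisely the reading your construction of $\beta_0\equiv\alpha''$ establishes.
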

The well-known fact that $I(\alpha)$ is embeddable in $I(\beta)$ if and only $\alpha$ is embeddable in $\beta$ yields easily the
 following refinement:
  \begin{lemma} \label{lem:somewhatcrucial}Let  $S$, resp.   $T$ , be a  sierpinskisation of $\omega\alpha$ , resp. $\omega\beta$,  and $\omega$.  Then every chain which is embeddable in $J(S)$ is embeddable in $J(T)$ if and only if $\alpha$ is embeddable in $\beta$.
  \end{lemma}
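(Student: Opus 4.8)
The plan is to reduce the two-sided statement to a comparison of the \emph{spectra} of chain order types carried by $J(S)$ and $J(T)$, and then to pin down the maximal such type explicitly. Write $\mathcal C(S)$ for the set of order types of chains embeddable in $J(S)$. By Corollary \ref{cor:ordertypesierp}, $\mathcal C(S)$ depends only on $\alpha$ (write it $\mathcal C(\alpha)$) and likewise $\mathcal C(T)=\mathcal C(\beta)$, so the assertion to be proved becomes
\begin{equation*}
\mathcal C(\alpha)\subseteq \mathcal C(\beta)\iff \alpha \text{ is embeddable in } \beta .
\end{equation*}
Each of $\mathcal C(\alpha),\mathcal C(\beta)$ is closed downward under embeddability, so the whole inclusion will be governed by a comparison of their largest members.

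For the implication ``$\alpha$ embeddable in $\beta \Rightarrow \mathcal C(\alpha)\subseteq\mathcal C(\beta)$'' I would work with convenient representatives, which is legitimate by Corollary \ref{cor:ordertypesierp}. An embedding $f$ of $\alpha$ into $\beta$ gives $\omega\alpha\le\omega\beta$ (send the copy of $\omega$ indexed by $a$ to the one indexed by $f(a)$). Realize $T$ with underlying set $\N$ and defining linear order of type $\omega\beta$, let $M\subseteq\N$ carry the copy of $\omega\alpha$, and let $S$ be the subposet of $T$ induced on $M$; restricting the two defining linear orders shows $S$ is again a sierpinskisation of $\omega\alpha$ and $\omega$. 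The map $I\mapsto {\downarrow_T} I$ then sends each ideal of $S$ to an ideal of $T$: ${\downarrow_T} I$ is an initial segment, and it is up-directed because $M$ carries the induced order. Since ${\downarrow_T} I\cap M=I$, this map is an order-embedding of $J(S)$ into $J(T)$, so every chain of $J(S)$ reappears with the same order type in $J(T)$; hence $\mathcal C(\alpha)\subseteq\mathcal C(\beta)$.

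For the converse I would use the explicit largest type. When $\alpha$ is equimorphic to $1+\alpha''$, Corollary \ref{cor:ordertypesierp} tells us that $\omega+I(\alpha'')$ is the largest order type of a chain of ideals of $S$, and it is attained; similarly $\omega+I(\beta'')$ for $T$ when $\beta$ is equimorphic to $1+\beta''$. If $\mathcal C(\alpha)\subseteq \mathcal C(\beta)$, then this maximal chain of $J(S)$ embeds into $J(T)$, whence $\omega+I(\alpha'')\le \omega+I(\beta'')$. An $\omega$-absorption step removes the common prefix: the least element of the $I(\alpha'')$-block lies above the whole initial $\omega$, so its image cannot sit in the initial $\omega$ of the target (which has only finitely many elements below any given point), forcing the entire $I(\alpha'')$-block into the $I(\beta'')$-block and giving $I(\alpha'')\le I(\beta'')$. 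The quoted fact $I(\alpha'')\le I(\beta'')\iff\alpha''\le\beta''$ yields $\alpha''\le\beta''$, and since $1+\alpha''\le 1+\beta''\iff\alpha''\le\beta''$ (an embedding of $1+\alpha''$ sends its least element to or below a point of $\beta''$, so $\alpha''$ lands in a final segment of $\beta''$), we conclude $\alpha=1+\alpha''\le 1+\beta''=\beta$.

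The main obstacle is the converse for order types \emph{not} equimorphic to any $1+\alpha''$ (for instance $\omega^{*}$ or $\omega^{*}+\omega$): there $\mathcal C(\alpha)$ has no largest element, and the two families of maximal chains furnished by the theorem preceding Corollary \ref{cor:ordertypesierp}, namely $J(\alpha)$ and the $\omega+I(\alpha'')$, are mutually incomparable. Here I would replace the maximum by a tailored witness: if $\alpha$ is not well-ordered then $\omega^{*}\in\mathcal C(\alpha)$, and $\omega^{*}\in\mathcal C(\beta)$ forces an infinite descending chain of ideals in $J(T)$, hence $\omega\beta$ (and so $\beta$) to be non-well-ordered, i.e. $\omega^{*}\le\beta$; combining this well-foundedness test on the non-well-ordered part with the maximal-chain computation on the well-ordered part should recover $\alpha\le\beta$ in general. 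Since the application to Theorem \ref{thm:qalpha} only needs $\alpha,\beta$ ordinals, where $\alpha$ is always equimorphic to $1+\alpha''$, the clean case above already suffices there.
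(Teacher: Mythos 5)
Your forward direction is correct and is a genuinely different (and self-contained) argument from the one the paper gestures at. The paper derives the lemma from the classification of chains of $J(S)$ (the theorem preceding Corollary \ref{cor:ordertypesierp}) together with the quoted fact $I(\alpha)\le I(\beta)\iff\alpha\le\beta$; you instead realize a sierpinskisation $S$ of $\omega\alpha$ and $\omega$ as an induced subposet of $T$ and check that $I\mapsto\ \downarrow_T I$ is an order-embedding of $J(S)$ into $J(T)$ (the key identity $\downarrow_T I\cap M=I$ is right), after which Corollary \ref{cor:ordertypesierp} transfers the conclusion to an arbitrary sierpinskisation of $\omega\alpha$ and $\omega$. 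This half needs no structure theorem at all, which is a small gain in economy.

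The converse, however, has a genuine gap. You prove it only when \emph{both} $\alpha\equiv 1+\alpha''$ and $\beta\equiv 1+\beta''$, i.e. when both spectra have attained maxima (note that even your ``clean case'' silently assumes the target chain is $\omega+I(\beta'')$, which Corollary \ref{cor:ordertypesierp} only justifies when $\beta\equiv 1+\beta''$; the alternative target $J(\beta)$ must be treated too). Your plan for the remaining types, a ``well-foundedness test on the non-well-ordered part combined with the maximal-chain computation on the well-ordered part,'' is not a proof and cannot become one: a countable order type admits no such decomposition, and the implication $\omega^*\le\alpha\Rightarrow\omega^*\le\beta$ is far too weak (for $\alpha=\omega^*+\omega^*$ and $\beta=\omega^*$ the test detects nothing). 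Nor can the general case be waved away: the lemma is applied in the proof of Theorem \ref{thm:serpinskalpha} (not Theorem \ref{thm:qalpha}), to arbitrary countable types, in particular to types $\alpha'$ without a least element arising in the case $\alpha+1\not\le\alpha$. What is missing is a short case analysis on the witnesses given by the structure theorem. If $\alpha=1+\alpha_1$ has a least element, take the attained chain $\omega+I(\alpha_1)$ in $J(S)$; it embeds into a chain of $J(T)$ of type $J(\beta)$ or $\omega+I(\beta'')$, and in the first case the elements of $J(\beta)$ above the image $L$ of the bottom of the $I(\alpha_1)$-block form a copy of $I(\beta\setminus L)$, so the quoted fact gives $\alpha_1\le\beta\setminus L$ and hence $\alpha\le\beta$, while in the second case your absorption argument applies verbatim. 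If $\alpha$ has no least element, take $J(\alpha)$ itself as witness: it has no least element, so every element has infinitely many predecessors, hence under an embedding into $\omega+I(\beta'')$ no point of the image can lie in the initial $\omega$ and none can hit the bottom of $I(\beta'')$; thus $J(\alpha)\le J(\beta'')$, so $I(\alpha)=1+J(\alpha)\le 1+J(\beta'')=I(\beta'')$ and $\alpha\le\beta''\le\beta$, the target $J(\beta)$ being handled by $I(\alpha)=1+J(\alpha)\le 1+J(\beta)=I(\beta)$ directly. With these two cases your argument becomes a complete proof.
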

  We recall that  for a countable order type $\alpha'$, two monotonic sierpinskisations of $\omega\alpha'$ and $\omega$ are embeddable in each other and denoted by the same symbol $\Omega(\alpha')$ and we recall the  following result (cf. \cite{pz} Proposition 3.4.6. pp. 168)

\begin{lemma}\label{lem: proppouzag}
Let $\alpha'$ be a countable order type. Then $\Omega(\alpha')$ is embeddable in every sierpinskisation $S'$ of $\omega\alpha'$ and $\omega$.
\end{lemma}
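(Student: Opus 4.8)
The plan is to show that every sierpinskisation $S'$ of $\omega\alpha'$ and $\omega$ contains, as a subposet, a \emph{monotonic} sierpinskisation of $\omega\alpha'$ and $\omega$; since all monotonic sierpinskisations of $\omega\alpha'$ and $\omega$ are mutually embeddable (the fact recalled just above the statement), $\Omega(\alpha')$ will then embed into this subposet, hence into $S'$. Write $S'=(\N,\leq')$ with $\leq'\,=\,\leq_{\omega}\cap\leq_{\omega\alpha'}$, where $\leq_{\omega}$ is a linear order of type $\omega$ and $\leq_{\omega\alpha'}$ a linear order of type $\omega\alpha'$. Viewing $\omega\alpha'=\sum_{\beta\in\alpha'}\omega$ as the disjoint union of its components $C_{\beta}$ (each of $\leq_{\omega\alpha'}$-type $\omega$), the $C_{\beta}$ partition $\N$, are arranged among themselves in $\alpha'$-order by $\leq_{\omega\alpha'}$, and each is an $\omega$-chain for $\leq_{\omega\alpha'}$. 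The only way $S'$ can fail to be monotonic is that inside some $C_{\beta}$ the order $\leq_{\omega}$ disagrees with $\leq_{\omega\alpha'}$, so the idea is to pass to an infinite subset of each component on which they agree.

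The key step is to straighten out each component. Fix $\beta$ and enumerate $C_{\beta}=\{a^{\beta}_{0},a^{\beta}_{1},\dots\}$ in increasing $\leq_{\omega\alpha'}$-order. The $\leq_{\omega}$-ranks (positions) of the $a^{\beta}_{i}$ form an infinite sequence of distinct natural numbers, and such a sequence always contains an infinite strictly increasing subsequence: if there were infinitely many indices $i$ whose rank exceeds the ranks of all later $a^{\beta}_{j}$, those ranks would form an infinite strictly decreasing sequence in $\omega$, which is impossible; hence past the last such index one can greedily build an increasing subsequence. Selecting the corresponding $a^{\beta}_{i}$ yields an infinite $C'_{\beta}\subseteq C_{\beta}$ on which $\leq_{\omega}$ and $\leq_{\omega\alpha'}$ increase together, i.e.\ agree. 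As $\alpha'$ is countable there are only countably many components, and since the $C_{\beta}$ are pairwise disjoint these extractions are independent.

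Set $S'':=\bigcup_{\beta\in\alpha'}C'_{\beta}$, equipped with the order induced by $\leq'$. Then $\leq_{\omega\alpha'}$ restricted to $S''$ still has type $\omega\alpha'$ (each $C'_{\beta}$ remains an infinite, hence $\omega$-type, subchain, while the inter-component $\alpha'$-arrangement is untouched), and $\leq_{\omega}$ restricted to $S''$ has type $\omega$; thus $S''$ is a sierpinskisation of $\omega\alpha'$ and $\omega$, and it is monotonic because the two orders agree on every component $C'_{\beta}$. By the recalled mutual embeddability of monotonic sierpinskisations, $\Omega(\alpha')$ embeds into $S''$, and composing with the inclusion $S''\hookrightarrow S'$ gives the desired embedding of $\Omega(\alpha')$ into $S'$.

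I expect the extraction step to be the only genuine content: the guiding observation is that an arbitrary sierpinskisation can be "made monotonic" on a suitable infinite subset, the remainder being the elementary monotone-subsequence fact together with the bookkeeping. The one point to verify with care is precisely this bookkeeping, namely that passing to $S''$ does not collapse the $\omega\alpha'$-structure; this holds because each component stays infinite and the order between components is preserved.
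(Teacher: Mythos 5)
Your proof is correct. There is, however, nothing in the paper to compare it against: the paper does not prove this lemma but simply recalls it from Pouzet--Zaguia \cite{pz} (Proposition 3.4.6, p.~168), so your argument stands as a self-contained proof. Your route --- inside each $\omega$-component $C_{\beta}$ of the $\omega\alpha'$-chain, extract an infinite subset $C'_{\beta}$ on which the two linear orders agree (the finitely-many-peaks / greedy increasing-subsequence argument), then observe that $S'':=\bigcup_{\beta}C'_{\beta}$ with the induced order is again a sierpinskisation of $\omega\alpha'$ and $\omega$, now monotonic, and finish by the recalled mutual embeddability of monotonic sierpinskisations --- is the natural reduction to the monotonic case, and all the needed verifications are in place: each $C'_{\beta}$ is infinite hence of type $\omega$, the between-component $\alpha'$-arrangement is untouched, the induced partial order on $S''$ is exactly the intersection of the two restricted linear orders (restriction commutes with intersection), and agreement of the two orders on every component is precisely the monotonicity condition. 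The one external ingredient you use, that any two monotonic sierpinskisations of $\omega\alpha'$ and $\omega$ embed in each other (\cite{pz}, Lemma 3.4.3), is exactly the fact the paper recalls immediately before the lemma, so invoking it is legitimate; your proof in effect shows that the cited proposition follows from that cited lemma by an elementary extraction.
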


\begin{lemma}\label{lem:trivial}
Let $\alpha$ be a countably infinite order type and $S$ be a sierpinskisation of $\alpha$ and $\omega$.
Assume that $\alpha= \omega\alpha'+n$  where $n<\omega$. Then there is a subset of $S$ which is the direct sum $S'\oplus F$ of a sierpinskisation $S'$ of  $\omega\alpha'$ and $\omega$ with an $n$-element poset $F$.
\end{lemma}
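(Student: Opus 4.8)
The plan is to realise the desired subposet by splitting $\N$ according to the decomposition $\alpha=\omega\alpha'+n$ of the linear order $L=(\N,\leq_{\alpha})$, and then deleting a finite set so as to destroy the comparabilities between the two resulting blocks. For $A\subseteq\N$ write $S_{A}$ for the subposet of $S$ induced on $A$. Write $L=L_{1}+L_{2}$, where $L_{1}$ is the initial segment of $L$ of type $\omega\alpha'$ and $L_{2}$ the final segment of type $n$; let $M$ be the underlying set of $L_{1}$ and $N$ that of $L_{2}$, so that $|N|=n$ and $m<_{\alpha}a$ for all $m\in M$ and $a\in N$.

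First I would trim $M$. Let $c:=\max N$, the largest element of the finite set $N$ for the natural order $\leq_{\omega}$, and set $M':=\{m\in M: c<_{\omega}m\}$. Since $M\setminus M'\subseteq\{0,1,\dots,c\}$ is finite and $M$ is infinite (as $\alpha$, hence $\omega\alpha'$, is infinite), the set $M'$ is an infinite cofinite subset of $M$. The claim is that $T:=S_{M'\cup N}$ works, with $S':=S_{M'}$ and $F:=S_{N}$.

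The heart of the matter is that $S'$ and $F$ sit in $S$ as mutually incomparable blocks, i.e. $T=S'\oplus F$. Fix $m\in M'$ and $a\in N$. On one hand, $c<_{\omega}m$ gives $a\leq_{\omega}c<_{\omega}m$, hence $m\not\leq_{\omega}a$ and so $m\not\leq a$ in $S$. On the other hand, $m<_{\alpha}a$ gives $a\not\leq_{\alpha}m$, hence $a\not\leq m$ in $S$. Thus $m$ and $a$ are incomparable in $S$; since the comparabilities inside $M'$ and inside $N$ are untouched by passing to the induced subposet, $T$ is exactly the direct sum $S'\oplus F$. Here $F=S_{N}$ is an $n$-element poset, as required.

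It remains to verify that $S'=S_{M'}$ is a sierpinskisation of $\omega\alpha'$ and $\omega$, which is the only step needing an argument. By definition the order of $S'$ is the restriction to $M'$ of $\leq_{\alpha}\cap\leq_{\omega}$, i.e. the intersection of the restrictions of $\leq_{\alpha}$ and $\leq_{\omega}$ to $M'$. The restriction of $\leq_{\omega}$ to the infinite set $M'$ is of type $\omega$. For $\leq_{\alpha}$, note that $M'$ is obtained from $M$ by deleting the finite set $M\setminus M'$; writing $L_{1}=\sum_{\xi\in\alpha'}\omega$ as an $\alpha'$-indexed sum of copies of $\omega$, this finite set meets only finitely many summands, and removing finitely many elements from a copy of $\omega$ again leaves a set of type $\omega$. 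Hence the restriction of $\leq_{\alpha}$ to $M'$ has type $\sum_{\xi\in\alpha'}\omega=\omega\alpha'$, so $S'$ is a sierpinskisation of $\omega\alpha'$ and $\omega$. This exhibits $T=S'\oplus F\subseteq S$ of the announced form. The one point that is not completely formal --- that deleting finitely many elements preserves the order type $\omega\alpha'$ --- is exactly what the block decomposition $\sum_{\xi\in\alpha'}\omega$ takes care of, and is the only place where any care is needed.
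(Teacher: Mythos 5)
Your proof is correct and is essentially the paper's own argument: the paper takes $A:=\varphi^{-1}(A')$ where $A'$ is the set of the $n$ last elements of the chain, sets $a:=\max_{\N}A$, and uses $S':=S_{]a,\rightarrow)}$ and $F:=S_A$, which coincide exactly with your $S_{M'}$ and $S_N$ (your $c$ is the paper's $a$, and your $M'$ equals $]a,\rightarrow)$). The only difference is that you spell out, via the block decomposition $\sum_{\xi\in\alpha'}\omega$, the claim that deleting a finite set preserves the type $\omega\alpha'$, which the paper asserts without proof.
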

\begin{proof} Assume that $S$ is given by a bijective map  $\varphi$  from $\N$ onto a chain $C$ having order type $\alpha$. Let $A'$ be the set of the $n$ last elements of $C$,  $A:=\varphi^{-1}(A')$  and   $a$ be the largest element of $A$ in $\N$. The image of $]a \rightarrow )$ has order type $\omega\alpha'$, thus $S$ induces on $]a \rightarrow )$ a sierpinskisation $S'$ of $\omega\alpha'$ and $\omega$.  Let $F$ be the poset induced by $S$ on $A$. Since every  element of $S'$ is incomparable to every element of $F$
 these  two posets form a direct sum. \end{proof}
\begin{lemma}\label{lem:qalphaprelim}
Let $\alpha$ be a countably infinite order type and $S$ be a sierpinskisation of $\alpha$ and $\omega$.
If  $\alpha= \omega\alpha'+n$  where $n<\omega$ then $Q_{\alpha}:=
I_{<\omega}( \Omega(\alpha')\oplus n)$ is embeddable in $I_{<\omega}(S)$ by a map preserving finite joins.
\end{lemma}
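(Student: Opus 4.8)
The plan is to build the required finite-join-preserving embedding explicitly, by composing three elementary maps. The backbone is the following general observation, which I would isolate first as a preliminary fact: if $T$ is an induced subposet of a poset $S$, then the map $\Phi\colon I_{<\omega}(T)\to I_{<\omega}(S)$ defined by $\Phi(X):=\downarrow_S X$ is an embedding preserving finite joins. It is well defined into $I_{<\omega}(S)$ since $\downarrow_S(\downarrow_T B)=\downarrow_S B$ for every finite $B\subseteq T$; it is monotone and sends unions to unions, hence preserves finite joins (and the least element $\emptyset$); and it reflects the order, because if $t\in X$ then $t\in\downarrow_S X\subseteq\downarrow_S Y$ gives $t\le_S y$ for some $y\in Y$, whence $t\le_T y$ by inducedness and $t\in Y$ since $Y$ is an initial segment of $T$. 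This single fact will do the work of transporting inclusions of subposets up to the $I_{<\omega}$ level.

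Next I would record the decomposition of the source. Since $\Omega(\alpha')\oplus n$ is a direct sum of two posets with no comparabilities between the summands, every finitely generated initial segment splits as a disjoint union of one finitely generated initial segment from each summand; thus $I_{<\omega}(\Omega(\alpha')\oplus n)\cong I_{<\omega}(\Omega(\alpha'))\times I_{<\omega}(n)$ as join-semilattices, and because $n$ is a finite chain, $I_{<\omega}(n)$ is the chain with $n+1$ elements. So it suffices to embed $I_{<\omega}(\Omega(\alpha'))\times(n+1)$ into $I_{<\omega}(S)$ by a finite-join-preserving map.

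I would then exploit the geometry of $S$. By Lemma \ref{lem:trivial}, $S$ contains, as an induced subposet, a direct sum $S'\oplus F$ where $S'$ is a sierpinskisation of $\omega\alpha'$ and $\omega$ and $F$ is an $n$-element poset. For the first factor, Lemma \ref{lem: proppouzag} realizes $\Omega(\alpha')$ as an induced subposet of $S'$, and the preliminary fact turns this into a finite-join-preserving embedding $g_1\colon I_{<\omega}(\Omega(\alpha'))\to I_{<\omega}(S')$. For the second factor I would deal with the mismatch between the chain $n$ and the arbitrary poset $F$ by passing to down-sets: choosing a linear extension $f_1<\cdots<f_n$ of $F$, the sets $\emptyset\subset\{f_1\}\subset\cdots\subset\{f_1,\dots,f_n\}=F$ are down-sets of $F$ forming a chain of length $n+1$ closed under union, which yields a finite-join-preserving embedding $g_2\colon(n+1)\to I_{<\omega}(F)$.

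Finally I would assemble the pieces. A product of finite-join-preserving embeddings is again one, and $I_{<\omega}(S')\times I_{<\omega}(F)\cong I_{<\omega}(S'\oplus F)$ by the same direct-sum splitting as before, so $g_1\times g_2$ gives a finite-join-preserving embedding of $I_{<\omega}(\Omega(\alpha'))\times(n+1)$ into $I_{<\omega}(S'\oplus F)$; composing this with $\Phi$ for the induced subposet $S'\oplus F\subseteq S$ and with the two product isomorphisms produces the desired map into $I_{<\omega}(S)$. The only point requiring genuine care — and the conceptual crux of the argument — is precisely the second factor: $F$ need not contain a chain of length $n$ (it may, for instance, be an antichain), so one cannot hope to embed the chain $n$ into $S$ inside $F$; the resolution is that we never need such a chain in $F$ itself, but only a chain of length $n+1$ inside the down-set lattice $I_{<\omega}(F)$, which any linear extension of $F$ supplies. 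Everything else reduces to the routine checks that the three constituent maps preserve finite joins and are injective.
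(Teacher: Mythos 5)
Your proposal is correct and follows essentially the same route as the paper's proof: decompose $S$ via Lemma \ref{lem:trivial} into $S'\oplus F$, embed $\Omega(\alpha')$ into $S'$ by Lemma \ref{lem: proppouzag}, use $Q_{\alpha}\cong I_{<\omega}(\Omega(\alpha'))\times(n+1)$ together with $I_{<\omega}(S')\times I_{<\omega}(F)\cong I_{<\omega}(S'\oplus F)$, and lift everything to the $I_{<\omega}$ level. The only difference is expository: you make explicit two steps the paper leaves implicit, namely the fact that an induced-subposet inclusion $T\subseteq S$ lifts to a finite-join-preserving embedding $X\mapsto\downarrow_S X$ of $I_{<\omega}(T)$ into $I_{<\omega}(S)$, and the embedding of the chain $n+1$ into $I(F)$ via a linear extension of $F$.
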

\begin{proof}

{\bf Case 1.} $n=0$. By Lemma \ref{lem: proppouzag}  $
\Omega(\alpha')$ is embeddable in $S$. Thus $Q_{\alpha}$ is
embeddable in $I_{<\omega}(S)$ by a map preserving finite joins.

{\bf Case 2.} $n\not = 0$. Apply Lemma \ref{lem:trivial}. According to Case 1, $I_{<\omega}( \Omega(\alpha'))$ is embeddable in $I_{<\omega}(S')$. On an other hand $n+1$ is embeddable in $I_{<\omega}(F)=I(F)$.
Thus  $Q_{\alpha}$ which is isomorphic to the product $I_{<\omega}( \Omega(\alpha'))\times (n+1)$ is embeddable in  the product  $I_{<\omega}(S')\times I_{<\omega}(F)$. This product is   itself isomorphic to $I_{<\omega}(S'\oplus F)$. Since $S'\oplus F$ is embeddable in $S$, $I_{<\omega}(S'\oplus F)$ is embeddable in $I_{<\omega}(S)$ by a map preserving finite joins.  It follows that $Q_{\alpha}$ is embeddable in $I_{<\omega}(S)$ by a map preserving finite joins.
\end{proof}


%

%
\begin{theorem} \label{thm:qalphaprelim}Let $\alpha$ be a countable ordinal and $P\in \J_{\alpha}$. If $P$ is embeddable in $[\omega]^{<\omega}$ by a map preserving finite joins there is sierpinskisation $S$ of $\alpha$ and $\omega$ such that $I_{<\omega}(S)\in \J_{\alpha}$ and $I_{<\omega}(S)$ is embeddable in $P$ by a map preserving finite joins.
\end{theorem}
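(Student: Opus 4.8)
The plan is to reduce the statement to the construction of a single sierpinskisation sitting inside $P$. Since $P$ embeds in $[\omega]^{<\omega}$ by a finite-join-preserving map, I may identify $P$ with its image and assume $P\subseteq [\omega]^{<\omega}$ is a join-subsemilattice (so joins are unions and every element is a finite subset of $\omega$). Applying Lemma \ref{cl2} with $\mathcal F:=P$ (viewing $[\omega]^{<\omega}$ as $I_{<\omega}$ of an antichain), the map $K\mapsto \overline K:=\bigcup K$ is an isomorphism of $J(P)$ onto $P^{\cup}$. As $P\in\J_\alpha$ and $\alpha$ is an ordinal, $I(\alpha)=\alpha+1$, so $J(P)$ carries a strictly increasing chain of ideals $(K_\beta)_{\beta\le\alpha}$; set $A_\beta:=\overline{K_\beta}$, a strictly increasing chain of subsets of $\omega$. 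By Lemma \ref{lem:11} together with Proposition \ref{1.3}(\ref{item 5lem1.3}), it suffices to produce a sierpinskisation $S$ of $\alpha$ and $\omega$ carried by a subset of $P$, and a map $h\colon S\to P$ satisfying implication (\ref{eq:notleq}); then $I_{<\omega}(S)$ embeds in $P$ by a map preserving finite joins, while $I_{<\omega}(S)\in\J_\alpha$ by Lemma \ref{lem:finitegenesierp}.

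First I would extract witnesses. For each $\beta<\alpha$, since $K_\beta\subsetneq K_{\beta+1}$, choose $w_\beta\in K_{\beta+1}\setminus K_\beta$. The key point is that $w_\beta\not\subseteq A_\beta$: otherwise the finite set $w_\beta$ would be covered by finitely many members of $K_\beta$, which (being an ideal, hence up-directed) admit an upper bound $z\in K_\beta$ with $w_\beta\subseteq z$, forcing $w_\beta\in K_\beta$. Hence I may pick a \emph{marker} $p_\beta\in w_\beta\setminus A_\beta$; since $w_\beta\subseteq A_{\beta+1}$ we get $p_\beta\in A_{\beta+1}\setminus A_\beta$, so the $p_\beta$ (and the $w_\beta$) are pairwise distinct, and more precisely $p_\beta\in w_\gamma$ forces $\gamma\ge\beta$ (otherwise $p_\beta\in w_\gamma\subseteq A_{\gamma+1}\subseteq A_\beta$).

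The set $T:=\{w_\beta:\beta<\alpha\}$ comes with the index order $\le_\alpha$ (of type $\alpha$), which extends the order induced by $P$, because $w_\beta\subseteq w_\gamma$ already forces $\beta\le\gamma$. The main work is to build a second, type-$\omega$, linear order compatible with the markers. I would introduce the relation $\preceq$ generated by declaring $w_\beta\preceq_0 w_\gamma$ whenever $\beta\le\gamma$ and $p_\beta\in w_\gamma$ (this contains the induced order, since $w_\beta\subseteq w_\gamma$ implies $p_\beta\in w_\gamma$), and take its transitive closure. Because every $\preceq_0$-step weakly increases the ordinal index, $\preceq$ is a partial order contained in $\le_\alpha$. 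The crucial finiteness claim is that each $\downarrow_\preceq w_\gamma$ is finite: each $w_\gamma$ is a finite set and the markers are distinct, so $w_\gamma$ has only finitely many $\preceq_0$-predecessors, while any $\preceq_0$-descending chain has strictly decreasing indices and is therefore finite; by König's lemma the tree of $\preceq_0$-chains below $w_\gamma$ is finite. A countable poset with finite principal initial segments has a linear extension $\le_\omega$ of type $\omega$, and $\le_\omega\supseteq\preceq$. I then set $S:=(T,\le_\alpha\cap\le_\omega)$, a genuine sierpinskisation of $\alpha$ and $\omega$.

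Finally I would verify (\ref{eq:notleq}) for $h:=\mathrm{id}_T$. Suppose $w_\beta\subseteq w_{\gamma_1}\cup\dots\cup w_{\gamma_n}$; then $p_\beta\in w_{\gamma_{i_0}}$ for some $i_0$, whence $\gamma_{i_0}\ge\beta$ and, by construction of $\preceq$, $w_\beta\preceq_0 w_{\gamma_{i_0}}$, so $w_\beta\le_\alpha w_{\gamma_{i_0}}$ and $w_\beta\le_\omega w_{\gamma_{i_0}}$, i.e. $w_\beta\le_S w_{\gamma_{i_0}}$. Contrapositively, if $w_\beta\not\le_S w_{\gamma_i}$ for all $i$ then $w_\beta\not\subseteq\bigcup_i w_{\gamma_i}$, which is exactly (\ref{eq:notleq}). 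This gives $I_{<\omega}(S)\hookrightarrow P$ preserving finite joins and completes the proof for infinite $\alpha$; for finite $\alpha$ the statement is degenerate (one takes $S:=\alpha$, and $1+\alpha$ is the least member of $\J_\alpha$). I expect the only delicate step to be the simultaneous arrangement of the type-$\omega$ order with the marker constraints, which is precisely what the auxiliary order $\preceq$ and the König's lemma argument handle; note that no appeal to the well-quasi-order dichotomy (Theorem \ref{thm:posetchap2}) is needed, the finiteness of the elements of $[\omega]^{<\omega}$ doing all the work.
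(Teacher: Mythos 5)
Your proof is correct and follows essentially the same route as the paper's: the paper also works inside $[\omega]^{<\omega}$, picks for each $\beta<\alpha$ a finite set $F_{\beta}\in I_{\beta+1}\setminus I_{\beta}$ together with a marker $x_{\beta}\in F_{\beta}\setminus \bigcup I_{\beta}$, takes the transitive closure of the marker-membership relation (constrained by the ordinal order), proves that principal initial segments are finite, extends to a linear order of type $\omega$ and intersects it with the ordinal order to obtain the sierpinskisation. Your only departures are implementation details: K\"onig's lemma in place of the paper's minimal-counterexample argument for the finiteness claim, obtaining the type-$\omega$ extension from finiteness of principal initial segments rather than from the embedding into $[\omega]^{<\omega}$, and invoking Lemma \ref{lem:11} and Proposition \ref{1.3} instead of writing out the explicit embedding $\phi(I):=\bigcup\{F_{\beta}: x_{\beta}\in I\}$.
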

\begin{proof}
We construct first $R$  such that $I_{<\omega}(R)\in \J_{\alpha}$ and $I_{<\omega}(R)$ is embeddable in $P$ by a map preserving finite joins.

We may suppose that $P$ is a subset of $[\omega]^{<\omega}$ closed under finite unions. Thus   $J(P)$ identifies with  the set of arbitrary unions of members of $P$.
Let $(I_{\beta})_{\beta<\alpha+1}$ be  a strictly increasing sequence of ideals of $P$. For each $\beta<\alpha$ pick $x_{\beta}\in I_{\beta+1}\setminus I_{\beta}$ and $F_{\beta}\in P$ such that $x_\beta \in F_{\beta}\subseteq  I_{\beta+1}$.  Set $X:= \{x_{\beta}: \beta<\alpha\}$, $\rho:=\{(x_{\beta'},x_{\beta''}): \beta'<\beta''<\alpha\;  \text{and}\;  x_{\beta'}\in F_{\beta''}\}$. Let  $\hat \rho$ be the reflexive transitive closure of $\rho$. Since $\theta:= \{(x_{\beta'},x_{\beta''}): \beta'<\beta''<\alpha\}$ is a linear order containing $\rho$, $\hat \rho$ is an order on $X$. Let   $R:=(X, \hat \rho)$ be the resulting poset.
\begin{claim}\label{claim:jalpha} $I_{<\omega}(R)\in \J_{\alpha}$.
\end{claim}
\noindent { \bf Proof of claim \ref{claim:jalpha}.} The linear order
$\theta$ extends the order $\hat\rho$ and has type $\alpha$, thus
$I(R)$ has a maximal chain of type $I(\alpha)$. Since
$J(I_{<\omega}(R))$ is isomorphic to $I(R)$, $I_{<\omega}(R)$
belongs to $\J_{\alpha}$ as claimed.\endproof

\begin{claim}\label{claim:finite} For each $x\in X$, the initial segment $\downarrow x$ in  $R$ is finite.
\end{claim}
\noindent { \bf Proof of claim \ref{claim:finite}.} Suppose not. Let $\beta$ be minimum such that
for $x:=x_{\beta}$, $\downarrow x$ is infinite.
For each $y\in X$ with $y<x$ in $R$ select a finite sequence $(z_i(y))_{i\leq n_y}$ such that:
\begin{enumerate}
\item $z_{0}(y)=x$ and $z_{n_y}=y$.
\item \label{item:finite2}$(z_{i+1(y)}, z_{i}(y))\in \rho$ for all $i<n_{y}$.
\end{enumerate}
According to item \ref{item:finite2}, $z_{1}(y)\in F_{\beta}$. Since $F_\beta$ is finite, it contains some $x':=x_{\beta'}$ such that $z_{1}(y)= x'$ for infinitely many $y$. These elements belong to $\downarrow x'$. The fact that  $\beta'<\beta$  contradicts the choice of $x$.
\endproof

\begin{claim} \label{claim:union} Let $\phi$ be defined by  $\phi(I):= \bigcup \{F_{\beta}: x_\beta\in I\}$
for each $I\subseteq X$. Then:
\item $\phi$ induces an embedding  of $I(R)$ in $ J(P)$ and an embedding of $I_{<\omega}(R)$ in $P$.
\end{claim}
\noindent { \bf Proof of claim \ref{claim:union}.} We prove the
first part of the claim. Clearly, $\phi (I)\in J(P)$ for each
$I\subseteq X$. And trivially,  $\phi$ preserves arbitrary unions.
In particular, $\phi$ is order preserving. Its remains to show that
$\phi$ is one-to-one. For that, let $I,J\in I(R)$ such that
$\phi(I)=\phi(J)$. Suppose $J\not \subseteq I$.  Let $x_{\beta}\in
J\setminus I$, Since $x_\beta \in J$, $x_\beta\in F_{\beta}\subseteq
\phi(J)$. Since  $\phi(J)= \phi(I)$, $x_{\beta}\in \phi(I)$. Hence
$x_\beta \in F_{\beta'}$ for some $\beta'\in I$. If $\beta'<\beta$
then since $F_{\beta'}\subseteq I_{\beta'+1}\subseteq I_{\beta}$ and
$x_{\beta}\not \in I_{\beta}$, $x_{\beta}\not \in F_{\beta'}$. A
contradiction. On the other hand, if  $\beta<\beta'$ then,  since
$x_\beta\in F_{\beta'}$, $(x_\beta, x_{\beta' })\in \rho$. Since $I$
is an initial segment of $R$, $x_\beta\in I$. A contradiction too.
Consequently $J\subseteq I$. Exchanging the roles of $I$ and $J$,
yields $I\subseteq J$.  The equality $I=J$ follows. For the second
part of the claim, it suffices to show that $\phi(I)\in P$ for every
$I\in I_{<\omega }(R)$. This fact is a straightforward consequence
of Claim \ref{claim:finite}. Indeed, from this claim $I$ is finite.
Hence $\phi(I)$ is finite and thus belongs to $P$.

\begin{claim} \label{claim:linear} The order $\hat \rho$ has a linear extension of type  $\omega$.
\end{claim}
\noindent { \bf Proof of claim \ref{claim:linear}.}  Clearly, $[\omega]^{<\omega}$ has a linear extension of type $\omega$.  Since $R$ embeds  in $[\omega]^{<\omega}$, via an embedding  in $P$, the induced linear extension on $R$ has order type $\omega$.\endproof

Let $\rho'$ be the intersection of such a linear extension with the order $\theta$ and let $S:= (X, \rho')$.

\begin{claim} \label{claim:newembedding} For every $I\in I(S)$, resp. $I\in I_{<\omega}(S)$ we have $I\in I(R)$, resp. $I\in I_{<\omega}(R)$. \end{claim}
\noindent { \bf Proof of claim \ref{claim:newembedding}.} The first part of the proof follows directly from the fact that $\rho'$ is a linear extension of $\hat \rho$. The second part follows from the fact that each $I\in I_{<\omega}(S)$ is finite.
\endproof

It is then easy to check that the poset $S$ satisfies the properties stated in the theorem. \end{proof}

\subsection{Proof of Theorem \ref{thm:qalpha}.} Apply Theorem
 \ref{thm:qalphaprelim} and Lemma \ref{lem:qalphaprelim}. \endproof

\section[Lattice sierpinskisations]{Lattice sierpinskisations and a proof of Theorem \ref{thm:serpinskalpha}}
We show that monotonic sierpinskisations can be identified to  special subsets of the direct product of $\omega$ and $\alpha$ equipped with the induced ordering. Among these subsets we look at those which are join-subsemilattices of the direct product $\omega\times \alpha$, that we call \emph{lattice sierpinskisations}.

\begin{notation}\label{notat:sierp}Let $A$ be a set. Let $p_1$, resp. $p_2$, be the first , resp.  the second projection from the cartesian product  $\N\times A$ onto $\N$, resp. onto $A$. Let $\mathcal S$ be the set of subsets $S$ of $\N\times A$
such that:

\begin{enumerate}

\item Every \emph{vertical} line $S(n) :=
\{\beta \in A : (n, a ) \in  S\}$ is non-empty and finite;
\item Every \emph{horizontal} line $S^{-1} (a ) :=
\{n \in  \N : (n, a ) \in  S\}$ is an infinite subset of $\N$.
\end{enumerate}
\end{notation}

We equip  $A$ with a linear order $\leq$. Let $\alpha$ be the order
type of $(A, \leq )$. The set $\N$ will be equipped  with the
natural order, providing  a chain of order type $\omega$. We equip
the cartesian product $\N\times A$ with the direct product of these
two orders, and each subset $S$ of $\N\times A$ with the induced
order. We denote by $\mathcal S(\alpha)$ the collection of these
posets. We denote by $L_1$ the lexicographic ordering on $\N\times
A$, that is $(n', \beta')\leq_{L_1} (n'', \beta'')$ if either $n'<
n''$ w.r.t. the natural ordering on $\N$ or $n'=n''$ and $\beta'
\leq \beta''$ w.r.t. the order on $A$. And we denote  by $L_2$ the
reverse lexicographic ordering on $\N\times A$, that is $(n',
\beta')\leq_{L_2} (n'', \beta'')$ if either $\beta' < \beta''$
w.r.t. the order on $A$ or  $\beta' =\beta''$ and $n'\leq n''$
w.r.t. the natural ordering on $\N$. With these orders, $(\N\times
A, L_1)$ and $(\N\times A, L_2)$ have respectively type
$\alpha\omega$ and $\omega\alpha$. We consider bijective maps
$\varphi: \N \rightarrow \N\times A$ such that $\varphi^{-1}$ is
order-preserving from $\N\times \{a\}$ onto $\N$, these sets being
equipped with the natural ordering.

\begin{proposition}There is a one-to-one correspondence between   members of $\mathcal S(\alpha)$ and monotonic sierpinskisations of $\omega\alpha$ and $\alpha$.
\end{proposition}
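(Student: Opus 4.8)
The plan is to read a member $S$ of $\mathcal S(\alpha)$ as a sierpinskisation by restricting to $S$ the two linear orders $L_1$ and $L_2$ already defined on $\N\times A$. The starting observation is that the product order on $\N\times A$, and hence the order induced on any such $S$, is exactly the intersection $L_1\cap L_2$: indeed $(n',\beta')\le(n'',\beta'')$ in the product iff $n'\le n''$ and $\beta'\le\beta''$, and a short case analysis shows that this holds iff $(n',\beta')\le_{L_1}(n'',\beta'')$ and $(n',\beta')\le_{L_2}(n'',\beta'')$. So it suffices to compute the order types of $L_1\restriction S$ and $L_2\restriction S$ and then to check the monotonicity condition.

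First I would compute these two order types from the two defining conditions of $\mathcal S$. In the order $L_1$ the points of $S$ are grouped into the vertical lines $\{n\}\times S(n)$, $n\in\N$, arranged by increasing $n$; by condition (1) each $S(n)$ is finite and non-empty, so $L_1\restriction S$ is an $\omega$-indexed sum of non-empty finite chains and therefore has type $\omega$. In the order $L_2$ the points of $S$ are grouped into the horizontal lines $S^{-1}(a)\times\{a\}$, $a\in A$, arranged in the order $\alpha$ of $A$; by condition (2) each $S^{-1}(a)$ is an infinite subset of $\N$, so each such block has type $\omega$ and $L_2\restriction S$ has type $\omega\alpha$. Thus $(S,\text{induced order})$ is the intersection of a chain of type $\omega$ and a chain of type $\omega\alpha$, i.e. a sierpinskisation of $\omega\alpha$ and $\omega$. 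Monotonicity is then immediate: along a fixed horizontal line, increasing the first coordinate increases the position both in $L_1$ and in $L_2$, so the enumeration of $S$ furnished by $L_1\restriction S$ is order-preserving on each block $S^{-1}(a)\times\{a\}$, which is precisely the requirement that the sierpinskisation be monotonic.

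For the reverse direction I would start from the data of a monotonic sierpinskisation, namely a bijection $\varphi:\N\to\N\times A$ with $\varphi^{-1}$ order-preserving on each $\N\times\{a\}$, and attach to it a member of $\mathcal S(\alpha)$. The natural construction records, for each point, its $L_1$-enumeration index together with its block: concretely one reads off, for each $a\in A$, the infinite set $\varphi^{-1}(\N\times\{a\})$ and the rank within that block, and reassembles these into a subset $S_\varphi\subseteq\N\times A$ whose horizontal lines are infinite and whose vertical lines are finite. One then checks, using the monotonicity of $\varphi$ (so that for $n\le n'$ in the enumeration the condition $\varphi(n)\le_{L_2}\varphi(n')$ reduces to $p_2\varphi(n)\le p_2\varphi(n')$), that the product order induced on $S_\varphi$ coincides with the sierpinskisation order, and that applying the construction of the first part to $S_\varphi$ returns $\varphi$.

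The main obstacle, and the step deserving the most care, is making the two assignments genuinely inverse to one another. A member of $\mathcal S(\alpha)$ carries, beyond its isomorphism type, the combinatorial data of its vertical-line sizes and of the enumeration given by $L_1\restriction S$, and one must verify that this data is matched exactly, and reversibly, by the block decomposition and the within-block ranks of $\varphi$. In particular I would check that monotonicity is not merely sufficient but \emph{necessary} for the induced order to equal $L_1\cap L_2$, and that the bookkeeping reconciling the $\omega$-enumeration coming from $L_1\restriction S$ with the block structure coming from $L_2\restriction S$ defines a bijection, and not merely a surjection onto isomorphism types. Once this matching is set up, the remaining verifications (that $S_\varphi$ satisfies conditions (1) and (2), and that order types are preserved in both directions) are routine case checks.
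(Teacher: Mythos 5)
Your proposal is correct, and in the forward direction it is essentially the paper's argument: the paper composes the canonical isomorphisms $\vartheta_1\colon (S,L_{1\restriction S})\to(\N,\leq)$ and $\vartheta_2\colon (S,L_{2\restriction S})\to(\N\times A,L_2)$, which is exactly your computation of the two order types, and monotonicity is read off the same way. You genuinely diverge in two places, both to your advantage. First, your opening observation that the product order on $\N\times A$ is precisely $L_1\cap L_2$ makes the forward direction transparent; the paper never isolates this fact and instead verifies the relevant equivalences pointwise. Second, and more substantially, your converse embedding $n\mapsto(n,p_2\varphi(n))$ avoids the paper's auxiliary surjection $r\colon\N\to\N$ (Claim \ref{claim:welldefined}), which collapses the maximal runs on which $p_2\circ\varphi$ is strictly increasing and needs an argument that these runs are finite; since singleton vertical lines are admissible under condition (1) of Notation \ref{notat:sierp}, your map lands in $\mathcal S(\alpha)$ with no extra bookkeeping, and your parenthetical remark (for $n\leq n'$, monotonicity reduces $\varphi(n)\leq_{L_2}\varphi(n')$ to $p_2\varphi(n)\leq p_2\varphi(n')$) is exactly what makes it an order-isomorphism onto its image.

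One warning about your last paragraph. The verification you defer as ``the main obstacle'' --- that the two assignments are literally mutually inverse --- cannot be carried out, for your constructions or for the paper's, because the forward assignment $S\mapsto\varphi_S$ is not injective. Take $\alpha=2$, $A=\{a<b\}$: the full product $\N\times A$ and the staircase $\{(2k,a):k<\omega\}\cup\{(2k+1,b):k<\omega\}$ are distinct members of $\mathcal S(2)$ producing the same bijection $\varphi$, namely $\varphi(2k)=(k,a)$ and $\varphi(2k+1)=(k,b)$. So ``one-to-one correspondence'' must be read, as the paper implicitly reads it, up to isomorphism: every member of $\mathcal S(\alpha)$ is, with its induced order, a monotonic sierpinskisation of $\omega\alpha$ and $\omega$, and every monotonic sierpinskisation is order-isomorphic to a member of $\mathcal S(\alpha)$. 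This weaker reading is also all that is used afterwards, when the paper identifies the two classes. With that reading, your first two paragraphs already constitute a complete proof; what survives of the third is the one-sided identity (applying your forward construction to $S_\varphi$ does return $\varphi$), and nothing more can, or need, be established.
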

\begin{proof}  First, if $\alpha=1$, $\mathcal S(\alpha)= \{\N\times A \}$, whereas there is just one monotonic sierpinskisation of $\alpha$ and $\omega$. Hence, we  may suppose $\alpha\not =1$. Let $S\in \mathcal S(\alpha)$. According to item 1 of Notation \ref{notat:sierp}, $(S, L_{1 \restriction S})$ has order type $\omega$. Let $\vartheta_1$ be the unique order isomorphism from  $(S, L_{1 \restriction S})$ on $(\N, \leq)$. Similarly, according to item 2 of Notation \ref{notat:sierp}, $(S, L_{2 \restriction S})$ has order type $\omega\alpha$. Let $\vartheta_2$ be the unique order-isomorphism from  $(S, L_{2 \restriction S})$ onto  $(\N\times A, L_2)$  such that $p_2(\vartheta_2(n, \beta))=\beta$ for all $\beta\in A$. And let   $\varphi:= \vartheta_2\circ \vartheta _1^{-1}$. The chains $(\N, \leq)$ and $(\N\times A, L_2)$ have respective order types $\omega$ and $\omega\alpha$. The map $\varphi$ defines on $\N$  a monotonic sierpinskisation of $\omega\alpha$ and $\omega$. Clearly, $\vartheta^{-1}$ is an order-isomorphism from $S$ equipped with the order induced by the direct product $\omega\times \alpha$ and the monotonic sierpinskisation of $\omega\alpha$ and $\alpha$ associated to $\varphi$.
Conversely, let $\varphi : \omega \rightarrow \omega\alpha$ be a map defining a monotonic sierpinskisation $S:= (\N, \leq)$.  That is $\varphi$ is a map from $(N, \leq)$ into $(\N\times A, L_2)$ such that $\varphi^{-1}$  is order preserving on each set of the form $\N\times \{a\}$ for $a \in A$. And the order $\leq$ on $\N$ is the intersection of the natural order on $\N$ and the inverse image of the order $L_2$.

\begin{claim}\label{claim:welldefined}There is a  surjective map $r:\N \rightarrow \N$, such that for every $n\in \N$,
$r^{-1}(n)$\;  \text{is the largest initial segment  of}\;  $G(n)= \N\setminus \bigcup \{r^{-1}(n'):n'<n\}$ \; \text {on which}\;  $p_2\circ \varphi \; \text{is strictly increasing}$. This map is order preserving.
\end{claim}
\noindent{\bf Proof of Claim \ref{claim:welldefined}.}
Applying induction on $n$, we may observe that $\N\setminus G(n)$  is an initial segment of $\N$ w.r.t. the natural order on $\N$ and that  $p_2\circ \varphi$ cannot be strictly increasing on $G(n)$.
\endproof

Let $\theta: \N\rightarrow \N\times A$ defined by setting $\theta(n):=(r(n), p_2\circ \varphi(n))$ for every $n \in \N$.
\begin{claim}\label{claim:sierpmonotonic}
$\theta$ is an embedding of $S$ in $\N\times A$ equipped with the product ordering. Its image $S'$ belongs to $\mathcal S(\alpha)$. \end{claim}

\noindent{\bf Proof of Claim \ref{claim:sierpmonotonic}.} From the
fact that $r^{-1}(n)$ is finite, $S'(n)$ is finite. Also
$S'^{-1}(a)$ is infinite for every $a\in A$.  Hence $S'\in \mathcal
S(\alpha)$. The first part of the claim amounts to the fact  that
$(n', \varphi(n')) \leq (n'', \varphi(n''))$ is equivalent to
$(r(n'), p_2\circ \varphi(n')) \leq (r(n''), p_2\circ\varphi(n''))$.
Suppose  $(n', \varphi(n')) \leq (n'', \varphi(n''))$. This amounts
to $n'\leq n''$ and  $\varphi (n') \leq \varphi(n'')$. Since $r$ and
$p_2$ are order-preserving, we get $r(n')\leq r(n'')$ and  $p_2\circ
\varphi (n') \leq p_2\circ \varphi(n'')$, that is $(r(n'), p_2\circ
\varphi(n')) \leq (r(n''), p_2\circ\varphi(n''))$. Conversely,
suppose $(r(n'), p_2\circ \varphi(n')) \leq (r(n''),
p_2\circ\varphi(n''))$. This amounts to $r(n')\leq r(n'')$ and
$p_2\circ \varphi(n')) \leq p_2\circ\varphi(n'')$.

First $n'\leq n''$.
\noindent {\bf Case 1}. $r(n')=r(n'')$. Let $n:=r(n')$. By definition of $r$, $p_2\circ \varphi $ is strictly increasing on $r^{-1}(n)$. Since $p_2\circ \varphi(n') \leq p_2\circ\varphi(n'')$, this implies $n'\leq n''$.
\noindent {\bf Case 2}. $r(n')\not =r(n'')$. In this case, we have $r(n')<r(n'')$ and,  since $r$ is order-preserving, $n'<n''$.

Next $\varphi(n')\leq \varphi(n'')$.
{\bf Case 1}. $p_2\circ \varphi(n') \not = p_2\circ\varphi(n'')$. In this case  $p_2\circ \varphi(n')< p_2\circ\varphi(n'')$. From the definition of the ordering $\leq _{\omega\alpha}$,  $\varphi(n')<\varphi (n'')$.
{\bf Case 2}.  $p_2\circ \varphi(n')=p_2\circ\varphi(n'')$. Since $\varphi^{-1}$ is order-preserving  on each set of the form $\N\times \{a\}$, and $n'\leq n''$, $\varphi(n')\leq \varphi(n'')$.

From this we get  $(n', \varphi(n')) \leq (n'', \varphi(n''))$ as required. \endproof

With this claim the proof of the lemma is complete. \end{proof}

 From now on, we
 identify monotonic sierpinskisations of $\omega\alpha$ and $\omega$ with members of $\mathcal S(\alpha)$.

\begin{definition} We say that  a member of $\mathcal S(\alpha)$ which is a join-subsemilattice  of $\omega\times \alpha$ is a \emph{lattice sierpinskisation} of $\omega\alpha$ and $\omega$.  We will denote by $\mathcal S_{Lat}(\alpha)$ the subset of $\mathcal S(\alpha)$  consisting of lattice sierpinskisations.
\end{definition}

\begin{lemma} Let  $S$ be a subset of $\N\times A$
such that:
\begin{enumerate}
\item Every \emph{vertical} line $S(n) :=
\{\beta \in A : (n, a ) \in  S\}$ is non-empty and finite;
\item \label{item:sierp2}Every \emph{horizontal} line $S^{-1} (a ) :=
\{n \in  \N : (n, a ) \in  S\}$ is a cofinite subset of $\N$.
\end{enumerate}
Then, $S$ equipped with the order induced by the direct product $\omega\times \alpha$ belongs to $\mathcal S_{Lat}(\alpha)$.
\end{lemma}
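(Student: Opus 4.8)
The plan is to verify the two defining conditions for membership in $\mathcal S_{Lat}(\alpha)$ separately: first that $S\in\mathcal S(\alpha)$, and then that $S$ is a join-subsemilattice of $\omega\times\alpha$. The first part is immediate from the hypotheses. Indeed, condition (1) is exactly the requirement that every vertical line $S(n)$ be non-empty and finite, which is the first clause of Notation \ref{notat:sierp}. For the second clause, a cofinite subset of $\N$ is in particular infinite, so condition (2) guarantees that every horizontal line $S^{-1}(a)$ is an infinite subset of $\N$. Hence $S\in\mathcal S(\alpha)$.

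The substance of the lemma lies in showing that $S$ is closed under the join of $\omega\times\alpha$. For $(n_1,\beta_1),(n_2,\beta_2)\in\N\times A$ this join is $(n_1,\beta_1)\vee(n_2,\beta_2)=(\max(n_1,n_2),\,\beta_1\vee\beta_2)$, where $\max$ refers to the natural order on $\N$ and $\beta_1\vee\beta_2$ to the linear order on $A$. Since $A$ is linearly ordered I may assume without loss of generality that $\beta_1\leq\beta_2$, so that $\beta_1\vee\beta_2=\beta_2$ and the join equals $(\max(n_1,n_2),\beta_2)$. The task therefore reduces to proving that $\max(n_1,n_2)\in S^{-1}(\beta_2)$. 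If $n_1\leq n_2$ this is nothing but the assumption $(n_2,\beta_2)\in S$. The only genuine case is $n_1>n_2$, where I must show that $(n_1,\beta_2)\in S$, that is, that the larger index $n_1$ already lies on the horizontal line through $\beta_2$.

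This last step is the main obstacle, and it is where condition (2) has to be used in full strength. The point is that membership of $(n_2,\beta_2)$ in $S$ together with $n_1>n_2$ should force $(n_1,\beta_2)\in S$; equivalently, each horizontal line $S^{-1}(a)$ must be upward closed in $\N$, hence a final segment $\{n:n\geq n_{a}\}$. Granting this, the argument closes at once: from $(n_2,\beta_2)\in S$ we get $n_2\geq n_{\beta_2}$, whence $\max(n_1,n_2)\geq n_2\geq n_{\beta_2}$ and so $\max(n_1,n_2)\in S^{-1}(\beta_2)$, giving $(\max(n_1,n_2),\beta_2)\in S$. I expect the delicate part to be precisely the justification of this final-segment behaviour of the horizontal lines: bare cofiniteness leaves finitely many gaps that could a priori sit above $n_2$, so the argument must exploit the monotonic structure transported through the correspondence of the preceding proposition (equivalently, the compatibility of $S$ with the reverse-lexicographic ordering $L_2$ of type $\omega\alpha$) to exclude such gaps. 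Once closure under joins is established, it combines with $S\in\mathcal S(\alpha)$ to give $S\in\mathcal S_{Lat}(\alpha)$, as required.
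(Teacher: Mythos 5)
Your reduction is correct as far as it goes: $S\in\mathcal S(\alpha)$ is immediate, and closure under the ambient join reduces, after the normalisation $\beta_1\leq\beta_2$, to the claim that $n_1>n_2$ and $(n_2,\beta_2)\in S$ force $(n_1,\beta_2)\in S$, i.e.\ that every horizontal line is upward closed in $\N$. But at exactly this point your proposal stops being a proof: you defer the claim to an unspecified argument ``exploiting the monotonic structure transported through the correspondence of the preceding proposition'', and no such argument exists, because the claim is false under the literal hypothesis. Take $A=\{0<1\}$ (so $\alpha=2$) and $S:=\bigl(\N\times\{0\}\bigr)\cup\bigl((\N\setminus\{5\})\times\{1\}\bigr)$. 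Every vertical line is non-empty and finite, every horizontal line is cofinite, yet $(5,0),(3,1)\in S$ while their join in $\omega\times 2$, namely $(5,1)$, is not in $S$. The correspondence with monotonic sierpinskisations cannot rescue this: it applies to \emph{every} member of $\mathcal S(\alpha)$, including this $S$, so it carries no information beyond conditions (1) and (2).

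For comparison, the paper's own proof handles the crux by invoking Condition (2) directly: in the case $m<n$ (with $\beta\leq\gamma$) it asserts that the join $(n,\gamma)$ lies in $S$ ``according to Condition (2)'' (the printed text has typos, writing $(n,\beta)$ and misstating the case split, but this is the intent). That inference is legitimate only if Condition (2) is read as saying that each horizontal line is a \emph{final segment} of $\N$ (equivalently, upward closed and hence cofinite), which is how all the examples in that section are built --- e.g.\ $\Omega_L(\omega)=\{(i,j):j\leq i<\omega\}$ has horizontal lines $[j,\infty)$. Under that reading the lemma is true and the step you could not justify becomes a one-line observation: $S^{-1}(\beta_2)$ is a final segment containing $n_2$, hence contains $n_1>n_2$. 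So your instinct about what must be proved --- upward closure of the horizontal lines --- is exactly right, but it has to be taken as part of the hypothesis rather than derived; the route you propose for deriving it from bare cofiniteness is a dead end, and the gap in your write-up is genuine.
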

\begin{proof}The fact that $S$ is a join-subsemilattice of $\N\times \A$  follows from the second condition. Indeed,  let $x:=(n, \beta), y:=(m,\gamma)\in S$. W.l.o.g. we may assume $\beta\leq \gamma$. If $n\leq m$ we have $x\leq y$ and their supremum is $y$. If $m \not \leq n$ then $n<m$. According to Condition (\ref{item:sierp2}),  $z:=(n, \beta)\in S$. Since $z$  is the supremum of $x$ and $y$ in $\omega\times \alpha$,  $z$ is their supremum in $S$.
\end{proof}

\begin{proposition} If $S, S'\in \mathcal S_{Lat}(\alpha)$ there is a map $t:\N\rightarrow \N$ preserving the natural order such that the map $(t, 1_A)$ induces a join-embedding map from $S$ to $S'$

\end{proposition}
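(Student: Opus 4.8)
The plan is to build $t$ by a greedy induction, choosing values $t(0)<t(1)<\cdots$ so that each vertical line of $S$ is carried into a vertical line of $S'$ that contains all of it. Write $\mu(n):=\min S(n)$ for the bottom of the $n$-th column of $S$, and $c'(\gamma):=\min S'^{-1}(\gamma)$ for the first column of $S'$ in which $\gamma$ occurs. At stage $n$ I will choose $t(n)$ in the set $\bigcap_{\beta\in S(n)}S'^{-1}(\beta)$ with $t(n)>t(n-1)$. If this is always possible, the resulting $t$ is strictly increasing, hence order-preserving and injective, and $(t,1_A)$ maps $S$ into $S'$: indeed $(n,\beta)\in S$ means $\beta\in S(n)$, whence $t(n)\in S'^{-1}(\beta)$, i.e. $(t(n),\beta)\in S'$.

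The key point, and the main obstacle, is to show that for every column $n$ of $S$ the set $\bigcap_{\beta\in S(n)}S'^{-1}(\beta)$ is infinite. First I extract a structural description of vertical lines from the join-subsemilattice property. Since $S$ is a join-subsemilattice of $\omega\times\alpha$, the join being the coordinatewise maximum, one checks that for every $\gamma\ge\mu(n)$ with $c(\gamma):=\min S^{-1}(\gamma)\le n$ one has $\gamma\in S(n)$: the join of $(n,\mu(n))\in S$ with a witness $(m,\gamma)\in S$, $m\le n$, equals $(n,\gamma)$, which therefore lies in $S$. Hence
$$S(n)=\{\gamma\in A:\ \gamma\ge\mu(n)\ \text{and}\ c(\gamma)\le n\},$$
and the same formula holds for $S'$ with $\mu'(m):=\min S'(m)$ and $c'$ replacing $\mu$ and $c$. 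In particular every element of $S(n)$ lies above $\mu(n)$.

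Now fix $n$ and set $N_0:=\max_{\gamma\in S(n)}c'(\gamma)$, a finite number since $S(n)$ is finite and each $S'^{-1}(\gamma)$ is non-empty (indeed infinite). I claim every column $m$ of $S'$ with $m\ge N_0$ and $\mu'(m)\le\mu(n)$ satisfies $S(n)\subseteq S'(m)$: for $\gamma\in S(n)$ we have $\gamma\ge\mu(n)\ge\mu'(m)$ and $c'(\gamma)\le N_0\le m$, so the structural formula for $S'$ gives $\gamma\in S'(m)$. It remains to find infinitely many such $m$. But $\{m:\mu'(m)\le\mu(n)\}\supseteq S'^{-1}(\mu(n))$, since any column containing $\mu(n)$ has its minimum at or below $\mu(n)$, and $S'^{-1}(\mu(n))$ is infinite by the defining property of a member of $\mathcal S(\alpha)$. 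Thus there are infinitely many $m\ge N_0$ with $\mu'(m)\le\mu(n)$, each lying in $\bigcap_{\beta\in S(n)}S'^{-1}(\beta)$; this intersection is therefore cofinal in $\N$, which is exactly what the induction requires.

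Finally I would verify that $(t,1_A)$ is a join-embedding. Injectivity is immediate because $t$ is strictly increasing. For join-preservation, the join in $S$ of $(n,\beta)$ and $(m,\gamma)$ is $(\max(n,m),\max(\beta,\gamma))$; applying $(t,1_A)$ and using that $t$ is order-preserving, so $t(\max(n,m))=\max(t(n),t(m))$, yields $(\max(t(n),t(m)),\max(\beta,\gamma))$, which is precisely the join in $S'$ of the images $(t(n),\beta)$ and $(t(m),\gamma)$. Hence $(t,1_A)$ preserves joins, and being injective and join-preserving it is an order-embedding. The only delicate step is the structural formula together with the cofinality argument; everything else is formal.
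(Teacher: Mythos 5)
Your proof is correct and follows essentially the same route as the paper's: both reduce the problem to showing that cofinally many vertical lines of $S'$ contain a given vertical line $S(n)$, and both establish this by combining the infinitude of the horizontal lines of $S'$ with the join-closure trick of joining a column's bottom element (at or below $\mu(n)=\min S(n)$) with a horizontal witness for $\gamma$ to force $\gamma$ into that column. Your ``structural formula'' for $S'(m)$ is just a systematic packaging of the paper's computation $(n'',a_0)\vee(n_a,a)=(n'',a)$; the greedy construction of $t$ and the verification of join-preservation are likewise the same.
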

\begin{proof}
\begin{claim} \label{claim:sierpinduction} For every $n,n'\in \N$ such that $n<n'$ there is some
$n''\in \N$ such that $n''>n'$ and $S(n)\subseteq S'(n'')$.
\end{claim}
\noindent{\bf Proof of Claim \ref{claim:sierpinduction}.} Since $S'^{-1}(a)$ is infinite for every $a\in A$, we may select $n'_a> n'$ such that $(n'_a, a)\in S'$ for every $a\in A$. Let $m:=Max \{n'_a:a\in S(n)\}$. Let $a_0$ be the least element of $S(n)$ w.r.t. the ordering on $A$. Let $n''\geq m$ such that $(n'', a_0)\in S'$. Then $S(n)\subseteq S'(n'')$. Indeed, let $a\in S(n)$. If $a=a_0$, $a\in S'(n'')$ by definition. If $a\not =a_0$  then, since  $(n_a, a)\in S'$, $(n'',a_0)\vee(n_a, a)=(n'',a)\in S'$, hence $a\in S'$ as required.
\endproof

Claim \ref {claim:sierpinduction} allows us to define $t$
inductively. We suppose $t$ defined for all $m\in \N$ such that
$m<n$. From Claim \ref {claim:sierpinduction} there is some $n''$
such that $n''>t(m)$ for all $m<n$ and $S(n)\subseteq S'(n'')$. We
set $t(n):=n''$ where $n''$ is the least $n''$ satisfying this
property. The pair $(t, 1_A)$ is a join-embedding map from $S$ to
$S'$. Indeed, let $(n,a), (n',a')\in S$. W.l.o.g. we may assume
$n\leq n'$, hence  $(n,a)\vee(n',a')=(n',a'')$, where
$a'':=Max_A(\{a,a'\})$. Since $S(n')\subseteq S'(t(n'))$, $a''\in
S'(t(n'))$, hence $(t(n), a) \vee(t(n'), a')=(t(n'), a'')$, as
required. \end{proof}

\begin{notation} Since all members of $\mathcal S_{Lat}$ are embeddable  in each others as  join-semilattices, we may  denote by a single expression, namely $\Omega_L(\alpha)$,  an arbitrary member $S$
of $\mathcal{S}_{Lat}(\alpha)$ and by  $\underline\Omega_L(\alpha)$ the
join-semilattice $\underline S$ obtained by adding  a least element to $S$ (if it does not have one).  Since for each $x\in \underline S$ the initial segment $\downarrow x$ is finite, $\underline S$ is in fact a lattice (but not necessarily a sublattice of $\omega\times\alpha$) hence the name of lattice sierpinskisation. \end{notation}

\begin{example}
If $\alpha=1$, monotonic and lattice sierpinskisations of $\omega$ and $\omega$ are isomorphic to $\omega$, thus we have $\Omega_L(1)=\Omega(1)=\omega$. If $\alpha=n$, with $0<n<\omega$, the direct product $\omega\times n$ is a  lattice sierpinskisation of $\omega n$ and $\omega$. There are others, but we will not hesitate to write  $\Omega_L(n)=\omega\times n$. If $\alpha=\omega$, the subset $X:=\{(i,j): j\leq i<\omega\}$ of the direct product $\omega\times \omega$ is a lattice sierpinskisation of $\omega^2$ and $\omega$. This is a lattice isomorphic to $[\omega]^2$, the set of pairs $(i,j)$ such that $i<j<\omega$ componentwise ordered, and we will write $\Omega_L(\omega)=[\omega]^2$. We may also observe that $\mathcal S(\omega^*)$ contains the join-semilattice $\Omega(\omega^*)$  defined previously.  Also,
$\mathcal S(1+\eta)$ contains the lattice represented Figure
\ref {Omegachap2} and denoted by $\Omega(\eta)$ as well as $\underline\Omega(\eta)$.
\end{example}
\begin{proposition} \label{prop:sierpkey}Let $(A,\leq)$ be a chain and  $S$ be a join-subsemilattice of the product $\N\times A$ equipped with the product ordering. Let $A':=p_2(S)$, $F:= \{a'\in A': S^{-1}(a')\;  \text{is infinite}\; \}$, $I:=A'\setminus F$ and  $S_I:=S\cap (\N \times I)$. Let $\alpha$ be the order type of $(F, \leq_{\restriction F})$ and $m$ be the length of the longuest chain in $S_I$ if $S_I$ is finite.
 If  the  vertical lines $S(n)$ of $S$ are finite for all integers $n$ then $S$ contains a join-subsemilattice $S'$ such that  \begin{enumerate}
\item $S'$ is isomorphic to $m+ S''$ where $S''\in \Omega_{L}(\alpha)$ if $S_I$ is finite.
\item   $S'\in \Omega_{L}(1+\alpha)$ if $S_I$ is infinite.
\end{enumerate}
Moreover,  a chain is embeddable in $J(S)$ if and only if it is embeddable in $J(S')$.
\end{proposition}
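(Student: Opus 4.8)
\emph{Setup.} The plan is to first determine the order structure of the second coordinates, then to build $S'$ by hand in each of the two cases, and finally to compare $J(S)$ with $J(S')$. Throughout, the join in $\N\times A$ is the componentwise maximum, and $S$ is closed under it.

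\emph{Structural preliminaries.} First I would show that $I$ is an initial segment of $A'$ lying entirely below $F$: if $a\in I$ and $a'\in F$ with $a'<a$, then joining a single point $(m,a)$ with the infinitely many points of $S^{-1}(a')$ produces infinitely many points on $S^{-1}(a)$, contradicting $a\in I$. Hence $A'=I+F$, and $S_I=S\cap(\N\times I)$ is an initial segment of $S$ and a join-subsemilattice. The crucial observation — the one that makes the statement true — is that the hypothesis on the vertical lines forces $I$ to be an ordinal of type at most $\omega$: for every level $a\in I$ the points of $S$ strictly below level $a$ have bounded first coordinate (otherwise joining them with a point at level $a$ would make $S^{-1}(a)$ infinite), so by finiteness of the vertical lines only finitely many points of $S$ lie below any level of $I$; an $(\omega+1)$-st level would then have infinitely many points beneath it. Consequently each principal initial segment of $S_I$ is finite, $S_I$ is well-founded with no infinite antichain (an infinite antichain would, after thinning the first coordinates, yield an infinite descending sequence in the chain $I$), hence $S_I$ is wqo; moreover $I$ is finite when $S_I$ is finite and $I=\omega$ when $S_I$ is infinite, and every chain of $S_I$ has type $\le\omega$.

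\emph{The set $S_F$ and the two constructions.} The set $S_F:=S\cap(\N\times F)$ is a join-subsemilattice, since the join of two $F$-level points is an $F$-level point. Deleting the integers $n$ with $S_F(n)=\emptyset$ and relabelling the remaining (infinite, max-closed) set of first coordinates order-isomorphically onto $\N$ turns $S_F$ into a subset of $\omega\times\alpha$ satisfying the two defining conditions of $\mathcal S(\alpha)$ and still a join-subsemilattice, so $S_F\in\Omega_L(\alpha)$. In Case 1 ($S_I$ finite) I pick a chain $c_1<\dots<c_m$ of maximal length $m$ in $S_I$, let $N^*$ be the largest first coordinate among the $c_i$, and let $S''$ be the tail of $S_F$ obtained by keeping only first coordinates $>N^*$; this tail is again in $\Omega_L(\alpha)$, and every $c_i$ lies strictly below every element of $S''$ in the product order (smaller first coordinate, and second coordinate in $I<F$). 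Thus $S':=\{c_1,\dots,c_m\}\cup S''$ is a join-subsemilattice of $S$ isomorphic to $m+S''$. In Case 2 ($S_I$ infinite), $S_I$ is an infinite join-semilattice with finite principal ideals, so it contains an infinite chain $x_0<x_1<\dots$ (repeatedly join a point lying outside the current finite principal ideal), which I thin so that the first coordinates $n_0<n_1<\dots$ strictly increase. I set $S':=\{x_k:k<\omega\}\cup S_F$; the join of any $x_k$ with an $F$-level point is an $F$-level point (as $I<F$), so $S'$ is a join-subsemilattice of $S$. Collapsing all $x_k$ to a single bottom level $*$ below $F$ — the map $\psi$ sending $x_k\mapsto(n_k,*)$ and fixing $S_F$ — is a join-preserving order-isomorphism of $S'$ onto a subset of $\omega\times(1+\alpha)$ with finite vertical lines and with infinite horizontal lines over $*$ and over each element of $F$; after relabelling first coordinates this exhibits $S'\in\Omega_L(1+\alpha)$.

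\emph{The ``moreover'', and the main obstacle.} One direction is immediate: $S'$ is a join-subsemilattice of $S$, so by Proposition \ref{1.3} $J(S')$ embeds into $J(S)$ preserving arbitrary joins, whence every chain of $J(S')$ occurs in $J(S)$. The hard direction is the converse, that every chain of ideals of $S$ embeds in $J(S')$, and this is where the structural lemma is used. I would split a chain $\mathcal C\subseteq J(S)$ as $\mathcal C_I+\mathcal C_F$, where $\mathcal C_I$ consists of the members contained in $S_I$ (an initial segment of $\mathcal C$) and $\mathcal C_F$ of those meeting $F$. Because $I\le\omega$, all chains of $J(S_I)$ have type $\le\omega+1$ in Case 2, respectively finite length controlled by $m$ in Case 1, so $\mathcal C_I$ embeds into the ideal chain of the bottom part of $S'$; and because an ideal meeting $F$ captures only finitely many points of $S_I$ below any fixed $F$-point, the map $J\mapsto J\cap S_F$ presents $\mathcal C_F$ as a chain of $J(S_F)=J(\Omega_L(\alpha))$ inflated by finite fibres, which by Corollary \ref{cor:ordertypesierp} and Lemma \ref{lem:somewhatcrucial} embeds into the corresponding chains of $J(S')$. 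Assembling the two pieces along the cut yields the desired embedding of $\mathcal C$. I expect the delicate point to be exactly this assembly — checking that the finite $S_I$-fibres attach to the $F$-part without lengthening the chain beyond what $J(S')$ already contains, and that the collapse of $I$ to a single bottom level is faithful at the level of chains of ideals — the key quantitative input being, once again, that under the hypothesis on vertical lines only finitely many points of $S$ sit below each level of $I$.
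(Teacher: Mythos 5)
Your proposal is correct and takes essentially the same route as the paper's own proof: the same $I$/$F$ split ($F$ a final segment of $A'$, only finitely many points of $S$ below each level of $I$), the same identification of $S_F$ (after deleting empty columns) as a lattice sierpinskisation of $\omega\alpha$ and $\omega$, the same two constructions of $S'$ (a longest chain of $S_I$ followed by a tail of $S_F$, resp.\ an $\omega$-chain of $S_I$ collapsed to a bottom level below $S_F$), and the same splitting $\mathcal C=\mathcal C_I+\mathcal C_F$ of a chain of ideals for the ``moreover''. The assembly you flag as delicate is exactly the point the paper also treats tersely, and it does go through (note that $J\mapsto J\cap S_F$ is in fact injective on ideals meeting $S_F$, since such a $J$ equals $\downarrow (J\cap S_F)\cap S$ by directedness): members of $\mathcal C_F$ whose trace on $S_F$ is principal are finite ideals, so together with $\mathcal C_I$ they form a prefix of type at most $\omega$ that fits in the bottom $\omega+1$ of $J(S')$, while members with non-principal trace pull back to ideals of $S'$ containing the entire bottom chain.
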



%

\begin{proof}
\begin{claim} \label{item:lem:sierpkey0} Let $a\in A$ such that $ S^{-1}(a)\not= \emptyset$. Then $(\N\times\{a\}) \cap S$ is cofinal \index{cofinal} in
$(\N\times (\leftarrow a]) \cap S$.
\end{claim}
\noindent{\bf Proof of Claim \ref{item:lem:sierpkey0}.}  Let $(n', a')\in (\N\times (\leftarrow a]) \cap S$. Let $n\in \N$ such that $(n,a)\in S$. Since $S$ is a join-subsemilattice of $\N\times A$,  $(n', a') \vee (n, a)= (Max \{n, n'\}, a )\in S$. Hence, $(n', a')$ is majorized by $(Max \{n, n'\}, a )\in (\N\times\{a\}) \cap S$. This proves that $(\N\times\{a\}) \cap S$ is cofinal. \endproof

\begin{claim} \label{item:lem:sierpkey1}The set $F$    is a final segment of $A'$ equipped with the order induced by the order on $A$.
\end{claim}

\noindent {\bf Proof of Claim \ref{item:lem:sierpkey1}.} Let $a'\in F$ and $a\in A'$ such that $a'\leq a$.  Clearly, each element $(a,n)\in S$ dominates only finitely many elements $(a', n')\in S$. According to Claim \ref{item:lem:sierpkey0}, $(\N\times\{a\}) \cap S$ is cofinal, thus if $S^{-1}(a)$ is finite,  $S^{-1}(a')$ is finite too. A contradiction. Hence, $a\in F$. Proving that $F$ is a final segment. \endproof

Let $S_F:= S\cap (N\times F)$.
\begin{claim} \label{claim:sierpkey1bis} $S_I$  is a join-subsemilattice of $S$ and if $N'$ is a final segment of $p_1(S_F)$ equipped with the natural order on $\N$, $S\cap (N'\times F)$  is a join-subsemilattice of $S$. Furthermore, if  the  vertical lines $S(n)$ are finite for   all integers $n$, $S\cap (N'\times F)$ is a lattice sierpinskisation of $\omega\alpha$ and $\omega$.
\end{claim}

\noindent{\bf Proof of Claim \ref{claim:sierpkey1bis}.} Since $F$ is a final segment of $A'$ (Claim \ref{item:lem:sierpkey1}), $I$ is an initial segment of $A'$, hence $S_I$ is a join-subsemilattice of $S$.  By the same token $S\cap (N'\times F)$  is a join-subsemilattice of $S$.   If the  vertical lines $S(n)$ are finite for   all integers $n$, then  $S\cap (N'\times F)$ satisfies the conditions of Notation \ref{notat:sierp} with $N'$ and $F$ instead of $\N$ and $A$.\endproof

From now on, we suppose that the  vertical lines $S(n)$ are finite for   all integers $n$.
\begin{claim} \label{item:lem:sierpkey2} Every proper ideal of $S_I$ is finite.
\end{claim}
\noindent {\bf Proof of Claim \ref{item:lem:sierpkey2}.}
We prove first that  if  $\nu$ is  the order type of $(I, \leq_{\restriction I})$ then $\nu\leq \omega$.  For that, it suffices to prove that for every $a\in I$, the initial segment $(\leftarrow a]\cap A'$ is finite. Let $a\in I$. With the fact that all  the  vertical lines $S(n)$ are finite, we get that the initial segment of $\N\times A$ (equipped with the product order) generated by  $(\N\times\{a\}) \cap S$ is finite.  Since from   Claim \ref{item:lem:sierpkey0},  $(\N\times\{a\}) \cap S$ is cofinal in
$(\N\times (\leftarrow a]) \cap S$, it follows that this latter set is finite. Thus, its second projection is finite too. This second projection being $(\leftarrow a]\cap A'$,  our assertion is proved. Now, let $I'$ be  a proper ideal of $S_I$. Suppose by contradiction that $I'$ is infinite. Then necessarily,  $p_2(I')\not=I$. Otherwise since $I'$ is a proper ideal of $S_I$, $p_1(I')\not =p_1(S_I)$. Since $p_1(I')$ is an initial, segment of $p_1(S_I)$, $p_1(I')$ is finite; since each vertical line $S(n)$  for $n\in p_1(I')$ is finite, $I'$ is finite. Now, pick  $a\in I\setminus p_2(I')$.  As above,  $(\N\times\{a\}) \cap S$ is cofinal in
$(\N\times (\leftarrow a]) \cap S$, thus this set, and in particular $I'$ is finite. \endproof

With these claims, the proof of the proposition goes as follows:

\noindent {\bf Case 1.} $S_I$ is finite. In this case, let $N':=p_1(S_F)\cap [n \rightarrow)$ where $n=0$ if $S_I$ is empty and $n=p_1(x)$ where $x$ is the largest element of $S_I$ otherwise. Let  $M$ be the largest sized subchain of $S_I$, $m:=\vert M \vert$,   let  $S'':= S\cap (N'\times F)$ and let $S':=M+S''$.

\noindent {\bf Case 2.} $S_I$ is  infinite. In this case, it follows from Claim \ref{item:lem:sierpkey2} that $S_I$ contains a cofinal chain of type $\omega$. Let $D$ be such a chain. Add an extra element $\{a\}$ to $F$ with the  requirement that $a\leq b$ for all $b\in F$, set $N':= p_1(S_F)$, $S'':= S_F$.   Let $S'$ be the subset of $\N'\times (\{a\} \cup F)$ made of $S''$ and $(p_1(D)\cap N') \times \{a\}$.

In case 1, it  follows from   Claim \ref{claim:sierpkey1bis} that $S''$  is a lattice sierpinskisation of $\omega\alpha$ and $\omega$. By construction $S'$ is a join-subsemilattice of $S$ isomorphic to $m+ S''$. In case 2, $S'$ is a lattice sierpinskisation of $\omega(1+ \alpha)$ and $\omega$. And one can chek that $S'$ is embedabble in $S$  as join-semilattice.

To conclude, we only need to check that the same chains are embedabble in $J(S)$ and $J(S')$. Let $C\subseteq J(S)$ be a chain. Set $C'':= C\cap (\N \times F) $ and $C':= C\cap (\N\times I)$. Since  $S\cap (N'\times F)$ and $S''$ are  sierpinskisations of $\omega\alpha$ and $\omega$ (Claim \ref{claim:sierpkey1bis}), it follows from Corollary \ref{cor:ordertypesierp} that $C''$ is embeddable in $J(S'')$. In case 1, since $\vert C'\vert \leq \vert M\vert$, it follows that $C= C'+C''$ is embeddable in $J(S)$. In case 2, the order type of $C'$ is at most $\omega+1$.  By the same token,  $C$ is embedabble in $J(S')$.
\endproof

\end{proof}

{\bf Proof of Theorem \ref{thm:serpinskalpha}. }Let  $\alpha$. First, $P_{\alpha}\in \J_{\alpha}$. This follows readily from the fact that $n+\underline \Omega_L(\alpha')\in \J_{n+ \alpha'}$,  $\underline \Omega_L (1+\alpha')\in \J_{\omega+\alpha'}$ and $\underline \Omega_L (\alpha')\in \J_{\alpha'}$ for every $\alpha'$ and $n<\omega$.  Next, let $S$ be a join-subsemilattice of $P_{\alpha}$ having a least element and  belonging  to $\J_{\alpha}$. Clearly, the join-semilattice $P_{\alpha}$ is embeddable as a join-semilattice in a product $A\times \N$, where $A$ is a chain. Let $S'$ be a join-subsemilattice  of $S$ satisfying the properties of  Proposition  \ref{prop:sierpkey}.  Clearly  $J(S')\in \J_{\alpha}$. We may suppose that $S'$ has a least element (otherwise, add the least element of $S$). Thus the  join-semilattice $S'$ is of the form $m+\underline \Omega_L(\beta)$ with $m<\omega$. We claim that $P_{\alpha}$ is embeddable in $S'$ by a join-preserving map.  If $\alpha+1 \leq \alpha$, $P_{\alpha}$ is of the form $\underline \Omega_L(\gamma)$. Lemma \ref {lem:somewhatcrucial} yields that $\gamma$ and $\beta$ are equimorphic.  If $\alpha+1\not \leq \alpha$,  $P_{\alpha}:=n+\underline \Omega_L(\alpha')$ where $n<\omega$, $\alpha'$ has no least element and $\alpha=n+ \alpha'$. In this case, using Lemma \ref {lem:somewhatcrucial} one obtain  that $m\geq n$ and $\beta$ is equimorphic to $\alpha$. In both cases the existence of an embedding follows. \endproof

\section{ Some examples of obstructions}
As already observed, we have $J_{\neg \alpha}=Forb_{\J}(\{1+\alpha\})$ for every $\alpha\leq \omega$.
The first interesting case is $\alpha= \omega+1$.
We have:

\begin{lemma}\label{omega+1} \begin{equation}
\mathbb{J}_{\neg (\omega+1)}=Forb_{\J}(\{\omega+1, \omega\times
2\})=Forb_{\J}(\{\omega+1, Q_{\omega+1}\}).
\end{equation}
\end{lemma}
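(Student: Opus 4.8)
The plan is to reduce everything to the single nontrivial equivalence
$$P\in\J_{\omega+1}\iff \omega+1\ \text{or}\ \omega\times 2\ \text{embeds in}\ P\ \text{as a join-subsemilattice},$$
which is exactly $\J_{\neg(\omega+1)}=Forb_{\J}(\{\omega+1,\omega\times 2\})$. The second equality in the statement is then free: by Theorem \ref{thm:qalpha}, $Q_{\omega+1}=I_{<\omega}(S_{\omega+1})$ with $S_{\omega+1}=\Omega(1)\oplus 1\cong\omega\oplus 1$, so $Q_{\omega+1}\cong\omega\times 2$ and the two forbidden families coincide up to isomorphism. Throughout I use that a \emph{chain} of $P$ is automatically a join-subsemilattice (the join of two comparable elements is the larger), so that "$\omega+1$ embeds as a join-subsemilattice" means exactly that $P$ carries a strictly increasing $\omega$-sequence together with a strict upper bound.

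For the easy inclusion $\J_{\neg(\omega+1)}\subseteq Forb_{\J}(\{\omega+1,\omega\times2\})$ I would first record that both candidates already lie in $\J_{\omega+1}$: since $1+(\omega+1)=\omega+1$, Lemma \ref{lem:forbid1} gives $\omega+1\in\J_{\omega+1}$, and $\omega\times 2\cong Q_{\omega+1}\in\J_{\omega+1}$ by Theorem \ref{thm:qalpha}. Because $\J_{\omega+1}$ is upward closed under join-embeddings — a join-preserving embedding $B\hookrightarrow P$ induces a join-preserving embedding $J(B)\hookrightarrow J(P)$ (Proposition \ref{1.3}), carrying a chain of type $I(\omega+1)=\omega+2$ from $J(B)$ into $J(P)$ — every $P$ admitting $\omega+1$ or $\omega\times 2$ as a join-subsemilattice belongs to $\J_{\omega+1}$. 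Contrapositively, $P\in\J_{\neg(\omega+1)}$ forbids both, i.e.\ $P\in Forb_{\J}(\{\omega+1,\omega\times2\})$.

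The substance is the reverse inclusion. Suppose $J(P)$ contains a chain of type $\omega+2$, say $A_0\subsetneq A_1\subsetneq\cdots\subsetneq A_\omega\subsetneq A_{\omega+1}$. Choosing $c_n\in A_{n+1}\setminus A_n$ and setting $b_n:=c_0\vee\cdots\vee c_n$, the directedness of the ideal $A_{n+1}$ yields $b_n\in A_{n+1}\setminus A_n$, so the $b_n$ form a strictly increasing $\omega$-sequence with every $b_n\in A_\omega$. I would then pick $x\in A_{\omega+1}\setminus A_\omega$ and split on whether $x$ bounds the sequence. If $b_n\le x$ for all $n$, then (as $x\notin A_\omega\ni b_n$) we get $b_n<x$ for all $n$, so $\{b_n:n<\omega\}\cup\{x\}$ is a chain of type $\omega+1$ and we are done. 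Otherwise $b_m\not\le x$ for some $m$; the $b_n$ being increasing, $b_n\not\le x$ for all $n\ge m$, and after discarding the first $m$ terms we may assume $b_n\not\le x$ for every $n$.

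In this remaining case I would produce $\omega\times 2$ explicitly by putting $u_n:=b_n$ and $v_n:=x\vee b_n$. Since $x\notin A_\omega$ and $A_\omega$ is an initial segment, $v_n\notin A_\omega$ while $u_n\in A_\omega$; hence $u_n<v_n$ and $v_n\not\le u_m$ for all $n,m$. Both rows are chains, and the decisive identities are $u_i\vee u_j=u_{\max(i,j)}$, $v_i\vee v_j=v_{\max(i,j)}$ and $u_i\vee v_j=b_i\vee x\vee b_j=x\vee b_{\max(i,j)}=v_{\max(i,j)}$, so the image is a join-subsemilattice. The point requiring care — and the main obstacle of the whole argument — is to exclude a spurious relation $u_i\le v_j$ with $i>j$, which would change the isomorphism type: but $u_i\le v_j$ forces $u_i\vee v_j=v_j$, while the computation gives $u_i\vee v_j=v_{\max(i,j)}$, whence $v_j=v_{\max(i,j)}$ and $i\le j$. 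Thus $(i,0)\mapsto u_i$, $(i,1)\mapsto v_i$ is an isomorphism of $\omega\times 2$ onto a join-subsemilattice of $P$. This is precisely where one must exploit the \emph{join} structure rather than only the order, and it is the step I expect to be delicate; granting it, the reverse inclusion, and hence the theorem, follows.
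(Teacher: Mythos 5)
Your reduction of the second equality to $Q_{\omega+1}\cong\omega\times 2$, your easy inclusion, and your Case 1 are all correct, and your overall strategy (extract an increasing sequence $(b_n)$ from a chain of ideals of type $\omega+2$, then build either $\omega+1$ or $\omega\times 2$) is the same as the paper's. But Case 2 has a genuine gap, and it sits exactly at the step you flag as delicate: you never prove that $n\mapsto v_n:=x\vee b_n$ is injective (equivalently, that the top row is strictly increasing), and your argument excluding a spurious relation $u_i\le v_j$ with $i>j$ is circular --- from $u_i\le v_j$ you deduce $v_j=v_{\max(i,j)}=v_i$ and then conclude ``$i\le j$'', an inference that already presupposes the injectivity in question. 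The failure is real, not merely unproven: let $P$ consist of a least element, a chain $b_0<b_1<\cdots$, an element $x$ incomparable to every $b_n$, and a top element $v$ (so $x\vee b_n=v$ for all $n$). This is a join-semilattice with least element, and $J(P)$ contains the chain $\downarrow b_0\subset\downarrow b_1\subset\cdots\subset\downarrow\{b_n:n<\omega\}\subset P$ of type $\omega+2$. With this chain and this witness $x$ you are in Case 2 (every $b_n\not\le x$), yet all the $v_n$ coincide, so your map collapses onto the chain $\{b_n:n<\omega\}\cup\{v\}$: the image has type $\omega+1$, not $\omega\times 2$, even though all your ``decisive identities'' hold.

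The gap is fixable by one further dichotomy inside Case 2. The sequence $(v_n)$ is non-decreasing; if it is eventually constant, say $v_n=v$ for $n\ge N$, then $\{b_n:n\ge N\}\cup\{v\}$ is a chain of type $\omega+1$ (each $b_n\in A_\omega$ while $v\ge x\notin A_\omega$), so you are done as in Case 1; otherwise pass to a subsequence along which $(v_n)$ is strictly increasing and run your computation on the pairs $(b_{n_k},v_{n_k})$ --- now your exclusion of spurious relations is valid, since $v_{n_i}=v_{n_j}$ genuinely contradicts strict increase. Note how the paper's proof avoids the issue: it assumes $\omega+1\nleq P$ and re-uses that hypothesis at \emph{every} inductive step, choosing $x_{0,j+1}\in I_{n_j}\cap\uparrow x_{0,j}$ with $x_{0,j+1}\not\le x_{1,j}$ (such a choice exists precisely because otherwise $x_{1,j}$ would be a strict upper bound of an infinite increasing chain, yielding $\omega+1\le P$), which forces $x_{1,j}<x_{1,j+1}$ by construction. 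Your one-shot choice of a single $x$, joined against the whole sequence $(b_n)$, cannot guarantee that strictness.
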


\begin{proof}
Let $P\in\J_{\alpha}$ such that $\omega+1\nleqslant P$. In $J(P)$ we
have a strictly increasing chain $(I_{\gamma})_{\gamma<\omega+2}$.
We construct in $P$ a join-subsemilattice $\{x_{i,j}: 0\leq i\leq 1,
j<\omega\}$ isomorphic to $\omega\times 2$ such that $x_{1,j}\in
(I_{\omega+1}\setminus I_{\omega})\cap\uparrow x_{0,j}$ for every
$j<\omega$. Pick $x_{0,0}$ in $I_{0}$ and $x_{1,0}$ in
$(I_{\omega+1}\setminus I_{\omega})\cap\uparrow x_{0,0}$. Suppose
$x_{0,0}, x_{1,0}, \ldots, x_{0,j}, x_{1,j}$ constructed. Since
$\omega+1\nleq P$ and $x_{1,j}\in(I_{\omega+1}\setminus
I_{\omega})\cap\uparrow x_{0,j}$, there is $n_{j}<\omega$ such that
$x_{1,j}$ does not dominate $I_{n}\cap\uparrow x_{0,j}$ for $n\geq
n_{j}$. Pick $x_{0,j+1}\in I_{n_{j}}\cap\uparrow x_{0,j}$ and put
$x_{1,j+1}:= x_{0,j+1}\vee x_{1,j}$.
\end{proof}

Next, we show:
\begin{equation} \mathbb{J}_{\neg(\omega+2)}=Forb_{\J}(\{\omega+2, (\omega \times 2)+1, \omega\times 3\})
\end{equation}

More generally, we  solve the case $\omega+n$.

\begin{lemma}\label{omega(k)} Let $n<\omega$ and $1\leq k \leq
n+1$. Then $\Omega(k)+n+1-k \in \J_{\omega+n}$.
\end{lemma}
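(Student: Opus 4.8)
The plan is to exhibit explicitly a chain of ideals of $\Omega(k)+(n+1-k)$ whose order type is $I(\omega+n)$, and to read off membership in $\J_{\omega+n}$ from the definition. Write $m:=n+1-k$, so that $0\le m\le n$ and $k+m=n+1$; here $\Omega(k)+m$ denotes the linear (ordinal) sum of the sierpinskisation $\Omega(k)$ with an $m$-element chain placed above all of $\Omega(k)$. Since $\omega+n$ is an ordinal, $I(\omega+n)=\omega+n+1=\omega+k+m$, so it suffices to produce a chain of ideals of type $\omega+k+m$.

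First I would reduce to a convenient representative. Because $\Omega(k)$ is a monotonic sierpinskisation of $\omega k$ and $\omega$, Corollary \ref{cor:ordertypesierp} guarantees that the order types of chains embeddable in $J(\Omega(k))$ depend only on $k$; thus I may take the lattice representative $\Omega_L(k)=\omega\times k$ (the direct product of the chain $\omega$ and the $k$-element chain), as recorded in the Example following Proposition \ref{prop:sierpkey}. For this representative I would directly build a chain in $J(\omega\times k)$ of type $\omega+k$ ending at the top element: the principal ideals $[0,a]\times\{0\}$, $a<\omega$, form a strictly increasing $\omega$-chain whose union is the ideal $\omega\times\{0\}$, and then the sets $\omega\times\{0,\dots,j\}$ for $j=0,\dots,k-1$ are each up-directed initial segments, hence ideals, giving $k$ further elements, the last being the whole lattice $\omega\times k$. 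This yields a chain of ideals of $\Omega(k)$ of type $\omega+k$ whose largest member is $\Omega(k)$ itself. (Alternatively this is exactly the ``largest order type'' $\omega+I(k-1)=\omega+k$ furnished by Corollary \ref{cor:ordertypesierp}, since $k$ is equimorphic to $1+(k-1)$.)

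Next I would analyse $J(\Omega(k)+m)$. Writing $t_1<\dots<t_m$ for the adjoined chain, an ideal of $\Omega(k)+m$ either avoids all $t_i$ — in which case it is precisely an ideal of $\Omega(k)$ — or it contains some $t_i$, and then, as each $t_i$ dominates all of $\Omega(k)$, it must contain $\downarrow t_i=\Omega(k)\cup\{t_1,\dots,t_i\}$. Hence $J(\Omega(k)+m)=J(\Omega(k))\cup\{\downarrow t_1,\dots,\downarrow t_m\}$, with the principal ideals $\downarrow t_1\subsetneq\dots\subsetneq\downarrow t_m$ lying strictly above every ideal of $\Omega(k)$ (each such ideal being a subset of $\Omega(k)\subsetneq\downarrow t_1$). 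Appending $\downarrow t_1,\dots,\downarrow t_m$ to the chain of type $\omega+k$ constructed above therefore produces a chain of ideals of type $(\omega+k)+m=\omega+n+1=I(\omega+n)$, which gives $\Omega(k)+m\in\J_{\omega+n}$.

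The routine but essential verifications are that the listed sets really are ideals (up-directed initial segments) of $\omega\times k$ and that they are pairwise comparable in the stated order. The one point requiring a little care — which I expect to be the main obstacle — is that $\Omega(k)$ is up-directed (being a lattice) and so is itself an ideal of $\Omega(k)+m$ lying \emph{strictly} below $\downarrow t_1$; this is what makes the final ``$+m$'' genuinely lengthen the chain rather than overlap with its top. Finally I would check that the degenerate cases $m=0$ (where $k=n+1$ and the chain of type $\omega+k$ already suffices) and $k=1$ (where $\Omega(1)=\omega$ and $P=\omega+n$) fall under the same construction.
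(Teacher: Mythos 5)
Your proof is correct and follows essentially the same route as the paper's: both reduce to the representative $\Omega(k)=\omega\times k$ and then exhibit a chain of ideals of type $\omega+n+1=I(\omega+n)$ in $J((\omega\times k)+(n+1-k))$. The only difference is cosmetic: the paper gets the chain by computing $J((\omega\times k)+(n+1-k))\cong (J(\omega)\times J(k))+(n+1-k)=((\omega+1)\times k)+(n+1-k)$ via the isomorphisms $J(A_1+A_2)\cong J(A_1)+J(A_2)$ and $J(A_1\times A_2)\cong J(A_1)\times J(A_2)$, whereas you construct the same chain of ideals explicitly (the $\omega$-chain of principal ideals along the bottom copy of $\omega$, the ideals $\omega\times\{0,\dots,j\}$, then the principal ideals of the adjoined finite chain).
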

\begin{proof} We may suppose $\Omega(k)=\omega\times k$. Hence $J(\Omega(k)+n+1-k)=J((\omega\times k)+n+1-k)=(J(\omega) \times J(k))+n+1-k=((\omega+1)\times
k)+n+1-k$. Hence $I(\omega+n)=\omega+n+1\leq J((\omega \times
k)+n+1-k)$.
\end{proof}

\begin{lemma}\label{omega+n} Let $\alpha:=\omega+n$ with $n\geq 1$. Then:
\begin{equation}
\mathbb{J}_{\neg \alpha}=Forb_{\J}(\{\Omega(k)+n+1-k: 1\leq k\leq n+1\}).
\end{equation}
\end{lemma}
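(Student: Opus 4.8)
Set $\B:=\{\Omega(k)+n+1-k:1\le k\le n+1\}$, and recall (as in the proof of Lemma \ref{omega(k)}) that we may take $\Omega(k)=\omega\times k$. The plan is to establish the two inclusions $\J_{\neg\alpha}\subseteq Forb_{\J}(\B)$ and $Forb_{\J}(\B)\subseteq\J_{\neg\alpha}$; equivalently, stated positively, to show that for $P\in\J$ one has $P\in\J_{\alpha}$ (that is, $J(P)$ carries a chain of type $I(\alpha)=\omega+n+1$) if and only if some member of $\B$ embeds in $P$ as a join-subsemilattice.

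The direction giving $\J_{\neg\alpha}\subseteq Forb_{\J}(\B)$ is the easy half. By Lemma \ref{omega(k)} every member $\Omega(k)+n+1-k$ of $\B$ lies in $\J_{\omega+n}=\J_{\alpha}$. If such a member embeds in $P$ by a join-preserving map, then item (\ref{item 2lem1.3}) of Proposition \ref{1.3} embeds its ideal lattice into $J(P)$ by a map preserving arbitrary joins, so the chain of type $I(\alpha)$ living in the member's ideal lattice is transported into $J(P)$ and $P\in\J_{\alpha}$. Contraposing, $P\in\J_{\neg\alpha}$ forbids every member of $\B$.

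For the converse, the substantive half, I would organise the construction around a single integer parameter. Assume $P\in\J_{\alpha}$, fix a strictly increasing chain of ideals $(I_{\gamma})_{\gamma<\omega+n+1}$ of $P$, and (replacing $I_{\omega}$ by $\bigcup_{m<\omega}I_{m}$) assume $I_{\omega}=\bigcup_{m<\omega}I_{m}$. Let
\[
m^{*}:=\max\{m:0\le m\le n\text{ and }P\text{ contains a chain of order type }\omega+m\},
\]
which is well defined since $J(P)$, hence $P$, carries a chain of type $\omega$. Put $k:=n+1-m^{*}$, so $1\le k\le n+1$. If $m^{*}=n$ then $k=1$, and the witnessing chain of type $\omega+n=\Omega(1)+n$ is already the required copy of the member of $\B$ with $k=1$ (a chain is automatically a join-subsemilattice). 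So assume $m^{*}<n$, i.e. $k\ge 2$; the force of maximality is that \emph{no} chain of type $\omega+(m^{*}+1)$ embeds in $P$, while one of type $\omega+m^{*}$ does. The goal is then a join-subsemilattice isomorphic to $(\omega\times k)+m^{*}=\Omega(k)+(n+1-k)$. I would build it by a staircase generalising Lemma \ref{omega+1}: thicken the base column into a width-$k$ grid $\{x_{i,c}:i<\omega,\ 0\le c\le k-1\}$ housed in the lower segment $I_{0}\subset\dots\subset I_{\omega+k-1}$ (whose type $\omega+k$ is exactly the length of a maximal chain in $J(\omega\times k)$), while realising the top chain $t_{1}<\dots<t_{m^{*}}$ in the $m^{*}$ uppermost gaps $I_{\omega+k-1}\subset\dots\subset I_{\omega+n}$. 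Each thickening step is powered by the same domination dichotomy as in Lemma \ref{omega+1}: given a current frontier point $x_{c,j}$ and a candidate $y$ in the next gap above it, $y$ cannot dominate $I_{n}\cap\uparrow x_{c,j}$ for all $n$, for otherwise an increasing $\omega$-sequence in $I_{\omega}\cap\uparrow x_{c,j}$ capped by $y$ and then by the $t_{j}$'s would produce a chain of type $\omega+(m^{*}+1)$, contradicting maximality of $m^{*}$; the failure of domination supplies the next grid point, and taking the new point to be the join of the chosen element with the previous one keeps the family closed under joins, imposes the product law $x_{i,c}\vee x_{i',c'}=x_{\max(i,i'),\max(c,c')}$, and keeps each row inside its gap.

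The hard part will be the simultaneous bookkeeping: one must run the $k-1$ thickenings and the $m^{*}$ top elements together so that each row stays confined to its designated gap (an initial segment of $P$, which is what forces the correct product relations between rows and places the $t_{j}$ strictly above the entire infinite grid), and so that every pairwise join of chosen elements is again a chosen element, with no accidental collapse of the order of $(\omega\times k)+m^{*}$. In particular, producing the top elements above an infinite grid with no maximum is delicate and is where the recursion must keep the $m^{*}$ ceiling witnesses in play throughout, rather than appending them at the end; I expect this confinement-and-closure step, not the individual dichotomy, to be the real obstacle. Once the copy of $\Omega(k)+(n+1-k)$ is exhibited, the join-embedding is checked as in Lemma \ref{w*} and Lemma \ref{omega+1}, giving $P\notin Forb_{\J}(\B)$ and completing the equality $\J_{\neg\alpha}=Forb_{\J}(\B)$.
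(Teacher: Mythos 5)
Your easy direction is correct and coincides with the paper's (Lemma \ref{omega(k)} together with the transport of ideal chains along join-embeddings, as in Proposition \ref{1.3}). The hard direction, however, is organised around a claim that is false. You assert that if $P\in\J_{\alpha}$ and $m^{*}$ is the largest $m\le n$ such that $P$ contains a chain of type $\omega+m$, then $P$ must contain the particular member $\Omega(n+1-m^{*})+m^{*}$. Take $n=3$ and let $P:=\{0\}\cup G\cup C\cup\{\top\}$, where $G:=\omega\times 3$ carries the grid order, $C:=\{c_{0}<c_{1}<\cdots<t_{1}\}$ is a chain of type $\omega+1$, every element of $G$ is incomparable to every element of $C$, $0$ lies below everything and $\top$ above everything. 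This is a join-semilattice with least element (the join of $g\in G$ and $c\in C$ is $\top$). The chain of ideals $\downarrow(0,0)\subset\downarrow(1,0)\subset\cdots\subset\{0\}\cup R_{1}\subset\{0\}\cup R_{2}\subset\{0\}\cup R_{3}\subset P$, where $R_{j}$ is the union of the first $j$ rows of $G$, has type $\omega+4=I(\omega+3)$, so $P\in\J_{\omega+3}$. Every chain of $P$ lies entirely in $\{0\}\cup G\cup\{\top\}$ or in $\{0\}\cup C\cup\{\top\}$, so the longest chains of $P$ have type $\omega+2$; thus $m^{*}=2$ and your prescription demands a copy of $\Omega(2)+2$. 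But the only elements of $P$ lying above infinitely many elements of $P$ are $t_{1}$ and $\top$, and $\downarrow t_{1}$ is a chain; hence no copy of $\omega\times 2$ is dominated by two elements of $P$, and $\Omega(2)+2$ does not embed (the lemma itself is safe here, since $G\cup\{\top\}$ is a copy of $\Omega(3)+1$). The root cause is that $m^{*}$ is a global invariant: the chain witnessing it can be incomparability-separated from the region carrying the ideal chain, so it can never be recruited as the top chain of your target, and no member whose top chain has length $m^{*}$ need embed at all.

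This is exactly the obstacle you yourself flagged --- producing elements above an infinite grid --- but it is not a matter of bookkeeping: it is an impossibility, and the paper's proof is engineered precisely never to face it. The paper argues by induction on $n$ (base case Lemma \ref{omega+1}). In the inductive step it fixes a chain $(I_{\gamma})_{\gamma<\omega+n+2}$, picks $x\in I_{0}$ and $x'\in(I_{\omega+n+1}\setminus I_{\omega+n})\cap\uparrow x$, and applies the induction hypothesis to the interval $P':=[x,x'[$. Since every element of $P'$ lies below $x'$ by construction, a copy inside $P'$ of any member $\Omega(k)+(n+1-k)$ with $k\le n$ would extend, by appending $x'$, to a copy of $\Omega(k)+(n+2-k)$ in $P$, which is assumed forbidden; hence the induction hypothesis forces $P'$ to contain the pure grid $\Omega(n+1)$, and the only structure ever built from scratch is again a pure grid, $\Omega(n+2)$, grown one column at a time by the dichotomy powered by the forbidden member $\Omega(n+1)+1$. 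Members with a nonempty top chain thus enter only negatively, as hypotheses; the element dominating an infinite configuration is always obtained for free, namely $x'$, because the configuration is grown inside $\downarrow x'$ from the start. Your cases $m^{*}=n$ (a chain suffices) and $m^{*}=0$ (your dichotomy does build the pure grid $\Omega(n+1)$) are sound, but for $0<m^{*}<n$ the plan cannot be repaired without reorganising it along the paper's lines.
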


\begin{proof} Lemma \ref{omega(k)} implies $\Omega(k)+n+1-k\in\J_{\omega+n}$ for every $1\leq k\leq
n+1$. To prove that this is a complete set of obstructions,  we proceed by recurrence on $n$. The case $n=1$ is solved in
 Lemma \ref{omega+1}. Suppose
$\J_{\omega+n}=\uparrow\{\Omega(k)+n+1-k: 1\leq k\leq n+1\}$. Let
$P\in\J_{\omega+n+1}$ such that $P$ contains no join-subsemilattice
isomorphic to $\Omega(k)+n+2-k$ for every $1\leq k\leq n+1$. In
$J(P)$ we have a strictly increasing chain
$(I_{\gamma})_{\gamma<\omega+n+2}$. Pick $x\in I_{0}$ and $x'\in
(I_{\omega+n+1}\setminus I_{\omega+n})\cap\uparrow x$. Put $P':=[x,
x'[$. Clearly $P'\in\J_{\omega+n}$. Since $P$ does not contain a
join-subsemilattice isomorphic to $\Omega(k)+n+2-k$ for every $1\leq
k\leq n+1$, $P'$ does not contain a join-subsemilattice isomorphic
to $\Omega(k)+n+1-k$, for every $1\leq k\leq n$. Recurrence's
hypothesis implies that $P'$ contains a join-subsemilattice
$\{z_{i,j}:0\leq i\leq n, j<\omega\}$ isomorphic to $\Omega(n+1)$.
We construct in $P$ a join-subsemilattice $\{x_{i,j}:0\leq i\leq
n+1, j<\omega\}$ isomorphic to $\Omega(n+2)$ such that $a)$ for
$0\leq i\leq n$, $x_{i,j}=z_{i,k}$ for some $k\geq j$  and $b)$
$x_{n+1,j}\in (I_{\omega+n+1}\setminus I_{\omega+n})\cap\uparrow
x_{n,j}$. Put $x_{i,0}:=z_{i,0}$ for $0\leq i\leq n$ and pick
$x_{n+1,0}\in (I_{\omega+n+1}\setminus I_{\omega+n})\cap\uparrow
x_{n,0}$. Suppose $x_{i,j}$ constructed for $0\leq i\leq n+1$ and
$0\leq j\leq m$. Since $P$ does not contain a join-subsemilattice
isomorphic to $\Omega(n+1)+1$ and $x_{n+1,m}\in
(I_{\omega+n+1}\setminus I_{\omega+n})\cap\uparrow x_{n,m}$ there is
$j_{m}>m$ such that $x_{n+1,m}\ngeq z_{n,j}$ for $j\geq j_{m}$. Put
$x_{i,m+1}:=z_{i,j_{m}}$ for $0\leq i\leq n$ and $x_{n+1,m+1}:=
x_{n,m+1}\vee x_{n+1,m}$.
\end{proof}

\begin{lemma}\label{omega2}
$\J_{\neg \omega2}=Forb_{\J}(\{\omega2,
\underline\Omega(\omega)\})$.
\end{lemma}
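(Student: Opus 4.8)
The plan is to establish the two inclusions $\uparrow\{\omega 2,\underline\Omega(\omega)\}\subseteq\J_{\omega 2}$ and $\J_{\omega 2}\subseteq\uparrow\{\omega 2,\underline\Omega(\omega)\}$. The first, soundness of the two obstructions, is immediate from what precedes: $\omega 2=1+\omega 2$ lies in $\J_{\omega 2}$ by Lemma \ref{lem:forbid1}, and $\underline\Omega(\omega)$ is a lattice sierpinskisation of $\omega\cdot\omega$ and $\omega$, so $J(\underline\Omega(\omega))$ carries a chain of type $\omega+I(\omega)=\omega 2+1=I(\omega 2)$, whence $\underline\Omega(\omega)\in\J_{\omega 2}$ (Theorem \ref{thm:serpinskalpha}). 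Since a join-preserving embedding $Q\hookrightarrow P$ induces an embedding $J(Q)\hookrightarrow J(P)$ preserving arbitrary joins (Proposition \ref{1.3}), the property of lying in $\J_{\omega 2}$ is inherited by any join-semilattice containing a member of the list, which is the first inclusion.

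For the converse I would fix $P\in\J_{\omega 2}$ together with a strictly increasing chain $(I_\gamma)_{\gamma\le\omega 2}$ of ideals of $P$ (of type $I(\omega 2)=\omega 2+1$), assume $P$ has no chain of type $\omega 2$, and aim to embed $\underline\Omega(\omega)$ as a join-subsemilattice. Realise $\underline\Omega(\omega)$ as $T:=\{(a,b):0\le b\le a<\omega\}$ with componentwise order and join. The crucial remark is that $T$ is generated, as a join-semilattice, by the two ascending $\omega$-chains $x_a:=(a,0)$ and $y_b:=(b,b)$, via $(a,b)=x_a\vee y_b$; consequently it suffices to produce in $P$ two ascending chains $(x_a)_{a<\omega}$ and $(y_b)_{b<\omega}$ for which $g(a,b):=x_a\vee y_b$ is injective on $T$ (it is automatically join-preserving once the two families are chains, and an injective join-preserving map is an order-embedding). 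I would take $x_0=y_0:=0$, keep all $x_a$ inside $I_\omega$, and place $y_b$ in the gap $I_{\omega+b}\setminus I_{\omega+b-1}$ of the upper block. Two facts then deliver injectivity: the second coordinate $b$ of $g(a,b)$ is read off from the gap it occupies (for $b\ge 1$ it lies in $I_{\omega+b}\setminus I_{\omega+b-1}$, while for $b=0$ it lies in $I_\omega$), and, along a fixed column, $g(a,b)=g(a',b)$ with $a<a'$ would force $x_{a+1}\le x_a\vee y_b$, so injectivity reduces to the non-collapse condition $x_{a+1}\not\le x_a\vee y_b$ for $a\ge b$.

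The construction would be run inductively and adaptively. At stage $a$, having $x_{a-1}$ and $y_{a-1}$, I first choose $x_a\in I_\omega\cap\uparrow x_{a-1}$ with $x_a\not\le w$, where $w:=x_{a-1}\vee y_{a-1}$, and then choose any $y_a\ge y_{a-1}$ in $I_{\omega+a}\setminus I_{\omega+a-1}$; since the $(y_b)$ are ascending, escaping $w$ secures $x_a\not\le x_{a-1}\vee y_b$ for every $b\le a-1$, which is exactly the non-collapse condition. This escape step is the analogue of the one in Lemma \ref{omega+1}, and I expect it to be the main obstacle. It can only fail if $I_\omega\cap\uparrow x_{a-1}\subseteq\downarrow w$, and the point is that this failure produces the alternative obstruction. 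Indeed $w\notin I_\omega$ (as $y_{a-1}$ lies above $I_\omega$ for $a\ge 2$), so after reducing to the case where the chain is continuous at $\omega$, i.e. $I_\omega=\bigcup_{n<\omega}I_n$ (which is harmless, replacing $I_\omega$ by this union), one finds an infinite ascending chain in $I_\omega\cap\uparrow x_{a-1}$ all of whose members are $\le w$; extending $w$ upward through the still strictly increasing tail $(I_{\omega+n})_{n\ge a-1}$ by partial joins $v_n:=w\vee e_{a-1}\vee\dots\vee e_n$ (with $e_n\in I_{\omega+n+1}\setminus I_{\omega+n}$) produces a second ascending $\omega$-chain lying entirely above the first, hence a chain of type $\omega+\omega=\omega 2$ in $P$, contrary to hypothesis. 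Thus the escape never fails, the two chains are constructed in full, and $g$ embeds $\underline\Omega(\omega)$ into $P$. The points demanding care are the reduction to a continuous chain, the verification that failure genuinely yields the $\omega 2$-chain (this is where $\omega 2\not\le P$ is consumed), and the bookkeeping showing that $g$ is injective, hence an embedding.
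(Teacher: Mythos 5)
Your proposal is correct and takes essentially the same route as the paper's proof: the paper's triangular recursion $x_{i+1,n+1}:=x_{i,n+1}\vee x_{i+1,n}$ unwinds to exactly your map $g(a,b)=x_a\vee y_b$, with the bottom row living in $\bigcup_{n<\omega}I_n$ and the diagonal threading the gaps $I_{\omega+b}\setminus I_{\omega+b-1}$. Your escape step (failure produces two stacked $\omega$-chains, hence a chain of type $\omega 2$) is the same use of the hypothesis as the paper's remark that $\omega 2\nleq P$ forces $\omega+1\nleq\,\downarrow x_{n,n}$; the only differences are your explicit continuity reduction at $\omega$ and the more detailed injectivity bookkeeping, which the paper leaves implicit.
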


\begin{proof} Let $P\in\J_{\omega2}$. Suppose $\omega2\nleq P$.
Let $(I_{\gamma})_{\gamma<\omega2}$ be a strictly increasing chain
in $J(P)$. We construct a join-subsemilattice $\{x_{i,j}: i\leq
j<\omega\}$ of $P$, isomorphic to $\underline\Omega(\omega)$ such
that $x_{i,i}\in (I_{\omega+i}\setminus I_{\omega+i-1})\cap\uparrow
x_{i-1,i}$ for every $i<\omega$. Pick $x_{0,0}\in I_{0}$,
$x_{0,1}\in I_{1}\cap\uparrow x_{0,0}$ and $x_{1,1}\in
(I_{\omega+1}\setminus I_{\omega})\cap\uparrow x_{0,1}$. Suppose
$x_{i,j}$ constructed for $0\leq i\leq j\leq n$. Since $\omega2\nleq
P$ and $x_{n,n}\in (I_{\omega+n}\setminus
I_{\omega+n-1})\cap\uparrow x_{n-1,n}$, we have $\omega+1\nleq
\downarrow x_{n,n}$. Hence, there is some $j_{n}<\omega$ such that
$x_{n,n}$ does not dominate $I_{j}\cap\uparrow x_{0,n}$ for all $j$
such that $ j_{n}\leq j<\omega$. Pick $x_{0,n+1}\in
I_{j_{n}}\cap\uparrow x_{0,n}$, put $x_{i+1,n+1}:=x_{i,n+1}\vee
x_{i+1,n}$ for $0\leq i\leq n-1$ and pick $x_{n+1,n+1}\in
(I_{\omega+n+1}\setminus I_{\omega+n})\cap\uparrow x_{n,n+1}$.
\end{proof}

\chapter[Chains in modular algebraic lattices]{The length of chains in modular algebraic lattices}\label{chap:algebraic}
We show that, for a large class of countable \index{countable}
order types \index{order type} $\alpha$, a modular \index{modular
lattice} algebraic lattice \index{algebraic lattice} $L$ contains
no chain \index{chain}of type $\alpha$ if and only if $K(L)$, the
join-semilattice \index{join-semilattice} of compact elements
\index{compact element} of $L$, contains neither a chain of type
$\alpha$ nor a subset isomorphic \index{isomorphic} to
$[\omega]^{<\omega}$, the set of finite subsets of $\omega$.

\section{Introduction and presentation of the results}

This paper is about  the relationship between the order structure
of an algebraic lattice $L$ and the order structure of the
join-semilattice $K(L)$, made of  the compact elements of $L$,
particularly,  the relationship between the length of chains in
$L$ and in $K(L)$.  The lattice $L$ is isomorphic to $J(K(L))$,
the collection of ideals \index{ideal} of $K(L)$ ordered by
inclusion, but this does not make this relationship immediately
apparent. For an example, if $L$ is  $\mathfrak P(E)$, the power
set  of a set $E$, then $K(E)$ is  $[E]^{<\omega}$, the collection
of finite subsets of $E$.
 If $E$ is infinite, chains in $[E]^{<\omega}$ and in $\mathfrak P(E)$ are quite far apart:
maximal chains \index{maximal chain} in $[E]^{<\omega}$  have
order type $\omega$ whereas in $\mathfrak P(E)$  each maximal
chain is made of the set  $I(C)$ of initial segments of a chain
$C:= (E, \leq)$  where $\leq $ is a linear order on $E$. And, from
this follows that $\mathfrak P(E)$ contains  uncountable chains.
On an other hand, there are classes of lattices $L$ for which,
except this case, chains in $L$ and in $K(L)$ are about the same.
Our results are about a class $\mathbb L$  of algebraic lattices,
including the modular  ones.

Let $\alpha$ be a chain, we denote by $I(\alpha)$ the set of
initial segments \index{initial segment} of $\alpha$, ordered by
inclusion, we denote by $L_{\alpha}:= 1+(1\oplus \alpha)+1$ the
lattice made of the direct sum \index{direct sum} of the
one-element chain $1$ and the chain $\alpha$, with top \index{top}
and bottom \index{bottom} added. Note that $I(\alpha)\cong
J(1+\alpha)$ and that the algebraic lattice $J(L_{\alpha})$ made
of  the set of ideals of $L_{\alpha}$, ordered by inclusion, is
isomorphic to $L_{J(\alpha)}$.

\begin{figure}[htbp]
\centering
\includegraphics[width=3in]{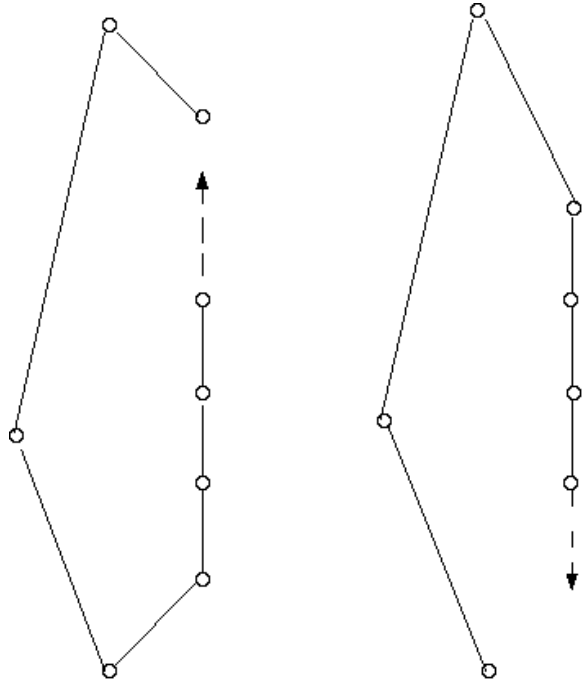}
\caption{$L_{\omega+1}$,     $L_{\omega^*}$} \label{L(omega)}
\end{figure}

\begin{figure}[htbp]
\centering
\includegraphics[width=0.8in]{chakirM5}
\caption{$M_{5}$} \label{M5}
\end{figure}

Let $\mathbb A $ be the class of algebraic lattices  and let
$\mathbb L$  be the collection of $L\in\mathbb A$ such that $L$
contains no sublattice \index{sublattice} isomorphic to
$L_{\omega+1}$ or to $L_{\omega^*}$. Since $L_{2}$ is isomorphic
to $M_{5}$, the five element non-modular lattice, every modular
algebraic lattice belongs to $\mathbb L$. Let $\mathbb A_{\neg
\alpha}$ (resp. $\mathbb L_{\neg \alpha}$) be the collection of
$L\in\mathbb A$ (resp. $L\in\mathbb L$) such that $L$ contains no
chain of type $I(\alpha)$. Let $\mathbb{K}$ be the class of order
types $\alpha$ such that $L \in  \mathbb L_{\neg \alpha}$ whenever
$K(L)$ contains no chain of type $1+\alpha$ and no subset
isomorphic to $[\omega]^{<\omega}$. If $\alpha$ is countable then
these two conditions are necessary in order that $L\in\mathbb
L_{\neg \alpha}$; if, moreover, $ \alpha$ is indecomposable
\index{indecomposable}, this is equivalent to the fact that $L$
contains no chain of type $\alpha$.

\begin{theorem}\label{thm6}
The class $\mathbb{K}$ satisfies the following properties:\\
$(p_{1})$ $0\in\mathbb{K}$, $1\in \mathbb{K}$;\\ $(p_{2})$ If
$\alpha+1\in\mathbb{K}$ and $\beta\in\mathbb{K}$ then
$\alpha+1+\beta\in\mathbb{K}$;\\ $(p_{3})$ If
$\alpha_{n}+1\in\mathbb{K}$
 for every $n<\omega$ then the $\omega$-sum
$\gamma:=\alpha_{0}+1+\alpha_{1}+1+\ldots+\alpha_{n}+1+\ldots$
belongs to $\mathbb{K}$;\\ $(p_{4})$ If $\alpha_{n}\in\mathbb{K}$
and $\{m: \alpha_{n}\leq\alpha_{m}\}$ is infinite for every
$n<\omega$ then the $\omega^*$-sum $\delta:=
\ldots+\alpha_{n+1}+\alpha_{n}+\ldots+\alpha_{1}+\alpha_{0}$
belongs to $\mathbb{K}$.\\ $(p_{5})$ If $\alpha$ is a countable
scattered order type \index{scattered order type}, then $\alpha\in
\mathbb{K}$ if and only if
$\alpha=\alpha_{0}+\alpha_{1}+...+\alpha_{n}$ with $\alpha_{i}\in
\mathbb{K}$ for $i\leq n$, $\alpha_{i}$ strictly
left-indecomposable  \index{strictly left-indecomposable} for
$i<n$ and $\alpha_{n}$ indecomposable;\\
 $(p_{6})$  $\eta\in \mathbb{K}$.\\
\end{theorem}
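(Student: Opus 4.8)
The plan is to translate the whole statement into the language of join-semilattices. Writing $P:=K(L)$, recall that $L\cong J(P)$ and that a chain of type $I(\alpha)$ in $L$ is exactly a chain of ideals of $P$ of that type. Thus, in contrapositive form, $\alpha\in\mathbb{K}$ asserts: \emph{whenever $J(P)\in\mathbb{L}$ carries a chain of ideals of type $I(\alpha)$, the semilattice $P$ itself carries a chain of type $1+\alpha$ or a copy of $[\omega]^{<\omega}$}. Each of $(p_1)$--$(p_6)$ will be proved in this form, and the common engine is a \emph{reflection} procedure: given an indexed chain of ideals $(I_t)_t$ of $P$, one marks selected covering pairs (``jumps'') $I\subset I'$ by choosing a compact witness $x\in I'\setminus I$, and then studies the family of witnesses inside $P$. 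The dichotomy driving every argument is that these witnesses either line up into one long chain, yielding the desired chain of type $1+\alpha$, or they fan out, in which case a Ramsey/independence extraction as in Theorem \ref{tm2.1} produces an infinite independent set, hence a copy of $[\omega]^{<\omega}$. The role of the hypothesis $L\in\mathbb{L}$, that is, the exclusion of $L_{\omega+1}$ and $L_{\omega^*}$, is precisely to rule out the intermediate ``forked'' configurations, forcing witnesses to be comparable and thereby turning partial chains into genuine ones.

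Property $(p_1)$ is immediate: for $\alpha\in\{0,1\}$ the hypothesis that $P$ carries no chain of type $1+\alpha$ forces $P$ to reduce to a single point, whence $J(P)$ is trivial and contains no chain of type $I(\alpha)$. I would then record, as the technical core for the next items, a single-step lemma: if a chain of type $I(\alpha+1)$ sits below a marked jump with witness $x$, then either $P$ contains $[\omega]^{<\omega}$ or, using $\alpha+1\in\mathbb{K}$, the interval of $P$ below $x$ supports a chain of type $1+\alpha$ ending at a point comparable to $x$. For $(p_2)$, a chain of type $I(\alpha+1+\beta)$ has a canonical jump coming from the displayed ``$+1$'': the initial segment exhausting the $\alpha+1$ block is covered by the next one, giving ideals $I\subset I'$ with a witness $x$. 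Below $x$ the chain has type $I(\alpha+1)$; from $x$ upward it has type essentially $J(\beta)$, realized in an interval (or quotient) of $P$ whose compact elements form a join-semilattice to which $\beta\in\mathbb{K}$ applies. Feeding the lower part into $\alpha+1\in\mathbb{K}$, the upper part into $\beta\in\mathbb{K}$, and gluing the two resulting chains through the single element $x$ by means of the exclusion of $L_{\omega+1}$, produces a chain of type $1+\alpha+1+\beta$, unless $[\omega]^{<\omega}$ has already appeared.

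Properties $(p_3)$ and $(p_4)$ are the $\omega$- and $\omega^*$-iterations of this step. In $(p_3)$ the $\omega$ separating ``$+1$''s furnish $\omega$ nested jumps with increasing witnesses $x_n$; applying the single-step lemma in each block $\alpha_n+1$ and concatenating along the $x_n$, again via no-$L_{\omega+1}$, yields a chain of type $1+\gamma$. Property $(p_4)$ is the order-dual: one runs a \emph{descending} marking, the combinatorial hypothesis that $\{m:\alpha_n\le\alpha_m\}$ is infinite guarantees that a copy of each $\alpha_n$ is always available arbitrarily far down so the construction never stalls, and the exclusion of $L_{\omega^*}$ (rather than $L_{\omega+1}$) forces the descending witnesses into a single chain of type $1+\delta$. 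Property $(p_5)$ combines these closure facts with Hausdorff's description of countable scattered types: the ``if'' direction is a finite induction assembling $\alpha=\alpha_0+\dots+\alpha_n$ by repeated use of $(p_2)$, together with $(p_3)$/$(p_4)$ for the indecomposable pieces, while the ``only if'' direction requires, for a scattered $\alpha$ \emph{not} so decomposable, a witnessing lattice $J(T)$ built from a distributive $T$ obtained by sierpinskisation, arranged so that $T$ contains $I(\alpha)$ but $K(J(T))=T$ avoids both a chain of type $1+\alpha$ and $[\omega]^{<\omega}$. Finally $(p_6)$ needs a self-contained dichotomy for the dense type $\eta$: from a chain of ideals of type $I(\eta)$ one marks a dense set of cuts and applies Ramsey's theorem to the witnesses, extracting either a densely ordered chain of witnesses, a copy of $1+\eta$ in $P$, or an infinite independent set.

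The main obstacle, I expect, is twofold. First, making the reflection and gluing rigorous: one must verify that the excluded patterns $L_{\omega+1},L_{\omega^*}$ genuinely force the required comparabilities, namely that in $\mathbb{L}$ a witness beside a long chain cannot remain incomparable to it, and that the ``upper part'' of a split is really governed by the compact elements of an interval or quotient of $P$ to which the inductive hypothesis applies. Second, the ``only if'' half of $(p_5)$: the construction of scattered counterexample lattices via sierpinskisations, tuned so that the compact semilattice simultaneously avoids both forbidden configurations, is where the genuine work lies, and the dense case $(p_6)$ is of comparable difficulty, handled on its own rather than through the scattered machinery.
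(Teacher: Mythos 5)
Your skeleton matches the paper's at the level of architecture: a single-step lemma for $(p_2)$ (the paper's Lemma \ref{lemIII.4.2}) iterated along marked jumps for $(p_3)$, a separate treatment of $(p_6)$, and for $(p_5)$ an assembly argument for the ``if'' direction plus counterexample lattices for the ``only if'' direction. But the proposal misdescribes, and in the hard cases omits, the mechanism that actually drives the dichotomy, and the specific claims about where $\mathbb L$ enters would not survive an attempt to write details. First, in $(p_2)$ no forbidden-sublattice argument is needed to glue: Lemma \ref{lemIII.4.2} produces an element $x$ which is simultaneously the top of the lower chain $1+\alpha+1\leq \downarrow x$ and the least element of $\uparrow x\in\mathbb L_{\beta}$ (via Lemma \ref{final}), so concatenation is automatic. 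Second, and more seriously, $\mathbb L$ is never used to ``force witnesses to be comparable'', and it cannot be: take $P=[\omega]^{<\omega}$, so that $J(P)=\mathfrak P(\omega)$ is distributive, hence in $\mathbb L$, and carries $\omega^*$-chains of ideals, while $P$ is well-founded --- no marking scheme can line witnesses up, and no forbidden sublattice appears. The paper's engine is instead the notion of a \emph{separating} chain of ideals: a separating chain yields an infinite independent set (Lemma \ref{independent}), and the role of excluding $L_{\omega+1}$ and $L_{\omega^*}$ is the opposite of what you assert --- Lemma \ref{lem III.5.1} shows that if the $I(\alpha)$-chain is \emph{non}-separating while no principal ideal $\downarrow x$ carries $I(\alpha)$, then $J(P)$ contains a sublattice $L_{J(\beta)}$ with $\beta\in\{\omega,\omega^*,\eta\}$, which is forbidden; so non-separation is ruled out and independence, not a chain, follows. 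Your $(p_4)$ also needs the dichotomy of Lemma \ref{lem III.5.2} (either an ideal in which no $\downarrow x$ carries $I(\alpha)$, or a convex $\omega^*$-sum of pieces $P_n\in\mathbb J_{\alpha_n}$ to which the hypotheses $\alpha_n\in\mathbb K$ are applied blockwise); it is not an order-dual iteration of $(p_3)$. Similarly, for $(p_6)$ a Ramsey extraction of a ``densely ordered chain of witnesses'' is obstructed by Sierpi\'nski-type colorings; the paper instead uses a dyadic bisection argument (Lemma \ref{lem III.6.1}) to either embed $\eta$ outright or localize to a convex piece where the separation argument applies with $\alpha:=\eta$.

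The other genuine gap is in the ``only if'' half of $(p_5)$. You propose to build, for each scattered $\alpha$ not of the stated form, a distributive counterexample directly; this is intractable without the reduction the paper performs first: Lemmas \ref{v1} and \ref{v2} show that every segment $\alpha_p+\dots+\alpha_q$ of the canonical decomposition of $\alpha\in\mathbb K$ is again in $\mathbb K$ (itself nontrivial --- it requires embedding a putative counterexample $P$ into an auxiliary semilattice $1+\delta+P+\mu$ and invoking $\alpha\in\mathbb K$ for the whole sum), which reduces everything to refuting the two-block types $\alpha_i+\alpha_{i+1}$ with $\alpha_i$ infinite and strictly right-indecomposable. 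Only for these does the paper build the counterexample (Lemma \ref{v3}): not a sierpinskisation but a product of two chains $A_0\times(1+B)$, whose analysis (showing $1+\gamma\not\leq T$ while $I(\gamma)\leq J(T)$ and $T$ has no infinite independent set) rests on Laver's results about maximal indivisible order types embeddable in $\alpha$ and $\beta$. Without this reduction and this analysis, your plan for $(p_5)$ remains a statement of what must be proved rather than a proof.
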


\begin{corollary}\label {cor: scatteredmodular} A modular algebraic lattice is well-founded, respectively scattered, if and only if the join-semilattice made of its compact element is well-founded,   resp. scattered and does not contains a join-subsemilattice isomorphic to $[\omega]^{<\omega}$.
\end{corollary}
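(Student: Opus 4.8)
The plan is to deduce the corollary from Theorem~\ref{thm6} by specialising the class $\mathbb K$ to the two order types $\omega^*$ and $\eta$, since ``well-founded'' means ``no chain of type $\omega^*$'' and ``scattered'' means ``no chain of type $\eta$''. First I would record that a modular algebraic lattice $L$ lies in $\mathbb L$: a modular lattice contains no sublattice isomorphic to the five-element non-modular lattice $M_5\cong L_2$, and $L_2$ embeds as a sublattice into both $L_{\omega+1}$ and $L_{\omega^*}$, so $L$ contains neither. Next I would verify the two membership facts $\omega^*\in\mathbb K$ and $\eta\in\mathbb K$. The latter is exactly property $(p_6)$. For the former I would apply $(p_4)$ to the constant sequence $\alpha_n:=1$: by $(p_1)$ we have $1\in\mathbb K$, and $\{m:1\le 1\}=\omega$ is infinite, so the $\omega^*$-sum $\cdots+1+1+\cdots+1+1=\omega^*$ belongs to $\mathbb K$.

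The core of the argument is then a translation between the conditions ``$L$ well-founded/scattered'', ``$L\in\mathbb L_{\neg\alpha}$'', ``$K(L)$ has no chain of type $1+\alpha$'', and ``$K(L)$ well-founded/scattered''. On the lattice side I would show that for a complete lattice $L$ one has $\alpha\le L$ iff $I(\alpha)\le L$, for $\alpha\in\{\omega^*,\eta\}$; hence, using $L\in\mathbb L$ (so that $\mathbb A_{\neg\alpha}\cap\mathbb L=\mathbb L_{\neg\alpha}$), $L$ is well-founded iff $L\in\mathbb L_{\neg\omega^*}$ and $L$ is scattered iff $L\in\mathbb L_{\neg\eta}$. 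For $\eta$ this is immediate: $\eta$ is indecomposable, so the footnote to Theorem~\ref{thm2}, applied to $L=J(K(L))$, gives $I(\eta)\le L\iff\eta\le L$. For $\omega^*$ I would argue directly: one computes $I(\omega^*)=1+\omega^*$, and for complete $L$ an $\omega^*$-chain $a_1>a_2>\cdots$ can be augmented by its meet $\bigwedge_i a_i$, which is strictly below every $a_i$ (else some $a_i$ would be a least element of an $\omega^*$-chain), producing a chain of type $1+\omega^*$; the reverse implication is trivial. On the semilattice side I would use that $K(L)$ has a least element $0$: an $\alpha$-chain in $K(L)$ never contains $0$ (it has no least element), so prepending $0$ turns it into a chain of type $1+\alpha$, whence ``$K(L)$ has no chain of type $1+\alpha$'' is equivalent to ``$K(L)$ has no chain of type $\alpha$'', i.e.\ to $K(L)$ being well-founded (for $\omega^*$) or scattered (for $\eta$). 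Finally, by Theorem~\ref{tm2.1} the presence of $[\omega]^{<\omega}$ as a subset and as a join-subsemilattice coincide, so the two formulations of the forbidden-configuration clause agree.

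With these translations in hand the corollary is a formal consequence of Theorem~\ref{thm6}. Fix $\alpha\in\{\omega^*,\eta\}$. Since $\alpha\in\mathbb K$, the defining property of $\mathbb K$ gives the implication ``$K(L)$ has no chain of type $1+\alpha$ and no copy of $[\omega]^{<\omega}\Rightarrow L\in\mathbb L_{\neg\alpha}$'', while, $\alpha$ being countable, the converse implication is the necessity of these conditions recorded before Theorem~\ref{thm6}. Thus $L\in\mathbb L_{\neg\alpha}$ iff $K(L)$ has no chain of type $1+\alpha$ and no copy of $[\omega]^{<\omega}$. Feeding in the equivalences above yields ``$L$ well-founded $\iff$ $K(L)$ well-founded with no copy of $[\omega]^{<\omega}$'' (taking $\alpha=\omega^*$) and ``$L$ scattered $\iff$ $K(L)$ scattered with no copy of $[\omega]^{<\omega}$'' (taking $\alpha=\eta$), as required. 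I expect the only delicate point to be the asymmetry between the two order types: $\eta$ is indecomposable and is handled by the footnote to Theorem~\ref{thm2}, whereas $\omega^*$ is not indecomposable and forces the explicit ``prepend the meet'' argument together with the computation $I(\omega^*)=1+\omega^*$; all remaining steps are bookkeeping once Theorem~\ref{thm6} is granted.
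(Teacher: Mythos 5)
Your proof is correct and is essentially the derivation the paper intends: the corollary is stated as an immediate consequence of Theorem \ref{thm6}, obtained exactly as you do by specialising to $\alpha=\omega^*$ and $\alpha=\eta$, with $\omega^*\in\mathbb{K}$ coming from $(p_{1})$ and $(p_{4})$ (the paper records this as $\omega^*\in\mathbb{P}\subseteq\mathbb{K}$, and also derives it from the well-foundedness theorem of Chapter \ref{chap:wellfounded}), $\eta\in\mathbb{K}$ from $(p_{6})$, the necessity remark preceding the theorem for the converse direction, and Theorem \ref{tm4.1} to pass between subset and join-subsemilattice containment of $[\omega]^{<\omega}$. The translations you make explicit (prepending the meet to an $\omega^*$-chain in a complete lattice, prepending $0$ to chains in $K(L)$, and invoking the indecomposability footnote for $\eta$) are precisely the steps the paper leaves implicit, so there is no gap.
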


Let $\mathbb{P}$ be the smallest class of order-types satisfying
properties $(p_{1})$ to $(p_{4})$ above. For exemple, $\omega,
\omega^{*}, \omega(\omega^{\alpha})^{*}, \omega^{*}\omega,
\omega^{*}\omega\omega^{*}$ belong to $\mathbb{P}$. We have
$\mathbb{P}\subseteq \mathbb{K}\setminus \{\eta \}$. We do not
know whether every countable $\alpha\in \mathbb{K}\setminus \{\eta
\}$ is equimorphic \index{equimorphic} to some $\alpha' \in
\mathbb{P}$. As a matter of fact, the class  of order types
$\alpha$ which are equimorphic to some $\alpha' \in \mathbb{P}$
satisfies also $(p_{5})$ hence, to answer our question, we may
suppose that $\alpha$ is indecomposable. We only succeed when
$\alpha$ is indivisible \index{indivisible}. We eliminate
 those outside $\mathbb{P}$ by building algebraic lattices of the form $J(T)$ where $T$ is an appropriate distributive lattices \index{distributive lattice}.

\begin{theorem} \label{thm7}
 A countable indivisible order type  $\alpha$ is equimorphic to some $\alpha' \in \mathbb {P} \cup \{\eta\}$ if and only if for
 for every modular algebraic  lattice $L$, $L$ contains   a chain of type $ I(\alpha)$ if and only if  $L$ contains a
chain of type $1+\alpha$ or a subset isomorphic to
$[\omega]^{<\omega}$.
\end{theorem}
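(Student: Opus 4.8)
Throughout write $\mathbb K_{\mathrm{mod}}$ for the class of countable order types $\alpha$ such that, for every modular algebraic lattice $L$, one has $L\in\mathbb A_{\neg\alpha}$ precisely when $K(L)$ contains neither a chain of type $1+\alpha$ nor a join-subsemilattice isomorphic to $[\omega]^{<\omega}$; the theorem asserts that, for countable indivisible $\alpha$, membership in $\mathbb K_{\mathrm{mod}}$ is equivalent to being equimorphic to a member of $\mathbb P\cup\{\eta\}$. Two preliminary remarks organize the proof. First, containment of a chain of type $I(\alpha)$, resp. of type $1+\alpha$, in a fixed lattice depends on $\alpha$ only up to equimorphism, since $I(\cdot)$ and $\alpha\mapsto 1+\alpha$ are monotone for embeddability; hence $\mathbb K_{\mathrm{mod}}$ is closed under equimorphism. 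Second, the necessity of the two conditions on $K(L)$ for $L\in\mathbb L_{\neg\alpha}$ holds for all algebraic lattices when $\alpha$ is countable, so combining it with the defining implication of $\mathbb K$ (stated for the wider class $\mathbb L$, which contains the modular lattices) yields $\mathbb K\subseteq\mathbb K_{\mathrm{mod}}$.

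The forward implication is then read off from Theorem \ref{thm6}. That theorem gives $\mathbb K$ the closure properties $(p_1)$--$(p_4)$, so by minimality $\mathbb P\subseteq\mathbb K$, while $(p_6)$ gives $\eta\in\mathbb K$; thus $\mathbb P\cup\{\eta\}\subseteq\mathbb K\subseteq\mathbb K_{\mathrm{mod}}$. If $\alpha$ is equimorphic to some $\alpha'\in\mathbb P\cup\{\eta\}$, then $\alpha'\in\mathbb K_{\mathrm{mod}}$ and, by equimorphism-closure, $\alpha\in\mathbb K_{\mathrm{mod}}$.

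For the converse I argue by contraposition: given $\alpha$ countable, indivisible, and not equimorphic to any member of $\mathbb P\cup\{\eta\}$, I exhibit a distributive (hence modular) algebraic lattice $L$ with a chain of type $I(\alpha)$ but with $K(L)$ avoiding both $1+\alpha$ and $[\omega]^{<\omega}$, so that $\alpha\notin\mathbb K_{\mathrm{mod}}$. Applying indivisibility to an arbitrary cut $\alpha=\beta+\gamma$ forces $\alpha\le\beta$ or $\alpha\le\gamma$, i.e. $\alpha$ is additively indecomposable; and since every countable chain embeds in $\eta$ while every non-scattered one contains $\eta$, a non-scattered indivisible $\alpha$ would be equimorphic to $\eta$, which is excluded. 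Hence $\alpha$ is a countable scattered indecomposable type. The combinatorial core is now to determine which such $\alpha$ lie outside $\mathbb P$: rule $(p_4)$ builds $\omega^*$-sums freely, so $\mathbb P$ absorbs the scattered indivisible non-ordinals up to equimorphism, whereas rule $(p_3)$ only admits blocks $\beta+1$ possessing a last element, and a direct check shows that $\omega+1$---and therefore no infinite successor ordinal---can be produced, so $(p_3)$ cannot bootstrap $\omega^2,\omega^3,\dots$. Consequently the only infinite ordinal in $\mathbb P$ is $\omega$, and the survivors to eliminate are the indecomposable ordinals $\omega^{\beta}$ with $\beta\ge 2$, together with any residual non-ordinal type the classification leaves out.

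For an ordinal survivor $\alpha$ the witness is built from an ordinary sierpinskisation $S$ of $\alpha$ and $\omega$: put $T:=I_{<\omega}(S)$ and $L:=J(T)\cong I(S)$, a distributive algebraic lattice with $K(L)=T$. Lemma \ref{lem:finitegenesierp} provides a chain of type $I(\alpha)$ in $J(T)$; since $\alpha$ is an ordinal, $S$ is well-quasi-ordered, so by Lemma \ref{lem:wqo} and Theorem \ref{thm:posetchap2} the lattice $T$ contains no copy of $[\omega]^{<\omega}$; and as every element of $T$ is a finite subset of $S$, the cardinality map embeds any chain of $T$ into $\omega$, whence $T$ has no chain of type $1+\alpha=\alpha$ (here $\alpha>\omega$). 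Thus $L$ witnesses $\alpha\notin\mathbb K_{\mathrm{mod}}$. For a non-ordinal survivor one cannot use $I_{<\omega}(S)$, because $\omega^*\le\alpha$ makes $J(T)$ non-well-founded, and a non-well-founded $K(L)$ avoiding $[\omega]^{<\omega}$ can no longer be a sub-join-semilattice of $[\omega]^{<\omega}$; instead $T$ must be a distributive lattice with infinite principal ideals, produced by a two-chain (lattice) sierpinskisation involving $\omega^*$, its avoidance of $[\omega]^{<\omega}$ being certified through Theorem \ref{thm8} (absence of $I_{<\omega}(\Gamma)$ and $I_{<\omega}(\Delta)$). The main obstacle of the proof is precisely this classification-plus-construction step: isolating exactly the indivisible types missed by $\mathbb P$ and, for each, engineering a distributive lattice that carries an $I(\alpha)$-chain while keeping its join-semilattice of compact elements free of both $1+\alpha$ and $[\omega]^{<\omega}$. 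The lattice-theoretic verifications themselves are routine given the results of Chapters \ref{chap:wellfounded} and \ref{chap:wellfoundedbis}.
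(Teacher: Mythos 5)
Your forward implication is correct and is exactly the paper's: $\mathbb{P}\cup\{\eta\}\subseteq\mathbb{K}$ by Theorem \ref{thm6} and minimality of $\mathbb{P}$, plus equimorphy-invariance. Your ordinal case of the converse is also correct, and the paper itself notes that for ordinals "a straightforward sierpinskization" suffices. The genuine gap is the non-ordinal half of the converse, which you first try to shrink by a classification claim and then leave as an unexecuted sketch. The classification claim is false: property $(p_4)$ does \emph{not} let $\mathbb{P}$ "absorb the scattered indivisible non-ordinals up to equimorphism", because its hypothesis requires the summands to lie in the class already. Concretely, let $\alpha:=\Sigma^{*}_{n<\omega}\,\omega^2$, the $\omega^*$-sum all of whose summands are the ordinal $\omega^2$. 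This $\alpha$ is countable, scattered, indivisible and not an ordinal, yet it is equimorphic to no member of $\mathbb{P}$. Indeed, by induction on $\mathbb{P}$-rank, no indecomposable member $\gamma$ of $\mathbb{P}$ embeddable in $\alpha$ can embed $\omega^2$: if $\gamma$ is an $\omega$-sum of blocks with largest elements, an increasing sequence in $\alpha$ can change summand only finitely often, so the image of some tail of $\gamma$ lies in a finite union of summands of $\alpha$, a well-ordered set; quasi-monotonicity gives that $\gamma$ embeds in its own tail, so $\gamma$ is an ordinal, hence (as you yourself observe) $\gamma\leq\omega$; if $\gamma$ is an $\omega^*$-sum, a well-ordered subset meets only finitely many of its blocks, so $\omega^2$ would embed into a single summand of smaller rank; finite sums reduce to the indecomposable case since $\omega^2$ is indecomposable. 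So non-ordinal "survivors" exist and are not a residual curiosity; they are precisely where the theorem is hard, and your proposal says nothing substantive about them.

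For such $\alpha$ you only assert that "$T$ must be a distributive lattice \dots produced by a two-chain (lattice) sierpinskisation involving $\omega^*$" and that the verifications are routine. They are not: this is the entire content of the paper's concluding Proposition and of Lemma \ref{lem III.8.1}. The paper first shows that the members of $\mathbb{P}$ embeddable in a countable scattered indecomposable $\alpha$ admit a largest one, $\mathbb{P}(\alpha)$, up to equimorphy, and computes $\mathbb{P}(\alpha)$ recursively through $\omega$- and $\omega^*$-sums; then, by induction along the wqo class of countable scattered types, it builds a poset $P_{\alpha}$ with no infinite antichain such that $T_{\alpha}:=I_{<\omega}(P_{\alpha})$ is a distributive lattice, $I(\alpha)\leq J(T_{\alpha})$, and \emph{every indivisible chain of $T_{\alpha}$ embeds in $\mathbb{P}(\alpha)$}. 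The delicate case, $\alpha=\Sigma_{\lambda<\mu}\alpha_{\lambda}$ strictly right-indecomposable, takes a lexicographic sum indexed by a sierpinskisation $Q$ of $\mu$ and $\omega$, needs each block $P_{\alpha_{\lambda}}$ to carry a top element so that $T_{\alpha}$ is a lattice, and controls the chains of $T_{\alpha}$ by splitting them along the finite sets $p(Z)$ and invoking Lemma \ref{propI.3.7}. Since an indivisible $\alpha$ not equimorphic to a member of $\mathbb{P}\cup\{\eta\}$ satisfies $\mathbb{P}(\alpha)<\alpha$, the lattice $J(T_{\alpha})$ is then a modular (distributive) algebraic witness that $\alpha\notin\mathbb{K}$. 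Your proposal contains neither the invariant $\mathbb{P}(\alpha)$ nor this construction, so the converse direction remains unproved.
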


This contains the fact that for a countable ordinal $\alpha$,
$\alpha\in\K$ if and only if $\alpha\leq\omega$, a fact which can
be obtained directly by a straightforward sierpinskization
\index{sierpinskization}.

A characterization, by means of obstructions, of posets with no
chain of ideals of a given type in given in \cite{pz}. The
motivation for our results can be found in \cite {bonn}, a paper
describing a characterization of countable order types $\alpha$
such that the lattice \index{lattice} $I(P)$ of initial segments
of a poset $P$ contains no chain of order type $I(\alpha)$ if and
only if $P$ contains no chain of order type $\alpha$  and no
infinite antichain \index{antichain} (see  \cite {bonn-pouz} for a
proof). Results presented here were obtained first for the special
case of distributive algebraic lattices. They were included in the
thesis of the first author presented before the University
Claude-Bernard in december 1992\cite{chak}, and announced in \cite
{cp}.

\section{Definitions and basic notions}

Our definitions and notations are standard and agree with \cite
{grat} and \cite {rose} except on minor points that we will
mention.

\subsection{Posets, well-founded and scattered posets, ordinals and scattered order types}

Let $P$ be a set. A {\it quasi-order} \index{quasi-order} on $P$
is a binary relation \index{binary relation} $\rho$ on $P$ which
is reflexive \index{reflexive} and transitive \index{transitive};
the set $P$ equipped with this quasi-order is a {\it qoset}. As
usual, $x\leq y$ stands for $(x,y)\in\rho$.  An antisymmetric
\index{antisymmetric} quasi-order is an {\it order} \index{order}
and $P$ equipped with this order is a {\it poset} \index{poset}.
The {\it dual} of a poset $P$, denoted $P^*$,  is the set $P$
equipped with the {\it dual order} \index{dual order} defined by
$x\leq y$ in $P^*$ if and only if $y\leq x$ in $P$. Let $P$ and
$Q$ be two posets. A map $f:P\rightarrow Q$ is {\it
order-preserving} \index{order-preserving} if $x\leq y$ in $P$
implies $f(x)\leq f(y)$ in $Q$; this is an {\it embedding} \index{
embedding} if $x\leq y$ in $P$ is equivalent to $f(x)\leq f(y)$ in
$Q$; if, in addition, $f$ is onto, then this is an {\it
order-isomorphism} \index{order-isomorphism}. We say that $P$ {\it
embeds} into $Q$ if there is an embedding from $P$ into $Q$, a
fact we denote $P\leq Q$; if $P\leq Q$ and $Q\leq P$ then $P$ and
$Q$ are {\it equimorphic} \index{equimorphic}, a fact we denote
$P\equiv Q$; if there is an order-isomorphism from $P$ onto $Q$ we
say that $P$ and $Q$ are {\it isomorphic} \index{isomorphic} or
have the same {\it order-type} \index{order-type}, a fact we
denote $P\cong Q$. We denote $\omega$ the order type of $\N$, the
set of natural integers, $\omega^*$ the order type of the set of
negative integers and $\eta$ the order type of $\Q$, the set of
rational numbers. A poset $P$ is {\it well-founded}
\index{well-founded } if every non-empty subset $X$ of $P$ has a
minimal element. With the Axiom of dependent choice, this amounts
to the fact that the order type $\omega ^{*}$ of the negative
integers does not embed into $P$. If furthermore, $P$ has no
infinite antichain then $P$  is {\it well-quasi-ordered}
\index{well-quasi-ordered}, w.q.o. in brief. A well-founded chain
is {\it well-ordered} \index{well-ordered}; its order type is an
{\it ordinal} \index{ordinal}. A poset $P$ is {\it scattered}
\index{scattered} if it does not embed the chain of rationals. We
denote order types of chains and ordinals by greek letters
$\alpha$, $\alpha'$, $\beta$, $\beta'$\dots. If $(P_{i})_{i\in I}$
is a family of posets indexed by a poset $I$, the {\it
lexicographic sum} \index{lexicographic sum} of this family is the
poset, denoted $\Sigma_{i\in I}P_{i}$, defined on the disjoint
union of the $P_{i}$, that is formally the set of $(i,x)$ such
that $i\in I$ and $x\in P_{i}$, equipped with the order
$(i,x)\leq(j,y)$ if either $i<j$ in $I$ or $i=j$ and $x\leq y$ in
$P_{i}$. When $I$ is the finite chain $n:=\{0,1,\ldots,n-1\}$ this
sum is denoted $P_{0}+P_{1}+\ldots+P_{n-1}$. When $I:=\omega$ this
sum is denoted $\Sigma_{i<\omega}P_{i}$ or
$P_{0}+P_{1}+\ldots+P_{n}+\ldots$. We denote $\Sigma^*_{i\in
I}P_{i}$ for $\Sigma_{i\in I^*}P_{i}$; when $I:=\omega$ we denote
$\Sigma^*_{i<\omega}P_{i}$ or $\ldots+P_{n}+\ldots+P_{1}+P_{0}$.
When $I$ (resp. $I^*$) is well ordered, or is an ordinal, we call
{\it ordinal sum} \index{ordinal sum} (resp. {\it antiordinal sum}
\index{antiordinal sum} ) instead of lexicographic sum. When all
the $P_{i}$ are equal to the same poset $P$, the lexicographic sum
is denoted $P.I$, and called the {\it lexicographic product}
\index{lexicographic product} of $P$ and $I$. These definitions
extend to order types, particularly to order types of chains (note
that $2\omega=\omega$ whereas $\omega2=\omega+\omega$). Two order
type $\alpha$ and $\beta$ are {\it equimorphic} if
$\alpha\leq\beta$ and $\beta\leq\alpha$, a fact we denote
$\alpha\equiv\beta$. An order type $\alpha$ is {\it
indecomposable} \index{indecomposable} if $\alpha=\beta+\gamma$
implies $\alpha\leq\beta$ or $\alpha\leq\gamma$ (for exemple
$\eta$ is indecomposable); it is {\it right-indecomposable}
\index{right-indecomposable} if $\alpha=\beta+\gamma$ with
$\gamma\not=0$ implies $\alpha\leq\gamma$; it is {\it strictly
right-indecomposable} \index{strictly right-indecomposable} if
$\alpha=\beta+\gamma$ with $\gamma\not=0$ implies
$\beta<\alpha\leq\gamma$. The {\it left-indecomposability}
\index{left-indecomposable} and the {\it strict
left-indecomposability} \index{strictly left-indecomposable} are
defined in the same way. The notions of indecomposability and
strict right or left-indecomposability are preserved under
equimorphy (but not the right or the left-indecomposability). A
sequence $(\alpha)_{n<\omega}$ of order types is {\it
quasi-monotonic} \index{quasi-monotonic} if $\{m: \alpha_{n} \leq
\alpha_{m} \}$ is infinite for each $n$, its sum
$\alpha=\Sigma_{n<\omega}\alpha_{n}$ is right-indecomposable or
equivalently $\alpha'=\Sigma^*_{n<\omega}\alpha_{n}$ is
left-indecomposable. If an order type $\alpha$ can be written
under the form $\alpha=\alpha_{0}+\alpha_{1}+\ldots+\alpha_{n-1}$
with each $\alpha_{i}$ indecomposable, then it is a {\it
decomposition} \index{decomposition} of $\alpha$ of {\it lenght}
\index{lenght} $n$. This decomposition is {\it canonical} \index{
canonical} if its lenght is minimal.  An order type $\alpha$ is
{\it indivisible} \index{indivisible} if for every partition of a
chain $A$ of order type $\alpha$ into $B\cup C$, then either
$A\leq B$ or $A\leq C$. An indivisible order type is
indecomposable, an indecomposable scattered order type distinct of
$0$ and $1$ is strictly right-indecomposable or strictly
left-indecomposable (indecomposable, resp. indivisible, order
types are said additively indecomposable, resp. indecomposable, by
J.G.Rosenstein \cite{rose}).  \\ The following theorem rassembles
the fundamental properties of scattered order types
 (cf. the exposition given by J.G.Rosenstein \cite{rose}).
 \begin{theorem} \label {tmI.4.1}
$(i)$ Every countable order type is scattered or equimorphic to
$\eta$. (G. Cantor, 1897 ).\\ $(ii)$ The class $\mathcal{D}$ of
scattered order types is the smallest class of order types
containing $0, 1$ and stable by ordinal and antiordinal sum. (F.
Hausdorff, 1908).\\ $(iii)$ The class $\mathcal{D}$  is
well-quasi-ordered under embeddability. (R. Laver, 1971).\\ $(iv)$
For every countable scattered order type $\alpha$, the set
$\mathcal{D}(\alpha)$ of order types $\beta$ which embed into
$\alpha$, considered up to equimorphy, is at most countable. (R.
Laver, 1971).\\
\end{theorem}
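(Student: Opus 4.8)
The plan is to treat the four assertions separately, since they are independent classical results; I would prove $(i)$ and $(ii)$ from first principles and reduce $(iii)$ and $(iv)$ to the better-quasi-order machinery, exactly as in the exposition of \cite{rose}.

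For $(i)$ I would start from the definition: a countable order type $\alpha$ is scattered precisely when no copy of $\eta$ embeds into it. So suppose $\alpha$ is not scattered, carried by a countable chain $C$, and fix a suborder of $C$ isomorphic to $\eta$; this gives $\eta\leq\alpha$. For the reverse inequality I would invoke Cantor's universality of $\Q$: enumerating $C=\{c_0,c_1,\dots\}$ and using the density and lack of endpoints of $\eta$, a routine back-and-forth induction produces an order-embedding of $C$ into $\eta$, so $\alpha\leq\eta$. Together these give $\alpha\equiv\eta$, which is exactly the dichotomy asserted.

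For $(ii)$ the easy inclusion is that the smallest class $\mathcal{C}$ containing $0,1$ and closed under ordinal and antiordinal sums is contained in $\mathcal{D}$: I would check by induction on the construction that an ordinal (or antiordinal) sum of scattered types is scattered, the point being that a copy of $\eta$ inside such a sum would either meet infinitely many summands — forcing $\eta$ to embed into the well-ordered (or reverse-well-ordered) index set, which is impossible — or be contained in a single summand, contradicting the induction hypothesis. The substantial inclusion $\mathcal{D}\subseteq\mathcal{C}$ I would obtain by the Hausdorff rank: iterate the finite-condensation operator $c_F$ (collapsing pairs with only finitely many points strictly between them) transfinitely; for a countable scattered chain this process reaches a one-point quotient at some countable ordinal, and induction on this rank lets one write each chain as a lexicographic sum, indexed by $\omega$, $\omega^*$ or a finite chain, of chains of strictly smaller rank, placing it in $\mathcal{C}$.

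The genuinely hard part is $(iii)$, Laver's theorem that $\mathcal{D}$ is well-quasi-ordered under embeddability, and I would not attempt a direct proof. The obstacle is that well-quasi-ordering is not preserved under the infinite lexicographic sums that build up scattered types, so one must strengthen to Nash-Williams' notion of better-quasi-ordering: the key step is to show, by induction on Hausdorff rank, that for any bqo label set $Q$ the class of $Q$-labelled countable scattered chains is again bqo, using Nash-Williams' theorem that bqo is preserved under the formation of transfinite sequences. I would simply cite Laver's result as presented in \cite{rose}. Finally $(iv)$ follows from the same structure theory: Laver's analysis shows that below a fixed countable scattered $\alpha$ there are, up to equimorphy, only countably many types, and I would again appeal to \cite{rose} rather than reproduce the argument.
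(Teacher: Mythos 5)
Your proposal is sound, but note that the paper does not prove this theorem at all: it records the four assertions as classical results of Cantor, Hausdorff and Laver and simply points the reader to the exposition in \cite{rose}, which is exactly what you do for parts $(iii)$ and $(iv)$. What you add beyond the paper are proof sketches for $(i)$ and $(ii)$, and these follow the standard, correct route: for $(i)$, observe only that a full back-and-forth is not needed --- a one-directional ``forth'' construction already embeds any countable chain into $\eta$ (back-and-forth is what yields an isomorphism between countable dense unbounded chains); for $(ii)$, the finite-condensation/Hausdorff-rank induction is the right mechanism, with $\Z$-indexed sums handled as an antiordinal sum followed by an ordinal sum. Two details deserve care. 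First, in the easy inclusion of $(ii)$ your dichotomy (``meets infinitely many summands'' versus ``contained in a single summand'') is not exhaustive; the clean split is: either some summand contains two points of the copy of $\eta$, hence by convexity an interval of it, hence a copy of $\eta$, contradicting the induction hypothesis, or every summand contains at most one point, and then $\eta$ embeds into the index chain, which is impossible since that chain is well-ordered or reverse well-ordered. Second, as stated, $(ii)$ and $(iii)$ are not restricted to countable order types: your condensation argument establishes the countable case (which is all the paper ever uses), whereas Hausdorff's theorem in full generality needs sums indexed by arbitrary ordinals and their reverses, and $(iii)$ in full generality is Laver's better-quasi-ordering theorem for ($\sigma$-)scattered orders. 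Neither point is a gap of substance --- they are precisely the details that the cited exposition supplies --- so your treatment is compatible with, and somewhat more self-contained than, the paper's.
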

An easy consequence of $(iii)$ of Theorem \ref{tmI.4.1} above is
this:
\begin{corollary} \label {corI.4.2} (R.Laver, 1971)
$(i)$ Every scattered order type is a finite sum of indecomposable
order types and has a partition into finitely many indivisible
order types. \\ $(ii)$ The class  of indecomposable countable
scattered order types is the smallest class of order types
containing $0, 1$ and stable by $\omega$ and and $\omega^*$  sums
of quasi-monotonic sequences.\\
\end{corollary}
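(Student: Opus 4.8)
The plan is to derive both parts of Corollary \ref{corI.4.2} from the well-quasi-ordering of $\mathcal{D}$ (part $(iii)$ of Theorem \ref{tmI.4.1}), exploiting that a wqo is in particular \emph{well-founded} (no infinite strictly descending sequence) and has no infinite antichain, so that every sequence of scattered types admits an infinite weakly increasing subsequence. Throughout, order types are taken up to equimorphy, so that $\le$ is a genuine partial order and phrases like ``strictly descending'' make sense.

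For part $(i)$ I would establish both clauses by the same \emph{decomposition-tree} argument, differing only in how a node is split. Given a scattered type $\alpha$, build a rooted binary tree with root $\alpha$. For the first clause a node $\beta$ is a leaf if it is indecomposable; otherwise decomposability provides a splitting $\beta=\beta_1+\beta_2$ with $\beta\not\le\beta_1$ and $\beta\not\le\beta_2$, and $\beta_1,\beta_2$ become its children. For the second clause a node is a leaf if its carrier chain has indivisible type; otherwise non-indivisibility provides a set partition of the carrier into $B_1\sqcup B_2$ with $\beta\not\le\mathrm{type}(B_1)$ and $\beta\not\le\mathrm{type}(B_2)$, and these become the children. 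In both cases every child type is strictly smaller than its parent, so an infinite branch would produce an infinite strictly descending sequence, contradicting well-foundedness; since the tree is finitely branching, König's lemma forces it to be finite. Reading the leaves left to right yields the finite sum of indecomposables, respectively the finite partition into indivisible pieces.

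For part $(ii)$, the inclusion $\mathcal{C}\subseteq\{\text{indecomposable countable scattered}\}$ is the routine direction: $0,1$ qualify; an $\omega$-sum (resp. $\omega^*$-sum) of countable scattered types is countable and scattered by Hausdorff's characterization (part $(ii)$ of Theorem \ref{tmI.4.1}); and quasi-monotonicity makes the $\omega$-sum right-indecomposable (resp. the $\omega^*$-sum left-indecomposable), hence indecomposable, as recorded just before the statement. The converse I would prove by induction over the well-founded order $\le$ on indecomposable scattered types. Let $\alpha\neq 0,1$ be indecomposable countable scattered; by duality assume it is strictly right-indecomposable, so it has no last element and every proper initial segment has type $<\alpha$. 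Being countable with no last element, $\alpha$ has a cofinal $\omega$-chain of proper initial segments, giving $\alpha=\sum_{n<\omega}\delta_n$ with each $\delta_n<\alpha$; refining each $\delta_n$ by part $(i)$ into indecomposables yields $\alpha=\sum_{k<\omega}\epsilon_k$ with every $\epsilon_k$ indecomposable and $<\alpha$, so each $\epsilon_k\in\mathcal{C}$ by the induction hypothesis.

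The hard part is the last step: upgrading this decomposition into a \emph{quasi-monotonic} $\omega$-sum of indecomposables, which is exactly what the closure rules defining $\mathcal{C}$ demand. Here the no-infinite-antichain half of wqo is essential: from $(\epsilon_k)$ one extracts an infinite weakly increasing cofinal subsequence, and one must show---using strict right-indecomposability to absorb the discarded terms and any finite initial segment (which may be dropped, since every nonempty final segment of $\alpha$ is again equimorphic to $\alpha$)---that $\alpha$ is equimorphic to a sum $\sum_j\gamma_j$ of indecomposables with $\{j':\gamma_j\le\gamma_{j'}\}$ infinite for every $j$. Arranging the regrouping so that each block is genuinely indecomposable (hence lies in $\mathcal{C}$ by induction) while preserving quasi-monotonicity is the delicate bookkeeping I expect to be the main obstacle; once it is done, closure of $\mathcal{C}$ under quasi-monotonic $\omega$-sums gives $\alpha\in\mathcal{C}$, completing the induction.
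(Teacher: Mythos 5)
Your part $(i)$ and the forward inclusion of part $(ii)$ are correct, and your general strategy (deriving everything from the well-quasi-ordering of scattered types, i.e. part $(iii)$ of Theorem \ref{tmI.4.1}) is exactly what is intended here: the paper itself gives no proof of Corollary \ref{corI.4.2}, stating it only as an easy consequence of Laver's theorem. The genuine gap is the one you yourself flag in the converse inclusion of $(ii)$, and it is not mere bookkeeping: your plan --- refine $\alpha=\Sigma_{k<\omega}\epsilon_k$ into indecomposables first, extract a weakly increasing subsequence by wqo, regroup, and drop or absorb the finitely many offending initial terms ``up to equimorphy'' --- cannot be completed as described. Regrouped blocks are finite sums of indecomposables and in general are not indecomposable, so they are not admissible terms for the closure rules defining the class, call it $\mathcal{C}$; and discarding a prefix only shows that $\alpha$ is \emph{equimorphic} to a member of $\mathcal{C}$, which is strictly weaker than what $(ii)$ asserts. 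You cannot repair this by invoking closure of $\mathcal{C}$ under equimorphy: that closure is a \emph{consequence} of $(ii)$, not an available tool, and the paper explicitly warns that the analogous class $\mathbb{P}$ is not preserved under equimorphy.

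The correct move is to secure quasi-monotonicity \emph{before} refining into indecomposables, not after. Since $\alpha$ is strictly right-indecomposable, $\alpha$ embeds into every non-empty final segment of itself, and under such an embedding the image of any proper initial segment is bounded above (by the image of any later point). So you may choose the cofinal sequence $x_0<x_1<\cdots$ recursively so that each interval $(x_n,x_{n+1}]$ contains a copy of the whole initial segment $(\leftarrow x_n]$; writing $\alpha=\Sigma_{n<\omega}\delta_n$ with these intervals gives $\delta_0+\cdots+\delta_n\leq\delta_m$ for every $m>n$. Now refine each $\delta_n$ into finitely many indecomposables by part $(i)$: the index set of the refined decomposition is still $\omega$, every piece has type $<\alpha$ (it lies inside a proper initial segment, and strict right-indecomposability makes those strictly smaller), and the refined sequence is quasi-monotonic because an indecomposable type embedding into a \emph{finite} sum must embed into one of its summands. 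The well-founded induction you set up then closes: each piece belongs to $\mathcal{C}$ by the induction hypothesis, the closure rule gives $\alpha\in\mathcal{C}$ literally, and the strictly left-indecomposable case is dual, using $\omega^*$-sums.
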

We give, without proof, two,  more technical, results  we need.
\begin{lemma} \label {lemI.4.4}
Let $\alpha$ be a scattered order type distinct from  $0$ and
$1$.\\ $(i)$ The set  $Idiv(\alpha)$ of indivisible order types
$\beta$ which embeds into $\alpha$ is a finitely generated initial
segment of the collection of indivisible order types;\\ $(ii)$ let
$\nu$ be a maximal member of $Idiv(\alpha)$ then  $\nu+1$ (resp.
$1+\nu$) does not embed into $\alpha$ and $\nu$ is strictly right
(resp.left)-indecomposable if $\alpha$ is strictly right
(resp.left)-indecomposable.\\
\end{lemma}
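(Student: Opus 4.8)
The plan is to treat the two parts separately, part (i) being essentially formal and part (ii) carrying the real content. For part (i), that $Idiv(\alpha)$ is an initial segment of the class of indivisible types (ordered by embeddability, up to equimorphy) is immediate from transitivity: if $\beta\in Idiv(\alpha)$ and $\gamma\leq\beta$ with $\gamma$ indivisible, then $\gamma\leq\beta\leq\alpha$. For finite generation I would invoke Corollary \ref{corI.4.2}(i) to partition the ground set of $\alpha$ into finitely many blocks $C_{1},\dots,C_{k}$ whose induced types $\gamma_{1},\dots,\gamma_{k}$ are indivisible; each $\gamma_{i}$ lies in $Idiv(\alpha)$. First I would record the elementary fact that indivisibility passes to finite partitions: if $\beta$ is indivisible and is split into $m$ pieces, then $\beta$ embeds into one of them (induction on $m$, peeling off one block and applying the two-block definition). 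Then, given $\beta\in Idiv(\alpha)$ with an embedding $f\colon\beta\to\alpha$, the preimages $f^{-1}(C_{i})$ partition $\beta$ into $k$ pieces, so $\beta$ embeds into some $f^{-1}(C_{i})$, whence $\beta\leq\gamma_{i}$. Thus $Idiv(\alpha)=\mathord\downarrow\{\gamma_{1},\dots,\gamma_{k}\}$; its maximal members are the maximal ones among the $\gamma_{i}$ up to equimorphy, so they are finite in number and every member lies below one of them.

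For part (ii) I would prove the ``right'' case, the ``left'' case being the exact dual. Note first that $\alpha$ has no greatest element, for otherwise splitting off that element and using right-indecomposability would force $\alpha\leq 1$; hence every nonempty final segment $G$ of $\alpha$ arises by cutting strictly above a point, and right-indecomposability yields $\alpha\leq G$. The key step is to show $\nu+1\not\leq\alpha$. Assuming $\nu+1\leq\alpha$, I would build an increasing sequence of copies $N_{0}<N_{1}<\cdots$ of $\nu$ in $\alpha$: once $N_{i}$ sits below a point $p_{i}$, the final segment above $p_{i}$ contains a copy of $\alpha$, hence a copy of $\nu+1$, whose $\nu$-part becomes $N_{i+1}$, lying above $p_{i}$ and below a fresh point $p_{i+1}$. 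The union $\bigcup_{i}N_{i}$ is a copy of $\nu\cdot\omega$, so $\nu\omega\leq\alpha$. Moreover $\nu\omega$ is itself indivisible: in any two-colouring, each copy of $\nu$ contains a monochromatic copy of $\nu$, and infinitely many of these share one colour, producing a monochromatic $\nu\omega$. Finally $\nu\omega\not\equiv\nu$, so $\nu<\nu\omega\in Idiv(\alpha)$, contradicting the maximality of $\nu$.

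Granting $\nu+1\not\leq\alpha$, the strict right-indecomposability of $\nu$ follows quickly. Since $\nu$ is an infinite indecomposable scattered type, it is strictly right-indecomposable or strictly left-indecomposable by the dichotomy recalled just before the statement. If it were strictly left-indecomposable it would absorb a point on the right, giving $\nu+1\equiv\nu\leq\alpha$, which contradicts $\nu+1\not\leq\alpha$. Hence $\nu$ is strictly right-indecomposable, completing the right case; replacing $+1$ by $1+$ and ``final'' by ``initial'' throughout gives the left case.

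The hard part will be the strict inequality $\nu<\nu\omega$. Maximality of $\nu$ only forbids a \emph{strictly} larger indivisible, so without separating $\nu$ from $\nu\omega$ the contradiction does not close; I would obtain it from monotonicity of the Hausdorff (condensation) rank under embeddability together with the computation $r(\nu\omega)=r(\nu)+1$, which forces $\nu\omega\not\leq\nu$. I would also isolate as a preliminary the two standard absorption facts used above, namely that a right-indecomposable type satisfies $1+\nu\equiv\nu$ and a left-indecomposable one satisfies $\nu+1\equiv\nu$; these follow from the definitions of (strict) indecomposability but are cleanest to state once and for all. Everything else — the initial-segment property, the finite-partition form of indivisibility, the construction of the $\nu\omega$-chain, and the Ramsey argument for indivisibility of $\nu\omega$ — is routine.
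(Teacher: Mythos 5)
You should note at the outset that the paper states this lemma explicitly \emph{without} proof (``We give, without proof, two, more technical, results we need''), so your argument stands or falls on its own. Part (i) is correct as written: Corollary \ref{corI.4.2}(i) gives a partition of $\alpha$ into finitely many indivisible pieces $\gamma_{1},\dots,\gamma_{k}$, your induction establishing the finite-partition form of indivisibility shows that every $\beta\in Idiv(\alpha)$ embeds into some $\gamma_{i}$, and $Idiv(\alpha)=\downarrow\{\gamma_{1},\dots,\gamma_{k}\}$ follows. Your reading of part (ii) as conditional on $\alpha$ being strictly right (resp.\ left)-indecomposable is also the right one (read unconditionally the claim is false, e.g.\ $\alpha=\omega+1$, $\nu=\omega$), and the skeleton of your right case is sound: $\alpha$ has no greatest element, iterating strict right-indecomposability turns one copy of $\nu+1$ into $N_{0}<p_{0}<N_{1}<p_{1}<\cdots$, whence $\nu\omega\leq\alpha$; your colour-stabilisation argument for the indivisibility of $\nu\omega$ is fine; and the closing dichotomy-plus-absorption step is fine, modulo the unproved but easy remark that $\nu$ is infinite (some piece $\gamma_{i}$ of the indivisible partition of $\alpha$ is infinite, so no maximal member of $Idiv(\alpha)$ is $0$ or $1$).

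The genuine gap is exactly the step you flagged, $\nu\omega\not\leq\nu$, and it is worse than you think: the identity $r(\nu\omega)=r(\nu)+1$ cannot be cited as a ``computation'', because it is false for scattered types in general. For the finite-condensation rank take $\nu=1+\omega^{*}$: then $r(\nu)=2$, but in $\nu\omega=1+\omega^{*}+1+\omega^{*}+\cdots$ the first condensation glues the isolated bottom point of each block onto the $\omega^{*}$-tail of the preceding block, so $c_{F}(\nu\omega)\cong\omega$ and $r(\nu\omega)=2$ as well. (If by ``Hausdorff rank'' you instead mean the rank attached to Hausdorff's generation of scattered types by ordinal and antiordinal sums, the identity fails even for the indivisible type $\nu=\omega$, since $\omega$ and $\omega\omega=\omega^{2}$ are both ordinals, of rank $1$.) The identity does hold for the finite-condensation rank when $\nu$ is indivisible, but any proof of that must invoke the dichotomy that indivisible scattered types are strictly right- or strictly left-indecomposable --- at which point the rank machinery is superfluous, since the dichotomy yields the needed strictness directly: if $f:\nu+\nu\to\nu$ were an embedding, let $B$ be the initial segment of $\nu$ generated by the image of the first copy and $C:=\nu\setminus B$; the image of the second copy is disjoint from $B$ (each of its points lies above every point of the first copy's image, hence below none of them), so $C\neq 0$, and strict right-indecomposability forces $B<\nu$, contradicting $\nu\leq B$; dually, using the final segment generated by the image of the second copy, when $\nu$ is strictly left-indecomposable. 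Hence $\nu+\nu\not\leq\nu$, and since $\nu+\nu\leq\nu\omega$ (and $\nu+\nu\leq\nu\omega^{*}$ for your left case) you get $\nu<\nu\omega$, and your contradiction with maximality closes. With this substitution your proof is complete; as proposed, its crucial step rests on a claim that is false in the stated generality.
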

\begin{lemma} \label{lemI.4.5}
Let $\alpha$ be a countable scattered order type. If $\alpha$  is
strictly right-indecomposable then
$\alpha=\Sigma_{\lambda<\mu}\alpha_{\lambda}$ where $\mu$ is an
indecomposable ordinal, every $\alpha_{\lambda}$ is strictly
left-indecomposable and verifies $\alpha_{\lambda}<\alpha$.
\end{lemma}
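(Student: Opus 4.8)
The plan is to argue by induction on the Hausdorff rank of $\alpha$ (the rank in the hierarchy of Theorem \ref{tmI.4.1}$(ii)$), reducing the statement to the normal form supplied by Corollary \ref{corI.4.2}$(ii)$. First I would dispose of the trivial cases $\alpha\in\{0,1\}$ and record that a scattered $\alpha\neq 0,1$ cannot be simultaneously strictly right- and strictly left-indecomposable: any nontrivial cut $\alpha=\beta+\gamma$ (with $\beta,\gamma\neq 0$) would force both $\beta<\alpha\leq\gamma$ and $\gamma<\alpha\leq\beta$, which is absurd. Hence our strictly right-indecomposable $\alpha$ is right-indecomposable but not left-indecomposable, so by Corollary \ref{corI.4.2}$(ii)$ its top level of construction is an $\omega$-sum (the $\omega^*$-sum form would make it left-indecomposable): $\alpha=\Sigma_{n<\omega}\gamma_n$ where $(\gamma_n)_{n<\omega}$ is quasi-monotonic and each $\gamma_n$ is an indecomposable scattered type of strictly smaller rank, so in particular $\gamma_n\not\equiv\alpha$ and $\gamma_n<\alpha$.

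Next I would decompose each summand. Each $\gamma_n$ is indecomposable, hence, by the dichotomy above, either lies in $\{0,1\}$, or is strictly left-indecomposable, or is strictly right-indecomposable. In the first two cases $\gamma_n$ is itself a single strictly left-indecomposable block (discarded if it is $0$), and I set $\mu_n:=1$. In the last case the induction hypothesis applies and yields $\gamma_n=\Sigma_{\lambda<\mu_n}\gamma_{n,\lambda}$ with $\mu_n$ an indecomposable ordinal and each $\gamma_{n,\lambda}$ strictly left-indecomposable and $<\gamma_n\leq\alpha$. Concatenating, $\alpha=\Sigma_{n<\omega}\Sigma_{\lambda<\mu_n}\gamma_{n,\lambda}$ is an ordinal-indexed sum of strictly left-indecomposable pieces, all strictly below $\alpha$, whose index is the countable ordinal $\nu:=\Sigma_{n<\omega}\mu_n$. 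Thus the only thing left to secure is that the index can be taken \emph{indecomposable}.

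The main obstacle is precisely the indecomposability of the index. My plan is to show that $\nu$ itself is indecomposable by proving that the sequence $(\mu_n)_{n<\omega}$ of indecomposable ordinals is quasi-monotonic: once this is known, $\nu=\Sigma_{n<\omega}\mu_n$ is an $\omega$-sum of a quasi-monotonic sequence of indecomposable ordinals, and the ordinal case of Corollary \ref{corI.4.2}$(ii)$ (where $\omega^*$-sums are unavailable, since they destroy well-ordering) gives that $\nu$ is an indecomposable ordinal, as required. Quasi-monotonicity of $(\mu_n)$ would follow from that of $(\gamma_n)$ together with the monotonicity implication $\gamma_n\leq\gamma_m\Rightarrow\mu_n\leq\mu_m$, since then $\{m:\mu_n\leq\mu_m\}\supseteq\{m:\gamma_n\leq\gamma_m\}$ is infinite for each $n$.

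Establishing this monotonicity is the delicate point, and it is where I expect the real work to lie. The invariant $\gamma\mapsto\mu(\gamma)$ (the length of the canonical greedy decomposition of $\gamma$ into maximal strictly left-indecomposable initial blocks) is \emph{not} monotone under embedding for arbitrary scattered types: a finite chain $n$ embeds into the strictly left-indecomposable type $\omega^*$, yet $\mu(n)=n>1=\mu(\omega^*)$. Consequently the argument must exploit that the $\gamma_n$ are themselves indecomposable, where the left/right dichotomy and Lemma \ref{lemI.4.4} sharply constrain how one indecomposable type can sit inside another. The cleanest route I foresee is to prove that $\mu(\cdot)$ is invariant under equimorphy on the class of indecomposable scattered types, invoking Laver's theorem (Theorem \ref{tmI.4.1}$(iii)$) that this class is well-quasi-ordered in order to run the rank induction without circularity; combined with the normal form this would yield $\gamma_n\leq\gamma_m\Rightarrow\mu_n\leq\mu_m$ and close the proof.
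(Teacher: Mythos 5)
Your outer skeleton (rank induction, ``top level must be an $\omega$-sum'' by the left/right dichotomy, each $\gamma_n<\alpha$, concatenate) is sound, and you correctly isolate where the difficulty lies; but note that the paper states Lemmas \ref{lemI.4.4} and \ref{lemI.4.5} explicitly \emph{without proof}, so your argument has to stand on its own --- and it does not, because the bridge you propose is false. The implication $\gamma_n\leq\gamma_m\Rightarrow\mu_n\leq\mu_m$ fails even on indecomposable countable scattered types: $\omega\leq\omega\omega^*$, yet $\mu(\omega)=\omega$ while $\mu(\omega\omega^*)=1$. Indeed, any chain with a least element and at least two points is never strictly left-indecomposable (cut off the least element: $\alpha_0=1+\gamma$ would force $\alpha_0\leq 1$), so $\omega$ can only be decomposed into singletons; whereas $\omega\omega^*$ is itself strictly left-indecomposable, since each of its proper initial segments has type $\omega\omega^*+k$ (into which $\omega\omega^*$ embeds) and each of its proper final segments is an ordinal $\omega m+k$, into which $\omega\omega^*$ does not embed. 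Your fallback --- equimorphy-invariance of $\mu$ --- cannot repair this, because $\omega\not\equiv\omega\omega^*$: embedding is far weaker than equimorphy, so the last step of your plan is a non sequitur.

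This failure is not cosmetic: your construction actually outputs a wrong index on a legitimate input. Take
\[
\alpha:=\omega+(\omega\omega^*)\omega=\gamma_0+\gamma_1+\gamma_2+\cdots,\qquad \gamma_0:=\omega,\quad \gamma_n:=\omega\omega^*\ (n\geq1),
\]
which is a quasi-monotonic $\omega$-sum of indecomposables (quasi-monotonic precisely because $\omega\leq\omega\omega^*$), and $\alpha$ is strictly right-indecomposable: every non-empty final segment contains a copy of $(\omega\omega^*)\omega$, and $\alpha\leq(\omega\omega^*)\omega$ (absorb the leading $\omega$ into a sub-block of the first factor), while no proper initial segment $\omega+(\omega\omega^*)m+k$ embeds $\alpha$ (a descending-sequence/coinitiality argument shows $(\omega\omega^*)(m+1)\not\leq\omega+(\omega\omega^*)m+k$). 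Your recursion yields $(\mu_n)=(\omega,1,1,\dots)$, which is \emph{not} quasi-monotonic, and the concatenated index is $\nu=\omega+1+1+\cdots=\omega2$, which is not indecomposable. The lemma is nevertheless true for this $\alpha$, but only via a decomposition your procedure never considers: cut \emph{each} block $\omega\omega^*$ --- even though it is already strictly left-indecomposable --- at a point of its top $\omega$-sub-block, producing a piece of type $\omega\omega^*+k$ followed by $\omega$ singletons; together with the $\omega$ singletons covering the leading $\omega$, the index becomes $\omega+\omega\cdot\omega=\omega^2$, an indecomposable ordinal, with all pieces strictly left-indecomposable and $<\alpha$. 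So the missing idea is exactly this padding step: one must sometimes refine pieces that are already strictly left-indecomposable in order to inflate the index up to an indecomposable ordinal, and no greedy concatenation of recursively obtained decompositions, nor any monotonicity of $\mu(\cdot)$, can substitute for it. (A minor separate point: as literally stated the lemma fails for $\alpha=1$, which is strictly right-indecomposable but not a sum of types $<1$; like Lemma \ref{lemI.4.4}, it implicitly assumes $\alpha\neq 0,1$, so ``disposing of the trivial cases'' should be made explicit as an hypothesis rather than a proof step.)
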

\subsection{Initial segments and ideals of a poset.} \label{subsection2.2}
Let $P$ be a poset, a subset $I$ of $P$ is an {\it initial
segment} \index{initial segment} of $P$ if $x\in P$, $y\in I$ and
$x\leq y$ imply $x\in I$. If
 $A$ is a subset of $P$, then $\downarrow A=\{x\in P: x\leq y$ for some $y\in A\}$ denotes the least initial segment containing
$A$. If $I= \downarrow A$  we say that $I$ is {\it generated}
\index{initial segment generated by} by $A$ or $A$ is {\it
cofinal} in \index{cofinal} $I$. If $A=\{a\}$ then $I$ is a {\it
principal initial segment} \index{principal initial segment} and
we write $\downarrow a$ instead of $\downarrow \{a\}$. A {\it
final segment} \index{final segment} of $P$ is any initial segment
of $P^*$. We denote by $\uparrow A$ the final segment generated by
$A$. If $A=\{a\}$ we write $\uparrow a$ instead of $\uparrow
\{a\}$. A subset $I$ of $P$ is {\it up-directed}
\index{up-directed} if every pair of elements of $I$ has a common
upper-bound \index{upper-bound} in $I$. An {\it ideal}
\index{ideal} is a non empty up-directed initial segment of $P$
(in some other texts, the empty set is an ideal). We denote
$I(P)$, resp. $I_ {<\omega }(P)$, resp. $J(P)$, the set of initial
segments, resp. finitely generated initial segments, resp. ideals
of $P$ ordered by inclusion and we set
 $ J_ *(P):=J(P)\cup\{\emptyset\}$, $I_ 0 (P):=I_ {<\omega}
(P)\setminus\{\emptyset\}$. We note that $I_{ <\omega}(P)$ is the
set of compact elements of $I(P)$, hence $J(I_{ <\omega}(P))\cong
I(P)$. Moreover $I_ {<\omega}(P)$ is  a lattice and in fact a
distributive lattice, if and only if, $P$ is {\it
$\downarrow$-closed} \index{$\downarrow$-closed} that is   the
intersection of two principal initial segments of $P$ is a finite
union, possibly empty, of principal initial segments.

We will need the following facts.
\begin{lemma} \label{final}
Let $P$ be  a poset  and $x\in P$; then, once ordered by
inclusion, the set $J_{x}(P):= \{J\in J(P): x\in J\}$ is
order-isomorphic to $J(P\cap\uparrow x)$.
\end{lemma}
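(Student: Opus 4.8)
The plan is to construct an explicit order-isomorphism between the two posets rather than to count anything or to invoke a structural theorem. Writing $\uparrow x=\{y\in P: x\leq y\}$, so that $P\cap\uparrow x$ is the final segment of $P$ above $x$ regarded as a subposet, I would define
\[
\Phi: J_{x}(P)\longrightarrow J(P\cap\uparrow x),\qquad \Phi(J):=J\cap\uparrow x,
\]
and propose as its inverse the map $\Psi(K):=\downarrow K$, the initial segment of $P$ generated by $K$. Both maps are visibly inclusion-preserving, so once I show they are mutually inverse bijections the lemma follows: a bijection preserving inclusion in both directions is an order-isomorphism.

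First I would check that the two maps land in the right places. For $\Phi(J)$: it contains $x$ (since $x\in J$ by definition of $J_{x}(P)$), it is an initial segment of $P\cap\uparrow x$ because $J$ is an initial segment of $P$, and it is up-directed because any upper bound $w\in J$ of two elements $y,y'\geq x$ automatically satisfies $w\geq x$, hence lies in $\Phi(J)$. For $\Psi(K)$: it is an initial segment by construction, and up-directedness of $\downarrow K$ follows from up-directedness of $K$. The one point worth isolating is that every nonempty ideal $K$ of $P\cap\uparrow x$ must contain $x$: indeed $x$ is the least element of $P\cap\uparrow x$, so any $w\in K$ forces $x\in K$ as $K$ is an initial segment; consequently $x\in\downarrow K=\Psi(K)$ and $\Psi(K)\in J_{x}(P)$.

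The substantive verification, and the step I expect to be the only real obstacle, is the identity $\Psi(\Phi(J))=J$, i.e. $J\subseteq\downarrow(J\cap\uparrow x)$. Given $z\in J$, I would use that $J$ is up-directed together with $x\in J$ to find $w\in J$ with $z\leq w$ and $x\leq w$; then $w\in J\cap\uparrow x$ witnesses $z\in\downarrow(J\cap\uparrow x)$. This is precisely where the hypothesis that $J$ is an \emph{ideal}, and not merely an initial segment, is used. The reverse composite $\Phi(\Psi(K))=K$ is routine: the inclusion $K\subseteq(\downarrow K)\cap\uparrow x$ is immediate, and conversely any $y\geq x$ with $y\leq w$ for some $w\in K$ lies in $K$ because $K$ is an initial segment of $P\cap\uparrow x$. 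Assembling these equalities shows $\Phi$ is a bijection with inverse $\Psi$, both inclusion-preserving, which completes the proof.
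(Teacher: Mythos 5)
Your proof is correct and uses essentially the same argument as the paper: the same map $J\mapsto J\cap\uparrow x$, with the key step — that up-directedness of $J$ together with $x\in J$ yields $J=\downarrow(J\cap\uparrow x)$ — being exactly the paper's cofinality remark. You are merely more explicit than the paper in exhibiting the inverse $K\mapsto\downarrow K$ and checking surjectivity, which the paper leaves implicit.
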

\begin{proof}
Let $\psi: J_{x}(P)\rightarrow J(P\cap\uparrow x)$ be defined by
$\psi(J):=J \cap \uparrow x$. This map is clearly
order-preserving. The fact that it is an embedding holds on the
remark that if an ideal $J$ of $P$ contains  $x$  then $J\cap
\uparrow x$ is cofinal in $J$. Indeed, let $I,J\in J_{x}(P)$ such
that  $\psi(I)\subseteq\psi(J)$; then
$I=\downarrow\psi(I)\subseteq \downarrow \psi(J)=J$.
\end{proof}

Let $P$ and $Q$ be two posets, the {\it direct product}
\index{direct product} of $P$ and $Q$ denoted $P\times Q$ is the
set of $(p, q)$ for $p\in P$ and $q\in Q$, equipped with the
product order; that is $(p,q)\leq (p',q')$ if $p\leq p'$ and
$q\leq q'$. The {\it direct sum} \index{direct sum} of $P$ and $Q$
denoted $P\oplus Q$ is the disjoint union of $P$ and $Q$ with no
comparability between the elements of $P$ and the elements of $Q$
(formally $P\oplus Q$ is the set of couples $(x,0)$ with $x\in P$
and $(y,1)$ with $y\in Q$ equipped with the order $(p,q)\leq
(p',q')$ if $p\leq p'$ and $q=q'$).

\begin{lemma} \label{lemI.3.5}
Let $A_{1}$, $A_{2}$ be two posets, then $J(A_{1}+ A_{2})\cong
J(A_{1})+ J(A_{2})$, $J(A_{1}\times A_{2})\cong J(A_{1})\times
J(A_{2})$  and $I(A_{1}\oplus A_{2})\cong I(A_{1})\times
I(A_{2})$.\\
\end{lemma}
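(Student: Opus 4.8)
The plan is to treat the three isomorphisms separately, in each case exhibiting an explicit map and checking that it is an order-isomorphism; the verifications are elementary, and the only place where the up-directedness of ideals does real work is the product identity.

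For the ordinal sum, write $A:=A_1+A_2$, so that every element of $A_1$ lies below every element of $A_2$. First I would observe that an ideal $I$ of $A$ splits into two cases. If $I\cap A_2=\emptyset$ then $I\subseteq A_1$ and $I$ is an ideal of $A_1$. If $I\cap A_2\neq\emptyset$, pick $y\in I\cap A_2$; since every $x\in A_1$ satisfies $x\leq y$ and $I$ is an initial segment, we get $A_1\subseteq I$, whence $I=A_1\cup(I\cap A_2)$; moreover $I\cap A_2$ is a non-empty up-directed initial segment of $A_2$ (directedness is inherited because an upper bound in $I$ of two elements of $A_2$ again lies in $A_2$), hence an ideal of $A_2$. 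This yields a bijection sending the ideals of the first kind to the $J(A_1)$-part and each $A_1\cup J$ to $J\in J(A_2)$; inclusion is preserved inside each part, and every ideal contained in $A_1$ is contained in every $A_1\cup J$, which is exactly the ordinal-sum order on $J(A_1)+J(A_2)$. The degenerate cases $A_1=\emptyset$ or $A_2=\emptyset$ are immediate, since the empty poset has no ideals.

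For the direct product, set $A:=A_1\times A_2$ and let $p_1,p_2$ be the two projections. I would first check that if $I_1\in J(A_1)$ and $I_2\in J(A_2)$ then $I_1\times I_2$ is an ideal of $A$ (the initial-segment and directedness conditions pass componentwise), and conversely that for an ideal $I$ of $A$ the projections $p_1(I),p_2(I)$ are ideals of $A_1,A_2$. The crux is the reconstruction $I=p_1(I)\times p_2(I)$: the inclusion $\subseteq$ is clear, and for $\supseteq$, given $(x,y)$ with $x\in p_1(I)$ and $y\in p_2(I)$ there are witnesses $(x,y'),(x',y)\in I$, and directedness of $I$ furnishes a common upper bound $(x'',y'')\in I$ with $x''\geq x$ and $y''\geq y$; since $I$ is an initial segment, $(x,y)\in I$. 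Thus $I\mapsto(p_1(I),p_2(I))$ and $(I_1,I_2)\mapsto I_1\times I_2$ are mutually inverse, and both manifestly preserve and reflect inclusion, giving $J(A_1\times A_2)\cong J(A_1)\times J(A_2)$.

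For the direct sum I would switch to arbitrary initial segments, as in the statement: in $A_1\oplus A_2$ there are no comparabilities across the two summands, so any initial segment $S$ satisfies $S=(S\cap A_1)\cup(S\cap A_2)$ with $S\cap A_i$ an initial segment of $A_i$, and conversely $S_1\cup S_2$ is an initial segment for any $S_i\in I(A_i)$. The map $S\mapsto(S\cap A_1,S\cap A_2)$ is then a bijection preserving and reflecting inclusion, legitimate precisely because the empty set is admitted in $I(A_i)$. I expect the one conceptual point worth flagging is why this last identity is phrased with $I$ rather than $J$: a direct sum of two non-empty posets is never up-directed, so an ideal cannot meet both summands and $J(A_1\oplus A_2)$ would merely be the disjoint union $J(A_1)\sqcup J(A_2)$, not their product; the directedness that powered the product identity is exactly what fails here, which is why the natural statement for $\oplus$ concerns all initial segments. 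The main (mild) obstacle overall is thus the reconstruction step $I=p_1(I)\times p_2(I)$, the single place where up-directedness is indispensable.
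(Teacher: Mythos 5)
Your proof is correct and complete: the paper states Lemma \ref{lemI.3.5} without any proof, and your argument via the natural maps (splitting an ideal of $A_1+A_2$ according to whether it meets $A_2$, the mutually inverse correspondences $I\mapsto(p_1(I),p_2(I))$ and $(I_1,I_2)\mapsto I_1\times I_2$ for the product, and $S\mapsto(S\cap A_1,S\cap A_2)$ for the direct sum) is precisely the routine verification the paper leaves to the reader, with the up-directedness used exactly where it is needed. Your closing observation --- that $J(A_1\oplus A_2)$ is only the disjoint union $J(A_1)\oplus J(A_2)$, which is why the third isomorphism must be phrased with $I$ rather than $J$ --- correctly identifies the reason for the change of operator in the statement.
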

\begin{lemma} \label{propI.3.7}
If an indivisible chain  embeds into a product $A_1\times A_2$
then it embeds into $A_1$ or into $A_2$.
\end{lemma}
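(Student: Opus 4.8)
The plan is to exhibit an explicit two-block partition of the chain and then invoke indivisibility. Let $C$ denote the indivisible chain and let $f=(f_1,f_2)\colon C\to A_1\times A_2$ be an embedding, so that $f_1\colon C\to A_1$ and $f_2\colon C\to A_2$ are order-preserving and $f$ is one-to-one. For $x\in C$ I call $\{z\in C: f_1(z)=f_1(x)\}$ the \emph{$f_1$-fibre} of $x$ (a convex subset of $C$, since $f_1$ is order-preserving on a chain), and likewise for $f_2$. I split $C$ into $U:=\{x\in C: x \text{ is maximal in its }f_1\text{-fibre}\}$ and $V:=C\setminus U=\{x\in C:\exists\,y>x \text{ with } f_1(y)=f_1(x)\}$. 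The aim is to prove that $f_1$ embeds $U$ into $A_1$ and that $f_2$ embeds $V$ into $A_2$; granting this, indivisibility applied to the partition $C=U\cup V$ yields $C\le U\le A_1$ or $C\le V\le A_2$, which is exactly the assertion.

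First I would record that $f_1|_U$ and $f_2|_V$ are one-to-one. For $U$ this is immediate: two distinct elements of $U$ with the same $f_1$-value would lie in a common $f_1$-fibre, and the smaller of the two could not be maximal there. The substantive point is the analogous statement for $V$, which rests on the following \emph{corner} observation. Suppose $x\in V$, witnessed by some $y>x$ with $f_1(y)=f_1(x)$, and suppose for contradiction that some $z>x$ satisfies $f_2(z)=f_2(x)$. Since $C$ is a chain, $y$ and $z$ are comparable; comparing them and using that $f_1$ and $f_2$ are order-preserving forces either $f_2(y)=f_2(x)$ (when $y\le z$, whence $f(y)=f(x)$) or $f_1(z)=f_1(x)$ (when $z\le y$, whence $f(z)=f(x)$), both contradicting injectivity of $f$. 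Hence every $x\in V$ is maximal in its $f_2$-fibre, so no two elements of $V$ can share an $f_2$-value and $f_2|_V$ is one-to-one. As injective order-preserving maps out of a chain are embeddings, this gives $U\hookrightarrow A_1$ and $V\hookrightarrow A_2$.

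With these two embeddings in hand, the partition $C=U\cup V$ together with indivisibility finishes the proof as indicated above; note that $A_1$ and $A_2$ need not be chains, the argument using only that $C$ is a chain. The step I expect to be the real obstacle is the corner observation and, correspondingly, the right choice of partition: the naive approach of two-colouring pairs by which coordinate strictly increases and hoping for a monochromatic copy of $C$ fails, since indivisibility does not grant the full pairwise Ramsey property (for instance $\omega^2$ is indivisible, yet it carries ``comb'' colourings coming from a convex block decomposition that admit no monochromatic copy of $\omega^2$). What rescues the argument is that the two fibre structures are not independent: in a chain one cannot simultaneously have a strictly larger element in one's $f_1$-fibre and a strictly larger element in one's $f_2$-fibre, and it is precisely this rigidity that confines $V$ to at most one representative per $f_2$-fibre. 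Fibres without a maximum need no special treatment, since each of their elements automatically lands in $V$.
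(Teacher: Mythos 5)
Your proof is correct, and it takes a genuinely different route from the paper's. The paper argues through the ideal machinery of the chapter: writing $A'_i$ for the (chain) image of the $i$-th projection, it uses $J(A'_1\times A'_2)\cong J(A'_1)\times J(A'_2)$ and $I(A'_1)\times I(A'_2)\cong I(A'_1\oplus A'_2)$ (Lemma \ref{lemI.3.5}), then invokes the fact that a maximal chain of $I(A'_1\oplus A'_2)$ extending $I(Q)$ has the form $I(C)$ for a linear extension $C$ of $A'_1\oplus A'_2$ (a Bonnet--Pouzet result), so that $Q$ embeds into $C$ and hence splits as $B_1\cup B_2$ with $B_i\leq A'_i$; indivisibility then finishes. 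You instead work directly on the chain: partitioning it into the elements maximal in their $f_1$-fibre and the rest, and using your ``corner'' observation --- in a chain one cannot have both a strictly larger element sharing the $f_1$-value and a strictly larger element sharing the $f_2$-value, lest $f$ fail to be one-to-one --- to show the two blocks embed via $f_1$ and $f_2$ respectively, before invoking indivisibility on that partition. Both proofs reduce to indivisibility applied to a two-block partition, but your construction of the partition is elementary and self-contained: it avoids the lattice-of-initial-segments detour and, in particular, the nontrivial characterization of maximal chains in $I(P)$, and it makes visible that nothing beyond $C$ being a chain is used (the $A_i$ may be arbitrary posets). What the paper's version buys is economy within its own framework: it recycles Lemma \ref{lemI.3.5} and the linear-extension fact already in the authors' toolkit, at the cost of importing that machinery. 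Your closing remark is also well taken: the naive Ramsey-style two-colouring of pairs cannot work here, precisely because indivisibility is weaker than a pairwise partition property, so the fibre rigidity you isolate is the essential point.
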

\begin{proof}
Let $Q$ be a chain such that $Q\leq  A_{1}\times A_{2}$. Then
$Q\leq  A'_{1}\times A'_{2}$ where $A'_i$ is a chain for $i=1,2$.
We have  $J(Q)\leq J(A'_{1}\times A'_{2})$. From Lemma
\ref{lemI.3.5},  $J(A'_{1}\times A'_{2})\cong J(A'_{1})\times
J(A'_{2})$. Since $I(A'_{i})=1+J(A'_{i})$, we have $I(Q)\leq
I(A'_{1})\times I(A'_{2})$. From Lemma \ref{lemI.3.5},
$I(A'_{1})\times I(A'_{2})\cong I(A'_{1}\oplus A'_{2})$. A maximal
chain of $I(A'_{1}\oplus A'_{2})$ extending $I(Q)$ is of the form
$I(C)$ where $C$ is  a chain extending the order on $A'_{1}\oplus
A'_{2}$. Since $Q\leq C$, $Q= B_1\cup B_2$ with  $B_i\leq A'_i$
for $i=1,2$. Hence, if $Q$ is indivisible,$Q\leq A'_{i}$, hence
$Q\leq A_i$,  for some $i\in \{1,2\}$. \end{proof}\\

\subsection{Join-semilattices.}
A {\it join-semilattice} \index{join-semilattice} is a poset $P$
such that  arbitrary elements $x, y$ have a join that we denote $x\vee y$. We denote $\mathbb J$ the class of join-semilattices
having a least element. If $P\in \J$ then, since  $J(P)$ is an
algebraic lattice, every maximal chain \index{maximal chain} $C$
is an algebraic lattice too, hence is of the form $I(D)$ where $D$
is some chain. Given an order type $\alpha$, let $\J_{\alpha}$ be
the class of $P\in \J$ such that $J(P)$ contains a chain of type
$I(\alpha)$ and let $\J_{\neg\alpha}:=\J\setminus\J_{\alpha}$. Let
$\mathbb B$ be a subset of $\J$, let $\uparrow \mathbb B$ be the
class of $P\in \J$ such that $P$ contains as a join-semilattice a
member of $\mathbb B$ and set $Forb_{\J}(\mathbb
B):=\J\setminus\uparrow \mathbb B$. It is natural to ask whether
for every countable order type $\alpha$, there is some finite
$\mathbb B$ such that $\J_{\neg \alpha}=Forb_{\J}(\mathbb B)$.
Looking at the class $\J'$ of $P\in \J$ such that $J(P)\in \mathbb
L$ we are just able to provide many $\alpha$'s such that:
$$\J'_{\neg \alpha}=Forb_{\J'}(\{1+\alpha, [\omega]^{<\omega}\}$$

\begin{lemma}\label{lem01} Let $P,Q$ be two join-semilattices with
a zero. Then:\\ $(i)$ $Q$ embeds into $P$ as a join-semilattice
iff $Q$ embeds into $P$ as a join-semilattice, with the zero
preserved.\\ $(ii)$ If $Q$ embeds into $P$ as a join-semilattice,
then $J(Q)$ embeds into $J(P)$ by a map preserving arbitrary
joins.\\ Suppose $Q:=I_{<\omega}(R)$ for some poset $R$, then:\\
$(iii)$ $Q$ embeds into $P$ as a poset iff $Q$ embeds into $P$ as
a join-semilattice.\\ $(iv)$ $J(Q)$ embeds into $J(P)$ as a poset
iff $J(Q)$ embeds into $J(P)$ by a map preserving arbitrary
joins.\\ $(v)$ If $\downarrow x$ is finite for every $x\in R$ then
$Q$ embeds into $P$ as a poset iff $J(Q)$ embeds into $J(P)$ as a
poset.\\
\end{lemma}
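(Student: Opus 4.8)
The plan is to observe that this statement is, verbatim, Proposition \ref{1.3} proved in Chapter \ref{chap:wellfoundedbis}, so I would reuse the same five-step argument, anchored on the structural Lemma \ref{lem:11} and Lemma \ref{lem:arbitraryjoins}. Concretely, for $(i)$ I would begin with a join-preserving embedding $f\colon Q\to P$ and define $g$ by $g(x):=f(x)$ for $x\neq 0$ and $g(0):=0$. Since preserving all binary joins already gives preservation of every nonempty finite join, the only thing that can fail is the empty join, i.e. the bottom element; the adjustment at $0$ repairs exactly this, so $g$ is an embedding preserving finite joins, the zero included. For $(ii)$ I would pass from $f$ to the induced map $\overline f\colon J(Q)\to J(P)$ given by $\overline f(I):=\downarrow\{f(x):x\in I\}$; by Lemma \ref{lem:arbitraryjoins} this preserves arbitrary joins, because the join in $J(Q)$ is the union and $f$ preserves finite joins, and injectivity of $\overline f$ follows from injectivity of $f$.

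For the three items assuming $Q=I_{<\omega}(R)$ I would exploit that every element of $Q$ is a finite union of principal initial segments $\downarrow x$, $x\in R$, together with the identification $J(Q)\cong I(R)$. For $(iii)$, starting from a mere poset embedding $f\colon Q\to P$, I would set $g(\emptyset):=0$ and $g(I):=\bigvee\{f(\downarrow x):x\in I\}$ for $I\neq\emptyset$; since joins in $Q$ are unions, $g$ preserves finite joins, which gives the nontrivial direction (the converse being trivial). Item $(iv)$ is precisely the equivalence between the existence of an order-embedding $I(R)\hookrightarrow J(P)$ and the existence of one preserving arbitrary joins, namely $(\ref{item:0})\Leftrightarrow(\ref{item:1})$ of Lemma \ref{lem:11}, after identifying $J(Q)$ with $I(R)$.

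Finally, for $(v)$ one direction chains the previous items: a poset embedding of $Q$ yields, by $(iii)$, a finite-join-preserving one, which by $(ii)$ lifts to an arbitrary-join-preserving embedding of $J(Q)$ into $J(P)$, hence in particular a poset embedding. For the converse I would invoke Lemma \ref{lem:11} to extract a map $h\colon R\to P$ satisfying implication $(\ref{eq:notleq})$, producing the arbitrary-join-preserving embedding $f\colon I(R)\to J(P)$ with $f(I):=\bigvee\{h(x):x\in I\}$. Here the hypothesis enters decisively: since $\downarrow x$ is finite for every $x\in R$, each finitely generated $I\in I_{<\omega}(R)$ is finite, so $f(I)$ has a largest element, and reading off these maxima gives an embedding $Q\hookrightarrow P$ preserving finite joins. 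The main obstacle is exactly this last descent: without finiteness of the principal segments the ideal $f(I)$ need not be principal, and one cannot in general recover an embedding of the semilattices from an embedding of their ideal lattices; the finiteness assumption is what makes $(v)$ go through.
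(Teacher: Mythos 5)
Your proposal is correct and takes essentially the same route as the paper: the statement is restated verbatim as Proposition \ref{1.3} of Chapter \ref{chap:wellfoundedbis}, and your five steps reproduce that proof — the $g(0):=0$ adjustment for $(i)$, the induced map $\overline f(I):=\downarrow\{f(x):x\in I\}$ via Lemma \ref{lem:arbitraryjoins} for $(ii)$, the join $g(I):=\bigvee\{f(\downarrow x):x\in I\}$ for $(iii)$, the appeal to Lemma \ref{lem:11} for $(iv)$, and the finiteness-of-$\downarrow x$ descent for $(v)$. If anything your write-up is more complete than the proof printed with Lemma \ref{lem01} in Chapter \ref{chap:algebraic} itself, which proves $(iv)$ directly by hand and stops there, omitting $(v)$.
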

\begin{proof}
$(i)$ Let $f: Q\rightarrow P$ satisfying $f(x\vee y)=f(x)\vee
f(y)$ for all $x, y\in Q$. Set $g(x):=f(x)$ if $x\neq 0$ and
$g(0):=0$. Then g preserves arbitrary finite joins.\\ $(ii)$ Let
$f: Q\rightarrow P$ and $\overline{f}: J(Q)\rightarrow J(P)$
defined by $\overline{f}(I):=\downarrow\{f(x): x\in I\}$. If $f$
preserve finite joins, then $\overline{f}$ preserves arbitrary
joins.\\ $(iii)$ Let $f: Q\rightarrow P$. Taking account that
$Q:=I_{<\omega}(R)$, set $g(\emptyset):=0$ and
$g(I):=\bigvee\{f(\downarrow x): x\in I\}$ for each $I\in
I_{<\omega}(R)\setminus \{\emptyset\}$.\\ $(iv)$ Let $f$ be an
embedding from $J(Q)$ into $J(P)$. Taking account that $J(Q)\cong
I(R)$, set $g(I):=\bigvee\{f(\downarrow x): x\in I\}$ for each
$I\in I(R)\setminus\{\emptyset\}$ and $g(\emptyset):=0$. Note that
$g(\downarrow x)=f(\downarrow x)$ for all $x\in I$ and
$g(I)\subseteq f(I)$ for all $I\in I(R)$. Suppose $I\nsubseteq J$.
Let $x\in I\setminus J$. Since $x\in I$, we have $f(\downarrow
x)\nsubseteq f(J)$. Since $g(J)\subseteq f(J)$ we have
$f(\downarrow x)\nsubseteq g(J)$. Hence $g(I)\nsubseteq g(J)$.
\end{proof}\\

%
The fact that a join-semilattice $P$ contains a
join-subsemilattice isomorphic to $[\omega]^{<\omega}$  amounts to
the existence of an infinite independent set \index{independent
set}. Let us recall that a  subset $X$ of a join-semilattice $P$
is {\it independent} if $x\not \leq \bigvee F$ for every $x\in X$
and every non empty finite subset $F$ of $X\setminus \{x\}$.

\begin{theorem} \cite {chapou}  \cite {lmp3} \label  {tm4.1} Let $\kappa$ be a cardinal number; for a join-semilattice $P$
the following properties are equivalent:\\ $(i)$ $P$ contains an
independent set of size $\kappa$;\\ $(ii)$ $P$ contains  a
join-subsemilattice isomorphic to $[\kappa]^{<\omega}$;\\ $(iii)$
$P$ contains  a subposet isomorphic to $[\kappa]^{<\omega}$;\\
$(iv)$ $J(P)$ contains a subposet isomorphic to $\mathfrak P
(\kappa)$;\\ $(v)$ $\mathfrak P (\kappa)$ embeds into $J(P)$ via a
map preserving arbitrary joins.\end{theorem}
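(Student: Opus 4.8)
The plan is to establish the cycle $(i)\Rightarrow(ii)\Rightarrow(iii)\Rightarrow(i)$ first, which already shows that the three ``finitary'' conditions are equivalent, and then to bridge to the ideal-theoretic conditions through $(ii)\Rightarrow(v)\Rightarrow(iv)\Rightarrow(iii)$. Throughout I may assume that $P$ has a least element $0$ (the case of interest, compatible with the class $\J$); the general statement follows by adjoining one, which affects none of the properties $(i)$--$(v)$. For $(i)\Rightarrow(ii)$, given an independent family $X=\{x_\alpha:\alpha<\kappa\}$ I would define $\varphi\colon[\kappa]^{<\omega}\to P$ by $\varphi(\emptyset):=0$ and $\varphi(F):=\bigvee_{\alpha\in F}x_\alpha$ for $F\neq\emptyset$. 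That $\varphi$ preserves joins is immediate from $\varphi(F\cup G)=\varphi(F)\vee\varphi(G)$; that it is an order-embedding is exactly where independence is used: if $F\not\subseteq G$, choosing $\alpha\in F\setminus G$ gives $x_\alpha\leq\varphi(F)$ while $x_\alpha\not\leq\bigvee_{\beta\in G}x_\beta=\varphi(G)$, so $\varphi(F)\not\leq\varphi(G)$. Hence the image of $\varphi$ is a join-subsemilattice isomorphic to $[\kappa]^{<\omega}$, giving $(ii)$, and $(ii)\Rightarrow(iii)$ is trivial.

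For $(iii)\Rightarrow(i)$ I would start from an order-embedding $\psi\colon[\kappa]^{<\omega}\to P$ and set $x_\alpha:=\psi(\{\alpha\})$. To see that $\{x_\alpha:\alpha<\kappa\}$ is independent, suppose $x_\alpha\leq\bigvee_{\beta\in F}x_\beta$ for some non-empty finite $F\subseteq\kappa\setminus\{\alpha\}$. Since $\psi$ is order-preserving, $x_\beta=\psi(\{\beta\})\leq\psi(F)$ for every $\beta\in F$, so $\bigvee_{\beta\in F}x_\beta\leq\psi(F)$ (the join being the least upper bound and $\psi(F)$ an upper bound); thus $\psi(\{\alpha\})\leq\psi(F)$, and as $\psi$ is an embedding this forces $\{\alpha\}\subseteq F$, i.e. $\alpha\in F$, a contradiction. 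This step uses only that $P$ is a join-semilattice, so that the relevant joins exist inside $P$.

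It remains to tie in $(iv)$ and $(v)$. For $(ii)\Rightarrow(v)$ I would invoke the fact that a join-embedding $Q\hookrightarrow P$ induces an embedding $J(Q)\hookrightarrow J(P)$ preserving arbitrary joins (Lemma \ref{lem01}), applied to $Q:=[\kappa]^{<\omega}$; since $J([\kappa]^{<\omega})\cong\mathfrak P(\kappa)$ by a map preserving arbitrary joins, this yields a join-preserving embedding of $\mathfrak P(\kappa)$ into $J(P)$, which is $(v)$. The implication $(v)\Rightarrow(iv)$ is trivial, a join-preserving embedding being in particular an order-embedding.

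The hard part is $(iv)\Rightarrow(iii)$: from a mere order-embedding $e\colon\mathfrak P(\kappa)\to J(P)$, which need not preserve joins, one must recover a copy of $[\kappa]^{<\omega}$ inside $P$ itself. Choosing, for each finite $F$ and each $\alpha\notin F$, a witness in $e(F\cup\{\alpha\})\setminus e(F)$ is easy, but uniformizing these choices into a single independent family of full size $\kappa$ requires a genuine combinatorial argument, carried out in the proof of Theorem \ref{thm0}. I would therefore cite that theorem directly: since $J(P)$ contains a subposet isomorphic to $\mathfrak P(\kappa)$, Theorem \ref{thm0} (stated for arbitrary posets) provides a subset of $P$ isomorphic to $[\kappa]^{<\omega}$, which is exactly $(iii)$. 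This closes the cycle $(iv)\Rightarrow(iii)\Rightarrow(i)\Rightarrow(ii)\Rightarrow(v)\Rightarrow(iv)$, and all five conditions are equivalent.
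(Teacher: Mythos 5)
Your proof is correct, and on the easy equivalences it coincides with what the paper treats as immediate: your cycle $(i)\Rightarrow(ii)\Rightarrow(iii)\Rightarrow(i)$ and the bridge $(ii)\Rightarrow(v)\Rightarrow(iv)$ are exactly the arguments hidden behind the paper's remark (Chapter \ref{chap:TD}) that the order containment and the semilattice containment of $[\kappa]^{<\omega}$, and likewise of $\mathfrak P(\kappa)$, are equivalent ``via the existence of an independent set''. The genuine divergence is at $(iv)\Rightarrow(iii)$, which you delegate to Theorem \ref{thm0}. That citation is legitimate and non-circular, since Theorem \ref{thm0} is proved for arbitrary posets (as Theorem \ref{thm1} of Chapter \ref{chap:TD}) via Proposition \ref{lemI.3.3}, with no appeal to the present theorem. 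However, your assertion that turning an order-embedding $e:\mathfrak P(\kappa)\to J(P)$ into an independent family of size $\kappa$ ``requires a genuine combinatorial argument'' is mistaken when $P$ is a join-semilattice, and this is precisely why the paper can dispose of the semilattice case through the closure-operator Theorem \ref{tm1}, whose proof it declares almost immediate: the values of $e$ are ideals of $P$, so for each $\alpha<\kappa$ one may pick $x_\alpha\in e(\{\alpha\})\setminus e(\kappa\setminus\{\alpha\})$, non-empty because $e$ is an embedding. For any finite non-empty $F\subseteq\kappa\setminus\{\alpha\}$, every $x_\beta$ with $\beta\in F$ lies in the ideal $e(\kappa\setminus\{\alpha\})$, hence so does $\bigvee_{\beta\in F}x_\beta$ (ideals of a join-semilattice are closed under finite joins), and since this ideal is an initial segment omitting $x_\alpha$ we get $x_\alpha\not\leq\bigvee_{\beta\in F}x_\beta$. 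This yields $(iv)\Rightarrow(i)$ in a few lines; there is no uniformization problem because the single co-atom $e(\kappa\setminus\{\alpha\})$ witnesses independence against all finite $F$ simultaneously. The trade-off between the two routes: yours is economical when the poset-level Theorem \ref{thm0} is already available (and that theorem is needed anyway for posets that are not semilattices), whereas the paper's route keeps the semilattice statement elementary and self-contained, reserving the harder combinatorics of Proposition \ref{lemI.3.3} for the setting where joins cannot be formed.
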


\begin{proposition} \label{corI.1.2}
Let $P$ be a poset. The following properties are equivalent:\\
$(i)$ $P$ contains an antichain of size $\kappa$;\\ $(ii)$
$I_{<\omega}(P)$ contains a subset isomorphic to
$[\kappa]^{<\omega}$ ;\\ $(iii)$ $\mathfrak {P}(\kappa)$ embeds
into $I(P)$.\\
\end{proposition}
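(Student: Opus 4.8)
The plan is to transfer the whole question to the join-semilattice $R:=I_{<\omega}(P)$, which has least element $\emptyset$ and satisfies $J(R)\cong I(P)$ (as already noted, $I_{<\omega}(P)$ is exactly the poset of compact elements of $I(P)$). Under this identification, property $(ii)$ says that $R$ contains a subposet isomorphic to $[\kappa]^{<\omega}$, whereas $(iii)$ says that $J(R)$ contains a subposet isomorphic to $\mathfrak{P}(\kappa)$. Hence the equivalence $(ii)\Leftrightarrow(iii)$ is immediate from Theorem \ref{thm0} applied to the poset $R$ (equivalently, from the equivalence $(iii)\Leftrightarrow(iv)$ of Theorem \ref{tm4.1} applied to the join-semilattice $R$, since $J(R)\cong I(P)$). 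It therefore remains only to tie these two conditions to the existence of a large antichain in $P$ itself, i.e. to prove $(i)\Leftrightarrow(ii)$.

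The direction $(i)\Rightarrow(ii)$ is routine. Given an antichain $A\subseteq P$ with $|A|=\kappa$, define $\phi\colon [A]^{<\omega}\to I_{<\omega}(P)$ by $\phi(F):=\downarrow F$. Since $A$ is an antichain, $a\in\downarrow F$ with $a\in A$ forces $a\in F$, so $\downarrow F\cap A=F$; thus $\phi$ is injective and $\downarrow F\subseteq\downarrow G$ holds if and only if $F\subseteq G$. Hence $\phi$ is an order-embedding of $[A]^{<\omega}\cong[\kappa]^{<\omega}$ into $I_{<\omega}(P)$, which is $(ii)$.

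The substance is $(ii)\Rightarrow(i)$. By Theorem \ref{tm4.1} (implication $(iii)\Rightarrow(i)$ applied to $R=I_{<\omega}(P)$), property $(ii)$ yields an independent family $\{J_\alpha:\alpha<\kappa\}$ in $I_{<\omega}(P)$; write $J_\alpha=\downarrow M_\alpha$ with $M_\alpha:=\mathrm{Max}(J_\alpha)$ a finite antichain of $P$. Independence means that for each $\alpha$ and each finite $F\subseteq\kappa\setminus\{\alpha\}$ there is some $w\in M_\alpha$ with $w\notin\bigcup_{\beta\in F}J_\beta$. Put $W:=\bigcup_{\alpha<\kappa}M_\alpha$. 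The $J_\alpha$ are pairwise distinct, so $\alpha\mapsto M_\alpha$ injects $\kappa$ into the finite subsets of $W$, forcing $|W|=\kappa$ when $\kappa$ is infinite (the finite case being elementary). The goal is then to choose, on a subset $S\subseteq\kappa$ of size $\kappa$, representatives $a_\alpha\in M_\alpha$ so that $\{a_\alpha:\alpha\in S\}$ is an antichain of $P$: a family of principal ideals $\{\downarrow a_\alpha\}$ is independent precisely when the $a_\alpha$ are pairwise incomparable, so such a selection gives $(i)$ directly. I would obtain the selection by a Ramsey/free-set colouring of $[\kappa]^2$ (or $[W]^2$), in the spirit of the colouring used in the proof of Theorem \ref{finitesubsets}: colour a pair according to which generator of one set is excluded from the other, thin out to make the choice coherent, and use the finiteness of each $M_\alpha$ to pin down a single representative uniformly on a large part of the index set. \textbf{The main obstacle} is exactly this extraction of an antichain of $P$ from an independent family of \emph{finitely generated} rather than principal initial segments: the witnessing generator $w\in M_\alpha$ a priori depends on the finite set $F$ being avoided, so one must pass to a subfamily on which this dependence disappears. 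For $\kappa=\omega$ a direct Ramsey argument suffices, but keeping the retained index set of full size $\kappa$ for larger (and especially singular) $\kappa$ is the delicate combinatorial point.
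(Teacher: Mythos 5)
Your treatment of $(i)\Rightarrow(ii)$ and of $(ii)\Leftrightarrow(iii)$ is correct: the map $F\mapsto\ \downarrow F$ on finite subsets of an antichain is an order-embedding of $[\kappa]^{<\omega}$ into $I_{<\omega}(P)$, and applying Theorem \ref{thm0} (equivalently $(iii)\Leftrightarrow(iv)$ of Theorem \ref{tm4.1}) to $R:=I_{<\omega}(P)$, with $J(I_{<\omega}(P))\cong I(P)$, gives $(ii)\Leftrightarrow(iii)$. The genuine gap is in $(ii)\Rightarrow(i)$. You reduce it correctly, via Theorem \ref{tm4.1}, to the claim that an independent family $\{J_\alpha:\alpha<\kappa\}$ in $I_{<\omega}(P)$, with $J_\alpha=\ \downarrow M_\alpha$ and $M_\alpha$ finite, yields an antichain of size $\kappa$ in $P$; but the Ramsey/thinning scheme you then sketch is not merely delicate for large $\kappa$, it cannot work: the preliminary pigeonhole fixing $|M_\alpha|$ can already fail when $\mathrm{cf}(\kappa)=\omega$, and more fundamentally $\kappa\to(\kappa)^2_2$ fails for every uncountable $\kappa$ that is not weakly compact (Sierpi\'nski's colouring for $\omega_1$), so no argument that colours pairs and keeps a coherent subfamily of full size $\kappa$ can succeed in general. (The paper itself states Proposition \ref{corI.1.2} without proof, as a corollary of Theorem \ref{tm4.1}, so the missing step is precisely the one below.)

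The dependence of the witness $w\in M_\alpha$ on the avoided set $F$, which you identify as the obstacle, is removed by a compactness argument inside each \emph{finite} set $M_\alpha$, with no thinning and no loss of cardinality. Fix $\alpha$ (note $M_\alpha\neq\emptyset$, since $\emptyset$ lies below every element of $I_{<\omega}(P)$ and so cannot belong to an independent family with at least two members). Suppose no $w\in M_\alpha$ satisfies $w\notin J_\beta$ for \emph{all} $\beta\neq\alpha$; then for each $w\in M_\alpha$ pick $\beta_w\neq\alpha$ with $w\in J_{\beta_w}$, and set $F^*:=\{\beta_w: w\in M_\alpha\}$, a non-empty finite subset of $\kappa\setminus\{\alpha\}$. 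Then $M_\alpha\subseteq\bigcup_{\beta\in F^*}J_\beta$, and since this union is an initial segment it contains $\downarrow M_\alpha=J_\alpha$, i.e. $J_\alpha\leq\bigvee_{\beta\in F^*}J_\beta$, contradicting independence. Hence each $\alpha$ has a uniform witness $a_\alpha\in M_\alpha$ with $a_\alpha\notin J_\beta$ for every $\beta\neq\alpha$. If $a_\alpha\leq a_\beta$ for some $\alpha\neq\beta$, then $a_\alpha\in\ \downarrow a_\beta\subseteq J_\beta$, a contradiction; by symmetry $a_\beta\not\leq a_\alpha$ as well. Thus $\{a_\alpha:\alpha<\kappa\}$ is an antichain of $P$ of size $\kappa$, for every cardinal $\kappa$, finite or infinite, regular or singular, and no partition calculus is needed.
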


\section [The class $\mathbb{K}$]{The class $\mathbb{K}$ and a proof of Theorem \ref {thm6}}
\subsection{Property $(p_{1})$}
The fact $(p_{1})$ holds is obvious.
\subsection{ Properties $(p_{2})$ and $(p_{3})$}
\begin{lemma} \label{lemIII.4.2}
    Let $\alpha$  and $\beta$ be two order types such that
$\alpha+1\in\mathbb{K}$. If $P\in\mathbb L_{\alpha+1+\beta}$ and
$P$ contains no infinite independent set then $P$ contains an
element $x$ such that $1+\alpha+1$ embeds into $P':=\downarrow x$
and $P'':=\uparrow x\in\mathbb L_{\beta}$.
\end{lemma}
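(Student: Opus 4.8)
The plan is to split $P$ at a single element $x$ so that the lower interval $\downarrow x$ carries a chain of type $I(\alpha+1)$ and the upper interval $\uparrow x$ carries a chain of type $I(\beta)$, and then to feed the lower interval to the hypothesis $\alpha+1\in\mathbb{K}$. (Here, as usual, $\mathbb{L}_{\gamma}$ denotes the members of $\mathbb{L}$ possessing a chain of type $I(\gamma)$.) The starting point is the elementary decomposition of $I(\alpha+1+\beta)$: writing a chain of type $\alpha+1+\beta$ as $C=C_{\alpha}+\{c\}+C_{\beta}$, an initial segment of $C$ either misses $c$, in which case it is an initial segment of $C_{\alpha}$, or it contains $c$, in which case it contains all of $C_{\alpha}+\{c\}$ and is obtained by adjoining a (possibly empty) initial segment of $C_{\beta}$. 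Hence $I(\alpha+1+\beta)\cong I(\alpha)+I(\beta)$, the lower block $I(\alpha)$ being the initial segments contained in $C_{\alpha}$ and the upper block $I(\beta)$ those containing $C_{\alpha}+\{c\}$, with every member of the first block below every member of the second.

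By this decomposition a chain of type $I(\alpha+1+\beta)$ in $P$ may be written $\mathcal C=\mathcal C_{1}+\mathcal C_{2}$ with $\mathcal C_{1}\cong I(\alpha)$ and $\mathcal C_{2}\cong I(\beta)$, every element of $\mathcal C_{1}$ lying below every element of $\mathcal C_{2}$. Since $I(\beta)$ has a least element, so does $\mathcal C_{2}$; I would take $x$ to be it. Then $\mathcal C_{1}\cup\{x\}\subseteq\downarrow x$ is a chain of type $I(\alpha)+1=I(\alpha+1)$, while $\mathcal C_{2}\subseteq\uparrow x$ is a chain of type $I(\beta)$. Both $\downarrow x=[0,x]$ and $\uparrow x=[x,1]$ are intervals of $P$, hence algebraic lattices (the compact elements of $[x,1]$ being the $x\vee k$ with $k$ compact in $P$) and sublattices of $P$; being sublattices of a member of $\mathbb{L}$, they contain no copy of $L_{\omega+1}$ or $L_{\omega^{*}}$ and so belong to $\mathbb{L}$. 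In particular $\uparrow x\in\mathbb{L}_{\beta}$, which settles the second assertion. (If an internal description of $\uparrow x$ is wanted, Lemma \ref{final} identifies it with $J(K(P)\cap\uparrow x)$.)

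It remains to produce a chain of type $1+\alpha+1$ inside $\downarrow x$, and this is where both hypotheses enter. Since $\downarrow x$ is a sublattice of $P$ it has no infinite independent set, and therefore neither does its compact part $K(\downarrow x)$; by Theorem \ref{tm4.1}, $K(\downarrow x)$ then contains no subset isomorphic to $[\omega]^{<\omega}$. On the other hand $\downarrow x\in\mathbb{L}_{\alpha+1}$ by the previous paragraph. Applying the defining property of $\mathbb{K}$ to the order type $\alpha+1$ and the lattice $\downarrow x$, in contrapositive form — a member of $\mathbb{L}$ carrying a chain of type $I(\alpha+1)$ whose compact part omits $[\omega]^{<\omega}$ must have a chain of type $1+(\alpha+1)$ among its compact elements — I obtain a chain of type $1+(\alpha+1)=1+\alpha+1$ in $K(\downarrow x)\subseteq\downarrow x$. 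This is the required embedding.

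The main obstacle is precisely this last step. The naive chain $\{\emptyset\}\cup\{\,\downarrow a: a\in C_{\alpha}\,\}\cup\{x\}$ already has type $1+\alpha+1$ in $\downarrow x$, but its members need not be compact, so it is useless for the inductive passage to $K(P)$ that property $(p_{2})$ ultimately requires; the role of $\alpha+1\in\mathbb{K}$, together with the absence of an infinite independent set, is exactly to guarantee that such a chain can be realized by compact elements. That the independence hypothesis cannot be dropped is visible from the example $\downarrow x=\mathfrak P(\kappa)$, whose compact elements form $[\kappa]^{<\omega}$ and support only chains of type $\omega$: there a long chain of ideals exists with no compact chain of type $1+\alpha+1$. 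Apart from this, the only routine verifications needed are that intervals of an algebraic lattice are again algebraic and that membership in $\mathbb{L}$ and the absence of infinite independent sets pass to sublattices.
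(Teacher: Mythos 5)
Your strategy is the paper's own: split the chain of type $I(\alpha+1+\beta)$ into a lower block $\cong I(\alpha+1)$ and an upper block $\cong I(\beta)$, feed the lower block to the definition of $\mathbb{K}$ (via Theorem \ref{tm4.1} and the absence of infinite independent sets), and treat the upper block through Lemma \ref{final}. But there is a genuine gap in where you place the splitting element. In this lemma $P$ is the join-semilattice side of the picture ($P\in\mathbb{J}$ with $J(P)\in\mathbb{L}$ containing the chain of ideals), and the conclusion requires an element $x$ \emph{of $P$}: the chain $1+\alpha+1$ must consist of elements of $P$ below $x$, and $\uparrow x$ must be a final segment of $P$, precisely so that in the proofs of properties $(p_2)$ and $(p_3)$ one can concatenate chains of elements of $P=K(L)$ across $x$. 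Your $x$ is the least element of the upper block $\mathcal{C}_2$, that is, an \emph{ideal} of $P$ (a generally non-compact element of the ambient algebraic lattice), and nothing in your construction makes this ideal principal. With such an $x$ what you prove is strictly weaker: the compact elements of the interval $[x,1]$ are the elements $x\vee k$, which are not compact in $J(P)$ when $x$ is not, so a later application of $\beta\in\mathbb{K}$ to $[x,1]$ yields a chain of type $1+\beta$ of non-compact elements, and the concatenation never lands in $K(L)$ as $(p_2)$ requires. Your parenthetical remark is a symptom of the same confusion: Lemma \ref{final} identifies $J(P\cap\uparrow x)$ with the set of ideals of $P$ containing the \emph{element} $x$; for a non-compact lattice element it says nothing (in $\mathfrak{P}(\omega)$, with $x$ the set of even numbers, no compact element lies above $x$, yet $[x,1]\cong\mathfrak{P}(\omega)$).

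The repair is exactly the step your proof stops short of, and it is the whole point of the paper's argument. You have already produced, from $\alpha+1\in\mathbb{K}$, a chain of type $1+\alpha+1$ inside $K(\downarrow x)$, i.e., inside the ideal $P_1:=\min\mathcal{C}_2$ viewed as a subset of $P$. Now relocate the splitting point: let $x'$ be the top of that chain. Then $x'\in P$ and $1+\alpha+1$ embeds into $\downarrow x'\subseteq P$; moreover, since $x'\in P_1$ and every ideal in $\mathcal{C}_2$ contains $P_1$ as a subset, every ideal in $\mathcal{C}_2$ contains $x'$ as an element. Hence, by Lemma \ref{final}, $J(P\cap\uparrow x')\cong J_{x'}(P)$ contains the chain $\mathcal{C}_2$ of type $I(\beta)$; and $J_{x'}(P)$ is a sublattice of $J(P)$ (ideals containing $x'$ are closed under intersection and join), so it inherits membership in $\mathbb{L}$. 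This gives $P'':=\uparrow x'\in\mathbb{L}_{\beta}$ in the sense the lemma is used. This relocation is precisely what the paper does: its $x$ is $f(c)$, the image of the top of the chain $A$ under an embedding of $A$ into $P_1$, not the ideal $\varphi(A)=P_1$ itself. Without it, the statement you prove cannot drive the induction for which the lemma was stated.
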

\begin{proof}
Let $C$ be a chain of type $\alpha+1+\beta$, let $c\in C$ such
that the type of $A:=\downarrow c$ is $\alpha+1$ and the type of
$B:=\{y : c<y\}$ is $\beta$. Let $\varphi$ be an embedding from
$I(C)$ into $J(P)$ and let $P_{1}:=\varphi(A)$. Since $P_{1}\in
J(P)$ then  $J(P_{1})$ is a sublattice of $J(P)$, and since
$J(P_{1})$ embeds $I(\alpha +1)$,   $P_{1}\in \mathbb
L_{\alpha+1}$.   Since $P_{1}$  contains no infinite independent
set and $\alpha+1\in \mathbb{K}$, then $1+\alpha+1$ embeds into
$P_{1}$. Let $f$ be an embedding from $A$ into $P_{1}$, $x: =f(c)$
and $P':=\downarrow x$. Clearly $1+\alpha+1\leq P'$.  Let
$P'':=\uparrow x$.  Clearly $P''\in \mathbb L$. According to Lemma
\ref {final}, $J(P'')\cong J_{x}(P)$; since trivially
$I(\beta)\leq   J_{x}(P)$,
 we get $P''\in \mathbb L_{\beta}$, as required.
\end{proof}
\subsubsection{Proof of  Property $(p_{2})$}
    Let $\alpha$ and $\beta$ be two order types such that $\alpha+1, \beta\in\mathbb{K}$.
Let $P\in\mathbb L_{\alpha+1+\beta}$. If $P$ contains no infinite
independent set then, from Lemma \ref{lemIII.4.2}, $P$ contains an
element $x$ such that $1+\alpha+1$ embeds into $P':=\downarrow x$
and $P'':=\uparrow x\in\mathbb L_{\beta}$. Since
$\beta\in\mathbb{K}$ and $P''\in\mathbb L_{\beta}$ then $1+\beta$
embeds into $ P''$, hence $1+\alpha+1+\beta$ embeds into $P$.
\endproof
\subsubsection{ Proof of Property $(p_{3})$}
    Let $\gamma:=\Sigma_{n<\omega}(\alpha_{n}+1)$  and  let
$\beta_{n}:=\Sigma_{m\geq n}(\alpha_{m}+1)$ for $n<\omega$. Let
$P\in\mathbb L_{\gamma}$. Suppose that $P$ has no infinite
independent set. We construct an infinite
 increasing sequence
$x_{0}, x_{1}, \ldots, x_{n}, \ldots$ of elements of $P$ such
that, for each $k<\omega$, we have $1+\alpha_{k}+1\leq Q_{k}$ and
$I(\beta_{k})\leq J(P_{k})$ where $Q_{k}$ is the interval $[x_{k},
x_{k+1}]$ and $P_{k}:=\uparrow x_{k}$. Put $x_{0}:=0$,
$P_{0}:=\uparrow x_{0}$. Since $P_{0}=P$ and $\beta_{0}=\gamma $,
we have $I(\beta_{0})\leq J(P_{0})$. Suppose $x_{0}, \ldots,
x_{n}$ constructed. We have $I(\beta_{n})\leq J(P_{n})$. Since
$\beta_{n}=\alpha_{n}+1+\beta_{n+1}$ and
$\alpha_{n}+1\in\mathbb{K}$, Lemma \ref{lemIII.4.2} asserts that
the semilattice $P_{n}$ contains an element $x_{n+1}$ such that
$1+\alpha_{n}+1\leq Q_{n}$ and $I(\beta_{n+1})\leq J(P_{n+1})$ for
$Q_{n}:=[x_{n}, x_{n+1}]$, $P_{n+1}:=\uparrow x_{n+1}$. Since
$1+\alpha_{n}+1$ embeds into $[x_{n}, x_{n+1}]$ for all $n<\omega$
it follows that $1+\gamma= 1+\Sigma_{n<\omega}(\alpha_{n}+1)$
embeds into $P$.

\subsection{ Property $(p_{4})$}
The fact that $\omega^*\in \K$ is a consequence of  Theorem 1.3
\cite{chapou2}. We use similar ingredients for Property
$(p_{4})$. Given a join-semilattice $P$, $x\in P$ and $J\in J(P)$
we denote $\downarrow x \bigvee J$, as well as $\{x\}\bigvee J$,
the join - in $J(P)$- of $\downarrow x$ and $J$. We say that   a
non-empty chain $\mathcal I$ of ideals of $P$ is {\it separating}
\index{separating chain} if for every $I\in\mathcal
I\setminus\{\cup\mathcal I\}$, every $x\in \cup\mathcal I\setminus
I$, there is some $J\in\mathcal I$ such that
$I\not\subseteq\{x\}\bigvee J$. We recall the following fact
(Lemma 3.1 \cite{chapou2}). For reader 's convenience we give the
proof.
\begin{lemma}\label{independent}
A join-semilattice $P$ contains an infinite independent set if and
only if it contains an infinite separating chain of ideals.
\end{lemma}
\begin{proof}
Let $\mathcal I$ be a separating chain of ideals. Define
inductively an infinite sequence $x_{0}, I_{0}, \ldots,
x_{n},I_{n}, \ldots$ such that $I_{0}\in\mathcal
I\setminus\{\cup\mathcal I\}, x_{0}\in \cup\mathcal I \setminus
I_{0}$ and
 such that:\\
 $a_{n})$
$I_{n}\in\mathcal I$;\\ $b_{n})$ $I_{n}\subset I_{n-1}$;\\
 $c_{n})$
$x_{n}\in I_{n-1}\setminus(\{x_{0}\vee \ldots \vee
x_{n-1}\}\bigvee I_{n})$ for every  $n\geq 1$.\\ The construction
is immediate. Indeed, since $\mathcal I$ is infinite then
$\mathcal I\setminus\{\cup\mathcal I\}\not =\emptyset$. Choose
arbitrary $I_{0}\in\mathcal I\setminus\{\cup\mathcal I\}$ and
$x_{0}\in\cup\mathcal I\setminus I_{0}$. Let $n\geq 1$. Suppose
$x_{k}, I_{k}$ defined and satisfying $a_{k}), b_{k}), c_{k})$ for
all $k\leq n-1$. Set $I:=I_{n-1}$ and $x:=x_{0}\vee \ldots \vee
x_{n-1}$. Since $I\in\mathcal I$ and $x\in\cup\mathcal I\setminus
I$, there is some $J\in\mathcal I$ such that
$I\not\subseteq\{x\}\bigvee J$. Let $z\in I\setminus(\{x\}\bigvee
J)$. Set $x_{n}:=z$, $I_{n}:=J$. The set $X:=\{x_{n}: n<\omega\}$
is independent. Indeed if $x\in X$ then since $x=x_{n}$ for some
$n$, $n<\omega$, condition $c_{n})$ asserts that there is some
ideal containing $X\setminus \{x\}$ and excluding $x$.
\end{proof}\\

\begin{lemma}\label{lem III.5.1}
    Let $\alpha$  be a countable order type and $P\in\mathbb J_{\alpha}$ satisfying the following conditions:\\
$1)$ For every $x$ in $P$, the chain $I(\alpha)$ does not embed
into $J(\downarrow x)$;\\ $2)$ For every $a\in \alpha$, the order
type $\alpha$ embeds into $\downarrow a$;\\ $3)$ The lattice
$L_{J(\beta)}$ does not embed into $J(P)$ as a sublattice for
$\beta:=\alpha$ if $\alpha=\eta$ and $\beta\in \{\omega,
\omega^*\}$ otherwise. \\Then
 $P$ contains an infinite independent set.
\end{lemma}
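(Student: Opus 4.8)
The plan is to reduce the conclusion to the construction of an infinite separating chain of ideals, and then to argue by contradiction: I assume no such chain exists and convert this into a copy of one of the forbidden lattices $L_{J(\beta)}$, contradicting hypothesis $3)$. By Lemma \ref{independent}, $P$ contains an infinite independent set if and only if it contains an infinite separating chain of ideals, and an infinite independent set is exactly the conclusion I am after (equivalently, by Theorem \ref{tm4.1}, a copy of $[\omega]^{<\omega}$ as a join-subsemilattice). So suppose, toward a contradiction, that $P$ admits \emph{no} infinite separating chain of ideals. Since $P\in\J_{\alpha}$, the lattice $J(P)$ carries a chain $\mathcal{I}$ of type $I(\alpha)$, which I index by the initial segments of a chain $C$ of type $\alpha$ through an order-embedding $\varphi:I(C)\to J(P)$. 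The standing assumption forces every infinite subchain of $\mathcal{I}$ to be non-separating.

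Next I would analyse a witness of non-separation: an ideal $I=\varphi(A)\in\mathcal{I}$ (with $A$ a proper initial segment of $C$) and an element $x\in\bigcup\mathcal{I}\setminus I$ such that $I\subseteq\{x\}\bigvee J$ for every $J\in\mathcal{I}$ lying below $I$. Here the two remaining hypotheses do the work. Condition $1)$ prevents a degenerate collapse: if the relevant part of $\mathcal{I}$ were trapped inside the principal cone $J(\downarrow x)$, then $I(\alpha)$ would embed into $J(\downarrow x)$, which $1)$ forbids; consequently $x$ must be genuinely incomparable to cofinally many members of the subchain, and this is precisely the incomparability required of the transversal point of an $L_{J(\beta)}$. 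Condition $2)$ — that $\alpha$ embeds into each of its initial segments $\downarrow a$ — is the engine of the recursion: it guarantees that the portion of $\mathcal{I}$ sitting inside any interval $[u,v]$ of $J(P)$ to which I localise (passing to $J(P\cap\uparrow u)$ via Lemma \ref{final}) still contains a chain of type $I(\alpha)$, so the construction can be restarted indefinitely and will deliver a transverse chain of the full infinite type $J(\beta)$ rather than a finite fragment.

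The heart of the proof, and the step I expect to be the main obstacle, is to upgrade these non-separating data into a genuine \emph{sublattice} $L_{J(\beta)}=1+(1\oplus J(\beta))+1$ of $J(P)$. The difficulty is that one needs not merely a chain $K$ of type $J(\beta)$ together with a point $p$ incomparable to it, but the sublattice identities $p\vee k=\top$ and $p\wedge k=\bot$ for every $k\in K$. As in the proof of Lemma \ref{independent} and in the sierpinskisation constructions of the preceding chapters, I would force these identities by a Sierpinski-type recursion, building a doubly-indexed family of elements whose pairwise joins are pinned to two values $\bot\subset\top$; the non-separation inequalities $I\subseteq\{x\}\bigvee J$ supply exactly the domination relations that fix the joins, while condition $1)$ keeps the meets from rising above $\bot$. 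The order type of the transverse chain is governed by $\alpha$: when $\alpha$ is scattered (the case $\alpha\neq\eta$), the coinitial/cofinal structure of the non-separating subchain yields a chain of type $J(\omega)=\omega+1$ or $J(\omega^{*})=\omega^{*}$, hence a copy of $L_{\omega+1}$ or of $L_{\omega^{*}}$; when $\alpha=\eta$, the density of $\alpha$ produces a chain of type $J(\eta)$, hence a copy of $L_{J(\eta)}$.

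In every case this exhibits $L_{J(\beta)}$ as a sublattice of $J(P)$, contradicting hypothesis $3)$. Hence $P$ must carry an infinite separating chain of ideals, and Lemma \ref{independent} then produces the desired infinite independent set. The two points demanding the most care are the bookkeeping that keeps the localisation intervals nested with a live chain of type $I(\alpha)$ inside each of them — where condition $2)$ is invoked repeatedly — and the verification of the join and meet identities that make the configuration a sublattice rather than a mere subposet.
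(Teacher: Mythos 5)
Your opening moves coincide with the paper's: reduce via Lemma \ref{independent} to producing a separating chain, suppose the chain $\mathcal{C}$ of type $I(\alpha)$ is not separating, extract witnesses $(I_0,x_0)$, and observe that non-separation pins all the joins $(\downarrow x_0)\bigvee J$, for $J$ in the chain below $I_0$, to the single value $\top:=(\downarrow x_0)\bigvee I_0$. But at the step you yourself flag as the main obstacle --- pinning the meets --- there is a genuine gap. The sentence ``condition $1)$ keeps the meets from rising above $\bot$'' is not an argument: condition $1)$ only says that $I(\alpha)$ does not embed into $J(\downarrow x_0)$; by itself it neither identifies a value $\bot$ nor forces the meets $(\downarrow x_0)\wedge J$ to agree along any infinite subchain. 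The missing mechanism (which is the paper's key step) is to consider the order-preserving projection $\phi:J(P)\rightarrow J(\downarrow x_0)$, $\phi(J):=J\cap(\downarrow x_0)$, restrict it to a subchain $\mathcal{A}\cong J(\alpha)$ of $\{I\in\mathcal{C}: I\subseteq I_0\}$ --- this is where condition $2)$ enters, since $\alpha$ embedding below each of its points guarantees such a copy of $J(\alpha)$ survives below $I_0$ --- and then run a pigeonhole argument: the image of $\phi$ cannot embed $I(\alpha)$ by condition $1)$, and since $\alpha$ is countable and (again by condition $2)$) indecomposable, an order-preserving map of $\mathcal{A}$ onto a strictly smaller chain must be \emph{constant} on a subchain $\mathcal{D}\cong J(\beta)$, with $\beta=\eta$ when $\alpha=\eta$ and $\beta\in\{\omega,\omega^*\}$ in the scattered case. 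The constant value is $\bot$, and $\downarrow x_0$, $\mathcal{D}$, $\top$, $\bot$ then form the sublattice $L_{J(\beta)}$ contradicting $3)$. Nothing in your proposal supplies this constancy argument, and without it the join identities alone give only a subposet, as you concede.

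Relatedly, your architecture misreads where the work lies. You cast condition $2)$ as ``the engine of the recursion,'' to be invoked repeatedly while localising via Lemma \ref{final}, and you propose a ``Sierpinski-type recursion'' to build the transverse chain. No recursion occurs in the actual proof, and introducing one would not help: at every stage of such a recursion you would face exactly the same unsolved problem of forcing $p\wedge k=\bot$, because non-separation controls joins only. The whole proof is a one-shot argument around the single pair of witnesses $(I_0,x_0)$; condition $2)$ is used exactly twice, once to place a copy of $J(\alpha)$ below $I_0$ and once, through indecomposability, inside the pigeonhole that makes $\phi$ constant.
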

\begin{proof}
Let $\mathcal C\subseteq J(P)$  be a chain isomorphic to
$I(\alpha)$ and $\mathcal C ^*:= \mathcal C \setminus \{I_{*}\}$,
where $I_{*}$  is the least element of  $\mathcal C$. According to
Lemma \ref{independent} it suffices to prove that  $\mathcal C^*$
is separating. Suppose it is not.  Let $I_{0}\in \mathcal
C^*\setminus  \bigcup\mathcal C^*$ and $x_{0}\in\bigcup\mathcal
C^*\setminus I_{0}$ witnessing it. Set $ \mathcal C^*_{0}:=\{I\in
\mathcal C^*: I \subseteq I_{0}\}$. For every $I \in \mathcal
C^*_{0}$ we have $(\downarrow x_{0})\bigvee I=(\downarrow
x_{0})\bigvee I_{0}$.  Since $I_{0}$ is non-empty then,  from
condition $2)$,
 $ \mathcal C^*_{0}$ contains
a subset $\mathcal A$
 isomorphic to
$J(\alpha)$. Let $\mathcal B$ be the image  of  $\mathcal A \cup
\{I_{*}\}$ by the  map $\phi : J(P) \rightarrow J(\downarrow
x_{0})$ defined by $\phi(I) :=I\cap (\downarrow x_{0})$ for every
$I\in J(P)$.  The chain $\mathcal A \cup \{I_{*}\}$ is isomorphic
to $I(\alpha)$, whereas its image $\mathcal B$ does not embed
$I(\alpha)$  because of condition $1)$. Since $\alpha$ is
countable and, from condition $2)$, indecomposable,
it follows that $\phi$ is constant on a subset $\mathcal D$ of
$\mathcal A$ which is isomorphic to $J(\beta)$, with
$\beta:=\alpha$ if $\alpha=\eta$ and $\beta\in \{\omega, \omega^*
\}$ otherwise(if $\alpha=\eta$ then $\phi$ cannot be one-to-one on
a subset of $\mathcal A$ isomorphic to $\alpha$, whereas if
$\alpha$ is scattered then, being indecomposable, it is
equimorphic to
 $J(\alpha)$  and, as it is easy to see, a map from $\alpha$ on a strictly smaller order type is constant on an infinite subset, hence on a subset of type $\omega$ or
$\omega^*$). The sublattice of $J(P)$ generated by $\downarrow x $
and $\mathcal D$ is isomorphic to $L_{J(\beta)}$.
\end{proof}\\
This, added to a very simple trick, gives $\omega^*\in \mathbb K$
and, more precisely:
\begin{theorem} If $L_{\omega^*}$ does not embed, as a lattice, into the lattice $J(P)$ of ideals of a join-semilattice $P$, then $J(P)$
 is well-founded if and only if $P$ is well-founded and has no infinite independent set.
\end{theorem}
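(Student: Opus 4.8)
My plan is to prove the two implications separately; only the ``if'' part uses the hypothesis that $L_{\omega^{*}}$ is not a sublattice of $J(P)$. First I would dispose of the direction in which $J(P)$ is well-founded, which needs no assumption on $L_{\omega^{*}}$. The assignment $x\mapsto\,\downarrow x$ is an order-embedding of $P$ into $J(P)$, so $P$ inherits well-foundedness. Moreover, were $P$ to contain an infinite independent set, then by Theorem \ref{tm4.1} $J(P)$ would contain a copy of $\mathfrak P(\omega)$, and the latter is not well-founded (it carries a strictly descending $\omega^{*}$-chain, e.g. the complements of the finite initial segments of $\omega$); this would contradict the well-foundedness of $J(P)$. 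Hence $P$ is well-founded with no infinite independent set.

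For the converse I assume that $P$ is well-founded, has no infinite independent set, and that $L_{\omega^{*}}$ does not embed as a lattice in $J(P)$, and I argue by contradiction that $J(P)$ is well-founded. Suppose it is not. Then $J(P)$ carries a strictly descending $\omega^{*}$-chain of ideals, and prefixing the least ideal $\downarrow 0$ yields a chain of type $1+\omega^{*}=I(\omega^{*})$, so $P\in\J_{\omega^{*}}$. The aim is to feed this into the argument of Lemma \ref{lem III.5.1} with $\alpha=\omega^{*}$, whose conclusion would be that $P$ contains an infinite independent set --- the contradiction sought. Two of the three hypotheses there are immediate for $\alpha=\omega^{*}$: condition $2)$ holds because in a chain of type $\omega^{*}$ every initial segment $\downarrow a$ is again of type $\omega^{*}$; and since $J(\omega^{*})=\omega^{*}$ and every infinite subset of an $\omega^{*}$-chain has type $\omega^{*}$, the only value of $\beta$ that can occur in condition $3)$ is $\beta=\omega^{*}$, so the single obstruction the argument can manufacture is $L_{J(\omega^{*})}=L_{\omega^{*}}$, excluded by hypothesis. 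In particular no control over $L_{\omega+1}$ is required.

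The remaining hypothesis, condition $1)$ --- that $J(\downarrow x)$ be well-founded for every $x$ --- is where the simple trick enters. If it already holds for every $x\in P$, the lemma applies to $P$ directly. Otherwise, well-foundedness of $P$ lets me choose $x^{*}$ minimal with $J(\downarrow x^{*})$ not well-founded. The principal ideal $\downarrow x^{*}$ is a sub-join-semilattice of $P$ containing $0$ (joins of elements $\leq x^{*}$ stay $\leq x^{*}$); its ideals are precisely the ideals of $P$ contained in $\downarrow x^{*}$, and both lattice operations are computed identically in $J(\downarrow x^{*})$ and in $J(P)$, so $J(\downarrow x^{*})$ is a sublattice of $J(P)$. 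Thus $\downarrow x^{*}$ is again well-founded, free of infinite independent sets, and omits $L_{\omega^{*}}$; replacing $P$ by it I may assume $x^{*}=\max P$ while $J(\downarrow y)$ is well-founded for every $y<x^{*}$. A strictly descending $\omega^{*}$-chain of ideals witnessing that $J(P)$ is not well-founded may be taken to consist of proper ideals (discard its top term if it equals $P$), so that its union omits $x^{*}$. Consequently, in the separation analysis of Lemma \ref{lem III.5.1} the witnessing element $x_{0}$ lies in this union, hence satisfies $x_{0}<x^{*}$, whence $J(\downarrow x_{0})$ is well-founded: condition $1)$ holds at the one point where the proof invokes it.

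Running the argument of Lemma \ref{lem III.5.1} now yields the dichotomy: either the $\omega^{*}$-chain $\mathcal C^{*}$ is separating, in which case Lemma \ref{independent} produces an infinite independent set in $P$, or a sublattice isomorphic to $L_{\omega^{*}}$ sits inside $J(P)$. The second alternative is forbidden, so the first holds, contradicting the absence of infinite independent sets; therefore $J(P)$ is well-founded. I expect the genuine difficulty to be precisely the management of condition $1)$ --- making the minimal-element reduction compatible with the separation argument so that the witness $x_{0}$ always stays strictly below the new top. This is exactly what forces the manufactured obstruction to be $L_{\omega^{*}}$ rather than $L_{\omega+1}$, and so lets the single exclusion $L_{\omega^{*}}\not\hookrightarrow J(P)$ suffice, matching the sharper hypothesis of the statement.
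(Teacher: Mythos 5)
Your proof is correct and follows essentially the paper's own route: the hard direction is the same combination of the minimal-bad-element trick with the separating/non-separating dichotomy of Lemma \ref{lem III.5.1} and Lemma \ref{independent}, including your observation (needed there as well, since Lemma \ref{lem III.5.1} as stated excludes both $L_{\omega+1}$ and $L_{\omega^*}$) that for $\alpha=\omega^*$ every infinite subset of $J(\omega^*)\cong\omega^*$ has type $\omega^*$, so only $L_{\omega^*}$ can be manufactured. The only organizational difference is that the paper does not re-run the lemma's proof inside $\downarrow x^*$: it passes instead to a proper ideal $P'\in J(\downarrow x)$ with $J(P')$ not well-founded, and since such a $P'$ cannot contain $x$, condition $1)$ holds at every point of $P'$ and the lemma applies to $P'$ directly.
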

\begin{proof}
Suppose $P$ well-founded but $J(P)$ not well-founded. Then $P$
contains an ideal $P'$ such that $J(P')$ is not well-founded but
$J(\downarrow x)$ is well-founded for every $x\in P'$.  Indeed, if
$P$ itself is not suitable, select $x$ minimal in $P$ such that
$J(\downarrow x)$ is not well-founded. Every  $P'\in J(\downarrow
x)$ such that $J(P')$ is not well-founded will do. From Lemma
\ref{lem III.5.1} above, $P'$ contains an  infinite independent
set, hence $P$ contains an infinite independent set. \end{proof}\\
In the next lemma, we extend the scope of the trick used above, in
the same vein as in \cite{pz}, Lemma 3.4.7.
\begin{lemma}\label{lem III.5.2}
Let $\alpha:=\Sigma^*_{n<\omega} \alpha_{n}$ be an  $\omega^*$-sum
of non-zero order types which is left-indecomposable. If $P \in
\mathbb J_{\alpha}$ then either:\\ $(i)$  $P$ contains some ideal
$P'\in \mathbb J_{\alpha}$ such that for every $x\in P'$, the
chain $I(\alpha)$ does not embed into $J(\downarrow x)$;\\ or\\
$(ii)$ $P$ contains an $\omega^*$-sum  $P'':= \Sigma^*_{n<\omega}
P_{n}$ where $P_{n}\in \mathbb J_{\alpha_{n}}$ is a convex
\index{convex} subset of $P$  for every $n<\omega$.
\end{lemma}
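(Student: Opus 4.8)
The plan is to prove the statement as a dichotomy driven by the negation of $(i)$. Call $x\in P$ \emph{$\alpha$-good} if $I(\alpha)\leq J(\downarrow x)$. Condition $(i)$ asserts the existence of an ideal $P'\in\J_{\alpha}$ of $P$ containing no $\alpha$-good element; I shall assume $(i)$ fails and deduce $(ii)$. The failure of $(i)$ reads: \emph{every} ideal $W\in\J_{\alpha}$ of $P$ contains an $\alpha$-good element. Since $P$ is itself an ideal of $P$ lying in $\J_{\alpha}$, this hypothesis applies from the start. Before building anything I would record the structure of the target chain. Writing $D_{k}:=\Sigma^{*}_{m\geq k}\alpha_{m}$ for the initial segment of $\alpha$ made of the blocks of index $\geq k$, the chain $I(\alpha)$ carries a canonical decreasing family of cut points $D_{0}=\alpha\supsetneq D_{1}\supsetneq\cdots$, the interval $[D_{k+1},D_{k}]$ of $I(\alpha)$ being order isomorphic to $I(\alpha_{k})$ and the segment of $I(\alpha)$ below $D_{k}$ being isomorphic to $I(D_{k})$. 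As $\alpha=\Sigma^{*}_{m\geq k}\alpha_{m}+(\alpha_{k-1}+\cdots+\alpha_{0})$ has nonzero left part, left-indecomposability gives $\alpha\leq D_{k}$, hence $I(\alpha)\leq I(D_{k})$ for every $k$. Consequently, whenever a chain $\mathcal{C}\cong I(\alpha)$ of ideals sits inside some $J(\downarrow b)$, each of its cut-point ideals $\widehat{I}_{k}$ again belongs to $\J_{\alpha}$, while its block of index $k$ is a chain of type $I(\alpha_{k})$ whose members all contain $\widehat{I}_{k+1}$ and are all contained in $\downarrow b$.

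With this in hand I would build, by recursion on $n$, elements $a_{n}\leq b_{n}$ and ideals $W_{n}\in\J_{\alpha}$ such that $P_{n}:=[a_{n},b_{n}]$ lies in $\J_{\alpha_{n}}$ and $b_{n+1}<a_{n}$. Start with $W_{0}:=P$. Given $W_{n}\in\J_{\alpha}$ (for $n\geq1$ we shall have arranged that every element of $W_{n}$ is $<a_{n-1}$), apply the failure of $(i)$ to obtain an $\alpha$-good $b_{n}\in W_{n}$, together with a chain $\mathcal{C}_{n}\cong I(\alpha)$ inside $J(\downarrow b_{n})$. Transport the decomposition above through $\mathcal{C}_{n}\cong I(\alpha)$: let $\widehat I^{(n)}_{k}$ be its cut-point ideals and let $(K_{s})_{s\in I(\alpha_{n})}$ be its block of index $n$, so that each $K_{s}\subseteq\widehat I^{(n)}_{n}\subseteq\downarrow b_{n}$ and each $K_{s}\supseteq\widehat I^{(n)}_{n+1}$. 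Since $\widehat I^{(n)}_{n+1}\in\J_{\alpha}$, apply the failure of $(i)$ once more to choose an $\alpha$-good $a_{n}\in\widehat I^{(n)}_{n+1}$; then $a_{n}\in K_{s}$ for every $s$. Now $P_{n}=[a_{n},b_{n}]$ is an interval, hence a convex sub-join-semilattice of $P$ with least element $a_{n}$ (a join of two elements below $b_{n}$ stays below $b_{n}$). Because every $K_{s}$ contains $a_{n}$ and is contained in $\downarrow b_{n}$, Lemma \ref{final} shows that $K\mapsto K\cap\uparrow a_{n}$ carries $(K_{s})_{s}$ to a chain of ideals of $P_{n}$ of type $I(\alpha_{n})$; thus $P_{n}\in\J_{\alpha_{n}}$.

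It remains to organise the recursion so that the blocks descend strictly and form an $\omega^{*}$-sum. Having produced the $\alpha$-good point $a_{n}$, I would fix a chain $\mathcal{C}_{a_{n}}\cong I(\alpha)$ inside $J(\downarrow a_{n})$ and set $W_{n+1}$ to be its cut-point ideal at level $1$, i.e.\ the image of $D_{1}$. By the first paragraph, $W_{n+1}\in\J_{\alpha}$; moreover $W_{n+1}$ is a proper initial segment of $\downarrow a_{n}$ not containing $a_{n}$ (otherwise $\downarrow a_{n}\subseteq W_{n+1}$ would collapse the top cut), so every element of $W_{n+1}$ is $<a_{n}$. Hence at the next stage $b_{n+1}\in W_{n+1}$ satisfies $b_{n+1}<a_{n}$, whence all of $P_{n+1}\subseteq\downarrow b_{n+1}$ lies strictly below $a_{n}=\min P_{n}$. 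Telescoping, for $m>n$ every element of $P_{m}$ is below every element of $P_{n}$, so the convex pieces $P_{n}$ are pairwise disjoint and $P'':=\bigcup_{n<\omega}P_{n}=\Sigma^{*}_{n<\omega}P_{n}$ with $P_{n}\in\J_{\alpha_{n}}$, which is exactly $(ii)$. The delicate point, and the place where left-indecomposability is indispensable, is precisely this simultaneous bookkeeping: each block must realise $\alpha_{n}$ inside a bounded interval, which forces $b_{n}$ to be $\alpha$-good so that the whole copy $\mathcal{C}_{n}$ lies below it, while enough of $\J_{\alpha}$ must survive strictly below $a_{n}$ to feed the next step; the inequality $\alpha\leq D_{k}$ is what guarantees that the residual cut-point ideals never fall out of $\J_{\alpha}$.
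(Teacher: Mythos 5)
Your proof is correct and follows essentially the same route as the paper's: the same dichotomy on the set of $\alpha$-good elements (the paper's $E$), the same use of left-indecomposability to keep the cut-point ideals of a copy of $I(\alpha)$ inside $\J_{\alpha}$, and the same recursive descent through ideals below chosen good points, with Lemma \ref{final} converting the tail of the chain into a chain of ideals of a convex block. The only differences are bookkeeping: the paper isolates a reusable Claim (valid for any $\beta$ with $1+\beta\leq\alpha$) and uses blocks $J_{n}\cap\uparrow x_{n}$ with a single pivot per block, whereas you use bounded intervals $[a_{n},b_{n}]$ with two pivots per step.
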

\begin{proof}
Set $E:=\{x\in P: I (\alpha)\leq J(\downarrow x)\}$.
\\
{\bf Case 1}. There is some  $J\in \mathbb J_{\alpha}\cap J(P)$
such that $E\cap J=\emptyset$. Set $P':=J$. \\ {\bf Case 2}.
$E\cap J\not =\emptyset$ for every  $J\in \mathbb J_{\alpha}\cap
J(P)$.  \\ {\bf Claim. } For every $\beta$,  $1+\beta\leq \alpha$,
and every $J\in \mathbb J_{\alpha}\cap J(P)$ there is some $x\in
E\cap J$ such that $I(\beta)\leq J(J\cap\uparrow x)$.\\ {\bf Proof
of the Claim.}
 Let $\mathcal C\subseteq J(J)$  be a chain isomorphic to $I(\alpha)$ and
$I_{*}$  be the least element of  $\mathcal C$. There is some
$J'\in J(J)\setminus\{J_{*}\}$ such that $I(\beta)\leq \{J''\in
J(J): J'\subseteq J''\}$. Since $\alpha$ is left-indecomposable,
$I(\alpha)\leq J(J')$. Since we are in Case  2, $E\cap J'\not =
\emptyset $.  Let $x\in E\cap J'$. From Lemma \ref {final} the
poset   $J(J\cap\uparrow x)$ is isomorphic to
 $\{J''\in J(J): x\in J''\}$ which trivially embeds  $I(\beta)$.  This proves  our claim. \endproof

With the help of this claim, we construct   a sequence $$J_{0},
x_{0}, J_{1}, x_{1}, \ldots, J_{n}, x_{n},\ldots$$ such that
$J_{n}\in J(P)$, $x_{n}\in E\cap J_{n}$, $I(\alpha_{n})\leq
J(J_{n}\cap \uparrow x_{n})$
 and $J_{n+1}\subseteq\downarrow x_{n}$ for every $n<\omega$. Indeed,
set $J_{0}:=P$, choose $x_{0}\in E$ such that  $I(\alpha_{0})\leq
J(\uparrow x_{0})$. Suppose $J_{0}, x_{0}, \ldots, J_{n}, x_{n}$
constructed. Since $x_{n}\in E\cap  J_{n}$, we may choose
$J_{n+1}\in J(\downarrow x_{n})$ such that $I(\alpha)\leq
J(J_{n+1})$. According to the claim above,  we may choose
$x_{n+1}\in E\cap J_{n+1}$ such that $I(\alpha_{n+1})\leq
J(J_{n+1}\cap \uparrow x_{n+1})$.\\ To conclude,  set
$P_{n}:=J_{n}\cap \uparrow x_{n}$  and $P'':= \Sigma^*_{n<\omega}
P_{n}$.
\end{proof}
\subsubsection{Proof of  Property $(p_{4})$}
    Let $\alpha:=\Sigma^*_{n<\omega} \alpha_{n}$ satisfying the conditions of
$(p_{4})$ and let $P\in \mathbb L_{\alpha}$. We need to show that
either $P$
 contains a  subchain isomorphic to
$1+\alpha$ or an infinite independent set. Since for every integer
$n$, the set $\{m : \alpha_{n}\leq \alpha_{m}\}$ is infinite, the
order type $\alpha$ is left-indecomposable and we may apply Lemma
\ref {lem III.5.2}.\\ {\bf Case 1.}  $P$ contains some ideal
$P'\in \mathbb J_{\alpha}$ such that for every $x\in P'$, the
chain $I(\alpha)$ does not embed into $J(\downarrow x)$.\\ This
says that  $P'$ satisfies condition $1)$ of Lemma  \ref {lem
III.5.1}. Since $P\in \mathbb L_{\alpha}$, then neither
$L_{\omega+1}$ nor $L_{\omega^*} $  embeds into $J(P)$ as a
sublattice.  The same holds for $J(P')$, hence $P'$ satisfies
condition $3)$ of Lemma \ref {lem III.5.1}. Since $\alpha$ is
left-indecomposable,
 condition $2)$ of this lemma is satisfied too. It follows then that $P'$, thus $P$, contains an infinite independent set.\\
{\bf Case 2.} $P$ contains an $\omega^*$-sum  $P'':=
\Sigma^*_{n<\omega} P_{n}$ where $P_{n}\in \mathbb J_{\alpha_{n}}$
is a convex subset of $P$  for every $n<\omega$. \\
 Let $n<\omega$.  Clearly $P_{n}\in \mathbb L$, hence $P_{n}\in \mathbb L_{\alpha_{n}}$. Since $\alpha_{n}\in  \mathbb K$ then $P_{n}$ contains either
a subchain isomorphic to  $1+\alpha_{n}$ or an infinite
independent set. If some $P_{n}$ contains an infinite independent
set, such a set is independent in $P$ too. If not, then each
$P_{n}$ contains a subchain isomorphic to $1+\alpha_{n}$, hence
$P''$ contains a  chain  of type $\Sigma^*_{n<\omega}
(1+\alpha_{n})$. Since  $P$ has a least element,  it contains a
chain of type $1+\alpha$. \endproof

\subsection{Property $(p_{6})$}

\begin{lemma}\label{lem III.6.1}
    Let $P\in \mathbb J_ {\eta}$ then
$P$ contains either a copy of $\eta$ or a convex subset $P'\in
\mathbb J_\eta$ such that $\downarrow x\cap P'\not \in \mathbb J_
{\eta}$, for every $x\in P'$.
\end{lemma}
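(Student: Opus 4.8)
The plan is to prove the contrapositive of the dichotomy: assuming $P$ has \emph{no} convex join-subsemilattice $P'\in\mathbb J_\eta$ with $\downarrow x\cap P'\notin\mathbb J_\eta$ for all $x\in P'$, I will build a copy of $\eta$ inside $P$. Since $\eta$ is indecomposable, the equivalence recorded in the footnote to Theorem \ref{thm2} lets me read ``$P''\in\mathbb J_\eta$'' as ``$\eta\le J(P'')$'', and unwinding the standing assumption it says: for every convex $P'\subseteq P$ lying in $\mathbb J_\eta$ there is $x\in P'$ with $\downarrow x\cap P'\in\mathbb J_\eta$. The convex sets I feed into this are \emph{intervals} $[a,b]:=\{z\in P:a\le z\le b\}$ and \emph{ideals}: both are join-subsemilattices with a least element (for an interval, $a\le z,z'\le b$ gives $a\le z\vee z'\le b$; for an ideal, up-directedness together with being an initial segment gives closure under $\vee$, and the least element $0$ of $P$ belongs to it), so the hypothesis does apply to them.

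First I would isolate the \textbf{interpolation step}, the heart of the matter: under the standing assumption, for every interval $[a,b]$ with $a<b$ and $[a,b]\in\mathbb J_\eta$ there is $c$ with $a<c<b$ and with both $[a,c]\in\mathbb J_\eta$ and $[c,b]\in\mathbb J_\eta$. To prove it, fix a chain $\mathcal C\cong\eta$ of ideals of $[a,b]$ and, using that $\eta$ is dense without endpoints, pick $K\in\mathcal C$ splitting $\mathcal C$ into dense pieces $\mathcal C_{<K}\cong\eta$ and $\mathcal C_{>K}\cong\eta$. Each member of $\mathcal C_{<K}$ is an ideal of $K$, so $K\in\mathbb J_\eta$; moreover $K$ is \emph{convex in $P$} (if $u\le w\le v$ with $u,v\in K$ then $a\le w\le b$ and $w\le v$ force $w\in K$) with least element $a$. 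Applying the assumption to $K$ gives $x\in K$ with $\downarrow x\cap K\in\mathbb J_\eta$; convexity of $K$ and $a,x\in K$ yield $\downarrow x\cap K=[a,x]$, so $c:=x$ satisfies $[a,c]\in\mathbb J_\eta$, with $a<c$ (as $[a,c]$ is infinite) and $c<b$ (since $\mathcal C_{>K}\ne\emptyset$ makes $K$ a proper initial segment of $[a,b]$, so $b\notin K$). Finally $[c,b]\in\mathbb J_\eta$: the map $C\mapsto C\cap\uparrow c$ carries $\mathcal C_{>K}$ into $J([c,b])$ and is strictly increasing, because for $C\subsetneq C'$ and $w\in C'\setminus C$ the element $w\vee c$ lies in $[c,b]$ and in $C'\setminus C$ (using $c\in K\subseteq C'$ and that $C$ is an initial segment), whence $\eta\le J([c,b])$.

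With this in hand I would run a \textbf{binary-tree construction}. Index the splitting by $2^{<\omega}$, ordered by the in-order (dyadic) tree order $\sqsubset$, for which $(2^{<\omega},\sqsubset)\cong\eta$. To start, apply the assumption to the convex set $P$ itself, obtaining $x_0$ with $\downarrow x_0\in\mathbb J_\eta$, and set $[a_\emptyset,b_\emptyset]:=\downarrow x_0=[0,x_0]$ (infinite, so $a_\emptyset<b_\emptyset$). At a node $s$ with rich interval $[a_s,b_s]$, the interpolation step supplies $c_s$ with $a_s<c_s<b_s$ and rich children $[a_{s0},b_{s0}]:=[a_s,c_s]$ and $[a_{s1},b_{s1}]:=[c_s,b_s]$. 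Nesting of intervals forces $c_s<c_t$ in $P$ precisely when $s\sqsubset t$: descendants through $s0$ stay $\le c_s$ and those through $s1$ stay $\ge c_s$, and a short case analysis on the meet of $s,t$ upgrades this to strict comparability in every case. Hence $\{c_s:s\in 2^{<\omega}\}$ is a subchain of $P$ of type $\eta$, i.e.\ $\eta\le P$.

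The \textbf{main obstacle} is exactly the comparability problem: in a join-semilattice two elements chosen in different sub-ideals are generically incomparable, so a naive downward descent produces an antichain, not a chain. The interval/midpoint scheme is what tames this, at the price of two verifications that carry the real content: that the hypothesis genuinely transfers to the \emph{middle} ideal $K$ (its convexity in $P$), and, dually, that the upper density survives the passage to $[c,b]$ via $C\mapsto C\cap\uparrow c$. These, together with the bookkeeping establishing $(2^{<\omega},\sqsubset)\cong\eta$ and the comparability case analysis, are where the work lies; the remaining points are routine.
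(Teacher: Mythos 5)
Your proposal is correct and follows essentially the same route as the paper's own proof: both negate the convex-subset alternative, establish a splitting step (the paper's claim that some $x$ has both $\downarrow x$ and $\uparrow x$ rich, your interval interpolation producing $[a,c]$ and $[c,b]$), and then iterate dyadically — the paper indexes the construction by dyadic rationals exactly as your binary tree does — to embed $\eta$. The only cosmetic differences are that you work with intervals and $\eta$-chains of ideals (via the footnote equivalence for indecomposable types), where the paper works with the pieces $\downarrow x_d\cap P_d$, $\uparrow x_d\cap P_d$ and $I(\eta)$-chains together with Lemma \ref{final}, your map $C\mapsto C\cap\uparrow c$ being precisely that lemma's isomorphism.
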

\begin{proof}
Set $P\in \mathbb J^*_{\eta}$  if   $P\in \mathbb J_ {\eta}$ and
the second part of the above assertion does not hold. \\ {\bf
Claim} If  $P\in \mathbb J^*_{\eta}$  then $P$ contains an element
$x$ such that  $P':=\downarrow x$ and $P'':=\uparrow x$ belong to
$\mathbb J^*_ {\eta}$.\\
 {\bf Proof of the claim }
let $\mathcal C\subseteq J(P)$
 be a
chain isomorphic to $I(\eta)$. Let $I\in \mathcal C\setminus X$
where $X: =Min (\mathcal C)\cup Max(\mathcal C)$. The set $I$ is
convex and belongs to $\mathbb J_ {\eta}$. Since $P \in \mathbb
J^*_ {\eta}$
 there
is $x\in I$ such that $I(\eta)\leq J(\downarrow x)$. Put
$P':=\downarrow x$ and $P'': =\uparrow x$.  From our choice,
$P'\in\mathbb J_ {\eta}$.   Since $x\in I\not \in  Max(\mathcal
C)$, we have $I(\eta)\leq J_{x}(P)$, hence from Lemma \ref
{final}, $I(\eta)\leq J(P'')$, that is  $P''\in\mathbb J_ {\eta}$.
If $Q\in\mathbb J_ {\eta}$ is a convex subset of $P'$ or $P''$,
this is  a convex subset  of $P$ too, hence it contains some $y$
such that $\downarrow y\cap Q \in \mathbb J_ {\eta}$, proving that
$P', P'' \in \mathbb J^*_ {\eta}$, as required. \endproof

From this, it  follows easily that if $P \in \mathbb J^*_ {\eta}$,
the chain of the dyadic numbers \index{dyadic number} of the
interval $]0,1[$ embeds into $P$.  Indeed, asociate to each dyadic
$d$ a convex subset $P_{d} \in \mathbb J^*_{\eta}$ and an element
$x_{d}\in P_{d}$ as follows. Put  $P_{1/2}:=P$. Let
$d:=\frac{2m+1}{2n}$, denote $d':=\frac{4m+1}{2n+1}$,
$d'':=\frac{4m+3}{2n+1}$. Suppose $P_{d}$ defined. From the claim
above, get an element, denoted $x_{d}$. Put $P_{d'}:=\downarrow
x_{d}\cap P_{d}$, $P_{d''}:=\uparrow x_{d}\cap P_{d}$. The
correspondance $d\rightarrow x_{d}$ is an embedding. Thus
$\eta\leq P$.
\end{proof}
\subsubsection{Proof of  Property $(p_{6})$}
    Let $P\in \mathbb L_{\eta}$ . If
$\eta\not\leq P$ then, from  Lemma \ref{lem III.6.1}, $P$ contains
a convex subset $P'\in \mathbb J_{\eta}$, such that Condition $1)$
of Lemma \ref{lem III.5.1} is verified for $\alpha:=\eta$.
Condition $2)$ is trivially verified.  Since $P'$ is convex in
$P$, then $P'\in \mathbb L$, hence $L(\eta)$ does not embed in
$J(P')$ as a sublattice. Thus Condition $3)$ is verified too.
Applying Lemma  \ref{lem III.5.1}, we get that $P'$, hence $P$,
contains an infinite independent set. Thus $\eta\in\mathbb K$.

\subsection{Property $(p_{5})$}
\begin{lemma} \label {v1}
    Let $\delta$, $\beta$, $\mu$  be three order types and $\alpha:=\delta+\beta+\mu$. If
$\alpha\in \mathbb{K}$ and if, for every order type $\beta'$, the
order type $\beta$ embeds into $\beta'$ whenever $\alpha$ embeds
into $\alpha':=\delta+\beta'+\mu$, then $\beta\in  \mathbb{K}$.
\end{lemma}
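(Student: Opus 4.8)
The plan is to argue on the join-semilattice side. Writing $P:=K(L)$, it suffices to prove that if $P\in\J$, $J(P)\in\mathbb L$ and $J(P)$ contains a chain of type $I(\beta)$, then $P$ contains a chain of type $1+\beta$ or an infinite independent set (equivalently, a copy of $[\omega]^{<\omega}$, by Theorem \ref{tm4.1}). The strategy is to manufacture from $P$ a join-semilattice $P^{+}$ that ``sees'' $\alpha$ in place of $\beta$, apply the hypothesis $\alpha\in\mathbb K$ to $P^{+}$, and then pull the resulting obstruction back into $P$; the cancellation hypothesis is precisely what makes this last transfer possible.

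First I would fix chains $C_{\delta}\cong\delta$ and $C_{\mu}\cong\mu$ and set $P^{+}:=C_{\delta}+P+C_{\mu}$, the ordinal sum (prepending a single fresh least element if $\delta$ provides none, so that $P^{+}\in\J$). Since any two elements in distinct blocks of this sum are comparable and each block is a join-semilattice, $P^{+}$ is a join-semilattice, and Lemma \ref{lemI.3.5} applied twice gives $J(P^{+})\cong J(C_{\delta})+J(P)+J(C_{\mu})$. As $J(C_{\delta})$ and $J(C_{\mu})$ are chains and $J(P)$ contains a chain of type $I(\beta)$, the lattice $J(P^{+})$ contains a chain of type $I(\delta+\beta+\mu)=I(\alpha)$. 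I also claim $J(P^{+})\in\mathbb L$: in the ordinal sum the middle block $J(P)$ is a convex sublattice (the join and the meet of two of its elements, computed in $J(P^{+})$, stay in $J(P)$, since the outer blocks lie entirely above, resp. below, $J(P)$), while the outer blocks are chains. Any copy of $L_{\omega+1}$ or $L_{\omega^{*}}$ contains two incomparable elements, which must sit in a single block, hence in $J(P)$; but then its top and bottom, being a join and a meet of elements of $J(P)$, also lie in $J(P)$, forcing the whole copy into $J(P)$ and contradicting $J(P)\in\mathbb L$.

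Now $\alpha\in\mathbb K$ together with $J(P^{+})\in\mathbb L$ and the presence of a chain of type $I(\alpha)$ in $J(P^{+})$ gives, by the defining property of $\mathbb K$ in contrapositive form, that $K(J(P^{+}))\cong P^{+}$ contains a chain of type $1+\alpha$ or an infinite independent set. If $P^{+}$ carries an infinite independent set $X$, then $X$ is in particular an infinite antichain, so it lies entirely in the block $P$; since $P$ is closed under the joins of $P^{+}$, $X$ remains independent in $P$, and Theorem \ref{tm4.1} yields $[\omega]^{<\omega}$ inside $P$. Otherwise $P^{+}$ contains a chain $W\cong 1+\alpha$. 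Splitting $W$ along the three blocks, $W=W_{\delta}+W_{P}+W_{\mu}$ with the order types of $W_{\delta}$ and $W_{\mu}$ at most $\delta$ and $\mu$, I get $\alpha\le W\le \delta+\tau+\mu$, where $\tau$ denotes the order type of $W_{P}$. The cancellation hypothesis applied with $\beta':=\tau$ then yields $\beta\le\tau$, so the chain $W_{P}\subseteq P$ already contains a copy of $\beta$; prepending the least element $0_{P}$ of $P$ produces a chain of type $1+\beta$.

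The step I expect to be delicate is this final transfer, namely securing the order type $1+\beta$ and not merely $\beta$. The subtlety is that a $\beta$-subchain of $W_{P}$ may have $0_{P}$ as its least element — this can occur when $\delta$ or $\mu$ absorbs on one side (e.g. $1+\delta\equiv\delta$) so that the leading ``$1$'' of $1+\alpha$ leaks into $C_{\delta}$ — in which case prepending $0_{P}$ gains nothing. The plan to dispose of this is to split on whether $\beta$ has a least element: if it does not, every $\beta$-subchain automatically avoids $0_{P}$ and the prepending succeeds; if it does, one shows that the very absorption placing $0_{P}$ at the foot of the $\beta$-block forces $\tau$ to contain $1+\beta$ outright, using again that the outer pieces of $W$ have order types bounded by $\delta$ and $\mu$ together with the cancellation hypothesis. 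Carrying out this order-type bookkeeping, and the symmetric absorption on the $\mu$ side, is the main technical content; everything else reduces to Lemma \ref{lemI.3.5}, Theorem \ref{tm4.1}, and the definition of $\mathbb K$.
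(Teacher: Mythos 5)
Your strategy is in essence the paper's own --- sandwich $P$ between chains of types $\delta$ and $\mu$, apply $\alpha\in\mathbb{K}$ to the sandwich, and pull the conclusion back through the cancellation hypothesis --- but two steps fail as written, and both failures have the same source: the fresh least element must be prepended \emph{always}, not merely when $\delta$ lacks one, because it does essential work beyond putting $P^{+}$ in $\J$. When $\delta$ has a least element, your $P^{+}=\delta+P+\mu$ need not contain any chain of ideals of type $I(\alpha)$: by Lemma \ref{lemI.3.5}, $J(P^{+})\cong J(\delta)+J(P)+J(\mu)$, whereas $I(\alpha)\cong 1+J(\delta)+J(\beta)+J(\mu)$, and the extra bottom point cannot in general be absorbed. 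Concretely, take $\delta:=1+\omega^{*}$, $\beta:=\omega$, $\mu:=0$, $P:=\omega$. This is a legitimate instance of the lemma: $\omega^{*}\in\mathbb{K}$ by $(p_{4})$, $\omega\in\mathbb{K}$ by $(p_{3})$, and since $\omega^{*}\cong\omega^{*}+1$, two applications of $(p_{2})$ give $\alpha=1+\omega^{*}+\omega\in\mathbb{K}$; the cancellation hypothesis holds because any embedding of $\alpha$ into $1+\omega^{*}+\beta'$ sends all but finitely many points of the final $\omega$ into $\beta'$. Here $P^{+}$ is the chain $1+\omega^{*}+\omega$, so $J(P^{+})\cong 1+\omega^{*}+\omega+1$, while $I(\alpha)\cong 2+\omega^{*}+\omega+1$, and the latter does not embed into the former: any copy of $\omega^{*}$ in $J(P^{+})$ has all but finitely many of its points in the $\omega^{*}$-block, hence is coinitial in it, so the two points of $I(\alpha)$ lying below its $\omega^{*}$-block would both have to equal the unique bottom point of $J(P^{+})$. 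Thus on this instance $\alpha\in\mathbb{K}$ can never be invoked. The paper's $Q:=1+\delta+P+\mu$ avoids this ($J(Q)\cong 1+J(\delta)+J(P)+J(\mu)$ visibly contains $I(\alpha)$), and the prepended point is used a second time, in a step you also gloss over: given a copy of $1+\alpha$ in $Q$, one must pass to its $\alpha$-part, which lies strictly above the image of the least element and so misses the bottom of $Q$; only then is its trace on the first block of type at most $\delta$ rather than $1+\delta$ (and $1+\delta\leq\delta$ can fail, e.g.\ $\delta=\omega^{*}$), which is the exact form the cancellation hypothesis requires.

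The second gap is the one you flag yourself, and your sketched repair is not an argument. When $\beta$ has a least element, all the sandwich can ever give you is a copy of $\beta$ in $P$ that may begin at $0_{P}$; nothing in the data available at that point ($\beta\leq\tau=1+\tau_{*}$, the bounds by $\delta$ and $\mu$, cancellation) yields a second point below a copy of $\beta$, and ``absorption forces $\tau$ to contain $1+\beta$ outright'' is precisely the assertion that needs proof. The paper resolves this structurally rather than by bookkeeping: it proves the conclusion for $\beta_{*}$, the type $\beta$ with its least element removed --- there prepending $0_{P}$ always succeeds, since a chain with no least element cannot contain $0_{P}$ --- and then recovers $\beta=1+\beta_{*}\in\mathbb{K}$ from the previously established properties $(p_{1})$ and $(p_{2})$. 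The genuine content of inserting one extra bottom point is thereby delegated to the proof of $(p_{2})$, i.e.\ to Lemma \ref{lemIII.4.2}, which produces a non-minimal element $x$ with $\uparrow x\in\mathbb L_{\beta_{*}}$ (via $J(\uparrow x)\cong J_{x}(P)$, Lemma \ref{final}), so that the copy of $1+\beta_{*}$ found above $x$ gains a point below it. If you rerun your argument for $\beta_{*}$ in place of $\beta$, with the bottom always prepended and the $\alpha$-part observation above, and close with $(p_{1})$ and $(p_{2})$, you obtain exactly the paper's proof.
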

\begin{proof}
Let $\beta_{*}$ obtained from $\beta$ by taking out its first
element, if any. We prove $\beta_{*}\in \mathbb{K}$. Properties
$(p_{1})$ and $(p_{2})$ insure $\beta \in \mathbb{K}$. Let $P\in
\mathbb L_{\beta_{*}}$. Suppose  $P$ contains no infinite
independent set.  Let $Q:= 1+ \delta+P+\mu$. Then $Q$ is a
join-semilattice with $0$ and with no infinite independent set.
Moreover, $Q\in \mathbb L_{\alpha}$. Indeed, from $J(Q)\cong
1+J(\delta)+J(P)+J(\mu)$ , we get first $I(\alpha)\leq Q$ that is
$Q\in \mathbb J_{\alpha}$  (  $I(\alpha)\cong 1+J(\alpha)\cong
1+J(\delta)+J(\beta)+J(\mu) \leq 1+
J(\delta)+I(\beta_{*})+J(\mu)\leq 1+J(\delta)+J(P)+J(\mu)\cong
J(Q)$); next,  we get that $L(\lambda)\leq J( Q)$ implies
$L(\lambda)\leq J( P)$  for every order-type $\lambda\not = 0 $,
from which $Q\in \mathbb L_{\alpha}$  follows. Since $\alpha\in
\mathbb{K}$ we have $1+ \alpha\leq Q$ . Let $\varphi$ be an order
isomorphism from $1+ \alpha$ into $Q $ ($=1+\delta+P+\mu$) and let
$\beta':=\varphi(\alpha)\cap P$. The order type $\alpha$ embeds
into $\delta+\beta'+\mu$ hence, from our hypothesis, $\beta$
embeds into $\beta'$. Since $\beta'$ is a subchain of $P$, $\beta$
embeds into $P$. Since $P$ has a $0$, this implies that
$1+\beta_{*}$ embeds into $P$, hence $\beta_{*}\in \mathbb{K}$ as
required.
\end{proof}
\begin{lemma} \label{v2}
    Let $\alpha\in \mathbb{K}$. If $\alpha$ has canonical form
$\alpha=\alpha_{0}+\alpha_{1}+\ldots +\alpha_{n}$  then
$\alpha_{p}+\alpha_{p+1}+\ldots +\alpha_{q}\in \mathbb{K}$ for
$0\leq p\leq q\leq n$.
\end{lemma}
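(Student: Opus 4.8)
The plan is to derive Lemma~\ref{v2} from Lemma~\ref{v1} by removing the components $\alpha_0,\dots,\alpha_{p-1}$ and $\alpha_n,\dots,\alpha_{q+1}$ one at a time, peeling alternately from the two ends until only $\alpha_p+\cdots+\alpha_q$ remains. Two preliminary observations make this legitimate. First, every sub-concatenation $\alpha_i+\cdots+\alpha_j$ of a canonical decomposition is itself canonical: a shorter decomposition of its type into indecomposables could be substituted back into $\alpha=\alpha_0+\cdots+\alpha_n$ to shorten the latter, contradicting minimality. So at each stage the current type $\gamma=\gamma_0+\cdots+\gamma_m$ (a sub-concatenation of $\alpha$) is canonical and, by the preceding step, lies in $\mathbb{K}$. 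Second, removing the last component is dual to removing the first under order reversal (the canonical decomposition of $\gamma^*$ is the reverse of that of $\gamma$), so it suffices to treat the removal of the \emph{first} component. Thus the whole lemma reduces to the following claim, to which Lemma~\ref{v1} then applies with $\delta:=\gamma_0$, $\mu:=0$ and $\beta:=\gamma_1+\cdots+\gamma_m$: \emph{if $\gamma=\gamma_0+\beta$ is canonical and $\gamma\leq \gamma_0+\beta'$, then $\beta\leq \beta'$.}

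To verify this embedding condition I would use a cutting argument. Fix a chain $C$ of type $\gamma$ and an embedding $f:C\to D$, where $D=D_{\gamma_0}+D_{\beta'}$ realizes $\gamma_0+\beta'$. Since $D_{\gamma_0}$ is an initial segment of $D$ and $f$ is order-preserving, $f^{-1}(D_{\gamma_0})$ is an initial segment of $C$; writing $y$ for the corresponding cut of $C$ we get $\gamma=P_{\gamma_0}+P_{\beta'}$ with $P_{\gamma_0}=\mathrm{tp}(f^{-1}(D_{\gamma_0}))\leq\gamma_0$ and $P_{\beta'}\leq\beta'$. The canonical splitting $\gamma=\gamma_0+\beta$ gives a second cut $x$ of the same chain $C$, so $x$ and $y$ are comparable. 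If $y\leq x$, then the $\beta$-part $[x,\to)$ of $C$ is contained in $[y,\to)$, whence $\beta\leq P_{\beta'}\leq\beta'$ and we are done.

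The hard case is $x<y$. Then $P_{\gamma_0}=\gamma_0+\sigma$ where $\sigma:=\mathrm{tp}([x,y))\neq 0$ is a nonzero initial part of $\beta$, and $P_{\gamma_0}\leq\gamma_0$ forces the right-absorption $\gamma_0+\sigma\equiv\gamma_0$. I would then split on the position of $\sigma$ relative to $\gamma_1$. If $\sigma$ contains all of $\gamma_1$ (i.e.\ $\gamma_1\leq\sigma$), then $\gamma_0+\gamma_1\leq\gamma_0+\sigma\equiv\gamma_0$ gives $\gamma_0+\gamma_1\equiv\gamma_0$; since $\gamma_0$ is indecomposable and indecomposability is preserved under equimorphy, $\gamma_0+\gamma_1$ is indecomposable, and merging $\gamma_0,\gamma_1$ contradicts the minimality of the canonical decomposition. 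Otherwise $\sigma$ is a proper nonzero initial part of $\gamma_1$, say $\gamma_1=\sigma+\rho$ with $\rho\neq 0$; indecomposability of $\gamma_1$ yields $\gamma_1\leq\sigma$ or $\gamma_1\leq\rho$. The first alternative again produces $\gamma_0+\gamma_1\equiv\gamma_0$, impossible as before. The second alternative gives $\sigma+\rho=\gamma_1\leq\rho$, i.e.\ $\rho$ left-absorbs $\sigma$; since $P_{\beta'}=\rho+\gamma_2+\cdots+\gamma_m$, prepending $\sigma$ is free and $\beta=\sigma+P_{\beta'}\leq P_{\beta'}\leq\beta'$, as required.

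The main obstacle, and the place I would take most care, is exactly this hard case: controlling how the pulled-back cut can slide past the canonical cut. The point that makes it work without any delicate invariance of canonical \emph{length} under equimorphy is that the absorption is localized at the boundary pair $\gamma_0,\gamma_1$, so it is resolved using only the indecomposability of $\gamma_0$ and $\gamma_1$, the equimorphy-invariance of indecomposability, and the minimality of the canonical decomposition. Once the claim is established, Lemma~\ref{v1} delivers $\gamma_1+\cdots+\gamma_m\in\mathbb{K}$; iterating and invoking duality for the symmetric suffix-removal step then yields $\alpha_p+\cdots+\alpha_q\in\mathbb{K}$.
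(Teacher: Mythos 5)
Your proof is correct, and it reaches the lemma by a genuinely different route than the paper, even though both ultimately rest on Lemma \ref{v1} plus canonicity and indecomposability. The paper applies Lemma \ref{v1} exactly once, with $\delta:=\alpha_0+\cdots+\alpha_{p-1}$, $\beta:=\alpha_p+\cdots+\alpha_q$, $\mu:=\alpha_{q+1}+\cdots+\alpha_n$, and verifies the embedding hypothesis in one shot: given $\varphi$ embedding $A=D+B+M$ into $A'=D'+B'+M'$, canonicity is invoked to get $\alpha_0+\cdots+\alpha_p\not\leq\alpha_0+\cdots+\alpha_{p-1}$ (and dually at the right end), and then indecomposability of the two \emph{boundary} types $\alpha_p,\alpha_q$ places full copies of them inside $A_p\setminus\varphi^{-1}(D')$ and $A_q\setminus\varphi^{-1}(M')$, forcing $\beta\leq\varphi^{-1}(B')\leq\beta'$. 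You instead iterate Lemma \ref{v1}, peeling one block at a time, which requires the extra scaffolding you supply (sub-concatenations of a canonical decomposition are canonical; the order-dual reduction for suffix peels, which you correctly apply only to the embedding claim and never to $\mathbb{K}$, whose self-duality is not known). What your longer route buys is precision at the one delicate point: the paper's non-embedding assertion, taken at face value, only yields that $\alpha$ is \emph{equimorphic} to a shorter sum of indecomposables, and converting that into a violation of minimal length needs a further argument (for instance, iterating exactly your claim, or an invariance of canonical length under equimorphy which the paper never states); your absorption-and-merging step instead produces an honest shorter decomposition $\gamma=(\gamma_0+\gamma_1)+\gamma_2+\cdots+\gamma_m$, with $\gamma_0+\gamma_1$ indecomposable because indecomposability is an equimorphy invariant, contradicting minimality outright. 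In short: the paper's argument is shorter and treats both ends of the middle block simultaneously; yours is more modular and more self-contained where the paper is terse, at the price of the peeling induction and the duality step.
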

\begin{proof}
Let $p,q$ so that $0\leq p\leq q\leq n$. Set $\delta:
=\alpha_{0}+\alpha_{1}+\ldots+\alpha_{p-1}$,
$\beta:=\alpha_{p}+\alpha_{p+1}+\ldots+\alpha_{q}$ and
$\mu:=\alpha_{q+1}+\ldots+\alpha_{n}$. Let $\beta'$ be an order
type such that $\alpha$ embeds into $\alpha':=\delta+\beta'+\mu$.
We prove $\beta \leq \beta'$. Lemma \ref {v1} insures $\beta \in
\mathbb{K}$. Let $A$ and $A'$ be two chains having order type
$\alpha$ and $\alpha'$ respectively; let us consider a partition
of $A$ in three intervals $D$, $B$, $M$ of types $\delta$,
$\beta$, $\mu$ respectively, such that $A=D+B+M$ and similarly
consider a  partition of $A'$ in three intervals $D'$, $B'$, $M'$
of  types $\delta$, $\beta'$, $\mu$ respectively, such that
$A'=D'+B'+M'$. Let $\varphi$ be an embedding from $A$ into $A'$
and $B_{1}:=\varphi^{-1}(B')$. We show that $\beta$ embeds into
$B_{1}$, which implies that $\beta$ embeds into $\beta'$. Indeed,
since the decomposition of $\alpha$ is canonical,
$\alpha_{0}+\alpha_{1}+\dots+\alpha_{p}\not\leq\alpha_{0}+\alpha_{1}+\dots+\alpha_{p-1}$,
hence $\alpha_{p}\not\leq \varphi^{-1}(D')\cap B$.  Similarly
$\alpha_{q}\not\leq \varphi^{-1}(M')\cap B$. But $\alpha_{p}$ and
$\alpha_{q}$ are indecomposable, hence if we decompose $B$ into
intervals $A_{i}$ of type $\alpha_{i}$ such that
$B=A_{p}+...+A_{q}$ we obtain $\alpha_{p}\leq A_{p}\setminus
\varphi^{-1}(D')$ and $\alpha_{q}\leq A_{q}\setminus
\varphi^{-1}(M')$. It follows that
$\beta=\alpha_{p}+\ldots+\alpha_{q}$ embeds into $B_{1}$.
\end{proof}
\begin{lemma}\label {v3}
    Let $\gamma:=\alpha +\beta$ be a scattered order-type, where $\alpha$  is infinite and strictly right-indecomposable,
$\beta$ indecomposable, and $\gamma\not\leq\beta$. Then
$\gamma\not \in \mathbb{K}$.
\end{lemma}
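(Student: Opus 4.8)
The plan is to prove $\gamma\notin\K$ by exhibiting a single witness: a lattice $L\in\mathbb L$ whose semilattice $K(L)$ of compact elements contains neither a chain of type $1+\gamma$ nor a copy of $[\omega]^{<\omega}$, while $L$ itself contains a chain of type $I(\gamma)$. Following the strategy announced for types outside $\mathbb P$, I would take $L:=J(T)$ for a carefully chosen \emph{distributive} lattice $T$; then $K(L)\cong T$, and since the lattice of ideals of a distributive lattice is again distributive, $L$ is modular and hence automatically lies in $\mathbb L$ (a modular lattice contains no sublattice isomorphic to $L_{2}\cong M_{5}$, a fortiori none isomorphic to $L_{\omega+1}$ or $L_{\omega^*}$). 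This reduces the lemma to producing a distributive lattice $T$ with three properties: (a) $J(T)$ contains a chain of type $I(\gamma)$; (b) $T$ has no chain of type $1+\gamma$; (c) $T$ contains no subset isomorphic to $[\omega]^{<\omega}$.

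The lattice $T$ will be a \emph{sierpinskisation of $\gamma$ and $\omega$ realised as a distributive lattice}: a subset $S$ of a product $\omega\times C$ of two chains ($C$ of type $\gamma$) whose order is the intersection of a linear order of type $\gamma$ with one of type $\omega$, chosen moreover to be closed under the join and the meet of the product, so that $S$ is a sublattice of $\omega\times C$ and therefore distributive. Properties (b) and (c) are then cheap. For (b): every chain of $S$ is a chain for the ambient order of type $\omega$, hence has type at most $\omega$; since $\alpha$ is infinite and the decomposition is non-degenerate ($\gamma\not\le\beta$, $\beta$ a nonzero indecomposable type), one has $1+\gamma\not\le\omega$, so $S$ carries no chain of type $1+\gamma$. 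For (c): $S$ is a subposet of a product of two chains, and $[\omega]^{<\omega}$ has infinite order dimension, as it contains $\mathfrak P(\{0,\dots,n-1\})$ for every $n$; hence it cannot embed into such a product, and by the equivalence $(iii)\Leftrightarrow(i)$ of Theorem \ref{tm4.1} this means $S$ has no infinite independent set, i.e. no subset isomorphic to $[\omega]^{<\omega}$. Property (a) is the content of the ideal analysis of sierpinskisations: by Proposition \ref{prop:basicsierp2} the non-principal ideals of $S$ are in order-preserving bijection with the non-principal initial segments of its defining chain of type $\gamma$, so (via Corollary \ref{cor:ordertypesierp}) $J(S)$ contains a chain order-isomorphic to $I(\gamma)$.

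The delicate point, and the step I expect to be the main obstacle, is to carry out the realisation so that $S$ is genuinely a \emph{distributive lattice} while its ideal lattice $J(S)$ is not well-founded. Non-well-foundedness is unavoidable here, because $\gamma$ (through its tail $\beta$) will in general contain $\omega^*$, so $I(\gamma)$ is not well-founded; by the characterisation of well-founded algebraic lattices (Theorem \ref{thm4}) this can coexist with ``$S$ has no $[\omega]^{<\omega}$'' only through a join-subsemilattice of type $\underline\Omega(\omega^*)$. The danger is precisely that a careless sierpinskisation of a left-indecomposable piece creates a pentagon $L_{\omega^*}$ inside $J(S)$, pushing $L$ out of $\mathbb L$; this is exactly what happens for $\gamma=\omega^*$ itself, and it is this type that the hypotheses exclude, since $\omega^*$ (being left- and not right-indecomposable, with only finite proper initial segments) can never be the infinite strictly right-indecomposable initial part $\alpha$.

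To keep $S$ meet-closed I would exploit the structural description of the strictly right-indecomposable type $\alpha$ as an $\omega$-sum of a quasi-monotonic sequence of strictly smaller types (Corollary \ref{corI.4.2}, Lemma \ref{lemI.4.5}): this lets me lay out the horizontal lines of $S$ along an increasing $\omega$-indexed scaffold so that componentwise meets fall back inside $S$, and then append the indecomposable tail $\beta$ as a final block. The roles of the hypotheses are thus clearly delineated: $\alpha$ infinite guarantees $1+\gamma\not\le\omega$ for (b); $\alpha$ strictly right-indecomposable makes the distributive, meet-closed layout possible and keeps $J(S)$ inside $\mathbb L$; and $\beta$ indecomposable together with $\gamma\not\le\beta$ ensures the decomposition is non-degenerate, so that the chain realised in $J(S)$ is genuinely of type $I(\gamma)$ rather than collapsing to that of the tail. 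Establishing rigorously that this scaffold can be chosen simultaneously meet-closed and sierpinskisation-faithful is the core technical work; once it is in place, (a)--(c) follow as above and yield $\gamma\notin\K$.
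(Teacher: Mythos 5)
The decisive gap is in your step (a), compounded by the requirement (which your plan never checks in a workable form) that the witness lattice lie in $\mathbb{L}$. As literally stated, your $T$ is a sierpinskisation of $\gamma$ and $\omega$, and for such an $S$ property (a) is false: by Proposition \ref{prop:basicsierp2} the non-principal ideals of $S$ form a chain isomorphic to the chain of \emph{non-principal} initial segments of $\gamma$ --- not to $I(\gamma)$ --- and every principal ideal of $S$ is finite, so every chain in $J(S)$ has type at most $\omega$ followed by that (usually very short) chain. Concretely, take $\gamma=\omega+\omega^{*}$ (so $\alpha=\omega$, $\beta=\omega^{*}$, and the hypotheses of Lemma \ref{v3} hold): $\gamma$ has only two non-empty non-principal initial segments, so $J(S)$ is well-founded, whereas $I(\gamma)$ contains $\omega^{*}$; hence $I(\gamma)\not\leq J(S)$. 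Corollary \ref{cor:ordertypesierp}, which you cite, asserts precisely this shortness of ideal chains; it does not give (a). If instead you repair (a) by sierpinskising $\omega\gamma$ with $\omega$ (the paper's $\Omega_{L}(\gamma)$, for which (a), (b), (c) do all hold), then the witness $L=J(\underline{S})$ fails to belong to $\mathbb{L}$, and this is fatal because the implication defining $\mathbb{K}$ is only required of lattices in $\mathbb{L}$. Indeed, pick a strictly monotone sequence $(J_n)$ of initial segments of $\gamma$, set $I_n:=S\cap(\N\times J_n)$, and pick $x=(k,c)\in S$ with $c$ above all $J_n$: since horizontal lines are infinite and $S$ is join-closed, all joins $\{x\}\bigvee I_n$ equal the single ideal $S\cap(\N\times(\leftarrow c])$, while the meets $\downarrow x\cap I_n$ are finite (vertical lines are finite) hence eventually constant; adding that constant meet as bottom and the common join as top yields a sublattice of $J(\underline{S})$ isomorphic to $L_{\omega^{*}}$ (decreasing $(J_n)$, $\gamma$ not well-ordered) or to $L_{\omega+1}$ (increasing $(J_n)$ with supremum, $\gamma$ an ordinal $>\omega$). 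This is no accident: $\mathbb{L}$ is defined exactly so as to exclude ideal lattices of sierpinskisations, which are the obstructions of Chapter \ref{chap:wellfoundedbis}.

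Your proposed escape --- realise $S$ as a meet-closed subset, hence a distributive sublattice of $\omega\times C$ --- is impossible while keeping the sierpinskisation structure, and repairing it reproduces the paper's proof. If $S$ is meet-closed with infinite horizontal lines, then any $c$ with infinitely many predecessors in $C$ (every element of the $\beta$-block, since it lies above all of the infinite $\alpha$) forces an infinite vertical line: from $(k,c)\in S$ and $(k',c')\in S$ with $c'<c$, $k'\geq k$, meet-closure puts $(k,c')$ in $S$ for infinitely many $c'$. Then principal initial segments of $S$ are infinite, the order of $S$ is no longer contained in a linear order of type $\omega$, and your ``cheap'' argument for (b) evaporates; the quasi-monotonic scaffold for $\alpha$ cannot help, since the obstruction sits in the $\beta$-tail, not in $\alpha$. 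Pushed to its end, closing under meets turns the construction into a direct product of two chains, which is exactly what the paper does: $T:=A_{0}\times(1+B)$, with $A_{0}$ of type $1+\alpha_{0}$ for a minimal $\alpha_{0}\leq\alpha$ (minimality available by Laver's theorem that countable scattered types are wqo) and $1+B$ of type $1+\beta$. There your (c) stays trivial and (a) is an explicit embedding of $I(\gamma)$ into $J(T)$, but (b), i.e.\ $1+\gamma\not\leq T$, becomes the heart of the matter and requires the indivisibility machinery: Lemma \ref{propI.3.7} (an indivisible chain embedding in a product embeds in a factor) and Lemma \ref{lemI.4.4} (maximal indivisible types below $\alpha$ and $\beta$ and their strict one-sided indecomposability), with a separate and harder minimality argument when $\beta$ is strictly right-indecomposable. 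The step you defer as ``core technical work'' is thus both unworkable as stated and, once repaired, amounts to the paper's entire proof.
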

\begin{proof}
We construct a distributive lattice $T$, direct product of two
chains, one with type $1+\alpha_{0}$ for some $\alpha_{0}\leq
\alpha$, the other of type $1+\beta$, such that  $a)$  $T$
contains no infinite independent set; $b)$ $I(\gamma)\leq J(T)$;
$c)$ $1+\gamma\not\leq T$. \\ Since $\alpha$ is infinite and
strictly right-indecomposable then $1+\alpha \leq \alpha$, hence
$1+\gamma\leq \gamma$. Let $C$ be a chain of type $1+\gamma$ and
$A$, $B$ be a partition of $C$ into an initial segment $A$ of type
$1+\alpha$ and a
 final segment $B$ of type $\beta$. Let  $1+B$ the chain obtained by
adding to $B$ a least element $b_{0}\not\in B$ and let
$Q:=A\times(1+B)$. We distinguish two cases.\\ {\bf Case 1}
$\beta$ is strictly left-indecomposable. We set $T:=Q$ . We check
that $T$ satisfies  properties $a)$, $b)$, $c)$ above. For
property  $a)$  this is trivial: as a product of two chains, $T$
contains no independent set with three elements. For $b)$ let
$\phi: J(C)\rightarrow J(T)$ defined  by
$\phi(I):=I\times\{b_{0}\}$  if $I\subseteq A$ and
$\phi(I):=A\times((I\setminus A)\cup\{b_{0}\})$ otherwise. This is
an embedding, hence $I(\gamma) \leq J(T)$. Finally for $c)$,
suppose for contradiction $1+\gamma\leq T$. Let $\varphi$ be an
embedding from $C$ into $T$. Let  $b\in B$ ,  $(x,y):=\varphi(b)$
and $\alpha_{1}$, $1+\beta_{1}$ be  the order types of $\downarrow
x$ and $\downarrow y$ respectively; we have   $\alpha \leq
\alpha_{1}\times (1+\beta_{1})$. Since $\alpha$ is infinite, $(i)$
of Lemma \ref {lemI.4.4} applies: among the indivisible order
types which embed into $\alpha$, there is a maximal one. Let $\nu$
be such an order type. {\bf Claim 1.} $\nu\leq\beta$.  First,
since $\nu$ is indivisible  and $\nu\leq\alpha_{1}\times
(1+\beta_{1})$ then, from Lemma \ref {propI.3.7}, either
$\nu\leq\alpha_{1}$ or $\nu\leq 1+\beta_{1}$ .  Since $ \alpha$ is
infinite and strictly right-indecomposable then $\alpha_{1}+1\leq
\alpha$.  Thus,  if  $\nu \leq \alpha_{1}$ then $\nu +1 \leq
\alpha$  which is impossible from  $(ii)$ of Lemma \ref
{lemI.4.4}. Hence,  $\nu\not \leq \alpha_{1}$. Thus $\nu\leq
1+\beta_{1}$. Since $\nu$ is infinite then $\nu\leq \beta$,
proving our claim. Now, from this claim, $\beta$ is infinite and
we can repeat what we did with $\alpha$.  Let $\xi$ be maximal
among the indivisible order types embedding $\nu$ and  which embed
into $\beta$. The same arguments as above show {\bf Claim 2.} $\xi
\leq \alpha$. From these two claims, we get   $\nu\equiv \xi$
which is impossible since, from $(ii)$ of Lemma   \ref {lemI.4.4},
$\nu$ and $\xi$ must be respectively strictly right and strictly
left-indecomposable. So $1+\gamma\not\leq T$.\\ {\bf Case 2}
$\beta$ strictly right-indecomposable. Because of Case  $1$, we
may suppose $\beta\not=1$. Let  $\mathcal{A}$ be the set of order
types $ \alpha', \alpha' \leq\alpha$,  such that $\alpha \leq
\alpha'\times(1+\beta')$ for some $\beta'$ such that
$\beta'+1\leq\beta$. This set is non empty (it contains $\alpha$).
Hence, it contains a minimal element w.r.t. embeddability (Theorem
\ref {tmI.4.1} (iii)). Let $\alpha_{0}$ be such an element and
$A_{0}$ be a subchain of $A$ of type $1+\alpha_{0}$. Let
$T:=A_{0}\times (1+B)$. We check that T satisfies the properties
$a)$, $b)$, $c)$ above. This is trivial for $a)$ and for the same
reasons that in Case $1$. For $b)$ observe that  since
$\alpha_{0}\in \mathcal {A}$ there is an element $b_{0}\in B$ and
an embedding $\varphi$ from $A$ into $A_{0}\times\{y\in 1+B :
y<b_{0}\}$. Since $\beta$ is strictly right-indecomposable, there
is an embedding $\theta$ from $B$ into $\{y\in1+B : y\geq
b_{0}\}$. Let $\phi$: $J(C)\rightarrow J(T)$ defined by
$\phi(I):=\{z\in T : z\leq\varphi(x)$ for some $x\in I\}$ if $I
\subseteq A$ and $\phi(I):=A_{0}\times\{y\in 1+B : y\leq\theta(x)$
for some $x\in I\}$ otherwise. This is an embedding, hence
$I(\gamma) \leq J(T)$. For $c)$ we prove  $\alpha+1\not\leq T$.
For a contradiction, suppose  $\alpha+1\leq T$. Let $A+1$ be the
chain obtained by adding to $A$ a largest element $a\not\in A$.
Since $1+\alpha \leq \alpha$ there is an  embedding $\psi$ from
$A+1$ into $T$. Let  $(x,y):=\psi(a)$. Then $\psi$ is an embedding
from $A+1$ into $\downarrow x\times\downarrow y$. Let
$\alpha_{1}:=\alpha'_{1}+1$ be the type of $\downarrow x$ and
$1+\beta_{1}$ the type of $\downarrow y$. We have $\alpha _{1}\leq
1+\alpha_{0}$ and  $1+\alpha +1\leq \alpha_{1}\times (
1+\beta_{1})$. Since $1+\alpha\leq \alpha$ then $\alpha_{1}\leq
\alpha$ and since $\beta$ is strictly right-indecomposable and
distinct from $1$, we have $\beta_{1}+1\leq\beta$. Hence
$\alpha_{1}\in \mathcal{A}$. {\bf Claim 3.} $\alpha_{0}$ is
strictly right-indecomposable and distinct from $1$. Indeed, if
$\alpha_{0}=1$, then $\alpha\leq 1+\beta'_{1}$ for some
$\beta'_{1}$ such that $\beta'_{1}+1\leq \beta$. Since $1+\alpha
\leq \alpha$, we get $\alpha\leq\beta'_{1}$ and since $\beta$ is
strictly right-indecomposable, $\alpha+\beta\leq\beta$, that is
$\gamma\leq \beta$,  which contradicts our
 assumption. Next,
suppose  $\alpha_{0}=\alpha'_{0}+\alpha''_{0}$ with
$\alpha''_{0}\not=0$. Since $\alpha_{0}\in \mathcal{A}$ there is
some $\beta'$ such that $\beta'+1\leq\beta$ and  $\alpha
\leq\alpha_{0}\times(1+\beta')$. Moreover, since $\alpha_{0}$ is
minimal, we may suppose that for some embedding $\varphi_{0}:
\alpha \rightarrow \alpha_{0}\times(1+\beta')$ the projection on
$\alpha_{0}$ is surjective. Since $\alpha_{0}\times(1+\beta')$
decomposes  into an initial segment of type
$\alpha'_{0}\times(1+\beta')$ and a final segment  of type
$\alpha''_{0}\times(1+\beta')$, then $\varphi_{0}$ divides
$\alpha$ into $\alpha'$ and $\alpha''$ in such a way that
$\alpha'\leq\alpha'_{0}\times(1+\beta')$ and
$\alpha''\leq\alpha''_{0}\times(1+\beta')$. Since $\alpha$ is
strictly right- indecomposable and $\alpha''\not=0$ then
$\alpha\leq\alpha''$, hence $\alpha''_{0}\in \mathcal{A}$. From
the minimality of $\alpha_{0}$  follows
$\alpha_{0}\leq\alpha''_{0}$, hence $\alpha_{0}$ is strictly
right-indecomposable as claimed. From this claim, the inequality
$\alpha_{1}:=\alpha'_{1}+1\leq 1+ \alpha_{0}$ implies
$\alpha_{1}<\alpha_{0}$. Since $\alpha_{1} \in \mathcal{A}$, this
contradicts the minimality of $\alpha_{0}$.
\end{proof}
\subsubsection{Proof of Property  $(p_{5})$}
    Let $\alpha$ be a countable scattered order type. Suppose $\alpha \in \mathbb{K}$.
From Corollary \ref {corI.4.2}, $\alpha$ has  decomposition into
finitely indecomposables order types, hence a canonical
decomposition
 $\alpha=\alpha_{0}+\alpha_{1}+\ldots
+\alpha_{n}$. From Lemma \ref {v2}, $\alpha_{i}\in \mathbb{K}$ for
all $i\leq n$. Let $i<n$. If $\alpha_{i}$ is not strictly
left-indecomposable then it is infinite and strictly
right-indecomposable. Since the decomposition is canonical,
$\alpha_{i}+\alpha_{i+1}$  embeds neither into $\alpha_{i}$ nor
into $\alpha_{i+1}$. From Lemma \ref {v3},
$\alpha_{i}+\alpha_{i+1}\not \in \mathbb{K}$. This contradicts
Lemma \ref {v2}. Thus, $\alpha$ has the  form given in Property
$(p_5)$. Conversely, suppose $\alpha=\alpha_{0}+\alpha_{1}+\ldots
+\alpha_{n}$ with $\alpha_{i}\in \mathbb{K}$, for all $i\leq n$,
$\alpha_{i}$ strictly left-indecomposable for $i<n$ and
$\alpha_{n}$ indecomposable. If $n=0$ then $\alpha=\alpha_{0}\in
\mathbb{K}$. If $n>0$, suppose $n$ minimal. Then
$\alpha_{i}\not=0$ for all $i<n$, hence $\alpha_{i}\equiv
\alpha'_{i}+1$ and $\alpha\equiv
\alpha'=\alpha'_{0}+1+\alpha'_{1}+1+\ldots
+\alpha'_{n-1}+1+\alpha_{n}$. From Property $(p_{2})$, $\alpha'\in
\mathbb{K}$. That is $\alpha\in \mathbb{K}$.

\section[The class $\mathbb{P}$]{The class $\mathbb{P}$ and a proof of Theorem \ref{thm7}}
Define for each ordinal $\xi$ the class $\mathbb{P}_{\xi}$,
setting  $\mathbb{P}_{0}:=\{0,1\}$,
$\mathbb{P}_{\xi}:=\bigcup\{\mathbb{P}_{\xi'} : \xi'<\xi\}$ if
$\xi$ is a limit ordinal, $\mathbb{P}_{\xi+1}$ equals to the set
of order types $\gamma$ which decompose into elements of
$\mathbb{P}_{\xi}$ under one of
 the forms stated in $(p_{2})$, $(p_{3})$ and $(p_{4})$ of the definition of $\mathbb{P}$ (cf. Section 1). For an example,
$\mathbb{P}_1=\mathbb{P}_{0}\cup \{2, \omega, \omega^*\}$,
$\mathbb{P}_2=\mathbb{P}_{1}\cup \{3, 4, 1+\omega^{*},
2+\omega^{*},  \omega^{*}2, \omega^{*^{2}}, \omega^{*}n+\omega,
n+\omega^{*}\omega, \omega\omega^{*}+n$ with $n<\omega\}\cup \{
\gamma: \gamma\equiv (\omega^{*}+\omega)\omega^{*} \}$. Since
$\mathbb{P}_{0}\subseteq \mathbb{P}_{1}$ the sequence of
$\mathbb{P}_{\xi}$ is non decreasing and moreover
$\mathbb{P}=\mathbb{P}_{\omega_{1}}$. Note that $\mathbb{P}$ is
not  preserved under equimorphy  (indeed
$\omega\omega^{*}\omega\not\in \mathbb{P}$ while
$(\omega\omega^{*}+1)\omega\in \mathbb{P}_3$). Let $\gamma\in
\mathbb{P}$; the {\it $\mathbb{P}$-rank} of $\gamma$, denoted
$rank_{\mathbb{P}}(\gamma)$, is the least $\xi$ such that
$\gamma\in \mathbb{P}_{\xi}$.  This notion of rank allows to prove
properties of the elements of $\mathbb{P}$ by induction on their
rank, especially those given just below.
\\
{\bf Claim 1}\\ Let $\gamma$ be a countable order type;\\ $1.1)$
If $\gamma\in \mathbb{P}$ then $\delta\in \mathbb{P}$ for every
initial segment $\delta$ of $\gamma$;\\ $1.2)$ $\gamma\in
\mathbb{P}$ if and only if $\gamma$ decomposes into  a sum
$\gamma=\gamma_{0}+\dots+\gamma_{n}$ of members of $\mathbb{P}$,
each $\gamma_{i}$ ($i<n$) is strictly left-indecomposable and has
a largest element, $\gamma_{n}$ is indecomposable;\\ $1.3)$
Suppose $\gamma\in \mathbb{P}_{\xi+1}\setminus \{0,1\}$
($\xi<\omega_{1}$).  If $\gamma$ is strictly right-indecomposable
then $\gamma=\Sigma_{n<\omega}(\alpha_{n}+1)$ with
$\alpha_{n}+1\in \mathbb{P}_{\xi}$;  if $\gamma$ is strictly
left-indecomposable then $\gamma=\Sigma^*_{n<\omega} \alpha_{n}$
with $\alpha_{n}\in \mathbb{P}_{\xi}$, the $\alpha_{n}$'s forming
a quasi-monotonic sequence.\\

\noindent {\bf Claim 2}\\ The class of indecomposable members  of
$\mathbb{P}$  is the smallest class $Ind_{\mathbb{P}}$ of order
types such that:\\
        $2.1)$ $0,1\in Ind_{\mathbb{P}}$;\\
        $2.2)$ If $(\alpha_{n})_{n<\omega}$ is a quasi-monotonic sequence of elements of
$Ind_{\mathbb{P}}$ then $\Sigma^*_{n<\omega} \alpha_{n} \in
Ind_{\mathbb{P}}$; if, moreover, each $\alpha_{n}$ has a largest
element then $\Sigma_{n<\omega} \alpha_{n} \in Ind_{\mathbb{P}}$.

        From Claim 2 and the fact that every countable indivisible scattered order-type is an $\omega$-sum or an
$\omega^*$-sum of smaller indivisible order-types, we deduce:\\
 {\bf Claim 3}\\
If $\gamma\in \mathbb{P}$ then each  maximal indivisible
order-type which  embeds into $\gamma$ is equimorphic to some
element of $\mathbb{P}$.\\

  \begin{lemma}\label{lem III.8.1}
        Let $\alpha$ be a countable, scattered and indecomposable, order type.\\
$(i)$ The set of $\beta\in \mathbb{P}$ which embed into $\alpha$
has a largest element (up to equimorphy) which we denote
$\mathbb{P}(\alpha)$;\\ $(ii)$ If
$\alpha\equiv\Sigma^{*}_{n<\omega} \alpha_{n}$ with each
$\alpha_{n}$ indecomposable and verifying $0<\alpha_{n}<\alpha$
then
$\mathbb{P}(\alpha)\equiv\Sigma^{*}_{n<\omega}\mathbb{P}(\alpha_{n})$.\\
$(iii)$ If $\alpha\equiv \Sigma_{\lambda<\mu} \alpha_{\lambda}$
where $\mu$ is an indecomposable ordinal, each $\alpha_{\lambda}$
is strictly left-indecomposable and verifies
$0<\alpha_{\lambda}<\alpha$ then
$\mathbb{P}(\alpha_{\lambda})<\mathbb{P}(\alpha)$ for each
$\lambda<\mu$ . Moreover, there is a sequence
$\lambda_{0}<\lambda_{1}<\ldots<\lambda_{n}<\ldots$ such that
$Sup\{\lambda_{n} : n<\omega\}=\mu$ and
$\mathbb{P}(\alpha)\equiv\Sigma_{n<\omega}\mathbb{P}(\alpha_{\lambda_{n}})$.
\end{lemma}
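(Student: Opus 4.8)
The plan is to prove $(i)$, $(ii)$ and $(iii)$ simultaneously, by induction on the countable scattered indecomposable type $\alpha$, organised by the dichotomy recalled in the definitions: an infinite indecomposable scattered type is either strictly right-indecomposable or strictly left-indecomposable, and it decomposes accordingly via Corollary \ref{corI.4.2} and Lemma \ref{lemI.4.5}. The base cases $\alpha\in\{0,1\}$ are immediate since $0,1\in\mathbb{P}$. The induction will carry along the invariant that $\mathbb{P}(\alpha)$ is again indecomposable (of the same one-sided type as $\alpha$), and will repeatedly use the elementary \emph{monotonicity} of the operator: whenever $\beta\leq\beta'$ and $\mathbb{P}(\beta),\mathbb{P}(\beta')$ already exist, then $\{\gamma\in\mathbb{P}:\gamma\leq\beta\}\subseteq\{\gamma\in\mathbb{P}:\gamma\leq\beta'\}$, so $\mathbb{P}(\beta)\leq\mathbb{P}(\beta')$. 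Existence $(i)$ is not argued separately: in each inductive case I exhibit one concrete member of $\mathbb{P}\cap\mathcal{D}(\alpha)$ and show it dominates every member of $\mathbb{P}\cap\mathcal{D}(\alpha)$, which is $(i)$ together with the explicit formula of $(ii)$ or $(iii)$. (By Laver's theorem, Theorem \ref{tmI.4.1}(iii)--(iv), the set $\mathbb{P}\cap\mathcal{D}(\alpha)$ is countable with finitely many maximal elements up to equimorphy; the content of $(i)$ is to collapse these to a single maximum.)

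For $(ii)$, write $\alpha\equiv\Sigma^{*}_{n<\omega}\alpha_{n}$ with each $\alpha_{n}$ indecomposable, $0<\alpha_{n}<\alpha$, and $(\alpha_{n})$ quasi-monotonic, so that $\mathbb{P}(\alpha_{n})$ is available by the induction hypothesis. Set $P:=\Sigma^{*}_{n<\omega}\mathbb{P}(\alpha_{n})$. By monotonicity $(\mathbb{P}(\alpha_{n}))$ is again quasi-monotonic, hence $P\in\mathbb{P}$ by property $(p_{4})$; and $\mathbb{P}(\alpha_{n})\leq\alpha_{n}$ term by term gives $P\leq\alpha$, so $P\leq\mathbb{P}(\alpha)$ once the latter is known to exist. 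For maximality I take an arbitrary $\beta\in\mathbb{P}$ with $\beta\leq\alpha$, fix an embedding $f\colon\beta\to\Sigma^{*}_{n}\alpha_{n}$, and slice $\beta$ along the blocks: the fibres $\beta_{n}:=f^{-1}(\alpha_{n})$ are convex in $\beta$, satisfy $\beta_{n}\leq\alpha_{n}$, and $\beta\leq\Sigma^{*}_{n}\beta_{n}$. It then suffices to show $\beta_{n}\leq\mathbb{P}(\alpha_{n})$, which follows from the induction hypothesis as soon as each $\beta_{n}$ is itself a member of $\mathbb{P}$.

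The key auxiliary fact, which I would prove separately by induction on the $\mathbb{P}$-rank using Claims 1.1--1.3 together with $(p_{2})$, $(p_{3})$, $(p_{4})$, is that \emph{$\mathbb{P}$ is closed under passage to convex subsets}. When $\gamma\in\mathbb{P}$ is decomposable, Claim 1.2 writes it as a finite sum whose non-final blocks are strictly left-indecomposable with a largest element; a convex $\delta$ then splits into a final segment of one block, finitely many full intermediate blocks, and an initial segment of another block, the two outer pieces being handled by the rank-lowering induction and Claim 1.1, and the whole reassembled by repeated $(p_{2})$. When $\gamma$ is indecomposable one uses instead its $\omega$- or $\omega^{*}$-decomposition from Claim 1.3, treating an unbounded convex piece by $(p_{3})$ or $(p_{4})$ and a bounded one by finitely many applications of $(p_{2})$. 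Here the delicate point is the bookkeeping of \emph{which} pieces carry a largest element, which is exactly what legitimises each use of $(p_{2})$ and $(p_{3})$; this, and the attendant fact that strictly left-indecomposable scattered types possess a largest element, is the main obstacle of the whole argument.

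For $(iii)$ the type $\alpha$ is strictly right-indecomposable, so Lemma \ref{lemI.4.5} gives $\alpha\equiv\Sigma_{\lambda<\mu}\alpha_{\lambda}$ with $\mu$ an indecomposable (hence countable, cofinality $\omega$) ordinal and each $\alpha_{\lambda}$ strictly left-indecomposable with $0<\alpha_{\lambda}<\alpha$. The inequalities $\mathbb{P}(\alpha_{\lambda})\leq\mathbb{P}(\alpha)$ are immediate from monotonicity; for strictness $\mathbb{P}(\alpha_{\lambda})<\mathbb{P}(\alpha)$ I compare maximal indivisible types: by Lemma \ref{lemI.4.4}(ii) a maximal indivisible type $\nu$ of $\alpha$ is strictly right-indecomposable and does not embed into the strictly left-indecomposable $\alpha_{\lambda}$, whereas $\nu$ (equimorphic, by Claim 3, to a member of $\mathbb{P}$ lying in $\alpha$) embeds into $\mathbb{P}(\alpha)$; hence $\mathbb{P}(\alpha)\not\leq\alpha_{\lambda}$ and a fortiori $\mathbb{P}(\alpha)\not\leq\mathbb{P}(\alpha_{\lambda})$. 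Finally, the convex-slicing maximality argument of $(ii)$, run now along the $\mu$-indexed sum, yields $\beta\leq\Sigma_{\lambda<\mu}\mathbb{P}(\alpha_{\lambda})$ for every $\beta\in\mathbb{P}\cap\mathcal{D}(\alpha)$; it then remains to regroup this transfinite sum as an $\omega$-sum over a cofinal sequence $\lambda_{0}<\lambda_{1}<\cdots$ with $\sup_{n}\lambda_{n}=\mu$, chosen so that $(\mathbb{P}(\alpha_{\lambda_{n}}))$ is quasi-monotonic and dominates the intervening blocks. That $\omega$-sum lies in $\mathbb{P}$ by $(p_{3})$ and, by the same maximality argument, is equimorphic to $\mathbb{P}(\alpha)$. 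The selection of $(\lambda_{n})$ and the verification that a cofinal $\omega$-family of single blocks already captures all of $\mathbb{P}(\alpha)$ is, alongside the convex-closure lemma, where the real work of the proof lies.
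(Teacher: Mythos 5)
Your argument collapses at the auxiliary lemma you yourself identify as carrying the main weight: the claim that $\mathbb{P}$ is closed under passage to convex subsets is false. Take $\gamma:=\omega\omega^{*}=\Sigma^{*}_{n<\omega}\omega$. Since $\omega\in\mathbb{P}_{1}$ and the constant sequence is quasi-monotonic, property $(p_{4})$ gives $\gamma\in\mathbb{P}_{2}$ (the paper lists $\omega\omega^{*}+n$ there). But the convex subset of $\gamma$ formed by its last two blocks is $\omega+\omega=\omega2$, and $\omega2\notin\mathbb{P}$: indeed $\mathbb{P}\subseteq\mathbb{K}$ and, as noted just after Theorem \ref{thm7}, a countable ordinal belongs to $\mathbb{K}$ if and only if it is at most $\omega$. (Directly: a quasi-monotonic $\omega^{*}$-sum with a nonzero term has infinitely many nonzero terms, hence embeds $\omega^{*}$ and is not well-ordered; the ordinals with a largest element lying in $\mathbb{P}$ are finite, so $(p_{3})$ yields only $\omega$; and $(p_{2})$ then cannot produce $\omega2$ either.) The same example refutes your ``attendant fact'': $\omega\omega^{*}$ is strictly left-indecomposable and has no largest element; the largest-element property of the non-final blocks in Claim 1.2 is a feature built into the inductive construction of $\mathbb{P}$, not a general property of strictly left-indecomposable scattered types. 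Worse, the fibrewise slicing fails at the level of its conclusion, not merely of the lemma meant to support it: with $\alpha:=\omega^{2}\omega^{*}=\Sigma^{*}_{n<\omega}\omega^{2}$ and $\beta:=\omega\omega^{*}\in\mathbb{P}$, you may embed $\beta$ into $\alpha$ by sending its last two blocks into the single block $\alpha_{0}=\omega^{2}$; the fibre $\beta_{0}:=f^{-1}(\alpha_{0})\cong\omega2$ then satisfies $\beta_{0}\not\leq\mathbb{P}(\alpha_{0})=\mathbb{P}(\omega^{2})=\omega$ (every chain embeddable in $\omega^{2}$ is an ordinal, and ordinals in $\mathbb{P}$ are at most $\omega$). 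So the inequality $\beta_{n}\leq\mathbb{P}(\alpha_{n})$ that your maximality argument needs is simply unavailable for an arbitrary embedding.

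This is exactly the difficulty the paper's proof is designed to avoid. For maximality it never slices a competitor $\gamma\in\mathbb{P}\cap\downarrow\alpha$ along the fibres of an embedding; it decomposes $\gamma$ \emph{intrinsically}, via Claims 1.1--1.3, into indecomposable members of $\mathbb{P}$, and only then argues that each indecomposable piece embeds into a single block $\alpha_{n}$ (in $(iii)$ via the minimal-$\mu'$ argument using strict left-indecomposability), reassembling by quasi-monotonicity. Note also that the paper proves $(i)$ directly, with no induction on $\alpha$: by Laver's theorem (Theorem \ref{tmI.4.1}) the members of $\mathbb{P}$ below $\alpha$ having a largest (resp.\ least) element admit a countable cofinal subset, which is enumerated quasi-monotonically and summed by $(p_{3})$ (resp.\ $(p_{4})$); strict right- (resp.\ left-) indecomposability of $\alpha$ keeps this sum below $\alpha$, and Claims 1.1--1.3 give its maximality. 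Finally, your strictness argument in $(iii)$ misapplies Claim 3, which presupposes $\gamma\in\mathbb{P}$ and cannot be invoked for $\alpha$ itself, and it is not true that a strictly right-indecomposable type cannot embed into a strictly left-indecomposable one ($\omega\leq\omega\omega^{*}$). To repair your induction you would have to replace the convex-closure lemma by the intrinsic decomposition of members of $\mathbb{P}$ --- at which point the argument essentially becomes the paper's.
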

\begin{proof}
$(i)$ We may suppose $\alpha\not=0, 1$.\\ {\bf Case1} $\alpha$ is
strictly right-indecomposable. Let $\mathcal{A}:=\{\gamma\in
\mathbb{P}: \gamma\leq \alpha, \gamma= \beta +1$ for some $\beta
\}$. This set contains a countable subset $X$ wich is cofinal (cf.
$(iv)$ of Theorem \ref {tmI.4.1} in  Section 2). Let $\beta_{0}+1,
\beta_{1}+1 \dots\beta_{n}+1,\dots $ an enumeration  of the
elements of $X$, with possible repetitions, wich is
quasi-monotonic and let $\beta:=\beta_{0}+1+
\beta_{1}+1+\dots+\beta_{n}+1+\dots$ . Since $\mathbb{P}$
satisfies property $(p_{3})$, we have $\beta\in \mathbb{P}$. Since
$\alpha$ is strictly right-indecomposable, we have
$\beta\leq\alpha$. Let $\gamma\in \mathbb{P}$ with
$\gamma\leq\alpha$. Then, according to Claim 1.2, $\gamma$ is a
sum $\gamma_{0}+\gamma_{1}+\dots+\gamma_{n}$ with  $\gamma_{i}$
strictly left-indecomposable with a largest element for $i<n$,
$\gamma_n$ indecomposable and all $\gamma_{i}\in \mathbb{P}$. If
$\gamma_n$ is strictly left-indecomposable then, since from Claim
1.1 every initial  segment belongs to $\mathbb{P}$,   $\gamma$
embeds into a finite sum of elements of $\mathcal{A}$, while if
$\gamma_n$ is strictly right-indecomposable then, according to
Claim 1.3, it is an $\omega$-sum of elements of $\mathcal{A}$,
hence $\gamma$ too. Since $X$ is cofinal and the enumeration is
quasi-monotonic then $\gamma\leq\beta$. Thus, up-to equimorphy,
$\beta$ is the largest element of
$\mathbb{P}\cap\downarrow\alpha$.\\ {\bf Case2} $\alpha$ is
strictly left-indecomposable. We consider  the set
$\mathcal{B}:=\{\gamma\in \mathbb{P}: \gamma \leq \alpha, \gamma=
1+\beta\}$ and we proceed as in Case 1.\\
 $(ii)$ Suppose  $\alpha \equiv\sum^{*}_{n<\omega}\alpha_{n}$ with each $\alpha_{n}$
indecomposable and $0<\alpha_{n}<\alpha$. Since $\alpha$ is
indecomposable and $0<\alpha_{0}<\alpha$, then $\alpha$ is
strictly left-indecomposable. Since the $\alpha_{n}$ are
indecomposable and are distinct from  $0$ they form a
quasi-monotonic sequence. But then, the sequence of the
$\mathbb{P}(\alpha_{n})$ is quasi-monotonic too. Hence $\gamma:=
\sum^{*}_{n<\omega}\mathbb{P}(\alpha_{n})$ is indecomposable and
belongs to $\mathbb{P}$, thus $\gamma\leq \mathbb{P}(\alpha)$.
According to the proof of $(i)$ above, $\mathbb{P}(\alpha)$ is of
the form $\sum^{*}_{n<\omega}(1+\beta_{n})$ with $1+\beta_n\in
\mathbb{P}\cap\downarrow \alpha$. Let $1+\beta \in
\mathbb{P}\cap\downarrow \alpha$; according to Claim 1.2,  we may
write $1+\beta = 1+\gamma_{1}+\dots+\gamma_{m }$ with each
$\gamma_{i}$ indecomposable belonging to $\mathbb{P}$. Each
$\gamma_{i}$ embeds into some $\alpha_{n}$, hence into
$\mathbb{P}(\alpha_{n})$.  Because of the quasi-monotony of the
sequence of the $\mathbb{P}(\alpha_{n})$ it follows
$1+\beta\leq\sum^{*}_{n<\omega}\mathbb{P}(\alpha_{n}):= \gamma$.
Since $\gamma$ is strictly left-indecomposable, this inequality
applied to each $1+\beta_{n}$ leads to
$\mathbb{P}(\alpha)\leq\gamma$. Finally,
$\mathbb{P}(\alpha)\equiv\sum^{*}_{n<\omega}\mathbb{P}(\alpha_{n})$.\\
$(iii)$ Suppose $\alpha=\sum_{\lambda<\mu}\alpha_{\lambda}$ with
each $\alpha_{\lambda}$ strictly left-indecomposable verifying
$0<\alpha_{\lambda}<\alpha$. From $(ii)$ above,  if $\beta$ is a
strictly left-indecomposable order type then $\mathbb{P}(\beta)$
too; moreover, from Claim 1.1, we may suppose that
$\mathbb{P}(\beta)$  has a largest element. Hence, for every
increasing sequence $\lambda_{0}<\lambda_{1}<\dots
<\lambda_{n}<\dots$ of members of $\mu$, we have
$\sum_{n<\omega}\mathbb{P}(\alpha_{\lambda_{n}})\in\mathbb{P}\cap\downarrow\alpha$
proving $\sum_{n<\omega}\mathbb{P}(\alpha_{\lambda_{n}})\leq
\mathbb{P}(\alpha)$. Since $\alpha$ is indecomposable and
$0<\alpha_{0}<\alpha$, then $\alpha$ is strictly
right-indecomposable. According to the construction given in $(i)$
above, $\mathbb{P}(\alpha)$ is also strictly right-indecomposable
and, in fact,  is a sum $\sum_{n<\omega}(\beta_{n}+1)$ with
$\beta_{n}+1\in \mathbb{P}$. From Claim 1.2 each of these
$\beta_{n}+1$ is a finite sum of strictly left-indecomposable
members of  $\mathbb{P}$, hence $\mathbb{P}(\alpha)$ is a sum
$\sum_{n<\omega}\gamma_{n}$ with $\gamma_{n}$ strictly
left-indecomposable and belonging to $\mathbb{P}$. Each
$\gamma_{n}$ embeds into some $\alpha_{\lambda}$; indeed, since
$\gamma_{n}\leq \alpha$, let $\mu'$, $\mu'\leq \mu$, be the least
ordinal such that  $\gamma_{n}\leq
\sum_{\lambda<\mu'}\alpha_{\lambda}$. Since $\gamma_{n}$ is
strictly left-indecomposable then $\mu':=\mu''+1$  and then for
the same reason $\gamma_{n}\leq \alpha_{\mu''}$.  In fact, since
$\alpha$ is strictly right-indecomposable, $\gamma_{n}$ embeds
into cofinaly many $\alpha_{\lambda}$'s, hence into cofinaly many
$\mathbb{P}(\alpha_{\lambda})$'s. We can then  find  a cofinal
sequence sequence
$\lambda_{0}<\lambda_{1}<\ldots<\lambda_{n}<\ldots$  such that
$\gamma_{n}\leq \mathbb{P}(\alpha_{\lambda_{n}})$, proving
$\mathbb{P}(\alpha)\leq
\Sigma_{n<\omega}\mathbb{P}(\alpha_{\lambda_{n}})$. Finally, we
have $\mathbb{P}(\alpha)\equiv
\Sigma_{n<\omega}\mathbb{P}(\alpha_{\lambda_{n}})$, as
claimed.\end{proof}
\begin{proposition}
        For each countable scattered indecomposable order type $\alpha$ there is a distributive lattice $T_{\alpha}$ with $0$
such that:\\ $1)$ $ T_{\alpha}$  contains  no infinite independent
set;\\ $2) $ Each indivisible chain that embeds into $ T_{\alpha}$
embeds into $\mathbb{P}(\alpha)$;\\ $3)$ $I(\alpha)$ embeds into
$J(T_{\alpha})$.
\end{proposition}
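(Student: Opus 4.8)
The plan is to build $T_{\alpha}$ by induction on the indecomposable scattered type $\alpha$, the recursion being well founded because, by Laver's theorem (\ref{tmI.4.1}$(iii)$), the countable scattered types are well-quasi-ordered and so carry no infinite strictly descending sequence under embeddability. For the base cases $\alpha\in\{0,1\}$ one has $\mathbb{P}(\alpha)=\alpha$ and a chain of type $1+\alpha$ does the job. The decisive idea is that the long chain $I(\alpha)$ demanded by $3)$ must be produced \emph{inside} the ideal lattice $J(T_{\alpha})$ and not inside $T_{\alpha}$ itself: this is the sierpinskization phenomenon. Concretely, if $S$ is a sierpinskization of $\alpha$ and $\omega$ then, $S$ being $\downarrow$-closed (it sits in a product of two chains), $I_{<\omega}(S)$ is a distributive lattice and $J(I_{<\omega}(S))\cong I(S)$ carries a maximal chain of type $I(\alpha)$ coming from the linear extension of type $\alpha$; meanwhile \emph{every} chain of the lattice $I_{<\omega}(S)$ of finite initial segments has type at most $\omega$, since cardinality strictly increases along such a chain. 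This already settles the case where $\alpha$ is an ordinal: there $\mathbb{P}(\alpha)=\omega$, $S$ is well-founded with no infinite antichain, so by Higman's theorem $I_{<\omega}(S)$ is wqo and contains no $[\omega]^{<\omega}$ (property $1)$, via \ref{tm4.1}), all its chains are bounded by $\omega=\mathbb{P}(\alpha)$ (property $2)$), and $3)$ holds by construction.

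For non-ordinal $\alpha$ the plain sierpinskization fails $1)$: since $\alpha$ is not well-ordered, $S$ has an infinite antichain and $I_{<\omega}(S)$ contains $[\omega]^{<\omega}$. The remedy I would use is to replace the ``horizontal'' factor $\omega$ by the recursively constructed lattices. Using \ref{corI.4.2}$(ii)$ write $\alpha$ as an $\omega$-sum (if right-indecomposable) or an $\omega^{*}$-sum (if left-indecomposable) of a quasi-monotonic sequence of strictly smaller indecomposables $\alpha_{n}$; by \ref{lem III.8.1} the matching decomposition of $\mathbb{P}(\alpha)$ is $\Sigma_{n}\mathbb{P}(\alpha_{\lambda_{n}})$, respectively $\Sigma^{*}_{n}\mathbb{P}(\alpha_{n})$. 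I would then assemble $T_{\alpha}$ from the inductive lattices $T_{\alpha_{n}}$ stacked along a well-ordered (resp. reverse well-ordered) index, the stacking arranged — as in the two-term product $A_{0}\times(1+B)$ of \ref{v3} — so that the index direction is absorbed into $J$ and supplies the extra length, while each level contributes only its own short chains. The identities $J(A\times B)\cong J(A)\times J(B)$ and $J(A+B)\cong J(A)+J(B)$ of \ref{lemI.3.5} are exactly what let one read off that $I(\alpha)$ reappears as a chain of $J(T_{\alpha})$, giving $3)$.

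Properties $1)$ and $3)$ should then be comparatively routine. For $1)$ I would show that an infinite independent set of the assembled lattice is necessarily confined to a single level (resp. factor) and then invoke the inductive hypothesis together with \ref{tm4.1}; for $3)$ one combines the existence of a linear extension of type $\alpha$ with the $J$-identities above. The hard part will be property $2)$ in the non-ordinal case: one must prove that every indivisible chain of $T_{\alpha}$ embeds into $\mathbb{P}(\alpha)$, even though $J(T_{\alpha})$ deliberately contains the longer chain $I(\alpha)$. For the product parts this is precisely \ref{propI.3.7} (an indivisible chain in a product of two chains embeds into one factor), which reduces the question to the factors; for the sum direction I would use that an indivisible chain is the maximal indivisible type embedding into itself, so that it suffices to bound each indivisible type that occurs, whereupon the recursive identity for $\mathbb{P}(\alpha)$ in \ref{lem III.8.1} matches the levels.

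The delicate point — the one I expect to absorb most of the work — is the interaction between an indivisible chain that runs \emph{cofinally} through all levels and the maximal-indivisible-type bookkeeping of \ref{v3}: there one compares the maximal strictly right-indecomposable and strictly left-indecomposable types $\nu,\xi$ that embed into the two factors and derives a contradiction from $\nu\equiv\xi$. This is exactly where \ref{lemI.4.4} and the strict left/right indecomposability of the pieces $\alpha_{n}$ must be invoked to exclude an indivisible chain exceeding $\mathbb{P}(\alpha)$. I expect that, once the cofinal case is handled by this argument, the remaining (bounded) cases fall to the inductive hypothesis together with the fact that $\mathbb{P}(\alpha_{n})\leq\mathbb{P}(\alpha)$.
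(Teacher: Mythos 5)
Your overall plan---induction on embeddability via Laver's theorem, and the insight that the long chain $I(\alpha)$ must be created inside $J(T_{\alpha})$ rather than inside $T_{\alpha}$ by means of a sierpinskization---is the right one, and your treatment of ordinals coincides with the paper's construction (for an indecomposable ordinal the paper's recursion degenerates to exactly $T_{\alpha}=I_{<\omega}(Q)$ with $Q$ a sierpinskization of $\alpha$ and $\omega$); your $\omega^{*}$-stacking for the left-indecomposable case is also essentially the paper's. The fatal gap is the assembly in the strictly right-indecomposable, non-ordinal case. A lexicographic sum along a well-ordered index does \emph{not} absorb the index direction into $J$: choosing one chain $C_{n}$ in each level of $T:=\Sigma_{n<\omega}T_{\alpha_{n}}$ produces the chain $\Sigma_{n<\omega}C_{n}$ inside $T$ itself, so the index length multiplies the level lengths \emph{inside} $T$. (The absorption you cite from Lemma \ref{v3} is a feature of \emph{products}, via Lemma \ref{propI.3.7}; but an infinite product contains an infinite independent set, so property $1)$ forbids that reading.) Concretely, take $\alpha:=\omega^{*}\omega\omega=(\omega^{*}\omega)\omega$, the sum of $\omega$ copies of $\omega^{*}\omega$: it is countable, scattered, indivisible, strictly right-indecomposable, and not an ordinal. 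Your decomposition from Corollary \ref{corI.4.2}$(ii)$ has pieces $\alpha_{n}=\omega^{*}\omega$; since already $T_{\omega^{*}}$ contains $\omega^{*}$, your stack for $\omega^{*}\omega$ contains a chain of type $\omega^{*}\omega$, and your stack for $\alpha$ contains the indivisible chain $\omega^{*}\omega\omega=\alpha$ itself. But Lemma \ref{lem III.8.1}$(iii)$, applied to $\alpha=\Sigma_{\lambda<\omega\omega}\,\omega^{*}$, gives $\mathbb{P}(\alpha)\equiv\Sigma_{n<\omega}\mathbb{P}(\omega^{*})=\omega^{*}\omega$, and $\alpha\not\leq\omega^{*}\omega$, because every well-ordered subset of $\omega^{*}\omega$ has type at most $\omega$ (its trace on each block $\omega^{*}$ is finite) whereas $\omega\omega$ embeds into $\alpha$. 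So property $2)$ fails for your $T_{\alpha}$, and no bookkeeping with Lemmas \ref{lemI.4.4} and \ref{v3} can repair it: the offending chain genuinely sits in $T_{\alpha}$. The same example shows that the inequality your cofinal case would need, $\Sigma_{n<\omega}\mathbb{P}(\alpha_{n})\leq\mathbb{P}(\alpha)$, is simply false for the decomposition of Corollary \ref{corI.4.2}$(ii)$, whose pieces may themselves be strictly right-indecomposable; Lemma \ref{lem III.8.1} yields such an inequality in the $\omega$-direction only for strictly \emph{left}-indecomposable pieces.

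What is missing is the paper's key move: sierpinskize the \emph{index} of the recursion, not only $\alpha$ itself. Write $\alpha=\Sigma_{\lambda<\mu}\alpha_{\lambda}$ using Lemma \ref{lemI.4.5}, with $\mu$ an indecomposable ordinal and every piece strictly left-indecomposable; build recursively \emph{posets} $P_{\alpha_{\lambda}}$, each given a largest element; let $Q$ be a sierpinskization of $\mu$ and $\omega$; and set $T_{\alpha}:=I_{<\omega}\bigl(\Sigma_{\lambda\in Q}P_{\alpha_{\lambda}}\bigr)$---stacking posets and taking $I_{<\omega}$ at the end, not stacking the lattices. Because $Q$ has finite principal initial segments and no infinite antichain, a finitely generated initial segment of the sum meets only finitely many levels and is determined by its traces on the maximal ones; hence any chain of $T_{\alpha}$ splits along a chain of level patterns---which lives in $I_{<\omega}(Q)$ and therefore has type at most $\omega$---into pieces, each embeddable in a \emph{finite} product $\Pi_{\lambda}T_{\alpha_{\lambda}}$ where Lemma \ref{propI.3.7} applies; this is exactly the shape that the recursion $\mathbb{P}(\alpha)\equiv\Sigma_{n<\omega}\mathbb{P}(\alpha_{\lambda_{n}})$ of Lemma \ref{lem III.8.1}$(iii)$ can absorb, giving property $2)$. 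Meanwhile $Q$ still linearly extends to $\mu$, so the whole sum linearly extends to a chain embedding $\alpha$, which is what places $I(\alpha)$ inside $J(T_{\alpha})\cong I\bigl(\Sigma_{\lambda\in Q}P_{\alpha_{\lambda}}\bigr)$ and gives property $3)$.
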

Thus if $\alpha$ is indivisible and if $\mathbb
{P}(\alpha)<\alpha$ then $T_{\alpha}$ reveals that $\alpha\not \in
\mathbb {K}$. This proves Theorem \ref{thm7}.

\begin{proof}
To each countable scattered indecomposable order-type $\alpha$ we
associate an ordered  set $P_{\alpha}$, with a largest element if
$\alpha$ is strictly left-indecomposable, such that:\\ $1')$
$P_{\alpha}$ has no infinite antichain;\\ $2')$ $
I_{<\omega}(P_{\alpha})$ is a distributive lattice and every
indivisible chain which  embeds into $ I_{<\omega}(P_{\alpha})$
embeds into $\mathbb{P}(\alpha)$; \\ $3')$ the order on
$P_{\alpha}$ extends to a linear order $\leq_{\alpha}$ such that
the chain $\underline {P}_{\alpha}:=(P_{\alpha}, \leq_{\alpha})$
embeds $\alpha$. \\ Let  $T_{\alpha}:=I_{<\omega}(P_{\alpha})$.
Then $T_{\alpha}$  satisfies conditions $1)$, $2)$ and $3)$.
Indeed, $J(T_{\alpha})=J(I_{<\omega}(P_{\alpha}))\cong
I(P_{\alpha})$. Since $P_{\alpha}$ has no infinite antichain,
$I(P_{\alpha})$ does not embed $\mathfrak{P}(\omega)$ (Proposition
\ref {corI.1.2}), hence, as it results from Theorem \ref{tm4.1},
$T_{\alpha}$ has no infinite independent set, proving that $1)$
holds. Condition $2)$ is just Condition $2')$ in this case. Now,
since $\alpha\leq \underline{ P}_{\alpha}$ it follows that
$I(\alpha)\leq I(\underline{P}_{\alpha})\leq I(P_{\alpha})\cong
J(T_{\alpha})$, hence $3)$ holds.\\ Since the class of countable
order-types  is well-quasi-ordered, we may construct the
$P_{\alpha}$'s  by induction. Starting with
 $\alpha= 1$ we put $P_{1}:=1$.
Next, we distinguish the following cases:\\ {\bf  Case 1}:
$\alpha$ strictly left-indecomposable. In this case
$\alpha\equiv\sum^{*}_{n<\omega}\alpha_{n}$ where the
$\alpha_{n}$'s form a quasi-monotonic sequence of indecomposable
order types ($(ii)$ of Corollary \ref {corI.4.2}). We put
$P_{\alpha}:=(\sum^{*}_{n<\omega}P_{\alpha_{n}})+1$. Conditions
$1')$ and $3')$ are trivially satisfied. For $2')$ observe that
from $(ii)$ of Lemma \ref{lem III.8.1} we have
$\mathbb{P}(\alpha)\equiv \sum^{*}_{n<\omega}\mathbb{P}
(\alpha_{n})$.\\ {\bf Case2}: $\alpha$ strictly
right-indecomposable. From Lemma \ref{lemI.4.5}, $\alpha$ writes
$\sum_{\lambda<\mu}\alpha_{\lambda}$ where $\mu$ is an
indecomposable ordinal, each $\alpha_{\lambda}$ is strictly
left-indecomposable and verifies $\alpha_{\lambda}<\alpha$. If
$\mu=\omega$ we  put $P_{\alpha}:=\sum_{n<\omega}P_{\alpha_{n}}$.
In full generality, let $Q$ be the set of integers ordered by the
intersection of the natural order $\leq_{\omega}$ on $\omega$, and
the order $\leq_{\mu}$ of type $\mu$, the order $\leq_{\mu}$  been
chosen such that $0$ is the least element of $Q$. We put
$P_{\alpha}:=\sum_{\lambda\in Q}P_{\alpha_{\lambda}}$. Since the
order of $Q$ is the intersection of two well-orders, $Q$ has no
infinite antichain. Since the $P_{\alpha_{\lambda}}$'s  have no
infinite antichain, the sum $\sum_{\lambda\in
Q}P_{\alpha_{\lambda}}$ also. Hence  $1')$ holds. By construction,
the natural order on $\mu$ is a linear extension of $Q$, thus the
chain $\underline{Q}:=(Q,\leq_{\mu})$ embeds $\mu$. From the
inductive hypothesis,
 each  $P_{\alpha_{\lambda}}$ extends to a chain  $\underline{P}_{\alpha_{\lambda}}$ which embeds
$\alpha_{\lambda}$. Hence
$\alpha:=\sum_{\lambda<\mu}\alpha_{\lambda}\leq
\underline{P}_{\alpha}:=\sum_{\lambda\in \underline
Q}\underline{P}_{\alpha_{\lambda}}$ and $3')$ holds . It remains
to prove that  $2')$ holds. First, $I_{<\omega}(P_{\alpha})$ is a
lattice. It suffices  to check that if $x,y\in P_{\alpha}$ then
the initial segment $Z:=\downarrow x\cap\downarrow y$ is
finitely generated. With no loss of  generality, we may suppose
that the $P_{\alpha_{\lambda}}$'s  are pairwise  disjoint and that
$P_{\alpha}$ is the union of them. If $x$ and $y$ are in the same
$P_{\alpha_{\lambda}}$ then $Z=\downarrow(Z\cap
P_{\alpha_{\lambda}})\cup R_{\lambda}$ where
$R_{\lambda}:=\cup\{P_{\alpha_{\lambda'}}: \lambda'<\lambda\}$.
From the inductive hypothesis, $Z\cap P_{\alpha_{\lambda}}$ is a
finitely generated initial segment of $P_{\alpha _{\lambda}}$,
moreover, since each $P_{\alpha_{\lambda'}}$ has a largest element
$1_{\alpha_{ \lambda'}}$ and $\{\lambda'\in Q: \lambda'<\lambda
\}$ is finite, then $R_{\lambda}$ is finitely generated, hence $Z$
too. If $x\in P_{\alpha_{\lambda'}}$, $y\in
P_{\alpha_{\lambda''}}$ with $\lambda'\not=\lambda''$, then
$Z=\cup \{P_{\alpha_{\nu}}: \nu\in
\downarrow\lambda'\cap\downarrow\lambda''\}$. Since each
$P_{\alpha_{\nu}}$ has a largest element $1_{\alpha_{ \nu}}$ and
$\downarrow\lambda'\cap\downarrow\lambda''$ is finite, then $Z$ is
finitely generated. Next, each indivisible chain $C$ contained
into $I_{<\omega}(P_{\alpha})$ embeds into $\mathbb{P}(\alpha)$.
To see this, associate to each $Z\in I_{<\omega}(P_{\alpha})$ the
set $p(Z):=\{\lambda\in Q : Z\cap
P_{\alpha_{\lambda}}\not=\emptyset\}$ and observe that this set is
an initial segment of $Q$, hence is finite. Let $C$ be a chain
contained into $I_{<\omega}(P_{\alpha})$. The set $L$ formed by
the $p(Z)$ for $Z\in C$ is a chain of $I_{<\omega}(Q)$. Hence,
either $L$ is finite or has type $\omega$. For each $F\in L$ put
$C_{F}=\{Z\in C : p(Z)=F\}$. Each $C_{F}$ is an interval of $C$
and $C$ is the sum $\sum_{F\in L}C_{F}$. As a chain, $C_{F}$
embeds into the direct product $\Pi_{\lambda\in
MaxF}I_{<\omega}(P_{\alpha_{\lambda}})=\Pi_{\lambda\in
MaxF}T_{\alpha_{\lambda}}$. Let $\gamma$ be an indivisible chain
embedding into $C$. If $\gamma$ embeds into one of the $C_{F}$
then it embeds into $\Pi_{\lambda\in MaxF}T_{\alpha_{\lambda}}$
hence into one of the $T_{\alpha_{\lambda}}$ (cf. Lemma
\ref{propI.3.7}). From the inductive hypothesis, it embeds into
$\mathbb{P}(\alpha_{\lambda})$ hence into $\mathbb{P}(\alpha)$. If
not, $L$ has type $\omega$ and we can write $\gamma=\sum_{n<\omega
}\gamma_{n}$ with $\gamma_{n}$ indivisible embedding into some
$C_{F_{n}}$. We obtain
$\gamma\leq\sum_{n<\omega}\mathbb{P}(\alpha_{\lambda_{n}})\leq
\mathbb{P}(\alpha)$.\\
\end{proof}

\chapter[Independent sets in distributive lattices]{Infinite independent sets in distributive lattices} \label{chap:TD}

 We show that a poset $P$ contains a subset
isomorphic to $[\kappa]^{<\omega}$ if and only if the poset $J(P)$
consisting of ideals of $P$ contains  a subset isomorphic to
${\mathcal P}(\kappa)$, the power set of $\kappa$. If $P$ is a
join-semilattice this amounts to the  fact that $P$
contains  an  independent set of size $\kappa$.  We show that if
$\kappa:= \omega$  and $P$ is a distributive lattice, then this
amounts to the fact that $P$
 contains either
$I_{<\omega}(\Gamma)$ or
$I_{<\omega}(\Delta)$ as sublattices, where $\Gamma$ and $\Delta$ are
two special meet-semilattices already considered by J.D.Lawson,
M.Mislove and H.A.Priestley.

\section{Presentation of the results}
Let $P$  be a poset \index{poset}. An {\it
ideal} \index{ideal} of $P$ is a non-empty up-directed
\index{up-directed} initial segment \index{initial segment} of
$P$. The set $J(P)$ of ideals of $P$, ordered by inclusion, is an
interesting poset associated with $P$. And there are several
results about their relationship, eg \cite {duff, pz}. If $P:=
[\kappa]^{<\omega}$, the set,  ordered by inclusion, consisting of
finite subsets of $\kappa$, then the poset $J([\kappa]^{<\omega})$
is isomorphic to $\mathcal {P}(\kappa)$, the set,  ordered by
inclusion, consisting of arbitrary subsets of $\kappa$. We prove:

\begin{theorem} \label{thm1}
A poset $P$ contains a subset isomorphic to
$[\kappa]^{<\omega}$ if and only if $J(P)$ contains a subset
isomorphic to
$\mathcal {P}(\kappa)$ .\\
\end{theorem}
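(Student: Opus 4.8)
The plan is to prove the two implications separately by explicit constructions, the forward one being routine and the converse carrying the real content. For the forward direction, suppose $e\colon [\kappa]^{<\omega}\to P$ is an order-embedding onto a subset of $P$. To each $A\subseteq\kappa$ I would attach $I_A:=\downarrow e([A]^{<\omega})$, the down-closure in $P$ of the image of the finite subsets of $A$. Since $[A]^{<\omega}$ is up-directed (closed under finite unions) and $e$ is order-preserving, $e([A]^{<\omega})$ is up-directed, hence so is its down-closure; thus each $I_A$ is a non-empty ideal of $P$. The assignment $A\mapsto I_A$ is clearly inclusion-preserving, and to see it is an embedding I would argue that if $A\not\subseteq B$, choosing $a\in A\setminus B$ gives $e(\{a\})\in I_A$, while $e(\{a\})\in I_B$ would yield $e(\{a\})\le e(F)$ for some finite $F\subseteq B$, whence $\{a\}\subseteq F\subseteq B$ by order-reflection of $e$, a contradiction. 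So $A\mapsto I_A$ embeds $\mathcal P(\kappa)$ into $J(P)$.

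For the converse, fix an order-embedding $\phi\colon\mathcal P(\kappa)\to J(P)$. The crucial first step is the choice of witnesses. For each $\alpha<\kappa$ the sets $\{\alpha\}$ and $\kappa\setminus\{\alpha\}$ are incomparable in $\mathcal P(\kappa)$ (and when $\kappa\le1$ one is strictly below the other), so in all cases $\phi(\{\alpha\})\not\subseteq\phi(\kappa\setminus\{\alpha\})$, and I may pick
\[
x_\alpha\in\phi(\{\alpha\})\setminus\phi(\kappa\setminus\{\alpha\}).
\]
The point of locating $x_\alpha$ in this difference of two \emph{incomparable} ideals is that it yields the two-sided equivalence
\[
x_\alpha\in\phi(A)\iff\alpha\in A\qquad(\alpha<\kappa,\ A\subseteq\kappa):
\]
if $\alpha\in A$ then $\phi(\{\alpha\})\subseteq\phi(A)$ gives $x_\alpha\in\phi(A)$, whereas if $\alpha\notin A$ then $\phi(A)\subseteq\phi(\kappa\setminus\{\alpha\})$ excludes $x_\alpha$.

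The second step builds, by induction on $|F|$, an element $y_F\in\phi(F)$ for every finite $F\subseteq\kappa$ that is a common upper bound of the finite set $\{x_\alpha:\alpha\in F\}\cup\{y_{F'}:F'\subsetneq F\}$. Such a $y_F$ exists because all the listed elements already lie in the ideal $\phi(F)$ — the $x_\alpha$ by the equivalence above, and each $y_{F'}$ by induction together with $\phi(F')\subseteq\phi(F)$ — and an ideal is up-directed, so finitely many of its elements have a common upper bound inside it. I would then verify that $F\mapsto y_F$ is the required embedding of $[\kappa]^{<\omega}$: monotonicity is immediate, since $y_G$ dominates every $y_F$ with $F\subsetneq G$; for order-reflection, if $F\not\subseteq G$ I pick $\alpha\in F\setminus G$, note $x_\alpha\le y_F$, so $y_F\le y_G\in\phi(G)$ would force $x_\alpha\in\phi(G)$ (an initial segment) and hence $\alpha\in G$ by the equivalence, a contradiction.

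I expect the main obstacle to be precisely this witness choice in the converse. A naive selection such as $x_\alpha\in\phi(\{\alpha\})\setminus\phi(\emptyset)$ controls membership only from below, while $x_\alpha\in\phi(\kappa)\setminus\phi(\kappa\setminus\{\alpha\})$ controls it only from above; neither alone suffices, because the inductive construction of the $y_F$ needs the full equivalence $x_\alpha\in\phi(A)\iff\alpha\in A$ (the $\Rightarrow$ part for order-reflection, the $\Leftarrow$ part to keep the upper bounds inside $\phi(F)$). Resolving this by placing $x_\alpha$ in the difference of the two incomparable ideals $\phi(\{\alpha\})$ and $\phi(\kappa\setminus\{\alpha\})$ is the key idea, after which directedness of each ideal does all the remaining work.
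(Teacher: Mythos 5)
Your proof is correct, and both directions check out: the witness choice $x_\alpha\in\phi(\{\alpha\})\setminus\phi(\kappa\setminus\{\alpha\})$ does give the two-sided equivalence $x_\alpha\in\phi(A)\iff\alpha\in A$, and the inductive construction of the $y_F$ inside the ideals $\phi(F)$ is sound. Your route differs from the paper's in an interesting way. The forward direction is the same (Lemma \ref{lemI.3.1}: send $A$ to the down-closure of the image of $[A]^{<\omega}$). For the converse, however, the paper proves a general criterion, Proposition \ref{lemI.3.3}: for any poset $K$ in which every $\downarrow x$ is finite and every $K\setminus\uparrow x$ is a finite union of ideals, $K\leq P$ follows from $J(K)\leq J(P)$. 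Its proof chooses, for \emph{every} $x\in K$, a witness $g(x)\in f(\downarrow x)\setminus\bigcup\{f(J): x\notin J\in J(K)\}$; the non-emptiness of that set rests on the primality of ideals among initial segments (an ideal contained in a finite union of initial segments lies in one of them), and order-preservation is then arranged inductively using up-directedness. You instead exploit the specific structure of $[\kappa]^{<\omega}$: witnesses are needed only at the atoms, where a single ideal $\phi(\kappa\setminus\{\alpha\})$ must be avoided, so incomparability of $\{\alpha\}$ and $\kappa\setminus\{\alpha\}$ suffices and no primality lemma is needed; all remaining elements $y_F$ are mere upper bounds taken inside $\phi(F)$, and order-reflection is delegated entirely to the atom witnesses. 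What each approach buys: yours is shorter and fully self-contained for the theorem as stated; the paper's proposition is reusable and is in fact applied later in the chapter to other posets $K$ (such as $I_{<\omega}(Q)$ for suitable $Q$ and sierpinskizations), where your atom-based shortcut would not be available since those posets are not generated by atoms in the way $[\kappa]^{<\omega}$ is.
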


If $P$ is a join-semilattice \index{join-semilattice}, then such
containments amount to the existence of an independent set
\index{independent set} of size $\kappa$; a subset $X$ of $P$
being {\it independent} if $x\not \leq \bigvee F$ for every $x\in
X$ and every non-empty finite subset $F$ of $X\setminus \{x\}$.
The notion of independence  is better understood in terms of
closure operators \index{closure operator}. Let us recall that if
$\varphi$  is a closure operator on a set $E$,   a subset $X$ is
{\it independent} if $x\notin\varphi(X\setminus\{x\})$ for every
$x\in X$. Also, if $\varphi$ is algebraic then: $a)$ $\mathcal
{F}_{\varphi}$, the set of closed sets \index{closed set}, is  an
algebraic lattice \index{algebraic lattice}, $b)$ $\mathcal
{F}_{\varphi}^{<\omega}$, the set of its compact elements
\index{compact element}, is a join-semilattice with $0$ and $c)$
$\mathcal {F}_{\varphi}$ is isomorphic to $ J(\mathcal
{F}_{\varphi}^{<\omega})$.

A basic relationship  between closure operators and independent sets
is this.
\begin{theorem} \label{closure}\label{tm1}
Let $\varphi$ be a closure operator on a set $E$. The following
properties are equivalent:\\
$(i)$ $E$ contains an independent set of size $\kappa$;\\
$(ii)$ $\mathcal {F}_{\varphi}$  contains a subset isomorphic to
$\mathcal {P}(\kappa)$;\\
If $\varphi$ is algebraic, these two properties are equivalent to:\\
$(iii)$ $\mathcal
{F}_{\varphi}^{<\omega}$ contains a subset isomorphic to
$[\kappa]^{<\omega}$.\\
\end{theorem}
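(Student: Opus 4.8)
The plan is to establish the equivalence of $(i)$ and $(ii)$ for an \emph{arbitrary} closure operator by a direct argument, and then to obtain $(iii)$ in the algebraic case by reducing to Theorem \ref{thm1}. Throughout I use only that $\varphi$ is monotone, extensive and idempotent, so that a subset of a closed set has its $\varphi$-closure inside that closed set.

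For $(i)\Rightarrow(ii)$, I would fix an independent set $X=\{x_{i} : i<\kappa\}$ and define $g:\mathcal{P}(\kappa)\to\mathcal{F}_{\varphi}$ by $g(A):=\varphi(\{x_{i} : i\in A\})$. This map is visibly monotone. To see that it is an order-embedding, suppose $A\not\subseteq B$ and pick $i\in A\setminus B$. Then $x_{i}\in g(A)$, while independence of $X$ gives $x_{i}\notin\varphi(X\setminus\{x_{i}\})\supseteq\varphi(\{x_{j} : j\in B\})=g(B)$, since $i\notin B$. Hence $g(A)\not\subseteq g(B)$, so $g$ embeds $\mathcal{P}(\kappa)$ as a subposet of $\mathcal{F}_{\varphi}$.

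The converse $(ii)\Rightarrow(i)$ is the step that needs the most care, because an order-embedding is weaker than a lattice-embedding, so the images $g(\{i\})$ and $g(\kappa\setminus\{i\})$ need not meet in $g(\emptyset)$. The trick is to draw the witness from \emph{both} relevant closed sets at once. Given an order-embedding $g:\mathcal{P}(\kappa)\to\mathcal{F}_{\varphi}$, the fact $\{i\}\not\subseteq\kappa\setminus\{i\}$ forces $g(\{i\})\not\subseteq g(\kappa\setminus\{i\})$, so I may pick $x_{i}\in g(\{i\})\setminus g(\kappa\setminus\{i\})$. Then $X:=\{x_{i} : i<\kappa\}$ is independent: for $j\neq i$ monotonicity gives $x_{j}\in g(\{j\})\subseteq g(\kappa\setminus\{i\})$, whence $\{x_{j} : j\neq i\}\subseteq g(\kappa\setminus\{i\})$; since $g(\kappa\setminus\{i\})$ is a closed set, $\varphi(\{x_{j} : j\neq i\})\subseteq g(\kappa\setminus\{i\})$, which excludes $x_{i}$. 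Distinctness of the $x_{i}$ follows from the same inclusions, so $|X|=\kappa$.

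Finally, assuming $\varphi$ algebraic, I would set $P:=\mathcal{F}_{\varphi}^{<\omega}$; as recalled before the statement, $P$ is then a join-semilattice with $0$ and $\mathcal{F}_{\varphi}\cong J(P)$. Applying Theorem \ref{thm1} to $P$ yields that $P$ contains a subset isomorphic to $[\kappa]^{<\omega}$ if and only if $J(P)\cong\mathcal{F}_{\varphi}$ contains a subset isomorphic to $\mathcal{P}(\kappa)$, that is, $(iii)\Leftrightarrow(ii)$; combined with $(i)\Leftrightarrow(ii)$ this gives the full equivalence. The only points demanding vigilance in this last paragraph are the legitimacy of the identification $\mathcal{F}_{\varphi}\cong J(\mathcal{F}_{\varphi}^{<\omega})$, which is precisely where algebraicity is used, and the check that the subset furnished by Theorem \ref{thm1} transports across this isomorphism without loss.
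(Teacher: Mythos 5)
Your proof is correct, but its two halves stand in different relations to the paper, which in fact gives no proof at all (it declares the result ``almost immediate'' and omits it). The equivalence $(i)\Leftrightarrow(ii)$ you prove exactly as intended: closing up subsets of an independent set yields the embedding of $\mathcal{P}(\kappa)$, and conversely the witnesses $x_{i}\in g(\{i\})\setminus g(\kappa\setminus\{i\})$ recover an independent set; your insistence on drawing the witness from both closed sets (since an order-embedding need not preserve meets or joins) and your distinctness check are precisely the points that make this direction work. For $(ii)\Leftrightarrow(iii)$, however, you take a genuinely different route: you invoke Theorem \ref{thm1} for the poset $P:=\mathcal{F}_{\varphi}^{<\omega}$ together with the isomorphism $\mathcal{F}_{\varphi}\cong J(\mathcal{F}_{\varphi}^{<\omega})$. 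This is legitimate and non-circular, because the paper proves Theorem \ref{thm1} for arbitrary posets by an independent argument (Proposition \ref{lemI.3.3}); but note that it inverts the dependence the authors set up, since they derive Theorem \ref{thm1}, in the join-semilattice case, from Theorem \ref{tm1} --- which presupposes a proof of the latter not relying on the former. The intended ``immediate'' argument uses algebraicity directly on both sides: for $(i)\Rightarrow(iii)$, closures of finite sets are compact, so $A\mapsto\varphi(\{x_{i}:i\in A\})$ restricted to finite $A$ embeds $[\kappa]^{<\omega}$ into $\mathcal{F}_{\varphi}^{<\omega}$; for $(iii)\Rightarrow(i)$, if $h$ embeds $[\kappa]^{<\omega}$ into $\mathcal{F}_{\varphi}^{<\omega}$, then the compact closed sets $F_{i}:=h(\{i\})$ satisfy $F_{i}\nsubseteq F_{j_{1}}\vee\dots\vee F_{j_{n}}$ whenever $i\notin\{j_{1},\dots,j_{n}\}$ (that join lies inside $h(\{j_{1},\dots,j_{n}\})$), and compactness of $F_{i}$ then permits choosing $x_{i}\in F_{i}\setminus\varphi\bigl(\bigcup_{j\neq i}F_{j}\bigr)$, after which independence follows exactly as in your argument for $(ii)\Rightarrow(i)$. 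What each approach buys: yours is shorter on the page but imports the chapter's main combinatorial machinery (Proposition \ref{lemI.3.3}) into a statement the authors regard as elementary, whereas the direct route keeps Theorem \ref{tm1} self-contained --- which is exactly what allows the paper to use it as an alternative foundation for Theorem \ref{thm1} in the semilattice case.
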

The proof is almost immediate and we will not give it. Via the
existence of an independent set, it shows that  the order
containment and the semilattice containment of
$[\kappa]^{<\omega}$ are equivalent, and that the same holds for
$\mathcal {P}(\kappa)$. \\ If $P$ is a join-semilattice  with $0$,
then  $J(P)$  is the lattice of closed sets of an algebraic
closure operator on $P$ whose independent sets are those defined
above in semilattice terms. In this case, Theorem \ref{thm1}
follows immediately from Theorem \ref {tm1}.

In the case of
distributive lattices \index{distributive lattice}, the order (or
join-subsemilattice) containment of $[\omega]^{<\omega}$ can be
replaced by some lattice containment. Given a poset $P$, let
$I_{<\omega}(P)$ be the set of finitely generated initial segments
\index{initial segment} of $P$. Let $\Delta:= \{ (i,j):
i<j\leq\omega \}$ ordered so that $(i,j)\leq (i',j')$ if and only
if either $j\leq i'$ or $i=i'$ and $j\leq j'$. Let
$\Gamma:=\{(i,j)\in \Delta : j=i+1$ or $j=\omega \}$ equipped with
the induced ordering. The posets $\Delta$ and $\Gamma$ are two
well-founded \index{well-founded} meet-semilattices
\index{meet-semilattice} whose maximal elements form an infinite
antichain \index{antichain}.  The posets $I_{<\omega}(\Delta)$ and
$I_{<\omega}(\Gamma )$ are two well-founded distributive lattices
\index{distributive lattice} containing a subset isomorphic to
$[\omega]^{<\omega}$.

\begin{figure}
\begin{center}

\includegraphics[width=2.5in]{chakirdelta}

\caption{$\Delta$}
\label{fig:delta}
\end{center}
\end{figure}

\begin{figure}
\begin{center}

 \includegraphics[width=2.5in]{chakirgamma}

\caption{$\Gamma$}
\label{fig:gamma}
\end{center}
\end{figure}

\begin{theorem} \label{thm8}
Let $T$ be a distributive lattice. The following properties are
equivalent:\\
$(i)$ $T$ contains a subset isomorphic to $[\omega]^{<\omega}$;\\
$(ii)$ The lattice $[\omega]^{<\omega}$ is a quotient of some
sublattice of $T$;\\
$(iii)$ $T$ contains a sublattice isomorphic to $I_{<\omega}(\Gamma)$
or to $I_{<\omega}(\Delta)$.\\
\end{theorem}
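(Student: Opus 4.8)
The plan is to prove the cycle $(iii)\Rightarrow(ii)\Rightarrow(i)\Rightarrow(iii)$, since this makes the three conditions equivalent. Two of the three arrows are short, and the real content is $(i)\Rightarrow(iii)$.

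For $(iii)\Rightarrow(ii)$ I would write the quotient map explicitly. Suppose $T$ has a sublattice isomorphic to $I_{<\omega}(\Gamma)$ (the case $I_{<\omega}(\Delta)$ is identical). The maximal elements of $\Gamma$ are the $(i,\omega)$, indexed by $i<\omega$, and I define $h(I):=\{i<\omega : (i,\omega)\in I\}$ for $I\in I_{<\omega}(\Gamma)$. Because $\Gamma$ is $\downarrow$-closed, $I_{<\omega}(\Gamma)$ is distributive with join equal to union of (finitely generated) initial segments and meet equal to intersection; hence $h$ preserves both operations, and it is onto $[\omega]^{<\omega}$ since $h(\downarrow\{(i,\omega):i\in F\})=F$. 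Thus $[\omega]^{<\omega}$ is a lattice quotient of a sublattice of $T$. For $(ii)\Rightarrow(i)$, let $h\colon S\to[\omega]^{<\omega}$ be a surjective lattice homomorphism with $S$ a sublattice of $T$, and pick $x_n\in h^{-1}(\{n\})$. If $x_n\le\bigvee_{i\in F}x_i$ for some finite $F$ with $n\notin F$, then applying the join- and order-preserving map $h$ gives $\{n\}\subseteq\bigcup_{i\in F}\{i\}$, a contradiction; so $\{x_n:n<\omega\}$ is an infinite independent set, and Theorem \ref{tm1} (with $\kappa=\omega$) produces a subset of $T$ isomorphic to $[\omega]^{<\omega}$.

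The heart is $(i)\Rightarrow(iii)$. First I would use Theorem \ref{tm1} to replace the hypothesis by an infinite independent set $\{x_n:n<\omega\}$, and pass to the (still distributive) sublattice they generate. I would then study the pairwise meets $x_i\wedge x_j$ and apply Ramsey's theorem \cite{rams} to a \emph{finite} colouring of the triples $i<j<k$ recording the comparability relations (one of $<,=,>,\parallel$) among the four elements $x_i\wedge x_j$, $x_i\wedge x_k$, $x_j\wedge x_k$ and the triple meet $x_i\wedge x_j\wedge x_k$; passing to an infinite homogeneous set and reindexing by $\omega$ fixes, simultaneously, how $x_i\wedge x_j$ varies in the larger index and in the smaller index. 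The intended dichotomy is that, for fixed smaller index $i$, the meet $x_i\wedge x_j$ is either \emph{constant} in $j$ or \emph{strictly increasing} in $j$. In the constant case I would set $a_i:=x_i$, $b_i:=x_i\wedge x_{i+1}$ and verify that $\{a_i,b_i:i<\omega\}$ generates a sublattice isomorphic to $I_{<\omega}(\Gamma)$ (the $b_i$ forming the increasing chain below the independent $a_i$, with $a_i\wedge a_j=b_{\min(i,j)}$); in the increasing case the cumulative meets form a staircase generating a sublattice isomorphic to $I_{<\omega}(\Delta)$. Throughout, the essential tool is distributivity in the form $x_i\wedge\bigvee_{j\in F}x_j=\bigvee_{j\in F}(x_i\wedge x_j)$, which both controls the meets of the constructed elements and, crucially, lets me \emph{replace} the possibly irregular generators $x_n$ by suitable finite joins that realise exactly the $\Gamma$- or $\Delta$-pattern.

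The step I expect to be the main obstacle is exactly this reduction to precisely two homogeneous patterns: a priori Ramsey also allows a strictly decreasing (or incomparable) behaviour of $x_i\wedge x_j$, and one must check these do not constitute a genuine third obstruction. The resolution I would pursue is that the freedom to form finite joins of the $x_n$ converts any surviving homogeneous configuration into one of the two admissible staircases, so that only $I_{<\omega}(\Gamma)$ and $I_{<\omega}(\Delta)$ occur; and I would then verify that the generated sublattice is genuinely \emph{isomorphic} to the target (no unexpected collapses of distinct finite joins/meets), which follows from independence together with the distributive law. This colouring-and-construction argument is modelled on the use of Ramsey's theorem in \cite{duff}.
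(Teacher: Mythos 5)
Your two short arrows are fine: the explicit map $h(I):=\{i<\omega:(i,\omega)\in I\}$ does give $(iii)\Rightarrow(ii)$, and your $(ii)\Rightarrow(i)$ (preimages of singletons form an independent set, then Theorem \ref{tm1}) is correct; incidentally your cycle runs opposite to the paper's, whose direct arrows are $(i)\Rightarrow(ii)$ via Lemma \ref{lemII.2} and $(iii)\Rightarrow(i)$ via the independent set $\{\downarrow(i,\omega):i<\omega\}$. The genuine gap is in $(i)\Rightarrow(iii)$, exactly at the step you flag, and it is not a checkable detail but the whole content of the theorem. For a raw independent set your dichotomy (meets constant or strictly increasing) is false, and no passage to an infinite subset repairs it: in the free distributive lattice on generators $x_0,x_1,\dots$ the generators are independent while the meets $x_i\wedge x_j$ are pairwise \emph{incomparable}, and in $T:=[\omega]^{<\omega}\times\omega^*$ the independent set $x_i:=(\{i\},-i)$ has $x_i\wedge x_j=(\emptyset,-\max(i,j))$, strictly \emph{decreasing} in the larger index. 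So the two ``bad'' Ramsey colours really occur, and your proposed cure (``the freedom to form finite joins converts any surviving configuration into one of the two staircases'') is precisely what must be proved. The paper's proof does the join-modification \emph{before} Ramsey, not after, and it needs control you do not have: from independence it first builds the lattice homomorphism $\varphi(x):=\{a\in L:a\leq x\}$ of $\langle L\rangle$ onto $[L]^{<\omega}$ (Lemma \ref{lemII.2}), and only with $\varphi$ in hand can one set $f(k,\omega):=a_k\vee\bigvee\{f(i,\omega)\wedge f(j,\omega):i<j<k\}$ and know the modified elements still form an antichain (Lemma \ref{lem II.4}); this yields a meet-preserving map $\Delta\to T$, whose image is well-founded because meet-preserving images of well-founded meet-semilattices are well-founded (Lemma \ref{lemII.4}); a Zorn-type minimal-bad-antichain argument (Lemma \ref{bad-sequence}) then makes the part below the antichain wqo, and it is this wqo property that kills the incomparable and decreasing colours when Ramsey is finally applied (Lemma \ref{lemII.7}).

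Even granting all that, your case analysis is too coarse: three homogeneous patterns survive, not two — strictly increasing ($\Delta$), meets depending only on the minimum ($\Gamma$), and \emph{all pairwise meets equal} (the semilattice $V$). In that third case your ``constant'' construction collapses ($b_i$ is a single element, $a_i\wedge a_j=b$ for all $i\neq j$), the generated sublattice is essentially $[\omega]^{<\omega}$ over a bottom and is \emph{not} isomorphic to $I_{<\omega}(\Gamma)$; the paper concludes there by the separate observation that $[\omega]^{<\omega}$ contains a sublattice isomorphic to $I_{<\omega}(\Gamma)$. Finally, your remark that ``no unexpected collapses'' follows from independence plus distributivity glosses over a real issue in upgrading a meet-embedding to a lattice embedding: the extension $f^{\vee}(A):=\bigvee\{f(a):a\in A\}$ of Lemma \ref{lemII.5} is injective only under an extra condition which can fail for $\Delta$ (an element $(i,i+1)$ may accidentally be the join of the images of elements strictly below it), and the paper must compose with the re-indexing $(i,j)\mapsto(2i,2j)$ to secure injectivity (Lemma \ref{lemII.11}), plus Lemma \ref{II.8} to pass between $I_{0}$ and $I_{<\omega}$. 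So while your skeleton (Ramsey on meet patterns plus join-modifications, after \cite{duff}) is the right one, the steps you defer are the theorem itself, and as written both the dichotomy and the $\Gamma$-case conclusion are incorrect.
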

\begin{corollary}\label{thm9}
    Let $T$ be a distributive lattice. If $T$ contains a subset
isomorphic to $[\omega]^{<\omega}$ then it contains a
well-founded sublattice $T'$ with the same property.
    \end{corollary}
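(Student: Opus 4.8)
The plan is to read the corollary off directly from Theorem \ref{thm8}, into which all the real work has already been absorbed. Assume $T$ is a distributive lattice containing a subset isomorphic to $[\omega]^{<\omega}$; this is precisely hypothesis $(i)$ of Theorem \ref{thm8}. The implication $(i)\Rightarrow(iii)$ of that theorem then hands us a sublattice $T'$ of $T$ that is isomorphic either to $I_{<\omega}(\Gamma)$ or to $I_{<\omega}(\Delta)$. I would simply take this $T'$ as the desired sublattice and verify that it has the two required features.

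First, $T'$ contains a copy of $[\omega]^{<\omega}$: as recorded in the presentation of $\Gamma$ and $\Delta$, both $I_{<\omega}(\Gamma)$ and $I_{<\omega}(\Delta)$ contain a subset isomorphic to $[\omega]^{<\omega}$, and $T'$ is isomorphic to one of them, so it inherits such a subset. Second, $T'$ is well-founded: the posets $\Gamma$ and $\Delta$ are well-founded meet-semilattices, as noted when they were introduced, so by Birkhoff's theorem, part $a)$ of Theorem \ref{wellfounded} (which states that $I_{<\omega}(Q)$ is well-founded exactly when $Q$ is), the lattices $I_{<\omega}(\Gamma)$ and $I_{<\omega}(\Delta)$ are well-founded; hence so is the isomorphic copy $T'$. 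Combining the two points, $T'$ is a well-founded sublattice of $T$ containing a subset isomorphic to $[\omega]^{<\omega}$, which is exactly the assertion of the corollary.

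It is worth stressing that there is essentially no obstacle remaining at this stage: the entire difficulty is concentrated in Theorem \ref{thm8}, and in particular in the implication $(i)\Rightarrow(iii)$, where the genuine combinatorics enters (the application of Ramsey's theorem in the manner of \cite{duff}). The only content specific to the corollary is the observation that the two canonical obstructions $I_{<\omega}(\Gamma)$ and $I_{<\omega}(\Delta)$ supplied there are themselves well-founded, which is immediate from Birkhoff's theorem once one recalls that $\Gamma$ and $\Delta$ are well-founded. Thus the corollary is best viewed as a structural sharpening of Theorem \ref{thm8}: not merely does the presence of $[\omega]^{<\omega}$ force one of these two sublattices, but the forced sublattice may always be chosen to be well-founded.
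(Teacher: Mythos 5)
Your proof is correct and is essentially the paper's own (implicit) argument: the corollary is read off from Theorem \ref{thm8} via the implication $(i)\Rightarrow(iii)$, which yields a sublattice isomorphic to $I_{<\omega}(\Gamma)$ or $I_{<\omega}(\Delta)$, and the paper has already recorded that these are well-founded distributive lattices containing a subset isomorphic to $[\omega]^{<\omega}$. The one detail you make explicit---deducing well-foundedness of $I_{<\omega}(\Gamma)$ and $I_{<\omega}(\Delta)$ from that of $\Gamma$ and $\Delta$ by Birkhoff's theorem (Theorem \ref{wellfounded}, part $a)$)---is exactly the justification behind the paper's assertion, so there is no gap.
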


One of the main ingredient in our proof of Theorem \ref{thm8} is
Ramsey's theorem  \cite{rams} applied as in \cite{duff}. The posets
$\Delta$ (with a top element added) and $\Gamma$ have been considered
previously in \cite{lmp1} and \cite{lmp2}. The  poset
$\delta$ obtained from $\Delta$ by leaving out the maximal elements was
also considered by E. Corominas in 1970 as a
variant of an example built by R. Rado \cite{rado}.The results
presented here are contained in part in Chapter 1 of the doctoral
thesis of
the first author presented before the University Claude-Bernard
(Lyon1) december 18th,  1992 \cite{chak}, and announced in
\cite{cp}.
\section[Initial segments and ideals]{Initial segments, ideals and a proof of Theorem \ref{thm1}}
Our definitions and notations are standard and agree with \cite
{grat} except on minor points that we will mention. Let $P$ and
$Q$ be two posets. A map $f:P\rightarrow Q$ is {\it
order-preserving} \index{order-preserving} if $x\leq y$ in $P$
implies $f(x)\leq f(y)$ in $Q$; this is an {\it embedding}
\index{embedding} if the converse also holds; this is an {\it
order-isomorphism} \index{order-isomorphism} if in addition $f$ is
onto. We say that $P$ and $Q$ are {\it isomorphic}
\index{isomorphic}, or have the same  {\it order-type}
\index{order-type}, in notation $P\cong Q$, if there is an
order-isomorphism from $P$ onto $Q$. We also say that $P$ {\it
embeds} into $Q$ if there is an embedding from $P$ into $Q$, a
fact we denote $P\leq Q$. We denote $\omega$ the order-type of
$\N$, the set of natural integers, $\omega^*$ the order-type of
the set of negative integers. Let $P$ be  a join-semilattice with
a $0$, an element $x\in P$ is {\it join-irreducible}
\index{join-irreducible} if it is distinct from $0$, and if
$x=a\vee b$ implies $x=a$ or $x=b$ (this is slight difference with
\cite{grat}).\\ If  $P$ is a poset, a subset $I$ of $P$ is an {\it
initial segment} \index{initial segment} of $P$ if $x\in P$, $y\in
I$ and $x\leq y$ imply $x\in I$. If
 $A$ is a subset of $P$, then $\downarrow A:=\{x\in P: x\leq y$ for
some $y\in A\}$ denotes the least initial segment containing $A$.
If $I:=\downarrow A$ we say that $I$ is {\it generated} by $A$ or
$A$ is {\it cofinal in} \index{cofinal} $I$. If $A:=\{a\}$ then
$I$ is a {\it principal initial segment} \index{principal initial
segment} and we write $\downarrow a$ instead of $\downarrow
\{a\}$. The poset $P$ is {\it $\downarrow$-closed}
\index{$\downarrow$-closed}, if the intersection of two  principal
initial segments of $P$ is a finite union, possibly empty, of
principal initial segments. A {\it final segment} \index{final
segment} of $P$ is any initial segment of $P^*$,  the {\it dual}
\index{dual order} of $P$. We denote by $\uparrow A$ the final
segment generated by $A$. If $A:=\{a\}$ we write $\uparrow a$
instead of $\uparrow \{a\}$. The poset $P$ is $\uparrow$-{\it
closed} \index{$\uparrow$-closed} if its dual $P^*$ is
$\downarrow$-closed. A subset $I$ of $P$ is {\it up-directed}
\index{ up-directed} if every pair of elements of $I$ has a common
upper-bound \index{upper-bound} in $I$. An {\it ideal}
\index{ideal} is a non-empty up-directed initial segment of $P$
(in some other texts, the empty set is an ideal). We denote $
I(P)$ , resp. $I_ {<\omega }(P)$, resp. $J(P)$, the set of initial
segments, resp. finitely generated initial segments, resp. ideals
of $P$ ordered by inclusion and we set
 $ J_ *(P):=J(P)\cup\{\emptyset\}$, $I_ 0 (P):=I_ {<\omega}
(P)\setminus\{\emptyset\}$. We note that $I_{ <\omega}(P)$ is the set
of compact elements of $I(P)$, in particular $J(I_{ <\omega}(P))\cong
I(P)$. Moreover
$I_ {<\omega}(P)$ is  a lattice, and in fact a distributive lattice,
if and only if
$P$ is $\downarrow$-closed. We also note that $J(P)$  is
the set of join-irreducible elements of $I(P)$; moreover,
$I_{<\omega}(J(P))\cong I(P)$ whenever $P$ has no infinite antichain.
\\

\begin{lemma} \label{lemI.3.1}
Let $P, Q$ be two posets. $a)$ If $P\leq Q$ then $J(P)\leq J(Q)$;
the converse holds if $Q$ is a chain. $b)$ If $P$ and $Q$ are
join-semilattices \index{join-semilattice} (resp.
meet-semilattices \index{meet-semilattice}), and if $P$ embeds
into $Q$ by
 a join-preserving \index{join-preserving} (resp. meet-preserving \index{meet-preserving}) map then there is an
embedding from $J(P)$ into $J(Q)$ by a map preserving  arbitrary
joins (resp. arbitrary meets).
\end{lemma}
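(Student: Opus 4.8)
The plan is to realise every embedding between the base posets by the single ideal map
$$\bar f(I):=\,\downarrow f(I)=\{\,y\in Q:\ y\le f(x)\ \text{for some}\ x\in I\,\},$$
and then to read off all three assertions from the properties of $f$. For part $a)$, let $f\colon P\to Q$ be an order-embedding. If $I\in J(P)$, then $f(I)$ is up-directed in $Q$ (an upper bound of $x,x'\in I$ lying in $I$ is sent to an upper bound of $f(x),f(x')$), so $\bar f(I)$ is a non-empty up-directed initial segment, i.e.\ $\bar f(I)\in J(Q)$. The map $\bar f$ is clearly monotone, and it reflects inclusion: if $\bar f(I)\subseteq\bar f(I')$ and $x\in I$, then $f(x)\in\bar f(I')$ forces $f(x)\le f(x')$ for some $x'\in I'$, whence $x\le x'$ (since $f$ is an embedding) and $x\in I'$. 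Thus $\bar f$ is an order-embedding, giving $J(P)\le J(Q)$.

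For the converse I use the hypothesis that $Q$ is a chain. The non-empty initial segments of a chain are linearly ordered by inclusion, so $J(Q)$ is a chain; as $J(P)\le J(Q)$, the poset $J(P)$ is a chain too. This forces $P$ to be a chain, for two incomparable elements $a,b$ would yield incomparable principal ideals $\downarrow a,\downarrow b$ in $J(P)$. Now both $P$ and $Q$ are chains, so $I(P)=\{\emptyset\}\cup J(P)$ and $I(Q)=\{\emptyset\}\cup J(Q)$ with $\emptyset$ least; extending an embedding $J(P)\to J(Q)$ by $\emptyset\mapsto\emptyset$ yields $I(P)\le I(Q)$. By the classical fact recalled earlier in the paper, that for chains $I(\alpha)$ embeds in $I(\beta)$ if and only if $\alpha$ embeds in $\beta$, I conclude $P\le Q$.

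For part $b)$ I keep the same map $\bar f$, now built from a join-preserving embedding $f$; it is already an order-embedding by the computation above, so only preservation of joins remains. Let $\mathcal I\subseteq J(P)$ with join $I$ (the ideal of $P$ generated by $\bigcup\mathcal I$). Monotonicity gives $\bigvee\{\bar f(J):J\in\mathcal I\}\subseteq\bar f(I)$. For the reverse inclusion, take $y\in\bar f(I)$, so $y\le f(x)$ with $x\le\bigvee_{P}X$ for some finite non-empty $X\subseteq\bigcup\mathcal I$; since $f$ preserves finite joins, $f(x)\le\bigvee_{Q}f(X)$, and each $f(x')$ with $x'\in X$ lies in some $\bar f(J)$, hence in the ideal $K:=\bigvee\{\bar f(J):J\in\mathcal I\}$. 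As $K$ is up-directed it contains an upper bound of the finite set $f(X)$, so $\bigvee_{Q}f(X)\in K$ and therefore $y\in K$. Hence $\bar f$ preserves arbitrary joins.

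The meet-semilattice case is the formal order-dual, and this is the step I expect to be the main obstacle, because the ideal functor interacts cleanly with joins but only delicately with meets. The enabling structural fact is that in a meet-semilattice the intersection of two ideals is again an ideal: if $x,y\in I\cap I'$ with upper bounds $u\in I$ and $u'\in I'$, then $u\wedge u'\in I\cap I'$ dominates both $x$ and $y$. Using meet-preservation of $f$ (from $y\le f(x)$ and $y\le f(x')$ one gets $y\le f(x)\wedge f(x')=f(x\wedge x')$), one verifies that $\bar f$ transports finite meets, computed as intersections, to finite meets. The crux is the passage to \emph{arbitrary} meets, where the meet in $J$ is the largest ideal contained in the intersection rather than the intersection itself; I would obtain it by the symmetric counterpart of the join computation above, and this is precisely the point of the proof that demands the most care.
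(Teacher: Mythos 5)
Parts $a)$ and the join half of $b)$ of your proposal are correct. For $a)$ you use exactly the paper's canonical map $\bar f(I)=\downarrow f[I]$, and your verification that it is an embedding is the paper's. For the converse in $a)$ your route differs: you first argue that $P$ and $Q$ must both be chains, pass to $I(P)\leq I(Q)$, and then quote the classical fact (recalled in Chapter \ref{chap:wellfoundedbis}) that $I(\alpha)$ embeds in $I(\beta)$ if and only if $\alpha$ embeds in $\beta$. The paper instead proves that fact on the spot: given an embedding $f:I(P)\rightarrow I(Q)$ it picks $g(x)\in f(\downarrow x)\setminus f(\{y:y<x\})$ and checks that $g$ embeds $P$ into $Q$. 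Both are valid; the paper's version is self-contained (it is, in substance, a proof of the fact you cite), while yours defers the only nontrivial step to a quoted result. Your computation that $\bar f$ preserves arbitrary joins is complete and correct, and more detailed than the paper's one-line assertion.

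The meet half of $b)$ is a genuine gap, and the plan you sketch would fail. You rightly note that binary meets of ideals in a meet-semilattice are intersections and that $\bar f$ preserves them; but for infinite families the meet in $J(P)$ is the largest ideal contained in the intersection, and there is no ``symmetric counterpart of the join computation'': the join argument rests on the finitary fact that every element of $\bigvee\mathcal I$ lies below a finite join of elements of $\bigcup\mathcal I$, and membership in the largest ideal inside $\bigcap\mathcal I$ admits no such finite witness. In fact $\bar f$ simply does not preserve infinite meets, already for chains. Take $P$ the chain $w<x<\cdots<z_2<z_1<z_0$ (type $2+\omega^*$), $Q$ the chain $w<x<y<\cdots<z_2<z_1<z_0$ (type $3+\omega^*$), and $f$ the inclusion, which preserves meets. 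In $J(P)$ the family $\{\downarrow_P z_n: n<\omega\}$ has infimum $\downarrow_P x=\{w,x\}$, since the only ideals of $P$ contained in $\bigcap_n\downarrow_P z_n=\{w,x\}$ are $\{w\}$ and $\{w,x\}$; in $J(Q)$ the image family $\{\downarrow_Q z_n: n<\omega\}$ has infimum $\downarrow_Q y=\{w,x,y\}$; but $\bar f(\downarrow_P x)=\{w,x\}\subsetneq\{w,x,y\}$. Here a meet-preserving embedding does exist (send $\{w\}$ to $\{w\}$, $\downarrow_P x$ to $\downarrow_Q y$, and $\downarrow_P z_n$ to $\downarrow_Q z_n$), but it is not $\bar f$, so the meet case genuinely requires a different construction rather than a dualization. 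Worse, if one makes $x$ and $y$ incomparable in $Q$ (both above $w$ and below every $z_n$), the image family has no infimum in $J(Q)$ at all, so the assertion itself only survives under a charitable reading of ``preserving arbitrary meets''. Your instinct that this is the delicate point is sound — the paper itself waves it away with ``it is easy to check that the map $\psi_f$ satisfies the properties stated in $b)$'', a claim the example above contradicts for that map — but flagging the difficulty is not resolving it: as written, your proposal does not prove the meet case.
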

\begin{proof} Let $f$ be an order-preserving map from $P$ into $Q$.
The map $\psi_{f}:J(P)\rightarrow J(Q)$ defined by
$\psi_{f}(I):=\downarrow \!\!f[I]$  (where $f[I]:=\{f(x): x\in I\}$)
preserves suprema of up-directed subsets.
This is an embedding
provided that $f$ is an embedding.
Conversely, suppose  $J(P)\leq J(Q)$; if
$Q$ is a chain, then
$I(P)\cong 1+J(P)\leq 1+J(Q)\cong I(Q)$; let
$f$ be an embedding from $I(P)$ into $I(Q)$. For
$x\in P$, choose
$g(x)$ in $f(\downarrow \!\!x)\setminus
f(\{y: y<x\})$. The map
$g$ is an embedding from $P$ into $Q$. This proves a). It is easy to
check that the map $\psi_{f}$ satisfies the properties stated in b).
  \end{proof}
\begin{proposition} \label{lemI.3.3}
Let $K$ be a poset such that for every $x\in K$ the initial
segment $\downarrow \!\!x$ is finite and the initial segment
$K\setminus\uparrow x$ is a finite union of ideals. For every
poset $P$, the following properties are equivalent:\\ $(i)$ $K\leq
P$;\\ $(ii)$ $J(K)\leq J(P)$.\\
\end{proposition}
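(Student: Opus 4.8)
The plan is to dispatch $(i)\Rightarrow(ii)$ in one line and to concentrate on $(ii)\Rightarrow(i)$. For the easy direction, if $K\leq P$ then Lemma~\ref{lemI.3.1}~$a)$ gives $J(K)\leq J(P)$ directly, and neither hypothesis on $K$ is needed. So the whole content is the converse: from an order-embedding $f\colon J(K)\to J(P)$ I must manufacture an order-embedding $g\colon K\to P$. I would first record two structural consequences of the hypotheses. Since $\downarrow x$ is finite for every $x$, the poset $K$ is well-founded and, for each $x$, the set $\{y:y<x\}$ is finite; this is what licenses a definition of $g$ by well-founded recursion. Second, I would use repeatedly the elementary fact that \emph{an up-directed set contained in a finite union of initial segments is contained in one of them} (if $d_j\in D\setminus A_j$ for each $j$, a common upper bound $d\in D$ of the $d_j$ lands in some $A_j$, forcing $d_j\in A_j$, a contradiction).

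The construction: define $g(x)$ by recursion on the well-founded order of $K$, assuming $g(y)$ already chosen for all $y<x$. Using the hypothesis that $K\setminus\uparrow x$ is a finite union of ideals, write $K\setminus\uparrow x=I_1\cup\dots\cup I_m$ with each $I_j$ an ideal, so each $I_j\in J(K)$ and $x\notin I_j$. The key non-covering claim is $f(\downarrow x)\not\subseteq f(I_1)\cup\dots\cup f(I_m)$: indeed $f(\downarrow x)$ is up-directed (it is an ideal of $P$) and the $f(I_j)$ are initial segments of $P$, so such a containment would give $f(\downarrow x)\subseteq f(I_j)$ for some $j$, whence $\downarrow x\subseteq I_j$ by the order-reflection of $f$ and thus $x\in I_j$, contradicting $x\notin I_j$. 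Now each $g(y)$ with $y<x$ lies in $f(\downarrow y)\subseteq f(\downarrow x)$, so the finitely many $g(y)$ admit a common upper bound $u\in f(\downarrow x)$; picking $w\in f(\downarrow x)\setminus\bigcup_j f(I_j)$ and then, by directedness, $v\in f(\downarrow x)$ with $v\geq u$ and $v\geq w$, I set $g(x):=v$. Because the $f(I_j)$ are downward closed, $v\geq w\notin\bigcup_j f(I_j)$ forces $v\notin\bigcup_j f(I_j)$, so $g(x)$ simultaneously dominates all earlier $g(y)$, lies in $f(\downarrow x)$, and avoids $\bigcup_j f(I_j)$.

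I would then verify that $g$ is an embedding. Order-preservation is built in: $y<x$ forces $g(y)\leq g(x)$ by the choice of $v$. For order-reflection I would show $x\not\leq y\Rightarrow g(x)\not\leq g(y)$. If $x\not\leq y$ then $x\notin\downarrow y$, so $\downarrow y\subseteq K\setminus\uparrow x=\bigcup_j I_j$; being up-directed inside a finite union of the $I_j$, $\downarrow y\subseteq I_j$ for some $j$, hence $f(\downarrow y)\subseteq f(I_j)$ and therefore $g(x)\notin f(\downarrow y)$ by the choice above. Since $g(y)\in f(\downarrow y)$ and $f(\downarrow y)$ is an initial segment of $P$, a relation $g(x)\leq g(y)$ would put $g(x)\in f(\downarrow y)$, a contradiction; thus $g(x)\not\leq g(y)$, and $g$ is an order-embedding of $K$ into $P$.

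The step I expect to be the real obstacle is reconciling preservation and reflection at the same time. A first attempt — picking, for each $x$ independently, some element of $f(\downarrow x)\setminus\bigcup_j f(I_j)$ — cleanly yields order-reflection but carries no monotonicity whatsoever, since arbitrary elements of nested ideals need not be comparable. The resolution is precisely the recursive choice above, where directedness of the ideal $f(\downarrow x)$ lets me push $g(x)$ above all previously placed images while keeping it outside the forbidden union. Here the finiteness of $\downarrow x$ (only finitely many $g(y)$ to dominate, and well-foundedness for the recursion) and the finite-union-of-ideals hypothesis (the non-covering fact) are both indispensable, which also explains why exactly these two hypotheses on $K$ appear in the statement.
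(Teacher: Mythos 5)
Your proof is correct and follows essentially the same route as the paper's: the easy direction via Lemma~\ref{lemI.3.1}, and for the converse an inductive choice of $g(x)$ inside the ideal $f(\downarrow x)$ avoiding the images of the finitely many ideals covering $K\setminus\uparrow x$, with directedness used to push $g(x)$ above the previously chosen values and the finiteness of $\downarrow x$ licensing the recursion. Your explicit verification that one can simultaneously dominate the earlier images and stay outside the forbidden union (via $v\geq u,w$ and downward-closedness of the $f(I_j)$) is exactly the justification the paper leaves implicit when it writes ``Select $g(x)$ in $C(x)\cap\uparrow z$.''
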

\begin{proof}
$(i)\Rightarrow(ii)$ Lemma \ref{lemI.3.1}.\\
$(ii)\Rightarrow(i)$ Let $f$ be a map from $J(K)$ into $J(P)$. For
$x\in K$, put $F(x):=\{f(J): x\not\in
J\in J(K)\}$ and $R(x):=\bigcup F(x)$ and $C(x):=f(\downarrow \!\!x)\setminus\
R(x)$. \\
{\bf Claim} $C(x)$ is non-empty.

 Indeed, if $x$ is the least element of $K$
then $R(x)$ is empty, hence $C(x)=f(\downarrow \!\!x)$ which is
non-empty. If $x$ is not the  least element of $K$  then
$K\setminus\uparrow x$ is non-empty, hence, from our hypothesis
on $K$, is the union $J_{1}\cup J_{2}\cup\dots\cup J_{n_{x}}$ of a
non-empty family of ideals. Since the ideals of $P$ are the
join-irreducible members of the distributive lattice $I(P)$, it
follows first
 that $R(x)= f(J_{1})\cup f(J_{2})\cup\dots\cup f(J_{n_{x}})$
(indeed, if $f(J)\subseteq R(x)$  then $J\subseteq
J_{n_{i}}$ for some $i$). With this, it follows next that
$C(x)$ is non-empty (otherwise, from $f(\downarrow \!\!x)\subseteq
R(x)$ we would have $f(\downarrow \!\!x)\subseteq
f(J_{n_{i}})$, hence $x\in
J_{n_{i}}$, for some $i$, which is impossible) and our claim is proved.
\\

Let
$g:K\rightarrow P$ be such that $g(x)\in C(x)$ for every $x\in K$.
Clearly $x'\not\leq x''$ implies $g(x')\not\leq g(x'')$.
Indeed, if
$x'\not\leq x''$ then $x'\not\in\downarrow \!\!x''$, hence $C(x')\cap
f(\downarrow \!\!x'')=\emptyset$. This, added to the fact that $g(x')\in
C(x')$, $g(x'')\in
f(\downarrow \!\!x'')$ and $f(\downarrow \!\!x'')$ is an initial segment,
gives $g(x')\not\leq g(x'')$. Hence, if   such a $g$ is
order-preserving then this is an embedding.  We define
such an order-preserving map
$g$  by induction on the size of
$\downarrow \!\!x$. If $\vert\downarrow \!\!x\vert=1$ then we
choose for
$g(x)$ any element of
$C(x)$. If
$\vert\downarrow \!\!x\vert>1$ and if $g(y)$ is defined for all $y$,
$y<x$, then we have $g(y)\in f(\downarrow \!\!y)\subseteq f(\downarrow \!\!x)$, hence $g(y)\in f(\downarrow \!\!x)$. The set $\{g(y): y<x\}$ is
finite, hence, it has an upper-bound $z$ in $f(\downarrow \!\!x)$. Select
$g(x)$ in
$C(x)\cap\uparrow \!\!z$.
\end{proof}\\

Hypotheses of this proposition are satisfied if
$K:=[\kappa]^{<\omega}$. Since in this case $J(K)\cong {\mathcal
P}(\kappa)$, we obtain Theorem \ref{thm1}. \\These hypotheses are
also satisfied by many other posets. This is the case of
join-semilattices which  embed  into some  $[\kappa]^{<\omega}$ as
join-subsemilattices. It is not difficult to see that  a
join-semilattice $K$ has this property  if and only if for every
$x\in K$, the set $J(P)^{\bigtriangleup}_{\neg x}$ of completely
meet-irreducible \index{completely meet-irreducible} members of
$J(P)$ which do not contain $x$ is finite (see \cite {chapou2}).
An interesting example is  $\delta$ ( note that the map $\varphi$
from $\delta$ into $[\omega]^{<\omega}$ defined by
$\varphi(i,j):=\{0,\dots,j-1\}\setminus \{i\}$ is
join-preserving). Posets of the form $K:=I_{<\omega}(Q)$, where
$Q$ embeds into $[\kappa]^{<\omega}$, also embed into
$[\kappa]^{<\omega}$ as join-semilattices. Evidently, it is only
needed to consider those not embedding  $[\kappa]^{<\omega}$ as a
join-semilattice, this amounting to the fact that  $Q$ contains no
infinite antichain (cf. Section 3, Corollary \ref {belordre}).  A
simple minded example is $K:=\omega\times\dots\times\omega$, the
direct product \index{direct product} of finitely many copies of
$\omega$ (obtained with $Q :=\omega\oplus\dots\oplus\omega$, the
direct sum \index{direct sum}of the same number of copies of
$\omega$). \\ The sierpinskization \index{sierpinskization}
technique leads to examples of join-semilattices satisfying the
hypotheses of Proposition \ref{lemI.3.3}, but which do not embed
into $[\omega]^{<\omega}$ as join-subsemilattices. A {\it
sierpinskization} of a countable order-type $\alpha$ with $\omega$
is a poset $S_{\alpha}$ made of a set $E$ and an order $\mathcal
E$ on $E$ which is  the intersection of two linear orders
\index{linear order} $\mathcal A$ and $\mathcal  N$ on $E$ such
that the chains $A:=(E, \mathcal A)$ and $N:=(E, \mathcal N)$ have
types $\alpha$ and $\omega$ respectively. Clearly, every principal
initial segment of $S_{\alpha}$ is finite. More importantly,
non-principal ideals of $S_{\alpha}$ form a chain: in fact, {\it a
subset  $I$ of $E$ is a non-principal   ideal of $S_{\alpha}$ if
and only if $I$ is a non-principal ideal of the chain $A$}. Also,
{\it if $K$ is a sierpinskization of $\alpha$ and $\omega$ then
for every $x\in K$, the initial segment $K\setminus\uparrow x$ is
a  finite union of ideals if and only if no interval of $\alpha$
has order-type $\omega^*$}  (cf \cite {chapou2}).

From this, Proposition \ref{lemI.3.3} applies to any
sierpinskisation of $\omega\alpha$ with $\omega$. As shown in
\cite {pz}, those given by a bijective map
$\psi:\omega\alpha\rightarrow \omega$ which is  order-preserving
on each component $\omega \cdot \{i\}$ of $\omega\alpha$ are all
embeddable in each other, and for this reason  denoted by the same
symbol $\Omega(\alpha)$; moreover, $\Omega(\alpha)$ embeds into
$\Omega(\beta)$ if and only if $\alpha$ embeds into $\beta$.
Consequently, the sierpinskization technique allows to construct
as  many  countable posets $K$ satisfying the hypotheses of
Proposition \ref{lemI.3.3} as there are countable order-types. A
bit more is true. {\it Among the representatives of $\Omega
(\alpha)$, some are semilattices }(and among them, subsemilattices
of the direct product $\omega\times \alpha$). {\it Except for
$\alpha=\omega$, the representatives of $\Omega (\alpha)$  which
are join-semilattices never embed into $[\omega]^{<\omega}$ as
join-semilattices }(whereas they embed  as  posets) \cite {chapou2}. \\

The posets $\Omega (\alpha)$ and $I_{<\omega}(\Omega (\alpha))$ do
not embed in each other as join-semilattices.  They provide two
examples of a  join-semilattice $P$ such that $P$ contains no
chain of type $\alpha$ and  $J(P)$ contains a chain of type
$J(\alpha)$. However, note that if $\alpha$ embeds $\omega^*$ then
$I_{<\omega}(\Omega (\alpha))$ reduces to $[\omega]^{<\omega}$
(they embed  in each other as join-semilattices). Are they
substantially different examples? (see \cite{chapou2} for more).

\section[Infinite independent set]{Distributive lattices containing an infinite independent set: proof of Theorem \ref{thm8}}
A poset $P$ is {\it well-founded}
\index{well-founded} if every non-empty subset has a minimal
element; it is {\it well-quasi-ordered} \index{well-quasi-ordered}
(w.q.o. in brief) if it is well-founded with no infinite
antichain. Let us recall the Higman's characterization of w.q.o.
sets \cite{higm}.
\begin{theorem}\label{Higman}
Let $P$ be a poset. The following properties are equivalent:\\
$(i)$ $P$ is well-quasi-ordered;\\
$(ii)$ For every infinite sequence $(x_{n})_{n<\omega}$ of elements
of $P$, some infinite subsequence is non-decreasing;\\
$(iii)$ For every infinite sequence $(x_{n})_{n<\omega}$ of elements
of $P$, there are $n<m$ such that $x_{n}\leq x_{m}$;\\
$(iv)$ Every final segment of $P$ is finitely generated;\\
$(v)$ The set $I(P)$ of initial segments of $P$,
ordered by inclusion, is well-founded.\\
\end{theorem}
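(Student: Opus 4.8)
The plan is to prove the five properties equivalent by establishing the single cycle $(i)\Rightarrow(ii)\Rightarrow(iii)\Rightarrow(iv)\Rightarrow(v)\Rightarrow(i)$, since four of these arrows are essentially bookkeeping with the definitions of initial and final segments, leaving only $(i)\Rightarrow(ii)$ as the substantial point.

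For the routine arrows I would argue as follows. The implication $(ii)\Rightarrow(iii)$ is immediate, since an infinite non-decreasing subsequence $x_{n_0}\le x_{n_1}\le\cdots$ already exhibits a pair $n_0<n_1$ with $x_{n_0}\le x_{n_1}$. For $(iii)\Rightarrow(iv)$ I would first read off from $(iii)$ that $P$ is well-founded (a strictly descending sequence admits no pair $n<m$ with $x_n\le x_m$) and has no infinite antichain (an antichain admits no such pair either); then, given a final segment $F$, its set $M$ of minimal elements is an antichain, hence finite, and well-foundedness forces $F=\uparrow M$, so $F$ is finitely generated. For $(iv)\Rightarrow(v)$, suppose $I(P)$ had a strictly descending chain $I_0\supsetneq I_1\supsetneq\cdots$; the complements $F_n:=P\setminus I_n$ form a strictly increasing chain of final segments whose union $F$ is, by $(iv)$, generated by a finite set $G$, and since $G$ lands inside some $F_{n^*}$ one gets $F=F_{n^*}$, contradicting strict growth. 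Finally $(v)\Rightarrow(i)$ is contrapositive bookkeeping: an infinite antichain $\{a_n\}$ produces the strictly descending chain $I_n:=\downarrow\{a_m:m\ge n\}$ in $I(P)$, each $a_n$ witnessing $I_{n+1}\subsetneq I_n$, while a strictly descending sequence $x_0>x_1>\cdots$ produces $\downarrow x_0\supsetneq\downarrow x_1\supsetneq\cdots$; both contradict well-foundedness of $I(P)$.

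The heart of the argument, and the step I expect to be the main obstacle, is $(i)\Rightarrow(ii)$: upgrading ``well-founded with no infinite antichain'' to ``every sequence has an infinite non-decreasing subsequence''. Here I would avoid Ramsey's theorem and proceed directly. Given a sequence $(x_n)$, call an index $n$ \emph{terminal} if $x_n\not\le x_m$ for every $m>n$, and let $T$ be the set of terminal indices. The key claim is that $T$ is finite: if $T=\{n_0<n_1<\cdots\}$ were infinite, the subsequence $y_i:=x_{n_i}$ would satisfy $y_i\not\le y_j$ for all $i<j$, so its values are pairwise distinct and form an infinite set whose finitely many minimal elements (finite by the no-antichain hypothesis) cover it by well-foundedness; one minimal value $y_{i_0}$ then lies below infinitely many $y_i$, yielding some $i>i_0$ with $y_{i_0}\le y_i$, contradicting terminality of $n_{i_0}$. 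Once $T$ is known to be finite, every index beyond $\max T$ is non-terminal, so starting past $\max T$ I can greedily select $n_0<n_1<\cdots$ with $x_{n_k}\le x_{n_{k+1}}$ at each stage, producing the required non-decreasing subsequence.

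A remark on set-theoretic hypotheses: I will use the Axiom of Dependent Choices throughout, exactly as the paper does when identifying well-foundedness with the absence of a strictly descending $\omega^*$-sequence, both to extract descending sequences in the bookkeeping steps and to perform the greedy construction in $(i)\Rightarrow(ii)$.
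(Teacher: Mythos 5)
Your proof is correct; the main point to note is that the paper offers no proof to compare against --- Theorem \ref{Higman} is recalled there as Higman's classical characterization of well-quasi-ordering, with a citation to \cite{higm}, and the text passes immediately to the minimal-bad-sequence lemma (Lemma \ref{bad-sequence}). Your argument is thus a self-contained substitute, and it holds up. The cycle $(i)\Rightarrow(ii)\Rightarrow(iii)\Rightarrow(iv)\Rightarrow(v)\Rightarrow(i)$ is sound: the bookkeeping arrows are all correct, and your explicit reliance on Dependent Choices is consistent with the paper's own convention of identifying well-foundedness with the absence of $\omega^*$-chains. For the one substantial step, $(i)\Rightarrow(ii)$, your terminal-index argument is valid: an infinite set of terminal indices would yield pairwise distinct values satisfying $y_i\not\le y_j$ for $i<j$, and the finitely many minimal elements of this infinite set (finite because $P$ has no infinite antichain, covering it because $P$ is well-founded) would force $y_{i_0}\le y_i$ for some $i_0<i$, a contradiction; the greedy selection beyond $\max T$ then needs only DC. It is worth remarking that this route is Ramsey-free, whereas in analogous situations the paper does reach for Ramsey's theorem (implication $7)\Rightarrow 1)$ of Theorem \ref{finitesubsets}, Lemma \ref{lemII.7}); your approach is the more elementary one, trading the appeal to partition calculus for a direct combinatorial extraction.
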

The following lemma provides an alternative version of the {\it
minimal bad-sequence} \index{minimal bad-sequence} technique
invented by C. St J. A. Nash-Williams \cite{nash}.
\begin{lemma}\label{bad-sequence} Let $P$ be a well-founded poset. If
$P$ contains an infinite antichain then it contains some infinite
antichain $A$ such that:\\ $1)$ for every $x\in P$ either $x\geq
y$ for some $y\in A$ or $x<y$ for all $y\in A$ but finitely
many;\\ $2)$ $P\setminus \uparrow A$ is w.q.o.\\
\end{lemma}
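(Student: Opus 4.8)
The plan is to classify the elements of $P$ by the behaviour of their upper complement. Call $x\in P$ \emph{good} if $P\setminus\uparrow x$ is well-quasi-ordered, and \emph{bad} otherwise; write $B$ for the set of bad elements and $G:=P\setminus B$. Since $x\le x'$ forces $\uparrow x'\subseteq\uparrow x$, and hence $P\setminus\uparrow x\subseteq P\setminus\uparrow x'$, a set containing a non-wqo subset is itself non-wqo, so $B$ is a final segment of $P$. The first thing I would prove is that $G$ is wqo: were $(y_k)_{k<\omega}$ an infinite antichain contained in $G$, the tail $\{y_k:k\ge 1\}$ would be an infinite antichain sitting inside $P\setminus\uparrow y_0$, contradicting the goodness of $y_0$. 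The same one-line observation shows that every infinite antichain of $P$ is contained in $B$; in particular, since $P$ is not wqo, $B\neq\emptyset$.

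The crucial remark is that clause $1)$ is \emph{automatic for good elements}, and for an arbitrary antichain $A$: if $x$ is good and $x\notin\uparrow A$, then the members of $A$ incomparable with $x$ lie in the wqo set $P\setminus\uparrow x$ and form an antichain there, so they are finitely many; as $x\not<a$ happens (for $x\notin\uparrow A$) exactly when $a$ is incomparable with $x$, this gives $x<a$ for all but finitely many $a\in A$. Thus clause $1)$ can be threatened only by \emph{bad} elements outside $\uparrow A$, and clause $2)$ reduces, through $G$ wqo, to controlling $B\setminus\uparrow A$. This dictates the shape of $A$: I would take it to be an antichain contained in $B$ which is \emph{maximal} in $B$, so that no bad element is incomparable with every member of $A$; then each bad $x\notin\uparrow A$ is comparable with, hence strictly below, some $a\in A$, which is the first approximation to clause $1)$.

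Maximality by itself is insufficient (a maximal antichain of $B$ may be finite, or may leave a bad element below only finitely many of its members), so the antichain must be extracted by the minimal bad sequence machinery that the lemma alludes to. Concretely, I would build a minimal bad sequence $(x_n)_{n<\omega}$ in $B$, each $x_n$ of least foundation rank among the $y$ extending $x_0,\dots,x_{n-1}$ to an infinite bad sequence; set $U:=\uparrow\{x_n:n<\omega\}$; let $A_0$ be the set of minimal elements of $U$, which a short rank argument shows is an infinite antichain with $A_0\subseteq\{x_n:n\}$ and $\uparrow A_0=U$; and finally enlarge $A_0$ to a maximal antichain $A$ of $B$ by adjoining bad elements incomparable with $A_0$ (this step repairs the degenerate configurations in which the sequence omits an infinite family of pairwise and mutually incomparable bad elements). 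The wqo-ness of $B\setminus\uparrow A$ is then obtained exactly as in the classical minimal bad sequence lemma: an infinite antichain in it would, after prepending a suitable initial segment of $(x_n)$, produce an infinite bad sequence of smaller rank than the minimal one. Combined with $G$ wqo this yields clause $2)$.

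The step I expect to be the main obstacle is clause $1)$ for the bad elements, that is, upgrading ``$x<a$ for some $a\in A$'' to ``$x<a$ for cofinitely many $a\in A$''. Here the easy argument of the second paragraph is unavailable, precisely because for a bad $x$ the set $P\setminus\uparrow x$ is \emph{not} wqo, so the incomparable members of $A$ cannot be bounded merely by a wqo argument. Instead I must exploit the minimal choice of the ranks together with maximality: the point is that the sequence selects its terms cofinally high above each bad element it fails to dominate, forcing all but finitely many members of $A$ to lie above such an $x$. Verifying this, and checking that the enlargement of $A_0$ to a maximal antichain neither destroys infiniteness nor reintroduces a bad element incomparable with infinitely many members of $A$, is the delicate bookkeeping on which the whole argument turns.
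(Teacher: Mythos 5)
Your reductions are sound: the set $B$ of bad elements is a final segment, $G=P\setminus B$ is wqo, every infinite antichain lies in $B$, clause $1)$ holds automatically at good points for an arbitrary antichain $A$, and clause $2)$ reduces to the wqo-ness of $B\setminus\uparrow A$. But the construction you propose for $A$ does not deliver the two remaining claims, and these are not ``delicate bookkeeping'' --- they are where the whole lemma lives, and your last paragraph concedes they are unverified. There are two distinct problems. First, the Nash-Williams minimal bad sequence lemma controls only the set $\{y: y<x_n \ \text{for some } n\}$ of elements strictly \emph{below} the terms of the sequence; it says nothing about bad elements \emph{incomparable} to every $x_n$, which is exactly what $B\setminus\uparrow A_0$ may consist of, so ``exactly as in the classical minimal bad sequence lemma'' does not apply. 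Second, and decisively, your construction can output a wrong antichain. Take $P=\{a_n: n<\omega\}\cup\{b_n: n<\omega\}\cup\{c_n: n<\omega\}$ with the only strict comparabilities $a_n<b_n$. Then $P$ is well-founded and every element is bad. Since every $a_n$ and every $c_n$ has rank $0$, the sequence $c_0,c_1,\dots$ is a legitimate rank-minimal bad sequence, giving $A_0=\{c_n: n<\omega\}$; enlarging $A_0$ to a maximal antichain of $B=P$ by adjoining elements incomparable with $A_0$ may produce $A=\{c_n: n<\omega\}\cup\{b_n: n<\omega\}$, which is indeed maximal. Yet $P\setminus\uparrow A=\{a_n: n<\omega\}$ is an infinite antichain, so clause $2)$ fails, and $a_0$ is below exactly one member of $A$, so clause $1)$ fails. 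Maximality of $A$ as an antichain of $B$, even combined with rank-minimality of a generating bad sequence, is simply the wrong invariant.

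The invariant that works is maximality of the \emph{final segment} $\uparrow A$ under inclusion, and this is what the paper's proof uses. In a well-founded poset every final segment is generated by its minimal elements, so antichains correspond bijectively to final segments via $A\mapsto \uparrow A$, $F\mapsto Min(F)$, and $\uparrow A$ is non-finitely generated if and only if $A$ is infinite. The paper applies Zorn's lemma to the family $\mathcal{F}$ of non-finitely generated final segments (non-empty because $P$ contains an infinite antichain, and closed under unions of chains), takes $F$ maximal in $\mathcal{F}$ and sets $A:=Min(F)$. Both clauses then follow in two lines from maximality: for $1)$, $F\cup\uparrow x$ is a final segment containing $F$, so either it equals $F$ (i.e.\ $x\in\uparrow A$) or it is finitely generated, in which case $A\setminus\uparrow x\subseteq Min(F\cup\uparrow x)$ is finite and $x<y$ for all but finitely many $y\in A$; for $2)$, a non-empty final segment $G$ of $P\setminus\uparrow A$ yields the final segment $G\cup F\supsetneq F$ of $P$, which is therefore finitely generated, and those of its generators lying in $G$ generate $G$, so Higman's criterion applies. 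In the example above this maximality forces $A=\{a_n: n<\omega\}\cup\{c_n: n<\omega\}$, the unique correct choice. Your good/bad decomposition can be kept as a preliminary reduction, but the selection of $A$ must be made by Zorn maximality of $\uparrow A$, not by maximality of $A$ inside $B$ nor by a rank-minimal bad sequence.
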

\begin{proof}
Since $P$ is well-founded, every final segment of $P$ is generated by
its minimal elements. Let $\mathcal{F}$ be the set of
non-finitely generated final segments of $P$. If $X$ is an infinite
antichain of $P$ then $\uparrow \!\!X
\in \mathcal{F}$; moreover $\mathcal{F}$ is closed by union of
chains.  Hence $\mathcal{F}$, ordered by inclusion, is
inductive. According to Zorn's lemma, it has some
maximal element $F$. We claim that  $A:=Min(F)$ satisfies $1)$ and
$2)$. Indeed, let first $x\in P$. Set $F':=F\cup \uparrow \!\!x$.
Since $F'$ is a final segment containing $F$ then either $F'=F$ or
$F'$ is finitely
generated. In the first case $x\geq y$ for some $y\in A$ whereas in
the latter case $x<y$ for all $y\in A$ but finitely many, since $Min
(F')= \{x\}\cup (A\setminus\uparrow \!\!x)$. This  proves that $1)$ holds.
Next, let $G$ be a  final segment of $P\setminus \uparrow \!\!A$.
Then
$G\cup \uparrow \!\!A$  is a final segment of $P$. Since $\uparrow \!\!A=F$,
this final segment strictly contains $F$ if $G$ is non-empty,
hence it is finitely generated; this implies that
$G$ is finitely generated. According to $(iv)$ of Theorem
\ref{Higman},
$P\setminus \uparrow \!\!A$ is w.q.o.
\end{proof}
\begin{lemma}\label{badsequence2} If $P$ is $\uparrow$-closed and $A$
is an infinite antichain satisfying  condition $1)$  of Lemma
\ref{bad-sequence} and  distinct from  $Min(P)$ then $P\setminus
\uparrow \!\!A$ is an ideal; in particular, if
$P$ is a join-semilattice then all members of $A$ are
join-irreducible.
\end{lemma}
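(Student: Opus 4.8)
The plan is to verify directly the three defining properties of an ideal for $P\setminus\uparrow A$ — that it is a non-empty, up-directed initial segment — and then to read off join-irreducibility of the members of $A$ as a corollary of up-directedness. That $P\setminus\uparrow A$ is an initial segment is immediate, since $\uparrow A$ is a final segment of $P$, so I would spend no effort there. For non-emptiness I would argue the contrapositive: if $P=\uparrow A$, then $A=Min(P)$, contradicting the hypothesis $A\neq Min(P)$. Indeed, each minimal element $m$ of $P$ must lie above some $y\in A$, and minimality forces $y=m$, giving $Min(P)\subseteq A$; conversely, if some $y\in A$ were not minimal, an element $x<y$ would, under the assumption $P=\uparrow A$, satisfy $x\geq y'$ for some $y'\in A$, whence $y'\leq x<y$ would contradict the fact that $A$ is an antichain. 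Hence $A\subseteq Min(P)$ too, so $A=Min(P)$, which is excluded. Note this step is purely combinatorial and needs neither $\uparrow$-closedness nor well-foundedness.

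The heart of the argument, and the step I expect to be the main obstacle, is up-directedness. Given $x_1,x_2\in P\setminus\uparrow A$, condition $1)$ of Lemma \ref{bad-sequence} forces the second alternative for each, so the sets $A_i:=\{y\in A: x_i<y\}$ are cofinite in $A$; thus $A_1\cap A_2$ is infinite, and every $y$ in it is a common upper bound of $x_1$ and $x_2$. The difficulty is that these common upper bounds all lie in $A\subseteq\uparrow A$, whereas I need one inside $P\setminus\uparrow A$. This is exactly where $\uparrow$-closedness enters: I would write $\uparrow x_1\cap\uparrow x_2=\uparrow z_1\cup\cdots\cup\uparrow z_k$ as a finite union of principal final segments. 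Since the infinitely many $y\in A_1\cap A_2$ all lie in this union, a pigeonhole argument yields a single $z_{j_0}$ with $z_{j_0}\leq y$ for infinitely many $y\in A$. Then $z_{j_0}\geq x_1,x_2$ is the required common upper bound, and $z_{j_0}\notin\uparrow A$: were $z_{j_0}\geq w$ with $w\in A$, then $w\leq y$ would hold for infinitely many $y\in A$, forcing $w=y$ for infinitely many distinct $y$, an impossibility. This establishes that $P\setminus\uparrow A$ is an ideal.

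Finally, for the ``in particular'' clause, I would suppose $P$ is a join-semilattice and take $y\in A$ with $y=a\vee b$ where $a\neq y\neq b$; then $a,b<y$. Neither $a$ nor $b$ can lie in $\uparrow A$ — if, say, $a\geq y'\in A$, then $y'\leq a<y$ again contradicts the antichain property — so $a,b\in P\setminus\uparrow A$. Using that $P\setminus\uparrow A$ is now known to be up-directed, I pick a common upper bound $c\in P\setminus\uparrow A$ of $a$ and $b$; then $y=a\vee b\leq c$, so $c\in\uparrow A$, a contradiction. Hence $y$ is join-irreducible, once one observes $y\neq 0$, which holds because an infinite antichain cannot contain a least element. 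The only thing requiring care throughout is the bookkeeping distinguishing ``cofinitely many'' from ``infinitely many'' elements of $A$, so that the pigeonhole step and the repeated antichain contradictions apply cleanly; no deeper machinery is needed.
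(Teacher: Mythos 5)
Your proof is correct and follows essentially the same route as the paper's: non-emptiness from $A\neq Min(P)$, up-directedness by combining condition $1)$ with $\uparrow$-closedness and a pigeonhole choice of a generator $z$ of $\uparrow x_1\cap\uparrow x_2$ lying below infinitely many elements of $A$ (hence outside $\uparrow A$), and join-irreducibility of the members of $A$ read off from up-directedness. The only differences are cosmetic: you spell out the non-emptiness argument and the $y\neq 0$ point, which the paper leaves implicit.
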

\begin{proof} Since $A\not=Min(P)$ then $P\setminus \uparrow \!\!A$ is
non-empty. Let $x,y\in P\setminus\uparrow \!\!A$. Condition
$1)$ insures that $A\setminus(\uparrow \!\!x\cap\uparrow \!\!y)$ is finite. Hence $\uparrow \!\!x\cap\uparrow \!\!y$ is
infinite and since $P$ is $\uparrow$-closed, $\uparrow \!\!x\cap\uparrow \!\!y=\uparrow \!\!F$ where $F$ is some finite set. Let $z\in F$ such that
$\uparrow \!\!z\cap A$ is infinite. Since $z\in F$, we have $x, y\leq
z$ and since $\uparrow \!\!z\cap A$ is infinite $z<a$ for some $a\in A$,
hence $z\in P\setminus \uparrow \!\!A$, proving that $P\setminus
\uparrow \!\!A$ is up-directed.\\
If $P$ is a join-semilattice, it is $\uparrow$-closed, hence
$P\setminus \uparrow \!\!A$ is an ideal. In this case $x\vee y\in
P\setminus
\uparrow \!\!A$ whenever $x, y\in P\setminus \uparrow \!\!A$. Hence if $a\in
A$ and $a=x\vee y$ then $x<a$ and $y<a$ are impossible.
\end{proof}\\
Let us recall a basic property of join-irreducibles in a distributive
lattices.
\begin{lemma}\label{lemII.1}
    Let $T$ be a distributive lattice and $x\in T$ with  $x$ distinct
    from the least element of $T$ if any. The following properties are
    equivalent:\\
$(i)$ $x$ is  join-irreducible;\\
$(ii)$ If  $a,b\in T$ and  $x\leq a\vee b$ then  $x\leq a$ or $x\leq
b$;\\
$(iii)$ For every integer $k$ and every $ a_{1},\dots,a_{k}\in T$
if $x\leq a_{1}\vee \dots\vee a_{k}$ then  $x\leq a_{i}$ for some
$i$,  $1\leq i\leq k$.
\end{lemma}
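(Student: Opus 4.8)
The plan is to prove the cyclic chain of implications $(iii)\Rightarrow (ii)\Rightarrow (i)\Rightarrow (iii)$. Two of these three steps are purely formal and rely on neither distributivity nor any hypothesis beyond the definitions; the only place where distributivity genuinely enters is the last implication $(i)\Rightarrow (iii)$, which carries the real content. I would begin by disposing of the two easy steps. The implication $(iii)\Rightarrow (ii)$ is just the instance $k=2$ of $(iii)$, so there is nothing to prove. For $(ii)\Rightarrow (i)$, suppose $x=a\vee b$; then $x\leq a\vee b$, so by $(ii)$ we have $x\leq a$ or $x\leq b$, say $x\leq a$. Since also $a\leq a\vee b=x$, we get $a=x$, which is exactly the defining condition of join-irreducibility, the requirement that $x$ differ from the least element being part of the standing hypothesis on $x$.

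The core of the argument is $(i)\Rightarrow (iii)$. Assume $x$ is join-irreducible and $x\leq a_{1}\vee\dots\vee a_{k}$. The idea is to pull $x$ inside the join by meeting with it and then invoke distributivity. From $x\leq a_{1}\vee\dots\vee a_{k}$ we get $x=x\wedge(a_{1}\vee\dots\vee a_{k})$, and distributivity, extended from the two-term identity $x\wedge(u\vee v)=(x\wedge u)\vee(x\wedge v)$ by an immediate induction on the number of terms, yields
$$x=(x\wedge a_{1})\vee\dots\vee(x\wedge a_{k}).$$
I would then argue by induction on $k$. For $k=1$ the conclusion is trivial. For the inductive step, rewrite the right-hand side as the two-fold join $(x\wedge a_{1})\vee\bigl((x\wedge a_{2})\vee\dots\vee(x\wedge a_{k})\bigr)$ and apply join-irreducibility of $x$: either $x=x\wedge a_{1}$, giving $x\leq a_{1}$, or $x=(x\wedge a_{2})\vee\dots\vee(x\wedge a_{k})$. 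In the latter case $x\leq a_{2}\vee\dots\vee a_{k}$, and the induction hypothesis applied to these $k-1$ elements gives $x\leq a_{i}$ for some $i$ with $2\leq i\leq k$. This establishes $(iii)$ and closes the cycle.

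I do not expect any genuine obstacle here. The only point requiring care is the use of distributivity in the displayed identity: without it the meet-over-join distribution fails and $(i)$ need not imply $(ii)$ (as one sees already in $M_{5}$), so distributivity must be flagged as the essential hypothesis. Beyond that, the work is confined to recording the elementary induction and keeping track of the standing assumption that $x$ is distinct from the least element, which is needed only so that the notion of join-irreducible is well-posed and which is carried unchanged through all three conditions.
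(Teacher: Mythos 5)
Your proof is correct, and it fills a gap the paper deliberately leaves: the paper states this lemma only as a recalled ``basic property'' of distributive lattices and gives no proof at all, so there is nothing to compare against beyond the standard argument, which is exactly what you give. Your cycle $(iii)\Rightarrow(ii)\Rightarrow(i)\Rightarrow(iii)$ is sound: the first two steps are formal, and in the key step the distributive identity $x=(x\wedge a_{1})\vee\dots\vee(x\wedge a_{k})$ combined with induction and the two-term definition of join-irreducibility does the work; you are also right that distributivity is the essential hypothesis, since in the five-element non-modular lattice $M_{5}$ (and likewise in the diamond) a join-irreducible element need not be join-prime.
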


\begin{corollary}\label{belordre} A well-founded distributive lattice $T$ contains no
subset isomorphic to $[\omega]^{<\omega}$ if and only if it
contains no infinite antichain.
\end{corollary}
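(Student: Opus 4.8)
The plan is to prove both implications by contraposition, the substance lying entirely in one direction: a well-founded distributive lattice that carries an infinite antichain must already contain a copy of $[\omega]^{<\omega}$.

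The easy direction is $(\Leftarrow)$: if $T$ has no infinite antichain, then $[\omega]^{<\omega}$ cannot embed as a subset, since the singletons $\{n\}$, $n<\omega$, already form an infinite antichain of $[\omega]^{<\omega}$; any $T$ containing a copy of $[\omega]^{<\omega}$ would thus contain an infinite antichain. So it suffices to treat $(\Rightarrow)$, and I would do this by showing that the presence of an infinite antichain forces a copy of $[\omega]^{<\omega}$.

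For that direction, assume $T$ contains an infinite antichain; the first task is to replace it by an infinite antichain of \emph{join-irreducible} elements. I would begin by recording that a well-founded lattice has a least element $0$: any two minimal elements $x,y$ satisfy $x\wedge y\le x$ and $x\wedge y\le y$, whence minimality gives $x=x\wedge y=y$, so the minimal element is unique and, by well-foundedness, is the least element. In particular $Min(T)=\{0\}$ is a singleton, so no infinite antichain can equal $Min(T)$. Now I apply Lemma \ref{bad-sequence} to extract an infinite antichain $A$ satisfying its condition $1)$. Since $T$, being a lattice, is a join-semilattice and hence $\uparrow$-closed, and since $A\ne Min(T)$, Lemma \ref{badsequence2} guarantees that every member of $A$ is join-irreducible. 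Passing to a countable subset, I may write $A=\{a_n:n<\omega\}$, a pairwise incomparable family of join-irreducibles, none of which is $0$.

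The final step is to verify that $\varphi:[\omega]^{<\omega}\to T$ defined by $\varphi(F):=\bigvee_{n\in F}a_n$, with $\varphi(\emptyset):=0$, is an order embedding. Monotonicity is immediate from $F\subseteq G\Rightarrow\varphi(F)\le\varphi(G)$, so the crux — and the only place distributivity enters — is order-reflection. Given $\varphi(F)\le\varphi(G)$ with $F\ne\emptyset$, each $n\in F$ yields $a_n\le\bigvee_{m\in G}a_m$; join-irreducibility of $a_n$ in the distributive lattice $T$, via Lemma \ref{lemII.1}$(iii)$, gives $a_n\le a_m$ for some $m\in G$, and since $A$ is an antichain this forces $n=m\in G$. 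Hence $F\subseteq G$, so $\varphi$ embeds $[\omega]^{<\omega}$ as a subset of $T$. I expect the main obstacle to be precisely the extraction of a join-irreducible antichain: this is exactly what the minimal bad-sequence machinery of Lemmas \ref{bad-sequence} and \ref{badsequence2} supplies, after which the distributive law renders the embedding check routine.
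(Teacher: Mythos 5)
Your proof is correct and follows essentially the same route as the paper's: both extract, via Lemmas \ref{bad-sequence} and \ref{badsequence2}, an infinite antichain of join-irreducible elements, and then use distributivity through Lemma \ref{lemII.1} to turn it into a copy of $[\omega]^{<\omega}$. The only difference is cosmetic: where the paper notes that such an antichain is an independent set and cites Theorem \ref{tm1}, you inline that theorem's (easy) implication by directly checking that $F\mapsto\bigvee_{n\in F}a_{n}$ is an order-embedding.
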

\begin{proof}
If $T$ contains an infinite antichain then, since $T$ is
well-founded, Lemmas \ref{bad-sequence} and \ref{badsequence2} apply,
hence the set $T^{\vee}$ of join-irreducible elements of $T$ contains
an infinite antichain. From Lemma \ref{lemII.1}, this antichain is in
fact an independent subset of $T$;
according to Theorem \ref {tm1}, $T$ contains a subset isomorphic to
$[\omega]^{<\omega}$. The converse is obvious.
\end{proof}\\

In the sequel we describe typical well-founded meet-semilattices
containing infinite antichains.

\begin{lemma}\label{lem II.3}
    Let $P$ be a meet-semilattice and $f: \Delta \rightarrow  P$  be a
map satisfying $f(i,j)=f(i,\omega )\wedge f(j,\omega)$ for all
$i<j<\omega$ .Then the following properties are equivalent:\\
(i) $f$ is meet-preserving;\\
(ii) $f$ is order-preserving;\\
(iii) $f(i,j)\leq f(k,\omega)$ for all $i<j<k<\omega$;\\
(iv) $f(i,j)\leq f(j,k)$ for all $i<j<k<\omega$;\\
(v) $f(i,j)\leq f(i,k)$ for all $i<j<k<\omega$;\\
(vi) $f(i,j)=f(i,k)\wedge f(j,k)$ for all $i<j<k<\omega$;\\
Moreover, if $f$ satisfies these conditions, then $f$ is one-to-one
if and only if it satisfies conditions $a)$ $f(i,j)< f(j,k)$  and
$b)$ $f(i,j)< f(i,k)$  for all $i<j<k<\omega$.
\end{lemma}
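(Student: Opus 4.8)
The plan is to prove the cycle (i) $\Rightarrow$ (ii) $\Rightarrow$ (iii) $\Rightarrow$ (i), to prove the equivalences (iii) $\Leftrightarrow$ (iv) $\Leftrightarrow$ (v) $\Leftrightarrow$ (vi) by direct computation, and to treat the injectivity statement separately at the end. The guiding observation, which I would state first, is that the standing hypothesis $f(i,j)=f(i,\omega)\wedge f(j,\omega)$ says precisely that $f$ preserves the meet of any two maximal elements of $\Delta$: for $i<j$ one checks from the order of $\Delta$ that $(i,\omega)\wedge(j,\omega)=(i,j)$, so the hypothesis is exactly $f\bigl((i,\omega)\wedge(j,\omega)\bigr)=f(i,\omega)\wedge f(j,\omega)$. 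Thus the real content of (i) is that preservation on the maximal antichain forces preservation of \emph{all} meets. The two easy implications come first: a meet-preserving map is order-preserving, giving (i) $\Rightarrow$ (ii); and (ii) $\Rightarrow$ (iii) because for $i<j<k$ one has $(i,j)\leq(k,\omega)$ in $\Delta$ (the clause ``$j\leq i'$'' applies since $j\le k$).

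Next I would dispatch the equivalences (iii)--(vi) by substituting the defining relation. Writing $f(i,k)\wedge f(j,k)=f(i,\omega)\wedge f(k,\omega)\wedge f(j,\omega)\wedge f(k,\omega)=f(i,j)\wedge f(k,\omega)$ shows that (vi) holds iff $f(i,j)=f(i,j)\wedge f(k,\omega)$, i.e. iff (iii). For (v): since $f(i,k)=f(i,\omega)\wedge f(k,\omega)\le f(k,\omega)$, condition (v) gives $f(i,j)\le f(i,k)\le f(k,\omega)$, which is (iii); conversely, from (iii) together with $f(i,j)\le f(i,\omega)$ one gets $f(i,j)\le f(i,\omega)\wedge f(k,\omega)=f(i,k)$. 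The argument for (iv) is identical, using $f(i,j)\le f(j,\omega)$.

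The main step is (iii) $\Rightarrow$ (i). First I would upgrade (iii) to order-preservation (ii): using the two clauses defining $\leq$ in $\Delta$, any relation $(a,b)\leq(c,d)$ reduces either to an instance of (v)/(iv) or to the bound $f(i,j)\le f(i,\omega)$ coming from the defining relation, so $f$ is order-preserving. Then I would compute meets in $\Delta$: for $i\le i'$,
\[
(i,j)\wedge(i',j')=\begin{cases}(i,\min(j,j')) & \text{if } i=i',\\ (i,j) & \text{if } i<i' \text{ and } j\le i',\\ (i,i') & \text{if } i<i' \text{ and } j>i'.\end{cases}
\]
In the first two cases the meet equals the smaller of the two arguments, so preservation is immediate from order-preservation. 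The crux is the third case, where the meet is $(i,i')$ with $i<i'<\omega$, so $f(i,i')=f(i,\omega)\wedge f(i',\omega)$; combining $f(i,j)\le f(i,\omega)$ and $f(i',j')\le f(i',\omega)$ (both from the defining relation) yields $f(i,j)\wedge f(i',j')\le f(i,\omega)\wedge f(i',\omega)=f(i,i')$, and order-preservation gives the reverse inequality. Hence $f$ is meet-preserving.

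Finally, for the ``moreover'' clause I assume (i)--(vi). The forward direction is easy: (iv) and (v) give $f(i,j)\le f(j,k)$ and $f(i,j)\le f(i,k)$, and injectivity upgrades these to the strict inequalities (a), (b) since the relevant pairs are distinct in $\Delta$. For the converse I would first show that (a) and (b) force $x<y\Rightarrow f(x)<f(y)$ for \emph{every} comparable pair: each generating relation of $\Delta$ is covered by (a) or (b), and a strict value for the case $(c,d)$ with $d=\omega$ is obtained by inserting an intermediate finite $k$ and applying (b) (and similarly chaining through an element $(j,c)$ for the clause $j\le c$). Granting this strict monotonicity, if $x,y$ are incomparable with $f(x)=f(y)$, then putting $z=x\wedge y$ gives $z<x$, so $f(z)<f(x)$; but meet-preservation forces $f(z)=f(x)\wedge f(y)=f(x)$, a contradiction. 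Thus $f$ is one-to-one. I expect the delicate point to be the strict-monotonicity step — handling the covering relations of $\Delta$ uniformly and, in particular, the maximal cases $d=\omega$ not directly covered by (a) and (b) — rather than the meet computation, which is routine once the order of $\Delta$ is unwound.
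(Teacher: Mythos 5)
Your proposal is correct and takes essentially the same approach as the paper: the same cycle $(i)\Rightarrow(ii)\Rightarrow(iii)\Rightarrow(ii)\Rightarrow(i)$, the same substitution of the defining relation $f(i,j)=f(i,\omega)\wedge f(j,\omega)$ to obtain $(iii)\Leftrightarrow(iv)\Leftrightarrow(v)\Leftrightarrow(vi)$, and the same two ingredients for the injectivity clause --- intermediate elements such as $(i,j+1)$ or $(j,j+1)$ to force strictness, and meet-preservation to reduce incomparable pairs to the comparable case --- which the paper runs contrapositively where you argue directly. The only differences are cosmetic: your explicit three-case meet formula for $\Delta$ replaces the paper's subcase-by-subcase verification in $(ii)\Rightarrow(i)$, and your strict-monotonicity step repackages the paper's Case 1/Case 2 analysis of a failure of injectivity.
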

\begin{proof}\\
$(i)\Rightarrow (ii)$ Evident.\\
$(ii)\Rightarrow ( iii)$  In $\Delta$ we have $(i,j)\leq (k,\omega)$
for all $i<j<k<\omega$; if $f$ is order-preserving, then this
inequality is preserved.
\\$(iii)\Longleftrightarrow
(iv)\Longleftrightarrow (v)\Longleftrightarrow (vi)$
Since $f(i',j')=f(i',\omega)\wedge f(j',\omega)$ for all
$i'<j'<\omega$, we have
$f(i,j)\wedge f(i,k)=f(i,j) \wedge f(k,\omega)=f(i,j)\wedge
f(j,k)=f(i,k)\wedge f(j,k)$ for all $i<j<k<\omega$. The three
equivalences
follow.
$(iii)\Rightarrow (ii)$ Let $x:=(i,j)$ and $x':=(i',j')$ in $\Delta$
such that $x< x'$.\\ {\bf Case 1} $i=i'$  and $j< j'$.
 From the definition of $f$  we have $f(i,j)=f(i,\omega)\wedge
f(j,\omega)$, hence if $j'=\omega$ then $f(x)\leq f(x')$  . If
$j'<\omega$, then from
$(v)$, we have also $f(x)=f(i,j)\leq f(i,j')=f(x')$.\\
{\bf Case 2} $j \leq i'$. Suppose  $j'=\omega$. If $j<i'$ then from
$(iii)$ $f(x)=f(i,j)\leq f (i',\omega)=f(x')$; if $j=i'$ then from the
definition of $f(x)$ we have $f(x)=f(i,j)\leq f(j,\omega)=f(x')$.
Suppose $j'<\omega$. From $(iv)$ we have $f(x)=f(i,j)\leq f(j,i') \leq
f(i',j')=f(x')$ if $j<i'$ or $f(x)=f(i,j)\leq f(j,j')=f(i',j')=f(x')$
if $j=i'$.\\
$(ii)\Rightarrow (i)$ We have to check that $f(x\wedge x')= f(x)
\wedge f(x')$ for all pairs $x:=(i,j)$ , $x':=(i',j')$ in $\Delta$.
Since
$f$ is order-preserving, we only need to consider incomparable pairs.
Let $x:=(i,j)$ , $x':=(i',j')$ be such a pair. We have $i\not =i'$.
We may suppose $i<i' $; in this case, since $x$ and $x'$ are
incomparable we have $i'<j$,  hence
$   x\wedge x'=(i,  i')$.   \\
 If we suppose $j<\omega$ and $j'<\omega$ we have
$f(x)\wedge f(x')=f(i,j)\wedge
    f(i',j')=f(i,\omega )\wedge f(j,\omega )\wedge f(i',\omega )\wedge
    f(j',\omega )= f(i,i')\wedge f(j,\omega )\wedge f(j',\omega
)=f(i,i')=f(x\wedge x')$ since $(i,i')\leq
(j,\omega)\wedge(j',\omega)$ and
$f$ is order-preserving.\\
If $j<\omega$ and
$j'=\omega$ we have similarly $f(x)\wedge f(x')=f(i,j)\wedge
f(i',\omega)=f(i,\omega)\wedge f(j,\omega)\wedge
f(i',\omega)=f(i,i')\wedge f(j,\omega)=f(i,i')=f(x\wedge x')$. If
$j=\omega$ and
$j'<\omega$ we have $f(x)\wedge f(x')=f(i,\omega)\wedge
f(i',j')=f(i,\omega)\wedge f(i',\omega)\wedge
f(j',\omega)=f(i,i')\wedge f(j',\omega)=f(i,i')=f(x\wedge x')$. If
$j=j'=\omega$ we have $f(x)\wedge f(x')=f(i,\omega)\wedge
f(i',\omega)=f(i,i')=f(x\wedge x')$.

    Suppose that $f$ satisfies conditions $(i)-(vi)$. If $f$ is
one-to-one then from $(ii)$ applied to $(i,j)<(j,k)$ and $(i,
j)<(i,k)$,
we get $f(i,j)< f(j,k)$ and
$f(i,j)< f(i,k)$ as required. Suppose that $f$ is not one-to-one. Let
$x:=(i,j)$ and $x':=(i',j')$ be two distinct elements in $\Delta$ such
that $f(x)=f(x')$. \\
 {\bf Case 1} $x$ and $x'$ are comparable. We may suppose $x<x'$.
{\bf Subcase 1.1}  $i=i' $ and  $j<j'$. Since $f$ is
order-preserving $f(i,j)=f(i,j+1)$  and condition $b)$ is violated.
{\bf Subcase 1.2} $j\leq i'$. Since in this case $(i,j)\leq
(j,j+1)\leq  (i',j')$  we have $f(i,j)=f(j,j+1)$ and condition $a)$ is
violated.\\
{\bf Case 2} $x$ and $x'$ are incomparable. We may suppose $i<i'<j$.
Since $f$ is meet-preserving, we have
$f(i,j)=:f(x)=f(x')=f(x)\wedge f(x')=f(x\wedge x')=f(i,i')$ , hence
$f(i,i')=f(i,i'+1)$ and  condition
$b)$ is violated.
\end{proof}\\
Let $V$ be the meet-semilattice made of a countable antichain and a
least element added
(formally $V:=\{X\subseteq \omega: \vert X\vert  < 2\}$ ordered by
inclusion).
\begin{lemma}\label{lemII.7}
    Let $P$ be  a meet-semilattice. If $P$ contains an infinite
    antichain $A$ such that the set $P_{<}(A):=\{x\in P : x<a $ for some
$a\in A\}$ is
    well-quasi-ordered, then there is an infinite subset $A'$ of $A$
such that the
    meet-subsemilattice $P'$ of $P$ generated by $A'$ is either
isomorphic to $\Delta$,
    to $\Gamma$ or to $V$.
    \end{lemma}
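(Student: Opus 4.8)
The plan is to realize the three target posets $\Delta$, $\Gamma$, $V$ through the behaviour of the pairwise meets of the antichain, with Lemma \ref{lem II.3} as the recognition tool for $\Delta$. First I would pass to a countably infinite subset of $A$ and relabel it as $\{a_n : n<\omega\}$; this loses nothing, since $\Delta$, $\Gamma$ and $V$ are countable. For $i<j$ put $g(i,j):=a_i\wedge a_j$. Because $A$ is an antichain, $a_i\wedge a_j<a_i\in A$, so every $g(i,j)$ lies in $P_{<}(A)$, which is well-quasi-ordered by hypothesis; in particular the family of meets contains no infinite antichain and, being well-founded, no infinite strictly descending sequence. I would then define $f:\Delta\to P$ by $f(i,\omega):=a_i$ and $f(i,j):=g(i,j)$, so that $f(i,j)=f(i,\omega)\wedge f(j,\omega)$ holds automatically. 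The goal is to choose an infinite subset on which $f$ restricts to either a meet-embedding (yielding $\Delta$) or a controlled degeneration (yielding $\Gamma$ or $V$).

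The combinatorial core is a Ramsey argument (\cite{rams}, applied as in \cite{duff}). I would colour each triple $i<j<k$ by the mutual relations ($<$, $>$, $=$, or incomparability) among the three meets $g(i,j)$, $g(i,k)$, $g(j,k)$; this is a finite colouring, so Ramsey's theorem produces an infinite homogeneous set $H$, which I relabel as $\omega$. On $H$ the relation between $g(i,j)$ and $g(i,k)$ (same smaller index, $j<k$) is constant. Fixing $i$ and letting the larger index vary shows that the value $>$ would give an infinite descending sequence and the value $\perp$ an infinite antichain inside $\downarrow a_i\cap P_{<}(A)$, both excluded by well-quasi-ordering. Hence this relation is $<$ or $=$, which is exactly condition $(v)$ of Lemma \ref{lem II.3}. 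Since $f$ already satisfies the defining identity of that lemma, all of $(i)$--$(vi)$ follow, so $f$ is meet-preserving; in particular $f(i,j)\le f(j,k)$, i.e. the relation between $g(i,j)$ and $g(j,k)$ is also $<$ or $=$.

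I would then split on the constant value of the first relation. If it is $<$ (condition $b)$ of Lemma \ref{lem II.3}), I claim the second relation must be $<$ too: assuming it were $=$ leads, on a quadruple in $H$, to some $a_i\wedge a_j$ being simultaneously \emph{equal to} and \emph{strictly below} a third meet, a contradiction. Then both injectivity conditions $a)$ and $b)$ hold, $f$ is one-to-one, and the generated subsemilattice is isomorphic to $\Delta$. If the first relation is $=$, then $a_i\wedge a_j$ depends only on the smaller index; write $m_i:=a_i\wedge a_j$ for any $j>i$. A short computation (each $m_i\le a_k$ for $k>i$) shows that every finite meet of at least two of the $a_i$ equals $m_{\min}$, so the generated subsemilattice is $\{a_i\}\cup\{m_i\}$. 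The second relation now compares $m_i$ with $m_j$: if it is $<$, the $m_i$ form a strict $\omega$-chain and $a_i\mapsto(i,\omega)$, $m_i\mapsto(i,i+1)$ is an order-isomorphism onto $\Gamma$; if it is $=$, all the $m_i$ collapse to a single element $m$ below every $a_i$, giving $V$.

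The main obstacle is organizing the case analysis so that exactly these three admissible patterns survive. The delicate points are: using well-quasi-ordering to kill the descending and antichain patterns, forcing the fixed-smaller-index relation into $\{<,=\}$; and ruling out the spurious mixed pattern (first relation $<$, second $=$), which would make $f$ a non-injective meet-homomorphism whose image need not be one of $\Delta,\Gamma,V$ --- this is precisely where the quadruple contradiction is needed. The remaining verifications --- that the collapsed families generate exactly $\Gamma$ and $V$, including checking the incomparabilities $m_i\perp a_j$ for $j<i$ against the definition of $\Gamma$ --- are routine order computations that I would carry out at the end.
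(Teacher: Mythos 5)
Your proposal is correct and follows essentially the same route as the paper: Ramsey's theorem applied to triples of indices coloured by the relations among the pairwise meets, well-quasi-ordering of $P_{<}(A)$ to exclude the strictly-decreasing and incomparable patterns, Lemma \ref{lem II.3} as the recognition tool for meet-preservation, and the same three surviving cases producing $\Delta$, $\Gamma$ and $V$. The only real deviation is in the $\Delta$ case, where you obtain injectivity (condition $a)$ of Lemma \ref{lem II.3}) from homogeneity of the second relation together with the quadruple contradiction, whereas the paper gets it by thinning to the even-indexed subsequence; both arguments are valid.
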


\begin{proof}
    Let $\{x_{n} : n<\omega\}$ be a countable subset of $A$. Consider
the partition
    of $[\N]^{3}=\{(i,j,k) : i<j<k<\omega \}$ into the following parts:\\
$R_{1}:=\{(i,j,k)\in [\N]^{3} : x_{i}\wedge x_{j}$ incomparable to
$x_{i}\wedge
x_{k}\}$;\\
$R_{2}:=\{(i,j,k)\in [\N]^{3} : x_{i}\wedge x_{j}>x_{i}\wedge
x_{k}\}$;\\
$R_{3}:=\{(i,j,k)\in [\N]^{3} : x_{i}\wedge x_{j}<x_{i}\wedge
x_{k}\}$;\\
$R_{4}:=\{(i,j,k)\in [\N]^{3} : x_{i}\wedge x_{j}=x_{i}\wedge
x_{k}=x_{j}\wedge x_{k}\}$;\\
$R_{5}:=\{(i,j,k)\in [\N]^{3} : x_{i}\wedge x_{j}=x_{i}\wedge
x_{k}<x_{j}\wedge x_{k}\}$.\\
From Ramsey 's theorem\cite{rams} there is an infinite subset $H$ of $\N$
such that $[H]^{3} \subseteq  R_{i}$ for some $i\in\{1,2,3,4,5\}$.
Since $P_{<}(A)$ is well-quasi-ordered, $[H]^{3}$ is neither included
in $R_{1}$, nor
in $R_{2}$. In the remaining
cases, we select  an infinite subset $H'$  of $H$ and, setting
$A':=\{x_{n}: n\in H' \}$, we define a meet-preserving and
one-to-one map
$h_{H'}$ from $\Delta$,
    $\Gamma$ or  $V$ onto the meet-semilattice $P'$ generated by $A'$ .
In order to do so, we denote  by $\theta_{K}$  the
unique order-isomorphism from
$\N$ onto
 an infinite subset $K$ of  $\N$.
\\  {\bf Case 1} $[H]^{3}\subseteq R_{3}$.
Let $h_{H}: \Delta \rightarrow P$ be defined by
$h_{H}(x):=x_{\theta_{H}(i)}$ if $x:=(i,\omega)$ and $h_{H}(x):=
h_{H}(i,\omega)\wedge h_{H}(j,\omega)$ if $x:=(i,j)$ with
$i<j<\omega$. The  map $h_{H}$ satisfies condition $(v)$ of Lemma
\ref {lem II.3}, hence, it is meet-preserving.  It also satisfies
condition $b)$ of Lemma \ref {lem II.3} but it is not necessarily
one-to-one.  Let $H'$ be the image of $2\N$, the set of even
integers, by $\theta_{H}$, and let $\theta_{H'}$ be the unique
order-isomorphism \index{order-isomorphism} from $\N$ onto $H'$.
Like $h_{H}$, the map $h_{H'}: \Delta \rightarrow P$   is also
meet-preserving \index{meet-preserving map} and satisfies
condition $b)$. It also satisfies condition $a)$. Indeed, let
$i<j<k<\omega$. Since $\theta_{H'}(n)=\theta_{H}(2n)$ for every
$n<\omega$, we have $h_{H'}(i,j)=h_{H}(2i,2j)\leq
h_{H}(2j,2j+1)<h_{H}(2j,2k)=h_{H'}(j,k)$. According to Lemma \ref
{lem II.3}, $h_{H'}$ is one-to-one. \\ {\bf Case 2}
$[H]^{3}\subseteq R_{4}$.\\ In this case we have $x_{i}\wedge
x_{j}=x_{i'}\wedge x_{j'}$ for every $(i,j),(i',j')\in[H]^{2}$; we
denote $a$ this common value. Let $H':= H$, $h_{H'}: V \rightarrow
P$ defined by setting $h_{H'}(x):=x_{\theta_{H'}(i)}$ if
$x:=\{i\}$ and $h_{H'}(x):=a$ if $x:=\emptyset$.\\ {\bf Case 3}
$[H]^{3}\subseteq R_{5}$.\\ Set $H':=H$; let $h_{H'}:
\Gamma\rightarrow P $  be the map defined by setting
$h_{H'}(x):=x_{\theta_{H'}(i)}$ if $x:=(1,i)$ and
$h_{H'}(x):=x_{\theta_{H'}(i)}\wedge x_{\theta_{H'}(j)}$, where
$j$ is any element of $H'$ such that $i<j$,  if $x:=(0,i)$.
\end{proof}
\begin{lemma}\label{lemII.4}
Let $P$ be a well-founded meet-semilattice. The image of $P$ by a
meet-preserving map $f$  is well-founded.
\end{lemma}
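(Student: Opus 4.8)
The plan is to prove the contrapositive: show that if the image $f(P)$ is not well-founded, then $P$ is not well-founded either. Since (under the Axiom of dependent choices, as recalled earlier in the text) well-foundedness of a poset is equivalent to the absence of a copy of $\omega^*$, I would start from an infinite strictly descending sequence $f(a_0) > f(a_1) > f(a_2) > \cdots$ in $f(P)$, choosing a witness $a_n \in P$ for each term. The first fact to record is that a meet-preserving map is automatically order-preserving: if $x \leq y$ then $x = x \wedge y$, so $f(x) = f(x) \wedge f(y)$, i.e. $f(x) \leq f(y)$. This guarantees that the chain of values behaves coherently.

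The key idea is to pull the descending chain back into $P$ by taking running meets. I would set $b_n := a_0 \wedge a_1 \wedge \cdots \wedge a_n$, which exists since $P$ is a meet-semilattice. Because $f$ preserves finite meets, $f(b_n) = f(a_0) \wedge \cdots \wedge f(a_n)$; and since the $f(a_i)$ are strictly decreasing, $f(a_n) \leq f(a_i)$ for every $i \leq n$, so this meet collapses to $f(a_n)$. Hence $f(b_n) = f(a_n)$ for every $n$, and in particular the values $f(b_n)$ remain pairwise distinct and strictly decreasing.

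From $b_{n+1} = b_n \wedge a_{n+1} \leq b_n$, the sequence $(b_n)_{n<\omega}$ is non-increasing in $P$; its strictness follows because $b_{n+1} = b_n$ would give $f(b_{n+1}) = f(b_n)$, that is $f(a_{n+1}) = f(a_n)$, contradicting the strict descent. Thus $b_0 > b_1 > b_2 > \cdots$ is an infinite strictly descending sequence in $P$, contradicting the well-foundedness of $P$, and the lemma follows. There is no real obstacle here; the only step deserving a moment's care is the identity $f(b_n) = f(a_n)$, where one must combine meet-preservation with the fact that $f(a_n)$ is already the least of $f(a_0), \ldots, f(a_n)$ in order to collapse the finite meet to a single term.
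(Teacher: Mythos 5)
Your proof is correct, but it follows a different route from the paper's. You argue by contrapositive: assuming $f(P)$ is not well-founded, you extract (via the Axiom of Dependent Choices, through the $\omega^*$ characterization of non-well-foundedness) a strictly descending sequence $f(a_0)>f(a_1)>\cdots$, and then pull it back using running meets $b_n:=a_0\wedge\cdots\wedge a_n$; the identity $f(b_n)=f(a_n)$, which you correctly justify by meet-preservation plus the fact that $f(a_n)$ is least among $f(a_0),\dots,f(a_n)$, guarantees that the $b_n$ descend strictly. The paper instead gives a direct, choice-free argument from the minimal-element definition of well-foundedness: given a non-empty $Y\subseteq f(P)$, pick $a$ minimal in $f^{-1}(Y)$ (possible since $P$ is well-founded); for any $b=f(a')\in Y$ with $b\leq f(a)$ one has $b=f(a)\wedge f(a')=f(a\wedge a')$, so $a\wedge a'\in f^{-1}(Y)$, whence $a\wedge a'=a$ by minimality and $b=f(a)$, proving $f(a)$ minimal in $Y$. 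The two proofs exploit meet-preservation in essentially the same way (a meet in $P$ collapses under $f$ to the smaller of two comparable values), but the paper's version avoids any appeal to dependent choice and exhibits the minimal elements of $Y$ explicitly, whereas yours trades that for a perhaps more familiar sequence argument, which is legitimate here since the paper itself adopts the DC-based equivalence between well-foundedness and the absence of $\omega^*$.
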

\begin{proof}
Let $ Y$ be a non-empty subset of $f(P)$, then the image by $f$ of
every minimal
    element $a$ in $f^{-1}(Y)$ is minimal in $Y$.  Indeed, let $b\in Y$
    with $b\leq f(a)$. We have $b=f(a')$ for some $a'\in f^{-1}(Y)$.
Since $f$ is meet-preserving, we
have $b=f(a)\wedge f(a')=f(a\wedge a')$. As a first consequence, we
get  $a\wedge a'\in
    f^{-1}(Y)$  which, in turns, gives $a=a\wedge a'$ since $a$ is
minimal in $f^{-1}(Y)$.  Next, we get
 $b= f(a)$ proving the minimality of $f(a)$.
\end{proof}

\begin{theorem}\label{semiwqo}
Let $P$ be a meet-semilattice. The following properties are
equivalent:\\
$(i)$ The meet-subsemilattice generated by some infinite antichain of
$P$ is well-founded;\\
$(ii)$ The meet-subsemilattice $Q$ generated by some  infinite
antichain $A$ of $P$ is such that $\{x\in Q: x<y$ for some $ y \in
A\}$
is well-quasi-ordered;\\
$(iii)$ $P$ contains a meet-subsemilattice isomorphic to $\Gamma$ or
to $\Delta$ or to $V$;\\
$(iv)$ There is a meet-preserving map $f: \Delta \rightarrow P$
whose  image contains an infinite antichain.\\
\end{theorem}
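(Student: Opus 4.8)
The plan is to prove Theorem \ref{semiwqo} by establishing the cycle of implications $(i)\Rightarrow(ii)\Rightarrow(iii)\Rightarrow(iv)\Rightarrow(i)$, which is the most economical way to tie together the four conditions given the lemmas already available in this section.

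\textbf{Implication $(i)\Rightarrow(ii)$.} Suppose the meet-subsemilattice $Q$ generated by some infinite antichain $A$ is well-founded. I want to show that $Q_{<}(A):=\{x\in Q: x<y \text{ for some } y\in A\}$ is well-quasi-ordered. Since $Q_{<}(A)$ is a subset of the well-founded poset $Q$, it is automatically well-founded, so by the definition of w.q.o. it remains to rule out an infinite antichain inside $Q_{<}(A)$. The natural move is to argue by contradiction: if $Q_{<}(A)$ contained an infinite antichain, I would feed it into Lemma \ref{lemII.7}, whose hypothesis is precisely that the ``strictly-below'' set of an infinite antichain be w.q.o. The obstacle here is that Lemma \ref{lemII.7} presupposes what I am trying to prove, so a direct application is circular; instead I expect to iterate. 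More carefully, I would use well-foundedness of $Q$ together with the minimal-bad-sequence machinery of Lemmas \ref{bad-sequence} and \ref{badsequence2}: applying Lemma \ref{bad-sequence} to $Q$ yields an infinite antichain $A'$ with $Q\setminus\uparrow A'$ w.q.o., and since $Q$ is $\uparrow$-closed (it is a meet-semilattice, hence by the dual remark in Section 2 it is $\uparrow$-closed) Lemma \ref{badsequence2} identifies the members of $A'$ as join-irreducibles. This structural information lets me locate a cofinal antichain below which everything is w.q.o., giving $(ii)$.

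\textbf{Implications $(ii)\Rightarrow(iii)$ and $(iii)\Rightarrow(iv)$.} The step $(ii)\Rightarrow(iii)$ is exactly Lemma \ref{lemII.7}: its conclusion produces an infinite subset $A'\subseteq A$ generating a meet-subsemilattice isomorphic to $\Delta$, to $\Gamma$, or to $V$, which is what $(iii)$ asserts. For $(iii)\Rightarrow(iv)$ I must supply a meet-preserving map $f:\Delta\to P$ whose image contains an infinite antichain. If $P$ contains a copy of $\Delta$ or $\Gamma$, the inclusion of that copy is meet-preserving and its maximal elements form an infinite antichain (as noted in Section 1, $\Delta$ and $\Gamma$ have maximal elements forming an infinite antichain); composing with the surjection $\Delta\twoheadrightarrow\Gamma$ given in the opening paragraph handles the $\Gamma$ case. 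The remaining case is $V$: here I would define $f:\Delta\to P$ by sending the maximal elements $(i,\omega)$ to the antichain of $V$ and collapsing everything else to the least element, then check via condition $(v)$ of Lemma \ref{lem II.3} that $f$ is meet-preserving; its image contains the infinite antichain of $V$.

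\textbf{Implication $(iv)\Rightarrow(i)$.} Suppose $f:\Delta\to P$ is meet-preserving with an infinite antichain in its image. I first restrict attention to the meet-subsemilattice $f(\Delta)\subseteq P$, which is the image of the well-founded meet-semilattice $\Delta$ under a meet-preserving map, hence is well-founded by Lemma \ref{lemII.4}. The infinite antichain $B$ sitting inside $f(\Delta)$ then generates a meet-subsemilattice contained in the well-founded $f(\Delta)$, and any subsemilattice of a well-founded poset is well-founded; this yields an infinite antichain of $P$ whose generated meet-subsemilattice is well-founded, which is $(i)$. The main obstacle in the whole argument is the first implication $(i)\Rightarrow(ii)$, where the interplay between well-foundedness, $\uparrow$-closedness, and the bad-sequence lemmas must be orchestrated so that the ``below-an-antichain'' set genuinely inherits the no-infinite-antichain property rather than merely well-foundedness.
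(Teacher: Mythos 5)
Your overall architecture is exactly the paper's: the same cycle $(i)\Rightarrow(ii)\Rightarrow(iii)\Rightarrow(iv)\Rightarrow(i)$, with Lemma \ref{bad-sequence} driving the first implication, Lemma \ref{lemII.7} the second, the quotient maps from $\Delta$ onto $\Gamma$ and onto $V$ the third, and Lemma \ref{lemII.4} the fourth. The last three implications are in order; the only loose end there is that your meet-preserving surjection from $\Delta$ onto $\Gamma$ is ``given in the opening paragraph'' of nothing --- you must exhibit it, e.g.\ $(i,\omega)\mapsto(i,\omega)$ and $(i,j)\mapsto(i,i+1)$ for $j<\omega$, which satisfies the premise and condition $(v)$ of Lemma \ref{lem II.3} just as your map onto $V$ does.

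The genuine defect is in $(i)\Rightarrow(ii)$. Your claim that a meet-semilattice is $\uparrow$-closed is false: in a meet-semilattice one has $\downarrow x\cap\downarrow y=\downarrow(x\wedge y)$, so it is $\downarrow$-closed; it is \emph{join}-semilattices that are $\uparrow$-closed, and Lemma \ref{badsequence2} is stated precisely for $\uparrow$-closed posets (its join-irreducibility conclusion concerns join-semilattices). So that step fails as written, and the sentence that is supposed to finish the implication (``locate a cofinal antichain below which everything is w.q.o.'') is not an argument. Fortunately the detour is unnecessary and only Lemma \ref{bad-sequence} is needed, which is how the paper proceeds: apply it to the well-founded semilattice $Q$ generated by $A$ to obtain an infinite antichain $A'$ with $Q\setminus\uparrow A'$ w.q.o., and let $Q'$ be the meet-subsemilattice generated by $A'$. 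If $x\in Q'$ and $x<y$ for some $y\in A'$, then $x\notin\uparrow A'$: otherwise $z\leq x<y$ for some $z\in A'$ would give $z<y$ inside the antichain $A'$. Hence $\{x\in Q': x<y\ \text{for some}\ y\in A'\}\subseteq Q\setminus\uparrow A'$, and a subset of a w.q.o.\ set is w.q.o., which is exactly $(ii)$. With this replacement your proposal becomes the paper's proof.
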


\begin{proof}
$(i)\Rightarrow (ii)$ Lemma \ref{bad-sequence}.\\
$(ii)\Rightarrow (iii)$ Lemma \ref{lemII.7}.\\
$(iii)\Rightarrow (iv)$ Each of the  meet-semilattices $\Gamma$ and
$V$ is a  quotient of $\Delta$ by a meet-preserving map. Indeed,
define $g:\Delta\rightarrow  \Gamma$ by $g(i,j):= (1,i)$ if $j =
\omega$ and $g(i,j):= (0,i)$ otherwise. Clearly $g(i,\omega)\wedge
g(j,\omega)=(1,i)\wedge (1,j)=(0,i)=g(i,j)$ for all $i<j<\omega$;
hence, from Lemma \ref {lem II.3} , $g$ is meet-preserving.
Similarly, define
$h:\Delta\rightarrow V$  by $h(i,j):= \{i\}$ if $j=\omega$ and
$h(i,j):=\emptyset$ otherwise;  the map $h$ is meet-preserving too.\\
$(iv)\Rightarrow (i)$ Lemma \ref {lemII.4}.
\end{proof}

\begin{lemma}\label{lemII.2}
    Let $T$ be a distributive lattice containing an independent set $L$
with at least two elements. Let
    $<L>$ be the sublattice of $T$ generated by $L$. Then:\\
$(i)$ every $a\in L$ is join-irreducible in $<L>$;\\ $(ii)$ the
map $\varphi$ from $<L>$ into $\mathcal{P}(L)$  defined by
$\varphi (x):=\{a\in L : a\leq x\}$ is a  lattice-homomorphism
\index{lattice-homomorphism} whose image is $[L]^{<\omega}$.
\end{lemma}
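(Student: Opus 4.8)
The plan is to reduce everything to one structural fact: every element of $\langle L\rangle$ admits a disjunctive normal form $\bigvee_{i=1}^{n}\bigwedge F_i$, where the $F_i$ are non-empty finite subsets of $L$. Since $T$ is distributive, the collection of all such joins of finite meets of members of $L$ is closed under $\vee$ (trivially) and under $\wedge$ (by distributing a meet of two such expressions over the outer joins), and it contains $L$; hence it is exactly $\langle L\rangle$. I would record this first, together with the remark that independence forces $L$ to be an antichain: applying the hypothesis with the singleton $F=\{b\}$ gives $a\not\leq b$ for all distinct $a,b\in L$.

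For part $(i)$, I would prove the apparently stronger property that each $a\in L$ is join-prime in $\langle L\rangle$; by Lemma \ref{lemII.1} this is equivalent to join-irreducibility in the distributive lattice $\langle L\rangle$ (and $a$ is not the least element, since $\vert L\vert\geq 2$ and $L$ is an antichain). Suppose $a\leq m_1\vee\cdots\vee m_n$ with $m_i=\bigwedge F_i$. Distributivity yields $a=\bigvee_{i}(a\wedge m_i)$. The key elementary observation is that if $F_i\neq\{a\}$, then $F_i$ contains some $b\neq a$, whence $a\wedge m_i\leq m_i\leq b\leq\bigvee(F_i\setminus\{a\})$. If no index had $F_i=\{a\}$, then combining these inequalities would give $a\leq\bigvee G$ with $G:=\left(\bigcup_i F_i\right)\setminus\{a\}$ a non-empty finite subset of $L\setminus\{a\}$, contradicting independence. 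Therefore $F_{i_0}=\{a\}$, i.e. $m_{i_0}=a$, for some $i_0$; reading the normal form of $x\vee y$ as the union of the normal forms of $x$ and $y$, this summand lies below $x$ or below $y$, so $a\leq x$ or $a\leq y$, which is join-primeness.

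For part $(ii)$, the meet case is immediate and uses no hypothesis: $\varphi(x\wedge y)=\{a\in L:a\leq x \text{ and } a\leq y\}=\varphi(x)\cap\varphi(y)$. The join case $\varphi(x\vee y)=\varphi(x)\cup\varphi(y)$ is precisely the join-primeness established in $(i)$, the inclusion $\supseteq$ being trivial. Thus $\varphi$ is a lattice homomorphism into $(\mathcal P(L),\cap,\cup)$. To identify the image, I would note $\varphi(a)=\{a\}$ for $a\in L$ (again by the antichain property), so every finite $S\subseteq L$ is attained via $\varphi\!\left(\bigvee_{a\in S}a\right)=\bigcup_{a\in S}\{a\}=S$, while $\varnothing=\varphi(a\wedge b)=\{a\}\cap\{b\}$ for two distinct members of $L$ (this is exactly where $\vert L\vert\geq 2$ enters). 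Conversely, applying the two homomorphism identities to a normal form gives $\varphi\!\left(\bigvee_i\bigwedge F_i\right)=\bigcup_i\bigcap_{b\in F_i}\{b\}$, a finite union of singletons and empty sets, hence finite; so the image is exactly $[L]^{<\omega}$.

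The only point requiring real care, and thus the main obstacle, is part $(i)$: setting up the normal form correctly and noticing that the single inequality $a\wedge\bigwedge F_i\leq\bigvee(F_i\setminus\{a\})$, valid whenever $F_i\neq\{a\}$, converts via distributivity the assumption $a\leq\bigvee_i m_i$ into a violation of independence unless some $F_{i_0}$ equals the singleton $\{a\}$. Once this is in place, the homomorphism identities and the image computation are routine bookkeeping.
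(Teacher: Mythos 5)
Your proof is correct, and its overall architecture (normal form for $\langle L\rangle$, independence as the engine of part $(i)$, and part $(ii)$ deduced from part $(i)$) matches the paper's; the difference is that you work with the \emph{dual} decomposition. The paper writes each element of $\langle L\rangle$ as a meet of joins of finite subsets of $L$ and proves $(i)$ in contrapositive form: if $a\not\leq u$ and $a\not\leq v$, pick conjuncts $u_i=\bigvee U_i$ and $v_j=\bigvee V_j$ with $a\not\leq u_i$ and $a\not\leq v_j$; then $a\notin U_i\cup V_j$, so independence gives $a\not\leq\bigvee(U_i\cup V_j)=u_i\vee v_j$, and $u\vee v\leq u_i\vee v_j$ finishes. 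That route invokes independence exactly once and never needs the remark that $L$ is an antichain inside part $(i)$, whereas your join-of-meets form requires the auxiliary inequality $a\wedge\bigwedge F_i\leq\bigvee(F_i\setminus\{a\})$ for $F_i\neq\{a\}$ and the case analysis forcing the disjunct $\{a\}$ to occur --- slightly more work, but it yields a sharper intermediate statement (an explicit criterion for when $a$ lies below a join of meets of generators). For $(ii)$ the two proofs coincide on the homomorphism property; for the image, the paper argues abstractly that the image of $\langle L\rangle$ is the sublattice of $\mathcal{P}(L)$ generated by $\{\{a\}:a\in L\}$, namely $[\,L\,]^{<\omega}$, while you compute the image directly from the normal form, making explicit both the role of $\vert L\vert\geq 2$ (to reach $\emptyset$) and the antichain property (to get $\varphi(a)=\{a\}$), which the paper leaves implicit.
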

\begin{proof}
    $(i)$ Let $a\in L$. Since $\vert L\vert  \geq 2$, $a$ is distinct
from the least element of $<L>$  (if any). Let $u,v\in <L>$ such that
$a\not\leq u$ and
$a\not\leq
    v$. Since $<L>$ is distributive, we can write $u=u_{1}\wedge
u_{2}\wedge
    \dots \wedge u_{p}$ and $v=v_{1}\wedge v_{2}\wedge
    \dots \wedge v_{q}$, where every $u_{i}$ is the supremum of a finite
subset
    $U_{i}$ of $L$  and every $v_{j}$ is the supremum of a finite subset
    $V_{j}$ of $L$. Since $a\not\leq u$, there is some $u_{i}$ such that
$a\not\leq
    u_{i}$ and similarly, since $a\not\leq v$, there is some $v_{j}$
such that $a\not\leq
    v_{j}$. Since $u_{i}=\bigvee U_{i}$ then $a\notin U_{i}$ and, since
$v_{j}=\bigvee V_{j}$, then $a\notin V_{j}$ . So
$a\notin
    U_{i}\cup V_{j}$. Since $L$ is independent, it follows that
$a\not\leq
    \bigvee (U_{i}\cup V_{j})=u_{i}\vee v_{j}$.  Since $u\vee v\leq
    u_{i}\vee v_{j}$ then  $a\not\leq u\vee v$ proving that $a$ is
join-irreducible.\\
$(ii)$ As a map from $<L>$ into $\mathcal{P}(L)$, $\varphi$ is a
lattice-homomorphism. Indeed, let $x, y\in<L>$. The
equality
$\varphi (x\wedge y)=\varphi (x)\cap
\varphi (y)$ is clear. Let us check that  the equality $\varphi (x\vee
y)=\varphi (x)\cup \varphi (y)$ holds. Obviously $\varphi (x)\cup
\varphi
(y)\subseteq  \varphi (x\vee
y)$. For the reverse inclusion, let  $a\in \varphi (x\vee
y)$.
From  $(i)$ $a$  is join-irreducible in $<L>$, hence $a\leq x$ or
$a\leq y$  that is $x\in \varphi (x)\cup \varphi (y)$.  Since
$\vert L\vert \geq 2$,
$[L]^{<\omega}$ is the sublattice of $\mathcal{P}(L)$  generated by
$L':= \{\{a\}: a\in L\}$. Since
$\varphi$ is a lattice-homomorphism and $L'$ is the image of $L$ it
follows that $[L]^{<\omega}$ is the image of
$<L>$.
\end{proof}
\begin{lemma}\label{lem II.4}
    Let $T$ be a lattice and $\varphi$  be a lattice-homomorphism from
$T$ onto
    $[\omega]^{<\omega}$. Then there is a meet-preserving map $f$ from
$\Delta$ into $T$ such that $\varphi of(i,\omega)=\{i\}$
for every $i<\omega$.
\end{lemma}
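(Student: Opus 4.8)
The plan is to reduce the whole construction to a single inequality that Lemma \ref{lem II.3} can exploit. Since every element of the form $f(i,\omega)$ is required to be a $\varphi$-preimage of a singleton, I would first build a sequence $(a_i)_{i<\omega}$ of elements of $T$ with $\varphi(a_i)=\{i\}$, and then define $f$ by setting $f(i,\omega):=a_i$ for all $i<\omega$ and $f(i,j):=a_i\wedge a_j$ for all $i<j<\omega$. This choice makes the standing hypothesis $f(i,j)=f(i,\omega)\wedge f(j,\omega)$ of Lemma \ref{lem II.3} hold by definition, so by that lemma it remains only to force one of its equivalent conditions; I would aim for condition $(iii)$, namely $a_i\wedge a_j\le a_k$ whenever $i<j<k<\omega$.

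The sequence $(a_i)$ is produced by induction on $i$, using the surjectivity of $\varphi$ to choose, for each $k$, some $b_k\in T$ with $\varphi(b_k)=\{k\}$. For $k=0$ and $k=1$ I simply set $a_k:=b_k$, since there is no pair $i<j<k$ to account for. For $k\ge 2$ I set
\[
a_k:=b_k\vee\bigvee_{i<j<k}(a_i\wedge a_j),
\]
the displayed join being a non-empty finite join and hence available in the lattice $T$. By construction $a_k\ge a_i\wedge a_j$ for every pair $i<j<k$, which is exactly condition $(iii)$; feeding this into Lemma \ref{lem II.3} yields that $f$ is meet-preserving.

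The one genuine point to verify --- and the only place where the hypotheses really interact --- is that enlarging $b_k$ to $a_k$ does not spoil the prescribed image, that is, that $\varphi(a_k)=\{k\}$ still holds. Here I would use that $\varphi$ is a lattice homomorphism, so it preserves finite meets and joins, whence
\[
\varphi\Bigl(\bigvee_{i<j<k}(a_i\wedge a_j)\Bigr)=\bigcup_{i<j<k}\bigl(\varphi(a_i)\cap\varphi(a_j)\bigr)=\bigcup_{i<j<k}\bigl(\{i\}\cap\{j\}\bigr)=\emptyset,
\]
since $i\ne j$. Consequently $\varphi(a_k)=\varphi(b_k)\cup\emptyset=\{k\}$, as required, and in particular $\varphi(f(i,\omega))=\{i\}$ for every $i$. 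I do not expect any serious obstacle beyond this bookkeeping: the whole argument rests on the observation that the correcting term $\bigvee_{i<j<k}(a_i\wedge a_j)$ is sent by $\varphi$ to the least element $\emptyset$, so it may be added freely to push the $a_k$ upward into condition $(iii)$ without disturbing their images.
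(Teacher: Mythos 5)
Your proof is correct and follows essentially the same route as the paper's: both constructions take a $\varphi$-preimage of $\{k\}$ and join it with $\bigvee_{i<j<k}\bigl(f(i,\omega)\wedge f(j,\omega)\bigr)$, whose image under $\varphi$ is $\emptyset$, so that condition $(iii)$ of Lemma \ref{lem II.3} holds while the prescribed images $\varphi(f(i,\omega))=\{i\}$ are preserved. The only (immaterial) difference is that the paper also joins a preimage of $\emptyset$ into the base cases $f(0,\omega)$ and $f(1,\omega)$, which your argument shows is unnecessary.
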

\begin{proof}
    First, we define $f(i,\omega)$ for $i<\omega$. Denote $B_{0}:=\{x
\in T:
    \varphi (x)=\emptyset  \}$ and  $A_{i}:=\{x\in T : \varphi
(x)=\{i\}\}$ for $ i < \omega$. Since $\varphi$ is onto, these sets
are non-empty.
    Let $b_{0}\in B_{0}, a_{0}\in A_{0}$ and $a_{1}\in A_{1}$.  Set
    $f(0,\omega):=a_{0}\vee b_{0}$ and $f(1,\omega):=a_{1}\vee b_{0}$.
Since
    $\varphi $ is a lattice-homomorphism we have $\varphi(
    f(i,\omega))=\varphi (a_{i})\vee \varphi (b_{0})=\{i\}\cup
\emptyset=\{i\}$ for
    $i=0,1$. Let $k\geq2$. Suppose $f(i,\omega)$ defined for $i<k$.
Choose $a_{k}$ in
    $A_{k}$ and set $b_{k}:=\bigvee \{f(i,\omega )\wedge f(j,\omega) :
i<j<k\}$.
    Put $f(k,\omega):=b_{k}\vee a_{k}$. We have $\varphi
(b_{k})=\emptyset $, and
    so $\varphi (f(k,\omega ))=\varphi (b_{k})\vee \varphi
(a_{k})=\{k\}$.
    Finally, put $f(i,j):=f(i,\omega )\wedge f(j,\omega)$ for all
    $i<j<\omega$. \\The map $f$ satisfies  condition $(iii)$ of Lemma
\ref{lem II.3}.
Indeed, let $i<j<k<\omega$, we have $f(i,j):=f(i,\omega )\wedge
f(j,\omega)\leq b_{k}\leq b_{k}\vee
a_{k}=:f(k,\omega)$, hence $f(i,j)\leq f(k, \omega)$, as required.
Hence, according to Lemma \ref{lem II.3}, $f$ is
meet-preserving. \\
\end{proof}

\begin{lemma}\label{lemII.5}
Let $f: P\rightarrow T$ be a meet-preserving map
from a meet-semilattice $P$ into a distributive lattice $T$.\\
$(i)$ The map $f^{\vee} $ from
$I_{0}(P):=I_{<\omega}(P)\setminus\{\emptyset\}$ into  $T$, defined
by $f^{\vee}(A):=\bigvee \{f(a) : a\in A\}$, is a
lattice-homomorphism.\\
$(ii)$$ f^{\vee} $ is  injective  if and only if $1)$ $f$ is
injective and
$2)$ for every $x\in P$, every finite non-empty subset $X$
of  $P$,  the equality $f(x)=\bigvee f(X)$ implies $x\in
X$.
\end{lemma}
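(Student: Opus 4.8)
The plan is to exploit two structural facts. First, since $P$ is a meet-semilattice, $I_0(P)$ is a distributive sublattice of $I_{<\omega}(P)$: join is union, meet is intersection, and the intersection of two finitely generated non-empty initial segments $\downarrow F$, $\downarrow G$ equals $\downarrow\{a\wedge b: a\in F,\ b\in G\}$, which is non-empty because $P$ has binary meets. Second, $f$ being meet-preserving is automatically order-preserving ($x\le y$ gives $x=x\wedge y$, so $f(x)=f(x)\wedge f(y)\le f(y)$). I would first record that $f^{\vee}$ is well-defined: if $A=\downarrow F$ with $F$ finite, then order-preservation gives $f(a)\le f(b)$ for some $b\in F$ whenever $a\in A$, so $\bigvee\{f(a):a\in A\}$ collapses to the finite join $\bigvee\{f(b):b\in F\}$, which exists in the (possibly non-complete) lattice $T$ and is independent of the generating set. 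In particular $f^{\vee}(\downarrow x)=f(x)$ for every $x\in P$.

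For part $(i)$, join-preservation is routine: $A\cup B$ is generated by $F\cup G$, whence $f^{\vee}(A\cup B)=\bigvee_{c\in F\cup G}f(c)=f^{\vee}(A)\vee f^{\vee}(B)$. The substantive point is meet-preservation, and this is where distributivity of $T$ enters. Using $A\cap B=\downarrow\{a\wedge b: a\in F,\ b\in G\}$ together with meet-preservation of $f$, I obtain $f^{\vee}(A\cap B)=\bigvee_{a\in F,\,b\in G}f(a\wedge b)=\bigvee_{a\in F,\,b\in G}\bigl(f(a)\wedge f(b)\bigr)$, and distributivity rewrites this as $\bigl(\bigvee_{a\in F}f(a)\bigr)\wedge\bigl(\bigvee_{b\in G}f(b)\bigr)=f^{\vee}(A)\wedge f^{\vee}(B)$.

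For the forward direction of part $(ii)$, assume $f^{\vee}$ injective. Condition $2)$ follows by observing that if $f(x)=\bigvee f(X)$ for a finite non-empty $X$, then $f^{\vee}(\downarrow X)=\bigvee f(X)=f(x)=f^{\vee}(\downarrow x)$, so $\downarrow X=\downarrow x$; then $x\le y$ for some $y\in X$ while every element of $X$ lies in $\downarrow x$, forcing $y=x$, hence $x\in X$. Taking $X$ a singleton already yields that $f$ is injective, so $1)$ is a special case of $2)$. For the converse I would assume $1)$ and $2)$ and show $f^{\vee}(A)=f^{\vee}(B)$ implies $A=B$. Part $(i)$ gives $f^{\vee}(A\cap B)=f^{\vee}(A)\wedge f^{\vee}(B)=f^{\vee}(A)\vee f^{\vee}(B)=f^{\vee}(A\cup B)$, so it suffices to treat the comparable case $C\subseteq D$ with $f^{\vee}(C)=f^{\vee}(D)$ and deduce $C=D$. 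Writing $C=\downarrow G$, for any $x\in D$ I have $f(x)\le f^{\vee}(D)=f^{\vee}(C)=\bigvee_{b\in G}f(b)$, whence by distributivity and meet-preservation $f(x)=f(x)\wedge\bigvee_{b\in G}f(b)=\bigvee_{b\in G}f(x\wedge b)$. Condition $2)$, applied to the finite set $X=\{x\wedge b: b\in G\}$, then gives $x=x\wedge b$ for some $b\in G$, i.e. $x\le b$, so $x\in C$; thus $D\subseteq C$ and $C=D$.

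The main obstacle is placing the distributivity of $T$ correctly: it is used both to turn a join of meets into a meet of joins in part $(i)$, and, dually packaged, to pass from the inequality $f(x)\le\bigvee_{b\in G}f(b)$ to the exact identity $f(x)=\bigvee_{b\in G}f(x\wedge b)$ on which condition $2)$ can act in part $(ii)$; neither step survives in a general lattice. The only remaining care is the bookkeeping that keeps every join finite (so that it exists in $T$) and the reduction of the injectivity test to comparable pairs via part $(i)$.
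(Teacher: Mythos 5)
Your proof is correct, and while part $(i)$ follows the paper's argument essentially verbatim (joins via union of generators; meets via $A\cap B=\downarrow\{a\wedge b\}$ plus finite distributivity in $T$), your part $(ii)$ takes a genuinely different route. The paper factors $f=f^{\vee}\circ i$ through the canonical embedding $i(x):=\downarrow x$ of $P$ into $T':=I_{0}(P)$ and reads condition $2)$ structurally: it says that $f(P\setminus Min(P))$ is exactly the set of join-irreducible elements of the sublattice $f^{\vee}(T')$; injectivity of $f^{\vee}$ is then deduced from the fact that every member of $I_{0}(P)$ is a finite join of join-irreducibles, implicitly invoking uniqueness of irredundant join-irreducible decompositions in a distributive lattice and the fact that conditions $1)$ and $2)$ make $f$ an order-embedding (via $f(x)\le f(y)\Rightarrow f(x)=f(x\wedge y)\Rightarrow x=x\wedge y$). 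You avoid that machinery entirely: for the forward direction you apply injectivity to the pair $\downarrow x$, $\downarrow X$; for the converse you first use part $(i)$ to reduce to comparable pairs ($f^{\vee}(A)=f^{\vee}(B)$ forces $f^{\vee}(A\cap B)=f^{\vee}(A\cup B)$), and then, for $x\in D\supseteq C=\downarrow G$, use distributivity and meet-preservation to get the exact identity $f(x)=\bigvee_{b\in G}f(x\wedge b)$, to which condition $2)$ applies directly. Your route is more elementary and self-contained, fills in details the paper glosses over (notably the well-definedness of $f^{\vee}$ -- the a priori infinite join over $A$ collapses to a finite join over any generating set -- and the precise mechanism behind ``this insures that $f^{\vee}$ is a lattice-isomorphism''), and it exposes the small redundancy that condition $1)$ is the singleton case of condition $2)$. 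What the paper's approach buys instead is the structural interpretation of condition $2)$ as a statement about join-irreducibles of the image, which is the viewpoint exploited in the surrounding results of that chapter (Corollary \ref{translate} and Lemma \ref{lemII.11}).
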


\begin{proof}
    $(i)$ Let $A, B\in I_{0}(P)$. We have $f^{\vee}(A\cup
B)=\bigvee\{f(c) : c\in A\cup
    B\}=(\bigvee\{f(a) : a\in A\})\vee (\bigvee \{f(b) : b\in
    B\})=f^{\vee}(A)\vee f^{\vee}(B)$ , hence  $f ^{\vee}$ is
join-preserving.
    By definition $f^{\vee}(A)\wedge f^{\vee}(B)=(\bigvee\{f(a) : a\in
    A\})\wedge (\bigvee\{f(b) : b\in B\})$. Since $A$ and $B$ are
    finitely generated,  $T$ is distributive and $f$ is meet-preserving,
we have $f^{\vee}(A)\wedge
    f^{\vee}(B)=\bigvee \{f(a)\wedge f(b) : a\in A, b\in
    B\}=\bigvee \{f(a\wedge b) : a\in A, b\in B\}$.  Since  $A$ and $B$
are initial segments of $P$ then
    $A\cap B=\downarrow \!\!\{a\wedge b : a\in A, b\in B\}$. Hence,
    $f^{\vee}(A)\wedge f^{\vee}(B)=\bigvee\{f(c) : c\in A\cap
    B\}=f^{\vee}(A\cap B)$, proving that $f ^{\vee}$ is
meet-preserving.\\
$(ii)$ Let $T':=I_{0}(P)$ and let $i: P\rightarrow T'$ be the map
defined by
$i(x)=\downarrow \!\!x$. This map is injective and  maps $P':=P\setminus
Min(P)$ on $T^{'\vee}$, the set of
join-irreducible elements of $T'$. Clearly $f=f^{\vee}\circ i$.
Hence, if
$f ^{\vee}$ is injective, then $f$ too and Condition $1)$ is
satisfied. Moreover,
the image by $f ^{\vee}$ of $T^{'\vee}$ is the set of
join-irreducible elements of the sublattice
$f^{\vee}(T')$. Hence, the image of $P'$ by $f$ is the set of
join-irreducible members of the sublattice $f^{\vee}(T')$ . This
amounts to Condition $2)$. Conversely, if these  two conditions are
satisfied, then $f(P')$ is equal to the set of join-irreducible
elements of
$f^{\vee}(T')$ and this insures that $f^{\vee}$ is a
lattice-isomorphism from $T'$ onto $f^{\vee}(T')$.
\end{proof}
\begin{corollary}\label{translate}
If a meet-semilattice $P$ embeds into a meet-semilattice $Q$ as a
subsemilattice then $I_{0}
(P)$ embeds into $I_{0}(Q)$ as a sublattice.
\end{corollary}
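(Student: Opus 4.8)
The plan is to derive the corollary directly from Lemma \ref{lemII.5}, whose two parts supply respectively a lattice-homomorphism and a criterion for its injectivity. Let $f\colon P\rightarrow Q$ be a meet-preserving embedding witnessing that $P$ is a subsemilattice of $Q$. The first thing I would record is that $I_{0}(Q)$ is a distributive lattice: since $Q$ is a meet-semilattice, $\downarrow x\cap\downarrow y=\downarrow(x\wedge y)$, so $Q$ is $\downarrow$-closed and $I_{<\omega}(Q)$ is a distributive lattice; the join of two finitely generated initial segments is again one, and their meet $\downarrow\{a\wedge b: a\in A,\ b\in B\}$ is non-empty whenever $A,B$ are, so $I_{0}(Q)$ is a sublattice of $I_{<\omega}(Q)$ minus its bottom, hence itself a distributive lattice. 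This is exactly what lets us take $T:=I_{0}(Q)$ as the codomain in Lemma \ref{lemII.5}.

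Next I would compose $f$ with the principal-segment map, setting $g\colon P\rightarrow I_{0}(Q)$, $g(x):=\downarrow f(x)$. Because $f$ is meet-preserving, $g(x\wedge y)=\downarrow f(x\wedge y)=\downarrow\bigl(f(x)\wedge f(y)\bigr)=\downarrow f(x)\cap\downarrow f(y)=g(x)\wedge g(y)$ in $I_{0}(Q)$, so $g$ is meet-preserving. Applying part $(i)$ of Lemma \ref{lemII.5} to $g$ produces a lattice-homomorphism $g^{\vee}\colon I_{0}(P)\rightarrow I_{0}(Q)$, given explicitly by $g^{\vee}(A)=\bigvee\{\downarrow f(a): a\in A\}=\downarrow f[A]$, the finitely generated initial segment of $Q$ generated by the finite set $f[Max(A)]$.

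It remains to check that $g^{\vee}$ is injective, for which I would verify the two conditions of part $(ii)$ of Lemma \ref{lemII.5}. Condition $1)$ is immediate: $g(x)=g(y)$ means $\downarrow f(x)=\downarrow f(y)$, whence $f(x)=f(y)$ and then $x=y$ since $f$ is an embedding. For condition $2)$, suppose $g(x)=\bigvee g(X)$ for some finite non-empty $X\subseteq P$, that is $\downarrow f(x)=\downarrow f[X]$. Then $f(x)\leq f(y)$ for some $y\in X$ because $f(x)\in\downarrow f[X]$, while every $f(y)$ with $y\in X$ lies in $\downarrow f(x)$ and hence satisfies $f(y)\leq f(x)$; for the selected $y$ this forces $f(x)=f(y)$, so $x=y\in X$ by injectivity of $f$. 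Both conditions hold, so $g^{\vee}$ is injective, i.e.\ a lattice embedding of $I_{0}(P)$ into $I_{0}(Q)$.

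I expect no serious obstacle here; the whole argument is a routine application of Lemma \ref{lemII.5}. The only points demanding a moment's care are confirming that the codomain $I_{0}(Q)$ is a distributive lattice so that the lemma applies, and the small bookkeeping that $g^{\vee}(A)=\downarrow f[A]$ is again non-empty and finitely generated, both of which follow from $Q$ being a meet-semilattice and $A$ being finitely generated.
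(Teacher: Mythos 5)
Your proposal is correct and takes essentially the same route as the paper: the paper's proof also sets $T:=I_{0}(Q)$, sends $x\mapsto\downarrow f(x)$, and invokes Lemma \ref{lemII.5} to get a one-to-one lattice-homomorphism $I_{0}(P)\to I_{0}(Q)$. You simply make explicit the points the paper leaves implicit (that $I_{0}(Q)$ is a distributive lattice, so the lemma applies, and the verification of conditions $1)$ and $2)$ of part $(ii)$), and these checks are all accurate.
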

\begin{proof}
Let $g: P \to Q$ be a one-to-one meet-preserving  map. Let
$T:=I_{0}(Q)$  and $f: P \to T$ defined by $f(x):=\downarrow \!\!g(x)$.
Conditions $1)$ and $2)$ in  $(ii)$  of Lemma \ref{lemII.5} are
satisfied, hence $f^{\vee} : I_{0}(P) \to T$ is a one-to-one
lattice-homomorphism.
\end{proof}
\begin{lemma}\label{lemII.11}
    Let $T$ be  a distributive lattice. If  the meet-subsemilattice of
$T$ generated by a subset $A$ of $T$ is isomorphic to $\Delta$,  to
    $\Gamma$ or to $V$, then
    the sublattice of $T$ generated by some infinite subset $A'$ of $A$
is either isomorphic to $I_{0}(\Delta)$,
    to $I_{0}(\Gamma)$ or to $[\omega]^{<\omega}$.
    \end{lemma}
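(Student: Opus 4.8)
The plan is to reduce the statement to the injectivity criterion of Lemma \ref{lemII.5}. Write $P$ for whichever of $\Delta$, $\Gamma$, $V$ is isomorphic to the meet-subsemilattice $Q$ generated by $A$, and fix an isomorphism $g\colon P\to Q$. A maximal element of $P$ cannot be written as a meet of other elements, so the images under $g$ of the maximal elements of $P$ all lie in $A$; since any infinite subset of these maximal elements again generates a copy of $P$ as a meet-semilattice, I may replace $A$ by an infinite subset $A'$ of $g(\mathrm{Max}(P))$ and let $P'\cong P$ be the meet-subsemilattice it generates. The inclusion $f\colon P'\hookrightarrow T$ is then injective and meet-preserving, so by Lemma \ref{lemII.5}$(i)$ the map $f^{\vee}\colon I_0(P')\to T$, $f^{\vee}(I)=\bigvee f[I]$, is a lattice homomorphism whose image is exactly the sublattice $\langle A'\rangle$ generated by $A'$ (every element of $\langle A'\rangle$ is a finite join of meets of members of $A'$, and by distributivity these are precisely the finite joins of the images $f(x)$). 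Thus $\langle A'\rangle\cong I_0(P')\cong I_0(P)$ as soon as $f^{\vee}$ is injective, and by Lemma \ref{lemII.5}$(ii)$ this holds precisely when condition $2)$ there is satisfied: $f(x)=\bigvee f[X]$ with $X$ finite non-empty forces $x\in X$. So the whole problem becomes: choose $A'$ so that condition $2)$ holds.

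The cases $\Gamma$ and $V$ need no extraction. Since $f$ is an order-embedding, any equality $f(x)=\bigvee f[X]$ forces every $y\in X$ to satisfy $y\le x$ in $P$; hence a violation would express $f(x)$ as a join of images of elements lying strictly below $x$. In $\Gamma$ (resp. $V$) the strict principal down-set of every element is a chain (resp. is empty except below the least element), so such a join reduces to the image of the largest element strictly below $x$, which is strictly below $f(x)$, and no equality is possible. For the antichain of maximal elements in the $V$ case one uses that in a \emph{distributive} lattice an antichain whose pairwise meets equal the least element is automatically independent: if $f(a_0)\le f(a_1)\vee f(a_2)$ then $f(a_0)=f(a_0)\wedge\bigl(f(a_1)\vee f(a_2)\bigr)=\bigl(f(a_0)\wedge f(a_1)\bigr)\vee\bigl(f(a_0)\wedge f(a_2)\bigr)$ is the least element, contradicting injectivity. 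Hence condition $2)$ holds with $A'=A$, and $\langle A'\rangle$ is $I_0(\Gamma)$, respectively $I_0(V)=[\omega]^{<\omega}$.

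The remaining, and by far the hardest, case is $P=\Delta$, where the strict down-sets are genuine copies of $\Delta$ and accidental join collapses of the elements $f(p_{ij}):=f(m_i)\wedge f(m_j)$ (with $m_i=(i,\omega)$ the maximal elements) can occur. First I would simplify condition $2)$: putting an offending join into disjunctive normal form and using that $f$ is an order-embedding, every \emph{rightward} contribution $f(m_i)\wedge f(m_b)$ with $i<b<j$ collapses, by monotonicity of $f(p_{i\,\cdot})$ in the second index, to a single element strictly below $f(p_{ij})$, so it can never create equality on its own; a violation therefore always contains a genuine \emph{leftward} relation $f(m_i)\wedge f(m_j)=\bigvee_{(a,b)\in S}\bigl(f(m_a)\wedge f(m_b)\bigr)$ in which all indices are $\le i$. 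I would then apply Ramsey's theorem, as in \cite{duff}, \cite{rams} and in the proof of Lemma \ref{lemII.7}, to a colouring of the finite index-tuples recording the comparisons between the meets $f(m_i)\wedge f(m_j)$ and the joins of smaller meets, extracting an infinite homogeneous $A'$ on which this collapse pattern is uniform. On $A'$ there are only finitely many homogeneous types: either no leftward collapse survives, in which case condition $2)$ holds and $\langle A'\rangle\cong I_0(\Delta)$; or the uniform collapse forces $f^{\vee}$ to factor through one of the two meet-preserving surjections $\Delta\to\Gamma$ and $\Delta\to V$ exhibited in the proof of Theorem \ref{semiwqo}, whose induced lattice homomorphisms (Lemma \ref{lemII.5}$(i)$) carry $I_0(\Delta)$ onto $I_0(\Gamma)$ and onto $I_0(V)=[\omega]^{<\omega}$, injectivity on the reduced structure being then recovered exactly as in the previous paragraph.

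The main obstacle is this last step for $\Delta$: designing a single finite colouring that captures \emph{all} possible join collapses of the $f(p_{ij})$ and proving that each homogeneous outcome is one of the three announced quotient types. Everything else — the reduction through Lemma \ref{lemII.5}, the role of distributivity, and the clean treatment of $\Gamma$ and $V$ — is routine once the normal form for the elements of $\langle A'\rangle$ is fixed. I would also use Corollary \ref{translate} to keep track of how $\Delta$, $\Gamma$ and $V$ embed into one another after passing to $I_0$, which is what makes the three possible outcomes mutually consistent.
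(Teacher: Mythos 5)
Your reduction through Lemma \ref{lemII.5} and your handling of $\Gamma$ and $V$ are correct and coincide with the paper's own argument: in those two cases every element outside $Min(P)$ is completely join-irreducible (its strict down-set has a largest element), so condition $2)$ of Lemma \ref{lemII.5}$(ii)$ holds for $f$ itself and no extraction is needed. The genuine gap is exactly where you flag it: the case $P=\Delta$, where what you offer is a plan rather than a proof. Two things go wrong with that plan. First, condition $2)$ quantifies over equalities $f(x)=\bigvee f(X)$ for \emph{arbitrary} finite $X$, so the ``collapse pattern'' you want to homogenize is not a colouring of $k$-element subsets of $\omega$ for any fixed $k$; Ramsey's theorem does not apply as stated, and you give no device for bounding the arity. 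Second, the dichotomy you expect the extraction to produce cannot be the right one. Under the hypothesis of the lemma $f$ is injective on $\Delta$ (and remains injective on the copy of $\Delta$ generated by any infinite set of its maximal elements), whereas if $f^{\vee}$ factored through the quotient $q\colon\Delta\to\Gamma$ or $q\colon\Delta\to V$ of Theorem \ref{semiwqo}, then evaluating on principal ideals would give $f(i,j)=f^{\vee}(\downarrow (i,j))=f^{\vee}(\downarrow (i,j'))=f(i,j')$ for all $j\neq j'<\omega$, a contradiction. More structurally, $\Delta$ contains chains of type $\omega+1$ while $I_{0}(\Gamma)$ and $[\omega]^{<\omega}$ do not, so when the meet-subsemilattice generated by $A$ is $\Delta$ the \emph{only} attainable outcome among the three is $I_{0}(\Delta)$: any correct proof must exhibit an infinite subset of $A$ with no join collapses at all, which your argument never does.

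This is precisely what the paper's proof accomplishes, by a short deterministic trick instead of Ramsey: compose $f$ with the doubling map $g\colon\Delta\to\Delta$ defined by $g(i,\omega):=(2i,\omega)$ and $g(i,j):=(2i,2j)$ for $j<\omega$, and set $h:=f\circ g$. If $h(x)=\bigvee h(X)$ with $x\notin X$, then, $h$ being an embedding, $x$ is the least upper bound of $X$ in $\Delta$, which forces $x=(i,i+1)$ with $i=Max\{j' : (i',j')\in X\}$ and all members of $X$ having second coordinate $\leq i$; but then every $h(i',j')=f(2i',2j')\leq f(2i,2i+1)<f(2i,2i+2)=h(x)$, using the odd element $(2i,2i+1)$ of the domain of $f$ as a separator, contradicting the assumed equality. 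Hence condition $2)$ of Lemma \ref{lemII.5}$(ii)$ holds for $h$, the map $h^{\vee}$ embeds $I_{0}(\Delta)$ into $T$ as a sublattice, and its image is generated by $A':=\{f(2i,\omega) : i<\omega\}$, an infinite subset of $A$ (the maximal elements of the copy of $\Delta$ are not proper meets, hence lie in $A$). You should replace your Ramsey step by this argument; everything before it in your proposal can stand.
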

\begin{proof}
Let $P$ one of the meet-semilattices $\Delta$,
    $\Gamma$ or $V$, and $f: P\rightarrow T$ be a one-to-one
meet-preserving map. According to $(i)$ of Lemma \ref
{lemII.5}, $f$ extends to a lattice-homomorphism $f^{\vee}:
I_{0}(P)\rightarrow T$. According to $(ii)$ of Lemma \ref
{lemII.5}, $f^{\vee}$ is one-to-one if for every $x\in P$, every
finite non-empty subset $X$ of  $P$, the equality
$f(x)=\bigvee f(X)$ implies $x\in X$. This condition is satisfied if
$P$ is $\Gamma$ or $V$  (indeed in this case, every
$x\in P\setminus Min (P)$ is {\it completely join-irreducible}, that
is $\{y\in P :y<x\}$ has a largest element). This is not
necessarily the case if $P:= \Delta$.  In this case, set
$g:\Delta\rightarrow \Delta$ defined by setting $g(i, j):=
(2i,\omega)$ if
$j=\omega$ and $g(i,j):=(2i,2j)$ if $j<\omega$ and $h:= f\circ g$.
Clearly $g$ is a one-to-one meet-preserving map, hence $h$
is a one-to-one meet-preserving map from $\Delta$ into $T$. Moreover
$h$ satisfies condition 2) of $(ii)$ of Lemma \ref
{lemII.5}. Indeed, let
$x:=(i,j)\in\Delta$ and $X$ be a finite subset of $\Delta$ such that
$h(x)=\bigvee
h(X)$. Since $h$ is an embedding, this entail that
$x=\bigvee X$ in $\Delta$. If $x\notin X$ then  $i=Max\{j' :
(i',j')\in X\}$ and $j=i+1$ . Since $f$ is an embedding, we have
$h(i',j')= f(2i',2j')\leq f(2i,2i +1)<f(2i,2j)=h(x)$ for all
$(i',j')\in X$, contradicting $h(x)=\bigvee
h(X)$. This insures that $h^{\vee}$ is  a one-to-one homomorphism
from
$I_{0}(\Delta)$  into $T$, the image being generated by the subset
$A':=\{h(i,\omega) : i<\omega\}$.
\end{proof}
\begin{lemma}\label{II.8}If $P$ is $\Gamma$ or $\Delta$, the lattices
$I_{0}(P)$ and $I_{<\omega}(P)$
embed in each other as sublattices. On an other hand,
    the lattices $I_{<\omega}(\Gamma)$ and $I_{<\omega}(\Delta)$
do not embed in each other as sublattices.
\end{lemma}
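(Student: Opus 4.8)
The lemma splits into two independent assertions: the mutual embeddability of $I_0(P)$ and $I_{<\omega}(P)$ for $P\in\{\Gamma,\Delta\}$, and the mutual non-embeddability of $I_{<\omega}(\Gamma)$ and $I_{<\omega}(\Delta)$. I treat them in turn.

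For the first assertion, observe that both $\Gamma$ and $\Delta$ have a least element, namely $(0,1)$; hence the intersection of two non-empty finitely generated initial segments is again non-empty, so $I_0(P)$ is closed under the meet (intersection) and join (union) of $I_{<\omega}(P)$. Thus the inclusion $I_0(P)\hookrightarrow I_{<\omega}(P)$ is a sublattice embedding, which settles one direction. For the reverse, let $P^{\bot}$ denote $P$ with a new least element $\bot$ adjoined. A routine check gives a lattice isomorphism $I_0(P^{\bot})\cong I_{<\omega}(P)$ via $I\mapsto\{\bot\}\cup I$, so it suffices to embed $P^{\bot}$ into $P$ as a meet-subsemilattice and invoke Corollary \ref{translate}. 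The plan is to use the shift $\sigma(i,j):=(i+1,j+1)$ (with $\omega+1=\omega$), which for both $P=\Gamma$ and $P=\Delta$ is a meet-preserving order-embedding of $P$ onto the meet-subsemilattice $\{(i,j)\in P:i\geq 1\}$; since $(0,1)$ lies strictly below this image and is not in it, $\{(0,1)\}\cup\sigma(P)\cong P^{\bot}$. Composing yields $I_{<\omega}(P)\cong I_0(P^{\bot})\hookrightarrow I_0(P)$, completing Part 1.

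For $I_{<\omega}(\Delta)\not\hookrightarrow I_{<\omega}(\Gamma)$ I would use \emph{local finiteness}. Every principal initial segment $\downarrow x$ of $\Gamma$ is a finite chain (indeed $\Gamma$ is a tree with spine $(0,1)<(1,2)<\cdots$ and pendant maximal leaves $(i,\omega)$), so every finitely generated initial segment of $\Gamma$ is finite, whence every interval of $I_{<\omega}(\Gamma)$ is finite; that is, $I_{<\omega}(\Gamma)$ is locally finite. Local finiteness passes to sublattices, since an interval $[a,b]$ of a sublattice is contained in the interval $[a,b]$ of the ambient lattice. But $I_{<\omega}(\Delta)$ is not locally finite: the interval $[\emptyset,\downarrow(0,\omega)]$ already contains the infinite chain $\downarrow(0,1)\subset\downarrow(0,2)\subset\cdots$. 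Hence $I_{<\omega}(\Delta)$ cannot be a sublattice of $I_{<\omega}(\Gamma)$.

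The reverse non-embedding $I_{<\omega}(\Gamma)\not\hookrightarrow I_{<\omega}(\Delta)$ is the hard direction, because a locally finite lattice may well embed into a non-locally-finite one, so the invariant used above is useless here. Instead the plan is to exploit a property that $I_{<\omega}(\Gamma)$ \emph{has} and that is preserved by lattice embeddings: the existence of an infinite antichain $c_0,c_1,\dots$ whose pairwise meets $c_i\wedge c_j$ form a \emph{chain}. In $I_{<\omega}(\Gamma)$ the leaves $c_i:=\downarrow(i,\omega)$ work, since $c_i\wedge c_j=\downarrow(\min(i,j),\min(i,j)+1)$ depends only on $\min(i,j)$ and these meets lie along the spine. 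A lattice embedding sends such a configuration to one of the same kind, so it remains to prove that $I_{<\omega}(\Delta)$ admits \emph{no} infinite antichain with chain-ordered pairwise meets. This last fact is the main obstacle. The route is to reduce it to the two-dimensional structure of $\Delta\subseteq\omega\times(\omega+1)$: first, every infinite antichain of $\Delta$ itself must consist of maximal elements $(i,\omega)$ (infinitely many distinct first coordinates are unbounded, which forces the second coordinate to equal $\omega$), and the meet of two maximal elements records \emph{both} indices, $(i,\omega)\wedge(j,\omega)=(\min,\max)$, so such meets are never totally ordered. Lifting this from $\Delta$ to antichains of the lattice $I_{<\omega}(\Delta)$ — whose members are finitely generated initial segments and whose meets are intersections — is where the real work lies; I expect to extract a well-structured sub-antichain by a Ramsey argument in the spirit of Lemma \ref{lemII.7} and then to read off the required incomparability of meets from the persistence of the second coordinate. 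Once $I_{<\omega}(\Delta)$ is shown to lack this invariant, both non-embeddings follow, and together with Part 1 they establish the lemma.
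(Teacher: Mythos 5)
Your Part 1 is correct and is essentially the paper's own argument: the explicit shift $\sigma$ realizes a meet-preserving embedding of $1+P$ into $P$, and Corollary \ref{translate} together with the isomorphism $I_{<\omega}(P)\cong I_{0}(1+P)$ finishes it (the paper states exactly this, without writing the map). Your local-finiteness argument for $I_{<\omega}(\Delta)\not\hookrightarrow I_{<\omega}(\Gamma)$ is also correct; it is a sound variant of the paper's argument (which uses the chain $\omega+1$ and the embedding of $I_{<\omega}(\Gamma)$ into $[\omega]^{<\omega}$), both resting on the finiteness of the finitely generated initial segments of $\Gamma$.

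The genuine gap is in the remaining direction, $I_{<\omega}(\Gamma)\not\hookrightarrow I_{<\omega}(\Delta)$, which is the heart of the lemma. You correctly isolate an invariant preserved by sublattice embeddings (an infinite antichain whose pairwise meets form a chain) and correctly verify it on the leaves $c_{i}:=\downarrow(i,\omega)$ of $I_{<\omega}(\Gamma)$, but the statement that $I_{<\omega}(\Delta)$ admits \emph{no} such antichain is never proved: you explicitly defer it (``I expect to extract \dots by a Ramsey argument''). That nonexistence statement is the entire content of this direction, and your observation about antichains of $\Delta$ itself (that they consist of maximal elements $(i,\omega)$ whose meets $(i,j)$ are not totally ordered) does not lift: the antichain to be excluded lives in the lattice of finitely generated initial segments, and nothing in the proposal controls how intersections of such initial segments behave. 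Note also that you have made the task strictly harder than necessary: your own witnesses satisfy the stronger relation $c_{i}\wedge c_{j}=c_{i}\wedge c_{k}=\downarrow(i,i+1)$ for all $i<j<k$, and it is this stronger configuration that the paper excludes from $I_{<\omega}(\Delta)$ --- a weaker, hence easier, nonexistence claim. The paper's argument is short but uses real structure: if $U_{0},U_{1},\dots$ were such an antichain and $u\in U_{0}\setminus U_{1}$, then $u\notin U_{k}$ for all $k>1$, so $\{U_{k}:k>1\}$ is an infinite antichain of $I_{<\omega}(\Delta\setminus\uparrow u)$, which forces $u=(i,\omega)$ because $\Delta\setminus\uparrow u$ is covered by finitely many chains when $u$ has finite second coordinate; then every $v<u$ lies in $U_{1}$ by the same reasoning, and finite generation of $U_{1}$ produces a generator above infinitely many elements of the column below $u$, which in $\Delta$ must be $u$ itself, a contradiction. (For what it is worth, your stronger claim is in fact true: using that a finitely generated initial segment of $\Delta$ with columns $0,\dots,N-1$ has height at least $N-1$ on every column $k<N-1$, one can show increasing supports, extract by pigeonhole a column where one $U_{m}$ has height $\omega$ while infinitely many later $U_{n}$ have finite height, and derive from the chain condition that these later heights are eventually constant, contradicting that heights on a fixed column tend to $\omega$. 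But some such argument must actually be supplied; as it stands the proposal does not prove the lemma.)
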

\begin{proof}
    If $P$ is $\Gamma$ or $\Delta$ then $1+P$ embeds into $P$ by a
meet-preserving map. From Corollary \ref{translate},
$I_{0}(1+P)$ embeds into $I_{0}(P)$ as a sublattice. Since
$I_{<\omega}(P)$  is isomorphic to $I_{0}(1+P)$ the desired
conclusion follows.\\
As a poset
$I_{<\omega}(\Delta)$ does not embed into
$I_{<\omega}(\Gamma)$. Indeed, the chain  $\omega +1$  embeds into
$\Delta$ hence into
$I_{<\omega}(\Delta)$ whereas it does not embed into
$I_{<\omega}(\Gamma)$ since
this poset embeds into
$[\omega]^{<\omega}$, a poset which does not embed  $\omega +1$.
On an other hand, $I_{<\omega}(\Gamma)$ embeds into
$I_{<\omega}(\Delta)$ as a poset, but not as a sublattice.
Indeed, in $\Gamma$ the antichain $\{(i, \omega) : i<\omega\}$ is such that
$( i, \omega)\wedge (j, \omega)=( i, \omega)\wedge (k, \omega)$
for $i<j<k<\omega$. Consequently, if  $I_{<\omega}(\Gamma)$ was embeddable
into  $I_{<\omega}(\Delta)$
 as a sublattice then $I_{<\omega}(\Delta)$ would contain an
antichain $\{U_{n} : n<\omega\}$
such that $U_{i}\cap U_{j}=U_{i}\cap U_{k}$ for all $i<j<k$. But this
equality does not hold  even for $i=0,j=1$ and all $k$.
Otherwise, if
$u\in U_{0}\setminus U_{1}$ , then $u\in U_{0}\setminus U_{k}$, for
all $k, k>1$. Hence,  $I_{<\omega}(\Delta
\setminus
\uparrow \!\!u)$  contains
the infinite antichain $\{U_{k} : 1<k\}$ from which it follows that
$u=(i,\omega)$ for some $i$ (indeed, if $u:=(i,j)$ with
$i<j<\omega$ then  , since
$\Delta
\setminus
\uparrow \!\!u$ is covered by finitely many chains, $I_{<\omega}(\Delta
\setminus
\uparrow \!\!u)$  contains no  infinite antichain). Let $v:= (i',j') <u$;
then $v\in U_{0}$ and, since $j'<\omega$,  the
previous argument gives $v\in U_{1}$. Since $U_{1}$ is finitely
generated, it contains some element $u_{1}$ above infinitely
many elements below $u$.   The structure of $\Delta$ imposes
$u_{1}=u$ which is impossible.
\end{proof}

\subsection{Proof of Theorem \ref{thm8}}

$(i) \Rightarrow (ii)$ Let $L$ be  an independent subset of $T$. From
Lemma \ref{lemII.2},
the sublattice $<L>$
generated by $L$ has $[L]^{<\omega}$ as a quotient. To conclude,
choose $L$ to be
countable.\\
$(ii)\Rightarrow (iii)$ If
$[\omega]^{<\omega}$ is a quotient of some sublattice of $T$ then,
according to Lemma \ref{lem II.4} there is a meet-preserving
map from
$\Delta$ into $T$ whose image contains an infinite antichain.
According to $(iii)$ of Theorem \ref {semiwqo},  $T$ contains either
$\Delta$, or
$\Gamma$ or  $V$ as a meet-subsemilattice, hence, from Lemma
\ref{lemII.11} and  Lemma \ref{II.8}, $T$ contains either
$I_{<\omega}(\Delta)$,
$I_{<\omega}(\Gamma)$ or  $[\omega]^{<\omega}$ as a sublattice.
Since
$[\omega]^{<\omega}$ has a sublattice isomorphic to
$I_{<\omega}(\Gamma)$ the conclusion
follows.\\ $(iii)\Rightarrow (i)$  $I_{<\omega}(\Gamma)$ and
$I_{<\omega}(\Delta)$
contain an infinite
independent set (namely $\{\downarrow(i, \omega) :
i<\omega\}$).\endproof

\chapter*{Notations, basic definitions and facts}
 \underline{Poset, qoset,
chain:}\\ If $(P,\leq)$ is a partially ordered set, shortly a  {\it poset},
we will often just write $P$ for $(P, \leq)$. We write $x\leq y$
for $(x,y)\in\leq$. A {\it qoset} is a quasi-ordered set and a
linearly ordered poset is a {\it
chain}.\\
\underline{Initial segment, principal, $I(P), I_{<\omega}(P), I_ 0
(P), \downarrow A$:}\\
A subset $I$ of $P$ is an {\it initial
segment} if $x\leq y$ and $y\in I$ imply $x\in I$. We denote by
$I(P)$ the set of initial segments of $P$ ordered by inclusion.
Let $A$ be a subset of $P$, then: $\downarrow A:=\{y\in P: y\leq
x$ for some $x\in A\}$. If $A$ contains only one element $a$, we
write $\downarrow a$ instead of $\downarrow \{a\}$. An initial
segment generated by a singleton is {\it principal} and it is {\it
finitely generated} if it is generated by a finite subset of $P$.
We denote by $I_{<\omega}(P)$ the set of finitely generated
initial segments and $I_ 0 (P):=I_ {<\omega}
(P)\setminus\{\emptyset\}$.\\
\underline{Up-directed, ideal,
$J(P)$, $\downarrow$-closed:}\\
A subset $I$ of $P$ is {\it
up-directed} if every pair of elements of $I$ has a common
upper-bound in $I$. An {\it ideal} is a non-empty up-directed
initial segment of $P$. We denote $J(P)$, the set of ideals of $P$
ordered by inclusion and we set $J_*(P):=J(P)\cup\{\emptyset\}$.
The poset $P$ is {\it $\downarrow$-closed} if the intersection of
two principal initial segments of $P$ is a finite union, possibly
empty, of principal initial segments.\\
 \underline{Dual, final
segment:}\\
The {\it dual} of $P$ is the poset obtained from $P$
by reversing the order; we denote it by $P^*$. A subset which is
an initial segment of $P^*$ will be called a final segment of
$P$.\\
\underline{Order-preserving, embedding,
order-isomorphism:}\\
 Let $P$ and $Q$ be two posets. A map
$f:P\rightarrow Q$ is {\it order-preserving} if $x\leq y$ in $P$
implies $f(x)\leq f(y)$ in $Q$; this is an {\it embedding} if
$x\leq y$ in $P$ is equivalent to $f(x)\leq f(y)$ in $Q$; if, in
addition, $f$ is onto, then this is an {\it order-isomorphism}.\\
\underline{Equimorphic posets, order type:}\\
 We say that $P$ {\it
embeds} into $Q$ if there is an embedding from $P$ into $Q$, a
fact we denote $P\leq Q$; if $P\leq Q$ and $Q\leq P$ then $P$ and
$Q$ are {\it equimorphic}, we denote $P\equiv Q$.  If there is an
order-isomorphism from $P$ onto $Q$ we say that $P$ and $Q$ are
{\it isomorphic} or have the same {\it order type},  a fact we denote
$P\cong Q$.\\
 \underline{$\omega, \omega^*, \eta$:}\\
  We denote
$\omega$ the order type of $\N$, the set of natural integers,
$\omega^*$ the order type of the set of negative integers and
$\eta$ the order type of $\Q$, the set of rational numbers.\\
\underline{Well-founded, well-quasi-ordered, well-ordered,
ordinal, scattered:}\\ A poset $P$ is {\it well-founded} if the
order type $\omega ^{*}$ does not embed into $P$. If furthermore,
$P$ has no infinite antichain then $P$ is {\it well-quasi-ordered}
 (wqo). A well-founded chain
is {\it well-ordered} ; its order type is an {\it ordinal}. A
poset $P$ is {\it scattered} if it does not contain a copy of
$\eta$, the chain of rational numbers.\\
 \underline{$\mathfrak
{P}(E)$, $[E]^{<\omega}$, $\mathcal{F}^{<\omega}$,
$\mathcal{F}^\cup$:}\\ Let $E$ be a set , we denote
$[E]^{<\omega}$ (resp. $\mathfrak {P}(E)$), the set, ordered by
inclusion, consisting of finite (resp. arbitrary) subsets of $E$.
If $\mathcal{F}$ is a subset of $\mathfrak P(E)$, we denote
$\mathcal{F}^{<\omega}$ (resp. $\mathcal{F}^\cup$) the collection
of finite (resp. arbitrary) unions of members of $\mathcal{F}$
ordered by inclusion.
\\ \underline{Lexicographic sum, ordinal sum,
lexicographic product:}\\ If $(P_{i})_{i\in I}$ is a family of
posets indexed by a poset $I$, the {\it lexicographic sum} of this
family is the poset, denoted $\Sigma_{i\in I}P_{i}$, defined on
the disjoint union of the $P_{i}$, that is formally the set of
$(i,x)$ such that $i\in I$ and $x\in P_{i}$, equipped with the
order $(i,x)\leq(j,y)$ if either $i<j$ in $I$ or $i=j$ and $x\leq
y$ in $P_{i}$. When $I$ is the finite chain
$n:=\{0,1,\ldots,n-1\}$ this sum is denoted
$P_{0}+P_{1}+\ldots+P_{n-1}$. When $I:=\omega$ this sum is denoted
$\Sigma_{i<\omega}P_{i}$ or $P_{0}+P_{1}+\ldots+P_{n}+\ldots$. We
denote $\Sigma^*_{i\in I}P_{i}$ for $\Sigma_{i\in I^*}P_{i}$; when
$I:=\omega$ we denote $\Sigma^*_{i<\omega}P_{i}$ or
$\ldots+P_{n}+\ldots+P_{1}+P_{0}$. When $I$ (resp. $I^*$) is well
ordered, or is an ordinal, we call  it {\it ordinal sum}
\index{ordinal sum} (resp. {\it antiordinal sum}) instead of
lexicographic sum. When all the $P_{i}$ are equal to the same
poset $P$, the lexicographic sum is denoted $P.I$, and called the
{\it lexicographic product} of $P$ and $I$.\\
 \underline{Direct
product,direct sum:}\\
 The {\it direct product} of $P$ and $Q$
denoted $P\times Q$ is the set of $(p, q)$ for $p\in P$ and $q\in
Q$, equipped with the product order; that is $(p,q)\leq (p',q')$
if $p\leq p'$ and $q\leq q'$. The {\it direct sum} of $P$ and $Q$
denoted $P\oplus Q$ is the disjoint union of $P$ and $Q$ with no
comparability between the elements of $P$ and the elements of $Q$
(formally $P\oplus Q$ is the set of couples $(x,0)$ with $x\in P$
and $(y,1)$ with $y\in Q$ equipped with the order $(p,q)\leq
(p',q')$ if $p\leq p'$ and $q=q'$).\\
\underline{Indecomposable,
right-indecomposable, left-indecomposable, indivisible:}\\ An
order type $\alpha$ is {\it indecomposable} if
$\alpha=\beta+\gamma$ implies $\alpha\leq\beta$ or
$\alpha\leq\gamma$; it is {\it right-indecomposable} if
$\alpha=\beta+\gamma$ with $\gamma\not=0$ implies
$\alpha\leq\gamma$; it is {\it strictly right-indecomposable} if
$\alpha=\beta+\gamma$ with $\gamma\not=0$ implies
$\beta<\alpha\leq\gamma$. The {\it left-indecomposability} and the
{\it strict
left-indecomposability} are defined in the same way. 
An order type $\alpha$ is {\it indivisible} if for every partition
of a chain $A$ of order type $\alpha$ into $B\cup C$, then either
$A\leq B$ or $A\leq C$.\\
\underline{Join and meet:}\\ The \emph{join} or  \emph{supremum} of a subset $X$ of a poset  $P$ is the least upper-bound of $X$ and is denoted $\bigvee X$. If $X$ is made of $x$ and $y$ it is denoted $x\vee y$. The \emph{meet} or \emph{infimum} of $X$ is denoted $\bigwedge X$. Similarly, $x\wedge y$ is the meet of $x$ and $y$.\\
\underline{Compact element:}\\ An element $x$ of a poset $P$ is \emph {compact} if $x\leq \bigvee X$ implies $x\leq \bigvee X'$ for some finite subset $X'$ of $X$.\\
\underline{Join-semilattice:}\\ A {\it join-semilattice} is a
poset $P$ such that  arbitrary elements $x, y$ have a join.\\
\underline{Join-preserving:}\\ Let $P$ and
$Q$ be two join-semilattices. A map $f:P\rightarrow Q$ is {\it
join-preserving} if $f(x\vee y)=f(x)\vee f(y)$ for every $x, y\in
P$.\\
\underline{Independent set:}\\
A subset $X$ of a join-semilattice $P$ is {\it
independent} if $x\not \leq \bigvee F$ for every $x\in X$ and
every non empty finite subset $F$ of $X\setminus \{x\}$.\\
\underline{Join-irreducible, join-prime, $\J_{irr}(P)$,
$\J_{pri}(P)$:}\\
An element $x$ of a join-semilattice $P$ is {\it join-irreducible} if
it is distinct from the least element (if any) and if $x=a\vee b$ implies $x=a$ or $x=b$.
We denote $\J_{irr}(P)$ the set of join-irreducible elements of
$P$.  An element $x\in P$ is  {\it join-prime}, if it is distinct
from the least element  (if any) and if
 $x\leq a\vee b$ implies
$x\leq a$ or $x\leq b$. This amounts to the fact that $P\setminus
\uparrow x$ is an ideal . We denote  $\J_{pri}(P)$, the set of
join-prime  members of $P$. We have $\J_{pri}(P)\subseteq
\J_{irr}(P)$.\\
\underline{Lattice, complete lattice:}\\ A lattice is a  poset $P$ in which every pair of elements has a join and a meet. If every subset has a join and a meet,  $P$ is a complete lattice.\\
\underline{Algebraic lattice:}\\ An \emph{algebraic lattice} is a complete lattice in which every element is a join of compact
 elements.\\
 \underline{Completely meet-irreducible, $\triangle
(L)$:}\\ Let $L$ be a complete lattice. For $x\in L$ , set
$x^{+}:=\bigwedge\{y\in L: x<y\}$. An element $x\in L$ is {\it
completely meet-irreducible} if $x=\bigwedge X$ implies $x\in X$,
or -equivalently- $x\neq x^{+}$. We denote $\triangle (L)$ the set
of completely meet-irreducible members of $L$.\\
\underline{Sierpinskization, $\Omega(\alpha)$:}\\ A {\it
sierpinskization} of a countable order-type $\alpha$ can be  obtained by
intersecting the natural order on the set $\N$
 of positive integers with  a linear order of $\alpha$.
Sierpinskisations given by a bijective map
$\psi: \omega \rightarrow \omega\alpha$ such that
$\varphi^{-1}$ is order-preserving
on each component $\omega \cdot \{i\}$ of $\omega\alpha$ are all
embeddable in each other, and for this reason  denoted by the same
symbol $\Omega(\alpha)$.\\ 
\underline{$L_{\alpha}$, $\mathbb J$, $\mathbb A$, $\mathbb L$,
$\J_{\neg\alpha}$, $\A_{\neg\alpha}$, $\mathbb L_{\neg\alpha}$:}\\
Let $\alpha$ be a chain, we denote by $L_{\alpha}:= 1+(1\oplus
\alpha)+1$ the lattice made of the direct sum of the one-element
chain $1$ and the chain $\alpha$, with top and bottom added. We
denote $\mathbb J$ the class of join-semilattices having a least
element, $\mathbb A$ the class of algebraic lattices, $\mathbb L$
the collection of $L\in\mathbb A$ such that $L$ contains no
join-semilattice isomorphic to $L_{\omega+1}$ or to $L_{\omega^*}$. We
denote $\J_{\neg\alpha}$ (resp. $\mathbb A_{\neg \alpha}$,
$\mathbb L_{\neg \alpha}$) the collection of $L\in\mathbb J$
(resp. $L\in\mathbb A$, $L\in\mathbb L$) such that $L$ contains no
chain of type $I(\alpha)$.\\
\underline{$K(L)$,
$\mathbb{K}$:}\\ Let $L$ be an algebraic lattice, we denote $K(L)$
the set of compact elements of $L$. We denote by $\mathbb{K}$ the
class of order types $\alpha$ such that $L \in \mathbb L_{\neg
\alpha}$ whenever $K(L)$ contains no chain of type $1+\alpha$ and
no subset isomorphic to $[\omega]^{<\omega}$.\\

\printindex

\end{document}